\numberwithin{equation}{section} %<-- MODIFIED: Resets counter per section.
\Crefname{figure}{Figure}{Figures}
\crefname{figure}{fig.}{Fig.}
\Crefname{table}{Table}{Tables}
\newtheorem{thmA}{Theorem}
\Crefname{thmA}{Theorem}{Theorems}
\theoremstyle{plain}
\newaliascnt{thm}{equation}
\newtheorem{thm}[thm]{Theorem}
\Crefname{thm}{Theorem}{Theorems}
\newaliascnt{lemma}{equation}
\newtheorem{lemma}[lemma]{Lemma}
\Crefname{lemma}{Lemma}{Lemmas}
\newaliascnt{prop}{equation}
\newtheorem{prop}[prop]{Proposition}
\Crefname{prop}{Proposition}{Propositions}
\newaliascnt{cor}{equation}
\newtheorem{cor}[cor]{Corollary}
\crefname{cor}{corollary}{corollaries}
\Crefname{cor}{Corollary}{Corollaries}
\theoremstyle{definition}
\newaliascnt{definition}{equation}
\newtheorem{definition}[definition]{Definition}
\Crefname{definition}{Definition}{Definitions}
\newaliascnt{rem}{equation}
\newtheorem{rem}[rem]{Remark}
\Crefname{rem}{Remark}{Remarks}
\newaliascnt{example}{equation}
\newtheorem{example}[example]{Example}
\Crefname{example}{Example}{Examples}
\newaliascnt{question}{equation}
\Crefname{question}{Question}{Questions}
\newaliascnt{notation}{equation}
\newtheorem{notation}[notation]{Notation}
\crefname{notation}{notation}{notations}
\Crefname{notation}{Notation}{Notations}
\newaliascnt{block}{equation}
\newtheorem{block}[block]{}
\newcounter{dummy}
\renewcommand{\thedummy}{\roman{dummy}}
\newenvironment{blist}{
	\begin{list}{(\thedummy)}{
			\setlength\labelsep{4pt}
			\setlength\itemindent{4pt}
			\setlength\leftmargin{0pt}
			\setlength\labelwidth{0pt}
			\usecounter{dummy}
		}
	}{\end{list}}
\newcommand{\C}{\mathbb{C}}
\newcommand{\CP}{\mathbb{CP}}
\newcommand{\N}{\mathbb{N}}
\newcommand{\Z}{\mathbb{Z}}
\newcommand{\R}{\mathbb{R}}
\newcommand{\Q}{\mathbb{Q}}
\newcommand{\D}{\mathbb{D}}
\newcommand{\W}{\mathcal{W}}
\newcommand{\V}{\mathcal{V}}
\newcommand{\A}{\mathcal{A}}
\newcommand{\E}{\mathcal{E}}
\newcommand{\Cinf}{\mathcal{C}^\infty}
\renewcommand{\O}{\mathcal{O}}
\DeclareMathOperator{\rk}{rk}
\DeclareMathOperator{\Jac}{Jac}
\renewcommand{\Re}{\mathop{\mathrm{Re}}}
\renewcommand{\Im}{\mathop{\mathrm{Im}}}
\DeclareMathOperator{\supp}{supp}
\DeclareMathOperator{\wt}{wt}
\renewcommand{\mod}{\mathop{\mathrm{mod}}\nolimits}
\DeclareMathOperator{\id}{id}
\DeclareMathOperator{\GL}{GL}
\DeclareMathOperator{\init}{in}
\newcommand{\Ndmin}{\mathcal{N}_{\mathrm{min}}}
\newcommand{\Mat}{\mathop{\rm Mat}\nolimits}
\newcommand{\Yro}{Y^{\mathrm{ro}}}
\newcommand{\Ymin}{Y_{\mathrm{min}}}
\newcommand{\degmin}{\deg_{\mathrm{min}}}
\newcommand{\Cmin}{C_{\mathrm{min}}}
\newcommand{\Dmin}{D_{\mathrm{min}}}
\newcommand{\Pmin}{P_{\mathrm{min}}}
\newcommand{\Ypol}{Y_{\mathrm{pol}}}
\newcommand{\Ypolte}{Y_{\mathrm{pol},\theta}}
\newcommand{\Yropol}{Y^{\mathrm{ro}}_{\mathrm{pol}}}
\newcommand{\Yropolte}{Y^{\mathrm{ro}}_{\mathrm{pol},\theta}}
\newcommand{\Droup}{D^{\mathrm{ro}}_{\Upsilon}}
\newcommand{\Cpol}{C_{\mathrm{pol}}}
\newcommand{\Dpol}{D_{\mathrm{pol}}}
\newcommand{\Ppol}{P_{\mathrm{pol}}}
\newcommand{\Gpol}{G_{\mathrm{pol}}}
\newcommand{\Gapol}{\Gamma_{\mathrm{pol}}}
\newcommand{\Gamin}{\Gamma_{\mathrm{min}}}
\newcommand{\Vmin}{\mathcal{V}_{\mathrm{min}}}
\newcommand{\Vpol}{\mathcal{V}_{\mathrm{pol}}}
\newcommand{\pimin}{\pi_{\mathrm{min}}}
\newcommand{\pipol}{\pi_{\mathrm{pol}}}
\newcommand{\piropol}{\pi^{\mathrm{ro}}_{\mathrm{pol}}}
\newcommand{\xipol}{\xi_{\mathrm{pol}}}
\newcommand{\xiropol}{\xi^{\mathrm{ro}}_{\mathrm{pol}}}
\newcommand{\garopol}{\gamma^{\mathrm{ro}}_{\mathrm{pol}}}
\newcommand{\gapol}{\gamma_{\mathrm{pol}}}
\newcommand{\Aropol}{A^{\mathrm{ro}}_{\mathrm{pol}}}
\newcommand{\Apol}{A_{\mathrm{pol}}}
\newcommand{\hxiro}{\hat{\xi}^{\mathrm{ro}}}
\newcommand{\Siro}{\Sigma^{\mathrm{ro}}}
\newcommand{\tSiro}{\tilde\Sigma^{\mathrm{ro}}}
\newcommand{\tSro}{\tilde{S}^{\mathrm{ro}}}
\newcommand{\Sro}{S^{\mathrm{ro}}}
\newcommand{\Sroup}{S^{\mathrm{ro}}_{\Upsilon}}
\newcommand{\Sroupte}{S^{\mathrm{ro}}_{\Upsilon,\theta}}
\newcommand{\Uro}{U^{\mathrm{ro}}}
\newcommand{\Cro}{\C^{\mathrm{ro}}}
\newcommand{\tUro}{\tilde U^{\mathrm{ro}}}
\newcommand{\hUro}{\hat U^{\mathrm{ro}}}
\newcommand{\gro}{g^{\mathrm{ro}}}
\newcommand{\xiro}{\xi^{\mathrm{ro}}}
\newcommand{\karo}{\kappa^{\mathrm{ro}}}
\newcommand{\phiro}{\phi^{\mathrm{ro}}}
\newcommand{\txiro}{\tilde{\xi}^{\mathrm{ro}}}
\newcommand{\xiror}{\xi^{\mathrm{ro},r}}
\newcommand{\xiros}{\xi^{\mathrm{ro},s}}
\newcommand{\xirot}{\xi^{\mathrm{ro},t}}
\newcommand{\xiroal}{\xi^{\mathrm{ro},\alpha}}
\newcommand{\xirobe}{\xi^{\mathrm{ro},\beta}}
\newcommand{\xirov}{\xi^{\mathrm{ro},v}}
\newcommand{\xiroup}{\xi^{\mathrm{ro}}_{\Upsilon}}
\newcommand{\xiroupte}{\xi^{\mathrm{ro}}_{\Upsilon,\theta}}
\newcommand{\piro}{\pi^{\mathrm{ro}}}
\newcommand{\Iro}{I^{\mathrm{ro}}}
\newcommand{\fro}{f^{\mathrm{ro}}}
\newcommand{\Fro}{F^{\mathrm{ro}}}
\newcommand{\Froupte}{F^{\mathrm{ro}}_{\Upsilon,\theta}}
\newcommand{\phiroupte}{\phi^{\mathrm{ro}}_{\Upsilon,\theta}}
\newcommand{\phiroup}{\phi^{\mathrm{ro}}_\Upsilon}
\newcommand{\Rro}{R^{\mathrm{ro}}}
\newcommand{\tBro}{\tilde{B}^{\mathrm{ro}}}
\newcommand{\Dro}{D^{\mathrm{ro}}}
\newcommand{\tDroc}{\tilde{D}^{\mathrm{ro}, \circ}}
\newcommand{\Droc}{D^{\mathrm{ro}, \circ}}
\newcommand{\Drosc}{D^{\mathrm{ro}, {\scriptscriptstyle \cup}}}
\newcommand{\Tu}{{\mathrm{Tub}}}
\newcommand{\hot}{{\mathrm{h.o.t.}}}
\newcommand{\Hess}{{\mathrm{Hess}}}
\newcommand{\Ind}{{\mathrm{Ind}}}
\newcommand{\ang}{{\mathrm{ang}}}
\newcommand{\gstd}{g_{\mathrm{std}}}
\newcommand{\Zmax}{Z_{\mathrm{max}}}
\newcommand{\bZmax}{\bar Z_{\mathrm{max}}}
\newcommand{\ord}{\mathrm{ord}}
\renewcommand{\epsilon}{\varepsilon}
\newcommand{\set}[2]{\left\{ #1 \,\middle\vert\, #2 \right\}}
\newcommand{\icol}[1]{\left(\begin{smallmatrix}#1\end{smallmatrix}\right)}
\newcommand{\irow}[1]{\begin{smallmatrix}(#1)\end{smallmatrix}}
\begin{document}

\frontmatter

\title{The Total Spine of the Milnor Fibration of a Plane Curve Singularity}

%    Remove any unused author tags.

%    author one information
\author{Pablo Portilla Cuadrado}
\address{
	Universidad Politécnica Madrid,
 Escuela Técnica Superior de Ingenieros Industriales, 
 José Gutiérrez Abascal 2, 28006 Madrid}
\email{p.portilla89@gmail.com}
\thanks{The first author is supported by RYC2022-035158-I, funded by 
	MCIN/AEI/10.13039/501100011033 and by the FSE+ and also was supported  by 
	the Labex CEMPI (ANR-11-LABX-0007-01)}

%    author two information
\author{Baldur Sigur{\dh}sson}
\address{
	Universidad Politécnica Madrid,
	Matem\'atica e Inform\'atica Aplicadas a las Ingenier\'ias Civil y Naval,
	C. del Profesor Aranguren 3, 28040 Madrid.}
\email{baldursigurds@gmail.com}
\thanks{The second
	author B.S. was partly supported by The Simons Foundation Targeted  Grant
	(No.
	558672) for the Institute of Mathematics, Vietnam Academy of Science and
	Technology, and  Contratos María Zambrano para la atracción de talento
	internacional at Universidad Complutense de Madrid and
    Universidad Polit\'ecnica de Madrid.}

%    \date is required; it is the date received by the editor.
\date{1st September 2023}

\subjclass[2020]{Primary 32S05, 14H50, 37B35}
%    Recognition of the 2010 edition of the Mathematics Subject
%    Classification requires a version of amsbook.cls from July 2009
%    or later.  If "2010" is not recognized, please upgrade.

\keywords{singularity theory, plane curve singularities, Milnor fibration, 
spine, gradient, low-dimensional topology.}

	\begin{abstract}
		For any plane curve singularity defined by an analytic function germ 
		$f$, we construct a spine on each Milnor fiber simultaneously, that 
		realizes the vanishing topology. In order to do so, we study the 
		separatrices at the origin of the vector field $-\nabla \log |f|$. 
		Under some genericity conditions on
		the metric, we produce a natural partition of the set of separatrices, 
		$S$,
		into a finite collection smooth strata.
		As a byproduct of this theory,
		we construct a smooth fibration which is
		equivalent to the Milnor fibration, and lives
		on a quotient of the Milnor fibration at radius $0$.
		The strict transform of $S$ in this space induces the aforementioned 
		spine for each fiber of this fibration.
		These fibers are naturally endowed with a vector field in
		such a way that the spine consists of trajectories which do not escape
		through the boundary.
	\end{abstract}
	
	\maketitle
	
	\tableofcontents
	
	\mainmatter
	
	%-----------------------------------------------------------------------
% Beginning of chap1.tex
%-----------------------------------------------------------------------
%
%  AMS-LaTeX sample file for a chapter of a monograph, to be used with
%  an AMS monograph document class.  This is a data file input by
%  chapter.tex.
%
%  Use this file as a model for a chapter; DO NOT START BY removing its
%  contents and filling in your own text.
% 
%%%%%%%%%%%%%%%%%%%%%%%%%%%%%%%%%%%%%%%%%%%%%%%%%%%%%%%%%%%%%%%%%%%%%%%%
\chapter{Introduction}

\section{Motivation}
The motivation for this work comes from a question that Thom posed to L\^e  
\cite{le_poly}. More concretely, let $f:\C^n \to \C$ be a holomorphic map. 
Then there exists $\epsilon, \eta >0$ with $\epsilon$ small enough and 
$\eta$ small with respect to $\epsilon$ such that the restriction of $f$ 
yields a locally trivial fibration: 
\begin{equation}
	f|_{B_\epsilon \cap f^{-1}(D_\eta)^*}: B_\epsilon \cap f^{-1}(D_\eta^*) 
	\to D_\eta^*
\end{equation}
This is the Milnor fibration over the punctured disk. The following two 
results were proven by Milnor \cite{Milnor_hyp}
\begin{enumerate}
	\item The pair of spaces $(f^{-1}(0) \cap B_\epsilon, B_\epsilon)$ is 
	homeomorphic to the pair of cones $(C(f^{-1}(0) \cap  S_\epsilon ), 
	C(S_\epsilon))$.
	
	\item When $f$ has an isolated critical point at the origin, the Milnor 
	fiber $F$ has the homotopy type of a bouquet of spheres of real 
	dimension $n-1$.
\end{enumerate}

In this context, according to \cite{le_poly}, Thom asked L\^e if one could 
find
a polyhedron $P$ contained in this fiber $F$ (the Milnor fiber), of real
dimension $n-1$, such that $F$ would be a regular neighborhood
of $P$. From this
result one would obtain the construction of a continuous application which
would send $F$ to the special fiber $F_0$, sending $P$ to $\{0\}$ and $F
\setminus P$ homeomorphically to  $F_0 \setminus \{0\}$. This application 
would
geometrically realize the collapsing of the homology $H(F)$ of $F$ on the
trivial homology of $F_0$ and this would give a geometric realization of the
vanishing cycles of the function at the isolated critical point.

A solution to this problem in the more general context of germs of analytic
maps with isolated singularity defined on analytic complex spaces $f:X \to 
\C$
was sketched by L\^e in \cite{le_poly} and later \cite[Theorem
1]{le_menegon_poly}, detailed by L\^e and Menegon. More concretely, they 
proved
the following: let $F_t$ be the Milnor fiber of $f$ and let $F_0$ be the
central fiber. Then there exists a polyhedron $P_t \subset F_t$ of real
dimension $\dim_\C F_t$ and a continuous simplicial map $\partial F_t \to 
P_t$
such that $F_t$ is homeomorphic to the mapping cylinder of the map above.
Furthermore, there exists a continuous map of pairs $\Psi_t: (F_t, P_t) \to
(F_0,0)$ such that its restriction $F_t \setminus P_t \to F_0 \setminus 
\{0\}$
is a homeomorphism.
Their construction can be realized simultaneously only
over a proper sector of the
base space of the fibration, which is a punctured disk.

The starting point of this work is based on an original idea by A'Campo
\cite{AC_lag} that can be summarized as follows:
\begin{enumerate}
	\item
	Consider the inwards pointing
	radial vector field $\frac{-t}{|t|} \frac{\partial}{\partial t}$ on the 
	disk 
	$D_\eta$.
	
	\item Lift the vector field to a vector field $\xi^{\mathrm{lift}}$ on
	$B_\epsilon \setminus F_0$ using the connection given by the family of 
	tangent
	planes which are the symplectic orthogonals to the vertical tangent 
	bundle
	associated with the Milnor fibration.
	
	\item Integrate the vector field and analyze the uniparametric family 
	of diffeomorphisms that takes Milnor fibers along a ray all the way to 
	the central fiber.
	
	\item Analyze the maps $\rho_t:F_t \to F_0$ resulting from taking the
	limit of these diffeomorphisms and study the set
	$S_t=\rho_t^{-1}(0) \subset F_t$.
	
\end{enumerate}
The set $S_t$ is the expected
{\em spine}
for the Milnor fiber $F_t$ but it 
is 
not
clear, a priori, what kind of structure it has. At this moment a simple
observation is due: Analyzing the set $S_t$ amounts to analyzing the
separatrices at $0$ of $\xi^{\mathrm{lift}}$, that is, the set of integral
lines $S$ of $\xi^{\mathrm{lift}}$ that converge to the origin. Intersecting
this set with each Milnor fiber $S \cap F_t$ equally defines the set $S_t$.
From now on, we consider the case of plane curves, that is, $n=2$.
We do not assume, however, that the plane curve is reduced.

Our program departs from A'Campo's idea but it quickly bifurcates. The lift 
of
the vector field $\frac{-t}{|t|} \frac{\partial}{\partial t}$ is easily 
seen to 
be, up to multiplication by a
positive real function, the vector field
\[
\xi = - \nabla \log |f|.
\]
In particular, $\xi$ and $\xi^{\mathrm{lift}}$ have the same integral 
lines. So this  problem turns out to be closely related to the study of 
integral lines of gradients of absolute values of complex analytic maps.

The set $S$ is very complicated near the origin so a natural idea to study 
it,
is to follow this strategy: with the notation $C = F_0$, and
$\tilde C$ the strict transform of $C$, blow up the origin 
\[
\pi_0:(Y_0, \tilde{C} \cup D_0) \to (\C^2,C)
\] 
and see if the strict transform of the separatrices  
$\overline{\pi_0^{-1}(S)}$ intersects in a reasonable way the new 
exceptional divisor $D_0$. It happens that, under some genericity 
conditions on the metric, some of the trajectories in $S$ now meet $D_0$ 
transversely at a finite set of points $\Sigma_0$. Moreover, one is able to 
suitably rescale $\pi_0^* \xi$ in such a way that the rescaled vector field 
extends over $D_0^\circ = D_0 \setminus \tilde{C}$ to a vector field  
$\hat{\xi}_0$ and its restriction $\xi_0 = \hat{\xi}_0|_{D_0^\circ}$ 
satisfies that
\begin{enumerate}
	\item $\xi_0$ is the gradient with respect to the Fubini-Study metric 
	on $D_0$ of a Morse function $D_0^\circ \to \R$.
	\item The critical set of $\xi_0$ is precisely $\Sigma_0$.
	\item $\xi_0$ has fountain singularities and saddle points on interior 
	points of $D_0^\circ$ but it does not have sink singularities. 
	\item The intersection points of $D_0$ with the strict transform 
	$\tilde{C}$ behave like sink singularities, that is, $\xi_0$ points 
	towards the strict transform near these intersection points.
\end{enumerate} 

While we have {\em simplified} our set $S$, the problem is far from being
solved: there are still many trajectories in $\pi_0^{-1}(S)$ that converge 
to
intersection points in $\tilde{C} \cap D_0$. Some wishful thinking leads 
one to
believe that further repeating of this technique will eventually {\em 
	resolve}
the set $S$ and one shall be able to study these separatrices by studying 
some
sort of nice Morse vector fields on the divisors $D_i$ of, let's say, an
embedded resolution $\pi:(Y, D = \bigcup_i D_i) \to (\C^2,C)$. This 
couldn't be
further from the truth since a lot of things that behave nicely on the first
blow up, go wrong. In particular, it is not {\em always} possible to 
re-scale
and extend the (pullback of) the vector field $\xi$ to the divisors $D_i$. 
This
phenomenon occurs because the extension depends on the angle $\theta$ from
which one
approaches the origin. Once spotted the problem and its origin, it becomes
natural to perform a {\em real oriented blow up} of the resolution space $Y$
along the exceptional set $D$. The real oriented blow up substitutes a 
manifold
by the moduli of all the normal directions to this manifold. Note that the
function $\arg(f)$ that gives the argument of $f$, is only well defined 
outside
$C$. The real oriented blow up 
\[
\sigma:(\Yro, \Dro) \to (Y,D)
\]
{\em resolves this indeterminacy}: the function $\arg(f)$ lifts to a 
function
$\arg^\mathrm{ro}(f)$ that naturally extends over all $\Yro$.
With $\piro = \pi\circ\sigma$, this same
phenomenon allows us to rescale and extend the vector field $(\piro)^*\xi$ 
to
vector fields $\xiro_i$ that are defined on, and are tangent to, the
exceptional divisors $\Dro_i = \sigma^{-1}(D_i)$.

The space $\Yro$ is a manifold with corners. As a topological manifold it 
has a
boundary $\partial \Yro = \Dro$ that fibers over the circle $\R / 2 \pi \Z$ 
via
the map $\arg^\mathrm{ro}(f)$. This topological fibration is equivalent to 
the
Milnor fibration and it is usually known as {\em the Milnor fibration at 
	radius
	$0$} and its fiber $\Fro_{\theta}$
{\em the Milnor fiber at radius $0$}. This fibration was
described by A'Campo \cite{ACampo} to describe a zeta function for the
monodromy. See also \cite{Kat_Nak} for a very similar construction.

When the metric used to define $\xi$ satisfies other genericity conditions 
we show that the vector fields $\xiro_i$ behave in a nice way:
\begin{enumerate}
	
	\item
	$\xiro_{i,\theta} = \xiro_i |_{\Fro_{\theta} \cap \Dro_i}$
	has a saddle-point singularity at a point
	$p\in \Droc_{i,\theta}$
	if and only if $\sigma(p) \in D_i$ is the intersection point
	of the strict transform of a particular relative polar curve and $D_i$.
	
	\item
	$\xiro_{i,\theta}$ has no other singularities on $p\in 
	\Droc_{i,\theta}$.
\end{enumerate}

At this point we have simplified enough the set of separatrices $S$ and we 
conclude that it can be partitioned into manifolds of trajectories that 
converge either to some saddle point of some $\xiro_{i,\theta}$ or to a 
repeller on the first blow up. More concretely:

\begin{thmA}[\Cref{thm:spine}] \label{thm:total_spine}
	Assuming that $\C^2$ is endowed with a generic metric,
	the punctured total spine $S\setminus\{0\}$
	is the disjoint union of strata, each of which is a punctured
	disk, an open solid torus or an open solid Klein bottle.
	Each stratum corresponds to a point $p$ on the exceptional divisor of
	an embedded resolution of $(C,0)$, in that it is the union of 
	trajectories of $\xi$ whose lift to the resolution converges to
	$p$.
\end{thmA}

In \Cref{s:invariant_spine} we define the
\emph{invariant Milnor fibration at radius zero} which as a set,
is a quotient of the Milnor fibration at radius zero.

\begin{thmA}[\Cref{thm:invariant_spine}] \label{thm:invariant}
	The invariant Milnor fibration naturally admits a smooth structure and 
	a smooth
	vector field tangent to each Milnor fiber.
	The union of trajectories of this vector field, which do not escape 
	through
	the boundary, is a spine for each Milnor fiber.
	As a set, this spine coincides with the intersection of strict transform
	of the total spine with the Milnor fibration at radius zero.
	Moreover, this spine has the structure of a piecewise smooth
	$1$-dimensional CW complex and all the $1$-cells meet transversely at 
	the
	$0$-cells 
\end{thmA}

Note that we are able to construct this spine for all $\theta \in \R /2 \pi 
\Z$ at the same time and so, necessarily, the topological type of the spine 
changes with $\theta$ because, otherwise, we would get a finite order 
algebraic monodromy which is not always the case for plane curve 
singularities.

\section{Future work}

A large motivating factor for this project is to understand the integral 
monodromy and the variation map. A natural continuation of this work 
provides 
us with methods to explicitly describe both operators. This is the subject 
of a 
manuscript already in preparation.

A similar program in higher dimensions quickly becomes more complicated. 
For 
instance, the resolution complex in higher dimensions is not as well 
understood 
as the dual resolution graph of a plane curve singularity, whose structure 
is 
repeatedly used in this text. Another reason is that relative polar curves, 
which are heavily used in this work, have a simpler (but still very rich) 
nature than relative polar varieties which, for example, do not intersect 
the 
exceptional divisors in just isolated points.

\section{Organization of the paper}
In \Cref{s:general_setting} we recall the ideas of A'Campo \cite{AC_lag} 
and 
motivate our problem. Using results of {\L}ojasiewicz \cite{loj_coll}, we 
prove 
that the ideas of A'Campo yield a well defined collapsing map 
\[
\rho_t: F_t \to C
\]
from each Milnor fiber over $t \in D_\eta^*$ to the central fiber $C$. We 
show 
that this map is a local diffeomorphism when restricted outside the 
preimage of 
the origin. One of the main goals of the rest of the paper is to understand 
the 
set 
\[
S_t = \rho_t^{-1}(0).
\]

In \Cref{s:embedded_resolutions} we mainly recall the theory of embedded 
resolutions $\pi:(Y,D) \to (\C^2,C)$, this serves as well to introduce some 
important notation and invariants. We introduce a structure of directed 
graph to the dual graph of the resolution $\Gamma$ and describe this 
structure in terms of defined invariants associated with the resolution. 

\Cref{s:real_oriented} is devoted to understand the real oriented blow up 
of the resolution along the exceptional set $D$. This procedure produces a 
manifold with corners $\Yro$ that is naturally foliated by manifolds 
$\Yro_\theta$ where $\theta = \arg(f)$. A topological locally trivial 
fibration naturally appears on the boundary of the real oriented blow up: 
the Milnor fibration at radius $0$. 

In \Cref{s:resolving_polar_locus}, we recall some of the work by Tessier
\cite{Tei} and we describe a particular kind of embedded resolutions
$\pipol:\Ypol \to \C^2$: those
that resolve also all the relative polar curves that live in a special dense
and equisingular family. These are the resolutions used throughout the rest 
of
the paper. 

In \Cref{s:invariant_non_invariant} we introduce an important invariant
associated to each divisor of an embedded resolution, {\em the polar weight}
$\varpi_i \in \Z_{\geq 0}$. The vertices where this invariant vanishes 
describe
a special connected subgraph $\Upsilon$ of the resolution graph, {\em the
	invariant subgraph}. It is, furthermore, characterized as the subgraph
consisting of all the geodesics in $\Gamma$ that join the vertex $0$ with
vertices adjacent to some arrow head (which correspond to components of the 
strict transform of the curve). The complement of $\Upsilon$ is the (in
general disconnected) non-invariant graph. This section ends with a detailed
study of the generic relative polar curves of a plane curve singularity. In
particular, using previous work of Michel \cite{Mich_Jac} we describe base
points of the Jacobian ideal and some obstruction as to where the strict
transforms of generic relative polar curves intersect the
exceptional divisors of the
embedded resolution $\pipol: \Ypol \to \C^2$.

As we have said in the introduction, our program has a special treatment for
the divisor $D_0$ and its corresponding counterpart $\Dro_0$ in the real
oriented blow up. Our program is carried away completely in this case in
\Cref{s:1st_blowup}. More concretely, we prove that, after appropriately
rescaling the vector field $(\pipol)^*\xi$ we can extend it over to a vector
field $\xi_0$ defined on $D_0^\circ$. Moreover, for a dense set of linear
metrics which is made precise in that section, the restriction to 
$D_0^\circ$ is
the gradient of a Morse function $D_0^\circ \to \R$. We finish the section 
by
describing the spine a radius $0$ over the $1$st blow up. Note that this 
case
is important since it completely finishes the program for
homogeneous singularities, since they
are resolved by one blow up. One can also think
about this as the part that deals with the tangent cone of the singularity.

In \Cref{s:corners,s:others} we deal with the extension of the vector field
$(\piropol)^*\xi$ over the the boundary of the real oriented blow up 
$\Dro$. In
order to do so, we introduce the numerical invariant called {\em the radial
	weight} that measures the order of the poles that $(\pipol)^*\xi$ has 
over the
divisors $\Dro_i$ and, thus, indicates how to rescale $(\piropol)^*\xi$.

\Cref{s:sings_on_boundary} constitutes one of the main technical 
contributions
of the paper. We prove that the vector fields extended in the previous 
sections
have a manifold of trajectories converging to each saddle point in the 
interior
of some divisor. The main ingredient for this part is the theory of
center-stable manifolds as developed by A. Kelley in
\cite{kelley_stab_central,kelley_stable}.

In \Cref{s:total_spine} we prove the first main theorem
(\Cref{thm:total_spine}) of this paper, showing that the total spine $S$ 
naturally
decomposes into smooth manifolds corresponding to the singularities of
the extended vector fields. In particular,  we prove that if a trajectory
$\gamma$ converges to the origin in $\C^2$, its strict transform by the real
oriented blow up converges to one of these singularities.

In \Cref{s:invariant_spine}, we prove the second main result
of the paper, \Cref{thm:invariant},
which also depends on the first one. First we describe the
invariant Milnor fibration which is the natural fibration induced by the
function $\arg^\mathrm{ro}$ on the boundary of the real oriented blow up of
$(Y,D)$ along a certain subset of divisors. We put a smooth structure on 
this
space and show that our vector fields naturally extend and glue to give a
smooth vector field. Then, the regularity of the extended vector field and 
the
previous main theorem allows us to prove \Cref{thm:invariant}.

Finally, in \Cref{s:examples} we work out a detailed example where all the 
theory developed in the paper comes into play.

	%-----------------------------------------------------------------------
% Beginning of chap1.tex
%-----------------------------------------------------------------------
%
%  AMS-LaTeX sample file for a chapter of a monograph, to be used with
%  an AMS monograph document class.  This is a data file input by
%  chapter.tex.
%
%  Use this file as a model for a chapter; DO NOT START BY removing its
%  contents and filling in your own text.
% 
%%%%%%%%%%%%%%%%%%%%%%%%%%%%%%%%%%%%%%%%%%%%%%%%%%%%%%%%%%%%%%%%%%%%%%%%
	\chapter{General Setting}
\label{s:general_setting}
Let $f\in \O_{\C^2,0}$ be a germ of a holomorphic function on $\C^2$ which 
is not necessarily reduced. Let $C \subset \C^2$ be a representative of the 
germ  $(Z(f),0)$ defined by $f$. Thus, $C$ is an analytic plane curve with 
possibly non-reduced components.

Let $B_\epsilon$ be a closed Milnor ball for $C$, that is, a ball such that 
all spheres $S_{\epsilon'}$ with $0<\epsilon'\leq \epsilon$ intersect $C$ 
transversely. Let $\eta$ be small enough with respect to $\epsilon$.
We define the set
\[
\Tu = \Tu(\epsilon,\eta)=B_\epsilon \cap f^{-1}(D_\eta)
\]
and call it a
\index{Milnor tube}
{\em Milnor tube}.
In particular, if $\eta$ is chosen small 
enough, Ehresmann's fibration theorem implies that 
\[
f|_{\Tu^*}: \Tu^* \to D_\eta^*
\] 
is a locally trivial fibration which is called
\index{Milnor fibration}
{\em the Milnor fibration}.
Here $\Tu^* = \Tu \setminus C$ and $D_\eta^* = D_\eta \setminus \{0\}$. For 
each $t\in D_\eta$, we denote by $F_t$ the sets
\[
F_t = f^{-1}(t) \cap B_\epsilon \subset \Tu.
\]
When $t \neq 0$, the set $F_t$ is {\em the Milnor fiber} at $t$. In this 
case, $F_t$ is a, possibly disconnected, compact surface with non-empty 
boundary. 
The set $F_0 = C \cap B_\epsilon$ is called the {\em central fiber} or the 
{\em special fiber}.  For notational convenience we also denote $F_0$ by 
$C_\epsilon = C \cap B_\epsilon$.

\section{The complex gradient as a lift} 

For the most of this section we essentially follow \cite{AC_lag}.

Let $\omega$ be the standard real symplectic structure on $\C^2$. We define
a connection associated to the Milnor fibration on the tube as
follows. For each $p \in \Tu \setminus \{0\}$ we define the vector space
\[N^\omega_p:=\{v \in T_p \Tu: \omega(v,u)=0 \text{ for all } u \in T_p 
F_{f^{-1}(f(p))}\}.\]
That is, $N_p^\omega$ is the symplectic orthogonal to the tangent space of 
the Milnor fiber that contains $p$. We consider the plane field 
\[
N^\omega := \bigcup_{p\in \C^2\setminus \{0\}} N_p^\omega
\]
which defines a connection on $\Tu \setminus \{0\} \to D_\eta^*$. We call 
$N^\omega$ the
\index{symplectic connection}
{\em symplectic connection}
associated with $f$.

\begin{rem}\label{rem:vect_properties}
	In this remark we expose some properties of the symplectic connection.
	\begin{enumerate}
		
		\item By the non-degeneracy of $\omega$ we find that $\dim_\R 
		N^\omega_p =2.$
		
		\item \label{v_prop:ii} Since each fiber is a complex curve, the 
		complex structure $J:T\C^2 \to T\C^2$ leaves invariant the tangent 
		bundle to the fibers. So $$\omega(u,v)=0 \Leftrightarrow 
		\gstd(u,v)=\omega(u,Jv)=0$$  where $\gstd$ is the standard 
		Riemannian 
		metric on 
		$\C^2$. Thus, the symplectic orthogonal to the 
		tangent spaces of fibers coincides with the Riemannian orthogonal 
		$N^{\gstd}_p$ for each $p \in \Tu \setminus \{0\}$.
		
		\item \label{v_prop:iii}Since $f$ is a submersion on $\Tu^*$ we 
		find that the map $df_p|_{N_p^\omega}:N_p^\omega \to T_{f(p)}\C$ is 
		a $\C$-linear isomorphism (in particular conformal) for all $p \in 
		\Tu^*$, so $N^\omega$ is a connection for the Milnor fibration 
		$\Tu^*\to D_\eta^*$.
	\end{enumerate}
	
\end{rem}

Let now $-t/ |t|$ be the unit real radial vector field pointing inwards on 
$D_\eta^*$. 

\begin{definition}
	Let $\xi^{\mathrm{lift}}$ be the symplectic lift of $-t/|t|$ to $\Tu^*$ 
	using the symplectic 
	connection $N^\omega$.
\end{definition}

In the next lemma we recall an explicit formula the gradient of the 
logarithm of the absolute value of a holomorphic function and a 
characterization of it.

\begin{lemma}\label{lem:cplex_gradient}
	Let $U\subset \C^n$ be an open subset and let $g:U\to \C$ be a 
	holomorphic 
	function. Then, 
	
	\begin{enumerate}
		\item \label{it:complex_gradient} 
		\[
		\nabla \log |g|
		= 
		\left( 
		\frac{\overline{\partial_{1} g }}{\bar{g}}, 
		\cdots,\frac{\overline{\partial_{n} g }}{\bar{g}}
		\right)^\top 
		\]
		
		\item \label{itcp:ii} 
		\[
		(Dg)(\nabla \log |g|)= \|\nabla \log |g|\|^2 g 
		\]
	\end{enumerate}
	where $\nabla$ indicates the real gradient with respect to the standard 
	Riemannian metric on $\C^n$ and $Dg$ is the differential of the 
	function $g$.
\end{lemma}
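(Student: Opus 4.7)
The plan is to reduce everything to a short Wirtinger-calculus computation, using that $\log|g|=\tfrac{1}{2}\log(g\bar g)$ wherever $g\neq 0$.

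First I would fix conventions. Writing $z_j=x_j+iy_j$ and identifying $\C^n$ with $\R^{2n}$ in the usual way, for a real-valued smooth function $u$ on an open subset of $\C^n$ the real gradient $\nabla u$, viewed as an element of $\C^n$ via $\partial/\partial x_j\leftrightarrow 1$, $\partial/\partial y_j\leftrightarrow i$, has components
\begin{equation*}
(\nabla u)_j \;=\; \frac{\partial u}{\partial x_j}+i\,\frac{\partial u}{\partial y_j}\;=\;2\,\frac{\partial u}{\partial \bar z_j},
\end{equation*}
by definition of the Wirtinger operator $\partial/\partial\bar z_j=\tfrac{1}{2}(\partial/\partial x_j+i\,\partial/\partial y_j)$.

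For (i), apply this to $u=\log|g|=\tfrac{1}{2}\log(g\bar g)$. Since $g$ is holomorphic, $\partial g/\partial\bar z_j=0$ and $\partial\bar g/\partial \bar z_j=\overline{\partial_j g}$, so
\begin{equation*}
\frac{\partial}{\partial\bar z_j}\log|g|\;=\;\frac{1}{2}\cdot\frac{g\,\overline{\partial_j g}}{g\bar g}\;=\;\frac{1}{2}\cdot\frac{\overline{\partial_j g}}{\bar g}.
\end{equation*}
Multiplying by $2$ yields the claimed formula for the $j$-th component of $\nabla\log|g|$.

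For (ii), since $g$ is holomorphic one has $Dg(v)=\sum_{j=1}^n (\partial_j g)\,v_j$ for $v\in\C^n$. Plugging in the formula from (i),
\begin{equation*}
Dg(\nabla\log|g|)\;=\;\sum_{j=1}^n (\partial_j g)\,\frac{\overline{\partial_j g}}{\bar g}\;=\;\frac{1}{\bar g}\sum_{j=1}^n |\partial_j g|^2.
\end{equation*}
On the other hand, from (i) again,
\begin{equation*}
\|\nabla\log|g|\|^2\;=\;\sum_{j=1}^n\left|\frac{\overline{\partial_j g}}{\bar g}\right|^2\;=\;\frac{1}{|g|^2}\sum_{j=1}^n|\partial_j g|^2,
\end{equation*}
and multiplying by $g$ gives $\tfrac{1}{\bar g}\sum_j|\partial_j g|^2$, matching the previous display.

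There is no real obstacle here; the only care needed is to keep track of the factor $2$ in the identification of the real gradient with $2\,\partial/\partial\bar z$, and to use holomorphicity of $g$ to kill the $\partial g/\partial\bar z_j$ terms when differentiating $\log(g\bar g)$.
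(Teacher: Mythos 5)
Your proof is correct and follows essentially the same route as the paper: part (ii) is word-for-word the same computation, and for part (i) you simply carry out the ``direct computation'' that the paper leaves implicit, doing so correctly via the identification $\nabla u \leftrightarrow 2\,\partial u/\partial\bar z$ and holomorphicity of $g$.
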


\begin{proof}
	The first item \cref{it:complex_gradient} is a direct computation. For 
	\cref{itcp:ii} again, the following direct computation
	\[
	\begin{split}
		(Dg)(\nabla \log |g|) &= \left( \partial_{1} g , 
		\cdots,\partial_{n} g
		\right) \cdot \left( \frac{\overline{\partial_{1} g }}{\bar{g}}, 
		\cdots,\frac{\overline{\partial_{n} g }}{\bar{g}}
		\right)^\top  \\
		& =  \frac{|\partial_{1} g|^2}{\bar{g}} + \cdots + 
		\frac{|\partial_{n} 
			g|^2}{\bar{g}} \\
		& = \left( \frac{|\partial_{1} g|^2}{|g|^2} + \cdots + 
		\frac{|\partial_{n} 
			g|^2}{|g|^2} \right) g \\
		& = \|\nabla \log |g|\|^2 g 
	\end{split}
	\]
	yields the result.
\end{proof}

The following lemma relates the lift $\xi^{\mathrm{lift}}$ of the unit 
radial vector field and the gradient described before.

\begin{lemma}\label{lem:vector_in_cords} The equality 
	\[
	\xi^{\mathrm{lift}} 
	= 
	\frac{-1}{\|\nabla \log |f|\|^2 |f|} \cdot \nabla \log |f|
	\] 
	holds.
\end{lemma}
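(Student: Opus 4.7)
The plan is to identify $\nabla \log |f|$ as an element of the symplectic connection $N^\omega$ and then pin down the proportionality constant using \Cref{lem:cplex_gradient}.

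First, I would verify that $\nabla \log|f|$ is a section of $N^\omega$. Since $\log|f|$ is real-valued and smooth on $\Tu^*$, its gradient $\nabla\log|f|$ is $g$-orthogonal to the level sets $\{|f| = \text{const}\}$. For any $p \in \Tu^*$ the Milnor fiber $F_{f(p)}$ is contained in the level set $\{|f| = |f(p)|\}$, and therefore $\nabla\log|f|(p)$ is $g$-orthogonal to $T_p F_{f(p)}$. By \cref{v_prop:ii} of \Cref{rem:vect_properties}, the $g$-orthogonal complement of $T_p F_{f(p)}$ coincides with $N^\omega_p$, so $\nabla\log|f|(p) \in N^\omega_p$.

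Next, I would argue that $\xi^{\mathrm{lift}}$ and $\nabla\log|f|$ are proportional by a real scalar. Since $df_p$ restricted to $N^\omega_p$ is a $\C$-linear isomorphism onto $T_{f(p)}\C$ (\cref{v_prop:iii} of \Cref{rem:vect_properties}), and since both $\xi^{\mathrm{lift}}(p)$ and $\nabla\log|f|(p)$ lie in $N^\omega_p$ and map under $df_p$ to real scalar multiples of $f(p)$ (the first by the definition of the horizontal lift of the radial inward field, the second by \cref{itcp:ii} of \Cref{lem:cplex_gradient}, which gives $Df(\nabla\log|f|) = \|\nabla\log|f|\|^2 f$), the two vectors must lie on the same real line in $N^\omega_p$.

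Finally, I would fix the scalar $c$ in $\xi^{\mathrm{lift}} = c \cdot \nabla \log|f|$ by applying $df_p$ to both sides. Using \cref{itcp:ii} of \Cref{lem:cplex_gradient} on the right and the defining property of the symplectic lift of $-t/|t|$ on the left, one obtains the equation $c \|\nabla\log|f|\|^2 f(p) = -f(p)/|f(p)|$, from which the claimed formula follows after cancelling $f(p)$ (valid on $\Tu^*$ since $f(p)\neq 0$) and simplifying. The argument is essentially mechanical once \Cref{lem:cplex_gradient} is in hand; the only step that requires a moment of thought is the identification of the symplectic orthogonal with the Riemannian orthogonal, which is already spelled out in \Cref{rem:vect_properties}, so I anticipate no real obstacle.
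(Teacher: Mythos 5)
Your proposal is correct and follows essentially the same route as the paper: show that $\nabla\log|f|$ is a section of $N^\omega$ using its orthogonality to the level sets of $|f|$ together with \Cref{rem:vect_properties}~\cref{v_prop:ii}, deduce real proportionality to $\xi^{\mathrm{lift}}$ from the $\C$-linearity of $df_p|_{N^\omega_p}$, and fix the scalar by applying $df_p$ and \Cref{lem:cplex_gradient}~\cref{itcp:ii}. One caveat: solving your equation $c\,\|\nabla\log|f|\|^2 f(p) = -f(p)/|f(p)|$ actually gives $c = -1/\bigl(|f(p)|\,\|\nabla\log|f|\|^2\bigr)$, i.e.\ an extra factor of $1/|f|$ compared with the stated formula; the paper's own computation of $k$ exhibits the same discrepancy, and it is immaterial for everything downstream, where only the positivity of the scalar is used (\Cref{rem:R_and_xi}).
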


\begin{proof}
	Observe that the function $-\log |f|: \Tu^*  \to \R_{> 0}$ takes 
	constant
	values on $f^{-1}(\mathbb{S}^1_{\eta'}) \cap B_\epsilon \subset \Tu^*$, 
	that
	is, on the preimage by $f$ of  circles of radius $\eta'$ with
	$0<\eta'<\eta.$ By the definition of gradient, $- \nabla \log |f|$ is
	orthogonal with respect to the Riemannian metric $\gstd$ to the tangent 
	spaces
	$T_pf^{-1}(\mathbb{S}^1_{\eta'})$, so in particular it is 
	$g$-orthogonal to the
	tangent spaces of the Milnor fibers and thus, by
	\Cref{rem:vect_properties}\cref{v_prop:ii},
	it is contained in $N^\omega_p$. It follows from construction
	that $\xi^{\mathrm{lift}}(p)$ is orthogonal to $T_pF_{f(p)}$. Since
	$df_p|_{N_p^\omega}:N_p^\omega \to T_{f(p)}\C$ is conformal
	(\Cref{rem:vect_properties} \cref{v_prop:iii}), the orthogonal 
	complement to
	$\xi^{\mathrm{lift}}(p)$ in $N^\omega_p$ is 
	\[
	T_pf^{-1}(\mathbb{S}^1_{\eta'}) \cap N^\omega_p.
	\]
	This shows that there exists a positive (since both vector fields point 
	in the same direction) real function $k$ such that 
	$\xi^{\mathrm{lift}}(p) = k \cdot (- \nabla \log |f|)$ and we can 
	compute it:
	\[
	k^{-1}= 
	\frac{\| Df(\nabla \log |f|)\|}{\|Df(\xi^{\mathrm{lift}})\|} = 
	\| Df(\nabla \log |f|)\| 
	\]
	because $\| Df(\xi^{\mathrm{lift}}) \| = 1.$  So we conclude by 
	applying \Cref{lem:cplex_gradient}, \ref{itcp:ii}.
\end{proof}

Once and for all, we put a name to this important vector field which is the 
central object studied in this work.

\begin{definition}\label{def:xi}
	We define the vector field 
\index{$\xi$}
	\[
	\xi = -\nabla \log |f|.
	\]
\end{definition}

\begin{rem} \label{rem:R_and_xi}
	\begin{blist}
		\item \label{it:i_R_and_xi_traj}
		The vector fields $\xi$ and $\xi^{\mathrm{lift}}$ have the same 
		trajectories on $\Tu^*$ since they differ by a positive scalar 
		function. 
		For computational convenience, throughout this article we 
		work with $\xi$ instead of $\xi^{\mathrm{lift}}$.
		
		\item \label{it:ii_R_and_xi_traj}Furthermore, an analogous 
		reasoning yields that these vector fields also have the same 
		trajectories as $- \nabla |f|^2$.
		\item \label{it:iii_and_xi_tang} 
		Since $\arg (f)$ is constant along trajectories of 
		$\xi^{\mathrm{lift}}$ (and $\xi$), these vector fields are tangent 
		to $\arg (f)^{-1}(\theta) \subset \Tu^*$ for all $\theta \in \R / 2 
		\pi \Z$. 
	\end{blist}
\end{rem}
\section{The collapsing map}

\begin{block}\label{blc:loja}
	The collapsing map,
$\rho:\Tu \to C$, is defined as follows.
	Choose $\epsilon$, and therefore also
	$\eta$, small enough. This means that $\Tu = \Tu(\epsilon,\eta)$ is
	contained in an arbitrarily small neighborhood of $0$ in $\C^2$.
	In fact, given a small neighborhood $U$ of $0 \in \C^2$,
	by a theorem of \L ojasiewicz \cite{loj_coll},
	we can choose $\epsilon$ small enough so that
	any trajectory of the vector field $-\nabla|f|^2$ starting
	at $p \in \Tu$, does not escape $U$. Furthermore, each such trajectory 
	$\gamma_p$ satisfies:
	\begin{enumerate}[label= (\alph*)]
		\item \label{it:def_R} it is defined on $\R_{\geq 0}$,
		\item \label{it:fin_arc} it has finite arc length, and 
		\item \label{it:converg} it converges to some point in $U$
		where $-\nabla|f|^2$ vanishes, i.e. a point of $C \cap U$.
	\end{enumerate}
	In this construction, we can clearly replace the vector field
	$-\nabla|f|^2$ by $\xi = -\nabla\log|f|$, or the lifting 
	$\xi^{\mathrm{lift}}$, since
	these define the same trajectories in $U \setminus C$ (recall 
	\Cref{rem:R_and_xi} \cref{it:i_R_and_xi_traj} and 
	\cref{it:ii_R_and_xi_traj} above).
	Note that if $p \in C$, then the trajectory of $-\nabla|f|^2$ is
	the constant path at $p$, and so the above results hold trivially.
\end{block}
\begin{rem}
	For completeness, we recall how the result cited above follows from
	the existence of a \L ojasiewicz exponent.
	Set $g = -|f|^2$.
	There exist $c > 0$ and $0<\theta<1$ so that in a neighborhood of the 
	origin,
	\[
	\|\nabla g\| \geq c |g|^\theta.
	\]
	Let $x(t)$ be a trajectory of $\nabla g$ parametrized by arc length.
	This way, we have
	\[
	\dot x
	=
	\frac{\nabla g}{\|\nabla g\|}
	=
	\frac{-\nabla\log|f|}{\|\nabla\log|f|\|}
	=
	\frac{\xi}{\|\xi\|}.
	\]
	Here we assume that $x$ is not a constant trajectory.
	This gives
	\[
	\frac{dg (x(t))}{dt}
	=
	\langle \nabla g(x(t), \dot x \rangle
	=
	\|\nabla g(x(t)\|
	\geq
	c|g(x(t)|^\theta,
	\]
	and so (since $|g| = -g$)
	\[
	\frac{d|g(x(t))|^{1-\theta}}{dt}
	=
	(1-\theta)|g(x(t))|^{-\theta} \frac{d|g(x(t))|}{dt}
	\leq
	-(1-\theta)c.
	\]
	Therefore, a trajectory starting at $p$ cannot be longer than
	\[
	(1-\theta)c|g(p)|^{1-\theta} = (1-\theta)c|f(p)|^{2-2\theta}.
	\]
	This shows that $x(t)$ has a limit, which must be a point where the
	vector field $\nabla g$ vanishes, i.e. a point on $C$.
\end{rem}

\begin{definition}
	With $\Tu$ as before, define the
\index{collapsing map}
{\em collapsing map} $\rho:\Tu\to C$ 
	by setting
	\[
	\rho(p) = \lim_{t\to\infty} \gamma_p(t),\qquad p\in\Tu,
	\]
	where $\gamma_p$ is the trajectory of $-\nabla|f|^2$ starting at $p$ 
	(which by \cref{it:def_R} is defined for all $t\in \R_{\geq 0}$).
	For $t \in D_\eta$, we set $\rho_t = \rho|_{F_t}$.
\end{definition}

\begin{definition}\label{def:spine}
	We say that a set $K$ in a manifold with boundary $M$ is a
\index{spine}
{\em spine} 
	for 
	$M$ if $M \setminus K$ is a collar neighborhood of $\partial M$.
\end{definition}

\begin{figure}[h!]
	\centering
	\includegraphics*{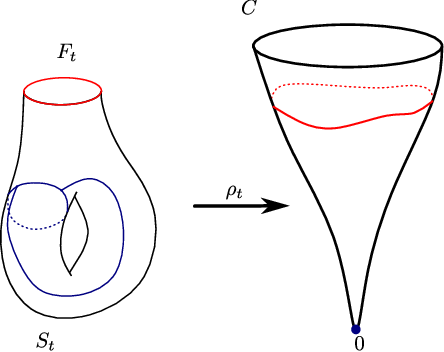}
	\caption{On the left, the Milnor fiber of the singularity $y^2+x^3$. In 
		blue we see the part of the Milnor fiber that maps to the origin by 
		the 
		collapsing map. In the central fiber we see the image of the 
		boundary 
		component of the Milnor fiber (red).}
\end{figure}

\begin{lemma} \label{lem:col_prop}
	
	Assume that $f = f_1^{r_1} \cdots f_s^{r_s}$, where $f_1,\ldots,f_s$ are
	irreducible, and set $C_i^{\phantom{1}} = f_i^{-1}(0)$.
	The collapsing map $\rho$ has the following properties:
	\begin{blist}
		\item \label{it:col_prop_prop}
		$\rho$ is continuous and proper.
		\item \label{it:col_prop_diff}
		The restriction of $\rho_t$ to $F_t \setminus \rho_t^{-1}(0) \to C 
		\setminus 
		\{0\}$
		is a local diffeomorphism.
		\item \label{it:col_prop_r}
		Let $q\in C_i \setminus \{0\}$, with
		$\|q\|<\epsilon$. Then, for $|t|$ small enough, the fiber 
		$\rho_t^{-1}(q)$  
		consists of  $r_i$ points.
	\end{blist}
\end{lemma}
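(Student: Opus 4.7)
The proof naturally splits into three parts.

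For \cref{it:col_prop_prop}, I would obtain continuity of $\rho$ from two ingredients: continuous dependence of the flow of $-\nabla|f|^2$ on initial conditions over compact time intervals, and a uniform ``tail'' estimate provided by the \L ojasiewicz bound of the preceding remark, namely that the arc length of $\gamma_p$ restricted to $[T,\infty)$ is bounded above by $(1-\theta)c\,|f(\gamma_p(T))|^{2-2\theta}$, which tends to $0$ uniformly on compact subsets of $\Tu$ as $T\to\infty$. A standard three-epsilon argument then yields $\rho(p_n)\to\rho(p)$ whenever $p_n\to p$ in $\Tu$. Properness is automatic since $\Tu$ is compact and $C$ is Hausdorff.

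For \cref{it:col_prop_diff} and \cref{it:col_prop_r}, the essential step is local. Shrinking $\epsilon$, I may assume that the only singularity of $C$ in $B_\epsilon$ is $0$, so any $q\in C\setminus\{0\}$ with $\|q\|<\epsilon$ is a smooth point of $C$ on a unique irreducible component $C_i$. About such a $q$ I pick holomorphic coordinates $(u,v)$ on a chart $U$ so that $f = u^{r_i}\cdot h(u,v)$ with $h(q)\neq 0$, and absorb $h$ via the local biholomorphism $u\mapsto u\cdot h(u,v)^{1/r_i}$ (well-defined since $h$ is non-vanishing), reducing to $f = u^{r_i}$ on $U$. Then $C\cap U = \{u=0\}$ and, for $|t|$ small, $F_t\cap U$ equals the disjoint union $\bigsqcup_{k=0}^{r_i-1}\Delta_k$, where $\Delta_k = \{u = t^{1/r_i}\zeta^k\}$ is a graph over the $v$-disk and $\zeta = e^{2\pi i/r_i}$.

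To prove \cref{it:col_prop_diff}, given $p\in F_t\setminus S_t$ with $q=\rho_t(p)$, I would first flow $p$ forward by a time $T$ chosen large enough that $\gamma_p(T)\in U$ and $|f(\gamma_p(T))|$ is as small as desired. The time-$T$ flow $\Phi_T$ is a smooth local diffeomorphism of $\Tu$, and for $p'$ in a neighborhood of $p$ one has $\rho(p') = \rho^{\mathrm{loc}}(\Phi_T(p'))$, where $\rho^{\mathrm{loc}}$ denotes the restriction of $\rho$ to a neighborhood of $\gamma_p(T)$ inside $U$. It therefore suffices to show that $\rho^{\mathrm{loc}}$ is smooth and its restriction to each sheet $\Delta_k$ is a local diffeomorphism onto $C\cap U$ for $|t'|=|f(\gamma_p(T))|$ arbitrarily small. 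In the chart, this restriction is the assignment $(t'^{1/r_i}\zeta^k,v)\mapsto(0,v'(v))$, which I would argue converges in $C^1$ to the identity map as $|t'|\to 0$, since the \L ojasiewicz estimate forces the total $v$-displacement along each trajectory to vanish. The main technical obstacle is precisely upgrading the arc-length bound to $C^1$ control of this endpoint map, which I would handle by differentiating the gradient inequality and exploiting the transverse hyperbolicity of $\xi=-\nabla\log|f|$ along $C\setminus\{0\}$, compensating for the degenerate linearization of $-\nabla|f|^2$ when $r_i\geq 2$.

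Finally, for \cref{it:col_prop_r}, the \L ojasiewicz arc-length bound forces every $p\in\rho_t^{-1}(q)$ to lie within distance $(1-\theta)c\,|t|^{2-2\theta}$ of $q$, hence inside the chart $U$ once $|t|$ is small enough. Each of the $r_i$ sheets $\Delta_k$ of $F_t\cap U$ then contributes exactly one preimage of $q$ by the local diffeomorphism established in \cref{it:col_prop_diff}, yielding $|\rho_t^{-1}(q)| = r_i$.
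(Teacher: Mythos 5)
Your overall architecture matches the paper's: continuity from the \L ojasiewicz tail estimate combined with continuity of the finite-time flow, and a reduction of \cref{it:col_prop_diff} and \cref{it:col_prop_r} to a local model at a smooth (possibly non-reduced) point of $C$. Part \cref{it:col_prop_prop} is fine, and your quantitative containment $\rho_t^{-1}(q)\subset U$ for $|t|$ small in \cref{it:col_prop_r} is if anything cleaner than the covering argument the paper uses for the same purpose.

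The gap is exactly where you flag it: you never actually establish that the endpoint map restricted to a sheet $\Delta_k$ is a local diffeomorphism. Your proposed fix --- ``differentiating the gradient inequality and exploiting the transverse hyperbolicity of $\xi$'' --- does not work as stated: the \L ojasiewicz inequality controls only arc length, not derivatives of the time-$\infty$ map; for $r_i\geq 2$ the linearization of $-\nabla|f|^2$ along $C$ vanishes identically (as you yourself note), and $\xi=-\nabla\log|f|$ does not extend to $C$ at all, so there is no hyperbolic structure available to invoke. The missing observation, which is how the paper closes this step, is that in your local model the vector field $-\nabla|f|^2=-\nabla|f_1^{r_i}|^2$ has the same trajectories as $-\nabla|f_1|^2$, and $-|f_1|^2$ is Morse--Bott along $C$ for any metric when $f_1$ is smooth. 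Standard Morse--Bott theory then says that the limit-of-trajectory map is a smooth fibration of a neighborhood of a smooth point of $C$ onto $C$, with $2$-dimensional stable fibers transverse to $C$, and each sheet $\Delta_k$ of $F_t\cap U$ is a section of this fibration for $|t|$ small. This yields simultaneously the local diffeomorphism in \cref{it:col_prop_diff} and the count of exactly one preimage per sheet in \cref{it:col_prop_r} --- note that a local diffeomorphism alone would not give the injectivity on each sheet that your count implicitly uses. With that substitution for your hyperbolicity argument, the proof goes through.
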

\begin{proof}
	For \cref{it:col_prop_prop}, it suffices to show that $\rho$ is 
	continuous
	since $\Tu$ is compact.
	Let $p\in \Tu$, set $q = \rho(p)$ and let $V$ be a neighborhood of $q$ 
	in
	$\C^2$. By \cref{blc:loja},
	there exists a smaller neighborhood of $q$, $V' \subset V$ so that 
	trajectories
	starting in $V'$ do not escape $V$. The trajectory of $\xi$ starting
	at $p$ ends up in $q \in V'$, which means that the flow of $\xi$ at 
	some time
	sends a neighborhood $U'$ of $p$ to $V'$. As a result, trajectories
	starting in $U'$ ultimately do not escape $V$, and so
	$\rho(U') \subset \overline V$, which proves continuity.
	
	Next we prove \cref{it:col_prop_diff} and \cref{it:col_prop_r}.
	First, consider the case when
	$f = f_1^{r_1}$ has one branch with some multiplicity $r_1 \geq 1$, and
	$f_1$ is smooth. In this case, the vector fields $-\nabla|f|^2$ 
	and $-\nabla|f_1|^2$ have the same trajectories, and the function
	$-|f_1|^2$ is Morse-Bott. In this case, the Milnor fiber $F_t$ is the 
	disjoint
	union of $r_i$ disks, and $\rho_t$ embeds each of them in the smooth
	curve $\{f_1 = 0\}$. Thus, \cref{it:col_prop_diff} and 
	\cref{it:col_prop_r} hold
	in this case.
	
	In general, assume that $p\in F_t \setminus \, \rho_t^{-1}(0)$ with 
	$t\in 
	D_\eta^*$, and set
	$q = \rho(p)$. Then there exist small neighborhoods
	$V\supset V' \ni q$ so that $C\cap V$
	is a smooth curve with some multiplicity, and trajectories starting in 
	$V'$
	do not escape $V$. The flow of $\xi^\mathrm{lift}$ at some time sends a 
	neighborhood of
	$p$ in $F_t$ diffeomorphically to a neighborhood in $V' \cap F_{t'}$ for
	some $t'$. By the above case, for $|t'|$ small enough, 
	$\rho_{t'}|_{V'}: V' \to C$
	is a local diffeomorphism.
	This proves \cref{it:col_prop_diff}.
	
	For \cref{it:col_prop_r}, we can choose
	small neighborhoods $V'$ and $V$ in $\C^2$ with $q \in V' \subset V$ so 
	that if $|t|$ is small, then
	$\rho_t^{-1}(q) \cap V'$ consists of $r_i$ points. This follows from 
	the case above,
	where $f = f_1^{r_i}$ and $f_1$ is smooth.
	If $V''\subset V'$ is a closed neighborhood
	of $q$, then the complement $(C \cap B_\epsilon) \setminus V'$ can be 
	covered with finitely many
	neighborhoods with the property that if $p$ is in any of them, then
	the trajectory starting at $p$ does not escape $\Tu \setminus V''$.
	As a result, $\rho_t^{-1}(q) \subset V''$ for $|t|$ small enough.
\end{proof}

\begin{cor}\label{cor:spine}
	For $t \in D^*_t$, the complement of the spine in the Milnor fiber $F_t$
	is a collar neighborhood of the boundary,
	\[
	F_t \setminus \rho_t^{-1}(0)
	\cong
	\partial F_t \times (0,1].
	\]
\end{cor}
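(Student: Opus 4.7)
The plan is to use the collapsing map $\rho_t$ to realize $F_t\setminus S_t$ as a disjoint union of half-open annuli, one per boundary circle of $F_t$. First, I would observe that the restriction $\rho_t|_{F_t\setminus S_t}\colon F_t\setminus S_t\to C\setminus\{0\}$ is a proper local diffeomorphism: the local-diffeomorphism property is \Cref{lem:col_prop}\cref{it:col_prop_diff}, and properness follows because $\rho_t$ is proper on the compact $F_t$ with $S_t=\rho_t^{-1}(0)$, so preimages of compact subsets of $C\setminus\{0\}$ are compact in $F_t\setminus S_t$. Consequently this restriction is a covering map of manifolds with boundary onto its image.

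Next I would analyze each connected component $U$ of $F_t\setminus S_t$. Since $C\setminus\{0\}$ decomposes as the disjoint union of punctured branches $C_i\cap B_\epsilon\setminus\{0\}$, and $U$ is connected, the image of $U$ lies in a single such branch, which is itself a half-open annulus diffeomorphic to $S^1\times(0,1]$. By \Cref{lem:col_prop}\cref{it:col_prop_r}, the total covering degree over $C_i\setminus\{0\}$ (for $|t|$ small) equals $r_i$, so $U$ realizes a finite-degree connected cover of a punctured disk. Such covers are classified by finite-index subgroups of $\pi_1(S^1\times(0,1])=\Z$ and are themselves again of the form $S^1\times(0,1]$, each with a single boundary circle.

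Finally, since $S_t$ is disjoint from $\partial F_t$ for $|t|$ small (trajectories from boundary points stay near $\partial B_\epsilon$ and hence cannot reach the origin), one has $\partial F_t\subset F_t\setminus S_t$, and each circle of $\partial F_t$ lies in exactly one component; conversely the boundary circle of each component, guaranteed by the cover structure, must coincide with one circle of $\partial F_t$. This yields a bijection between components of $F_t\setminus S_t$ and boundary components of $F_t$, and the component-wise diffeomorphisms assemble to the desired $F_t\setminus S_t\cong\partial F_t\times(0,1]$. The key subtlety, which I would handle most carefully, is justifying that each component really has the form $S^1\times(0,1]$: this requires verifying that the image of $U$ under $\rho_t$ is a punctured-disk-like region in $C_i$ with controlled boundary coming from $\rho_t(\partial F_t)$, so that the cover classification applies and no ``floating'' annular components of $F_t\setminus S_t$ disjoint from $\partial F_t$ can occur.
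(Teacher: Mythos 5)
Your argument hinges on the claim that the proper local diffeomorphism $\rho_t|_{F_t\setminus S_t}\colon F_t\setminus S_t\to C\setminus\{0\}$ is a covering map (of manifolds with boundary) onto its image, and that is where the proof breaks down. The source has nonempty boundary, and $\rho_t(\partial F_t)$ does \emph{not} land in $\partial(C\cap B_\epsilon)$: a point of $\partial F_t=f^{-1}(t)\cap S_\epsilon$ flows inward along $-\nabla|f|^2$ and its limit is an interior point of the central fiber. Consequently the image of $F_t\setminus S_t$ is not open in $C\setminus\{0\}$, and the fiber cardinality of $\rho_t$ is not locally constant: over points of $C_i$ close to the origin it is $r_i$ (\Cref{lem:col_prop}\cref{it:col_prop_r}), but it drops as one crosses the curve $\rho_t(\partial F_t)$, since some of the would-be preimages are trajectories starting outside $B_\epsilon$. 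There are therefore no evenly covered neighborhoods along $\rho_t(\partial F_t)$, so the classification of finite covers of $S^1\times(0,1]$ cannot be invoked for the components of $F_t\setminus S_t$. You flag this at the end, but the repair you propose (showing the image is a ``punctured-disk-like region'') does not restore the covering property, because the failure occurs at \emph{interior} points of the image sitting on $\rho_t(\partial F_t)$, not merely in controlling the image's outer boundary. A secondary issue: \Cref{lem:col_prop}\cref{it:col_prop_r} is stated for a fixed $q$ with $|t|$ small depending on $q$, so even the uniform degree-$r_i$ statement over a fixed punctured neighborhood of $0$ needs a compactness argument you have not supplied.

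The paper avoids all of this by working downstairs first: the gradient flow of $(|x|^2+|y|^2)|_{C\setminus\{0\}}$ is transverse to every small sphere, which exhibits $C\cap B_{\epsilon'}\setminus\{0\}\cong(C\cap S^3_{\epsilon'})\times(0,1]$ explicitly, and this product structure is then pulled back through the local diffeomorphism of \Cref{lem:col_prop}\cref{it:col_prop_diff} to give the collar on $F_t\setminus S_t$. If you want to keep your covering-theoretic viewpoint, you would have to first restrict to $\rho_t^{-1}(C\cap B_{\epsilon'}\setminus\{0\})$ for $\epsilon'$ small enough that the degree is genuinely constant there, and then separately argue that the remaining compact piece of $F_t\setminus S_t$ is a trivial collar glued onto it; as written, the proposal does not establish the statement.
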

\begin{proof}
	The punctured central fiber is the union of trajectories of the vector
	field
	\[
	\nabla((|x|^2 + |y|^2)|_{C \setminus \{0\}})
	\]
	That is, the gradient (with respect to $g|_{C \setminus \{0\}}$) of the 
	restriction of the square distance function to the central fiber. Each 
	of these trajectories is transverse to all
	spheres $S_{\epsilon'}^3$, for $0 \leq \epsilon' \leq \epsilon$.
	This gives a product structure
	\[
	C\cap B_{\epsilon'}\setminus \{0\}
	\cong
	(C \cap S_{\epsilon'}^3) \times (0,1]
	\]
	for $\epsilon'$ small.
	This pulls back to a product structure on $F_t \setminus \, 
	\rho_t^{-1}(0)$.
\end{proof}

As we said in the introduction, this work is devoted to study the set of 
trajectories of $\xi$ that converge to the origin. The closure of this set, 
by 
definition, contains the origin. After \Cref{def:spine} and 
\Cref{cor:spine}, 
the following definition is justified.

\begin{definition} \label{def:total_spine}
	The
\index{spine!total}
\emph{total spine}
$S \subset \Tu$ and the \emph{spine} $S_t 
	\subset 
	F_t$
	of each Milnor fiber are defined as
	\[
	S = \rho^{-1}(0),\qquad
	S^{\phantom{1}}_t = \rho_t^{-1}(0),\quad t\in D_\eta.
	\]
\end{definition}

\section{Isometric coordinates}
In this subsection we define the notion of
\index{isometric coordinates}
{\em isometric coordinates}
that 
we use throughout the text. This captures the idea that changing the metric 
by a linear transformation and taking a linear change of coordinates are 
equivalent operations and thus, one can always work with the standard 
metric if one is willing to change the equation of $f$.

\begin{definition}\label{def:isometric_coords}
	For $A \in \GL(\C,2)$ define a Hermitian metric
	on $\C^2$ by setting \[h_A(v,w) = w^* A^*Av.\]
	In particular $h_{\id}$ is the standard metric. Equivalently, $h_A$ is 
	the metric obtained by pulling back
	the standard metric on $\C^2$ via the linear change of coordinates
	$A:\C^2 \to\C^2$. Associated with $h_A$ we have its real part $g_A$ 
	which is a Riemannian metric and its imaginary part $\omega_A$ which is 
	a symplectic form.
	
	Two linear functions $x,y:\C^2 \to \C$ form \emph{isometric coordinates}
	with respect to $A$ if the induced linear map
	$(x,y):(\C^2, g_A) \to (\C^2,g_{\id})$ is an isometry.
\end{definition}

\section{Vector fields}
\label{ss:vector_fields}

In this subsection we recall some notions about vector fields and fix some 
notation.

Let $\zeta$ be a vector field defined on an open neighborhood $U$ of an 
$n$-dimensional manifold $M$ around a point $p\in M$. In local coordinates, 
$\zeta$ is given by

\[
\zeta=(\zeta^1, \ldots, \zeta^n): U \to \R \times \R^n.
\]

The following definition is inspired by the usual notion of Hessian when 
the vector field admits a potential.
\begin{definition}\label{def:hess_vf}
	Assume that $\zeta(p)=0$. We define the
\index{Hessian matrix}
{\em Hessian matrix}
of $\zeta$ 
	at $p \in M$ as the matrix of partial derivatives
	\[\Hess_p \zeta  = 	\left(
	\begin{matrix}
		\partial_1 \zeta^1 (p) & \partial_2 \zeta^1 (p) &
		\cdots & \partial_n \zeta^1 (p)\\
		\partial_1 \zeta^2 (p) & \partial_2 \zeta^2 (p) &
		\cdots & \partial_n \zeta^2 (p)\\
		\vdots & \vdots &
		\ddots & \vdots\\
		\partial_1 \zeta^n (p) & \partial_2 \zeta^n (p) &
		\cdots & \partial_n \zeta^n (p)\\
	\end{matrix}
	\right)
	.
	\]
	The {\em Hessian} of $\zeta$ at $p$ is the linear operator $T_pM \to 
	T_pM$ represented by the above matrix. 
\end{definition}

\begin{definition}\label{def:singularity_vf}
	A {\em singularity of a vector field} is a point $p$ where the vector 
	field is not defined (for example a puncture of $M$) or where the 
	vector field takes the value $0$. 
\end{definition}
We follow the nomenclature of the book \cite{Abra}, in particular Corollary 
22.5 and text after.
\begin{definition}\label{def:elementary_vf}
	A linear operator is
{\em elementary}
if its eigenvalues all have 
	non-zero real part.
	
	A singularity $p$ of a vector field $\zeta$ where $\zeta(p)=0$ is an 
\index{elementary singularity}
{\em elementary singularity}
if the Hessian of the vector field is 
	elementary as an operator.
	
	We say that a vector field $\xi$ is {\em elementary} if it only has 
	elementary singularities.
\end{definition}

Elementary singularities allow us to apply the theorem of Grobman-Hartman 
and {\em linearize} the vector field near the singularity. 

\begin{example} \label{ex:saddles}
	A consequence of Grobman-Hartman  is that there are only three types of 
	elementary singularities (see \Cref{fig:elementary_sing}) that can 
	appear in a vector field defined on a topological surface:
	\begin{enumerate}
		\item a
\index{fountain}
{\em fountain}
or
\index{repeller}
{\em repeller},
which happens when the 
		Hessian has two eigenvalues with positive real part,
		\item a
\index{sink}
{\em sink}
or
\index{attractor}
{\em attractor},
when the two eigenvalues of 
		the Hessian have negative real part, and
		\item a
\index{saddle}
{\em saddle point},
when one eigenvalue has positive real 
		part and the other has negative real part.
	\end{enumerate}
	
	\begin{figure}[h!]
		\centering
		\includegraphics*[scale=1]{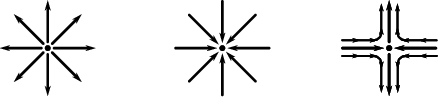}
		\caption{From left to right: a fountain, a sink and a saddle point.}
		\label{fig:elementary_sing}
	\end{figure}
	
\end{example}
Next, we recall the notions of winding number function of a curve relative 
to a vector field and Poincar\'e-Hopf index of a vector field. For more on 
winding number functions see \cite{Humph} or \cite{Chill}.

\begin{definition}\label{not:index_vf}
	Let $\zeta$ be a vector field on an oriented surface $M$ with punctures 
	and boundary components. Let $\gamma:S^1 \to  M$ be a simple closed 
	oriented curve in $M$. Assume that $\zeta$ does not vanish at any point 
	of $\gamma$. We define the
\index{winding number}
{\em winding number of $\zeta$ around 
		$\gamma$} by 
	
	\[
	w_\zeta (\gamma) = \frac{1}{2 \pi}\int_{S^1} d \, \ang 
	\left((\gamma'(t),\zeta(\gamma(t)) \right)
	\]
	where $\ang(\cdot, \cdot)$ is the angle function with respect to some 
	metric on $M$. It can be shown that it does not depend on the chosen 
	metric and that it is also invariant under isotopies of $\gamma$ that 
	do not cross any singularities of $\zeta$.
	
	Assume that the curve $\gamma$ is the boundary of a small disk centered 
	at an isolated singularity $p$ (puncture or zero) of $\zeta$, and that 
	$\gamma$ is oriented as the boundary of such disk. Then $w_\zeta 
	(\gamma)$ relates to the classical
\index{Poincar\'e-Hopf index}
Poincar\'e-Hopf index
of $\zeta$ at 
	$p$ denoted by $\Ind_\zeta (p)$ as follows
	\begin{equation} \label{eq:wind_ph}
		w_\zeta (\gamma) = 1 - \Ind_\zeta (p)  .
	\end{equation}
	
	Let $L$ be a boundary component of $M$ and let $N(L)$ be a small collar 
	neighborhood of $L$. Assume that $\zeta$ does not vanish at any point 
	of $N(L) \setminus L$. We define the {\em winding number of $\zeta$ 
		around $L$}, denoted by $w_\zeta (L)$, as 	$w_\zeta (L')$ where $L'$ 
	is a closed curve in $N(L)\setminus L$ parallel to $L$ and oriented 
	with the orientation induced by $M$ on $L$.
	
\end{definition}

\begin{example}\label{ex:ind_k}
	Consider the vector field $\zeta=\bar{z}^k$ on $\C$ with $k \geq 0$. 
	This vector field has, outside the origin and up to scaling, the same 
	integral lines as the vector field $z^{-k}$. Therefore we can compute 
	its index at the origin by the Cauchy integral formula and get
	\begin{equation}\label{eq:winding_index}
		\Ind_\zeta(0) = -k.
	\end{equation}
	Note that in this case the vector field $\bar{z}^k$ defines a 
	$k+1$-pronged singularity at the origin (see the left part of 
	\cref{fig:k_pronged}).
	\begin{figure}[h!]
		\centering
		\includegraphics*[scale=1]{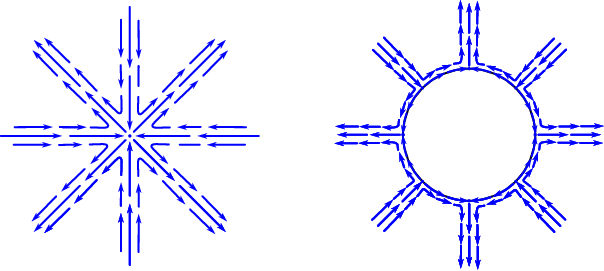}
		\caption{A $4$-pronged singularity on the left corresponding to the 
			vector field $\bar{z}^3$. And a $4$-pronged boundary on the right.}
		\label{fig:k_pronged}
	\end{figure}
	If the surface $M$ has a boundary component $L$ such that after being 
	contracted to a point, looks like a $k+1$-pronged singularity (see the 
	right part of \cref{fig:k_pronged}), then 
	\[
	w_\zeta (L)=1 + k
	\]
	
	Similarly, for the vector field $\zeta=z^k$  on $\C$ with $k \geq 0$, 
	we get a $k+1$-petal singularity (see \cref{fig:petal}). By a similar 
	computation the index in this case is
	\begin{equation}\label{eq:winding_index_petal}
		\Ind_\zeta(0) = k.
	\end{equation}
	And, if $M$ has a boundary component $L$ such that after being 
	contracted to a point, looks like a $k+1$-petal singularity, then 
	\[
	w_\zeta (L)=1-k
	\]
	\begin{figure}[h!]
		\centering
		\includegraphics*{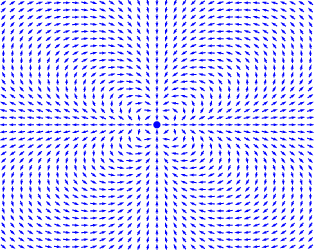}
		\caption{The vector field near the singularity of the vector field 
			given by $z^3$: a $4$-petal singularity.}
		\label{fig:petal}
	\end{figure}
	
\end{example}

	%-----------------------------------------------------------------------
% Beginning of chap1.tex
%-----------------------------------------------------------------------
%
%  AMS-LaTeX sample file for a chapter of a monograph, to be used with
%  an AMS monograph document class.  This is a data file input by
%  chapter.tex.
%
%  Use this file as a model for a chapter; DO NOT START BY removing its
%  contents and filling in your own text.
% 
%%%%%%%%%%%%%%%%%%%%%%%%%%%%%%%%%%%%%%%%%%%%%%%%%%%%%%%%%%%%%%%%%%%%%%%%
\chapter{Embedded Resolutions} 
\label{s:embedded_resolutions}

In this section we explain several notions related to the resolutions of 
plane curve singularities, we also introduce notation and fix our 
conventions. We start by talking about different embedded resolutions and 
their  dual graphs. We introduce the {\em group of cycles} associated with 
a modification and the notion of {\em initial part} and {\em vanishing 
	order} of a function with respect to a variable or hypersurface. The latter 
two concepts play a central role in many of the subsequent proofs of this 
work.

After this, we introduce the {\em maximal} and {\em canonical} cycles which 
lead to the definition of two numerical invariants, $c_0$ and $c_1$, which 
allow us later to describe precisely the spine. We also include easy 
algorithms to compute them from the resolution process. For more details on 
the invariants and classical objects defined in these subsections, see 
\cite{Nemethi_FL} or \cite{Wall_ctc}.

In the last two subsections we introduce the structure of directed graph on 
the dual resolution graph $\Gamma$ and we introduce the notion of tangent 
associated with a divisor.

\section{Embedded resolutions}
\label{ss:embedded_resolutions}

\begin{notation} \label{not:pi}
	Let $(C,0) \subset (\C^2,0)$ be an isolated plane curve singularity 
	defined by a germ $f:(\C^2,0) \to (\C,0)$. Let $\pi:Y \to \C^2$ be an 
\index{embedded resolution}
\index{$\pi$}
	embedded resolution, that is, $\pi$ is a modification of $\C^2$ with 
	$\pi^{-1}(0)$ a strict normal crossings divisor.
	Factorize $\pi$ as
	\begin{equation} \label{eq:factorize_pi}
		\pi = \pi_0 \circ \pi_1 \circ \cdots \circ \pi_s
	\end{equation}
	where $\pi_i:(Y_i,D_i) \to (Y_{i-1},p_{i})$ are blow-ups.
	Set also $\bar\pi_i = \pi_i\circ\cdots\circ\pi_0:Y_i \to \C^2$.
	In particular, $\pi_0$ is the usual blow-up of $\C^2$ at the origin.
	We identify the divisors
	$D_i \in Y_i$ with their strict transforms in $Y_j$ for $j > i$.
	Set 
	\[
	D = \bigcup_{i=0}^s D_i \subset Y = Y_s.
	\]
	Let $\tilde{C}$ be the strict transform of $C$ in $Y$.
	
\end{notation}

Examples of embedded resolutions that appear in this work are $$\pimin: 
(\Ymin, \Dmin) \to (\C^2,0)$$ which is the
\index{$\pimin$}
\index{embedded resolution!minimal}
minimal embedded resolution. 
\index{$\pipol$}
Similarly, we have $$\pipol: (\Ypol, \Dpol) \to (\C^2,0)$$ which is the 
\index{embedded resolution!resolving polar locus}
minimal embedded resolution that also resolves the generic polar locus (cf. 
\Cref{ss:mp_p}).

\begin{notation}\label{not:dual_graph}
	Let
\index{$\Gamma$}
$\Gamma$
\index{dual graph}
be the dual graph associated with the resolution $Y\to 
	\C^2$ and
	the function $f$.
	This graph has set of vertices $\W = \V \cup \A$
	where $\V = \{0,1,\ldots,s\}$,
	and $\A$ is in bijection with the branches
	of $(C,0)$, where $(C,0) = \bigcup_{a\in \A} (C_a,0)$.
	Then $i,j\in \W$ are joined by an edge $ij$ in $\Gamma$ if and
	only if $D_i \cap D_j \neq \emptyset$. Here,
	if $a \in \A$, we are denoting by $D_a$ the strict transform of $C_a$
	in $Y$. In this case, we denote by $p_{ij}$ the intersection point $D_i 
	\cap D_j$.

	Set $D_\V = D$, as well as $D_\A = \bigcup_{a\in\A} D_a$ and
	$D_\W = \bigcup_{w\in \W} D_w$. For $i \in \W$,  set 
	\[
	D_i^\circ = D_i \setminus \bigcup_{j\in \W\setminus \{i\}} D_j.
	\]
	
	Similarly we can define the graphs $\Gamma_i$ associated with the 
	modifications $\bar{\pi}_i:Y_i \to \C^2$ for $i=0,1, \ldots, s$. These 
	have a set of vertices $\W_i =  \V_i \cup \A_i$. Furthermore, for each 
	$j,k$ with $j<k$ there is a canonical injective map $\V_j \to \V_k$ and 
	a bijective map $\A_j \to \A_k$.
	
	The graphs $\Gamma$ and $\Gamma_i$ (for all $i$) are trees (see 
	\cite[Section 3.6]{Wall_ctc}). In particular, given two vertices $j,k 
	\in \W$ or $\W_i$, there is a unique {\em geodesic} (path with the 
	smallest number of edges possible) joining $j$ and $k$.  
\end{notation}

Next we recall the notion of {\em group of cycles} associated with a 
modification of $\C^2$.

\begin{definition} \label{def:group_cycles}
	The
\index{group of cycles}
\emph{group of cycles}
$L$ associated with the modification
	$Y\to\C^2$ is the free Abelian group generated by the exceptional
	divisors $D_i$ for $i\in \V$. 
\end{definition}

\begin{rem}
	Set $V = \pi^{-1}(B_\epsilon)$,
	where $B_\epsilon$ is a Milnor ball. By, \cite[Lemma 8.1.1]{Wall_ctc}, 
	the group of cycles is isomorphic to
	$H_2(V;\Z) \cong H^2(V,\partial V;\Z)$, that is, this homology group is
	freely generated by the homology classes of exceptional components
	$D_i$, $i\in \V$. Furthermore, since $\partial V \cong S^3$,
	the long exact sequence of the pair $(V,\partial V)$ induces
	an isomorphism 
	\[
	H_2(V;\Z) \to H_2(V,\partial V; \Z) \cong H^2(V;\Z).
	\]
	As a result, with the identification $L = H_2(V,\Z)$,
	Poincar\'e duality induces a unimodular pairing
	\[
	(\cdot,\cdot):L\times L\to\Z.
	\]
\end{rem}
\begin{definition} \label{def:anti_dual_basis}
	We define the
\index{antidual basis}
\emph{antidual basis}
$D_j^*$, $j\in \V$ as the unique
	elements $D_j^*\in L$ satisfying $(D_i,D_j^*) = -\delta_{ij}$.
\end{definition}

\section{Initial part and vanishing order} 

Here we define several notions related to the order of vanishing of a real 
analytic function taking values in $\C$.

\begin{definition}
	Let $g:(\C^2,0)\to(\C,0)$ be a Laurent series in coordinates $u,v$ 
	given by
	\[
	g(u,v)= \sum_{i,j,k,l\in \Z} a_{ijkl} u^i v^j \bar{u}^k \bar{v}^l.
	\] 
	Let $\ord g = \min \set{i+j+k+l}{a_{ijkl} \neq 0}$. Then, we define the 
\index{initial part}
{\em initial part}
of $g$ as 
	\[
	\init g (u,v) = \sum_{i+j+k+l=\ord g} a_{ijkl} u^i v^j \bar{u}^k 
	\bar{v}^l.
	\]	
\end{definition}

\begin{definition}\label{def:initial_part}
	Let $g:(\C^2,0)\to(\C,0)$ be a Laurent series in coordinates $u,v$ 
	given by \[g(u,v)= \sum_{i,j,k,l\in \Z} a_{ijkl} u^i v^j \bar{u}^k 
	\bar{v}^l.\] 
	We define the
{\em vanishing order}
of $g$ with respect to $u$ as
	\[
	\ord_u g = \min \set{i+k}{a_{ijkl} \neq 0}.
	\]
Then, we define the {\em initial part} of $g$ with respect to $u$ as 
	the Laurent series given by  
	\[
	(\init_u g) (u,v) = \sum_{i+k=\ord_u g} a_{ijkl} u^i v^j \bar{u}^k 
	\bar{v}^l
	\]
\end{definition}

\begin{definition}
	Let $h:(\C^2,0)\to(\C,0)$ be the germ of a real analytic function, and 
	let $\pi:Y \to \C^2$ be as in \Cref{not:pi} and let $p\in D_i\subset Y$ 
	with $i\in \V$. And let $U\subset Y$ a coordinate neighborhood around 
	$p$ with coordinates $u,v$ so that $D_i \cap U$ is defined by $u=0$. We 
	define
\index{vanishing order!along $D_i$}
\index{$\ord_{D_i}$}
	the {\em vanishing order of $h$ along $D_i$} by
	\[
	\ord_{D_i} (h) = \ord_u (\pi^* h).
	\]
\end{definition}

\begin{rem}\label{rem:asso_div}
	Given a function $h:(\C^2,0)\to(\C,0)$ and a modification $\pi:Y \to 
	\C^2$ with dual graph $\Gamma$, there is a cycle associated to $\pi^*h$ 
	defined by 
	\[
	(\pi^*h) =\sum_{i\in \V} \ord_{D_i} (h) D_i.
	\]
	In the antidual basis $D^*_j$, the cycle takes the form 
	\[
	(\pi^*h) =\sum_{i\in \V} a_i D^*_i,
	\]
	where $a_i$ is the multiplicity intersection of $D_i$ and the strict 
	transform of $\{h=0\}$, that is, the number of intersection points of 
	$D_i$ with a small perturbation of the strict transform of $\{h=0\}$. 
	(See \cite{Nemethi_FL}).
\end{rem}

\begin{definition}
	For $i \in \W$, we set
	\[
	m_i = \ord_{D_i} (f).
	\]
	Note that, if $a \in \A$, then $m_a$ is the vanishing order of $f$ 
	along one of the branches of $C$ (which might be bigger than $1$ if $f$ 
	has non isolated critical points).
\end{definition}
\section{Invariants of the resolution: $c_0, c_1$ and the canonical 
	divisor}
\label{ss:modification_c}

In this subsection, we introduce numerical invariants of the modification
$Y \to \C^2$. In particular, these do not depend on $f$ or the metric $g$.
All of this subsection is either part of the classical theory of plane 
curve singularities, or it may be deduced from \cite{BFP_inner_rate}, (for 
example 
\Cref{lem:c_01_recursive}  is implied by
\cite[Lemma 3.6]{BFP_inner_rate}). However, since our setting is different 
and we need to fix notation, we briefly rework this theory.

\begin{definition} \label{def:max_cycle}
	Denote by $\Zmax$ the
\index{maximal cycle}
\index{$c_{0,i}$}
\emph{maximal cycle}
in $Y$, and denote by
	$c_{0,i}$ its coefficients. Thus,
	\[
	\Zmax = \sum_{i\in \V} c_{0,i} D_i.
	\]
	and we have $c_{0,i} = \ord_{D_i}(\ell)$, where $\ell:\C^2\to\C$ is a 
	generic linear function. Here {\em generic} means that the strict 
	transform of $\{\ell=0\}$ intersects $D_0^\circ$, or equivalently, that 
	$\{\ell=0\}$ is not a tangent of $C$ at the origin.
	
	Since $c_{0,i} = \ord_{D_i}(\ell)$ we can similarly define these 
	numbers for arrowheads, that is for $i \in \A$. In this case $c_{0,i}= 
	\ord_{D_i}(\ell) = 0$.
\end{definition}

\begin{rem}\label{rem:vanishing_linear}
	We recall a basic fact about vanishing orders of linear functions (see 
	for example \cite{Nemethi_FL}).
	If $\ell$ is a generic linear function and $\ell'$ is any other linear 
	function then, 
	\[
	\ord_{D_i}(\ell') \geq \ord_{D_i}(\ell)
	\]
	for all $i \in \V$. We also have 
	\[
	c_{0,i} = \min \{\ord_{D_i}(x), \ord_{D_i}(y)\}
	\]
	for $x,y$ any coordinate system in $\C^2$.
\end{rem}
\begin{definition} \label{def:canonical_cycle}
	Denote by
\index{$K$}
$K$
the
\index{canonical cycle}
{\em canonical cycle}
on $Y$ and by
\index{$\nu_i$}
$\nu_i -1$ its 
	coefficients.
	Thus,
	\[
	K = \sum_{i\in \V} (\nu_i-1) D_i.
	\]
\end{definition}

\emph{A canonical divisor} on a smooth variety $X$ of dimension $d$
is defined as the divisor defined by any section of the canonical
sheaf $\Omega_X^d$. The canonical cycle defined above is the unique
canonical divisor supported on the exceptional set of $\pi$.
In fact, for $d=2$
take the standard holomorphic two-form $dx \wedge dy \in \Omega_{\C^2}^2$
on $\C^2$. Then $K$ is the divisor defined by the section
$\pi^*(dx \wedge dy)$ of $\Omega_Y^2$.

$K$ can be seen as a Cartier divisor as follows.
If $U \subset Y$ is a coordinate chart, with coordinates $u,v$, we have
a Jacobian matrix
\[
\Jac \pi(u,v)
=
\left(
\begin{matrix}
	\partial_u \pi^*x & \partial_v \pi^*x \\
	\partial_u \pi^*y & \partial_v \pi^*y
\end{matrix}
\right).
\]
The pairs $(U, \det \Jac \pi)$ constitute a Cartier divisor,
whose associated Weil divisor is $K$.

Yet another characterization goes as follows.
$K$ is the unique exceptional divisor satisfying
the \emph{adjunction formulas}:
\[
(K,D_i) = -D_i^2 - 2, \qquad i\in\V.
\]

\begin{block} \label{block:c_1}
	Let $p \in D^\circ_i$ for some $i\in \V$, and choose a (small) 
	coordinate
	neighborhood $U$ of $p$.
	Assume that $x,y$ are coordinates in $\C^2$, with $x$ a generic linear 
	function.
	Then the function $\pi^*x$ vanishes with order $c_0 = c_{0,i}$ along 
	$D_i$,
	and does not vanish on $U \setminus D_i$.
	As a result, coordinates $u,v$ can be chosen for $U$ so that 
	$\pi^*x = u^{c_0}$. Expand the second coordinate
	around $p\in U$ as
	\[
	\pi^*y = \sum_{k \geq c_0} u^k g_k(v),
	\]
	where $g_k$ are holomorphic functions defined in a neighborhood of
	$p$ in $D_i^\circ$ (possibly smaller than $U\cap D_i^\circ$). We are 
	interested in the smallest number $k$
	so that $g_k$ really depends on $v$, i.e. is not constant.
	Denote this number by $c_1$. The Jacobian of $\pi$ in these coordinates 
	has a triangular form
	\begin{equation} \label{eq:partials}
		\left(
		\begin{matrix}
			\partial_u \pi^*x & \partial_v \pi^*x \\
			\partial_u \pi^*y & \partial_v \pi^*y
		\end{matrix}
		\right)
		=
		\left(
		\begin{matrix}
			c_0 u^{c_0-1} & 0 \\
			\sum_k k u^{k-1} g_k(v) & \sum_k u^k g_k'(v)
		\end{matrix}
		\right)
	\end{equation}
	and so $c_1$ is the order of vanishing of $\partial_v \pi^* y$ along 
	$D_i^\circ$. That is 
	\[
	\ord_{u}(\partial_v \pi^* y)= c_1.
	\]
	Furthermore, we see that 
	\[
	\ord_u(\det\Jac\pi)=c_0 + c_1 - 1.
	\]
	But this order of vanishing is $\nu_i-1$ (recall 
	\Cref{def:canonical_cycle}), which is independent of the choices
	we made so far. This motivates the following definition of the numbers 
	$c_{1,i}$:
\end{block}

\begin{definition}\label{def:c_1}
	For each vertex $i\in \V$, set
\index{$c_{1,i}$}
	\[
	c_{1,i} = \nu_i - c_{0,i}.
	\]
	
	As we did in \Cref{def:max_cycle}, just set $c_{1,i} =0$ for $i \in \A$.
\end{definition}

\begin{lemma} \label{lem:c_01_recursive}
	Let $\pi_i:(Y_i,D_i) \to (Y_{i-1},p_i)$ be the $i$-th blow up as in 
	\Cref{not:pi}. Then, 
	\begin{itemize}
		\item If $i=0$, then 
		\[c_{0,0}=c_{1,0}=1.\]
		\item
		If there is $j <i$, such that $p_i$ lies on a smooth point of the 
		exceptional divisor $D_j$,
		that is $p_i \in D_j^\circ$, then
		\[
		c_{0,i} = c_{0,j},\qquad
		c_{1,i} = c_{1,j} + 1.
		\]
		\item
		If there are $j,k<i$, such that $p_i$ lies at the intersection, 
		$p_i \in D_j \cap D_k$, then
		\[
		c_{0,i} = c_{0,j} + c_{0,k},\qquad
		c_{1,i} = c_{1,j} + c_{1,k}.
		\]
	\end{itemize}
\end{lemma}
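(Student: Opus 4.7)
The identities should follow from explicit blow-up chart computations, using two facts: $c_{0,i} = \ord_{D_i}(\pi^*\ell)$ for a sufficiently generic linear form $\ell$ (chosen once so that its strict transform avoids all the centers $p_i$), and $c_{1,i} = \nu_i - c_{0,i}$, where $\nu_i - 1$ is the vanishing order of $\det\Jac\pi$ along $D_i$. So my strategy is to compute the orders of vanishing of $\bar\pi_i^*\ell$ and of $\bar\pi_i^*(dx\wedge dy)$ along the new divisor $D_i$ in terms of the same data on the divisors meeting $p_i$; the statements about $c_1$ then follow by subtraction.

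\textbf{Base case.} For $i=0$, I take standard coordinates $x,y$ with $\ell = x$ and use the chart $(u,v)$ for $\pi_0$ with $x = u$, $y = uv$. Then $\pi_0^*\ell = u$, giving $c_{0,0} = 1$, and $\pi_0^*(dx\wedge dy) = u\,du\wedge dv$, giving $\nu_0 - 1 = 1$, hence $c_{1,0} = \nu_0 - c_{0,0} = 1$.

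\textbf{Inductive step, smooth center.} Suppose $p_i \in D_j^\circ$. Choose coordinates $(u,v)$ around $p_i$ in $Y_{i-1}$ with $D_j = \{u=0\}$, and use the chart $(u',v')$ for $\pi_i$ with $u = u'$, $v = u'v'$ (the other chart is symmetric). Since $\ord_{D_j}(\bar\pi_{i-1}^*\ell) = c_{0,j}$, we have $\bar\pi_{i-1}^*\ell = u^{c_{0,j}} \cdot h(u,v)$ for a unit $h$ near $p_i$ (using genericity of $\ell$), and pulling back yields $\bar\pi_i^*\ell = (u')^{c_{0,j}} \cdot h(u',u'v')$, so $c_{0,i} = c_{0,j}$. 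For the Jacobian, write $\bar\pi_{i-1}^*(dx\wedge dy) = u^{\nu_j-1}\,\phi(u,v)\,du\wedge dv$ with $\phi$ a unit at $p_i$; since $du\wedge dv = u'\,du'\wedge dv'$ in the chart, we get
\[
\bar\pi_i^*(dx\wedge dy) = (u')^{\nu_j}\,\phi(u',u'v')\,du'\wedge dv',
\]
so $\nu_i - 1 = \nu_j$, and therefore $c_{1,i} = \nu_i - c_{0,i} = (\nu_j+1) - c_{0,j} = c_{1,j} + 1$.

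\textbf{Inductive step, corner.} Suppose $p_i \in D_j\cap D_k$. Choose coordinates $(u,v)$ with $D_j = \{u=0\}$, $D_k = \{v=0\}$, and use the chart $u=u'$, $v=u'v'$. Now $\bar\pi_{i-1}^*\ell = u^{c_{0,j}} v^{c_{0,k}}\cdot h$ with $h$ a unit, and pulling back gives an overall factor $(u')^{c_{0,j}+c_{0,k}}$ along $D_i$, hence $c_{0,i} = c_{0,j}+c_{0,k}$. Similarly $\bar\pi_{i-1}^*(dx\wedge dy) = u^{\nu_j-1}v^{\nu_k-1}\phi\,du\wedge dv$ with $\phi$ a unit, and combined with the extra $u'$ from $du\wedge dv = u'\,du'\wedge dv'$ this gives $\nu_i-1 = (\nu_j-1)+(\nu_k-1)+1 = \nu_j+\nu_k-1$, so $c_{1,i} = \nu_i - c_{0,i} = c_{1,j}+c_{1,k}$. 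The computations in both inductive cases are essentially bookkeeping; the one mild subtlety, which I would address at the outset, is choosing $\ell$ generic enough that at every stage its strict transform avoids each blow-up center $p_i$, so that the unit factors $h$ above are genuinely units and $c_{0,\cdot}$ is read off directly from the exceptional contributions.
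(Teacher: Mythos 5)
Your proof is correct and follows essentially the same route as the paper: the base case by the standard chart for $\pi_0$, and the recursions for $c_{0,i}$ and $c_{1,i}$ via the behaviour under blow-up of the multiplicity of a generic linear form and of $\nu_i$ (through $c_{1,i}=\nu_i-c_{0,i}$). The only difference is that the paper cites these two recursion facts from Wall's book, whereas you verify them directly by the chart computations (correctly, including the genericity caveat that the strict transform of $\ell$ must avoid the centers).
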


\begin{proof}
	The first equality is a direct computation using the usual coordinates 
	$\pi_0^*x=u, \pi_0^*y=uv$.
	The two formulas for $c_{0,i}$ follow immediately, since this is the 
	vanishing
	of a generic linear function along the divisor $D_i$ (see the more 
	general, \cite[Lemma 8.1.2]{Wall_ctc}).
	The formula for $c_{1,i}$ follows, using the \Cref{def:c_1},
	and the well known fact (see the proof of \cite[Proposition 
	8.1.8]{Wall_ctc} that
	$\nu_i = \nu_j + 1$ if $p_i$ is a smooth point, and
	$\nu_i = \nu_j + \nu_k$ if $p_i$ is an intersection point of two 
	divisors.
\end{proof}

As a direct consequence of the recursive formulas, we have the following 
corollary.
\begin{cor}
	For any $i\in \V$, we have $c_{1,i} \geq c_{0,i}$, with equality if
	and only if $i = 0$.
\end{cor}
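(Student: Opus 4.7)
The plan is to prove the corollary by induction on $i \in \V$, using the recursive formulas in \Cref{lem:c_01_recursive} as the engine.

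First, I would establish the base case $i = 0$: by \Cref{lem:c_01_recursive}, $c_{0,0} = c_{1,0} = 1$, so equality holds at the initial blow-up, matching the ``only if'' part of the claim for this vertex.

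For the inductive step, suppose $c_{1,j} \geq c_{0,j}$ for every $j < i$. There are two cases depending on the nature of the point $p_i$ blown up by $\pi_i$. If $p_i \in D_j^\circ$ is a smooth point of a single previous divisor, then $c_{0,i} = c_{0,j}$ and $c_{1,i} = c_{1,j} + 1$. By the inductive hypothesis,
\[
c_{1,i} = c_{1,j} + 1 \geq c_{0,j} + 1 > c_{0,j} = c_{0,i},
\]
so the inequality is strict. If instead $p_i = D_j \cap D_k$ is the intersection of two previous divisors (necessarily with $j \neq k$), then $c_{0,i} = c_{0,j} + c_{0,k}$ and $c_{1,i} = c_{1,j} + c_{1,k}$, and adding the two inductive inequalities gives $c_{1,i} \geq c_{0,i}$.

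The only place where equality could a priori occur in this second case is when both $c_{1,j} = c_{0,j}$ and $c_{1,k} = c_{0,k}$; by the inductive hypothesis, this forces $j = k = 0$, which is impossible since $j \neq k$. Hence at least one of the two summands contributes a strict inequality, and $c_{1,i} > c_{0,i}$ for every $i > 0$. No step should present a real obstacle — the entire argument is a bookkeeping exercise on the recursion of \Cref{lem:c_01_recursive}; the one subtlety to flag is the simultaneous-equality observation in the intersection case, which is what pins down the ``only if'' part and ensures the strict inequality propagates at every subsequent blow-up.
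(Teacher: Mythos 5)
Your proof is correct and is exactly the argument the paper intends: the corollary is stated with a \qed because it follows immediately by induction from \Cref{lem:c_01_recursive}, and your case analysis (including the observation that equality at an intersection point would force $j=k=0$) fills in that routine induction faithfully.
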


The next important corollary yields a hierarchy on the set of vertices 
$\V$. We make explicit use of this hierarchy in the next subsection and 
throughout the rest of the paper.

\begin{cor} \label{cor:c01_det}
	If $i,j$ are neighbors in the resolution graph $\Gamma$, then
	we have a nonzero determinant
	\begin{equation} \label{eq:c01_det}
		\left|
		\begin{matrix}
			c_{0,i} & c_{1,i} \\
			c_{0,j} & c_{1,j}
		\end{matrix}
		\right|
		\neq 0.
	\end{equation}
	Furthermore, the above determinant is negative if and only if the 
	geodesic (in $\Gamma$) from $0$ to $i$ passes through $j$.
\end{cor}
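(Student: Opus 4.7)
The plan is to prove the statement by induction on $\max(i,j)$, using only the recursive formulas of \Cref{lem:c_01_recursive}. Without loss of generality assume $j<i$, so that $D_i$ is produced by the blow-up $\pi_i:Y_i\to Y_{i-1}$. Because $c_{0,\bullet}$ and $c_{1,\bullet}$ are fixed once the corresponding divisor is created, and because two vertices adjacent in $\Gamma$ are already adjacent in $\Gamma_i$, it suffices to split according to the two geometric possibilities for the center $p_i$.

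First I would dispatch the case in which $p_i$ is a smooth point of a single divisor $D_k$. Then $j=k$ is forced (in $\Gamma_i$ the new divisor $D_i$ is a leaf attached to $D_k$, and this adjacency is preserved in $\Gamma$), and substituting $c_{0,i}=c_{0,k}$, $c_{1,i}=c_{1,k}+1$ yields
\[
c_{0,i}c_{1,j}-c_{1,i}c_{0,j}=c_{0,k}c_{1,k}-(c_{1,k}+1)c_{0,k}=-c_{0,k},
\]
which is strictly negative since $c_{0,k}\geq 1$ for every $k\in\V$. The geodesic from $0$ to $i$ must pass through $k$, so the sign agrees with the claim.

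The substantive case is when $p_i=D_k\cap D_l$ with $k,l<i$. Then $j\in\{k,l\}$; taking $j=k$ and plugging $c_{0,i}=c_{0,k}+c_{0,l}$, $c_{1,i}=c_{1,k}+c_{1,l}$ into the determinant gives
\[
c_{0,i}c_{1,j}-c_{1,i}c_{0,j}=c_{0,l}c_{1,k}-c_{1,l}c_{0,k},
\]
i.e.\ the very same determinant for the pair $(l,k)$, which are neighbors in $\Gamma_{i-1}$. Non-vanishing then follows directly from the inductive hypothesis, and the case $j=l$ is symmetric.

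The one mild obstacle is the sign assertion in this second case: one must reconcile the tree-theoretic condition for $(i,k)$ in $\Gamma$ with the corresponding one for $(l,k)$ in $\Gamma_{i-1}$ under the inductive hypothesis. I would handle this by observing that $\pi_i$ modifies the underlying tree exactly by replacing the edge $kl$ with the path $k\!-\!i\!-\!l$; hence the geodesic in $\Gamma$ from $0$ to $i$ passes through $k$ if and only if the geodesic in $\Gamma_{i-1}$ from $0$ to $l$ passes through $k$. This transfers the sign from the inductive hypothesis to the new pair, completing the induction. The base case is either vacuous ($\V=\{0\}$) or the pair $(1,0)$, which is covered by the smooth-point case above.
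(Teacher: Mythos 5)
Your proof is correct and follows essentially the same route as the paper: induction over the blow-up sequence, using the recursions of \Cref{lem:c_01_recursive}, with the smooth-point case giving $-c_{0,k}<0$ directly and the intersection-point case reducing the new determinant to the one for the old edge $kl$ (the paper phrases this as a row operation, you as a substitution — same computation). Your explicit remark that the blow-up replaces the edge $kl$ by the path $k\!-\!i\!-\!l$, so the geodesic condition transfers, is a point the paper leaves implicit.
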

\begin{proof}
	We use induction on $i=0,1, \ldots, s$. Assume the statement holds for 
	$\Gamma_{i-1}$. We recall from \Cref{not:dual_graph} that the set of 
	vertices $\V_{i-1}$ of $\Gamma_{i-1}$ naturally embeds in the set of 
	vertices $\V_i$ of $\Gamma_i$. Actually, there is a unique extra vertex 
	$i \in \V_i$ that is not in $\V_{i-1}$. We treat two different cases 
	depending on whether $p_i$ is a smooth point or an intersection point 
	between two divisors (see also \cref{fig:new_blow_up}).
	
	Case 1. If $p_i$ lies on a smooth point of $D_{j}$, then there is a 
	unique edge $ij$ in $\Gamma_i$ that is not in $\Gamma_{i-1}$. In this 
	case $i$ is further from $0$ than $j$, and \Cref{lem:c_01_recursive} 
	gives  
	\[
	\left|
	\begin{matrix}
		c_{0,i} & c_{1,i} \\
		c_{0,j} & c_{1,j}
	\end{matrix}
	\right| = c_{0,i} c_{1,j} -  c_{1,i}c_{0,j} = c_{0,i} c_{1,j} -  
	(c_{1,j}+1)c_{0,i} = -c_{0,i} < 0.
	\]
	
	Case 2. If $p_i$ is the intersection point $D_j \cap D_k$, then there 
	are two edges $ik$ and $ij$ in $\Gamma_i$ that are not in 
	$\Gamma_{i-1}$. In this case, \Cref{lem:c_01_recursive} gives 
	
	\[
	\left|
	\begin{matrix}
		c_{0,i} & c_{1,i} \\
		c_{0,j} & c_{1,j}
	\end{matrix}
	\right| = 
	\left|
	\begin{matrix}
		c_{0,k} & c_{1,k} \\
		c_{0,i} & c_{1,i}
	\end{matrix}
	\right|= 
	\left|
	\begin{matrix}
		c_{0,k} & c_{1,k} \\
		c_{0,j} & c_{1,j}
	\end{matrix}
	\right|
	\]
	because 
	\[
	c_{0,i} = c_{0,j} + c_{0,k},\qquad
	c_{1,i} = c_{1,j} + c_{1,k}
	\]
	and row operations preserve determinants.
\end{proof}

\begin{cor} \label{cor:c01_1}
	If $i\in \V$ is a neighbor of the vertex $0$, which corresponds to the
	first blow-up, then $c_{i,1} = c_{0,i}+1$. 
\end{cor}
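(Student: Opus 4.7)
The plan is to prove a slightly stronger statement by strong induction on the blow-up order $m$, namely: for every $m \geq 1$ and every $i \in \V_m \setminus \{0\}$ adjacent to $0$ in $\Gamma_m$, the identity $c_{1,i} = c_{0,i}+1$ holds. Applying this to $m = s$ yields the corollary for $\Gamma = \Gamma_s$. The strengthening is necessary because the natural recursion in \Cref{lem:c_01_recursive} references divisors through the intermediate graphs $\Gamma_{m-1}$, while adjacency to $0$ can be destroyed later by blowing up a point of $D_0 \cap D_i$; tracking adjacency at every stage is the main bookkeeping subtlety.

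For the base case $m=1$, the unique new vertex is $1$, obtained by blowing up a smooth point of $D_0$, so \Cref{lem:c_01_recursive} gives $c_{0,1}=c_{0,0}=1$ and $c_{1,1}=c_{1,0}+1=2$. For the inductive step, assume the claim for all $m' < m$ and take $i \in \V_m\setminus\{0\}$ adjacent to $0$ in $\Gamma_m$. If $i < m$, then since blow-ups never create new intersections between pre-existing divisors, the adjacency of $i$ and $0$ in $\Gamma_m$ forces $i$ to have already been adjacent to $0$ in $\Gamma_{m-1}$; the inductive hypothesis then gives the desired equality (the values $c_{0,i}, c_{1,i}$ are intrinsic to $D_i$, independent of the ambient graph).

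Finally, suppose $i = m$. Then the center $p_m$ lies on the strict transform of $D_0$ in $Y_{m-1}$, either at a smooth point of $D_0$ or at an intersection $D_0 \cap D_j$ for some $j < m$. In the smooth case, \Cref{lem:c_01_recursive} directly gives $c_{0,m}=c_{0,0}=1$ and $c_{1,m}=c_{1,0}+1=2$. In the intersection case, $j$ is adjacent to $0$ in $\Gamma_{m-1}$, so the inductive hypothesis provides $c_{1,j}=c_{0,j}+1$; combining this with $c_{0,m}=c_{0,0}+c_{0,j}=1+c_{0,j}$ and $c_{1,m}=c_{1,0}+c_{1,j}=1+c_{1,j}$ from \Cref{lem:c_01_recursive} yields $c_{1,m}=2+c_{0,j}=c_{0,m}+1$. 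This closes the induction and proves the corollary.
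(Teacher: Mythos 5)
Your induction via \Cref{lem:c_01_recursive} is correct and is exactly the argument the paper leaves implicit (the corollary is stated with an immediate \qed): a neighbor of $0$ is created either by blowing up a smooth point of $D_0$, giving $(c_0,c_1)=(1,2)$, or by blowing up $D_0\cap D_j$ with $j$ a previous neighbor of $0$, and the recursion propagates $c_{1}=c_{0}+1$. Your extra bookkeeping about adjacency persisting through the intermediate graphs $\Gamma_m$ is sound and fills in the detail the authors chose to omit.
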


\section{Directions on the dual graph}

We endow $\Gamma$ with the structure of an directed graph and deduce some 
combinatorial properties of this structure in terms of $c_0$ and $c_1$.

\begin{definition} \label{def:directed}
	The graph $\Gamma$ is seen as a directed graph as follows.
	Let $i,j$ be neighbors in $\Gamma$. The edge $ij$ is directed from
	$j$ to $i$ if and only if the determinant \cref{eq:c01_det} is 
	negative. In order to express that {\em the edge $ij$ is oriented from 
		$j$ to $i$} we write $j \to i$.
	
	Similarly, the graphs $\Gamma_i$ corresponding to the modifications 
	$\bar\pi_i : Y_i \to \C^2$ are directed for each $i=0,1,\ldots,s$ 
	following the same criterion.
\end{definition}
\begin{figure}[!ht]
	\centering
	\includegraphics[scale=0.7]{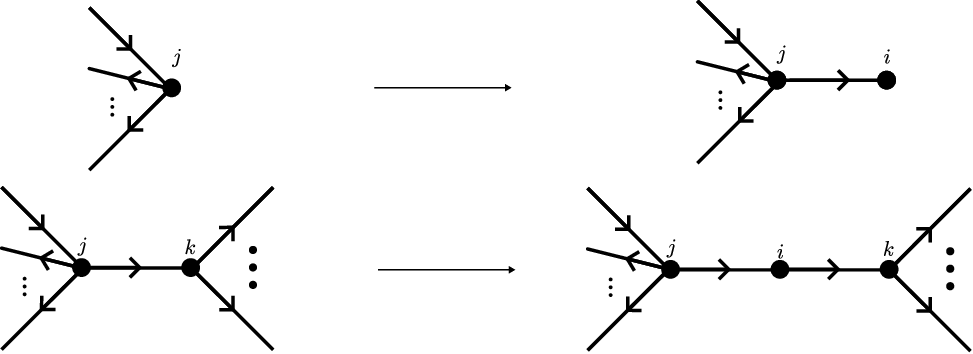}
	\caption{Effect of a blow-up on directed graphs. On top the effect of 
		blowing up a smooth point at $D_j$. On bottom, the effect of blowing up 
		the intersection point $D_j \cap D_k$.}
	\label{fig:new_blow_up}
\end{figure}
\begin{rem}
	The orientations of the edges defined in \Cref{def:directed} are the 
	same as the ones induced by the criterion: 
	
	\centerline{ ``orient an edge $ij$ from $j$ to $i$ if $i$ is further 
		from $0$ than $j$.''}
\end{rem}

Observe that each vertex $i \in \V$ with $i\neq 0$ has exactly one neighbor 
$j$ such that $j \to i$. This allows the following definition.

\begin{definition} \label{def:Ga_branch}
	Let $i$ be a vertex in the directed graph $\Gamma$, as above. If 
	$i\neq0$, let $j$ be the unique neighbor of $i$ such that $j \to i$.
	The
\index{branch}
\emph{branch}
(\cref{fig:branch}) of $\Gamma$ at $i$ is 
	\begin{enumerate}
		\item $\Gamma$, if $i = 0$ and,
		\item otherwise, the connected component of $\Gamma$ with the edge 
		$ji$ removed, containing $i$.
	\end{enumerate}
	
\end{definition}

\begin{rem} \label{rem:branch_ah}
	
	The branch of $\Gamma$ at $i$ is a tree rooted at $i$ (note that $i$ is 
	the vertex which is the closest to $0$ among all vertices of the 
	branch). Moreover, if $\Xi$ is a branch of $\Gamma$ at $i$ then, it 
	satisfies that if $j \in \W_\Xi$ and $j \to k$, then $k \in \W_\Xi$.
	
	Arrowheads in $\Gamma$ are included in the above definition. Thus,
	the branch at $i$ contains a subset of the arrowheads of $\Gamma$.
\end{rem}

\begin{figure}[h!]
	\centering
	\includegraphics*[scale=1]{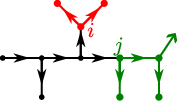}
	\caption{In red the branch at the vertex $i$ and in green the branch at 
		the vertex $j$.}
	\label{fig:branch}
\end{figure}

\begin{lemma} \label{lem:c0ord}
	Let $i,j\in\V$, and assume that $\Gamma$ has an edge $j \to i$. If $g 
	\in \O_{\C^2,0}$, then
	\[
	\det
	\left(
	\begin{matrix}
		c_{0,i} & \ord_{D_i}(g) \\
		c_{0,j} & \ord_{D_j}(g)
	\end{matrix}
	\right)
	\leq 0.
	\]
	If $\bar \V \subset \V$ is the set of vertices on the branch of $\Gamma$
	at $i$, then equality holds if and only if the strict transform of
	the curve defined by $g = 0$ does not intersect any $D_k$ for
	$k \in \bar \V$.
\end{lemma}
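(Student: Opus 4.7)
The plan is to translate the inequality into a statement about exceptional cycles on $Y$ and then exploit the positivity of anti-dual cycles in negative-definite sublattices on trees. First, by additivity of $\ord_{D_l}$ under products I reduce to the case where $g$ is irreducible; if moreover $g$ is coprime to $f$, then $\tilde C_g$ shares no components with the exceptional set $D_\V$, and the decomposition $\pi^*g = \sum_{l\in\V}\ord_{D_l}(g)\,D_l + \tilde C_g$ together with $(\pi^*g)\cdot D_k = 0$ for every $k\in\V$ gives
\[
  \ord_{D_l}(g) \;=\; \tilde C_g\cdot D_l^* \;=\; \sum_{k\in\V} e_{lk}\,n^g_k,\qquad c_{0,l} \;=\; e_{l0},
\]
where $e_{lk}:=-(D_l^*,D_k^*)$ and $n^g_k:=\tilde C_g\cdot D_k\geq 0$, the formula for $c_{0,l}$ coming from the case of a generic linear function. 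Substituting into the determinant of interest yields
\[
  c_{0,j}\,\ord_{D_i}(g) - c_{0,i}\,\ord_{D_j}(g) \;=\; \sum_{l\in\V} n^g_l\,\alpha_l,
\]
with $\alpha_l$ the coefficient of $D_l$ in the exceptional cycle $\tilde\alpha := c_{0,j}D_i^* - c_{0,i}D_j^*$. The lemma therefore reduces to showing $\alpha_l = 0$ for $l\notin\bar\V$ and $\alpha_l > 0$ for $l\in\bar\V$. The remaining case, when $g$ is an irreducible component of $f$, is handled by the same computation after replacing $n^g_k$ by the appropriate intersection with an arrowhead divisor.

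The intersection numbers of $\tilde\alpha$ are immediate: $\tilde\alpha\cdot D_k = -c_{0,j}\delta_{ki} + c_{0,i}\delta_{kj}$, which vanishes for every $k\notin\{i,j\}$. To identify $\tilde\alpha$ concretely, I let $(D_i)^*_{\bar\V}$ denote the anti-dual of $D_i$ in the sublattice of $L\otimes\Q$ spanned by $\bar\V$; since $(\cdot,\cdot)$ is negative definite on this sublattice (as a principal minor of a negative-definite form), the anti-dual exists and has strictly positive coefficients on all of $\bar\V$ by the standard positivity result for connected trees. The cycle $Z := c_{0,j}(D_i)^*_{\bar\V}$, extended by zero to $\V\setminus\bar\V$, already matches the intersection numbers of $\tilde\alpha$ with every $D_k$ except possibly with $D_j$, because $D_i$ is the only divisor of $\bar\V$ meeting $\V\setminus\bar\V$, across the single edge $ji$. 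Matching intersection with $D_j$ reduces to the single identity
\[
  \bigl[(D_i)^*_{\bar\V}\bigr]_i \;=\; \frac{c_{0,i}}{c_{0,j}},
\]
where $[\,\cdot\,]_i$ denotes the coefficient of $D_i$.

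This last identity is the technical heart of the argument and the step I expect to be the main obstacle. I would establish it by a uniqueness argument in the full lattice $L$: the cycle $D_i^* - (D_i)^*_{\bar\V}$ (with the latter extended by zero to $\V$) has zero intersection with every $D_k$ except $D_j$, where it intersects as $-[(D_i)^*_{\bar\V}]_i$; by unimodularity of $(\cdot,\cdot)$ on $L$ this forces
\[
  D_i^* - (D_i)^*_{\bar\V} \;=\; \bigl[(D_i)^*_{\bar\V}\bigr]_i \cdot D_j^*.
\]
Reading off the coefficient of $D_0$ on both sides, and using $[D_l^*]_0 = c_{0,l}$ together with $[(D_i)^*_{\bar\V}]_0 = 0$ (since $0\notin\bar\V$), yields the identity $c_{0,i} = [(D_i)^*_{\bar\V}]_i\cdot c_{0,j}$. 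With this in hand, $\tilde\alpha = c_{0,j}(D_i)^*_{\bar\V}$ is supported on $\bar\V$ with strictly positive coefficients, so $\alpha_l > 0$ for $l\in\bar\V$ and $\alpha_l = 0$ for $l\notin\bar\V$. This proves both the inequality $c_{0,j}\,\ord_{D_i}(g) \geq c_{0,i}\,\ord_{D_j}(g)$ and the characterization of equality as $n^g_l = 0$ for every $l\in\bar\V$, i.e., $\tilde C_g\cap D_k = \emptyset$ for all $k\in\bar\V$.
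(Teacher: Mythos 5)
Your argument is correct, and at its core it is the same lattice computation as the paper's: both proofs hinge on the anti-dual $\bar D_i^*$ of $D_i$ in the negative-definite sublattice spanned by the branch $\bar\V$, its strictly positive coefficients there, and the identification of $c_{0,j}\bar D_i^*$ with a cycle determined by the $c_{0,k}$'s. The packaging is dual: the paper decomposes the exceptional cycle of $g$ itself as $\bar Z + Z'$ with $\bar Z$ a nonnegative combination of the $\bar D_k^*$ and $Z' = n_j\bar D_i^*$, then compares with $\bZmax = c_{0,j}\bar D_i^*$ to get $n_i \geq n_j c_{0,i}/c_{0,j}$; you instead pair the data $n^g_l = \tilde C_g\cdot D_l$ against the fixed test cycle $c_{0,j}D_i^* - c_{0,i}D_j^*$ and show that this cycle equals $c_{0,j}(D_i)^*_{\bar\V}$ extended by zero. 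The one genuinely different ingredient is how you obtain the coefficient identity $[(D_i)^*_{\bar\V}]_i = c_{0,i}/c_{0,j}$: the paper reads it off from $(\bZmax, D_k) = -c_{0,j}\delta_{ik}$ on the branch, whereas you deduce it from nondegeneracy of the form on $L\otimes\Q$ (unimodularity is more than you need) by identifying $D_i^* - (D_i)^*_{\bar\V}$ with a multiple of $D_j^*$ and comparing $D_0$-coefficients via $[D_l^*]_0 = c_{0,l}$. Your formulation makes the equality case slightly more transparent, since the determinant is exhibited as $\sum_l n^g_l\alpha_l$ with $\alpha_l > 0$ exactly on $\bar\V$, but the mathematical content is the same.
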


\begin{proof}
	In this proof, cycles are considered in the group
	$L \otimes \Q \simeq H_2(V; \Q)$.
	Set $n_k = \ord_{D_k}(g)$ for $k\in\V$, and $Z = \sum_k n_k D_k \in L$. 
	Thus, $Z$ is the cycle associated with $\pi^*g$.
	This cycle can be written in terms of the antidual basis as
	$Z = \sum_{k\in \V} a_k D_k^*$, with $a_k \geq 0$ (recall 
	\Cref{rem:asso_div}).
	
	Let $\bar\Gamma$ be the
	branch of $\Gamma$ at $i$, with vertex set $\bar \V \subset \V$.
	Since the intersection form is negative definite, there exist unique
	rational cycles $\bar D_k^*$ for $k\in\bar \V$
	supported on $\bar\Gamma$ satisfying $(\bar D_k^*, D_l) = -\delta_{k,l}$
	for $l \in \bar\V$, and furthermore, $\bar D_k^*$ has positive 
	coefficients in the standard basis $\{D_i\}_{i\in \bar{\V}}$.
	
	Define $\bar Z = \sum_{k\in \bar \V} a_k \bar D^*_k$. Then
	$\bar Z = \sum_{k\in\bar\V} \bar n_k D_k$, with nonnegative coefficients
	$\bar n_k \in \Q$. For $k \in \bar \V$, set $n_k' = n_k - \bar n_k$ and
	$Z' = \sum_{k\in \bar \V} n_k' D_k$.
	Thus, $Z'$ is supported on $\bar\Gamma$, and we have
	\[
	(Z',D_k) = -n_j \delta_{ik},\qquad
	k\in \bar \V.
	\]
	Indeed, if $k \in \bar \V$, with $k \neq i$, then $D_k$ only intersects
	exceptional divisors $D_l$ with $l \in \bar \V$, and so
	\[
	(Z',D_k)
	= (Z,D_k) - (\bar Z, D_k)
	= (-a_k) - (-a_k)
	= 0.
	\]
	If, however, $k = i$, then $D_k = D_i$ intersects $D_j$ once, but
	no other $D_l$ with $l \in \V \setminus \bar \V$, and so in this case
	\[
	(Z',D_k)
	= (Z - n_jD_j,D_k) - (\bar Z, D_k)
	= (-a_k) - n_j - (-a_k)
	= -n_j.
	\]
	As a result, $Z' = n_j \bar D_i^*$.
	
	In a similar way, if we set $\bZmax = \sum_{k\in \bar \V} c_{0,k} D_k$,
	we have $(\bZmax, D_k) = -c_{0,j} \delta_{ik}$, i.e.
	$\bZmax = c_{0,j}\bar D^*_i = c_{0,j} / n_j Z'$.
	As a result,
	\[
	n_i = n_i' + \bar n_i \geq n_i' = \frac{n_j c_{0,i}}{c_{0,j}},
	\]
	and so the determinant from the statement of this lemma equals $c_{0,i} 
	n_j -c_{0,j} n_i \leq 0$.
	Furthermore, we have equality if and only if all the
	$a_k$ vanish for $k \in \bar \V$. But these coefficients
	are nothing but the intersection
	between the strict transform of the vanishing set of $g$ and $D_k$.
\end{proof}

\begin{example} \label{ex:toric}
	We consider the case of a
\index{toric modification}
toric modification
of $\C^2$,
	see \cite{Fulton_toric} for definitions used here.
	Let $\triangle$ be a regular subdivision of the positive quadrant.
	This gives a modification $\pi_\triangle:Y\to\C^2$, where $Y$ is smooth.
	An irreducible component of the exceptional divisor corresponds
	to a ray $\rho \in \triangle$ generated by a unique primitive vector
	$(a,b) \in \Z^2$ with $a,b > 0$ and $\gcd(a,b) = 1$.
	Call the component $D_\rho$.
	We can choose an adjacent ray $\theta$ generated similarly by $(c,d)$,
	so that $\eta = \rho + \theta$ is a two-dimensional cone in $\triangle$ 
	and
	$ad-bc = 1$. The affine toric variety $U_\eta \cong \C^2$ provides a 
	coordinate
	neighborhood containing all but one point of $D_\rho$.
	Denoting by $u,v$ the coordinates in $D_\eta$, we have
	\[
	\pi_\triangle(u,v) = (u^av^c,u^bv^d),\qquad
	U_\eta \cap D_\rho = \{u = 0\}.
	\]
	We therefore have $\ord_{D_\rho} x = a$, $\ord_{D_\rho} y = b$ and so
	$c_{0,\rho} = \min\{a,b\}$.
	Assuming that $a \leq b$, take a local coordinate change
	$\tilde u = u v^{a/c}, \tilde v = v$. Then we have $x = \tilde 
	u^{c_{0,\rho}}$,
	and $y = \tilde u^{b} \tilde v^{d - bc/a}$.
	It follows that $c_{1,\rho} = b = \max\{a,b\}$.
\end{example}

\section{The tangent associated with a divisor}

We associate a tangent to each connected component of $\Gamma \setminus 
\{0\}$ and prove an easy, but useful, inequality of vanishing orders of 
linear functions.

\begin{definition}\label{def:tangent}
	Setting $\pi' = \pi_1 \circ \cdots \circ \pi_s$, we have
	a factorization $\pi = \pi_0 \circ \pi'$. For $p \in D \subset Y$, we 
	define
	the \emph{tangent associated with} $p$ as the line in $\C^2$ 
	corresponding
	to the point $\pi'(p) \in D_0 \subset Y_0$.
	If $i \neq 0$, then the tangent associated with $p \in D_i$ is 
	independent of $p$. So whenever $i \neq 0$, we also speak of the
\index{tangent associated with $D_i$}
{\em tangent associated} with $D_i$.
\end{definition}

The following lemma expresses the idea that tangents are precisely the {\em 
	non-generic} complex lines relative to a given plane curve.

\begin{lemma} \label{lem:tang_van}
	Let $i \in \V\setminus \{0\}$, and let $x,y$ be linear coordinates in
	$\C^2$ such that $\{y=0\}$ is the tangent associated with $D_i$. Then
	\[
	\ord_{D_i}(y) > \ord_{D_i}(x) = c_{0,i}.
	\]
\end{lemma}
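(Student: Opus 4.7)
The plan is to use the remark that $c_{0,i} = \min\{\ord_{D_i}(x), \ord_{D_i}(y)\}$ for any coordinate system $(x,y)$ on $\C^2$ (Remark preceding the definition of initial part, namely the statement that $c_{0,i}$ is the minimum of $\ord_{D_i}$ over all linear functions, attained by any generic one). Once we know that $\ord_{D_i}(y) > \ord_{D_i}(x)$, the equality $\ord_{D_i}(x) = c_{0,i}$ is forced, since the minimum of the two must be the smaller value. So the whole proof reduces to the strict inequality.

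To establish the inequality, I would factor $\pi = \pi_0 \circ \pi'$ and work in the standard affine chart of $Y_0$ in which $\pi_0(u,v) = (u, uv)$, chosen so that the $x$-axis corresponds to $\{v = \infty\}$ and the $y$-axis to $\{v = 0\}$ (this is possible precisely because $\{y=0\}$ is a line through the origin, not equal to $\{x=0\}$). In this chart, $\pi_0^* x = u$ and $\pi_0^* y = uv$.

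The hypothesis that $\{y=0\}$ is the tangent associated with $D_i$ means, by Definition~\ref{def:tangent}, that for $p \in D_i$ the image $\pi'(p) \in D_0$ is the point $(u,v) = (0,0)$ of the chosen chart. Hence $D_i$ lies above this point in $Y$. Since the local function $v$ vanishes at $(0,0) \in Y_0$, its pullback $\pi'^* v$ vanishes along every exceptional divisor of $\pi'$ created over $(0,0)$, in particular along $D_i$. Therefore $\ord_{D_i}(\pi'^* v) \geq 1$.

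Since the vanishing order along $D_i$ is a valuation, we compute
\[
\ord_{D_i}(y) = \ord_{D_i}(\pi'^*(uv)) = \ord_{D_i}(\pi'^* u) + \ord_{D_i}(\pi'^* v) = \ord_{D_i}(x) + \ord_{D_i}(\pi'^* v) > \ord_{D_i}(x),
\]
which gives the desired strict inequality. Combined with the first paragraph, this yields both assertions. There is no real obstacle here; the only subtle point is to unpack the definition of tangent in an affine chart of the first blow-up in a way consistent with the choice of coordinates.
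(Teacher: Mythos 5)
Your proof is correct, and it reaches the conclusion by a somewhat different route than the paper. For the strict inequality, the paper argues inductively through the tower of blow-ups: it locates the first blow-up $\pi_k$ at the point of $D_0$ hit by the strict transform of $\{y=0\}$, computes $\ord_{D_k}(x)=1$, $\ord_{D_k}(y)=2$, and then observes that each subsequent blow-up preserves the gap. You instead collapse this into a single valuation computation: writing $\pi_0^*y = uv$ and noting that $D_i$ is contracted by $\pi'$ to the point of $D_0$ where $v$ vanishes, so $\ord_{D_i}(\pi'^*v)\geq 1$ and additivity of the divisorial valuation gives $\ord_{D_i}(y)=\ord_{D_i}(x)+\ord_{D_i}(\pi'^*v)>\ord_{D_i}(x)$. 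This is cleaner and avoids the induction. For the equality $\ord_{D_i}(x)=c_{0,i}$, the paper asserts directly that $x$ behaves like a generic linear function because $\{x=0\}$ is not the tangent associated with $D_i$ (a claim it does not fully unpack), whereas you deduce the equality from the strict inequality together with the formula $c_{0,i}=\min\{\ord_{D_i}(x),\ord_{D_i}(y)\}$ of \Cref{rem:vanishing_linear}; your organization is the more self-contained of the two.

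One cosmetic slip: in the chart with $\pi_0(u,v)=(u,uv)$ the strict transform of $\{y=0\}$ is $\{v=0\}$ and that of $\{x=0\}$ sits at $v=\infty$, so your parenthetical assigning ``the $x$-axis'' to $\{v=\infty\}$ and ``the $y$-axis'' to $\{v=0\}$ has the two lines swapped. The subsequent computation correctly identifies $(0,0)$ as the point of $D_0$ corresponding to the line $\{y=0\}$, so the argument itself is unaffected, but the parenthetical should be fixed for consistency.
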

\begin{proof}
	Since $\{x=0\}$ is not the tangent associated with $D_i$, the vanishing 
	order of $\pi^*x$ along $D_i$ is that of a generic linear function 
	which is, by definition (recall \Cref{def:max_cycle}) $c_{0,i}$. In 
	particular, since $y$ is another linear function, by 
	\Cref{rem:vanishing_linear} we have the inequality
	\[
	\ord_{D_i}(y) \geq \ord_{D_i}(x).
	\]
	The fact that $\{y=0\}$ is the tangent associated to $D_i$ implies that 
	its strict transform by the first blow up $\pi_0$ does not pass through 
	$D_0^\circ$ (it has to pass through one of the intersection,, let's say 
	$p$ points of the strict transform of $C$ by $\pi_0$). Since we are 
	assuming that $i \neq 0$, we know that the resolution $\pi= \pi_0 \circ 
	\pi_1 \circ \cdots \circ \pi_s$ includes the blow up at $p \in D_0$. 
	Let $\pi_k: Y_k \to Y_{k-1}$ be the blow up at $p$. Then $\ord_{D_k} 
	(x)  = 1$ and  $\ord_{D_k} (y)  = 2$. Necessarily $i \geq k$ and after 
	each new blow up, the order of vanishing of $y$ increases at least as 
	that of $x$. This settles the strict inequality.
\end{proof}

	%-----------------------------------------------------------------------
% Beginning of chap1.tex
%-----------------------------------------------------------------------
%
%  AMS-LaTeX sample file for a chapter of a monograph, to be used with
%  an AMS monograph document class.  This is a data file input by
%  chapter.tex.
%
%  Use this file as a model for a chapter; DO NOT START BY removing its
%  contents and filling in your own text.
% 
%%%%%%%%%%%%%%%%%%%%%%%%%%%%%%%%%%%%%%%%%%%%%%%%%%%%%%%%%%%%%%%%%%%%%%%%
\chapter{The Real Oriented Blow-up} 
\label{s:real_oriented}

In this section, we recall the definition of the
real oriented blow-up of an embedded resolution along the exceptional set. 
This construction has been used before by other authors, see for example
\cite{ACampo}  or, in a more general setting,  \cite{Kat_Nak}.

Let $(Y,D) \to (\C^2,0)$ be a modification of the origin as above 
(\Cref{not:pi}). 
Denote by $\sigma_i:\Yro_i \to Y$
the
\index{real oriented blow-up}
\emph{real oriented blow-up}
of $Y$ along the submanifold $D_i \subset Y$
for $i \in \W$ constructed as follows.
If $U \subset Y$ is a chart with coordinates $u,v$ such that $u = 0$ is
an equation for $D_i \cap U$,
then we take a coordinate chart
$\Uro  \subset \Yro_i$ with coordinates $r,\alpha,v \in \R_{\geq 0} \times 
\R / 2 \pi\Z \times \C$, where $r$ and $\alpha$ are polar coordinates for 
$u$, that is:
\begin{equation} \label{eq:polar_coords}
	r = |u|:\Uro \to \R,
	\qquad
	\alpha = \arg(u):\Uro \to \R / 2\pi \Z.
\end{equation}
We can cover $D_i$ by such charts in order to define an atlas for $\Yro_i$. 
In each of these charts, the map $\sigma_i$ is given by 
$\sigma_i(r,\theta,v)= (re^{i\theta}, v)$.
Denote the fiber product of these maps by
\begin{equation}\label{eq:fiber_product}
	\sigma = \bigtimes_{i\in\W} \sigma_i:(\Yro,\Dro_\W) \to (Y,D_\W).
\end{equation}
The spaces $\Yro_i$ are manifolds with boundary, and the space $\Yro$ is
a manifold with corners and, as a topological manifold, its boundary 
$\partial \Yro$ coincides with $\Dro_\W$. Denote by
\index{$\piro$}
$\piro$
the composition
\[
\piro =  \pi \circ \sigma: \Yro \to \Tu. 
\]
Similarly, we can consider the real oriented blow up of $\C$ at the origin
\[
\sigma': \C^{\mathrm{ro}} \to \C.
\]
In this case, $\Cro \simeq \R_{\geq 0} \times \R / 2 \pi \Z$ and $\sigma' 
(r, \theta) = r e^{2 \pi i \theta}.$
The real-oriented blow up construction is functorial, that is, there exists 
a well defined map
\index{$\fro$}
$\fro: \Yro \to \Cro$
that makes the diagram in 
\cref{eq:ro_cd} commutative. We also say that the map $\sigma$ resolves the 
indeterminacy locus of the function $\arg(f)$ which results in a well 
defined function
\index{$\arg(\fro)$}
\[
\arg(\fro)=\arg \circ \fro:\Yro \to \R / 2 \pi \Z.
\]

We fit all this information in the following commutative diagram:
\begin{equation} \label{eq:ro_cd}
	\begin{tikzcd}[row sep=huge,column sep=huge]
		\partial \Yro 
		\arrow[hookrightarrow]{r} 
		\arrow[swap, bend right=10]{ddr}{\arg(\fro)|_{\partial \Yro} } 
		&\Yro 
		\arrow[ bend right= 10]{drr}{(\piro)^*f} 
		\arrow{r}{\sigma} 
		\arrow{d}{\fro} 
		\arrow[bend left=25]{rr}{\piro}
		\arrow[swap, bend right=25]{dd}{\arg(\fro)}
		& Y 
		\arrow{dr}{\pi^*f}  
		\arrow{r}{\pi}
		& \Tu  
		\arrow{d}{f} 
		\arrow[hookleftarrow]{r} 
		& \Tu^* 
		\arrow[bend left=25]{ddlll}{\arg(f)}\\
		&\C^{\mathrm{ro}} 
		\arrow{d}{\arg} 
		\arrow[bend right=20]{rr}{\sigma'}
		& & \C &\\
		&\R / 2 \pi \Z & & &
	\end{tikzcd}
\end{equation}

\begin{notation}\label{not:strat}
	The corners of $\Yro$ induce a stratification indexed by the graph
	$\Gamma$ as follows.
	Set $\Dro_\emptyset = \Yro \setminus \Dro_\W$.
	For $i,j \in \W$, we define the following subspaces of $\partial \Yro$:
\index{$\Dro_i$}
	\[
	\Droc_i = \sigma^{-1}(D_i^\circ), \qquad
	\Dro_i  = \sigma^{-1}(D_i),\qquad
	\Dro_{i,j} = \sigma^{-1}(D_i \cap D_j).
	\]
	Note that $\sigma^{-1}(D_i)= \overline{\Droc_i}$.
\end{notation}

\section{An action on the normal bundle}
For an $i\in \W$, the elements of $\sigma_i^{-1}(D_i) = \partial \Yro_i$
correspond to oriented real lines in the normal bundle
of $D_i \subset Y$, which has a complex structure.
This induces a $\C^*$-action on $\partial \Yro_i$, with stabilizer
$\R_{>0} \subset \C^*$,
which reduces to a principal action of
$\R/2\pi\Z \cong \C^*/\R_{> 0}$. 
In particular,  $\Droc_i$ is naturally a principal $S^1$-bundle over 
$D_i^\circ$,
and $\Dro_{i,j} \cong S^1 \times S^1$.
Denote this action by
\index{$\cdot_i$}
$\cdot_i$.
In the coordinates $r,\alpha,v$ on $\Uro$ introduced before in 
\Cref{s:real_oriented},
this action is given by
\[
\eta \cdot_i (0,\alpha,v) = (0,\alpha+\eta,v),\qquad
\eta \in \R / 2\pi \Z.
\]
for a point $(0,\alpha,v) \in \partial \Uro$.

Let $p \in D_i \cap D_j$ be an intersection point, and $U \ni p$ a
coordinate chart with $U \cap D_i = \{u=0\}$ and $U \cap D_j = \{v = 0\}$.
We have polar coordinates $(r,s,\alpha,\beta)$ on $\sigma^{-1}(U) = \Uro$
so that $\sigma(r,s,\alpha,\beta) = (r e^{i\alpha}, s e^{i\beta})$.
The corner
\[
\sigma^{-1}(p) = \set{(r,s,\alpha,\beta)\in \Uro}{r=s=0}
\]
has two $\R/2\pi\Z$-actions. For $\eta, \nu \in \R/2\pi\Z$
\[
\eta \cdot_i \nu \cdot_j (0,0,\alpha,\beta)
= (0,0,\alpha+\eta,\beta+\nu).
\]
Thus, the torus $\sigma^{-1}(p)$ is an $\R/2\pi\Z \times \R/2\pi\Z$-torsor.

\begin{definition}\label{def:equivariant}
	Let $U \subset Y$ and $\Uro = \sigma^{-1}(U)$.
	A function $f:\Uro \to \C$ is $\cdot_i$-\emph{equivariant of weight}
	$k \in \Z$
	if for all $p \in \Dro_i \cap \Uro$ and $\eta \in \R/2\pi \Z$, we have
	$f(\eta\cdot_i p) = e^{ik \eta} f(p)$. A function which is equivariant 
	of weight $0$, is said to be {\em invariant}.
\end{definition}

\begin{lemma} \label{lem:arg_equivariant}
	If $U \subset Y$ is open, and
	$h:U\to \C$ is a meromorphic function which is holomorphic on $U 
	\setminus D_i$ and has vanishing order $k\in\Z$
	along $D_i \cap U$. Then the pull-back $r^{-k} \sigma^*h$ of $|u|^{-k} 
	h$ by $\sigma$, extends
	over the boundary
	to a real analytic equivariant function of weight $k$.
	
	Similarly, the pull-back of $|u|^{-k} \bar h$ extends
	over the boundary
	to a real analytic equivariant function of weight $-k$.
\end{lemma}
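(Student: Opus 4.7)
The plan is to reduce immediately to a local computation in a polar chart on $\Yro_i$ and then verify the two assertions (real analyticity and equivariance of the stated weight) by direct inspection.

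First, I would fix a point $p\in D_i$ and a coordinate chart $U\subset Y$ around $p$ with coordinates $u,v$ such that $D_i\cap U=\{u=0\}$. Since $h$ is meromorphic on $U$, holomorphic on $U\setminus D_i$, and has vanishing order exactly $k\in\Z$ along $D_i$, by definition the function $u^{-k}h$ extends to a holomorphic function $\tilde h$ on $U$, and we may write
\[
  h(u,v) = u^k\,\tilde h(u,v),
\]
with $\tilde h$ holomorphic in a neighbourhood of $p$ (and not identically zero on $D_i\cap U$, though we do not need this). On the corresponding polar chart $\Uro=\sigma^{-1}(U)$ with coordinates $(r,\alpha,v)$ and $\sigma(r,\alpha,v)=(re^{i\alpha},v)$, pulling back gives
\[
  \sigma^{*}h(r,\alpha,v) \;=\; r^{k}\,e^{ik\alpha}\,\tilde h(re^{i\alpha},v).
\]

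Next, I would multiply by $r^{-k}$, so that
\[
  (r^{-k}\sigma^{*}h)(r,\alpha,v) \;=\; e^{ik\alpha}\,\tilde h(re^{i\alpha},v).
\]
The key observation is that although the factor $r^{-k}$ is singular at $r=0$ when $k>0$ (and $\tilde h(re^{i\alpha},v)$ is singular from the $r^{-k}$ side when $k<0$), the combined expression on the right is manifestly real analytic in $(r,\alpha,v)$: the factor $e^{ik\alpha}$ is real analytic in $\alpha\in\R/2\pi\Z$, and $\tilde h(re^{i\alpha},v)$ is the composition of the holomorphic (hence real analytic) function $\tilde h$ with the real analytic map $(r,\alpha,v)\mapsto(re^{i\alpha},v)$. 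This shows that $r^{-k}\sigma^{*}h$ extends real analytically across the boundary $\{r=0\}$, which is precisely the content of the first assertion since $r=|u|$.

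For equivariance, I would simply apply the action $\eta\cdot_i$ recalled above: on $\Dro_i\cap\Uro$ it is given by $\eta\cdot_i(0,\alpha,v)=(0,\alpha+\eta,v)$, so
\[
  (r^{-k}\sigma^{*}h)(\eta\cdot_i(0,\alpha,v))
  \;=\; e^{ik(\alpha+\eta)}\tilde h(0,v)
  \;=\; e^{ik\eta}\,(r^{-k}\sigma^{*}h)(0,\alpha,v),
\]
which is equivariance of weight $k$. The second statement follows by complex conjugation: applying the same argument to $\overline{h}$ yields
\[
  r^{-k}\sigma^{*}\bar h \;=\; \overline{r^{-k}\sigma^{*}h} \;=\; e^{-ik\alpha}\,\overline{\tilde h(re^{i\alpha},v)},
\]
which is again real analytic in $(r,\alpha,v)$ and picks up the factor $e^{-ik\eta}$ under $\eta\cdot_i$, hence has weight $-k$. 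Since the local formulas are independent of the chart (any two charts adapted to $D_i$ differ by a biholomorphism that respects the stratification $\{u=0\}$, so the weight is an intrinsic numerical datum), these local statements glue to give the global claim.

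There is no real obstacle here; the only thing that needs slight care is ensuring that the holomorphy of $\tilde h=u^{-k}h$ on a full neighbourhood of $D_i$ (and not merely on $U\setminus D_i$) is what allows the pull-back by $(r,\alpha)\mapsto re^{i\alpha}$ to remain real analytic through $r=0$, as opposed to merely smooth on $r>0$.
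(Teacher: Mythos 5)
Your proof is correct and follows essentially the same route as the paper: factor $h = u^k\tilde h$ with $\tilde h$ holomorphic (the paper does this via the expansion $h=\sum_{j\ge k}a_j(v)u^j$), pull back in the polar chart so the $r^{-k}$ cancels, and read off real analyticity and the weight-$k$ boundary value $e^{ik\alpha}\tilde h(0,v)$. Nothing is missing.
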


\begin{proof}
	Since the statement of the lemma is local, we can assume that
	$U$ is a small coordinate chart around some $p \in D_i$, with
	coordinates $u,v$ so that the set $U \cap D_i$ is defined by $u = 0$.
	Expand $h = \sum_{j=k}^\infty a_j(v) u^j$, where $a_i$ are holomorphic
	functions in the variable $v$. Then, using polar coordinates as in
	\cref{eq:polar_coords},
	\[
	r^{-k} \sigma^*h(r,\alpha,v)
	= \sum_{j=k}^\infty r^{-k+j} a_k(v) e^{ij\alpha}
	\]
	which shows that $	r^{-k} \sigma^*h $ is real analytic function which 
	is well defined on all $\Uro$. Restricting to the boundary, that is 
	$r=0$, we get:
	\[
	\left. (r^{-k} \sigma^*h) \right|_{r=0} 
	= a_k(v) e^{k i \alpha}
	\]
	and the first statement follows.
	The second statement is completely analogous.
\end{proof}

\section{Milnor fibration at radius zero}

In this subsection we introduce the {\em Milnor fibration at radius zero}, 
which is the fibration given by the map
\[
\arg(\fro)|_{\partial \Yro} : \partial \Yro \to \R / 2 \pi \Z.
\]

\begin{definition}\label{def:angled_ray}
	For $\theta \in \R/2\pi \Z$, we define the {\em Milnor ray} at angle 
	$\theta$, as  
	\[
	\Tu^*_\theta = \arg(f)^{-1}(\theta) \subset \Tu^*,
	\]
	as well as the subsets of $\Yro$:
	\begin{equation}
		\begin{aligned}
			\Yro_\theta &= \arg(\fro)^{-1}(\theta), \qquad 
			&\Dro_{i,\theta} &= \Yro_\theta \cap \Dro_i, \\
			\Droc_{i,\theta} &= \Yro_\theta \cap \Droc_i, 
			&\Dro_{i,j,\theta} &=  \Yro_\theta \cap \Dro_{i,j}.
		\end{aligned}
	\end{equation}
\end{definition}

\begin{lemma}\label{lem:transverse_strata}
	For any $\theta \in \R/2\pi \Z$, the set $\Yro_\theta$ is a submanifold
	of $Y$ which intersects the boundary strata transversely.
\end{lemma}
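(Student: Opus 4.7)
The plan is to verify, in each local chart of the manifold with corners $\Yro$, that $\arg(\fro)$ is a submersion whose restriction to every boundary stratum remains a submersion; both conclusions of the lemma then follow from the implicit function theorem and the standard characterization of transversality in terms of tangent spaces.

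First, I would deal with the interior points $p\in \Dro_\emptyset$: here $\sigma$ is a local diffeomorphism and $\pi$ is biholomorphic away from $D$, so $f\circ\pi$ is a submersion onto $\C^*$ and hence $\arg(\fro)=\arg(f\circ\pi)\circ\sigma$ is a submersion onto $\R/2\pi\Z$. Next, take $p\in \Droc_i$ and local coordinates $(r,\alpha,v)$ as in \cref{eq:polar_coords}, so that $\sigma(r,\alpha,v)=(re^{i\alpha},v)$ and $D_i$ is given by $u=0$. Since $\pi^*f$ vanishes to order $m_i$ along $D_i$, we can write $\pi^*f(u,v)=u^{m_i}h(u,v)$ for a holomorphic $h$ with $h(0,v)\neq 0$ near $p$. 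Therefore
\[
  \arg(\fro)(r,\alpha,v) = m_i\alpha + \arg h(re^{i\alpha},v),
\]
which extends smoothly across $r=0$. Differentiating in $\alpha$ and setting $r=0$ gives $\partial_\alpha \arg(\fro)|_{r=0}=m_i\neq 0$, so $\arg(\fro)$ is a submersion at $p$, and its restriction to $\Droc_i=\{r=0\}$ is still a submersion (again because of the $m_i\partial_\alpha$ term).

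For a corner point $p\in\Dro_{i,j}$ I would use coordinates $(r,s,\alpha,\beta)$ with $\sigma(r,s,\alpha,\beta)=(re^{i\alpha},se^{i\beta})$, so that $\pi^*f=u^{m_i}v^{m_j}h(u,v)$ with $h(0,0)\neq 0$. Then
\[
  \arg(\fro)(r,s,\alpha,\beta) = m_i\alpha + m_j\beta + \arg h(re^{i\alpha},se^{i\beta}),
\]
and at $r=s=0$ the partial derivatives $\partial_\alpha\arg(\fro)=m_i$ and $\partial_\beta\arg(\fro)=m_j$ are both nonzero. In particular, the differential of $\arg(\fro)$ restricted to $\Dro_{i,j}=\{r=s=0\}$ is nonzero, so the restriction is a submersion. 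The same computation handles the case where $p$ meets an arrowhead divisor $D_a$ (with $m_a$ playing the role of $m_j$).

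Putting these local computations together: $\arg(\fro)^{-1}(\theta)=\Yro_\theta$ is cut out by a submersion onto a $1$-manifold, so it is a smooth codimension-one submanifold of $\Yro$. Moreover, since the restriction $\arg(\fro)|_{\Dro_i}$ (resp.\ $|_{\Dro_{i,j}}$) is a submersion, the preimage $\Dro_{i,\theta}$ (resp.\ $\Dro_{i,j,\theta}$) is a codimension-one submanifold of the corresponding stratum, which by the usual kernel-plus-tangent-space criterion is exactly the statement that $\Yro_\theta$ is transverse to that stratum. The main bookkeeping step is just the extension of $\arg h$ smoothly across the boundary, which is precisely the content of \Cref{lem:arg_equivariant} applied to the unit $h/|h|$; once that regularity is granted, transversality is immediate from the nonvanishing of $m_i,m_j$.
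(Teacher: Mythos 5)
Your proof is correct and follows essentially the same route as the paper: write $\pi^*f$ locally as a monomial in the boundary coordinates times a unit, observe that $\arg(\fro)$ restricted to each stratum has nonvanishing derivative $m_i$ (resp.\ $m_i,m_j$) in the angular direction(s), and conclude submersion, hence submanifold and transversality. The only cosmetic difference is that you spell out the interior/arrowhead cases and the final kernel-plus-tangent-space step slightly more explicitly than the paper does.
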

\begin{proof}
	It suffices to show that for every stratum $S \subset \Yro$, the
	restriction \[
	\arg(\fro)|_S:S\to \R / 2 \pi \Z
	\]
	is a submersion.
	For $S = \Dro_\emptyset$, this follows from
	the chain rule, and
	the fact that
	if $p\in \Yro \setminus \Dro_\W \cong \Tu \setminus C$, then
	\[
	D((\piro)^*f)_p:TY_p \to T\C_{f(p)}
	\]
	is a non-zero complex linear map, in
	particular, it is surjective.
	
	Now take $p \in S = \Droc_i$ and let $\Uro$ be a chart around $p$ with 
	coordinates $r,\alpha, v$ and such that $\Droc_i \cap \Uro$ is defined 
	by $r=0$. The function $f$ in the chart $\Uro$ takes the form 
	\[
	f = r^{m_i}e^{i m_i \alpha} \tilde{f}
	\]
	with $\tilde{f}$ a unit on $\Uro$. Now, the function $\fro$ (recall 
	\cref{eq:ro_cd}), in these coordinates, looks like
	\[
	\fro (r,\alpha,v) = (r^{m_i}|\tilde{f}|, m_i \alpha + \arg(\tilde{f}) ) 
	\in \R_{\geq 0} \times \R / 2 \pi \Z.
	\]
	Now, we can compute 
	\[
	\arg(\fro)(r,\alpha, v) = m_i \alpha + \arg(\tilde{f})
	\]
	which, restricted to $S$, takes the form
	\[
	\arg(\fro)|_{S}(\alpha, v) = m_i \alpha + \arg(\tilde{f}).
	\]
	
	Since $\partial_\alpha (\arg(\fro)|_{S}) = m_i \neq 0$, because  
	$\arg(\tilde{f})$ does not depend on $\alpha$, we get again that 
	$\arg(\fro)|_S$ is a submersion. 
	
	A similar argument applies to $\Dro_{i,j}$.
\end{proof}

As a consequence of \Cref{lem:transverse_strata} the map 
\[
\arg(\fro)|_{\partial \Yro}: \partial \Yro \to \R/2 \pi \Z
\]
is a locally trivial topological fibration which is smooth when restricted 
to each of the strata (\Cref{not:strat}) of $\partial \Yro$. Moreover, it 
is equivalent as a $C^0$-fibration to the Milnor fibration.

\begin{definition}\label{def:milnor_fib_radius_0}
	The locally trivial fibration induced by $\arg(\fro)|_{\partial \Yro}$ 
\index{Milnor fibration!at radius zero}
	is called {\em the Milnor fibration at radius $0$}.
\index{$\Fro_\theta$}
	We call its fiber $\Fro_\theta = \arg(\fro)|_{\partial 
		\Yro}^{-1}(\theta)$ {\em the Milnor fiber at radius $0$ over the angle 
		$\theta$}.
\end{definition} 

\begin{rem}
	Note that $\Fro_\theta$ is a topological, piecewise smooth manifold and 
	that, for each $\theta \in \R / 2 \pi \Z$ it has a natural partition 
	induced by the stratification of the exceptional set $D$:
	\[
	\Fro_\theta = \bigsqcup_{i \in \W} \Droc_{i,\theta} 
	\bigsqcup_{\substack{i,j \in \W \\ D_i \cap D_j \neq \emptyset}} 
	\Dro_{i,j,\theta}.
	\]
\end{rem}

\section{Scaling and extending the metric}

In this subsection we explain how the pullback metric on $\Yro$ induces a 
metric on
$\Droc_{i,\theta}$
for each $i \in \V \setminus \{0\}$ and each 
$\theta \in \R/2\pi \Z$.

Fix a vertex $i \in \V\setminus \{0\}$ and a value $\theta\in\R/2\pi\Z$.
The set
$\Droc_{i,\theta}$ covers the Riemann surface $D^\circ_i$, and so
is a Riemann surface. Choose isometric coordinates 
(\Cref{def:isometric_coords}), $x,y$ so that $\{y=0\}$ is the tangent 
associated with $D_i$. Let $U$ be a small coordinate chart of some point
as in \cref{block:c_1}, with coordinates $u,v$ chosen so that the pullback 
$\pi^*x = u^{c_{0,i}}$, and in particular ${u=0}$ defines $D_i\cap U$. 
Recall that by \Cref{lem:tang_van},
$\pi^*y$ vanishes with strictly higher order along $D_i$ than $\pi^*x$.
By abuse of notation, we  write $x$ rather than $\pi^*x$, and denote its
partials with respect to $u$ and $v$ by $x_u$ and $x_v$, and so forth.
With these assumptions, we have $x_v = 0$.
Denote by $\Jac\pi$ the Jacobian matrix \cref{eq:partials}. Then, outside
of the exceptional divisor, the pullback of the standard Hermitian
metric in $\C^2$ is given by the Hermitian matrix
\[
\Jac\pi^* \cdot \Jac\pi
=
\left(
\begin{matrix}
	\bar x_u &
	\bar y_u \\
	0 &
	\bar y_v
\end{matrix}
\right)
\left(
\begin{matrix}
	x_u &
	0 \\
	y_u &
	y_v
\end{matrix}
\right)
=
\left(
\begin{matrix}
	|x_u|^2 + |y_u|^2 &
	\bar y_u y_v \\
	\bar y_v y_u &
	|y_v|^2
\end{matrix}
\right)
\in \Mat(2\times 2, \C)
\]
which does not extend to give a Hermitian metric on all of $U$. Here $\Jac 
\pi^*$ denotes the hermitian adjoint of the complex matrix $\Jac \pi$.
However, the lower diagonal $|y_v|^2$ entry vanishes with order precisely
$2c_{1,i}$. Thus, if we scale the metric by the function
$|u|^{-2c_{1,i}} = |x|^{-2c_{1,i} / c_{0,i}}$,
this lower diagonal entry gives a real $2\times 2$ matrix which is 
diagonal, and
whose diagonal entries are the restriction to $U\cap D_i$
of the extension of $|y_v|^2/|u|^{2c_{1,i}}$ over $U\cap D_i$.
Denote this function by $h_U$. Thus, if $(0,v) \in D_i \cap U$, then
\[
h_U(v) = \lim_{u\to 0} \frac{|y_v|^2}{|u|^{2c_{1,i}}}.
\]
Since this construction was made by scaling the pullback of the
standard metric by the function $|x|^{-2c_{1,i} / c_{0,i}}=|u|^{-2c_{1,i}} 
$, this gives
a metric of the whole set $D^\circ_i$, which in the coordinate
neighborhood $U\cap D^\circ_i$ is given by the real $2\times 2$ matrix
\[
\left(
\begin{matrix}
	h_U &
	0 \\
	0 &
	h_U
\end{matrix}
\right)
\in \Mat(2\times 2,\R)
\]
In fact, one may verify that if $U'$ is another coordinate neighborhood,
then the above matrices define the same metric on the intersection.
Since $\Droc_{i,\theta}$ covers $D^\circ_i$, the Riemann surface
$\Droc_{i,\theta}$ inherits a Riemannian metric.

\begin{definition} \label{def:exc_metric}
	Denote by $g_i$ and $\gro_{i,\theta}$ the Riemannian metrics on
\index{metric!on $\Droc_{i,\theta}$}
\index{metric!on $D_i^\circ$}
	$D_i^\circ$ and $\Droc_{i,\theta}$ constructed above.
\end{definition}

\begin{rem} \label{rem:exc_metric}
	A holomorphic coordinate $v$ in a chart $U \cap D_i^\circ$, as above, 
	induces
	a trivialization of the tangent and cotangent bundles of $U \cap 
	D_i^\circ$,
	which identifies a vector field or a one form with a complex function.
	Write $v = s + it$, so that $s,t$ are real coordinates.
	A vector field $\zeta = a\partial_s + b\partial_t$
	on $U \cap D_i^\circ$ corresponds to the complex function $a + ib$, and
	a differential form $\eta = \alpha ds + \beta dt$ corresponds to
	the function $\alpha + i \beta$. 
	
	The form $\eta$ is dual to the vector field
	$\zeta$ with respect to the metric $g_i$, that is, $\eta = 
	g_i(\zeta,\cdot)$
	if and only if
	\[
	\alpha+i\beta = h_U\cdot(a+ib).
	\]
	If $\phi$ is a real function on $D^\circ_i \cap U$, then the gradient 
	of $\phi$
	with respect to the metric $g_i$ is given by
	\[
	\nabla\phi
	=
	h_U^{-1} \cdot(\phi_s\partial_s + \phi_t\partial_t).
	\]
\end{rem}

	%-----------------------------------------------------------------------
% Beginning of chap5.tex
%-----------------------------------------------------------------------
%
%  AMS-LaTeX sample file for a chapter of a monograph, to be used with
%  an AMS monograph document class.  This is a data file input by
%  chapter.tex.
%
%  Use this file as a model for a chapter; DO NOT START BY removing its
%  contents and filling in your own text.
% 
%%%%%%%%%%%%%%%%%%%%%%%%%%%%%%%%%%%%%%%%%%%%%%%%%%%%%%%%%%%%%%%%%%%%%%%%
	\chapter{Resolving the Polar Locus} \label{s:resolving_polar_locus}

In this section we specify a genericity condition for the metric that 
involves the relative polar curves of the plane curve singularity. Before 
doing so, we review part of the  needed theory.

\section{Generic polar curves} \label{ss:mp_p}
In this subsection, we fix notation regarding relative polar curves and 
recall a result from Tessier that says that there is an equisingular family 
of relative polar curves which is dense among all polar curves.

Let $w = (w_1,w_2) \in \C^2 \setminus \{0\}$ be a vector.
By the canonical isomorphism $T_p\C^2 \cong \C^2$ for any $p\in \C^2$,
we identify $w$ with a vector field $\partial_w$
on $\C^2$. We denote the corresponding partial $\partial_w f$ of $f$ by 
$f_w$.

\begin{definition}\label{def:polar_curve}
	The function $f_w: \C^2 \to \C$ defines a plane curve. We denote this 
	plane curve by 
	\[
	P_w = \{f_w = 0\}.
	\] 
	That is, $P_w$ is the vanishing set of $f_w$. We call $P_w$ the
\index{polar curve}
{\em relative polar curve}
of $f$ associated with $w$. 
	
	Observe that $P_w = P_{\lambda w}$ for any $\lambda \in \C^*$, and so
	$P_w$ only depends on the equivalence class $[w] \in \CP^1$.
	Denote the strict transform of $P_w$ in a modification $Y$ of $\C^2$ by 
	$\tilde P_w$.
\end{definition}

\begin{block}\label{blc:teissier}
	By \cite[pg. 269-270, Th\'eor\`eme 2.]{Tei}, the family $P_w$ for
	$[w] \in \CP^1$ is equisingular on
	a Zariski open dense subset $\Omega$ of $\CP^1$.
	More concretely, there exists a modification
	\[
	\pi: Y \to \C^2
	\] 
	as in \cref{eq:factorize_pi} and a set $\Omega \subset \CP^1$
	satisfying the properties
	\begin{enumerate}
		\item \label{it:normal_cross}
		if $[w] \in \Omega$,
		then $D \cup \tilde X \cup \tilde P_w$ is a normal crossing 
		divisor. That is, $\pi$ is an embedded resolution for each $P_w$ 
		with $w \in \Omega$,
		\item \label{it:equal_vanishing}
		if $[w],[w'] \in \Omega$ and $i \in \V$, then the strict transforms 
		of both 
		associated polar curves $\tilde{P}_w$ and $\tilde{P}_{w'}$ 
		intersect each 
		divisor in the same number of points, that is
		$|D_i \cap \tilde P_w| = |D_i \cap \tilde P_{w'}|$. Equivalently, \[
		\ord_{D_i} \pi^* f_w = \ord_{D_i} \pi^* f_{w'}.
		\]
		Or, equivalently, the cycles associated with $\pi^*f_w$ and 
		$\pi^*f_{w'}$ 
		coincide (recall \Cref{rem:asso_div}). We put a name to this cycle 
		in 
		\Cref{def:pi}.
		\item \label{it:not_tangent} $\Omega$ does not contain  any of the 
		tangents to $(C,0)$.
	\end{enumerate}
\end{block}

\begin{definition} \label{def:pipol}
	We denote by
\index{$\pipol$}
$\pipol: (\Ypol, \Dpol) \to (\C^2,0)$
	the minimal embedded resolution of $(C,0)$ satisfying the above 
	properties,
	and by $\Omega\in\CP^1$ the corresponding open set.
	Denote by $\Gapol$ its resolution graph, with $\Vpol$ the set of
	vertices corresponding to exceptional components.
	Denote by $\Cpol$ and $\Ppol$ the strict transform of the curve $C$ and
	a polar curve $P_w$ for a $w\in \Omega$, respectively.
\end{definition}

\begin{rem}\label{rem:resolv_jac}
	\begin{enumerate}
		\item \label{it:repeat_blow} Observe that $\pipol$ can be obtained 
		from $\pimin$ by iteratively blowing map at intersection points of 
		$\Pmin$ with the exceptional set.
		
		\item \label{it:jacobian_res} Any embedded resolution $\pi:Y \to 
		\C^2$
		that resolves the Jacobian ideal $J= (f_x, f_y)$, i.e. such that 
		$\pi^*J$ is
		principal, satisfies properties \cref{blc:teissier}
		\ref{it:normal_cross}, \ref{it:equal_vanishing} and 
		\ref{it:not_tangent} above.
	\end{enumerate}
\end{rem}
Property \Cref{blc:teissier} \cref{it:equal_vanishing} above leads to the 
following definition:

\begin{definition}\label{def:pi}
	For $[w] \in \Omega$ and $i\in\Vpol$,
	define a cycle $Z_P \in L$ (in the group of cycles
	associated with the modification $\pipol$)
\index{$p_i$}
	and integers $p_i$ for $i\in \V$ by
	\[
	Z_{P}
	= \sum_{i\in \V} |\Ppol \cap D_i| D^*_i
	= \sum_{i\in \V} p_i D_i.
	\]
	For $a \in \A$ we define $p_a = \ord_{D_a}f_w = m_a - 1$.
\end{definition}

\begin{rem}\label{rem:proppi}
	Recall that by \Cref{rem:asso_div}, $\ord_{D_i}(f_w) = p_i$. An 
	equivalent
	definition of the numbers $p_i$ (for any resolution),
	which does not depend on any choice, is
	\begin{equation} \label{eq:def_p_eq}
		p_i = \min\{ \ord_{D_i} (f_x),\; \ord_{D_i}(f_y) \}.
	\end{equation}
	This is because a relative polar curve is defined by taking a linear 
	combination of the partials $f_x$ and $f_y$.
\end{rem}

The next lemma shows that the number of intersection points of a generic 
polar 
curve with the first exceptional divisor is controlled by the number of 
tangents of the plane curve.

\begin{lemma} \label{lem:polar_comp_0}
	For $w \in \Omega$, the number of intersection points of the divisor 
	$D_0$
	corresponding to the first blow-up and
	$\Ppol$ is  $|D_0 \cap \Ppol| = t - 1$,
	where $t$ is the number of different tangents of $(C,0)$.
\end{lemma}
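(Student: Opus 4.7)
The plan is to compute $\tilde P_w \cap D_0$ on the first blow-up $\pi_0 : Y_0 \to \C^2$ and then identify which of these intersection points survive as points of $\Ppol \cap D_0$ in $\Ypol$.

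I would first choose linear coordinates $x,y$ on $\C^2$ so that $\{x=0\}$ is not a tangent of $C$, and factor the initial form as $f_m = \prod_{i=1}^t L_i^{e_i}$ with $L_i(x,y) = b_i x + a_i y$ and $a_i \neq 0$. In the chart $(u,v)\mapsto (u,uv)$ of $\pi_0$, the divisor $D_0 = \{u=0\}$ meets $\tilde C$ at the $t$ distinct points $(0,\alpha_i)$ with $\alpha_i = -b_i/a_i$. Writing $f(u,uv) = u^m \tilde F(u,v)$ with $\tilde F(0,v) = f_m(1,v)$, the chain rule gives
\[
f_x(u,uv) = u^{m-1}[f_{m,x}(1,v) + O(u)], \qquad f_y(u,uv) = u^{m-1}[f_{m,y}(1,v) + O(u)].
\]
Both $f_{m,x}(1,v)$ and $f_{m,y}(1,v)$ are divisible by $\prod_k(v-\alpha_k)^{e_k-1}$, and by Euler's identity $mf_m = xf_{m,x} + yf_{m,y}$ one has $w_1 f_{m,x}(1,v) + w_2 f_{m,y}(1,v) = mw_1 f_m(1,v) + (w_2 - vw_1)f_{m,y}(1,v)$. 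Tracking the leading term, a cancellation occurs and one obtains
\[
\left.\frac{f_w(u,uv)}{u^{m-1}}\right|_{u=0} = \kappa\cdot\prod_{k=1}^t(v-\alpha_k)^{e_k-1}\cdot h_w(v),
\]
for a nonzero constant $\kappa$ and a polynomial $h_w(v)$ of degree exactly $t-1$ (the Euler identity forcing the drop from the naively expected degree $t$).

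Next I would verify that for $[w]\in \Omega$, the polynomial $h_w$ has $t-1$ pairwise distinct roots, none equal to any $\alpha_i$. A direct substitution yields
\[
h_w(\alpha_i) = (w_2 - \alpha_i w_1)\cdot e_i \cdot\prod_{k\neq i}(\alpha_i - \alpha_k),
\]
which is nonzero precisely when $[w_1:w_2]$ is not the tangent direction associated with $L_i$, an exclusion guaranteed by property (iii) in the definition of $\Omega$. Simplicity of the roots of $h_w$ is a generic condition incorporated into $\Omega$, since a coalescence would change the equisingular type of $P_w$ near $D_0$, contradicting property (ii).

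To conclude, in $Y_0$ the set $\tilde P_w \cap D_0$ splits into: (a) the $t-1$ distinct roots of $h_w$, at each of which $\tilde P_w$ is transverse to $D_0$ and meets no other component of $D \cup \tilde C$; and (b) the points $(0,\alpha_i)$ with $e_i \geq 2$, where $D_0$, $\tilde C$, and $\tilde P_w$ all meet. Since $\pipol$ is the minimal embedded resolution of $C \cup P_w$, its blow-ups are performed only at non-normal-crossing centers, leaving a neighbourhood of each point of type (a) untouched; thus each such point persists as one point of $\Ppol \cap D_0$ in $\Ypol$. For type (b), any point $p \in \Ppol \cap D_0$ in $\Ypol$ lying above $(0,\alpha_i)$ would sit in the exceptional set of the modification $\Ypol \to Y_0$, and so be an intersection $D_0 \cap D_j$ for some $j \geq 1$; but then three components $\Ppol$, $D_0$, $D_j$ would pass through $p$, contradicting the simple normal crossings property of $D \cup \Cpol \cup \Ppol$ from property (i). Hence $|\Ppol \cap D_0| = t-1$. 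The main obstacle is this last step, most cleanly handled by invoking the normal crossings property of $\Ypol$ rather than tracking the intermediate blow-ups above each $\alpha_i$ individually.
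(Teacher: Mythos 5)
Your proposal is correct and its core is the same as the paper's: both proofs expand $f_w$ on the first blow-up, identify $\tilde P_w\cap D_0$ with the zeros of a form of degree $e-1$ obtained from the tangent cone, and use Euler's identity to account for the count $t-1$. The differences are in how Euler is deployed and how one passes from $Y_0$ to $\Ypol$. The paper normalizes $w=(0,1)$ by a linear change of coordinates, reads off the multiplicity $e_i-1$ of $\init(f)_y$ at each tangent direction, and then uses Euler's identity once, to show the Jacobian ideal has no base point on $D_0\setminus\tilde C$; that single statement is what guarantees both that the extra zeros are generically simple and that no further blow-ups of $\Ypol\to Y_0$ touch them. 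You instead keep $w$ general, use Euler to force the degree drop of $h_w$, and handle the two kinds of points separately; your type-(b) argument (three components of an SNC divisor cannot meet at a point of a surface, so $\Ppol$ cannot meet $D_0$ above a point of $D_0\cap\tilde C$) is a clean justification of something the paper leaves implicit. Two places need tightening. First, $\deg h_w=t-1$ is not automatic: with $R(v)=\prod_k(v-\alpha_k)$ and $Q(v)=\sum_i e_i\prod_{k\neq i}(v-\alpha_k)$ one finds $h_w=mw_1R+(w_2-vw_1)Q$ with leading coefficient $mw_2-w_1\sum_i e_i\alpha_i$, which vanishes for exactly one direction $[w_0]\in\CP^1$; for that $w$ a root escapes to the point of $D_0$ your chart does not see (the direction $\{x=0\}$), so you should either check the second chart or invoke property (ii) of $\Omega$ to reduce to a single good $w$ (the paper sidesteps this by adapting the coordinates to $w$). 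Second, $\pipol$ is minimal for the whole family $\{P_{w'}\}_{w'\in\Omega}$, not for a single $C\cup P_w$, so "no blow-up near a type-(a) point" needs that no $\tilde P_{w'}$, $w'\in\Omega$, forces one there; this is exactly the statement that type-(a) points are not base points of the Jacobian ideal, which follows from the same Euler identity you already used (a common zero of $\init f_x$ and $\init f_y$ is a zero of $\init f$). With these two remarks supplied, the argument is complete.
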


\begin{proof}
	Let $e$ be the multiplicity of $(C,0)$ so that $f = \init(f) + \cdots$
	where $\init(f)$ is a homogeneous polynomial of degree $e$, and $\cdots$
	stands for higher order terms.
	Distinct linear factors of $\init(f)$ correspond to tangents of $(C,0)$.
	Therefore, we have distinct points $[a_i,b_i] \in \CP^1$, and
	positive integers $e_i$ so that
	$e_1+\ldots+e_t = e$ and
	\[
	\init(f)(x,y) = \prod_{i=1}^t (a_ix - b_iy)^{e_i}
	\]
	We can assume $w = (0,1)$ up to a change of coordinates.
	In this case $f_w=f_y$. Then the intersection points
	$D_0 \cap \Ppol$ correspond to the roots of the homogeneous polynomial
	$\init(f)_y$, which are not roots of $\init(f)$.
	Note that $\init(f_y)=\init(f)_y$, because,
	since $y$ is not a tangent, $\init(f)$ has the monomial $y^e$ with 
	nonzero
	coefficient.
	As a result, $\init(f_y)$ is a homogeneous polynomial of degree $e-1$,
	and has a root of multiplicity $(e_i-1)$ at $[a_i,b_i]$.
	
	To finish the proof, it is enough to show that
	the Jacobian ideal has no base point on $D_0$ outside
	the strict transform of $C$ in $Y_0$.
	In other words, it suffices to show that $\init f_x$ and $\init f_y$
	have no common factors other than the roots of $\init(f)$.
	But a common root of $\init (f_x), \init (f_y)$ is a root of $\init (f)$
	by Euler's identity $x\init (f_x) + y\init (f_y) = e\init (f)$.
\end{proof}

\begin{rem} \label{rem:tangents}
	In the above proof, we have seen that the Jacobian ideal
	$(f_x,f_y) \subset \O_{C,0}$ does not have base points on the
	exceptional divisor of the blow-up $\pi_0:Y_0 \to \C^2$, outside the
	strict transform of $C$. As a result,
	the vertex $0$ of $\Gapol$ has precisely $t$ neighbors in $\W$.
\end{rem}

\begin{definition} \label{def:gen_pol}
	With $\Omega$ as above, let $G_{\mathrm{pol}} \subset \GL(\C,2)$ be the 
	set of
	matrices $A\in\GL(\C,2)$ satisfying the following property:
	If $v,w \in \C^2\setminus\{0\}$ are such that $v$ is a tangent of 
	$(C,0)$ and
	$h_A(v,w) = 0$, then $[w] \in \Omega$. We call such a metric $h_A$, a 
\index{metric!$P$-generic}
	{\em $P$-generic metric}.
\end{definition}

	%-----------------------------------------------------------------------
% Beginning of chap6.tex
%-----------------------------------------------------------------------
%
%  AMS-LaTeX sample file for a chapter of a monograph, to be used with
%  an AMS monograph document class.  This is a data file input by
%  chapter.tex.
%
%  Use this file as a model for a chapter; DO NOT START BY removing its
%  contents and filling in your own text.
% 
%%%%%%%%%%%%%%%%%%%%%%%%%%%%%%%%%%%%%%%%%%%%%%%%%%%%%%%%%%%%%%%%%%%%%%%%
	\chapter{The Invariant and Non-invariant Subgraphs}
\label{s:invariant_non_invariant}

In \Cref{ss:polar}, we define the numerical invariant $\varpi_i$
associated to each divisor of an embedded resolution of $C$. This invariant
distinguishes between two types of divisors: those where $\varpi_i$ vanishes
({\em invariant}) and those where it does not ({\em non invariant}).
In \Cref{s:others}, we see that invariance is precisely the condition
which allows for a scaling of the pullback of $\xi$ to the minimal 
resolution
to be extended over a component of the exceptional divisor.

In \Cref{ss:Upsilon}, we give a combinatorial description of the
induced subgraph of $\Gamin$ on the invariant vertices, only in
terms of $\Gamin$.

\section{The polar weight} \label{ss:polar}

In this subsection we define an integral invariant associated to an embedded
resolution and we prove a formula for it. We work with the embedded 
resolution
$\pipol:\Ypol\to \C$ (\Cref{def:pipol}) of the curve $(C,0)$. 
Recall the  properties of $p_i$ in \Cref{rem:proppi} and that $m_i = 
\ord_{D_i} 
(f)$. Also, as we already did in \Cref{s:real_oriented}, by abuse of 
notation, 
we sometimes write $x$ rather than $\pi^*x$, and denote its
partials with respect to $u$ and $v$ by $x_u$ and $x_v$, and so forth. 
Similarly $f_v$ denotes the partial derivative $\partial_v$ of $\pi^*f$.

\begin{definition}\label{def:polar}
	The
\index{polar weight}
\emph{polar weight}
of the vertex $i \in \V$ is the integer
\index{$\varpi_i$}
	\[
	\varpi_i = c_{1,i} - m_i + p_i.
	\]
	The vertex $i\in \V$, or the divisor $D_i$,
	is \emph{invariant} if $\varpi_i = 0$.
	An edge $ij$ joining $i,j \in \V$, or the intersection point
	in $D_i \cap D_j$, is \emph{invariant} if both $i$ and $j$
	are invariant. An edge $ij$ is said to be {\em non-invariant} if  
	$\varpi_i \neq 0$ or $\varpi_j \neq 0$.
\end{definition}

\begin{example}\label{ex:base_case}
	If $i = 0$, then $c_{1,i} = 1$, and $p_i = m_i - 1$, and so 
	$\varpi_0=0$.
	In other words, the divisor $D_0$ appearing in the first blow-up is 
	invariant.
\end{example}

%The following lemma gives a formula for $\varpi_i$ in terms of the 
%vanishing orders (\Cref{def:initial_part}) of (the pullback of) $f$ and 
%its 
%partial derivative in the tangent direction associated with $D_i$ 
%(\Cref{def:tangent}). In particular, it shows that $\varpi_i$ is a 
%non-negative integer.

\begin{lemma} \label{lem:varpi}
	Let $i \in \V$ and $p \in D_i^\circ$, and let $U \subset Y$ be a 
	coordinate
	neighborhood around $p$ with coordinates $u,v$ satisfying
	$\pi^* x = u^{c_{0,i}}$. Assume also that the standard metric in the 
	coordinates $x,y$ is $P$-generic. Then
	\begin{equation} \label{eq:varpi}
		\varpi_i = \ord_u f_v - \ord_u f.
	\end{equation}
	In particular, $\varpi_i \geq 0$ with a strict inequality if and only
	the initial part $\init_u f$ does not depend on $v$, and so
	$\init_u f$ is a nonzero constant multiple of $u^{m_i}$.
\end{lemma}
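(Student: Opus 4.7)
The plan is to reduce the formula to a straightforward chain rule computation, exploiting the shape of the local coordinates chosen around $p$. Since $\pi^*x = u^{c_{0,i}}$ depends only on $u$, we have $\partial_v(\pi^*x) = 0$, so the chain rule gives
\[
  \partial_v(\pi^*f)
  = \pi^*(f_x)\cdot\partial_v(\pi^*x) + \pi^*(f_y)\cdot\partial_v(\pi^*y)
  = \pi^*(f_y)\cdot\partial_v(\pi^*y).
\]
The formula $\varpi_i = \ord_u f_v - \ord_u f$ will then follow by computing the vanishing orders of the two factors separately and adding them.

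For the factor $\partial_v(\pi^*y)$, I would invoke the analysis already carried out in \cref{block:c_1}: writing $\pi^*y = \sum_{k\geq c_{0,i}} u^k g_k(v)$, the number $c_{1,i}$ was defined as the smallest $k$ such that $g_k$ is non-constant, which is exactly the order $\ord_u(\partial_v \pi^*y)$. For the factor $\pi^*(f_y)$, the key point is that the $P$-genericity of the standard metric in the coordinates $x,y$ applies: since $\{y=0\}$ is the tangent associated with $D_i$, the vector $v=(1,0)$ is tangent at the origin, and the vector $w=(0,1)$ is $h_{\mathrm{id}}$-orthogonal to $v$, so $[w]\in\Omega$ by \Cref{def:gen_pol}. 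Therefore $P_{(0,1)} = \{f_y=0\}$ is a generic polar curve in the sense of \Cref{def:pipol}, which gives $\ord_u(\pi^*f_y) = \ord_{D_i}(f_y) = p_i$. Combining, $\ord_u f_v = p_i + c_{1,i}$, and since $\ord_u f = m_i$, subtraction yields $\varpi_i = c_{1,i} - m_i + p_i$ as required.

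For the nonnegativity and the characterization of strict inequality, I would expand $\pi^*f = \sum_{k\geq m_i} u^k h_k(v)$ with $h_{m_i}\not\equiv 0$, so that $\init_u f = u^{m_i}h_{m_i}(v)$; then termwise differentiation in $v$ gives $f_v = \sum_{k\geq m_i} u^k h_k'(v)$, whence $\ord_u f_v \geq m_i = \ord_u f$, i.e.\ $\varpi_i \geq 0$. The inequality is strict exactly when $h_{m_i}'(v) \equiv 0$, i.e.\ when $h_{m_i}$ is a nonzero constant, which is the same as saying that $\init_u f$ is independent of $v$ and equals a nonzero constant multiple of $u^{m_i}$.

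The only genuinely delicate step is the identification $\ord_u(\pi^*f_y) = p_i$: the numerical invariant $p_i$ was defined via any $[w]\in\Omega$, and one must verify that the particular direction $w=(0,1)$ dictated by the choice of coordinates lies in $\Omega$, which is exactly where the $P$-generic hypothesis on the metric enters. Everything else is an application of the chain rule and the formulas collected in \cref{block:c_1}.
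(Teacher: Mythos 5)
Your proof is correct and takes essentially the same route as the paper's: the chain rule with $x_v = 0$ gives $f_v = y_v f_y$, and the two order computations $\ord_u y_v = c_{1,i}$ (from \cref{block:c_1}) and $\ord_u f_y = p_i$ (from $P$-genericity, which the paper records in the preamble to the lemma rather than inside the proof) yield \cref{eq:varpi}. Your termwise-differentiation argument for the sign and the characterization of strict inequality is exactly the reasoning the paper leaves implicit, and your verification that $[(0,1)]\in\Omega$ is a welcome extra detail rather than a divergence.
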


\begin{proof}
	As in \cref{block:c_1}, we have $x_v = 0$ and $\ord_u y_v = c_{1,i}$.
	Furthermore, by the chain rule,
	\[
	f_v = x_v f_x + y_v f_y = y_v f_y.
	\]
	Therefore,
	\[
	c_{1,i} - m_i + p_i = \ord_u f_v - \ord_u f,
	\]
	proving \cref{eq:varpi}. The other statements follow from the last 
	equation and the definition of $\varpi_i$.
\end{proof}

\section{The invariant subgraph $\Upsilon$ of $\Gamma_{\mathrm{min}}$} 
\label{ss:Upsilon}

In this subsection, we use arbitrary coordinates $x,y$ for $\C^2$,
with the only condition that $\{y=0\}$ is a tangent of $C$. In particular,
we make no use of a generic metric.
We work with the minimal resolution $\Ymin \to \C^2$.
We recall that the characterization of the numbers $p_i$ given in 
\cref{eq:def_p_eq}, for $i \in \Vmin$
(or for $i \in \V$ for $\V$ the vertex set of any embedded resolution), is 
independent of any genericity conditions.
The main result of this subsection is \Cref{lem:inv}, which
characterizes invariant vertices in $\Gamin$.

\begin{notation}\label{not:geodesic_degree}
	Since $\Gamin$ is a tree, any pair of vertices in $\Gamin$ is joined by
	a \emph{geodesic}, i.e. a shortest path, which is unique.
	The
\index{degree}
\emph{degree}
$\degmin(i)$ of a vertex $i \in \Vmin$ in $\Gamin$ is 
	the number of adjacent
	vertices, including arrowheads.
\end{notation}

\begin{definition}\label{def:upsilon}
	Let
\index{$\Upsilon$}
$\Upsilon \subset \Gamin$ be the smallest connected subgraph of
	$\Gamin$ 
	containing the vertex corresponding to the first blow-up $0 \in \Vmin$, 
	as well as any vertex in
	$\Vmin$ adjacent to an arrow-head $a \in \A$.
	Let $\V_\Upsilon \subset \Vmin$ be the vertex set of $\Upsilon$.
\end{definition}

The next lemma characterizes the vertex set of $\Upsilon$. We postpone its 
proof to the end of this subsection (until we have developed the necessary 
tools for it). The rest of the subsection is a collection of technical 
results, 
most of them of a combinatorial nature, that are of interest on their own.

\begin{lemma} \label{lem:inv}
The vertex $i \in \Vmin$ is
\index{invariant}
invariant, i.e. $\varpi_i = 0$, if and only if $i \in \V_\Upsilon$.
\end{lemma}

\begin{block} \label{block:Z_induction}
	Let $\V_y \subset \Vmin$ be the set of vertices of $\Gamin$ whose
	associated tangent is the line $\{y=0\}$. The induced
	subgraph $\Gamma_y \subset \Gamma$ is one of the connected
	components of the graph
	$\Gamin$, with the vertex $0$ removed.
	We have a coordinate chart in $Y_0$ with coordinates $s,t$ so that
	$\pi_0$ is given by $(s,t) \mapsto (s,st)$.
	Denote by $p_0 \in Y_0$ the intersection point of $D_0$ and the
	strict transform of $\{y=0\}$, given in $s,t$-coordinates as $(0,0)$, 
	and
	let $g \in \O_{Y_0,p}$ be the pullback of $f$, given by
	\[
	g(s,t) = f(s,st).
	\]
	The morphism $\Ymin \to Y_0$ (restricted to a preimage of a small 
	neighborhood
	of $p_0$) is the minimal resolution of the plane curve
	germ defined by $g$. For $i \in \V_y$, we denote by
	$c_{0,i,y}$, $c_{1,i,y}$, $p_{i,y}$, $m_{i,y}$ and $\varpi_{i,y}$
	the invariants so far introduced corresponding to this resolution.
	Note that we have $m_{i,y} = m_i$, since $g$ is the pullback of $f$.
	Since $s,t$ are coordinates around $p_0\in Y_0$, we have
	\[
	c_{0,i,y} = \min\{\ord_{D_i}(s), \ord_{D_i}(t)\},\qquad
	p_{i,y} = \min\{\ord_{D_i}(g_s), \ord_{D_i}(g_t)\}.
	\]
	Using \Cref{lem:c_01_recursive}, one also verifies that
	\begin{equation} \label{eq:c1y}
		c_{1,i,y} = c_{1,i} - c_{0,i}.
	\end{equation}
	We can repeat the construction of the graph $\Upsilon$ as in
	\Cref{def:upsilon} for the function $g$.
	We denote the result of this subgraph of $\Gamma_y$ by $\Upsilon_y$.
	Denote by $D_y = \{y=0\} \subset \C^2$. By abuse of notation, we also 
	denote
	by $D_y$ the strict transform of $D_y$ in any modification of $\C^2$.
	Then $D_y$ intersects a unique exceptional component in $\Ymin$.
	By renaming of vertices, we assume that this component is $D_1$, and 
	that
	the geodesic from $1$ to $0$ in $\Gamin$ is $1,2,\ldots,k,0$.
	Set $\V' = \{1,\ldots,k\} \subset \Vmin$. We refer the reader to 
	\cref{fig:diag} for an example showing the use of this notation.
\end{block}

\begin{example}
	Let $f(x,y) = y^7 + x^3y^4 + x^6y^2 + x^{11}$. This is a Newton 
	nondegenerate
	germ, which can be resolved using a toric modification with $5$ 
	exceptional
	components, as seen in \cref{fig:diag}.
	Using the notation in \cref{block:Z_induction}, we find
	\[
	g(s,t)
	= f(s,st)
	= (st)^7 + s^3(st)^4 + s^6(st)^2 + s^{11}
	= s^7  \left( t^7 + t^4 + st^2 + s^4 \right).
	\]
	This curve has three branches at the origin $(s,t) = (0,0)$, one of 
	which
	is not reduced. The graph $\Gamma_y$ contains the four vertices
	to the right in \cref{fig:diag}.
\end{example}

\begin{lemma}
	With the above notation, there exists a morphism $\pi':Y' \to Y_0$ 
	which is
	a sequence of blow-ups, so that the minimal resolution factorizes as
	\[
	\begin{tikzcd}[row sep=tiny]
		& Y' \arrow[dd,"\pi'"] \\
		\Ymin \arrow[ur] \arrow[dr] &
		\\
		& Y_0
	\end{tikzcd}
	\]
	and the exceptional divisor of $\pi'$ is $D_1 \cup \ldots \cup D_k$.
\end{lemma}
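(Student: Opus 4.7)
The plan is to construct $\pi' : Y' \to Y_0$ explicitly as a sequence of $k$ point blow-ups, running along the chain of intersection points from $D_0$ towards $D_y$, and then to verify both the factorization of the minimal resolution through $\pi'$ and the identification of the resulting exceptional divisors with the geodesic chain $D_1, \ldots, D_k$.

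First I would define $\pi'$ inductively. Set $Y'_0 := Y_0$ and let $\pi'_1 : Y'_1 \to Y'_0$ be the blow-up of $p_0 = D_0 \cap D_y$, producing a new exceptional divisor $E_1$ which meets the strict transforms of $D_0$ and $D_y$ transversely at two distinct points. For $j = 2, \ldots, k$, let $\pi'_j : Y'_j \to Y'_{j-1}$ be the blow-up of the point $E_{j-1} \cap D_y$ in $Y'_{j-1}$, producing a new exceptional divisor $E_j$. At each step, the strict transforms of $E_{j-1}$ and $D_y$ become disjoint (separated by $E_j$), while the strict transform of $D_y$ meets $E_j$ at a new single point, so the construction is well defined. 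Set $Y' := Y'_k$ and $\pi' := \pi'_1 \circ \cdots \circ \pi'_k$. By construction, the exceptional configuration of $\pi'$ is a chain with $D_0$ adjacent to $E_1$, $E_j$ adjacent to $E_{j+1}$ for $1 \leq j < k$, and $E_k$ adjacent to $D_y$.

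Second, I would verify that $\Ymin \to Y_0$ factors through $\pi'$, applying the universal property of blow-ups iteratively. At each stage $j$, the center of $\pi'_j$ is a closed point whose scheme-theoretic preimage under the already-constructed morphism $\Ymin \to Y'_{j-1}$ is supported on the union of exceptional components of $\pimin$ lying above it. Since $\Ymin$ is a smooth surface, this preimage is an effective Weil divisor, hence Cartier; by the universal property of the blow-up $\pi'_j$, the morphism $\Ymin \to Y'_{j-1}$ lifts to $\Ymin \to Y'_j$. Iterating gives the desired morphism $\Ymin \to Y'$.

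Third, I would identify each $E_j$ with $D_{k-j+1}$ under this morphism. Since the map $\Ymin \to Y'$ is a birational morphism of smooth surfaces, it is itself a composition of blow-ups; each exceptional component of $\pimin$ either contracts to a point of $Y'$ or is the strict transform of an exceptional component of $\pi'$. The non-contracted components are in bijection with $E_1, \ldots, E_k$ and, preserving adjacencies, must form a chain from $D_0$ to $D_y$ in $\Gamin$. Since $\Gamin$ is a tree, there is a unique such chain, given by hypothesis as $0, k, k-1, \ldots, 1$. Hence the non-contracted components are exactly $D_1, \ldots, D_k$ with $D_{k-j+1}$ identified with $E_j$, so the exceptional divisor of $\pi'$ is $D_1 \cup \cdots \cup D_k$.

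The main obstacle is the identification step: one must rule out the possibility that a divisor of $\Ymin$ lying off the geodesic chain survives as a non-contracted divisor under $\Ymin \to Y'$. This follows from the tree structure of $\Gamin$, uniqueness of geodesics therein, and the fact that the construction of $\pi'$ produces exactly $k$ exceptional divisors forming a chain, forcing the bijection with the unique geodesic.
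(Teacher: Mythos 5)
There is a genuine gap, and it is in the very first step: the centers you choose for the blow-ups are wrong in general. You always blow up $E_{j-1}\cap D_y$, so (in the toric picture of \Cref{ex:toric}) your $E_j$ is the divisor with $(\ord x,\ord y)=(1,j+1)$; in particular every exceptional divisor of your $\pi'$ has $c_0=1$. But the divisors $D_1,\ldots,D_k$ on the geodesic need not satisfy this. Already for $f=y^2-x^3$ the minimal resolution is: blow up the origin, then $p_0=D_0\cap D_y$ (giving a divisor with $(c_0,c_1)=(1,2)$), then the corner $D_0\cap E_1$ (giving $(c_0,c_1)=(2,3)$). So $k=2$ and the chain is carried by the rays $(1,2)$ and $(2,3)$, whereas your construction produces $(1,2)$ and $(1,3)$. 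The point $E_1\cap D_y$ is never blown up in $\Ymin$; its preimage in $\Ymin$ is a single point, not a Cartier divisor, so the universal property of the blow-up does not apply and $\Ymin$ does \emph{not} dominate your $Y'$ (the birational map $\Ymin\dashrightarrow Y'$ would have to contract the divisor over $D_0\cap E_1$ and extract the one over $E_1\cap D_y$). The underlying error is the assumption that the resolution process marches down the chain by always separating the newest divisor from $D_y$; in fact the successive centers sit wherever the strict transform of $C$ (or a non--normal-crossing point of the total transform) actually lies, and this is frequently a corner $E_i\cap E_j$ between two exceptional components. The resulting chain is the continued-fraction chain between the rays $(1,1)$ and $(0,1)$ determined by $C$, not the staircase $(1,2),(1,3),\ldots$.

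The fix is to define $\pi'$ not by prescribing centers a priori but by extracting a subsequence of the blow-ups that constitute $\pimin$ itself: run the minimal resolution algorithm, but perform only those blow-ups whose centers are intersection points lying on the total transform of $D_y$. Because every center of $\pi'$ is genuinely a center occurring in $\pimin$, the remaining blow-ups of the algorithm give the factorization $\Ymin\to Y'\to Y_0$ with no appeal to the universal property in a situation where it could fail; and the exceptional divisors so produced are exactly the components of $\Ymin$ lying on the chain joining $D_0$ to the strict transform of $D_y$, i.e.\ $D_1\cup\cdots\cup D_k$. Your third step (using that $\Gamin$ is a tree to identify the surviving chain with the unique geodesic) is fine in spirit, but it only becomes available once the construction and the factorization are repaired.
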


\begin{proof}
	The minimal resolution is obtained by repeatedly blowing up any point of
	the total transform of $C$ which is not normal crossing. If we add the
	rule that we only blow up intersection points on the total transform of
	$D_y$, we get a shorter sequence of blow-ups $\pi':Y'\to Y_0$.
	The dual graph to the exceptional divisor of $Y'$ is a string, and
	the strict transform of $C$ in $Y'$ only intersects smooth points
	on the total transform of $D_y$.
	Finishing the resolution process gives a map
	$\Ymin \to Y'$, and an identification
	of the exceptional divisor of $Y'$ with
	$D_1 \cup \ldots \cup D_k \subset \Ymin$.
\end{proof}

\begin{block} \label{blc:yprime}
	The composed morphism $Y' \to Y_0 \to \C^2$ is a toric morphism with 
	respect to 
	the
	standard toric structure on $\C^2$, as in \Cref{ex:toric}.
	Let $\triangle_y$ be the corresponding fan.
	Along with the natural basis vectors $(1,0)$ and $(0,1)$,
	the vectors
	\[
	w_l = (\ord_{D_l}(x), \ord_{D_l}(y)) = (c_{0,l}, c_{1,l}),\qquad
	l = 0,1,\ldots,k
	\]
	generate the rays in $\triangle_y$.
	In particular, $w_0 = (1,1)$.
	Denote throughout \Cref{ss:Upsilon} by $\wt_l(f)$ and $\init_l f$ the 
	weight
	and initial part of a function $f\in\O_{\C^2,0}$
	with respect to the weight vector $w_l$. Thus, if
	\[
	f(x,y) = \sum_{i,j} a_{i,j} x^i y^j,\qquad
	\supp(f) = \set{(i,j) \in \Z_{\geq 0}^2}{a_{i,j} \neq 0},
	\]
	then
	\[
	\begin{split}
		\wt_l(f)
		&= \min\set{\langle w_l,(i,j)\rangle}{(i,j)\in\supp(f)},\\
		\init_l(f)
		&= \sum\set{a_{i,j}x^iy^j}{\langle w_l,(i,j)\rangle = \wt_l(f)}.
	\end{split}
	\]
	Furthermore,
	\[
	\ord_{D_l}(f) = \wt_l(f).
	\]
\end{block}
In the rest of the results of the present subsection we use the notation 
introduced in 
\Cref{block:Z_induction} and \Cref{blc:yprime}.
\begin{lemma} \label{lem:ord_y}
	Assume $i \in \V_y$. The following holds
	\begin{enumerate}
		\item if $i \in \V'$, then $\ord_{D_i}(y) = c_{1,i}$ and
		\item if $i \notin \V'$, then $\ord_{D_i}(y) < c_{1,i}$ .
	\end{enumerate}
	
\end{lemma}

\begin{proof}
	The first statement follows from the computations made in
	\Cref{ex:toric}.
	By construction, the strict transform of any branch of $C$
	with tangent $\{y=0\}$ in $Y'$ intersects the exceptional divisor
	in a smooth point. If $D_l$ has such a point, and we blow it up to
	get a new vertex $j$, then by \Cref{lem:c_01_recursive},
	we have $c_{1,j} = c_{1,l}+1$, whereas $\ord_{D_j}(y) = \ord_{D_l}(y)$.
	By continuing the resolution process, the difference $c_{1,j} - 
	\ord_{D_j}(y)$
	can only increase. This proves the second statement.
\end{proof}

\begin{cor} \label{cor:yx_const}
	If $i \in \V_y \setminus \V'$, then the (pullback of the) function
	\begin{equation} \label{eq:yx_const}
		\frac{y^{\ord_{D_i}(x)}}{x^{\ord_{D_i}(y)}}
	\end{equation}
	is a nonzero constant along $D_i^\circ$.
\end{cor}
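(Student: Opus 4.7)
Set $a_l := \ord_{D_l}(x)$ and $b_l := \ord_{D_l}(y)$ for $l \in \V$; the function in question is $\phi := y^{a_i}/x^{b_i}$. Since $\ord_{D_i}(\phi) = a_i b_i - b_i a_i = 0$, the pullback $\phi$ restricts to a nontrivial meromorphic function on the rational curve $D_i \cong \CP^1$, and I shall show that $\phi|_{D_i}$ has neither zeros nor poles, hence is a nonzero constant whose value agrees with the function appearing in the statement.

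The divisor of $\phi$ on $Y$ reads
\[
  \div(\phi) = a_i D_y - b_i D_x + \sum_{l \in \V}\bigl(a_i b_l - b_i a_l\bigr)\, D_l,
\]
where $D_y$ and $D_x$ denote the strict transforms of $\{y=0\}$ and $\{x=0\}$. A zero or pole of $\phi|_{D_i^\circ}$ could only arise from $D_y$, $D_x$, or another exceptional $D_l$; the last is excluded by the definition of $D_i^\circ$, the strict transform $D_x$ is disjoint from $D_l$ for every $l \in \V_y$ (the tangent associated with $l$ is $\{y=0\}\neq\{x=0\}$), and by the construction of $\pi'$ the strict transform $D_y$ meets only a single vertex of $\Gamin$, which belongs to $\V'$. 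Hence $\phi|_{D_i^\circ}$ is a nowhere-vanishing holomorphic unit, and I am reduced to checking regularity at each corner $D_i \cap D_j$ with $j$ a neighbor of $i$ in $\Gamin$. In local coordinates $(u,v)$ with $D_i = \{u=0\}$ and $D_j = \{v=0\}$ the vanishing order of $\phi|_{D_i}$ at the corner equals $\ord_{D_j}(\phi) = a_i b_j - b_i a_j$, so the corollary reduces to proving that $(a_i,b_i)$ and $(a_j,b_j)$ are proportional for every neighbor $j$ of $i$ in $\Gamin$.

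I will establish this proportionality by induction on the blow-ups of the factorization $\Ymin \to Y'$. The key combinatorial input is that the pair $(a_l,b_l)$ satisfies the same additive recursion under blow-ups as $(c_{0,l},c_{1,l})$ in \Cref{lem:c_01_recursive}: unchanged under a smooth-point blow-up of $D_l$, and equal to $(a_j+a_l,b_j+b_l)$ under a blow-up of $D_j \cap D_l$; this is valid past $\pi'$ since the blow-up centers there avoid both $D_x$ and $D_y$. For the base case, the first blow-up past $Y'$ cannot occur at an intersection within $\V'$: such intersections sit on the total transform of $D_y$, and any blow-up there forced by a non-normal-crossing of $C$ would already have been performed in $\pi'$ by its minimality. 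Thus the first new vertex $i_1$ comes from a smooth point of some $D_l^\circ$ with $l \in \V'$, giving $(a_{i_1},b_{i_1}) = (a_l,b_l)$, trivially proportional to the data of its unique neighbor. In the inductive step, a smooth-point blow-up again preserves the pair; an intersection blow-up $D_j \cap D_l$ past $\pi'$ must have at least one endpoint in $\V_y\setminus\V'$ (both endpoints lie over $p_0$, and both lying in $\V'$ is excluded exactly as in the base case), so the inductive hypothesis gives $(a_j,b_j)\parallel(a_l,b_l)$, and therefore the new pair $(a_i,b_i)=(a_j+a_l,b_j+b_l)$ is proportional to both. The subtlest point in the whole argument is the base case — ruling out an intersection within the string $\V'$ as the first blow-up past $\pi'$ — which hinges on the characterization of $\pi'$ as the maximal subsequence of the minimal resolution whose blow-up centers sit on the total transform of $D_y$; once this is clear the numerical induction is routine.
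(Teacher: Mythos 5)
Your proof is correct, but it takes a genuinely different route from the paper's. The paper's argument is a two-line local computation: in the chart of \cref{block:c_1} with $\pi^*x = u^{c_{0,i}}$ and $\pi^*y = \sum_k u^k g_k(v)$, the fraction restricted to $D_i^\circ$ is $g_k(v)^{c_{0,i}}$ with $k = \ord_{D_i}(y)$, and since $k < c_{1,i}$ by \Cref{lem:ord_y} the function $g_k$ is constant by the very definition of $c_{1,i}$ as the first index at which $g_k$ depends on $v$. You instead argue globally: $\phi = y^{\ord_{D_i}(x)}/x^{\ord_{D_i}(y)}$ has vanishing order zero along $D_i$, so it restricts to a rational function on $D_i \cong \CP^1$, and you kill all potential zeros and poles — $D_x$ and $\tilde D_y$ miss $D_i$, and the corner contributions $a_ib_j - b_ia_j$ vanish because $(\ord_{D_l}x,\ord_{D_l}y)$ obeys the same additive blow-up recursion as $(c_{0,l},c_{1,l})$ and, past $\pi'$, no blow-up center sits on the total transform of $D_y$ (so the first vertex of $\V_y\setminus\V'$ inherits its parent's pair, and proportionality propagates). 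The only loose ends are cosmetic: the intersection point excluded in your base case could also involve $D_0$ rather than two vertices over $p_0$, but $D_0$ is likewise a component of the total transform of $D_y$, so the same exclusion applies; and corners of $D_i$ with arrowheads should be mentioned as irrelevant since $\tilde C$ is not in $\supp(\div\phi)$. What your approach buys is independence from \Cref{lem:ord_y} and from the expansion defining $c_1$ — the constancy drops out of the combinatorics of the resolution alone — at the cost of redoing by induction structural facts about $\pi'$ that the paper has already packaged into \cref{block:Z_induction} and \Cref{lem:ord_y}; what the paper's route buys is brevity, since the statement is essentially a reformulation of that lemma.
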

\begin{proof}
	Set $k = \ord_{D_i}(y)$, and note that $\ord_{D_i}(x) = c_{0,i}$.
	Let $U$ be a coordinate chart around some point in $D^\circ_i$ as in
	\cref{block:c_1}, with coordinates $u,v$ so that $x = u^{c_{0,i}}$.
	Then, the fraction
	\cref{eq:yx_const}, restricted to $D_i^\circ$ equals $g_k(v)^{c_{0,i}}$.
	Since $k < c_{1,i}$ by the previous lemma,
	the function $g_k(v)$ is constant, by construction in \cref{block:c_1}.
	Since the fraction \cref{eq:yx_const} has vanishing order zero,
	this constant is not zero.
\end{proof}

\begin{lemma} \label{lem:varpi_char}
	Let $i \in \V_y$. The following are equivalent
	\begin{enumerate}
		\item \label{it:varpi_char_inv}
		$i$ is invariant, i.e. $\varpi_i = 0$,
		\item \label{it:varpi_char_x}
		the fraction
		\[
		\frac{f^{c_{0,i}}}{x^{m_i}}
		\]
		restricts to a nonzero constant function along $D_i^\circ$,
		\item \label{it:varpi_char_h}
		If $h \in \O_{\C^2,0}$ is any function satisfying
		$\ord_{D_i}(g) = c_{0,i}$, then the fraction
		\[
		\frac{f^{c_{0,i}}}{h^{m_i}}
		\]
		restricts to a nonzero constant function along $D_i^\circ$,
	\end{enumerate}
\end{lemma}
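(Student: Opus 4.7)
The plan is to reduce all three conditions to a transparent local formula via a direct expansion in suitable coordinates, and then read off the equivalence from the characterization of $\varpi_i$ in \Cref{lem:varpi}. Fix $p\in D_i^\circ$ and pick local coordinates $(u,v)$ on a chart $U\ni p$ as in \cref{block:c_1}, arranged so that $\pi^*x = u^{c_{0,i}}$; this is legitimate because the tangent associated with $D_i$ is $\{y=0\}$ rather than $\{x=0\}$, so $\ord_{D_i}(x)=c_{0,i}$ by \Cref{lem:tang_van}. In such a chart I would expand
\[
  \pi^*f = u^{m_i}\bigl(f_{m_i}(v) + u\,f_{m_i+1}(v) + \cdots\bigr),
\]
with $f_{m_i}(v)\not\equiv 0$ by definition of $m_i$.

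A direct division then gives $(\pi^*f)^{c_{0,i}}/(\pi^*x)^{m_i} = (f_{m_i}(v)+u\,f_{m_i+1}(v)+\cdots)^{c_{0,i}}$, whose restriction to $U\cap D_i^\circ$ (the locus $u=0$) equals $f_{m_i}(v)^{c_{0,i}}$. These local expressions glue to an intrinsic holomorphic function on $D_i^\circ$, since the ratio is defined globally on $Y$ and has $D_i$-vanishing order zero. The equivalence \cref{it:varpi_char_inv} $\Leftrightarrow$ \cref{it:varpi_char_x} then follows directly from the final clause of \Cref{lem:varpi}: the vanishing of $\varpi_i$ is characterized precisely in terms of the $v$-dependence of $\init_u f = u^{m_i} f_{m_i}(v)$, which translates verbatim into the corresponding behavior of $f_{m_i}(v)^{c_{0,i}}$ as a function on $D_i^\circ$.

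To handle \cref{it:varpi_char_h}, I compare the ratio in \cref{it:varpi_char_x} with the one obtained from an arbitrary $h\in\O_{\C^2,0}$ satisfying $\ord_{D_i}(h)=c_{0,i}$. Writing $h(x,y)=\sum a_{jk}x^jy^k$ and pulling back yields
\[
  \pi^*h = \sum_{j,k} a_{jk}\,u^{jc_{0,i}}(\pi^*y)^k,
\]
where the $(j,k)$-summand has $u$-order $j c_{0,i} + k\,\ord_{D_i}(y)$. Since $\V_y$ is a connected component of $\Gamin\setminus\{0\}$ (see \cref{block:Z_induction}) we have $i\neq 0$, and \Cref{lem:tang_van} gives $\ord_{D_i}(y)>c_{0,i}$. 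Therefore the only summand of minimal $u$-order $c_{0,i}$ is $a_{10}u^{c_{0,i}}$, and the hypothesis $\ord_{D_i}(h)=c_{0,i}$ forces $a_{10}\neq 0$. Consequently $\pi^*h = a_{10}u^{c_{0,i}} + O(u^{c_{0,i}+1})$, and
\[
  \left.\frac{(\pi^*f)^{c_{0,i}}}{(\pi^*h)^{m_i}}\right|_{D_i^\circ}
  = \frac{1}{a_{10}^{m_i}}\,f_{m_i}(v)^{c_{0,i}},
\]
a nonzero constant multiple of the restriction used in \cref{it:varpi_char_x}. Hence the two restrictions have the same character, completing the equivalence.

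The main obstacle I anticipate is not conceptual but bookkeeping: one must verify that the leading coefficient $a_{10}$ is indeed a nonzero constant rather than a function of $v$, which relies crucially on the strict inequality $c_{1,i}>c_{0,i}$ valid for $i\neq 0$. This is precisely the structural reason for excluding the vertex $0$ from $\V_y$; at $i=0$ one has $c_{0,0}=c_{1,0}=1$ and the pullback $\pi^*y$ contributes at the same $u$-order as $\pi^*x$, making the leading coefficient genuinely depend on the position on $D_0$.
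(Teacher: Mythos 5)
Your proposal is correct and follows essentially the same route as the paper's proof: the equivalence \cref{it:varpi_char_inv}$\Leftrightarrow$\cref{it:varpi_char_x} is read off from \Cref{lem:varpi} after computing the restriction of $f^{c_{0,i}}/x^{m_i}$ to $D_i^\circ$ as $f_{m_i}(v)^{c_{0,i}}$, and \cref{it:varpi_char_x}$\Leftrightarrow$\cref{it:varpi_char_h} is obtained by observing that the hypothesis on $h$ forces $h = ax + b y + \hot$ with $a\neq 0$ (so $\pi^*h = a\,u^{c_{0,i}} + O(u^{c_{0,i}+1})$), whence the two ratios differ along $D_i^\circ$ by the nonzero constant $a^{-m_i}$. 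The only place you should be more explicit is the phrase ``translates verbatim'': by \Cref{lem:varpi}, $\varpi_i=0$ holds exactly when $\init_u f = u^{m_i}f_{m_i}(v)$ \emph{does} depend on $v$, i.e.\ when $f_{m_i}(v)^{c_{0,i}}$ is \emph{non-constant} on $D_i^\circ$, so the polarity of the correspondence between constancy and invariance needs to be stated rather than left implicit.
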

\begin{proof}
	The equivalence of \cref{it:varpi_char_inv} and \cref{it:varpi_char_x}
	follows from \Cref{lem:varpi}.
	If $\ord_{D_i}(h) = c_{0,i}$, then
	$h$ vanishes at $0$, and has a linear term $ax+by$, with $a\neq 0$, 
	since
	all monomials other than $1$ and $x$ vanish with strictly higher order
	than $c_{0,i}$. Then
	\[
	\frac{f^{c_{0,i}}}{h^{m_i}}
	= \frac{f^{c_{0,i}}}{(ax + by + \hot)^{m_i}}
	= a^{-m_i}\frac{f^{c_{0,i}}}{x^{m_i}}.
	\]
	along $D_i^\circ$, which proves that
	\cref{it:varpi_char_x} and \cref{it:varpi_char_h} are equivalent.
\end{proof}

\begin{lemma} \label{lem:char_l}
	If $l \in \V'$, then the following are equivalent
	\begin{enumerate}
		\item \label{it:char_l_U}
		$l$ is in $\Upsilon$,
		\item \label{it:char_l_mon}
		$\init_l f$ does depend on $y$, i.e. $\init_l f$ is not
		a monomial of the form $a x^n$,
		\item \label{it:char_l_inv}
		$l$ is invariant.
	\end{enumerate}
\end{lemma}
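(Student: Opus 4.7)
The plan is to establish (ii)$\Leftrightarrow$(iii) by a direct initial-part computation involving the partial derivatives of $f$, and (i)$\Leftrightarrow$(ii) by matching the Newton polygon of $f$ with the toric resolution $Y'\to Y_0$ constructed in \cref{block:Z_induction}.

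For (ii)$\Leftrightarrow$(iii), I would work with the valuation $v_l(g)=\ord_{D_l}(g)$, whose associated graded part picks off $\init_l f$. If $\init_l f$ depends on $y$, then $\partial_y(\init_l f)$ is a nonzero quasihomogeneous polynomial of $w_l$-weight $m_l-c_{1,l}$, coinciding with the $w_l$-leading part of $f_y$; hence $\ord_{D_l}(f_y)=m_l-c_{1,l}$. Since also $\ord_{D_l}(f_x)\geq m_l-c_{0,l}>m_l-c_{1,l}$, the invariant characterisation \cref{eq:def_p_eq} gives $p_l=m_l-c_{1,l}$, i.e.\ $\varpi_l=0$. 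Conversely, if $\init_l f=cx^{m_l/c_{0,l}}$ is independent of $y$, then $\partial_y(\init_l f)=0$ forces $\ord_{D_l}(f_y)>m_l-c_{1,l}$, while $\ord_{D_l}(f_x)=m_l-c_{0,l}>m_l-c_{1,l}$; in both cases $p_l>m_l-c_{1,l}$, so $\varpi_l>0$.

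For (i)$\Leftrightarrow$(ii), I would use the Newton polygon of $f$ in the coordinates $(x,y)$. Because $\{y=0\}$ is a tangent of $C$, the polygon has a vertex $(e,0)$ on the $i$-axis, and the compact edge adjacent to $(e,0)$ has inward-normal slope $\mu_r$; by Newton's algorithm this is the largest first Puiseux slope among the branches of $C$ with tangent $\{y=0\}$. Condition (ii), $\init_l f\neq cx^{e}$, then reads exactly: $w_l=(c_{0,l},c_{1,l})$ does not lie strictly inside the dual cone of $(e,0)$, equivalently $c_{1,l}/c_{0,l}\leq\mu_r$. On the other hand, in $Y'$ each branch $C_\alpha$ with first Puiseux slope $\mu$ has its strict transform meeting $D_{l^{(\mu)}}^\circ$ for the unique $l^{(\mu)}\in\V'$ with $c_{1,l^{(\mu)}}/c_{0,l^{(\mu)}}=\mu$; the higher Puiseux characteristics are resolved by further blow-ups producing only descendants of $l^{(\mu)}$ in $\Gamin$. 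Because the index in the chain $\V'=\{1,\dots,k\}$ is inversely ordered with slope, the arrowhead of $C_\alpha$ lies in the branch of $\Gamin$ at $l$ iff $l^{(\mu)}\leq l$, i.e.\ $\mu\geq c_{1,l}/c_{0,l}$. Maximising over branches yields $l\in\Upsilon$ iff $\mu_r\geq c_{1,l}/c_{0,l}$, which is precisely (ii).

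The hardest part will be the combinatorial bookkeeping in (i)$\Leftrightarrow$(ii): one must verify carefully that (a) the Newton-polygon edge slopes correspond to the first Puiseux slopes of the branches via Newton's algorithm, (b) the chain-index ordering on $\V'$ is the reverse of the slope ordering, and (c) the non-toric blow-ups required to pass from $Y'$ to $\Ymin$ only insert descendants of $l^{(\mu)}$ in $\Gamin$, so that the position of $l^{(\mu)}$ alone controls whether the corresponding arrowhead lies in the branch at $l$.
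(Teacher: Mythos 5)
Your proposal is correct and follows essentially the same route as the paper: \cref{it:char_l_mon}$\Leftrightarrow$\cref{it:char_l_inv} via the weight computation $\wt_l(f_y)>m_l-c_{1,l}$ exactly when $\partial_y(\init_l f)=0$ (combined with $\wt_l(f_x)\geq m_l-c_{0,l}>m_l-c_{1,l}$ and \cref{eq:def_p_eq}), and \cref{it:char_l_U}$\Leftrightarrow$\cref{it:char_l_mon} via the Newton polygon, where the paper reads off directly which $D_l$ meet the strict transform from the compact faces rather than detouring through Puiseux exponents of branches --- the two phrasings both reduce to comparing $c_{1,l}/c_{0,l}$ with the extremal normal slope. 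The only substantive difference is that the paper explicitly isolates the degenerate cases ($y$ divides $f$, or the strict transform of $C$ meeting the corner at $D_1$), which in your version are absorbed into the convention $\mu=\infty$ and should be stated.
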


\begin{proof}
	We first show  \cref{it:char_l_mon} $\Leftrightarrow$ 
	\cref{it:char_l_inv}.
	If \cref{it:char_l_mon} does not hold,
	then, since $\partial_y x^n = 0$, we have
	\[
	c_{1,l} + p_i = \wt_l(yf_y) > \wt_l(f) = m_i,
	\]
	where we use \Cref{lem:ord_y}. This gives $\varpi_i > 0$, i.e.
	\cref{it:char_l_inv} does not hold.
	Conversely, if \cref{it:char_l_mon} holds,
	then the partial operator $\partial_y$ does not kill the initial part
	of $f$ with respect to the weight vector $w_l$, and we have an equality 
	above,
	i.e. $\varpi_i = 0$.
	
	To prove \cref{it:char_l_U} $\Leftrightarrow$ \cref{it:char_l_mon},
	consider first the case when the $x$-axis $D_y$ is a component
	of $C$. In this case, all vertices in $\V'$ are in $\V_\Upsilon$ by
	construction, since $\V'$ is the set of vertices along the geodesic
	connecting the strict transform of $D_y$ with the first blow-up.
	That is, all elements $l\in \V'$ satisfy \cref{it:char_l_U}.
	In this case, $f$ is divisible by $y$, and so all elements of $\V'$
	also satisfy \cref{it:char_l_mon}.
	
	In the case when the strict transform of $C$ passes through the 
	intersection
	point of $D_y$ and $D_1$, we also find that all elements of $\V'$
	are in $\V_\Upsilon$, and also satisfy \cref{it:char_l_mon}.
	
	Otherwise, the strict transform of $C$ intersects only the smooth part
	of the total transform of $D_y$ in $Y'$. For $l\in \V'$, the strict
	transform intersects $D_l$ if and only if $\init_l(f)$ is not a 
	monomial,
	in which case the convex hull of $\supp(\init_l(f))$ is a face
	of the Newton polyhedron of $f$. If $l$ and $j$ are two such vectors
	corresponding to adjacent faces, then $\init_s(f)$ is the monomial
	at the intersection of the two faces, if $s$ is a vertex in between
	$l$ and $j$. These are precisely the vertices in $\Upsilon$, and for the
	rest of the vertices, the initial part of $f$ is a monomial whose 
	exponent
	is a vertex on the Newton diagram which also lies on the $x$-axis.
\end{proof}

\begin{figure}[ht]
	\begin{center}
		\includegraphics*{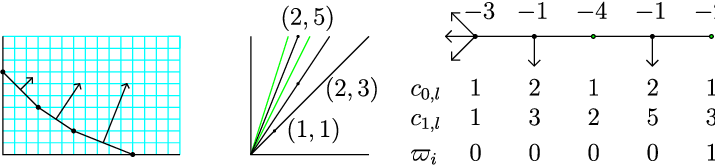}
		\caption{
			A Newton diagram for the function
			$f(x,y) = y^7 + x^3y^4 + x^6y^2 + x^{11}$, along with the 
			corresponding
			regular subdivision of $\R_{\geq 0}^2$. The vertex to the left 
			on the
			resolution graph corresponds to the first blow-up, the other 
			vertices are
			in $\V'$. The vertex to the right is not invariant. 
			Correspondingly, the
			initial part of $f$ with respect to the weight vector $(1,3)$ is
			$x^{11}$.}
		\label{fig:diag}
	\end{center}
\end{figure}

\begin{lemma} \label{lem:char_l_np}
	If $i \in \V_y \setminus \V'$, then $i \in \Upsilon_y$
	if and only if $i \in \Upsilon$.
\end{lemma}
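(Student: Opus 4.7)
The plan is to reduce the statement to a combinatorial fact about geodesics in the tree $\Gamin$, using the description in \cref{block:Z_induction} of the minimal resolution of the germ $g$ at $p_0$ as the restriction of $\Ymin \to Y_0$ over a neighborhood of $p_0$. First I would record the following identifications. The exceptional divisors of this resolution of $g$ are indexed by $\V_y$; the branches of the germ are in bijection (via $\pi_0$) with the branches of $C$ tangent to $\{y=0\}$, and hence with the arrow-heads of $\Gamin$ attached to vertices of $\V_y$; and the ``first blow-up'' vertex, in the sense of \Cref{def:upsilon} applied to $g$, is the unique vertex of $\V_y$ adjacent to $0$ in $\Gamin$, which is $k$. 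With these identifications, $\Upsilon_y$ is precisely the smallest connected subgraph of the subtree $\Gamma_y$ containing $k$ together with every vertex of $\V_y$ adjacent to an arrow-head associated with a $\{y=0\}$-tangential branch of $C$.

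Next I would exploit the tree structure of $\Gamin$. Since $\Gamma_y$ is a connected component of $\Gamin \setminus \{0\}$ and $k$ is its unique vertex adjacent to $0$, the geodesic in $\Gamin$ from $0$ to any $j \in \V_y$ takes the form $0, k, \ldots, j$, whose tail is the geodesic from $k$ to $j$ inside $\Gamma_y$. For any $i \in \V_y$, a geodesic from $0$ to a vertex adjacent to an arrow-head \emph{outside} $\V_y$ avoids the component $\Gamma_y$ entirely and hence cannot contain $i$. Consequently, $i \in \Upsilon$ if and only if $i$ lies on some geodesic in $\Gamma_y$ from $k$ to a vertex of $\V_y$ adjacent to a $\{y=0\}$-tangential arrow-head; by the identification above this is exactly the condition $i \in \Upsilon_y$.

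The hypothesis $i \notin \V'$ in fact plays no role in the combinatorial argument -- the equivalence holds for every $i \in \V_y$. It merely delimits the scope of the lemma, with the complementary range $i \in \V'$ being handled by \Cref{lem:char_l}. I do not foresee a significant obstacle: the only step requiring a little care is justifying that the restriction of $\Ymin$ over $p_0$ really is the \emph{minimal} embedded resolution of $g$ with matching arrow-head set, and that the vertex of this resolution corresponding to the first blow-up is $k$ rather than some other vertex of $\V'$. Both points follow from the standard fact that minimality of an embedded resolution is a local condition, together with the toric/blow-up description recorded in \cref{block:Z_induction}.
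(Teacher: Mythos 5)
Your overall strategy — reduce everything to a statement about geodesics in the tree $\Gamin$ — is the same as the paper's, but one of your two ``identifications'' is false, and your argument leans on it. You claim that the first blow-up vertex of the resolution of $g$ (the root of $\Upsilon_y$ in the sense of \Cref{def:upsilon}) is $k$, the unique vertex of $\V_y$ adjacent to $0$. It is not: the first blow-up of $g$ is the blow-up of $Y_0$ at $p_0 = D_0 \cap D_y$, whose exceptional divisor has $(c_0,c_1)=(1,2)$, and this vertex generally sits at the \emph{other} end of the string $\V'$ (or in its interior), not adjacent to $0$. The paper's own example $f = y^3 + x^4$ shows this: there the first blow-up of $g$ creates $D_1$ with $(c_{0,1},c_{1,1})=(1,2)$, while the vertex adjacent to $0$ is $D_3$ with $(3,4)$. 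In that example $\Upsilon_y = \{1,2,3\}$ (it must contain $D_1$ by definition) whereas $\Upsilon \cap \V_y = \{3\}$, so your further assertion that ``the equivalence holds for every $i \in \V_y$'' and that the hypothesis $i \notin \V'$ ``plays no role'' is also false — the two subgraphs genuinely differ on $\V'$, which is exactly why the lemma excludes $\V'$ and why \Cref{lem:char_l} has to treat $\V'$ by a separate, non-combinatorial argument via initial parts.

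The correct version of the argument, which is what the paper does, only uses that the first blow-up vertex $e$ of $g$'s resolution lies \emph{somewhere} in $\V'$: since $\V'$ spans a connected string containing both $e$ and $k$, the geodesic in $\Gamma_y$ from any arrowhead-adjacent vertex $j$ to $e$ and the geodesic from $j$ to $k$ coincide outside $\V'$ (both enter $\V'$ at the same vertex and only differ inside the string). Hence $\V_\Upsilon \cap (\V_y\setminus\V') = \V_{\Upsilon_y}\setminus\V'$, which is the lemma. Your proof needs this replacement of ``$e=k$'' by ``$e,k\in\V'$ and geodesics agree outside $\V'$''; as written, the step ``by the identification above this is exactly the condition $i \in \Upsilon_y$'' does not go through. (A secondary point to watch: the germ $g = \pi_0^* f$ vanishes along $D_0$, so the resolution of $g$ has an extra arrowhead for the branch $\{s=0\}$, attached precisely at $k$; your bijection of arrowheads with the $\{y=0\}$-tangential branches of $C$ omits it.)
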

\begin{proof}
	The first blow-up in the
	resolution of $g$ creates a vertex in $\V'$.
	Therefore, the geodesic in $\Gamma_y$ from an arrowhead to the
	first blow-up in $\Gamma_y$, and the geodesic from
	the same arrowhead in $\Gamin$ to $0 \in \Vmin$ both start as the 
	geodesic
	starting from this arrowhead, going towards $\V'$.
	It follows that
	\[
	\V_\Upsilon \cap (\V_y \setminus \V')
	=
	\V_{\Upsilon_y} \setminus \V'. \qedhere
	\]
\end{proof}

Finally, we are ready to prove the lemma stated at the beginning of this 
subsection.

\begin{proof}[Proof of \Cref{lem:inv}]
	We prove the statement by induction on the cardinality of  $\Vmin$. The 
	base 
	case is when $\Vmin$ consists of $1$ single vertex, which means that 
	the 
	singularity consists of smooth branches (not necessarily reduced) with 
	different tangents. In this case 
	the result follows by \Cref{ex:base_case}.
	Assume now that $\Vmin$ has more than one element. If $i \in \V'$, then 
	the 
	statement follows from \Cref{lem:char_l}.
	Assume, therefore, that $i \notin \V'$. By induction, we have
	$i \in \Upsilon_y$ if and only if $\varpi_{i,y} = 0$.
	Thus, by \Cref{lem:char_l_np}, it suffices to show that
	\begin{equation} \label{eq:varpi_iff}
		\varpi_i = 0,\quad
		\text{if and only if}\quad
		\varpi_{i,y} = 0.
	\end{equation}
	Recall the coordinates $s,t$ in \cref{block:Z_induction}, with
	$(x,y) = (s,st)$.
	Since $c_{0,i,y} = \min\{\ord_{D_i}(s),\ord_{D_i}(t)\}$
	(recall \Cref{rem:vanishing_linear}), we consider two cases.
	
	First, assume that $\ord_{D_i}(s) \leq \ord_{D_i}(t)$.
	Then $c_{0,i,y} = \ord_{D_i}(s) = \ord_{D_i}(x) = c_{0,i}$,
	and
	\[
	\frac{g^{c_{0,i,y}}}{s^{m_{i,y}}}
	=
	\frac{f^{c_{0,i}}}{x^{m_i}}.
	\]
	Using the characterization of \Cref{lem:varpi_char}, the equivalence
	\cref{eq:varpi_iff} follows.
	
	Assume next that $\ord_{D_i}(t) < \ord_{D_i}(s)$.
	Then $c_{0,i,y} = \ord_{D_i}(t) = \ord_{D_i}(y/x)$,
	and $c_{0,i} = \ord_{D_i}(x)$. We find
	\[
	\left(
	\frac{g^{c_{0,i,y}}}{t^{m_{i,y}}}
	\right)^{c_{i,0}}
	=
	\left(
	\frac{f^{\ord_{D_i}(y/x)}}{x^{-m_i} y^{m_i}}
	\right)^{\ord_{D_i}(x)}
	=
	\left(
	\frac{f^{c_{0,i}}}{x^{m_i}}
	\right)^{c_{0,i,y}}
	\cdot
	\left(
	\frac{y^{\ord_{D_i}(x)}}{x^{\ord_{D_i}(y)}}
	\right)^{-m_i}
	\]
	By \Cref{cor:yx_const}, the factor to the right is constant
	along $D_i^\circ$. Since $c_{0,i}$ and $c_{0,i,y}$ are positive
	integers, \cref{eq:varpi_iff} again follows from \Cref{lem:varpi_char}.
\end{proof}

\section{The invariant subgraph of $\Gamma_{\mathrm{pol}}$}

In this subsection we show that the invariant subgraph $\Upsilon$ of 
$\Gamin$ does not
get modified by the extra blow-ups  $\Ypol \to \Ymin$ needed to get to the 
polar resolution (other than the Euler numbers may be lowered).
That is, no invariant edges are split up.
We conclude with \Cref{cor:ups_pol}, which is used in the proofs of
\Cref{lem:transversality_number,lem:potential_xiro}. Our
\Cref{lem:Pmin_Cmin} has been proved in the case of an isolated plane
curve in \cite{LMWcomp}.

For this subsection we choose some $[w] \in \Omega$ that defines a generic 
relative polar curve $P_w$. We recall that we denote its strict transform 
in $\Ymin$ by $\Pmin$.

\begin{lemma} \label{lem:Pmin_edge}
	If $ij$ is an invariant edge in $\Gamin$, then $\Pmin$ does not pass
	through the intersection point $D_i \cap D_j$.
\end{lemma}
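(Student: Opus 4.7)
The plan is to prove the lemma by a local computation of $\pimin^*f_w$ near $p = D_i \cap D_j$ for a generic $[w] \in \Omega$, using the chain rule together with the invariance identities $m_k = c_{1,k} + p_k$ ($k = i, j$) to extract the leading coefficient of the strict transform at $p$.

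First I would choose linear coordinates $x, y$ on $\C^2$ so that neither $\{x=0\}$ nor $\{y=0\}$ is a tangent of $C$, and take local coordinates $(u,v)$ on $\Ymin$ near $p$ with $D_i = \{u=0\}$ and $D_j = \{v=0\}$. Because $\Ymin$ is the minimal embedded resolution, $D \cup \tilde C$ is SNC, so neither $\tilde C$ nor the strict transforms of $\{x=0\},\{y=0\}$ pass through $p$; hence
\[
  \pimin^* f = u^{m_i} v^{m_j} \tilde f,\quad
  \pimin^* x = u^{c_{0,i}} v^{c_{0,j}} X,\quad
  \pimin^* y = u^{c_{0,i}} v^{c_{0,j}} Y
\]
with $\tilde f, X, Y$ holomorphic units, and $\det \Jac \pimin = u^{\nu_i-1} v^{\nu_j - 1} \Delta$ with $\Delta$ a unit (using $\nu_k = c_{0,k} + c_{1,k}$).

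The chain rule gives
\[
\pimin^* f_w = \frac{1}{\det \Jac\pimin}\bigl((w_1 y_v - w_2 x_v) F_u + (-w_1 y_u + w_2 x_u)F_v\bigr),
\]
where $F = \pimin^* f$. Substituting the factorizations above, extracting common powers of $u$ and $v$, and applying invariance in the form $m_k - c_{1,k} = p_k$, the exponents of $u$ and $v$ collapse exactly to $p_i$ and $p_j$, giving $\pimin^* f_w = u^{p_i} v^{p_j}\, h(u,v)$ where $h = H/\Delta$ and
\[
  H(0,0) = \tilde f(0,0)\,\bigl(w_1 Y(0,0) - w_2 X(0,0)\bigr)\,\bigl(c_{0,j} m_i - c_{0,i} m_j\bigr).
\]
Since $\Pmin$ passes through $p$ precisely when $h(0,0) = 0$, the proof reduces to showing each of the three factors is nonzero.

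The factor $\tilde f(0,0)$ is nonzero by the SNC property. For the third, invariance rewrites
\[
  c_{0,j} m_i - c_{0,i} m_j = (c_{0,j} c_{1,i} - c_{0,i} c_{1,j}) + (c_{0,j} p_i - c_{0,i} p_j);
\]
orienting the edge WLOG from $j$ to $i$, \Cref{cor:c01_det} makes the first summand strictly positive while \Cref{lem:c0ord} applied to $f_w$ makes the second nonnegative, so the sum is positive. Finally, $w_1 Y(0,0) - w_2 X(0,0)$ vanishes only when $[w_1:w_2] = [X(0,0):Y(0,0)]$, which is the common tangent direction of $D_i$ and $D_j$ at $p$; by \Cref{lem:inv} both endpoints belong to $\V_\Upsilon$, so this direction coincides with a tangent of $C$, and is therefore excluded from $\Omega$ by property \cref{it:not_tangent} of \Cref{def:pipol}. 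The main obstacle will be the bookkeeping in the chain-rule step, verifying that the exponents collapse exactly to $p_i, p_j$ and that the leading coefficient of $H$ takes the displayed factored form; once both are in hand, the sign arguments above close the proof.
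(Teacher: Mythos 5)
Your proof is correct and follows essentially the same route as the paper's: a chain-rule computation of $\pimin^*f_w$ at the corner whose leading coefficient is controlled by the nonvanishing of $c_{0,j}m_i - c_{0,i}m_j$ (the paper streamlines the algebra by taking $\{y=0\}$ to be the tangent associated with $D_i$ and $w=(0,1)$, so that your extra factor $w_1Y(0,0)-w_2X(0,0)$ never appears, and it gets the nonvanishing of the determinant directly from \Cref{lem:c0ord} applied to $f$ rather than via your decomposition into the $c_1$- and $p$-determinants). One small repair: the fact that the strict transforms of $\{x=0\}$ and $\{y=0\}$ miss $p$ and vanish to orders exactly $c_{0,i},c_{0,j}$ does not follow from $D\cup\tilde C$ being normal crossings (which only controls $\tilde C$), but from your choice that neither line is a tangent of $C$ while the tangent associated with $p$ is one because $i,j\in\V_\Upsilon$ --- the same observation you correctly invoke at the end to rule out $[w]=[X(0,0):Y(0,0)]$.
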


\begin{proof}
	We consider coordinates $x,y$  in $\C^2$ so that $\{y=0\}$ is the 
	tangent associated with $D_i$ and such that $\{f_w=0\}= \{f_y =0\}$.
	Take coordinates $u,v$ in a small
	chart $U \subset \Ymin$ near $D_i \cap D_j$ so that
	$x = u^{c_{0,i}} v^{c_{0,j}}$. We can write
	$\pi^*f(u,v) = u^{m_i} v^{m_j}(a + \hot)$ for some $a \in \C^*$
	in these coordinates.
	A chain rule computation gives
	\begin{equation} \label{eq:fyc0}
		f_y
		= \frac{-x_v f_u + x_u f_v}{\det \Jac \pi}
		= \frac{u^{c_{0,i}-1} v^{c_{0,j}-1}}{\det\Jac\pi}
		\cdot
		(-c_{0,j} u f_u + c_{0,i} v f_v).
	\end{equation}
	The first factor on the right hand side does not vanish on
	$U \setminus (D_i \cup D_j)$. But since
	\[
	uf_u = u^{m_i} v^{m_j}(m_i a + \hot),\qquad
	vf_v = u^{m_i} v^{m_j}(m_j a + \hot),
	\]
	we get
	\[
	-c_{0,j} u f_u + c_{0,i} v f_v
	=
	u^{m_i} v^{m_j}
	\left(
	a
	\left|
	\begin{matrix}
		m_i & c_{0,i} \\
		m_j & c_{0,j}
	\end{matrix}
	\right|
	+\hot
	\right)
	\]
	The determinant in this formula does not vanish, by \Cref{lem:c0ord}.
	Indeed, \Cref{lem:c0ord}
	guarantees that this determinant vanishes if and only if
	the branch of $\Gamma$ at $i$ does not meet an arrowhead.
	Now because the edge $ij$ is
	invariant, by \Cref{lem:inv} it is an edge of $\Upsilon$,
	so it lies on a geodesic between $i$ and an arrowhead.
	Therefore, the second factor on the right hand side of \cref{eq:fyc0}
	does not vanish outside $U \cap (D_i \cup D_j)$ either, and so
	the strict transform of the polar curve does not intersect $U$.
\end{proof}

\begin{cor} \label{lem:Pmin_Cmin}
	If $P'$ is a component of $\Pmin$, 
	then either $P' \cap \Cmin = \emptyset$, or $P' \subset \Cmin$.
	The inclusion only happens when $C$ is not reduced.
\end{cor}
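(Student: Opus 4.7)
I would decompose the components of $\Pmin$ according to the irreducible factorization of $f$. Writing $f=f_1^{r_1}\cdots f_s^{r_s}$ with the $f_j$ irreducible, differentiation gives
\[
f_w \;=\; \Bigl(\prod_j f_j^{r_j-1}\Bigr)\,h_w, \qquad h_w\;=\;\sum_j r_j(\partial_w f_j)\!\prod_{k\ne j}\!f_k,
\]
so $V(f_w)=\bigcup_{r_j\ge 2}\{f_j=0\}\cup V(h_w)$ as sets. Every irreducible component $P'$ of $\Pmin$ is therefore either the strict transform $\tilde C_j$ of a non-reduced branch $\{f_j=0\}$ (with $r_j\ge 2$), in which case $P'\subset\Cmin$ and $C$ is non-reduced, or the strict transform of an irreducible component of $V(h_w)$; in the latter case I must show $P'\cap\Cmin=\emptyset$.

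For such $P'$, first note that $V(h_w)$ shares no component with $C$: a divisibility $f_{j_0}\mid h_w$ would force $f_{j_0}\mid\partial_w f_{j_0}$, which is excluded by $[w]\in\Omega$ and \cref{it:not_tangent}. Hence on $\Ymin\setminus D$, identified with $\C^2\setminus\{0\}$ via $\pimin$, the intersection $P'\cap\Cmin$ is empty. Since $P'$ is an irreducible analytic arc through the origin, $P'\cap D$ is a single point $p$; and by normal crossings of $D\cup\Cmin$, the only way $p\in\Cmin$ is $p=D_i\cap C_a$ for some $i\in\V$ adjacent to an arrowhead $a$. The problem reduces to excluding this configuration.

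To do so, I would adapt the chain-rule argument of \Cref{lem:Pmin_edge}, with $C_a$ playing the role of the second exceptional divisor $D_j$. The vertex $i$ is invariant ($\varpi_i=0$, i.e. $p_i=m_i-c_{1,i}$) by \Cref{lem:inv}, the tangent $\tau_i$ associated with $D_i$ equals the tangent of $C_a$, and $[w]\ne[\tau_i]$ by \cref{it:not_tangent}; so I may choose coordinates $x,y$ on $\C^2$ with $\{y=0\}$ equal to the tangent of $D_i$ and $f_w=f_y$. In local coordinates $(u,v)$ at $p$ as in \cref{block:c_1} with $D_i=\{u=0\}$, $C_a=\{v=0\}$, $\pimin^*x=u^{c_{0,i}}$ (so $x_v=0$), and the standard expansion $\pimin^*y=\sum_{k\ge c_{0,i}}u^k g_k(v)$, the identity $f_y=x_u f_v/\det\Jac\pimin$ together with $\pimin^*f=u^{m_i}v^{m_a}g$ ($g$ a unit at $p$) yields
\[
\pimin^*f_w \;=\; u^{p_i}\,v^{m_a-1}\cdot\frac{m_a g+v g_v}{g'_{c_{1,i}}(v)+O(u)}.
\]
If the right-hand factor is a unit at $p$, then the local divisor of $\pimin^*f_w$ at $p$ is exactly $p_i[D_i]+(m_a-1)[C_a]$ with no strict-transform contribution, contradicting $P'\ni p$.

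The main obstacle is that the right-hand factor is a unit at $p$ precisely when $g'_{c_{1,i}}(0)\ne 0$; this may fail for special $[w]\in\Omega$, but holds for a generic choice. Since the combinatorial incidence pattern of $\Pmin$ with $\Cmin$ in $\Ymin$ is constant on the equisingular family $\{P_w\}_{[w]\in\Omega}$ by \cite{Tei}, proving the conclusion for generic $w$ suffices to deduce it for the fixed $w$ of the statement, completing the argument.
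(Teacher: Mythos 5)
Your reduction and your local model are essentially the ones the paper uses: the paper also works at a point $p\in D_i\cap\Cmin$, writes $\pimin^*f=u^{m_i}(au+bv+\hot)^{m'}$ for a local equation of the branch, and computes $f_y=x_uf_v/\det\Jac\pimin$; your preliminary factorization $f_w=\bigl(\prod_jf_j^{r_j-1}\bigr)h_w$ is a harmless global packaging of the same dichotomy, and your exclusion of common components of $V(h_w)$ and $C$ via \cref{it:not_tangent} is fine. The genuine gap is your last step. You need the factor $(m_ag+vg_v)/(g'_{c_{1,i}}(v)+O(u))$ to be a unit at $p$, i.e. $g'_{c_{1,i}}(0)\neq0$, and you propose to obtain this only for generic $[w]\in\Omega$ and then transfer the conclusion by equisingularity. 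That transfer does not work as stated: (a) the vanishing of $g'_{c_{1,i}}$ at $p$ is a property of the point $p$ and of the coordinate $y$ (the tangent associated with $D_i$), which does not vary with $w$ in any controlled way, so genericity in $w$ does not obviously help; (b) the properties defining $\Omega$ control the resolution $\Ypol$ and the intersection numbers $|D_i\cap\tilde P_w|$, not whether $\Pmin$ passes through the specific points $D_i\cap\Cmin$ in $\Ymin$, so constancy of that incidence over $\Omega$ is essentially what you would have to prove; and (c) this corollary is used downstream to compare $\Ymin$ with $\Ypol$ (\Cref{lem:only_non_invariant}), so importing finer properties of $\Ypol$ here risks circularity.

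The obstacle is in fact illusory, and either of two observations closes your argument. The paper's route: since you only need the zero locus of $f_w$ \emph{off} $D_i$, do not expand $y_v$ at all; because $\det\Jac\pimin$ vanishes exactly on $D$ (its divisor is the canonical cycle $K$ of \Cref{def:canonical_cycle}), the identity $f_y=x_uf_v/\det\Jac\pimin$ exhibits $\{f_y=0\}\cap(U\setminus D_i)$ as the zero set of $v^{m_a-1}$, which is empty iff $m_a=1$ and equals $C_a$ otherwise — exactly the stated dichotomy, with non-reducedness of the branch as the criterion. Alternatively, keep your computation but note that $g'_{c_{1,i}}(v)+O(u)=y_v/u^{c_{1,i}}=\det\Jac\pimin/(c_{0,i}\,u^{\nu_i-1})$ is \emph{automatically} a unit on $U$, again because the divisor of $\det\Jac\pimin$ in $U$ is $(\nu_i-1)D_i$ and nothing else; no genericity in $w$ is needed. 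With either fix your proof is correct and coincides in substance with the paper's.
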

\begin{proof}
	As in the previous lemma, we consider coordinates $x,y$  in $\C^2$ so 
	that $\{y=0\}$ is the tangent associated with $D_i$ and such that 
	$\{f_w=0\}= \{f_y =0\}$.
	Let $i\in \V$ and $p\in D_i$, and assume that there is a component $C'$
	of $\Cmin$ passing through $p$.
	Choose a small chart $U$
	containing $p=(0,0)$ with coordinates $u,v$ so that $\pimin^*x = 
	u^{c_{0,i}}$.
	Since $\pimin$ is an embedded resolution,
	we can write $\pi^*f(u,v) = u^{m_i} (au + bv + \hot)^{m'}$, where
	$a\in\C$, $b\in\C^*$, and $m'$ is a positive integer.
	A computation like in the previous proof gives
	\[
	f_y
	= \frac{x_u f_v}{\det\Jac\pi}
	= \frac{c_{0,i} u^{c_{0,i}-1+m_i} m' (b + \hot)}{\det\Jac\pi}
	(au + bv + \hot)^{m'-1}.
	\]
	The fraction on the right hand side does not vanish in $U \setminus 
	D_i$.
	The second factor is $1$ if $m' = 1$, in which case $\Pmin$ does
	not pass through $U$, and vanishes precisely along $C'$ if $m'>1$,
	in which case $\Ppol \cap U = \Cpol \cap U$ as reduced sets.
\end{proof}

\begin{lemma}\label{lem:only_non_invariant}
	The subgraph $\Upsilon \subset \Gamin$ naturally embeds in $\Gapol$
	as the smallest connected subgraph containing the vertex $0$ and
	all $i\in \Vpol$ satisfying $\Cpol \cap D_i \neq \emptyset$.
\end{lemma}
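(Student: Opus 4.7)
The plan is to exploit the factorization of $\pipol$ through $\pimin$ recorded in \Cref{rem:resolv_jac}, combined with \Cref{lem:inv}, \Cref{lem:Pmin_edge}, and \Cref{lem:Pmin_Cmin}. First I would establish that $\Upsilon$ embeds into $\Gapol$ via the canonical inclusion $\Vmin \hookrightarrow \Vpol$ of \Cref{not:dual_graph}. By \Cref{rem:resolv_jac}, the modification $\pipol$ is obtained from $\pimin$ by iteratively blowing up points of $\Pmin$ lying on the total exceptional set. By \Cref{lem:inv}, the vertex set $\V_\Upsilon$ coincides with the invariant vertices, so every edge of $\Upsilon$ connects two invariant vertices and is therefore invariant in the sense of \Cref{def:polar}. \Cref{lem:Pmin_edge} then ensures that $\Pmin$ avoids every such intersection point $D_i \cap D_j$. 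Consequently none of the polar blow-ups subdivides an edge of $\Upsilon$; each blow-up occurs either at a smooth interior point of some divisor or at a non-invariant intersection point. This gives the natural embedding of $\Upsilon$ as a subgraph of $\Gapol$, with every edge preserved.

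Next I would identify the set of arrowhead-adjacent vertices of $\Gamin$ with the set $\{i \in \Vpol : \Cpol \cap D_i \neq \emptyset\}$. A vertex $i \in \Vmin$ has $\Cmin \cap D_i \neq \emptyset$ precisely when it is adjacent to an arrowhead in $\Gamin$. By \Cref{lem:Pmin_Cmin}, for reduced $C$ we have $\Pmin \cap \Cmin = \emptyset$, so the polar blow-ups avoid $\Cmin$ entirely; consequently $\Cpol$ meets exactly the divisors indexed by the images of these same vertices, and no vertex in $\Vpol \setminus \Vmin$ meets $\Cpol$. In the non-reduced case any component of $\Pmin$ meeting $\Cmin$ is already a component of $\Cmin$, so along it $\Pmin \cup \Cmin \cup \Dmin$ is already normal crossing; minimality of $\pipol$ then forces the corresponding points not to be blown up either, and the same conclusion holds.

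Finally, writing $\Upsilon'$ for the subgraph of $\Gapol$ described in the statement, I would reduce $\Upsilon = \Upsilon'$ to an observation about geodesics in trees. Every new vertex $k \in \Vpol \setminus \Vmin$ arises either as a leaf attached to some $i \in \Vmin$ (blow-up at a smooth point) or as a subdivider of a non-invariant edge $ij$ of $\Gamin$, so all new vertices form subtrees disjoint from any $\Gapol$-geodesic between two vertices of $\Vmin$ joined by a path of invariant edges. Hence the geodesic in $\Gapol$ from $0$ to an arrowhead-adjacent vertex coincides, under the embedding, with its counterpart in $\Gamin$. Since $\Upsilon'$ is by definition the union of such geodesics and the set of endpoints agrees with the arrowhead neighborhood of $\Gamin$ by the previous step, we conclude $\Upsilon' = \Upsilon$. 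The most delicate point is the careful handling of possible blow-ups near $\Cmin$ in the non-reduced case, which is entirely controlled by \Cref{lem:Pmin_Cmin}.
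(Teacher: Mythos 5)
Your proposal is correct and follows essentially the same route as the paper: the paper's (very terse) proof simply observes that by \Cref{lem:Pmin_edge} and \Cref{lem:Pmin_Cmin} the blow-ups producing $\Ypol$ from $\Ymin$ touch neither the invariant intersection points nor $\Cmin$, which is exactly the content you spell out in more detail. Your additional care with geodesics in the tree and with the non-reduced case is a legitimate elaboration of the same argument rather than a different approach.
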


\begin{proof}
	By \Cref{lem:Pmin_edge} and \Cref{lem:Pmin_Cmin}, invariant edges or
	intersection points with $\Cmin$ are not modified by the resolution 
	process
	used to obtain $\Ypol$.
\end{proof}

\begin{lemma} \label{lem:inv_pol}
	If $i \in \Vpol$, then $i \in \V_\Upsilon$ if and only if $\varpi_i = 
	0$.
\end{lemma}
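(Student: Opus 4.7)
The plan is to reduce the statement to the minimal-resolution case \Cref{lem:inv} by showing that the polar weight $\varpi_i$ is intrinsic to the divisor $D_i$, so that passing from $\Ymin$ to $\Ypol$ does not change it on old divisors, and then verifying directly that every new divisor created in going from $\Ymin$ to $\Ypol$ has strictly positive polar weight. First I would observe that each of the three ingredients of $\varpi_i = c_{1,i} - m_i + p_i$ depends only on the divisor $D_i \subset Y$ itself and not on the ambient modification: $m_i = \ord_{D_i}(f)$ and $p_i = \ord_{D_i}(f_w)$ (for $[w] \in \Omega$) are vanishing orders along the fixed divisor, while $\nu_i$ is the coefficient of $D_i$ in the canonical cycle and is preserved under further blow-ups, as one verifies from $K_{Y'} = \pi^* K_Y + E$ together with $\pi^* D_i = \tilde D_i + E$ when the blow-up center lies on $D_i$. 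Hence for every $i \in \Vmin \subset \Vpol$ the value $\varpi_i$ is the same whether computed in $\Ymin$ or in $\Ypol$, and by \Cref{lem:only_non_invariant} membership in $\V_\Upsilon$ is also unchanged under the embedding $\Gamin \hookrightarrow \Gapol$; so \Cref{lem:inv} immediately gives the equivalence for such $i$.

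It remains to handle $i \in \Vpol \setminus \Vmin$. By \Cref{lem:only_non_invariant} such new vertices never belong to $\V_\Upsilon$, so I must show $\varpi_i > 0$ for each one. I would proceed by induction on the sequence of blow-ups described in \Cref{rem:resolv_jac}, \cref{it:repeat_blow} taking $\Ymin$ to $\Ypol$, where at each stage we blow up a point $p$ at which the strict transform of the generic polar curve meets the current exceptional divisor. Let $i$ denote the new vertex. Two cases arise depending on whether $p$ lies in a smooth stratum $D_j^\circ$ or at a corner $D_j \cap D_k$. In both cases, pulling $f$ and $f_w$ back through a blow-up chart and combining the resulting orders with \Cref{lem:c_01_recursive} yields recursions of the shape $\varpi_i = \varpi_j + 1 + s_p$ in the smooth case and $\varpi_i = \varpi_j + \varpi_k + s_p$ at a corner, where $s_p \geq 0$ is the multiplicity at $p$ of the strict transform of the generic polar curve after the exceptional contributions of $D_j$ (and $D_k$) have been divided out.

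In the smooth case $s_p \geq 1$ because $p$ lies on the polar curve, and $\varpi_j \geq 0$ by induction, so $\varpi_i \geq 2 > 0$. In the corner case the key input is \Cref{lem:Pmin_edge}: since the polar curve passes through $p$, the edge $jk$ cannot be invariant, hence $\varpi_j + \varpi_k > 0$ by induction, forcing $\varpi_i > 0$ regardless of the sign of $s_p$. The main obstacle in turning this plan into a rigorous proof is careful bookkeeping in the non-reduced case, where by \Cref{lem:Pmin_Cmin} a component of the polar curve coincides with a component of $\Cmin$: here the strict transform of $C$ may itself pass through $p$, contributing extra terms to $m_i$ that must be balanced against corresponding contributions to $p_i$. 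Since a branch of $C$ of multiplicity $r$ contributes $r$ to $m_i$ and at least $r-1$ to $p_i$ (through the factor that survives under $\partial_w$), the net contribution to $\varpi_i$ remains nonnegative, and combined with the strict inequalities from the previous paragraph this completes the induction.
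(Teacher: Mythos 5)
Your proof is correct and follows essentially the same route as the paper: reduce to \Cref{lem:inv} for vertices of $\Vmin$ (where $\varpi_i$ is indeed unchanged, being built from vanishing orders along the fixed divisor), then show each new vertex created in passing to $\Ypol$ has $\varpi_i>0$ via the blow-up recursions for $c_1$, $m$ and $p$, using that the center lies on $\Pmin$. Your closing worry about the non-reduced case is moot: \Cref{lem:Pmin_Cmin} already guarantees that no blow-up center of $\Ypol\to\Ymin$ lies on $\Cmin$ (a polar component meeting $\Cmin$ is contained in it, hence already resolved by $\pimin$), so $m_i=m_j$ resp.\ $m_i=m_j+m_k$ holds outright and no multiplicity bookkeeping is needed — this is exactly how the paper dispatches it.
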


\begin{proof}
	If $i \in \Vmin \subset \Vpol$, then the statement is clear from
	\Cref{lem:inv}. To prove the lemma, it suffices to show that if
	$i \in \Vpol \setminus \Vmin$, then $\varpi_i \neq 0$.
	If $i$ is such a vertex, then $D_i$ is the exceptional divisor of
	a blow-up $(Y_i,D_i) \to (Y_{i-1},q_i)$, where $q_i$ is a point on
	the exceptional divisor in $Y_{i-1}$.
	Let us assume that $q_i$ is an intersection point of $D_j$ and $D_k$.
	The case when $q_i$ lies on a smooth point of the exceptional divisor is
	similar.
	
	By \Cref{rem:resolv_jac} \cref{it:repeat_blow}, $q_i$ is an 
	intersection point of $\Pmin$ and $D$. Therefore, we have
	\[
	p_i > p_j + p_k.
	\]
	By \Cref{lem:Pmin_Cmin}, the strict transform of $C$
	does not pass through $q_i$, and so we have
	\[
	m_i = p_j + p_k.
	\]
	Using \Cref{lem:c_01_recursive}, we find
	\[
	\varpi_i
	=
	c_{1,i} - m_i + p_i
	>
	c_{1,j} - m_j + p_j
	+c_{1,k} - m_k + p_k
	=
	\varpi_j + \varpi_k
	\geq 0. \qedhere
	\]
\end{proof}

\begin{cor} \label{cor:Gapol_ah}
	Let $i,j$ be neighbors in $\Gapol$, with an edge going from $j$ to $i$.
	The following are equivalent:
	\begin{enumerate}
		
		\item \label{it:Gapol_ah_i}
		The vertex $i$ is invariant.
		
		\item \label{it:Gapol_ah_ji}
		The edge $ji$ is invariant.
		
		\item \label{it:Gapol_ah_ah}
		The branch at $i$ contains arrowheads
		(see \Cref{def:Ga_branch} and \Cref{rem:branch_ah}).
		
		\item \label{it:Gapol_ah_m}
		We have
		\[
		\left|
		\begin{matrix}
			c_{0,i} & m_i \\
			c_{0,j} & m_j
		\end{matrix}
		\right|
		\neq 0.
		\]
		
		\item \label{it:Gapol_ah_pos}
		The vector $(m_i, m_j)$ is not a positive rational multiple of
		$(c_{0,i}, c_{0,j})$.
	\end{enumerate}
\end{cor}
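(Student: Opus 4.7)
The plan is to reduce all five conditions to combinatorial statements about the tree $\Gapol$ and then invoke the three key lemmas of this section. The equivalence \cref{it:Gapol_ah_i} $\Leftrightarrow$ \cref{it:Gapol_ah_ji} is formal: if $i$ is invariant then $i \in \V_\Upsilon$ by \Cref{lem:inv_pol}, and connectivity of $\V_\Upsilon$ forces the neighbor $j$ on the geodesic from $i$ to $0$ to also lie in $\V_\Upsilon$, hence both endpoints of $ji$ are invariant; the converse is immediate from the definition of an invariant edge. The equivalence \cref{it:Gapol_ah_m} $\Leftrightarrow$ \cref{it:Gapol_ah_pos} is equally immediate: since all four matrix entries are positive integers, vanishing of the $2\times 2$ determinant is the same as the two columns being proportional by a positive rational scalar.

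For \cref{it:Gapol_ah_i} $\Leftrightarrow$ \cref{it:Gapol_ah_ah}, the strategy is to combine \Cref{lem:inv_pol} with \Cref{lem:only_non_invariant}, which together identify the invariant vertices of $\Gapol$ with the vertex set of the smallest connected subgraph of $\Gapol$ containing $0$ along with every vertex adjacent to an arrowhead. Since $\Gapol$ is a tree, this smallest subgraph is precisely the union of the geodesics from $0$ to each arrowhead. A vertex $i \neq 0$ therefore lies in $\V_\Upsilon$ if and only if at least one such geodesic passes through $i$, which by \Cref{def:Ga_branch} (together with \Cref{rem:branch_ah}) is exactly the condition that the branch at $i$ contains an arrowhead.

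The remaining equivalence \cref{it:Gapol_ah_ah} $\Leftrightarrow$ \cref{it:Gapol_ah_m} is obtained by applying \Cref{lem:c0ord} with $g = f$, so that $\ord_{D_i}(g) = m_i$ and $\ord_{D_j}(g) = m_j$. The lemma asserts that the determinant in question is non-positive, and vanishes if and only if the strict transform of $\{f=0\}=C$ does not intersect any $D_k$ with $k$ in the branch at $i$. Since the strict transform of $C$ meets $D_k$ precisely when some arrowhead is adjacent to $k$, and the branch at $i$ is closed under taking descendants (hence contains every arrowhead adjacent to any of its interior vertices), this non-intersection is equivalent to the branch at $i$ containing no arrowhead. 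Thus the determinant is nonzero (equivalently, strictly negative) if and only if the branch at $i$ contains an arrowhead, closing the chain of equivalences. I anticipate no serious obstacle, since all the substantive analytic and combinatorial work has already been packaged into \Cref{lem:inv_pol,lem:only_non_invariant,lem:c0ord}; the proof is purely a bookkeeping exercise gluing them together.
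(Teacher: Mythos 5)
Your proof is correct and follows essentially the same route as the paper's: both reduce (i)$\Leftrightarrow$(ii)$\Leftrightarrow$(iii) to \Cref{lem:inv_pol} together with \Cref{lem:only_non_invariant} and the connectivity of $\Upsilon$, obtain (iii)$\Leftrightarrow$(iv) by applying \Cref{lem:c0ord} with $g=f$, and dispatch (iv)$\Leftrightarrow$(v) by positivity of the entries. The only difference is bookkeeping in how the chain of equivalences is ordered.
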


\begin{proof}
	By construction of $\Upsilon$, and \Cref{lem:only_non_invariant},
	the branch of $\Gapol$ at $i$ contains arrowheads
	if and only if $i \in \Upsilon$, which, by the previous lemma, is
	equivalent to $\varpi_i = 0$, which is equivalent to
	$\varpi_i = \varpi_j = 0$, proving
	\ref{it:Gapol_ah_i} $\Leftrightarrow$
	\ref{it:Gapol_ah_ji} $\Leftrightarrow$
	\ref{it:Gapol_ah_ah}.
	\Cref{lem:c0ord} gives
	\ref{it:Gapol_ah_ah} $\Leftrightarrow$
	\ref{it:Gapol_ah_m}.
	Finally, \ref{it:Gapol_ah_m} is equivalent to \ref{it:Gapol_ah_pos},
	since both $c_{0,i}$ and $m_i$ are positive.
\end{proof}

\begin{cor} \label{cor:ups_pol}
	Let $i,j$ be vertices in $\Gapol$, with a directed edge $j \to i$. Then
	\[
	\left|
	\begin{matrix}
		c_{0,i} & \varpi_i \\
		c_{0,j} & \varpi_j
	\end{matrix}
	\right|
	\leq 0,
	\]
	with equality if and only if $\varpi_i = \varpi_j = 0$.
\end{cor}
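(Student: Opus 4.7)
The plan is to expand using the definition $\varpi_k = c_{1,k} - m_k + p_k$, which gives
\[
  c_{0,i}\varpi_j - c_{0,j}\varpi_i
  =
  \det\!\begin{pmatrix} c_{0,i} & c_{1,i} \\ c_{0,j} & c_{1,j}\end{pmatrix}
  - \det\!\begin{pmatrix} c_{0,i} & m_{i} \\ c_{0,j} & m_{j}\end{pmatrix}
  + \det\!\begin{pmatrix} c_{0,i} & p_{i} \\ c_{0,j} & p_{j}\end{pmatrix}.
\]
All three summands have been controlled earlier in the paper: by \Cref{cor:c01_det}, the first determinant is strictly negative because the edge is directed from $j$ to $i$; and by \Cref{lem:c0ord} applied to $g = f$ and to $g = f_w$ for $[w]\in\Omega$ (so that $\ord_{D_k}(f_w) = p_k$), the other two are nonpositive, with equality occurring precisely when the strict transforms of $\Cpol$, respectively $\Ppol$, avoid every divisor $D_k$ in the branch of $\Gapol$ at $i$.

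I would then split into two cases using \Cref{lem:inv_pol} together with the subtree structure of $\V_\Upsilon$. If $i \in \V_\Upsilon$, then since $\V_\Upsilon$ is the smallest connected subgraph of $\Gamin$ containing $0$ and every arrow-adjacent vertex (\Cref{def:upsilon}), it is a subtree containing $0$; hence $j$, which lies on the geodesic from $0$ to $i$, is also in $\V_\Upsilon$. By \Cref{lem:inv_pol} both $\varpi_i$ and $\varpi_j$ vanish, and the target determinant is zero, yielding the equality clause.

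The remaining case is $i \notin \V_\Upsilon$, where the key observation is that the branch of $\Gapol$ at $i$ then contains no arrow-adjacent vertex: if some $k$ in this branch were adjacent to an arrowhead, then $k \in \V_\Upsilon$, while the geodesic from $0$ to $k$ passes through $i$, which by the subtree property would force $i \in \V_\Upsilon$. Consequently $\Cpol$ meets no $D_k$ in the branch at $i$, and the equality clause of \Cref{lem:c0ord} for $g = f$ makes the middle determinant vanish. What is left is a strictly negative determinant plus a nonpositive one, so the value is strictly negative; in particular, the strict inequality holds whenever $\varpi_i > 0$, complementing Case 1.

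The main obstacle, and the only genuinely nonroutine step, is the decoupling in the noninvariant case: one must use the characterization of invariance via arrowheads-in-the-branch (essentially \Cref{cor:Gapol_ah}) to kill the $m$-determinant exactly when $\varpi_i > 0$, so that the $c_1$- and $p$-contributions have aligned sign and strict negativity emerges without any delicate cancellation among the three determinants.
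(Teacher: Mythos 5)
Your proof is correct and follows essentially the same route as the paper: the same decomposition into the $c_1$-, $m$-, and $p$-determinants, with the first strictly negative by directedness, the last nonpositive by \Cref{lem:c0ord}, and the $m$-determinant killed in the noninvariant case via the arrowheads-in-the-branch characterization (the paper simply cites \Cref{cor:Gapol_ah}\cref{it:Gapol_ah_m} where you unpack it). Your Case 1 observation that $j\in\V_\Upsilon$ whenever $i\in\V_\Upsilon$ is the contrapositive of the paper's remark that $\varpi_j\neq 0$ forces $\varpi_i\neq 0$.
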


\begin{proof}
	If $\varpi_i = \varpi_j = 0$, then the statement is clear.
	It follows from the construction of $\Upsilon$ and \Cref{lem:inv_pol}, 
	that if $\varpi_j \neq 0$, then $\varpi_i \neq 0$ as well,
	so let us assume that $\varpi_i \neq 0$.
	We have
	\[
	\left|
	\begin{matrix}
		c_{0,i} & \varpi_i \\
		c_{0,j} & \varpi_j
	\end{matrix}
	\right|
	= 
	\left|
	\begin{matrix}
		c_{0,i} & c_{1,i} \\
		c_{0,j} & c_{1,j}
	\end{matrix}
	\right|
	-
	\left|
	\begin{matrix}
		c_{0,i} & m_i \\
		c_{0,j} & m_j
	\end{matrix}
	\right|
	+
	\left|
	\begin{matrix}
		c_{0,i} & p_i \\
		c_{0,j} & p_j
	\end{matrix}
	\right|.
	\]
	The first term on the right hand side is negative by 
	\Cref{def:directed}, and
	the last term is nonpositive by \Cref{lem:c0ord}.
	Since $\varpi_i \neq 0$, the middle term vanishes by 
	\Cref{cor:Gapol_ah} \cref{it:Gapol_ah_m}.
\end{proof}

\begin{rem}
	One can change  $\Ypol$ for any other resolution that arises from 
	blowing 
	up 
	$\Ymin$ either at non-invariant vertices or base points of the polar 
	locus. In 
	this case, all the results in this section are still true.
\end{rem}

\section{The Jacobian ideal in $Y_{\mathrm{min}}$ and polar curves in 
$Y_{\mathrm{pol}}$}

The main result of this subsection is \Cref{lem:polar}, which gives a 
description
of the strict transform of the polar curve in $\Ymin$ and $\Ypol$.
The lemma follows from some of the properties of these modifications
we have seen so far, and a strong result of Fran\c{c}ois Michel
\cite{Mich_Jac}.

Recall (\Cref{not:geodesic_degree}) that if,  $i \in \Vmin$, we denote by 
$\degmin(i)$ the
\emph{degree}
of $i$ in $\Gamin$, i.e. the number of adjacent edges
(this includes edges adjacent to arrowheads). Since $\Upsilon$ is naturally 
embedded both in $\Gamin$ and $\Gapol$, a vertex $i \in \V_\Upsilon$ has a 
well-defined $\degmin(i)$.

\begin{definition}
	Let $\Ndmin \subset \Vmin$ be the set of
\index{node}
\emph{nodes}, i.e. vertices
	with degree $\geq 3$.
	Let $\E\subset \Vmin$ be the set of
\index{end}
\emph{ends}, i.e. vertices of 
	degree $1$.
\end{definition}

This notion of node is used in some classical references as in 
\cite{neuwahl}. 
Sometimes the corresponding divisors are referred to as {\em branching} or 
{\em 
	rupture}.
\begin{lemma}\label{lem:neigh_invariant}
	An invariant vertex has at most one non-invariant neighbor in $\Gamin$.
	Furthermore, a connected component of the complement of $\Upsilon$
	in $\Gamin$ is a bamboo, that is, a string of vertices, as shown in red 
	on the 
	left part of
	\cref{fig:redinv}.
\end{lemma}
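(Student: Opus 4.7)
Let $T$ be a connected component of the induced subgraph of $\Gamin$ on $\V_\Gamin \setminus \V_\Upsilon$. Since $\Upsilon$ is a connected subgraph of the tree $\Gamin$ containing $0$ and every arrowhead-adjacent vertex, $T$ is a subtree attached to $\V_\Upsilon$ by a single edge, from a root $r_T \in T$ to a parent $i_T \in \V_\Upsilon$; no vertex of $T$ is $0$ or adjacent to an arrowhead, and every vertex of $T \setminus \{r_T\}$ has all of its $\Gamin$-neighbors in $T$.

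To prove the bamboo claim, I would argue by contradiction that no $v \in T$ has $\Gamin$-degree $\geq 3$. Such a $v$ would contribute at least two children in the rooted tree, both necessarily noninvariant (as their subtrees are subsets of the arrowhead-free subtree of $v$). In the minimal embedded resolution $\Ymin$, a vertex of degree $\geq 3$ is produced only by blowing up a point where at least three strata (exceptional divisors and/or branches of the strict transform of $C$) meet; tracking the blow-up history via \Cref{lem:c_01_recursive}, the newly created divisor is adjacent to the participating strata, and any subsequent separation of the arrowhead from $v$ deposits it on a single immediate child of $v$. One then shows that producing two arrowhead-free subtrees at $v$ forces a superfluous blow-up at a smooth point of an exceptional divisor not on the strict transform of $C$, violating minimality of $\Ymin$. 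Hence every vertex of $T$ has $\Gamin$-degree $\leq 2$, so $T$ is a path.

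The first claim follows by the same mechanism: two distinct noninvariant neighbors $j_1, j_2$ of an invariant $i$ would correspond, by the bamboo claim, to two disjoint noninvariant bamboos attached to $i$, and the same minimality obstruction rules this out. Concretely, each $j_l$ satisfies the numerical constraint of \Cref{cor:Gapol_ah}\ref{it:Gapol_ah_pos} (transported to $\Gamin$ since its content is intrinsic to $(c_0,m)$), namely $(m_{j_l},m_i)$ is a positive rational multiple of $(c_{0,j_l},c_{0,i})$; retracing each chain via \Cref{lem:c_01_recursive} with this constant-slope constraint, the existence of two independent such chains at $i$ again demands a redundant smooth-point blow-up absent from $\Ymin$.

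\textbf{Main obstacle.} The crux of both parts is the minimality argument: showing that two noninvariant subtrees at a single vertex must force a blow-up not demanded by any normal-crossings failure. Noninvariant bamboos genuinely do appear in $\Ymin$ (e.g.\ in the cusp $y^2=x^3$, where after the triple-point blow-up creating $D_2$ the vertex $D_1$ is left stranded as a single-vertex noninvariant bamboo attached to the invariant rupture $D_2$), so the argument must carefully distinguish the unique admissible ``cleanup'' bamboo from hypothetical additional ones. A cleaner route is the Hirzebruch--Jung / Newton-polygon description of the resolution along each branch's Puiseux expansion, where noninvariant bamboos correspond exactly to the HJ refinement past the rightmost characteristic weight vector of the Newton polygon --- a structure that intrinsically yields at most one such tail per invariant attachment point, and that is automatically a linear chain.
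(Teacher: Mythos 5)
There is a genuine gap, and you have located it yourself: both halves of your argument ultimately rest on the claim that ``two noninvariant subtrees at a single vertex force a superfluous blow-up,'' and that claim is never established. As stated it is also slippery: a vertex of $\Gamin$ acquires degree $\geq 3$ whenever two or more later centers of blow-up lie on its divisor, and each of those centers can be perfectly legitimate (a non-normal-crossing point of the total transform), so ``minimality of $\Ymin$'' does not by itself forbid the configuration. Your tree-theoretic setup (each component $T$ of $\Gamin\setminus\Upsilon$ attaches to $\Upsilon$ along a single edge, contains no vertex adjacent to an arrowhead, etc.) is correct but is pure combinatorics of removing a connected subtree from a tree; it gives neither the bamboo claim nor the ``at most one noninvariant neighbor'' claim, which are genuinely geometric.

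The route you call ``cleaner'' at the end is in fact the one the paper takes, and it is worth carrying out rather than gesturing at. The proof is an induction organized by the decomposition of \cref{block:Z_induction}: a component $\Xi$ of $\Gamin\setminus\Upsilon$ either lies inside the string $\V'=\{1,\dots,k\}$ of divisors created over the tangent direction, or lies entirely in $\V_y\setminus\V'$. In the first case, \Cref{lem:char_l} identifies the noninvariant vertices of $\V'$ as exactly those $l$ with $\init_l f$ a monomial $ax^n$, i.e.\ those whose weight vector $(c_{0,l},c_{1,l})=(l,l+1)$ lies beyond the steepest compact face of the Newton polygon; these form a terminal segment of the string $\V'$ (cf.\ \cref{fig:diag}), hence a bamboo whose unique neighbor outside it is invariant and has no other noninvariant neighbor. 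In the second case, \Cref{lem:char_l_np} and \cref{eq:varpi_iff} let you replace $f$ by its pullback $g$ to $Y_0$ and induct on the resolution of $g$. If you want to salvage your write-up, you need to either execute this Newton-polygon step explicitly or supply an actual proof of the ``superfluous blow-up'' obstruction; neither is present in the proposal.
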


\begin{proof}
	This statement can be seen to follow from the explicit process of 
	resolving
	a plane curve. Alternatively, we can use induction,
	see \cref{block:Z_induction} for notation.
	If $\Xi$ is a connected component of $\Gamin \setminus \Upsilon$, then 
	$\Xi$
	is either the induced subgraph on the vertices in
	$\V' \setminus \V_\Upsilon$, or we have $\V_\Xi \subset \Vmin \setminus 
	\V'$.
	In the first case, the connected component $\Xi$ has precisely one 
	neighbor, 
	which
	does not have other non-invariant neighbors.
	In the second case, use induction.
\end{proof}

\begin{block} \label{block:nodes}
	Let $n\in\Ndmin$ be a node, and assume that $n$ has a non-invariant 
	neighbor $i_1$.
	Denote by $i_1,i_2,\ldots,i_h$ the vertices on the connected component 
	of
	$\Gamin \setminus\Upsilon$ containing the $i_1$,
	such that $i_j$ and $i_{j+1}$
	are adjacent for $j=1,\ldots,h-1$, and $i_h$ is an end.
	Let $-b_j = D_{i_j}^2$ be the Euler numbers
	of these vertices. We use the notation
	\[
	[b_1,\ldots,b_h] = b_1 - \frac{1}{b_2 - \frac{1}{\cdots-\frac{1}{b_h}}}
	\]
	for the \emph{negative continued fraction} associated with the sequence
	$b_h,\ldots,b_1$. Recursively define
	\[
	\alpha_j
	=
	\begin{cases}
		1   & j = h,\\
		b_h & j = h-1,\\
		b_{j+1} \alpha_{j+1} - \alpha_{j+2}, & j=0,\ldots,h-2.
	\end{cases}
	\]
\end{block}

\begin{definition}
	Let $n \in \Ndmin \setminus \{0\}$. If $n$ has a non-invariant neighbor 
	in $\Gamin$,
	set $\alpha_n = \alpha_0$, as defined above. Otherwise, set
	$\alpha_n = \infty$.
\end{definition}

\begin{rem}
	If $\alpha_n = \infty$, then it is understood that $1/\alpha_n = 0$.
\end{rem}

\begin{definition}
	Let $n \in \Ndmin$ be a node in $\Gamin$.
	With the above notation, set $\Vmin(n) = \{i_0, i_1, \ldots, i_h\}$ 
	with $i_0 =n$.
	Let $\Vpol(n)$ be the set of vertices in $\Vmin(n)$,
	as well as any vertex appearing when blowing up a point
	in the preimage of the union of the $D_i$ for $i \in \Vmin(n)$. See 
	\cref{fig:redinv}.
\end{definition}

\begin{rem}
	The set $\Vmin(n) \subset \Vmin$
	is the set of vertices in the connected component
	containing $n$ in the graph obtained from
	$\Gamin$ by removing all invariant edges.
	A similar description holds for $\Vpol(n) \subset \Vpol$.
\end{rem}

\begin{figure}[ht]
	\begin{center}
		\includegraphics*{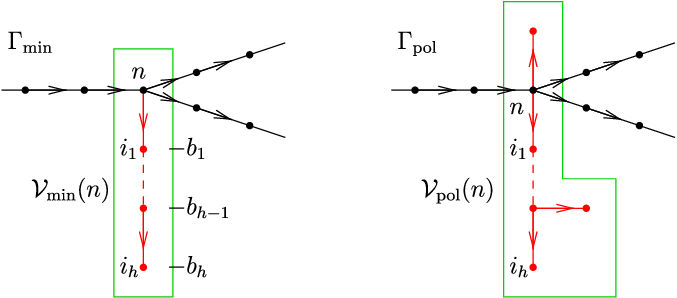}
		\caption{On the right hand side, we see two new vertices appear in
			$\Vpol(n)$.}
		\label{fig:redinv}
	\end{center}
\end{figure}

By recursion, we find that $\alpha_{j-1} > \alpha_j$, and
$\gcd(\alpha_{j+1}, \alpha_j) = 1$, as well as
the formula
\begin{equation} \label{eq:bj_exp}
	[b_j, b_{j+1},\ldots,b_h]
	=
	\frac{\alpha_{j-1}}{\alpha_j}.
\end{equation}
Indeed, by induction
\[
[b_j, b_{j+1},\ldots,b_h]
=
b_j - \frac{1}{[b_{j+1},\ldots,b_h]}
=
b_j - \frac{\alpha_{j+1}}{\alpha_j}
=
\frac{b_j \alpha_j - \alpha_{j-1}}{\alpha_j}
=
\frac{\alpha_{j-1}}{\alpha_j}.
\]

With this in mind, we have the following lemma.

\begin{lemma} \label{lem:alpha_ratio}
	For $j=0,\ldots,h$, we have
	\[
	\alpha_j
	= \frac{m_{i_j}}{m_{i_h}}
	= \frac{c_{0,i_j}}{c_{0,i_h}}.
	\]
\end{lemma}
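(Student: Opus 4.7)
The plan is to exploit two principal divisors, $\pimin^* f$ and $\pimin^* \ell$ for $\ell$ a generic linear form on $\C^2$, and read off linear recursions for $m_{i_j}$ and $c_{0,i_j}$ along the bamboo from the fact that these divisors have zero intersection with each compact exceptional component. By \Cref{lem:neigh_invariant}, the bamboo $i_1,\ldots,i_h$ is a string in $\Gamin$: each $i_j$ for $1\le j\le h-1$ has exactly the two neighbors $i_{j-1}, i_{j+1}$, while $i_h$ has only $i_{h-1}$. Moreover, the branch of $\Gamin$ at $i_1$ is non-invariant, hence contains no arrowheads by \Cref{cor:Gapol_ah}, so $\Cmin$ does not meet $D_{i_j}$ for any $j\ge 1$. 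Genericity of $\ell$ guarantees that the strict transform of $\{\ell=0\}$ meets only $D_0$ (at a single smooth point), hence it too avoids the bamboo.

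The second step is to translate the vanishing intersection numbers $(\pimin^* f)\cdot D_{i_j}=0$ and $(\pimin^* \ell)\cdot D_{i_j}=0$ into coefficient relations. Because the non-exceptional parts of the two divisors avoid the bamboo, only the exceptional contribution survives, and using $D_{i_j}^2=-b_j$ together with the string structure gives
\[
m_{i_{j-1}}+m_{i_{j+1}}=b_j\,m_{i_j}\quad (1\le j\le h-1), \qquad m_{i_{h-1}}=b_h\,m_{i_h},
\]
together with the identical relations for the sequence $c_{0,i_j}$.

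Finally, I would compare with the recursion defining the $\alpha_j$ in \cref{block:nodes}. Setting $\beta_j := m_{i_j}/m_{i_h}$, we have $\beta_h=1=\alpha_h$, $\beta_{h-1}=b_h=\alpha_{h-1}$, and the rearrangement $\beta_{j-1}=b_j\beta_j-\beta_{j+1}$ of the relation above is exactly the recursion for $\alpha_{j-1}$. Descending induction from $j=h$ to $j=0$ then forces $\beta_j=\alpha_j$, i.e.\ $m_{i_j}=\alpha_j m_{i_h}$; running the same argument for the sequence $c_{0,i_j}$ yields $c_{0,i_j}=\alpha_j c_{0,i_h}$. The only point requiring care is the geometric input in the first paragraph---namely that $\Cmin$ and the strict transform of a generic line avoid the bamboo---after which everything reduces to a formal linear algebra computation on a string. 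In particular, no delicate analysis of the modification $\Ypol \to \Ymin$ is needed here, since the argument stays entirely in $\Ymin$.
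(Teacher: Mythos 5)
Your proof is correct and follows essentially the same route as the paper: both derive the three-term recursion $m_{i_{j-1}} + m_{i_{j+1}} = b_j\, m_{i_j}$ (with $m_{i_{h-1}} = b_h\, m_{i_h}$ at the end of the bamboo) from $(D_{i_j}, \pimin^*f) = 0$ together with the facts that the bamboo is a string and that $\Cmin$ avoids it, and then match this against the continued-fraction recursion defining $\alpha_j$. The only divergence is in the second equality: the paper gets $c_{0,i_j}/c_{0,i_h} = m_{i_j}/m_{i_h}$ in one line from \Cref{cor:Gapol_ah} (noninvariance of the edges $i_j i_{j+1}$ forces $(m_{i_j},m_{i_{j+1}})$ to be a rational multiple of $(c_{0,i_j},c_{0,i_{j+1}})$), whereas you rerun the intersection argument with $\pimin^*\ell$ for a generic line --- both are valid, yours being slightly longer but more self-contained.
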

\begin{proof}
	Since the edges $i_ji_{j+1}$ are non-invariant, it suffices, by
	\Cref{cor:Gapol_ah}, to prove the first equality.
	
	Since the strict transform $\Cmin$ of the curve defined by $f$ does not 
	intersect
	$D_{i_j}$ for $j>0$, and the divisor $\sum_i m_i D_i + \Cmin = 
	(\pimin^*f)$ is principal,
	we find
	\[
	0 = (D_{i_j},\pimin^*f) =
	\begin{cases}
		m_{i_{h-1}} - b_{h}m_{i_h}, & j = h, \\
		m_{i_{j+1}} - b_{j}m_{i_j} + m_{i_{j-1}}, & 1 \leq j < h.
	\end{cases}
	\]
	Clearly, $\alpha_h = 1 = m_{i_h} / m_{i_h}$. The case $j=h$ above
	gives $\alpha_{h-1} = b_h = m_{i_{h-1}} / m_{i_h}$. For $j < h-1$, 
	induction gives
	\[
	\frac{m_{i_j}}{m_{i_h}}
	=
	b_{j+1} \frac{m_{i_{j+1}}}{m_{i_h}} - \frac{m_{i_{j+2}}}{m_{i_h}}
	=
	\alpha_{j+1}\left(b_{j+1} - \frac{1}{[b_{j+2},\ldots,b_h]}\right)
	=
	\alpha_{j}. \qedhere
	\] 
\end{proof}

We are ready to state the main result of this section whose proof is 
postponed to the end.
\begin{lemma} \label{lem:polar}
	With the notation introduced above, the following hold.
	\begin{enumerate}
		\item \label{it:polar_0}
		The Jacobian ideal has no base points on $D_0$.
		If $t$ is the number of tangents of $C$, then
		\[
		(\Ppol, D_0)
		= (\Pmin, D_0)
		= \degmin(0) - 1 = t - 1.
		\]
		
		\item \label{it:polar_bamboo}
		If $i \in \V_\Upsilon \setminus \Ndmin$, then
		\[
		D_i \cap \Ppol = D_i \cap \Pmin = \emptyset.
		\]
		
		\item \label{it:polar_node}
		If $n \in \Ndmin$, then, with the notation as in \cref{block:nodes},
		\begin{equation} \label{eq:polar_node}
			\sum_{i\in\Vpol(n)} (\Ppol,m_i D_i)
			= \sum_{j=0}^h (\Pmin, m_{i_j}D_{i_j})
			= m_n\left(\degmin(n) - 2 - \frac{1}{\alpha_n}\right).
		\end{equation}
		Furthermore, every intersection point in $\Pmin \cap D^\circ_n$ is
		a base point of the Jacobian ideal.
	\end{enumerate}
\end{lemma}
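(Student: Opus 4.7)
Plan. Part (i) is essentially \Cref{lem:polar_comp_0}: the proof there shows the Jacobian ideal has no base points on $D_0$ outside $\Cmin$, and counts $(\Pmin, D_0) = t-1$ as the number of roots of $\init(f)_y = \init(f_y)$ that are not roots of $\init(f)$. Since these intersection points lie on $D_0^\circ$ away from $\Cmin \cap D_0$, no blow-up in the passage $\Ypol \to \Ymin$ modifies them, so $(\Pmin,D_0) = (\Ppol,D_0)$; the identification $t = \degmin(0)$ is \Cref{rem:tangents}.

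For (ii) and (iii), the basic tool is the identity
\[
  (\Pmin, D_k) = -\sum_{\ell\in\W} p_\ell (D_\ell, D_k),
\]
coming from the principal divisor $(\pimin^* f_y) = \sum_\ell p_\ell D_\ell + \Pmin$ (with $p_a = m_a - 1$ on arrowheads, vanishing for $f$ reduced and handled analogously otherwise). For $k \neq 0$, combining $(\pimin^* x, D_k) = 0$ with $(K, D_k) = -D_k^2 - 2$ yields by elementary elimination the \emph{balance relation}
\[
  c_{1,k} D_k^2 + \sum_{\ell \in \V,\ \ell\text{ nbr of }k} c_{1,\ell} = d_k^\V - 2,
\]
where $d_k^\V$ is the number of divisor neighbors of $k$. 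The invariance identity $p_\ell = m_\ell - c_{1,\ell}$ for $\ell \in \V_\Upsilon$ (\Cref{lem:varpi}, \Cref{lem:inv_pol}) is what makes subsequent computations telescope.

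For (ii), the case $i = 0$ is already covered by (i); for $i \in \V_\Upsilon\setminus \Ndmin$ with $i \neq 0$, first note that $\degmin(i) = 2$ (else a contractible $(-1)$-curve would violate minimality of the resolution), and that every neighbor of $D_i$ lies in $\V_\Upsilon$ or is an arrowhead: a noninvariant neighbor $i'$ would correspond to a branch of $\Gamin$ at $i'$ containing no arrowheads by \Cref{cor:Gapol_ah}, and the minimality of $\Upsilon$ would then force $\degmin(i) \geq 3$. Substituting invariance at all neighbors and applying the balance relation collapses $(\Pmin, D_i)$ to $\degmin(i) - 2 = 0$; since intersection multiplicities of effective cycles are nonnegative, this forces $\Pmin \cap D_i = \emptyset$, and the analogous statement for $\Ppol$ follows because no blow-up in $\Ypol \to \Ymin$ occurs on $D_i$.

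For (iii), let $Z_n = \sum_{j=0}^h m_{i_j} D_{i_j}$. The chain recursions $b_j m_{i_j} = m_{i_{j-1}} + m_{i_{j+1}}$ (for $1 \leq j < h$) and $b_h m_{i_h} = m_{i_{h-1}}$ immediately give $(D_{i_j}, Z_n) = 0$ for $1 \leq j \leq h$, while the multiplicity equation at $n$ gives $(D_n, Z_n) = -\sum_{\ell\text{ inv nbr}} m_\ell - a_n$. Expanding, substituting invariance at $n$ and its invariant neighbors, and simplifying via the balance relation at $n$, one reaches
\[
  \sum_{j=0}^h m_{i_j}(\Pmin, D_{i_j}) = m_n\bigl(\degmin(n) - 2\bigr) + (c_{1,n} m_{i_1} - m_n c_{1,i_1}).
\]
The main obstacle is the arithmetic identity $m_n c_{1,i_1} - c_{1,n} m_{i_1} = m_{i_h}$. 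I would prove this by the standard continued-fraction trick: subtracting the linear equation $b_j c_{0,i_j} = c_{0,i_{j-1}} + c_{0,i_{j+1}}$ from the canonical equation along the chain shows that $c_{1,i_j}$ satisfies the same three-term recursion as $m_{i_j}$, with boundary condition $b_h c_{1,i_h} - c_{1,i_{h-1}} = 1$ coming from adjunction at the endpoint $i_h$. Consequently the $2\times 2$ determinant $v_j = m_{i_j} c_{1,i_{j+1}} - m_{i_{j+1}} c_{1,i_j}$ is constant in $j$, and evaluating at $j = h-1$ gives $v_{h-1} = m_{i_h}$, whence $v_0$ yields the identity. Since $m_{i_h} = m_n/\alpha_n$ by \Cref{lem:alpha_ratio}, this gives the stated formula. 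The final assertion, that every point of $\Pmin\cap D_n^\circ$ is a base point of the Jacobian ideal, is immediate from the definition of $\pipol$ as the minimal embedded resolution of that ideal; and the equality $\sum_{i\in\Vpol(n)}(\Ppol, m_iD_i) = \sum_{j=0}^h(\Pmin, m_{i_j}D_{i_j})$ follows by the projection formula applied to $\Ypol \to \Ymin$, using \Cref{lem:Pmin_Cmin} to guarantee that the intermediate blow-ups avoid $\Cmin$.
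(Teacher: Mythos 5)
Your proposal is correct in its main line but takes a genuinely different route from the paper for parts \cref{it:polar_bamboo} and \cref{it:polar_node}. The paper deduces both from the external result \cite[Theorem 4.9]{Mich_Jac}: it identifies $\{n,i_1,\ldots,i_h\}$ as a rupture zone for the Hironaka number $q_i = m_i/c_{0,i}$ and reads off $\sum_\gamma V_f(\gamma) = -\sum_j m_{i_j}\chi(D_{i_j}^\circ)$, from which \cref{eq:polar_node} and the emptiness in \cref{it:polar_bamboo} follow. You instead give a self-contained intersection computation on $\Gamin$, built on the numerical triviality of $(\pimin^*f_y)$ and $(\pimin^*x)$ against exceptional components, adjunction, and the invariance identity $p_\ell = m_\ell - c_{1,\ell}$. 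I checked the pivotal steps: the balance relation $c_{1,k}D_k^2 + \sum_{\ell\ \mathrm{nbr}} c_{1,\ell} = d_k^{\V} - 2$ holds for $k\neq 0$; for \cref{it:polar_bamboo} the substitution does collapse $(\Pmin,D_i)$ to $\degmin(i)-2=0$ (the cleaner reason that all $\V$-neighbours of such an $i$ are invariant is simply that the geodesic through $i$ must continue inside $\Upsilon$, not minimality of the resolution); and for \cref{it:polar_node}, with $Z_n=\sum_{j}m_{i_j}D_{i_j}$, the determinant $v_j = m_{i_j}c_{1,i_{j+1}} - m_{i_{j+1}}c_{1,i_j}$ is indeed constant with $v_{h-1}=m_{i_h}$, since $m$ and $c_1$ satisfy the same three-term recursion along the bamboo and the balance relation at the end vertex gives $b_hc_{1,i_h}-c_{1,i_{h-1}}=1$. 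One step to make explicit: for noninvariant $\ell$ one has $p_\ell = m_\ell - c_{1,\ell} + \varpi_\ell$, and the correction $\sum_\ell \varpi_\ell(D_\ell,Z_n)$ vanishes precisely because $(D_{i_j},Z_n)=0$ for $j\geq 1$ — a fact you state but do not connect to this cancellation. Your route avoids the dependence on \cite{Mich_Jac} and is verifiable entirely from the paper's own lemmas; the paper's is shorter and places the count in Michel's framework.

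The one genuine gap is your justification of the last assertion of \cref{it:polar_node}, that every point of $\Pmin\cap D_n^\circ$ is a base point of the Jacobian ideal. This is not ``immediate from the definition of $\pipol$'': by \Cref{def:pipol}, $\pipol$ is the minimal resolution satisfying Teissier's equisingularity conditions for the family $P_w$, not a resolution of $(f_x,f_y)$, and in any case the assertion requires that the strict transforms of \emph{both} $\{f_x=0\}$ and $\{f_y=0\}$ pass through the point. The correct argument is the strict inequality $\ord_{D_n}(f_y)<\ord_{D_n}(f_x)$ of \Cref{cor:strict_ine}, which forces the strict transform of $af_x+f_y=0$ through the point for every $a$; this is exactly the unnamed corollary following \Cref{cor:strict_ine}, which you should cite instead. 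A final shared caveat rather than an error of yours: for $i=0$ with $t=2$, part \cref{it:polar_bamboo} as literally stated conflicts with part \cref{it:polar_0}, so deferring ``the case $i=0$'' to part \cref{it:polar_0} only works when $t=1$; both your argument and the paper's implicitly take $i\neq 0$ there.
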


\begin{example}
	The curve singularities defined by the polynomials
	\[
	f(x,y) = y^4 + x^6 + x^5y,\qquad
	g(x,y) = y^4 + x^3y^2 + x^6 + x^5y.
	\]
	at the origin have the same topological type. In fact, they are both
	Newton nondegenerate, and have the same Newton polyhedron.
	Thus, $f$ and $g$ are resolved by a toric morphism $\pi_\triangle$,
	as described in \Cref{ex:toric}. The Newton polyhedron has one
	compact face, with normal vector $(2,3)$.
	
	The Newton polyhedron of a generic partial $af_x + bf_y$
	has two compact faces, one with normal vector $(2,3)$, and
	one with normal vector $(1,2)$.
	
	The Newton polyhedron of a generic partial $ag_x + bg_y$
	has one compact face with normal vector $(3,5)$.
	
	To the left of \cref{fig:4_6_ex}, we see the minimal resolution of $f$,
	with two red arrows representing the strict transform of the polar 
	curve,
	as well as, we see the same resolution, with one additional
	blow-up, giving an embedded resolution of a generic polar curve of $g$.

	\begin{figure}[ht]
		\begin{center}
			\includegraphics*{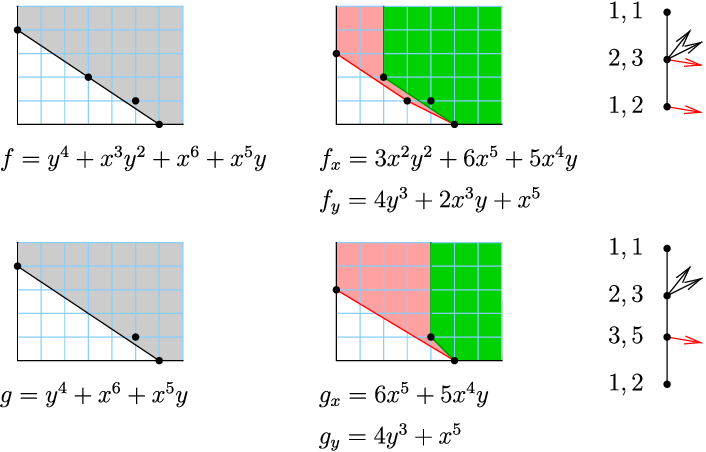}
			\caption{Left and center: Newton polyhedrons of $f$ and $g$ and 
				their partials.
				To the right: resolution graphs showing the generic
				polar curve in red.}
			\label{fig:4_6_ex}
		\end{center}
	\end{figure}
\end{example}

\begin{lemma} \label{lem:xfx}
	Use the notation introduced in \cref{block:Z_induction}.
	If $i \in \V_y \cap \V_\Upsilon$, then
	\[
	\ord_{D_i}(x f_x) \geq \ord_{D_i}(y f_y).
	\]
	Equality holds,
	unless $i \in \V'$ and $\init_i(f)$ does not depend on $x$,
	i.e. $\init_i(f) = by^m$ for some $b\in\C^*$ and $m\in \Z_{>0}$.
\end{lemma}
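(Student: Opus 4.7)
The plan is to compute both orders in local coordinates $u,v$ on a chart $U\subset\Ymin$ around a point of $D_i^\circ$, chosen as in \cref{block:c_1} so that $\pi^*x = u^{c_{0,i}}$ and $\pi^*y = \sum_{j\geq c_{0,i}} u^j g_j(v)$, with $g_{c_{1,i}}$ the first non-constant $g_j$. Since $i\in\V_y$, \Cref{lem:tang_van} gives $\ord_u(\pi^*y) > c_{0,i}$, and I set $k := \ord_u(\pi^*y)$. By \Cref{lem:ord_y}, $k = c_{1,i}$ precisely when $i \in \V'$; otherwise $c_{0,i} < k < c_{1,i}$ and $g_k$ is a nonzero constant (since every $g_j$ with $j < c_{1,i}$ is constant by the definition of $c_{1,i}$). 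Writing $\pi^*f = u^{m_i} h(u,v)$, invariance of $i$ together with \Cref{lem:varpi} implies that $c(v) := h(0,v)$ is non-constant on $D_i^\circ$, and in particular $\ord_u(h_v) = 0$.

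From the chain rule $(\pi^*f)_v = (\pi^*f_y)\,y_v$ (which uses $x_v = 0$) together with $\ord_u(y_v) = c_{1,i}$, I obtain $\ord_u(\pi^*f_y) = m_i - c_{1,i}$, and hence $\ord_{D_i}(yf_y) = m_i + k - c_{1,i}$. Expanding $(\pi^*f)_u = (\pi^*f_x)\,x_u + (\pi^*f_y)\,y_u$, I find $\ord_u((\pi^*f)_u) = m_i - 1$ and $\ord_u((\pi^*f_y)\,y_u) = m_i + k - c_{1,i} - 1$. When $i \in \V_y \setminus \V'$ the strict inequality $k < c_{1,i}$ makes the second term strictly smaller than the first, so it dominates their difference and therefore $\ord_u((\pi^*f_x)\,x_u) = m_i + k - c_{1,i} - 1$. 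Using $\ord_u(x_u) = c_{0,i} - 1$ this gives $\ord_{D_i}(xf_x) = m_i + k - c_{1,i} = \ord_{D_i}(yf_y)$, so equality holds automatically, with no exceptional subcase.

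The delicate case is $i \in \V' \cap \V_\Upsilon$, in which $k = c_{1,i}$ makes $(\pi^*f)_u$ and $(\pi^*f_y)\,y_u$ share the order $m_i - 1$, so their difference may drop in order. To resolve this, I would invoke the toric description of $\V'$ from \Cref{ex:toric} and \cref{block:Z_induction}: the divisor $D_i$ corresponds to a ray of the toric fan with weight vector $(c_{0,i}, c_{1,i})$, and the valuation $\ord_{D_i}$ agrees with the weighted order $\wt_i$ for that weight. The weighted Euler identity
\[
c_{0,i}\,x\partial_x \init_i(f) + c_{1,i}\,y\partial_y \init_i(f) = m_i\,\init_i(f)
\]
then shows that at least one of $xf_x$, $yf_y$ has weighted order exactly $m_i$; combined with the preceding computation $\ord_{D_i}(yf_y) = m_i$ (also confirmed independently by \Cref{lem:char_l}, stating that $i\in\V'$ is invariant if and only if $\init_i(f)$ depends on $y$), this yields $\ord_{D_i}(xf_x) \geq m_i$, with equality if and only if $x\partial_x\init_i(f) \neq 0$, i.e.\ $\init_i(f)$ depends on $x$. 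The main obstacle will be this cancellation analysis in Case 1: the weighted-Euler shortcut avoids tedious leading-coefficient bookkeeping but requires one to unwind the toric coordinates of \cref{block:Z_induction} to identify $\ord_{D_i} = \wt_i$, and to adapt the argument when $\gcd(c_{0,i}, c_{1,i}) > 1$.
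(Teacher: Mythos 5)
Your proposal is correct and follows essentially the same route as the paper: for $i \in \V_y \setminus \V'$ the chain-rule computation of $\ord_{D_i}(f_x)$ via $f_u = f_x x_u + f_y y_u$ together with $\ord_{D_i}(f_v)=m_i$ (from invariance and \Cref{lem:varpi}) and $\ord_{D_i}(y)<c_{1,i}$ (from \Cref{lem:ord_y}), and for $i\in\V'$ the weighted-order argument that $\wt_i(yf_y)=\wt_i(f)\leq\wt_i(xf_x)$ with equality iff $\init_i(f)$ depends on $x$, using \Cref{lem:char_l}. The weighted Euler identity is a harmless extra flourish in Case 1 (the paper gets the same conclusion directly from the fact that $y\partial_y$, resp.\ $x\partial_x$, kills $\init_i(f)$ exactly when $\init_i(f)$ is independent of $y$, resp.\ $x$), and the worry about $\gcd(c_{0,i},c_{1,i})>1$ is moot since $(c_{0,i},c_{1,i})=(i,i+1)$ for $i\in\V'$.
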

\begin{proof}
	
	\textbf{Case 1.} Assume first that $i\in \V'$. Since we assumed that 
	$i\in 
	\V_\Upsilon$, that means by \Cref{lem:char_l} that $\init_i(f)$ does 
	depend on 
	$y$, so
	\[
	\ord_{D_i}(y f_y) = \ord_{D_i}(f) \leq \ord_{D_i}(xf_x).
	\]
	If $\init_i(f)$ does depend on $x$, then the above inequality is 
	actually an equality.
	
	\textbf{Case 2.} Assume next that $i \in \V_y \setminus \V'$. Since $i 
	\in \V_\Upsilon$,
	$i$ is an invariant vertex, i.e. we have $\varpi_i = 0$, by
	\Cref{lem:inv}. If we take coordinates $u,v$ in a neighborhood
	of some point $p \in D_i^\circ$, such that $\pimin^*x = u^{c_{0,i}}$,
	then the chain rule gives, since $x_v = 0$,
	\begin{equation} \label{eq:partials_chain}
		\begin{aligned}
			f_u &= x_u f_x + y_u f_y,\qquad
			&f_x &= \frac{f_u}{x_u} - \frac{y_u f_v}{x_u y_v}, \\
			f_v &= y_v f_y,
			&f_y &= \frac{f_v}{y_v}.
		\end{aligned}
	\end{equation}
	Since $f$ and $x$ vanish along $D_i$, we find
	\[
	\ord_{D_i}(f_u) = m_i - 1,\quad
	\ord_{D_i}(x_u) = \ord_{D_i}(x) - 1,\quad
	\ord_{D_i}\left(x \frac{f_u}{x_u}\right) = m_i.
	\]
	By \Cref{lem:ord_y}, we have $\ord_{D_i}(y) < c_{1,i}$, and so
	$\ord_{D_i}(y_u) < c_{1,i}-1$.
	By \cref{block:c_1} we have $\ord_{D_i}(y_v) = c_{1,i}$. 
	By \Cref{lem:inv,lem:varpi}, we have $\ord_{D_i}(f_v) = m_i$.
	Put together,
	\[
	\ord_{D_i}\left( x \frac{y_u f_v}{x_u y_v} \right)
	<
	c_{0,i} + (c_{1,i}-1) + m_i - (c_{0,i}-1) - c_{1,i}
	=
	m_i.
	\]
	These two lines, along with \cref{eq:partials_chain}, give
	\[
	\ord_{D_i}(x f_x)
	=
	\ord_{D_i}\left( x \frac{y_u f_v}{x_u y_v} \right)
	=
	m_i - c_{1,i} + \ord_{D_i}(y).
	\]
	Similarly,
	\[
	\ord_{D_i}(y f_y)
	=
	\ord_{D_i}\left(\frac{y f_v}{y_v}\right)
	= m_i - c_{1,i} + \ord_{D_i}(y).\qedhere
	\]
\end{proof}

\begin{cor}\label{cor:strict_ine}
	Use the notation introduced in \cref{block:Z_induction}.
	If $i\in\V_y$ is an invariant vertex, and $i\neq 0$,
	then $\ord_{D_i}(f_y) < \ord_{D_i}(f_x)$.
\end{cor}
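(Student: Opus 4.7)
The plan is to obtain this strict inequality as a quick consequence of the previous lemma combined with \Cref{lem:tang_van}. From \Cref{lem:xfx} applied to $i \in \V_y \cap \V_\Upsilon$, we have the inequality
\[
\ord_{D_i}(x f_x) \geq \ord_{D_i}(y f_y),
\]
which can be rewritten as
\[
\ord_{D_i}(f_x) - \ord_{D_i}(f_y) \geq \ord_{D_i}(y) - \ord_{D_i}(x) = \ord_{D_i}(y) - c_{0,i}.
\]

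The key observation is that since $i \in \V_y$, the tangent associated with $D_i$ is the line $\{y=0\}$ (this is the defining property of $\V_y$ in \cref{block:Z_induction}). Combined with the assumption $i \neq 0$, \Cref{lem:tang_van} applies and gives the strict inequality $\ord_{D_i}(y) > c_{0,i}$. Hence the right-hand side above is strictly positive, and this yields $\ord_{D_i}(f_y) < \ord_{D_i}(f_x)$, as desired.

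The only subtlety to check is the exceptional case in \Cref{lem:xfx}, where the inequality $\ord_{D_i}(x f_x) \geq \ord_{D_i}(y f_y)$ may fail to be an equality (namely when $i \in \V'$ with $\init_i(f) = by^m$); but in that case the inequality is actually strict, which only reinforces the conclusion. Thus no case analysis is needed beyond this observation, and there is no real obstacle — the proof is essentially a one-line combination of the two preceding lemmas.
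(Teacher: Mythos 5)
Your proof is correct and uses the same ingredients as the paper's own argument, namely \Cref{lem:xfx} together with the strict inequality $\ord_{D_i}(y) > \ord_{D_i}(x) = c_{0,i}$ coming from \Cref{lem:tang_van}; the only difference is that you invoke the inequality of \Cref{lem:xfx} uniformly, whereas the paper splits off the exceptional case $\init_i(f) = by^m$ and handles it by a separate weight computation. Your observation that in that exceptional case the inequality of \Cref{lem:xfx} is strict and therefore only reinforces the conclusion is accurate, so the case split is indeed dispensable.
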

\begin{proof}
	If $i \in \V'$ and $\init_i(f) = b y^m$, with $b\in \C^*$, then
	\[
	\ord_{D_i}(f_y)
	= \wt_i(y^{m-1})
	< \wt_i(x^{-1}y^m)
	< \ord_{D_i}(f_x).
	\]
	We are using that by  \Cref{blc:yprime}, {\em orders} can be computed 
	as {\em weights}. Also, the last strict inequality follows since the
	operator  $\partial_x$  kills the initial part $\init_i(f) = b y^m$.
	
	Otherwise, by \Cref{lem:xfx}, we find
	\[
	\ord_{D_i}(f_y)
	= \ord_{D_i}(f_x) + \ord_{D_i}(x) - \ord_{D_i}{y}
	< \ord_{D_i}(f_x).\qedhere
	\]
\end{proof}

\begin{cor}
	If $i \in \V_\Upsilon \setminus \{0\}$
	and $p \in D_i^\circ \cap \Pmin$, then $p$ is
	a base point of the Jacobian ideal $(f_x, f_y)$.
\end{cor}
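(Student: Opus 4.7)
The plan is to reduce the statement to a direct application of \Cref{cor:strict_ine} by selecting coordinates adapted to $D_i$. First I will choose linear coordinates $x,y$ on $\C^2$ such that $\{y=0\}$ is the tangent line associated with $D_i$ and such that the standard metric in these coordinates is $P$-generic in the sense of \Cref{def:gen_pol}; both conditions can be met since $P$-genericity is an open dense condition on $\GL(\C,2)$ and fixing the tangent direction only imposes a single linear constraint on $y$. With this choice, $i \in \V_y$ in the notation of \cref{block:Z_induction}, and by \Cref{lem:inv} the assumption $i \in \V_\Upsilon$ translates into $i$ being invariant. In particular, the polar curve $\{f_y = 0\}$ is a generic relative polar curve and its strict transform coincides with $\Pmin$ near $D_i$, so $p_i = \ord_{D_i}(f_y)$.

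Next I will recall the meaning of a base point in this setting. In a local chart around $p$ with coordinates $u,v$ such that $D_i = \{u=0\}$, the pullback ideal $\pi^*(f_x,f_y)$ has $u^{p_i}$ as a greatest common exceptional factor, and $p$ is a base point of the Jacobian ideal exactly when both rescaled generators $u^{-p_i}\pi^*f_x$ and $u^{-p_i}\pi^*f_y$ vanish at $p$.

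The final step is to verify both vanishings. By \Cref{cor:strict_ine}, $\ord_{D_i}(f_y) < \ord_{D_i}(f_x)$, so $u^{-p_i}\pi^*f_x$ vanishes along the whole of $D_i^\circ$ to order $\ord_{D_i}(f_x) - p_i > 0$, and hence in particular at $p$. Moreover, the restriction of $u^{-p_i}\pi^*f_y$ to $D_i^\circ$ is a regular function whose zero set is precisely $\Pmin \cap D_i^\circ$, which by hypothesis contains $p$; hence $u^{-p_i}\pi^*f_y$ also vanishes at $p$. I do not foresee a meaningful obstacle: once \Cref{cor:strict_ine} is in hand, this corollary is essentially a coordinate change plus a bookkeeping of vanishing orders.
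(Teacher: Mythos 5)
Your proof is correct and follows essentially the same route as the paper: after choosing coordinates so that $\{y=0\}$ is the tangent associated with $D_i$ (and the metric is $P$-generic), everything reduces to \Cref{cor:strict_ine}, which forces $u^{-p_i}\pipol^*f_x$ to vanish identically along $D_i^\circ$ while $u^{-p_i}\pipol^*f_y$ cuts out the intersection points with the polar. The only imprecision is the claim that the strict transform of $\{f_y=0\}$ ``coincides with $\Pmin$ near $D_i$'': since $\Pmin$ is the strict transform of $\{af_x+bf_y=0\}$ for some $[a:b]\in\Omega$ with $b\neq 0$, the two curves are in general distinct, but they meet $D_i^\circ$ in the same set (the zeros of $u^{-p_i}\pipol^*f_y|_{D_i^\circ}$) precisely because of the inequality $\ord_{D_i}(f_x)>\ord_{D_i}(f_y)$ you already invoke, which is all your argument actually uses.
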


\begin{proof}
	We can assume that $i \in \V_y \cap \V_\Upsilon$.
	Let $p$ be an intersection point of $\Pmin$ and $D_i$.
	Since $\ord_{D_i}(f_y) < \ord_{D_i}(f_x)$ by \Cref{cor:strict_ine}, the 
	strict transform of
	the curve $af_x + f_y = 0$ passes through $p$ for any $a \in \C$.
\end{proof}

\begin{proof}[Proof of \Cref{lem:polar}]
	\cref{it:polar_0} was proved in \Cref{lem:polar_comp_0} and the fact 
	that, by 
	\Cref{rem:tangents}, the Jacobian ideal has no base points on $D_0$.
	
	The statements \cref{it:polar_bamboo} and \cref{it:polar_node} follow 
	from
	the work of Michel \cite{Mich_Jac}, as follows.
	Assume that $x$ is any generic
	linear function $\C^2 \to \C$, and consider the finite morphism
	$(f,x):(\C^2,0) \to (\C^2,0)$.
	The polar curve $P$ is called the \emph{Jacobian curve} in 
	\cite{Mich_Jac},
	and the
\index{Hironaka number}
\emph{Hironaka number}
of $i\in\Vmin$ is defined as
	\[
	q_i
	= \frac{\ord_{D_i}(f)}{\ord_{D_i}(x)}
	= \frac{m_i}{c_{0,i}}.
	\]
	By \Cref{lem:c0ord} and \Cref{cor:Gapol_ah},
	the Hironaka number is strictly increasing along invariant edges, 
	and constant along non-invariant edges, in either $\Gamin$ or $\Gapol$.
	Thus, if $i \in \Upsilon$ and
	$\degmin(i) = 2$, then the vertex $i$ has no neighbor with the same 
	Hironaka number, and thus by \cite[Theorem 4.9]{Mich_Jac}, the polar
	curve $\Pmin$ does not pass through $D_i$, proving
	the second equality in \cref{it:polar_bamboo}.
	The first equality follows immediately, since the modification
	$\Ypol\to\Ymin$ therefore does not modify $D_i$.
	
	If $n \in \Ndmin$, then, using the notation in \cref{block:nodes}, the
	set $\{n,i_1,\ldots,i_h\}$ is a \emph{rupture zone}, as defined in
	\cite[Definitions 1.4]{Mich_Jac}, i.e. a connected component
	in $\Gamin$ of the induced subgraph of $\Gamin$
	on the vertices with Hironaka number $q_n$.
	Using $i_0 = n$ as before, then
	\cite[Theorem 4.9]{Mich_Jac} states that
	\begin{equation} \label{eq:Michel_statement}
		\sum_\gamma V_f(\gamma)
		=
		-\sum_{j=0}^h m_{i_j} \chi(D_{i_j}^\circ).
	\end{equation}
	Here, $\gamma$ runs through the connected components of $\Pmin$ which 
	pass 
	through
	a point $p \in D_i$ for some $i \in \Vmin(n)$, and if
	$\nu:(\C,0)\to(\gamma,p)$ is the normalization of such a branch, then
	$V_f(\gamma) = \ord_t(\nu^* \pimin^*f(t))$ where $t$ is a coordinate in 
	$\C$.
	As a result, we have
	\[
	V_f(\gamma)
	= \left(\gamma, (\pimin^*f)\right)
	= \sum_{i\in\Vmin} (\gamma, D_i),
	\]
	since $\gamma$ does not intersect the strict transform $\Cmin$, by
	\Cref{lem:Pmin_Cmin}.
	A similar statement holds for $\pipol$.
	The terms on the right hand side of \cref{eq:Michel_statement}
	all vanish, except for $j=0$ and $j=h$, and
	we have
	\[
	-\chi(D_{i_0}^\circ) = -\chi(D_n^\circ) = \degmin(n) - 2,\qquad
	-\chi(D_{i_h}^\circ) = \degmin(i_h) - 2 = -1.
	\]
	Furthermore, $m_{i_h} = m_n / \alpha_n$ by \Cref{lem:alpha_ratio}. Now
	\cref{eq:polar_node} follows, since
	\[
	\left(\Pmin, \sum_{j=0}^h m_{i_j} D_{i_j} \right)
	= \sum_\gamma V_f(\gamma)
	= \sum_{i\in\Vpol(n)} (\Ppol,m_i D_i).\qedhere
	\]
\end{proof}

	%-----------------------------------------------------------------------
% Beginning of chap7.tex
%-----------------------------------------------------------------------
%
%  AMS-LaTeX sample file for a chapter of a monograph, to be used with
%  an AMS monograph document class.  This is a data file input by
%  chapter.tex.
%
%  Use this file as a model for a chapter; DO NOT START BY removing its
%  contents and filling in your own text.
% 
%%%%%%%%%%%%%%%%%%%%%%%%%%%%%%%%%%%%%%%%%%%%%%%%%%%%%%%%%%%%%%%%%%%%%%%%
	\chapter{The $1$st Blow-up}
\label{s:1st_blowup}
In this section we extend the pullback of the vector field $\xi$ to the 
exceptional divisor of the $1$st blow up
\index{$\pi_0$}
$\pi_0:Y_0 \to \Tu$.
As we will 
notice, this construction cannot be further continued to divisors of 
subsequent 
blow-ups since, in general, the vector field is not invariant in the sense 
of 
\Cref{def:equivariant}.

After that, we also extend the vector field to the boundary of the real 
oriented blow up of $Y_0$ along $D_0$. We introduce some genericity 
conditions 
on the metric (but not on the defining function $f$) in order for these 
extensions to admit 
potentials that are Morse functions.

\section{Special coordinates}

For this subsection fix the standard metric on $\C^2$. Let $x,y$ be 
\index{isometric coordinates}
isometric linear coordinates
(\Cref{def:isometric_coords}) on $\C^2$ with 
respect to some $A \in U(2)$ such that $\{x=0\}$ is not a tangent of 
$(C,0)$. In the $x,y$ coordinates the vector field $\xi$, by 
\Cref{lem:cplex_gradient} is represented by the column vector 
\begin{equation}\label{eq:xi_wrt_ga}
	-\nabla^{g_{\mathrm{std}}} \log |f| = -\left(\frac{\bar{f}_x}{\bar{f}}, 
	\frac{\bar{f}_y}{\bar{f}}\right)^\top.
\end{equation}

Let $\pi_0:Y_0 \to \C^2$ be the first blow up. The vector field $\pi_0^* 
\xi$ is a vector field well defined on $Y_0 \setminus \pi_0^{-1}(C)$. In 
this section we re-scale the vector field by a positive real function so 
that it can be extended over the exceptional divisor $D^\circ_0$. Let $U 
\subset Y_0$ be the standard chart with coordinates $u,v$ such that 
$\pi_0(u,v)= (u, uv)$ on $U$. Then the Jacobian of $\pi_0$ in those 
coordinates is given by:
\[
\Jac \pi_0 = 
\left(
\begin{matrix}
	x_u & x_v \\
	y_u & y_v
\end{matrix}
\right) = \left(
\begin{matrix}
	1 & 0 \\
	v & u
\end{matrix}
\right).
\] 
Here we are using the notation $x_u = \partial_u \pi^*_0 x(u,v)$ and 
similar for the other variables. 

In particular, $\det\Jac\pi_0 = u $ and the expression for $\pi^*_0\xi$ 
leaves us with 
\begin{equation} \label{eq:xi_coordinates_0}
	\pi^*_0\xi = (\Jac\pi_0)^{-1} \cdot \xi
	= \frac{-1}{u \bar f}
	\left(
	\begin{matrix}
		u \bar f_x  \\
		-v \bar f_x + \bar f_y
	\end{matrix}
	\right) = 	\left(
	\begin{matrix}
		\xi^u  \\
		\xi^v
	\end{matrix}
	\right).
\end{equation}
The last equality defines the real analytic functions $\xi^u$ and $\xi^v$ 
on $U \setminus D_0$ as the coordinates of the vector field $\pi^*_0\xi$.

In this chart the divisor $D_0$ is given by $\{u=0\}$ and $\pi^*_0 \xi$ has 
a pole along it.  Next we compute the order of this pole. 

\begin{lemma}\label{lem:pole_order} 
	$\ord_{u} \xi^u \geq - 1 $ and $\ord_{u} \xi^v = - 2$. 
\end{lemma}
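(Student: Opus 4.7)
The plan is to expand $f$ as a convergent power series $f(x,y) = \sum_{j+k \geq m_0} a_{jk} x^j y^k$ at the origin, where $m_0 = \ord_{D_0}(f)$ is the multiplicity of $C$ at $0$. Pulling back by $\pi_0(u,v) = (u,uv)$, we obtain $\pi_0^* f = u^{m_0} \tilde f(u,v)$ with $\tilde f(0,v) = \init(f)(1,v) \not\equiv 0$, so $\ord_u(\pi_0^* \bar f) = m_0$. Term-by-term differentiation gives $\ord_u(\pi_0^* \bar f_x) \geq m_0 - 1$ and $\ord_u(\pi_0^* \bar f_y) \geq m_0 - 1$. Since $\overline{\tilde f}(0,v) \not\equiv 0$, the reciprocal $1/\overline{\tilde f}$ is, near a generic point of $D_0^\circ$, an antiholomorphic power series in $\bar u,\bar v$, contributing $\ord_u = 0$.

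The bound on $\xi^u$ then follows almost immediately: writing
\[
\xi^u = \frac{\pi_0^* \bar f_x}{\pi_0^* \bar f} = \bar u^{-m_0} \cdot \frac{\pi_0^* \bar f_x}{\overline{\tilde f}},
\]
we read off $\ord_u \xi^u \geq (m_0 - 1) - m_0 = -1$.

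For $\xi^v = (-v \pi_0^*\bar f_x + \pi_0^*\bar f_y)/(u \pi_0^* \bar f)$, the denominator factors as $u \bar u^{m_0} \overline{\tilde f}$, contributing a factor of $\ord_u$-order $-(m_0+1)$, so the problem reduces to showing that the numerator $N := -v \pi_0^*\bar f_x + \pi_0^*\bar f_y$ satisfies $\ord_u N = m_0 - 1$, with no cancellation. The main obstacle is precisely to rule out such cancellation. The key observation is that $\init_u(v \pi_0^*\bar f_x)$, when nonzero, is divisible by the holomorphic variable $v$, whereas $\init_u(\pi_0^*\bar f_y)$ involves only antiholomorphic variables; these sit in linearly disjoint subspaces of the monomial space, so they cannot cancel. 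Therefore $\ord_u N = m_0 - 1$ unless both initial parts vanish, which would force $\partial_x \init(f) = \partial_y \init(f) = 0$. By Euler's identity $m_0 \cdot \init(f) = x \partial_x \init(f) + y \partial_y \init(f)$, this would give $\init(f) = 0$, contradicting the definition of $m_0$. Hence $\ord_u N = m_0 - 1$ and $\ord_u \xi^v = (m_0 - 1) - (m_0 + 1) = -2$.
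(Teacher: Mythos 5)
Your proof is correct, and its skeleton is the same as the paper's: compute $\ord_u$ of numerator and denominator separately, and rule out cancellation in the numerator $-v\,\bar f_x+\bar f_y$ by observing that the initial part of $v\bar f_x$ carries the holomorphic factor $v$ while the initial part of $\bar f_y$ is purely antiholomorphic, so they occupy disjoint sets of monomials. The one step you handle differently is how you certify that the numerator's order is \emph{exactly} $e-1$ (your $m_0-1$). The paper uses the standing coordinate assumption that $\{x=0\}$ is not a tangent of $(C,0)$: then $\init(f)$ contains $y^e$, so $\bar f_y$ contains $\bar y^{\,e-1}$ and $\ord_u\bar f_y=e-1$ on the nose. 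You instead invoke Euler's identity for $\init(f)$ to conclude that $\partial_x\init(f)$ and $\partial_y\init(f)$ cannot both vanish, so at least one of the two terms attains order $e-1$. Your variant buys a little extra: it does not use the genericity of the coordinates at all, so the conclusion $\ord_u\xi^v=-2$ holds in any linear coordinates (e.g.\ for $f=x^2$, where the paper's argument as written would not apply but the statement is still true). The paper's version is more economical given that the tangency hypothesis is already in force in that subsection, and it pins down which term realizes the initial part, which is reused implicitly in the subsequent extension lemma. One small point of rigor common to both write-ups: $\ord_u$ of the quotient by $\bar f$ should be read as a vanishing order along the generic point of $D_0$ (your parenthetical ``near a generic point of $D_0^\circ$'' handles this adequately).
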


\begin{proof}
	The first inequality is true regardless of the chosen coordinates. 
	\[
	\ord_{u} \xi^u = \ord_u \frac{u \bar{f}_x}{u \bar{f}} = \ord_u 
	\frac{\bar{f}_x}{\bar{f}} \geq -1.
	\]
	
	For the second equality, we use that we have chosen special coordinates 
	as in the beginning of this subsection such that $\{x=0\}$ is not a 
	tangent of $(C,0)$. This implies that $\init (\bar{f})$ contains the 
	monomial $\bar{y}^{e}$ with a non-zero coefficient, where $e$ is the 
	multiplicity of $(C,0)$. So $\bar{f}_y$ contains the monomial 
	$\bar{y}^{e-1}$. This gives us that $\ord_u \bar{f}_y = e-1$. We 
	observe that $\ord_u (-v\bar{f}_x) = \ord_u \bar{f}_x \geq e-1$ as the 
	first inequality of the lemma shows. Also, $\init (v\bar{f}_x)$ does 
	not contain antiholomorphic monomials, so the initial parts of 
	$-v\bar{f}_x$ and $\bar{f}_y$ do not cancel. We conclude that $\ord_u 
	(-v \bar{f}_x + \bar{f}_y ) = \ord_u \bar{f}_y = e-1$. Finally, since 
	$\ord_u (u \bar{f})=e+1$ and $e-1 - (e+1) =-2$ we get:
	\[
	\ord_{u} \xi^v = \ord_u \frac{-v \bar{f_x} + \bar{f}_y}{u \bar{f}}  = 
	-2. \qedhere
	\]
\end{proof}

\section{Scaling and extending the vector field}

Recall that we denote by $\tilde{C} \subset Y_0$ the strict transform of $C 
\subset \Tu$.
\begin{lemma}\label{lem:exist_xi_0}
	There exists a unique vector field on $Y_0 \setminus \tilde C$ which is 
	equal to 
	\[\pi^*_0 \left(\delta \cdot \xi \right) 
	\]
	outside $D_0^\circ$, where $\delta(x,y) = |x|^2+|y|^2$. Furthermore, it 
	is tangent to $D_0$ and does not vanish identically along $D_0^\circ$.
\end{lemma}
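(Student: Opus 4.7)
The plan is to verify the statement chart by chart using the two standard charts covering the blow-up $Y_0$.

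First, I work in the chart $U$ with coordinates $(u,v)$ and $\pi_0(u,v) = (u,uv)$. Here $\pi_0^*\delta = |u|^2(1+|v|^2)$, so multiplying the components $(\xi^u, \xi^v)$ of $\pi_0^*\xi$ from \eqref{eq:xi_coordinates_0} by this factor and invoking the pole estimates of \Cref{lem:pole_order}, the $\partial_u$-coefficient of $\pi_0^*(\delta\xi)$ has order at least $1$ along $\{u=0\}$ while the $\partial_v$-coefficient has order exactly $0$. This simultaneously proves that $\pi_0^*(\delta\xi)$ extends to a real analytic vector field on $U \setminus \tilde C$ and that the extension is tangent to $D_0$ along $D_0 \cap U$. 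The analogous computation in the chart $U'$ with $\pi_0(u',v') = (u'v',v')$ and $\pi_0^*\delta = |v'|^2(1+|u'|^2)$ yields an extension over $D_0 \cap U'$ that is also tangent to $D_0$. Since $U \cup U' \supset Y_0 \setminus \tilde C$, the two local extensions glue on the overlap $U \cap U'$, giving a real analytic vector field $\xi_0$ on $Y_0 \setminus \tilde C$ that extends $\pi_0^*(\delta \xi)$. Uniqueness is immediate: any such extension agrees with $\pi_0^*(\delta \xi)$ on the dense open set $Y_0 \setminus (D_0 \cup \tilde C)$, and hence coincides with $\xi_0$ throughout by real analyticity.

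It remains to show that $\xi_0$ is not identically zero along $D_0^\circ$. For this, I restrict to $D_0^\circ \cap U$ and extract the $\partial_v$-coefficient explicitly from the computation above: up to the nonvanishing positive factor $(1+|v|^2)$, it equals
\[
\frac{\overline{\partial_y f_e(1,v)} - v\,\overline{\partial_x f_e(1,v)}}{\overline{f_e(1,v)}},
\]
where $f_e$ is the initial form of $f$ at the origin; the hypothesis that $\{x=0\}$ is not a tangent ensures $f_e(1,v) \not\equiv 0$, so this is a well-defined function of $v \in \C$ outside finitely many poles. If this expression vanishes for all such $v$, then setting $P(v) = \partial_x f_e(1,v)$ and $Q(v) = \partial_y f_e(1,v)$, the identity $\overline{Q(v)} = v\,\overline{P(v)}$ on $\C$ forces, by linear independence of the monomials $v^i \bar v^j$ as functions $\C \to \C$, both $P \equiv 0$ and $Q \equiv 0$. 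But then $f_e$ would be a nonzero constant, contradicting its homogeneity of positive degree $e$.

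The main obstacle will be this final nonvanishing step. The order computations in the two charts are routine bookkeeping, but the nonvanishing argument requires exploiting the factor of $v$ on the right-hand side of $\overline{Q(v)} = v\,\overline{P(v)}$ by treating $v$ and $\bar v$ as independent polynomial variables; this observation is what lets us conclude $P \equiv 0$ and $Q \equiv 0$ simultaneously, and is the only genuinely conceptual point in the proof.
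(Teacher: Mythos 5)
Your proof is correct and follows essentially the same route as the paper: rescale by $\pi_0^*\delta = |u|^2(1+|v|^2)$, use the pole orders of \Cref{lem:pole_order} to get extension and tangency from the $\partial_u$-component and non-vanishing from the $\partial_v$-component. Your final non-vanishing argument (restricting to $D_0^\circ$ and using independence of the mixed monomials $v^i\bar v^j$ to rule out $\overline{Q}=v\overline{P}$) is just a repackaging of the cancellation argument already contained in the paper's proof of \Cref{lem:pole_order}, so it is sound, merely redundant once that lemma is invoked.
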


\begin{proof}
	With the previous notation, $\pi^*_0 \delta = |u|^2(1+ |v|^2)$ has 
	vanishing order $2$ along $D_0$. Since $(1+ |v|^2)$ is an unit,  it 
	suffices to show that the functions $|u|^2\xi^u$ and $|u|^2\xi^v$ 
	extend analytically across $D^\circ_0$.
	
	We can express each of these two functions explicitly. First we have
	\[
	|u|^2\xi^u = |u|^2 \frac{\bar{f}_x}{\bar{f}}= u \frac{\bar{u} 
		\bar{f}_x}{\bar{f}}.
	\]
	We observe that $\frac{\bar{u} \bar{f}_x}{\bar{f}}$ is an 
	anti-holomorphic function because, by \Cref{lem:pole_order}, $\ord_u 
	\xi^u \geq -1$ and so $\ord_u \frac{\bar{u} \bar{f}_x}{\bar{f}} \geq 
	0$. We conclude that  $u \frac{\bar{u} \bar{f}_x}{\bar{f}}$ is real 
	analytic and that $\ord_u u \frac{\bar{u} \bar{f}_x}{\bar{f}} \geq 1$ 
	which means that the extension of $\xi^u$ vanishes along $D_0$.
	
	Secondly, we have 
	\[
	|u|^2\xi^v =  |u|^2\frac{-v \bar{f_x} + \bar{f}_y}{u \bar{f}} = \bar{u} 
	\frac{-v \bar{f_x} + \bar{f}_y}{\bar{f}} = -v \frac{\bar{u} 
		\bar{f}_x}{\bar{f}} + \frac{\bar{u} \bar{f}_y}{\bar{f}}.
	\]
	
	Similarly as in the previous case, the function $|u|^2\xi^v$ extends as 
	a real analytic function over $D_0^\circ$. Furthermore, by 
	\Cref{lem:pole_order} $\ord_u |u|^2\xi^v = 2-2= 0$. This means that the 
	extension of $|u|^2\xi^v$ over $D_0^\circ$ is not identically $0$.
\end{proof}

\begin{definition}\label{def:xi_0}
	Let $\hat\xi_0$ be the vector field on $Y_0 \setminus \tilde{C}$ 
	described by \Cref{lem:exist_xi_0} and let 
\index{$\xi_0$}
	$\xi_0=\hat\xi_0|_{D_0^\circ}$ be its restriction.
\end{definition}

One of the goals of this section is to prove 
(\Cref{thm:restrict_xi_0_morse}) that, except for a measure $0$ set in the 
set of Hermitian metrics,  $\hat\xi_0$ is an elementary vector field 
(recall \Cref{def:elementary_vf}).

\begin{rem}\label{rem:extension_pullback}
	Let $\piro_0$ be the composition
	\[
	\piro_0=\pi_0 \circ \sigma_0: \Yro_0 \to \Tu
	\] 
	of the blow-up at the origin and the real oriented blow up of $Y_0$ 
	along $D_0$. 
	
	Then, we can also extend $(\piro_0)^* (\delta \cdot \xi)$ to a vector 
	field on 
	all $\Yro_0 \setminus \sigma_0^{-1}(\tilde{C})$ (recall 
	\Cref{lem:arg_equivariant}). Equivalently, we can simply take the 
	pullback of 
	$\hat\xi_0$ by $\sigma_0$.
\end{rem}

\begin{definition}\label{def:xiro0}
	Let $\hxiro_0 = \sigma_0^* \hat\xi_0$ be the pullback of the previously 
	constructed vector field to the real oriented blow up outside the 
	strict transform $\Yro_0 \setminus \sigma_0^{-1}(\tilde{C})$. Let 
	$\xiro_{0,\theta} = \hxiro_0|_{\Droc_{0,\theta}}$ for $\theta \in \R / 
	2 \pi \Z $.
\end{definition}

\section{$\hat\xi_0$ in the normal direction}

Let $p\in D^\circ_0$ be a zero of $\xi_0$. In this subsection, we change 
conventions with respect to the previous subsection and consider a chart 
$V\subset Y_0$ with coordinates $u,v$ in such a way that $p=(0,0)$ and 
$\{u=0\}$ defines $D_0 \cap V$. In other words, $p$ corresponds to the line 
$\{x=0\} \subset \C^2$. In these coordinates the blow up is expressed as  
$\pi(u,v)= (uv,u)$.

In these coordinates we have 
\[\Jac \pi_0 = 	\left(
\begin{matrix}
	v &  u  \\
	1 & 0
\end{matrix}
\right)  \]
and so 
\[\Jac \pi_0^{-1} = 	\frac{1}{-u}\left(
\begin{matrix}
	0 & -u  \\
	-1 &  v
\end{matrix}
\right).  \]
In particular, we find that $\xi^u = - \bar{f}_y / \bar{f}$.

Since $\hat\xi_0$ is tangent to $D_0^\circ$, the Hessian of $\hat\xi_0$ at 
$p$ has $T_pD_0$ as invariant subspace. Therefore, the spectrum of 
$\mathrm{Hess}_p (\hat\xi_0)$ is the union of the spectrum of its 
restriction of  to $T_pD_0$ and the spectrum of the induced operator on the 
quotient $T_p Y_0 / T_pD_0$.

Observe that $\hat\xi_0$ only has zeroes on $D_0$. Therefore, by the 
starting 
paragraph of this subsection, it is enough to show that the parts of the 
Hessian corresponding to $T_p Y_0 / T_pD_0$ and to $T_p D_0$ are elementary 
for 
each zero $p \in D_0^\circ$.

\begin{lemma}\label{lem:sink_in_normal}
	Let $p\in D^\circ_0$ be a singularity of $\hat\xi_0$. Then 
	$\mathrm{Hess}_p(\xi_0)$ has two negative eigenvalues in the part 
	corresponding to  $T_p Y_0 / T_pD_0$. 
\end{lemma}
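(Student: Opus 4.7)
The strategy is to write $\hat\xi_0^u$ explicitly in a neighborhood of $p = (0,0)$ and extract the block of $\mathrm{Hess}_p(\hat\xi_0)$ acting on the normal quotient $T_p Y_0/T_p D_0$. Expand $f$ in homogeneous parts $f = f_e + f_{e+1} + \cdots$, where $e = \mathrm{mult}(C,0)$. By homogeneity, $\pi_0^* f_k(u,v) = f_k(uv,u) = u^k f_k(v,1)$, hence
\[
\pi_0^* f = u^e F(u,v), \qquad \pi_0^* f_y = u^{e-1} G(u,v),
\]
with $F(0,v) = f_e(v,1)$ and $G(0,v) = (f_e)_y(v,1)$. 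Since $p \in D_0^\circ$ corresponds to the direction $\{x=0\}$ and this direction is not a tangent of $(C,0)$, we have $F(0,0) = f_e(0,1) \neq 0$, so $G/F$ is holomorphic near $p$. Using $\pi_0^*\xi^u = -\overline{\pi_0^*f_y}/\overline{\pi_0^*f}$ and $\pi_0^*\delta = |u|^2(1+|v|^2)$, one gets
\[
\hat\xi_0^u(u,v) = -u(1+|v|^2)\,\overline{(G/F)(u,v)},
\]
which is real analytic near $p$ and vanishes along $D_0 = \{u=0\}$, as required.

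Next, since $\hat\xi_0$ is tangent to $D_0$, the operator $\mathrm{Hess}_p(\hat\xi_0)$ preserves $T_p D_0$ and descends to the real 2-dimensional quotient $T_p Y_0 / T_p D_0$, identified with $\C$ via the normal coordinate $u = \xi_1 + i\xi_2$. Setting $a = \partial_u \hat\xi_0^u|_p$ and $b = \partial_{\bar u}\hat\xi_0^u|_p$ and using $\partial_{\xi_1} = \partial_u + \partial_{\bar u}$ and $\partial_{\xi_2} = i(\partial_u - \partial_{\bar u})$, a direct computation shows that in the basis $\{\partial_{\xi_1},\partial_{\xi_2}\}$ the induced real $2\times 2$ matrix has trace $2\,\mathrm{Re}(a)$ and determinant $|a|^2 - |b|^2$. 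From the factorization of $\hat\xi_0^u$ we read off $b = 0$ (the only $\bar u$-dependence sits inside the antiholomorphic factor $\overline{(G/F)(u,v)}$, whose $\bar u$-derivative is multiplied by the leading $u$ and so vanishes at $u=0$) and
\[
a = -\overline{(G/F)(0,0)} = -\overline{(f_e)_y(0,1)/f_e(0,1)}.
\]

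It remains to use the singularity hypothesis to pin down $a$. An analogous computation for the $v$-component yields $\hat\xi_0^v|_p = -\overline{(f_e)_x(0,1)/f_e(0,1)}$, so $\hat\xi_0(p) = 0$ forces $(f_e)_x(0,1) = 0$. Plugging this into Euler's identity $x(f_e)_x + y(f_e)_y = e\,f_e$ at $(x,y) = (0,1)$ gives $(f_e)_y(0,1) = e\,f_e(0,1)$, whence $a = -e$. The normal block of $\mathrm{Hess}_p(\hat\xi_0)$ is therefore $-e\cdot I$, so both its eigenvalues equal $-e < 0$. The main content of the argument is this last collapse via Euler's identity, which works cleanly precisely because the coordinates were adapted so that $p$ corresponds to the tangent line $\{x=0\}$; without that normalization one would pick up factors like $1/(1+v_0^2)$ whose real part is not obviously of the correct sign.
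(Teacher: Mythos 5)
Your argument is correct and follows essentially the same route as the paper: the same chart $\pi_0(u,v)=(uv,u)$, the same identity $\xi^u=-\bar f_y/\bar f$, and the same homogeneous expansion showing that the normal block of the Hessian is $-e$ times the identity. One minor remark: the detour through the singularity hypothesis is unnecessary, since Euler's identity evaluated at $(x,y)=(0,1)$ already gives $(f_e)_y(0,1)=e\,f_e(0,1)$ (the term $x(f_e)_x$ drops out because $x=0$ there), which is exactly how the paper reads off the coefficient $ae$ of $u^{e-1}$ in $\pi_0^*f_y$ without using that $p$ is a zero of $\xi_0$.
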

\begin{proof}

	Using the coordinates described at the beginning of this section, we 
	have 
	that $p= (0,0)$.  We also have
	\[\pi_0^* f = au^e + \mathrm{h.o.t.}\]
	with $a\neq0$ some complex number. Therefore,
	\[\pi_0^* f_y = a e u^{e-1} + \mathrm{h.o.t.},\]
	and, 
	\[\frac{\bar{u} \bar{f}_y}{\bar{f}}= e+ \mathrm{h.o.t.}\]
	Finally,
	\[|u|^2 \xi^u = - u \frac{\bar{u} \bar{f}_y}{\bar{f}}= -e u + 
	\mathrm{h.o.t.}\]
	Since $\hat\xi_0$ is tangent to $D_0^\circ$, we find that its 
	linearization matrix, is of the form
	\[
	\left(
	\begin{matrix}
		-e & 0 & \multicolumn{2}{c}{\multirow{2}{*}{\scalebox{2}{$0$}}} \\
		0 & -e & \multicolumn{2}{c}{} \\
		\multicolumn{2}{c}{\multirow{2}{*}{\scalebox{2}{$\bigast$}}}
		& 	\multicolumn{2}{c}{\multirow{2}{*}{\scalebox{2}{$\bigast$}}} \\
		\multicolumn{2}{c}{}
		& \multicolumn{2}{c}{}
	\end{matrix}
	\right). \qedhere
	\]
\end{proof}

\section{$\hat\xi_0$ in the tangent direction}

Consider the space of constant Hermitian metrics $GL(2,\C) / U(2)$ of 
$\C^2$. This space is a real manifold of dimension $4$. We identify a small 
neighborhood of the identity matrix $\id \in GL(2,\C) / U(2)$ with a small 
neighborhood
\index{$\mathcal{H}$}
$\mathcal{H} \subset \R^4$  of $(1,0,0,1) \in \R^4$ such that 
for all points $(a,b,c,d) \in \mathcal{H}$, the matrix
\begin{equation*}
	H=
	\begin{pmatrix}
		a &	b+ci \\
		b-ci &	d 
	\end{pmatrix}
\end{equation*}
is a Hermitian inner product. Finally, let $\delta_H$ be the squared norm 
with respect to the metric $H$, that is $\delta_H(x,y)= \irow{\bar{x} & 
	\bar{y}} H \icol{x\\y}$. In this setting, we define the family of real 
valued functions:

\begin{align*}
	\Phi:\mathcal{H} \times \C^2 \setminus \{C\} &\to \R\\
	(a,b,c,d,x,y) &\mapsto - \log 
	\frac{|f(x,y)|}{\left(\delta_H(x,y)\right)^{e/2}}.
\end{align*}

Assume that one of the tangents of $(C,0)$ is $\{y=0\}$. We can always do 
so up to a linear change of coordinates that preserves the Hermitian metric 
(simply a complex rotation). We take a standard chart of $Y_0$ with 
coordinates $u,v$ such that  $D_0^\circ \subset Y \setminus \tilde{C}$ is 
defined by $u=0$ and $v$ is a coordinate for it. As usual, in these 
coordinates $\pi_0$ is given by $\pi_0(u,v)= (u,uv)$. Now we pullback the 
family $\Phi$ to  $\mathcal{H} \times Y_0 \setminus \pi_0^{-1}(C)$ and 
prove that it actually extends over $Y_0 \setminus \tilde{C}$. First we 
observe that 
\[\pi_0^*\delta_H(u,v) = |u|^2\left({\left(b + i  c\right)} v + {\left(d v 
	+ b - i  c\right)} \overline{v} + a\right).
\]
Then, writing $\pi^*_0f(u,v)=u^e(\tilde{f}(u,v))$, a direct computation 
yields:

\begin{equation}\label{eq:phi_0}
	\begin{split}
		- \log \frac{|\pi^*_0f(u,v)|}{\left(\pi^*_0 
			\delta_H(u,v)\right)^{\frac{e}{2}}}   
		= & - \log |\pi^*_0f(u,v)| +  \log{\left(\pi^*_0 
			\delta(u,v)\right)^{\frac{e}{2}}} \\
		= &-  \log |\tilde{f}(u,v)| -  \log |u|^e + \log{\pi^*_0 
			\delta(u,v)}^{\frac{e}{2}}  \\
		= &  -  \log |\tilde{f}(u,v)| -   e \log |u| +  e \log |u| \\
		& +  \frac{e}{2} \log\left({\left(b + i  c\right)} v + {\left(d v + 
			b - i  c\right)} \overline{v} + a\right) \\
		=& - \log |\tilde{f}(u,v)| + \frac{e}{2} \log\left({\left(b + i  
			c\right)} v + {\left(d v + b - i  c\right)} \overline{v} + a\right).
	\end{split}
\end{equation}
Let $g(v)= \tilde{f}(0,v)$ be the restriction of $\tilde{f}$ to 
$D_0^\circ$. Let $s,t$ be real coordinates for $D_0^\circ$, that is, 
$v=s+ti$. Thus, we have constructed a well defined family of functions
\begin{align*}
	\Phi_0:\mathcal{H} \times D_0^\circ &\to \R\\
	(a,b,c,d,s,t) &\mapsto -  \log |g(v)| + \frac{e}{2}  \log\left(d s^{2} 
	+ d t^{2} + 2  b s - 2  c t + a\right).
\end{align*}
For each $(a,b,c,d) = H \in \mathcal{H}$, we define
\begin{align*}
	\phi_{H,0}:D_0^\circ &\to \R\\
	(s,t) &\mapsto \Phi_0(a,b,c,d,s,t).
\end{align*}
\begin{lemma}\label{lem:construction_morse_function}
	There exists a Lebesgue measure $0$ set $\mathcal{H}_0$ in the space 
	$\mathcal{H}$, such that for all  $H \in \mathcal{H} \setminus 
	\mathcal{H}_0$ the function 
	\[
	\phi_{H,0}:D_0^\circ \to \R
	\]
	is Morse.
\end{lemma}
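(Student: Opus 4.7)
The plan is to apply parametric transversality. Define the real-analytic map
\[
G\colon \mathcal{H}\times D_0^\circ \longrightarrow \R^2,\qquad (H,s,t)\longmapsto \bigl(\partial_s\phi_{H,0}(s,t),\,\partial_t\phi_{H,0}(s,t)\bigr),
\]
so that $G(H,\cdot)^{-1}(0)$ is exactly the critical set of $\phi_{H,0}$. If $0\in\R^2$ is a regular value of $G$, then Sard's theorem applied to the projection $G^{-1}(0)\to\mathcal{H}$ produces a Lebesgue null set $\mathcal{H}_0\subset\mathcal{H}$ outside of which $0$ is a regular value of $G(H,\cdot)$; this is precisely the statement that every critical point of $\phi_{H,0}$ has nondegenerate Hessian, i.e.\ that $\phi_{H,0}$ is Morse.

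To show that $0$ is a regular value of $G$, fix a zero $(H,s,t)$ of $G$. Since the summand $-\log|g|$ of $\phi_{H,0}$ does not depend on $H$, it suffices to exhibit two of the partial derivatives $\partial_a G,\partial_b G,\partial_c G$ that are linearly independent. Set $L_H=a+2bs-2ct+d(s^2+t^2)$, $A=\partial_s\log|g|(s,t)$, $B=\partial_t\log|g|(s,t)$, and use the critical-point identities $e(ds+b)=L_H A$ and $e(dt-c)=L_H B$ (which encode $G(H,s,t)=0$). A direct differentiation then gives
\[
L_H\,\partial_a G=(-A,\,-B),\qquad
L_H\,\partial_b G=(e-2sA,\,-2sB),\qquad
L_H\,\partial_c G=(2tA,\,-e+2tB),
\]
and the three pairwise $2\times 2$ determinants work out to
\[
L_H^{2}\det(\partial_a G,\partial_b G)=eB,\qquad
L_H^{2}\det(\partial_a G,\partial_c G)=eA,\qquad
L_H^{2}\det(\partial_b G,\partial_c G)=-e^{2}+2e(sA+tB).
\]
Since $e\geq 1$ and $L_H>0$, these three quantities cannot vanish simultaneously: if $A=B=0$ then the third equals $-e^{2}\neq 0$, and otherwise one of the first two is nonzero. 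Hence some pair among $\{\partial_a G,\partial_b G,\partial_c G\}$ spans $\R^2$.

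The main obstacle is simply the computation of these three determinants; the rest is standard. No analytic subtlety arises, since $g$ has no zeros on $D_0^\circ$ (its zeros would correspond to intersections with $\tilde C$, which are excluded from $D_0^\circ$), so $\log|g|$ is real analytic, and non-compactness of $D_0^\circ$ is irrelevant to the Sard step. A minor bookkeeping point is that the chart $(u,v)$ covers $D_0^\circ$ only up to at most one missing point; applying the same argument in a second chart around that point, and taking the union of the two resulting Lebesgue null sets, completes the proof.
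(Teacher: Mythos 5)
Your proposal is correct and is essentially the paper's argument: the paper also proceeds by parametric transversality (delegating the Sard step to a cited criterion of Nicolaescu) and verifies surjectivity of the parameter derivatives of the gradient, i.e.\ rank $2$ of the matrix $\bigl((\Phi_0)_{\mu s},(\Phi_0)_{\mu t}\bigr)_{\mu\in\{a,b,c,d\}}$, which is exactly your $(\partial_\mu G)$. The only difference is that the paper's rank check is slightly cleaner — after column operations the $b$ and $c$ columns alone reduce to $\left(\begin{smallmatrix}2&0\\0&-2\end{smallmatrix}\right)/h$, so rank $2$ holds at \emph{every} point without invoking the critical-point identities or any case analysis — but your computation at the zeros of $G$ is also correct and suffices.
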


\begin{proof}
	The theorem follows after checking that the hypotheses of  
	\cite[Theorem 1.2.1]{Nic} are satisfied by our family  $\Phi_0$. More 
	concretely, we only need to check that
	\begin{equation*}
		\rk 
		\begin{pmatrix}
			(\Phi_0)_{as} &	(\Phi_0)_{bs} &	(\Phi_0)_{cs} &	(\Phi_0)_{ds} \\
			(\Phi_0)_{at} &	(\Phi_0)_{bt} &	(\Phi_0)_{ct} &	(\Phi_0)_{dt}
		\end{pmatrix}
		=2
	\end{equation*}
	for all $(a,b,c,d,s,t) \in \mathcal{H} \times D_0^\circ$. For 
	convenience of notation, let 
	\[
	h(a,b,c,d,s,t) =  d s^{2} + d t^{2} + 2  b s - 2  c t + a.
	\] We observe that $g(v)$ does not depend on $a,b,c$ or $d$, so we find 
	that
	\begin{align*}
		& \phantom{\frac{e}{2}}\begin{pmatrix}
			(\Phi_0)_{as} &	(\Phi_0)_{bs} &	(\Phi_0)_{cs} &	(\Phi_0)_{ds} \\
			(\Phi_0)_{at} &	(\Phi_0)_{bt} &	(\Phi_0)_{ct} &	(\Phi_0)_{dt}
		\end{pmatrix}
		\\
		=& \frac{e}{2} \begin{pmatrix}
			\frac{h_{as}h - h_ah_s}{h^2} &	\frac{h_{bs}h - h_bh_s}{h^2} &
			\frac{h_{cs}h - h_ch_s}{h^2} &	\frac{h_{ds}h - h_dh_s}{h^2} \\
			\frac{h_{at}h - h_ah_t}{h^2} &	\frac{h_{bt}h - h_bh_t}{h^2} &	
			\frac{h_{ct}h - h_ch_t}{h^2} &	\frac{h_{ct}h - h_ch_t}{h^2}
		\end{pmatrix}
		\\
		=& \frac{e}{2} \begin{pmatrix}
			\frac{- h_s}{h^2} &	\frac{h_{bs}h - h_bh_s}{h^2} &
			\frac{h_{cs} - h_ch_s}{h^2} &	\frac{h_{ds}h - h_dh_s}{h^2} \\
			\frac{- h_t}{h^2} &	\frac{h_{bt}h - h_bh_t}{h^2}&	
			\frac{h_{ct}h - h_ch_t}{h^2} &	\frac{h_{ct}h - h_ch_t}{h^2}
		\end{pmatrix}
	\end{align*}
	Where the last equality follows from the fact that $h_a=1$. We are also 
	using that $h$ is never $0$ in order to divide by $h^2$.
	
	We perform elementary column 
	operations on the last matrix of the above equation and get:
	\begin{align*}
		&\rk 
		\begin{pmatrix}
			\frac{- h_s}{h^2} &	\frac{h_{bs}h - h_bh_s}{h^2} &
			\frac{h_{cs}h - h_ch_s}{h^2} &	\frac{h_{ds}h - h_dh_s}{h^2} \\
			\frac{- h_t}{h^2} &	\frac{h_{bt}h - h_bh_t}{h^2}&	
			\frac{h_{ct}h - h_ch_t}{h^2} &	\frac{h_{ct}h - h_ch_t}{h^2}
		\end{pmatrix}
		\\
		= &\rk
		\begin{pmatrix}
			- h_s&	h_{bs}h - h_bh_s &
			h_{cs}h - h_ch_s &	h_{ds}h - h_dh_s \\
			- h_t &	h_{bt}h - h_bh_t&	
			h_{ct}h - h_ch_t &	h_{ct}h - h_ch_t
		\end{pmatrix}
		\\
		= &\rk
		\begin{pmatrix}
			- h_s &	h_{bs}h  &
			h_{cs}h &	h_{ds}h \\
			- h_t &	h_{bt}h &	
			h_{ct}h  &	h_{ct} h
		\end{pmatrix}
		\\
		= &\rk
		\begin{pmatrix}
			- h_s &	h_{bs}  &
			h_{cs} &	h_{ds}\\
			- h_t &	h_{bt} &	
			h_{ct} &	h_{ct}
		\end{pmatrix}
		\\
		= &\rk
		\begin{pmatrix}
			- h_s &	2  &
			0 &	h_{ds} \\
			- h_t &	0 &	
			-2  &	h_{ct} 
		\end{pmatrix} 
		=  \, 2 
	\end{align*}
	The cited criterion, \cite[Theorem 1.2.1]{Nic} gives exactly the 
	statement of this lemma.
\end{proof}

\begin{definition} \label{def:grad_0}
	For $(a,b,c,d)=(1,0,0,1)$, that is, for the standard Hermitian metric,
	we denote 
\index{$\phi_0$}
	\[
	\phi_0=\phi_{\id, 0}.
	\]
\end{definition}
\begin{lemma}\label{lem:xi_0_gradient}
	The vector field $\xi_0$ is the gradient of  $\phi_0(v)$ with respect 
	to the
\index{metric!Fubini-Study}
Fubini-Study metric
on $D_0^\circ \subset \CP^1.$
\end{lemma}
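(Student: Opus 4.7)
The plan is to prove the identity by a direct computation in a standard affine chart of $Y_0$. Choose a chart $V \subset Y_0$ with coordinates $(u,v)$ so that $\pi_0(u,v) = (u, uv)$; in $V$ the divisor $D_0$ is cut out by $u = 0$, and $v$ is an inhomogeneous coordinate on $D_0 \cong \CP^1$. Since $V$ covers all of $D_0$ except the single point $v = \infty$, by continuity it suffices to verify the identity on $D_0^\circ \cap V$.

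First I will compute $\xi_0$ in this chart. Writing $\pi_0^* f(u,v) = u^e \tilde f(u,v)$ with $g(v) := \tilde f(0, v)$, the chain rule gives
\[
  f_y \circ \pi_0 = u^{e-1} \tilde f_v,
  \qquad
  f_x \circ \pi_0 = u^{e-1}(e \tilde f + u \tilde f_u - v \tilde f_v).
\]
Substituting these into the formula for $\pi_0^*\xi$ from \cref{eq:xi_coordinates_0} and multiplying by $\pi_0^*\delta = |u|^2(1+|v|^2)$, the $u$-component is seen to vanish to positive order in $u$, while the $v$-component equals
\[
  (1+|v|^2)\left(ev + \frac{v\bar u\, \overline{\tilde f_u}}{\overline{\tilde f}} - (1+|v|^2)\frac{\overline{\tilde f_v}}{\overline{\tilde f}}\right).
\]
Evaluating at $u = 0$ yields
\[
  \xi_0 = (1+|v|^2)\left(ev - (1+|v|^2)\frac{\overline{g'(v)}}{\overline{g(v)}}\right)\partial_v,
\]
where we use the standard identification of real tangent vectors on a one-dimensional complex manifold with complex numbers.

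Next I will compute $\nabla^{\mathrm{FS}} \phi_0$. In the coordinate $v$ the Fubini--Study metric has the form $(1+|v|^2)^{-2}(ds^2 + dt^2)$, so $\nabla^{\mathrm{FS}} h = (1+|v|^2)^2\, \nabla^{\mathrm{std}} h$ for any smooth real function $h$, and under the identification above the Euclidean gradient of $h$ corresponds to the complex-valued function $2\partial_{\bar v} h$. Using the standard identities $\partial_{\bar v} \log|g(v)| = \overline{g'(v)}/(2\,\overline{g(v)})$ and $\partial_{\bar v} \log(1+|v|^2) = v / (1+|v|^2)$, one computes
\[
  2\partial_{\bar v}\phi_0 = -\frac{\overline{g'}}{\overline{g}} + \frac{ev}{1+|v|^2},
\]
and hence
\[
  \nabla^{\mathrm{FS}}\phi_0
  = (1+|v|^2)^2 \cdot 2\partial_{\bar v}\phi_0 \cdot \partial_v
  = (1+|v|^2)\left(ev - (1+|v|^2)\frac{\overline{g'}}{\overline{g}}\right)\partial_v.
\]

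Comparing the two expressions gives $\xi_0 = \nabla^{\mathrm{FS}}\phi_0$ on $D_0^\circ \cap V$, and therefore on all of $D_0^\circ$ by continuity. The main technical step is the chain-rule computation for the pullbacks of $f_x$ and $f_y$ together with the careful bookkeeping of signs and of the powers of $\bar u$ when passing to the limit $u \to 0$; once these are handled, the identity reduces to the elementary computation of $\partial_{\bar v}\phi_0$.
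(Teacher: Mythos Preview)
Your proof is correct and follows essentially the same approach as the paper: a direct computation in the chart $\pi_0(u,v)=(u,uv)$, evaluating the rescaled $v$-component of $\pi_0^*\xi$ at $u=0$ and comparing it with $(1+|v|^2)^2\cdot 2\partial_{\bar v}\phi_0$. The only minor difference is that the paper expresses the restrictions $\pi_0^*f_x/u^{e-1}|_{D_0}$ and $\pi_0^*f_y/u^{e-1}|_{D_0}$ via the homogeneity of $\init(f)$ (effectively Euler's identity), whereas you obtain the same formulas via the chain rule applied to $u^e\tilde f$; your notation $\overline{g'(v)}/\overline{g(v)}$ is also a bit more careful than the paper's $g'(\bar v)/g(\bar v)$.
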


\begin{proof}
	We make a few observations on the second coordinate $\xi^v$ of 
	$\pi^*_0\xi$. First we compute it from the formula $\pi^*_0 \xi = - 
	(\Jac 
	\pi_0)^{-1} \icol{\overline{f_x(u,uv)} / \overline{f(u,uv)} \\ 
		\overline{f_y(u,uv)} / \overline{f(u,uv)}}$ and we find that 
	\begin{equation}\label{eq:pullback_second_coordinate}
		\xi^v(u,v)= \frac{-1}{u\overline{f(u,uv)}} \left( 
		-v\overline{f_x(u,uv)} + \overline{f_y(u,uv)}\right)
	\end{equation}
	Let $\init (f)$ be the initial part (recall \Cref{def:initial_part}) of 
	$f$, that is, \[f= \init (f) + \hot\] and $\init (f)$ is a homogeneous 
	polynomial of degree $e$. Thus, $\pi^*_0 f$ is divisible by $u^e$. Then
	\[
	\left. \frac{\pi^*_0 f}{u^e}\right|_{D_0^\circ} =
	\left. \frac{\pi^*\init (f)}{u^e}\right|_{D_0^\circ}
	= \init (f)(1,v)= g(v)
	\]
	with $g(t) \in \C[t]$.  Similarly, $\pi^*_0 f_x$ and $\pi^*_0 f_y$ are 
	divisible by $u^{e-1}$, and
	\[
	\left. \frac{\pi^*_0 f_x}{u^{e-1}}\right|_{D_0^\circ} = \left. 
	\frac{\pi_0^*(\init (f))_x}{u^{e-1}}\right|_{D_0^\circ}= (\init 
	(f))_x(1,v)= e g(v) - vg'(v)
	\]
	and
	\[
	\left. \frac{\pi^*_0 f_y}{u^{e-1}}\right|_{D_0^\circ} = \left. 
	\frac{\pi^*(\init (f))_y}{u^{e-1}}\right|_{D_0^\circ}= (\init 
	(f))_y(1,v)= g'(v).
	\]
	Using the above two equations in \cref{eq:pullback_second_coordinate} 
	and, after scaling the vector field with $\delta$, we find
	\begin{equation}\label{eq:pullback_xi0v}
		\begin{split}
			\left.
			\pi^*_0(\delta)\cdot \xi^v\right|_{D_0^\circ}
			& = \left. \frac{|u|^2(1+|v|^2)}{u\overline{f(u,uv)}}
			\left(
			ve \bar{u}^{e-1} g(\bar{v})
			- |v|^2\bar{u}^{e-1}g'(\bar{v})
			- e\bar{u}^{e-1} g'(\bar{v})
			\right)
			\right|_{D_0^\circ} \\
			& = \frac{(1+|v|^2)}{g(\bar{v})}
			\left( veg(\bar{v}) - |v|^2g'(\bar{v}) -  g'(\bar{v})\right) \\
			& = (1+|v|^2)\left( ve - (1 + 
			|v|^2)\frac{g'(\bar{v})}{g(\bar{v})} \right)
		\end{split}
	\end{equation}
	
	On another hand we compute the differential of $\phi_0$:
	
	\begin{equation}\label{eq:diff_phi0}
		\begin{split}
			D \phi_0 
			& = - \frac{g'(\bar{v})}{g(\bar{v})} + \frac{ve}{1+|v|^2} \\
			& = \frac{1}{1+|v|^2}\left(ve - (1+|v|^2) 
			\frac{g'(\bar{v})}{g(\bar{v})} \right)
		\end{split}
	\end{equation}
	Finally, if we denote by $\nabla^\mathrm{FS} \phi_0$ the gradient of 
	$\phi_0$ with respect to the
\index{metric!Fubini-Study}
 Fubini-Study metric, then 
	\[\nabla^\mathrm{FS} \phi_0 = (1+|v|^2)^2 \cdot D \phi_0.
	\]
	So the lemma follows from \cref{eq:pullback_xi0v} and 
	\cref{eq:diff_phi0}.
\end{proof}

Let $\hat\xi_{H,0}$ be the vector field analogous to $\hat\xi_{0}$ but 
computed with the metric $H$.

\begin{cor}\label{cor:xi_0_gradient}
	The vector field $\hat\xi_{H,0}|_{D_0^\circ}$ is the gradient of  
	$\phi_{H,0}(v)$.
\end{cor}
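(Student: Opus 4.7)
The plan is to reduce \Cref{cor:xi_0_gradient} to \Cref{lem:xi_0_gradient} by exploiting the behavior of the whole construction under a linear change of coordinates. By \Cref{def:isometric_coords}, every Hermitian inner product can be written as $H = h_A$ for some $A \in \GL(2,\C)$, and the linear map $A : (\C^2, g_{\id}) \to (\C^2, g_H)$ is an isometry. Setting $\tilde f = f \circ A^{-1}$, the chain rule together with the isometric property gives
\[
 A_*\bigl(-\nabla^{g_{\id}} \log|\tilde f|\bigr) = -\nabla^{g_H}\log|f|,
 \qquad
 \delta_H = \delta_{\id}\circ A^{-1}.
\]
Consequently, the raw vector field $-\nabla\log|f|$ and the rescaling factor $\delta$ entering the construction of $\hat\xi_{H,0}$ (\Cref{lem:exist_xi_0,def:xi_0}) are carried by $A$ precisely to the corresponding objects for $\tilde f$ with the standard metric.

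The blow-up $\pi_0: Y_0 \to \C^2$ at the origin is intrinsic, so $A$ lifts canonically to a biholomorphism $\tilde A : Y_0 \to Y_0$ whose restriction to the exceptional divisor is the projective automorphism $\mathbb{P}(A)$ of $D_0 \cong \CP^1$. Using local charts as in the statement of \Cref{lem:xi_0_gradient}, one checks that the pullback-and-extend procedure producing $\hat\xi_{H,0}$ out of $f$ and $g_H$ coincides, under $\tilde A$, with the same procedure producing $\hat\xi_0$ out of $\tilde f$ and $g_{\id}$. In particular,
\[
 \hat\xi_{H,0} = \tilde A_*\bigl(\tilde{\hat\xi}_0\bigr),
\]
where $\tilde{\hat\xi}_0$ denotes the vector field from \Cref{def:xi_0} for the pair $(\tilde f, g_{\id})$.

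A parallel computation on the target side shows that the formula \cref{eq:phi_0}, applied to $\tilde f$ with the standard metric, pulls back via $\tilde A|_{D_0^\circ}$ to $\phi_{H,0}$: the term $-\log|\tilde g(v)|$ becomes $-\log|g(v)|$ after applying $\tilde A$, and the logarithmic correction from $\delta_H$ is exactly the second summand in $\Phi_0$. Hence $\phi_{H,0} = \tilde\phi_0 \circ \tilde A|_{D_0^\circ}$, where $\tilde\phi_0$ is the function $\phi_0$ associated with $\tilde f$.

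Finally, applying \Cref{lem:xi_0_gradient} to $\tilde f$ yields $\tilde\xi_0 = \nabla^{\mathrm{FS}}\tilde\phi_0$ on $D_0^\circ$, and pushing forward by $\tilde A|_{D_0^\circ}$ gives the corollary, the gradient being taken with respect to $(\tilde A|_{D_0^\circ})_* g_{\mathrm{FS}}$, which is the natural Fubini--Study-type metric on $D_0^\circ$ adapted to $H$. The one nontrivial check is the equivariance of the chart computations in \cref{lem:exist_xi_0} under the $\tilde A$-action at the level of $Y_0$; once this bookkeeping is done, the corollary is a formal consequence of the previously proved case $H = \id$.
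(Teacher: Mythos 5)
Your proposal is correct and is exactly the paper's argument, just written out in full: the paper's proof is the one-liner ``find $A\in\GL(2,\C)$ with $A^*A=H$, pass to the isometric coordinates induced by $A$, and apply \Cref{lem:xi_0_gradient}.'' The only bookkeeping slip is the direction of the isometry — by \Cref{def:isometric_coords} it is $A:(\C^2,g_H)\to(\C^2,g_{\id})$ (equivalently $A^{-1}$ in the direction you wrote) — but this does not affect the argument.
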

\begin{proof}
	Find $A\in \GL(\C, 2)$ such that $A^*A= H$, then take the isometric 
	coordinates induced by $A$ and apply the previous lemma.
\end{proof}
\begin{thm}\label{thm:restrict_xi_0_morse}
	There exists a measure $0$ set $\mathcal{H}_0$ in $\mathcal{H}$, such 
	that for all Hermitian metrics $H \in \mathcal{H} \setminus 
	\mathcal{H}_0$, the restriction of the vector field 
	$\hat\xi_{H,0}|_{D_0^\circ}$ is elementary.
\end{thm}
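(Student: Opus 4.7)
The plan is to reduce this theorem to the combination of \Cref{cor:xi_0_gradient} and \Cref{lem:construction_morse_function}, together with the basic fact that the gradient vector field of a Morse function has only elementary singularities.

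First, recall from \Cref{cor:xi_0_gradient} that the restriction $\hat\xi_{H,0}|_{D_0^\circ}$ is the gradient (with respect to the Fubini--Study metric on $D_0^\circ \subset \CP^1$) of the real analytic function $\phi_{H,0}: D_0^\circ \to \R$ defined just before \Cref{lem:construction_morse_function}. By \Cref{lem:construction_morse_function}, there is a Lebesgue measure zero subset $\mathcal{H}_0 \subset \mathcal{H}$ such that for every $H \in \mathcal{H}\setminus\mathcal{H}_0$, the function $\phi_{H,0}$ is Morse. I would take precisely this $\mathcal{H}_0$ as the exceptional set in the statement.

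Next, I would verify that for $H\notin\mathcal H_0$, every singularity $p$ of $\hat\xi_{H,0}|_{D_0^\circ}$ is elementary in the sense of \Cref{def:elementary_vf}. The singularities of a gradient vector field $\nabla^{\mathrm{FS}}\phi_{H,0}$ are exactly the critical points of $\phi_{H,0}$, so by the Morse condition they are non-degenerate. At such a point $p$, the Hessian (as a linear operator $T_pD_0^\circ \to T_pD_0^\circ$) of $\nabla^{\mathrm{FS}}\phi_{H,0}$ coincides with the Hessian of $\phi_{H,0}$ viewed as a $(1,1)$-tensor using the Fubini--Study metric to raise an index. This operator is self-adjoint with respect to the Fubini--Study inner product on $T_pD_0^\circ$, hence has real eigenvalues, and these are nonzero precisely because $p$ is a Morse critical point. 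In particular, all eigenvalues of $\Hess_p(\hat\xi_{H,0}|_{D_0^\circ})$ have nonzero real part, which is exactly the condition for an elementary singularity.

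There is no real obstacle here: the statement is a direct corollary of the two preceding results once one observes the standard linear-algebraic fact that the linearization of a gradient vector field at a critical point is conjugate, via the metric, to the Hessian of the potential. The only mild care needed is to note that $\hat\xi_{H,0}|_{D_0^\circ}$ has no singularities coming from indeterminacies of $\phi_{H,0}$ on $D_0^\circ$ (the logarithmic pole locus of $\phi_{H,0}$ is exactly $D_0^\circ \cap \tilde C$, which lies outside $D_0^\circ$ by definition of the latter), so the correspondence between singularities of the vector field and critical points of the potential is exact.
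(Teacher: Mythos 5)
Your proposal is correct and follows exactly the route the paper takes: the paper's proof is a one-line combination of \Cref{lem:construction_morse_function} and \Cref{cor:xi_0_gradient} with the observation that gradients of Morse functions are elementary. Your extra remarks (the self-adjointness of the linearization of a gradient field at a critical point, and the absence of spurious singularities on $D_0^\circ$ since the pole locus of $\phi_{H,0}$ is $D_0\cap\tilde C$) simply make explicit what the paper leaves implicit.
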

\begin{proof}
	The proof is a direct application of 
	\Cref{lem:construction_morse_function} and \Cref{cor:xi_0_gradient} 
	plus the observation that gradients of Morse functions are elementary 
	vector fields.
\end{proof}

\begin{definition}\label{def:genericity_morse}
	Let $G_0 \subset \GL (2, \C)$ be the set of matrices $A$ such that 
	\[
	A^*A \in \mathcal{H} \setminus \mathcal{H}_0
	\]
\end{definition}

\begin{definition}\label{def:sigma_0}
	We denote by
\index{$\Sigma_0$}
$\Sigma_0 \subset D_0^\circ$ the set of singularities of 
	$\xi_0$ on $D_0^\circ$.
\end{definition}

The following lemma shows that $\xi_0$ may only have
\index{fountain!of $\xi_0$}
fountains or
\index{saddle!of $\xi_0$}
saddle 
points (\cref{fig:elementary_sing}).

\begin{lemma}\label{lem:attractors}
	If $H \in \mathcal{H} \setminus \mathcal{H}_0$, then $\xi_{H,0}$ does 
	not have
\index{sink!of $\xi_0$}
sink
singularities.
\end{lemma}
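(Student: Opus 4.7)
The plan is to exploit the fact that $\xi_{H,0}|_{D_0^\circ}$ is a gradient vector field for the Fubini-Study metric (\Cref{cor:xi_0_gradient}), so that a sink would correspond to a local maximum of the potential $\phi_{H,0}$. I would then show that no such maxima exist because $\phi_{H,0}$ is strictly subharmonic on $D_0^\circ$.

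Concretely, I would first recall from \cref{eq:phi_0} that
\[
  \phi_{H,0}(v) = -\log|g(v)| + \tfrac{e}{2}\log h(v,\bar v),
\]
where $g(v)=\init(f)(1,v)$ and $h(v,\bar v) = a + (b+ic)v + (b-ic)\bar v + d|v|^2$. The zeros of $g$ on $D_0$ are precisely the intersection points $D_0 \cap \tilde C$, which lie outside $D_0^\circ$; therefore $-\log|g|$ is the real part of a local holomorphic branch of $-\log g$, hence harmonic on $D_0^\circ$.

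Next I would perform the one-line computation
\[
  \partial_v\partial_{\bar v}\log h
  = \partial_v\frac{(b-ic)+dv}{h}
  = \frac{d\,h - \bigl((b-ic)+dv\bigr)\bigl((b+ic)+d\bar v\bigr)}{h^2}
  = \frac{ad-(b^2+c^2)}{h^2}
  = \frac{\det H}{h^2}.
\]
Since $H$ is a positive definite Hermitian form, $\det H>0$, so $\tfrac{e}{2}\log h$ is strictly plurisubharmonic and consequently $\phi_{H,0}$ is strictly subharmonic on $D_0^\circ$ (harmonic $+$ strictly subharmonic).

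Finally, if $p\in\Sigma_0$ were a sink of $\xi_{H,0}$, then, since $\xi_{H,0}$ is the Fubini-Study gradient of $\phi_{H,0}$, the point $p$ would be a local maximum of $\phi_{H,0}$. At such a point the Hessian is negative semi-definite, so its trace—which is (up to the positive Fubini-Study conformal factor) the Laplacian—would be non-positive, contradicting strict subharmonicity. The only genuine subtlety is the positivity $\det H>0$, which is exactly the condition that $H$ defines an inner product; once this is observed, the maximum-principle argument closes the proof.
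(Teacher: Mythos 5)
Your proof is correct, but it follows a genuinely different route from the paper's. The paper argues indirectly: by \Cref{lem:sink_in_normal} the normal part of $\Hess_p\hat\xi_{H,0}$ already has two negative eigenvalues, so a tangential sink would make $p$ a full four‑dimensional attractor of $\hat\xi_{H,0}$; then a whole open disk of each nearby Milnor fiber would lie in the spine, contradicting A'Campo's theorem that the spine is isotropic. You instead work entirely inside $D_0^\circ$: writing $\phi_{H,0}=-\log|g|+\tfrac{e}{2}\log h$, noting $-\log|g|$ is harmonic on $D_0^\circ$ (the zeros of $g$ are exactly the punctures $D_0\cap\tilde C$), and computing $\partial_v\partial_{\bar v}\log h=\det H/h^2>0$, so that $\phi_{H,0}$ is strictly subharmonic and the linearization of its gradient at any zero has positive trace, which is incompatible with a sink. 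Both arguments are sound. Yours is more elementary and self-contained — it avoids the appeal to \cite[Theorem 2.2]{AC_lag} and, notably, never uses the hypothesis $H\in\mathcal{H}\setminus\mathcal{H}_0$: strict subharmonicity rules out local maxima (hence sinks, degenerate or not) for \emph{every} positive definite $H$. The paper's argument, by contrast, is shorter given the machinery already in place and makes visible the geometric reason a sink is forbidden (the spine cannot contain a symplectic disk). One cosmetic point: your computation is done in the affine chart $U\cap D_0^\circ$; to cover the point of $D_0$ at infinity one should note that the identical computation in the complementary chart gives the same conclusion, or simply invoke the positivity of the trace, which is chart-independent.
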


\begin{proof}
	If $p \in D_0^\circ$ is a sink singularity of $\xi_{H,0}$, that is, if 
	the 
	vector field $\xi_{H,0}$ has a sink in the direction tangent to $D_0$, 
	then 
	the vector field $\hat\xi_{H,0}$ has an sink at $p$ by 
	\Cref{lem:sink_in_normal}. In particular, for each Milnor fiber, there 
	is a 
	whole disk in it that forms part of the spine, which contradicts the 
	fact 
	that the spine contains no disks with positive volume which is proved 
	in  
	\cite[Theorem 2.2]{AC_lag}.
\end{proof}

\begin{lemma}\label{lem:sinks_near_strict}
	Let $p$ be a point in $D_0 \cap \tilde{C}$. Then, the vector field 
	$\xi_0$ points towards $p$ in a small neighborhood around $p$. As a 
	consequence, the
\index{Poincar\'e-Hopf index}
Poincar\'e-Hopf index
of $\xi_0$ at the puncture $p$ 
	is $1$.
\end{lemma}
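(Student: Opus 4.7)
The plan is to reduce everything to the explicit gradient description in \Cref{lem:xi_0_gradient} and then read off both the direction of $\xi_0$ and its Poincar\'e--Hopf index from the logarithmic singularity of $\phi_0$ at $p$.

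First, work in the standard chart of $Y_0$ in which $D_0^\circ$ has coordinate $v$ and $g(v)=\init(f)(1,v)$, so that \Cref{lem:xi_0_gradient} gives
\[
\xi_0 \;=\; \nabla^{\mathrm{FS}}\phi_0,\qquad
\phi_0(v) \;=\; -\log|g(v)|+\tfrac{e}{2}\log(1+|v|^2).
\]
(If $p$ is the one point of $D_0$ not covered by this chart, perform a linear change of coordinates in $\C^2$, which preserves the shape of $\phi_0$ in the other standard chart.) The intersection points of $\tilde C$ with $D_0^\circ$ are exactly the zeros of $g$, so we may write $p=v_0$ with $g(v)=(v-v_0)^k\tilde g(v)$ for some integer $k\geq 1$ and $\tilde g(v_0)\neq 0$.

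Next, I would expand $\phi_0$ near $v_0$. Setting $w=v-v_0$, we get
\[
\phi_0(v) \;=\; -k\log|w| \;+\; h(v),
\]
where $h$ is real analytic and bounded in a neighborhood of $v_0$. Using that the Fubini--Study metric is conformally Euclidean near $v_0$, i.e.\ $g_{\mathrm{FS}}=\mu(v)\,g_{\mathrm{eucl}}$ with $\mu(v_0)>0$, one has
\[
\nabla^{\mathrm{FS}}\bigl(-k\log|w|\bigr)
\;=\; -\frac{k}{\mu(v)}\cdot\frac{w}{|w|^2},
\]
while $\nabla^{\mathrm{FS}}h$ stays bounded near $v_0$. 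Hence in some punctured neighborhood $U\setminus\{p\}$ of $p$ we have
\[
\xi_0(v) \;=\; -\frac{k}{\mu(v)}\cdot\frac{w}{|w|^2} \;+\; O(1),
\]
and for $|w|$ sufficiently small the leading term dominates. Since $-w/|w|^2$ points from $v$ directly toward $v_0$ and $\mu(v)>0$, the vector $\xi_0(v)$ makes an arbitrarily small angle with the inward radial direction as $v\to v_0$, which is precisely the first claim.

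For the index, let $\gamma$ be a small counterclockwise circle around $p$ in $D_0$ on which $\xi_0$ is nonvanishing (which exists by the previous step). Along $\gamma$, $\xi_0$ is arbitrarily close in direction to $-w/|w|^2$, whose winding number (as a map $\gamma\to\C^*$) equals $+1$ since $w$ winds once counterclockwise along $\gamma$. The tangent $\gamma'$ also has winding $+1$. Therefore, by \Cref{not:index_vf},
\[
w_{\xi_0}(\gamma) \;=\; (\text{winding of }\xi_0) - (\text{winding of }\gamma') \;=\; 1-1 \;=\; 0,
\]
and \cref{eq:wind_ph} gives $\Ind_{\xi_0}(p)=1-w_{\xi_0}(\gamma)=1$. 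No step is a serious obstacle: the only delicate point is making the leading-term/error-term split precise, which follows from the boundedness of $\nabla^{\mathrm{FS}}h$ near $v_0$.
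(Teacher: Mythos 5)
Your proposal is correct and follows essentially the same route as the paper: both reduce to the gradient description $\xi_0=\nabla^{\mathrm{FS}}\phi_0$ from \Cref{lem:xi_0_gradient} and exploit the logarithmic blow-up of $\phi_0$ at the zeros of $g$ to see that $\xi_0$ points towards $p$, then read off the index from the resulting sink-like dynamics. You simply make explicit (via the leading-term expansion $-k w/(\mu|w|^2)+O(1)$ and the winding-number count) what the paper states in two sentences.
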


\begin{proof}
	By the expression of $\phi_0$ given in \cref{eq:phi_0} we get that 
	$\phi_0\to + \infty$ for points approaching $p$. By 
	\Cref{lem:xi_0_gradient}, the vector field $\xi_0$ is the gradient of 
	$\phi_0$ and so it points in the direction where $\phi_0$ grows. 
	
	The statement about the index is a direct consequence of the dynamics 
	of $\xi_0$ near $p$ which behaves effectively as a sink (recall 
	\Cref{not:index_vf}).
\end{proof}

\section{The spine at radius zero over the $1$st blow-up}
\label{ss:vfld_prop_spine_0}

We are now ready to construct the spine at radius zero for homogeneous 
singularities. Consider the real oriented blow up 
\[
\sigma_0:\Yro_0 \to Y_0
\]
of $Y_0$ along $D_0 \cup C$. 

First we construct a $1$-dimensional $CW$-complex which is a spine for 
$D_0^\circ$. Let $I_0 \subset \Sigma_0$ be the set of
\index{saddle!of $\xi_0$}
saddle points
of 
$\xi_0$. For each saddle point $p \in I_0$ we consider its stable manifold 
$\tilde{B}_p$ which consists of the union of two trajectories $B^+_p$ and 
$B^-_p$ together with $p$, that is,
\[
\tilde{B}_p = B^+_p \sqcup B^-_p \sqcup \{p\}.
\]

Let $\Iro_{0,\theta} \subset \Siro_{0,\theta}$ be the set of saddle points 
of $\xiro_0|_{\Droc_{0,\theta}}$. For each saddle point $q \in 
\Iro_{0,\theta}$ we consider the corresponding stable manifold,
\[
\tBro_q = B^+_q \sqcup B^-_q \sqcup \{q\}.
\]
We recall that by \Cref{def:xiro0}, the vector field 
$\xiro_0|_{\Droc_{0,\theta}}$ is just the pullback of $\xi_0$ by the map 
$\sigma_0|_{\Droc_{0,\theta}} : \Droc_{0,\theta} \to D_0^\circ$, which is a 
$m_0$-fold regular covering. In particular,
\[
\left(\sigma_0|_{\Droc_{0,\theta}}\right)^{-1}(\tilde{B}_p)= \bigcup_{q \in 
	\left(\sigma_0|_{\Droc_{0,\theta}}\right)^{-1}(p)} \tBro_q.
\]

By \Cref{lem:attractors} all the singularities in $R_0=\Sigma_0 \setminus 
I_0$ are
\index{repeller!of $\xi_0$}
repellers
and thus, have empty stable manifolds. Analogously, all 
singularities in $\Rro_{0,\theta}=\Siro_{0,\theta} \setminus 
\Iro_{0,\theta}$ are also repellers. 
We define the set
\index{$S_0$}
\[
S_0 = \bigsqcup_{p \in I_0} \tilde{B}_p \bigsqcup_{ p \in R_0} \{p\}.
\]

\begin{definition}\label{def:spine_D0}
	We define the set
	\[
	\Sro_{0,\theta} = \bigsqcup_{q \in \Iro_{0,\theta}} \tBro_q 
	\bigsqcup_{q \in \Rro_{0,\theta}} \{ q \}
	\]
	and call it the {\em spine at radius $0$ and angle $\theta$ over $D_0$}.
\end{definition}

\begin{lemma}\label{lem:Sro0_is_spine}
	The set $S_0$ is a
\index{spine!for $D_0^\circ$}
spine
for $D_0^\circ$ and the set $\Sro_0$ is a 
	spine for $\Droc_{0,\theta}$ for each $\theta \in \R / 2 \pi \Z$.
\end{lemma}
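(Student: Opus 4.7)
The plan is to exploit the Morse-theoretic structure of $\phi_0$ on $D_0^\circ$ established earlier in this section. Combining \Cref{lem:xi_0_gradient}, \Cref{lem:construction_morse_function} together with the genericity condition \Cref{def:genericity_morse}, and \Cref{lem:attractors}, one has that $\xi_0 = \nabla \phi_0$ is the gradient of a Morse function whose only critical points are the fountains in $R_0$ (local minima) and the saddles in $I_0$, with no local maxima. Moreover, from the explicit formula \cref{eq:phi_0} and \Cref{lem:sinks_near_strict}, $\phi_0(p) \to +\infty$ as $p$ tends to any puncture $q \in D_0 \cap \tilde C$, and $\xi_0$ points toward $q$ on a small punctured neighborhood of $q$.

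Since $\phi_0$ is strictly increasing along nonconstant trajectories of $\xi_0$, each forward trajectory either limits onto a saddle (and thus lies in its stable manifold $\tilde B_p$, hence in $S_0$) or escapes every compact subset of $D_0^\circ$, in which case it must converge to some puncture. I would therefore write the complement as a disjoint union of open basins
\[
  D_0^\circ \setminus S_0
  = \bigsqcup_{q \in D_0 \cap \tilde C} B(q),
  \qquad
  B(q) = \{ p \in D_0^\circ : \lim_{t\to+\infty} \phi_t(p) = q \},
\]
where $\phi_t$ denotes the flow of $\xi_0$. Each $B(q)$ is open by continuous dependence on initial data and the disjointness is immediate. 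I would then argue that every $B(q)$ is diffeomorphic to a collar $S^1 \times (0,1]$ onto the boundary circle around $q$: since all critical points of $\phi_0$ lie in $S_0$, the restriction $\phi_0|_{B(q)}$ is a submersion; by \Cref{lem:sinks_near_strict} its level sets close to $q$ are small embedded transverse circles; and trivializing $B(q)$ via the flow of $\xi_0$ based on such a circle $C_q$ yields $B(q) \cong C_q \times \R$, which furnishes the required collar neighborhood of the boundary component sitting over $q$.

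For the real oriented version, I would invoke the fact (recalled in \Cref{ss:vfld_prop_spine_0}) that $\sigma_0|_{\Droc_{0,\theta}} \colon \Droc_{0,\theta} \to D_0^\circ$ is an $m_0$-fold regular covering and that $\xiro_{0,\theta}$ is the pullback of $\xi_0$ under this covering. All the structures above—critical points, stable manifolds, basins, and their cylindrical trivializations—lift through the covering. By \Cref{def:spine_D0}, $\Sro_{0,\theta}$ is exactly $(\sigma_0|_{\Droc_{0,\theta}})^{-1}(S_0)$, so $\Droc_{0,\theta} \setminus \Sro_{0,\theta}$ is the preimage of the collar constructed above, which is itself a disjoint union of collars around the boundary components of $\Droc_{0,\theta}$, giving the required spine property.

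The main obstacle lies in the middle step: rigorously promoting the local cylindrical picture around each puncture to a global trivialization of $B(q)$, which a priori could be an open surface of more complicated topology. This amounts to showing that every level set $\{\phi_0 = c\} \cap B(q)$ (for $c$ in the appropriate range) is a single embedded circle rather than a union of circles gluing back into $S_0$. The argument uses that $\phi_0|_{B(q)}$ is a submersion with no critical values and that, by \Cref{lem:sinks_near_strict}, for $c$ large the level set is a single small transverse loop around $q$; one then uses the flow of $\xi_0$ on $B(q)$—which has no singular points—to propagate this single-circle description to all $c$ at which the level set remains nonempty.
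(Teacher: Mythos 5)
Your proof is correct and follows essentially the same route as the paper's: both rest on the fact that, absent sinks, every trajectory outside $S_0$ converges to one of the punctures, and both obtain the collar structure by flowing a small transverse circle around each puncture forward and backward, with the covering $\sigma_0|_{\Droc_{0,\theta}}$ handling the second statement. The ``obstacle'' you flag is already resolved by your own flow trivialization $C_q \times \R \cong B(q)$ once $C_q$ is taken to be a high level set of $\phi_0$ (injectivity and surjectivity follow from strict monotonicity of $\phi_0$ along trajectories and compactness of sublevel sets), so the separate level-set continuation argument is not needed.
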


\begin{proof}
	We know that near the punctures of $D_0$ the vector field points 
	towards the punctures (\Cref{lem:sinks_near_strict}). By definition, 
	the complement of $S_0$ consists of trajectories that necessarily 
	converge to one of these punctures because there are no sinks in 
	$D_0^\circ$ (\Cref{lem:attractors}). Pick a small simple closed curve 
	around each of these punctures and let it flow by $\xi_0$ forward and 
	backwards. The union of all these curves and their translates by  
	$\xi_0$ equals an union of disjoint cylinders, as many as punctures in 
	$D_0^\circ$. This shows that $S_0$ is a spine of $D_0^\circ$.
	
	The second statement follows directly after the observation that  
	$\sigma|_{\Droc_{0,\theta}} : \Droc_{0,\theta} \to D_0^\circ$ is a  
	regular covering for each $\theta$.
\end{proof}

\begin{figure}[h!]
	\centering
	\includegraphics*[scale=0.8]{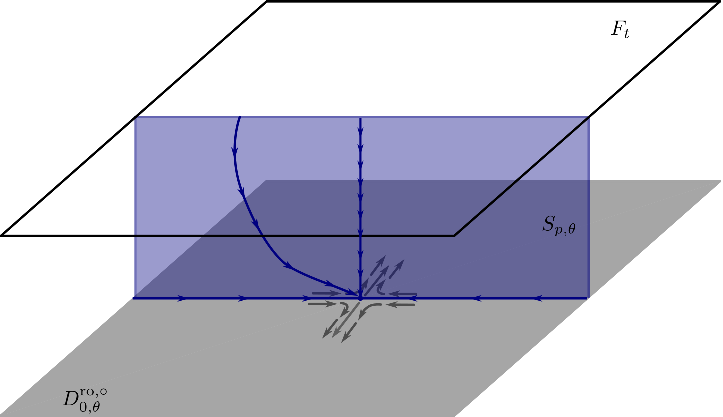}
	\caption{In blue, the stable-manifold associated with a saddle point of 
		$\xiro_{0,\theta}$. In darker blue we can see two trajectories 
		contained in $S_{p,\theta}$. On top, the Milnor fiber over $t\in 
		D_\eta$ with $\arg(t)= \theta$. Below, in gray, a part of 
		$\Droc_{0,\theta}$.}
\end{figure}

\section{Some homogeneous examples}

In this subsection we apply the theory developed on this section to some 
base case examples. 
\begin{example}
	We start by considering the
\index{Morse singularity}
Morse singularity $f(x,y)=y^2+x^2$.  Let 
	$\pi_0:Y_0 \to \Tu$ be the blow up at the origin $0 \in \Tu$. Take the 
	local chart $U \subset Y_0$ with coordinates $u,v$ such that
	\[
	\pi_0(u,v) = (u, uv).
	\]
	In these local coordinates we have:
	\[
	\pi_0^*f={\left(v^{2} + 1\right)} u^{2},
	\quad 
	\Jac \pi_0 = 
	\begin{pmatrix}
		1 &	0 \\
		v &	u 
	\end{pmatrix},
	\quad
	\det \Jac \pi_0 = u.
	\]
	
	We use \cref{eq:xi_coordinates_0} and we compute
	\[
	\left(
	\begin{matrix}
		\xi^u  \\
		\xi^v
	\end{matrix}
	\right)
	=
	\frac{-1}{u {\left(\bar{v}^{2} + 1\right)} \bar{u}^{2}}
	\left(
	\begin{matrix}
		2  u \bar{u}  \\
		2 \bar{u} \bar{v} -2v\bar{u}
	\end{matrix}
	\right)
	=
	\left(
	\begin{matrix}
		-\frac{2}{\bar{u} \bar{v}^{2} + \bar{u}} \\
		\frac{2  {\left(v - \bar{v}\right)}}{u \bar{u} \bar{v}^{2} + u 
			\bar{u}}
	\end{matrix}
	\right).
	\]
	
	\begin{figure}
		\centering
		\includegraphics*[scale=0.7]{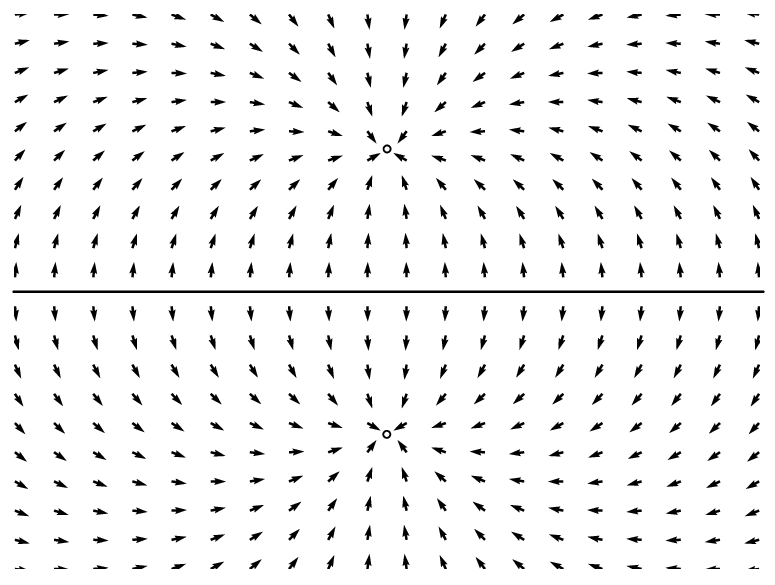}
		\caption{In black the vector field on the chart $U$. The horizontal 
			line on the center is part of a whole $S^1$ of zeroes of $\xi_0$}
		\label{fig:22_deg_case}
	\end{figure}
	
	We compute
	\[
	\lim_{u\to 0} |u|^2 
	\left(
	\begin{matrix}
		-\frac{2}{\bar{u} \bar{v}^{2} + \bar{u}} \\
		\frac{2  {\left(v - \bar{v}\right)}}{u \bar{u} \bar{v}^{2} + u 
			\bar{u}}
	\end{matrix}
	\right)
	=
	\left(
	\begin{matrix}
		0 \\
		\frac{2  {\left(v - \bar{v}\right)}}{\bar{v}^{2} + 1}
	\end{matrix}
	\right).
	\]
	That is, for the chart $U\cap D_0^\circ$ with coordinate $v$,
	\[
	\xi_0
	=
	\frac{2 {\left(v -\bar{v}\right)}}{\bar{v}^{2} + 1}.
	\]
	We can see from the expression above that the vector field has 
	non-isolated 
	zeroes (see \cref{fig:22_deg_case}). More concretely, the whole line 
	$\Re(v)=0$ is a line of zeros in $U\cap D_0^\circ$. Actually, this line 
	forms part of a $S^1$ of zeroes in $D_0^\circ$. In particular, this 
	shows 
	that the standard metric here does not come from a matrix $A$ in $G_0$ 
	(\Cref{def:genericity_morse}) since the vector field cannot be the 
	gradient 
	of any Morse function.
\end{example}

In the next example we consider the Morse singularity obtained from 
$y^2+x^2$ after the linear change of coordinates $y \mapsto y$ and $x 
\mapsto x + y$.
\begin{example}
	Consider the polynomial $f(x,y)=y^2+(y+x)^2$. Let $\pi_0:Y_0 \to \Tu$ 
	and  $U \subset Y_0$ as before. We find:
	\[
	\pi_0^*f=\left(2  v^{2} + 2  v + 1\right) u^{2},
	\quad 
	\Jac \pi_0 = 
	\begin{pmatrix}
		1 &	0 \\
		v &	u 
	\end{pmatrix},
	\quad
	\det \Jac \pi_0 = u.
	\]
	And so, following \cref{eq:xi_coordinates_0}, we find
	\[
	\left(
	\begin{matrix}
		\xi^u  \\
		\xi^v
	\end{matrix}
	\right)
	=
	\frac{-1}{u \left(2  \bar{v}^{2} + 2  \bar{v} + 1\right) \bar{u}^{2}}
	\left(
	\begin{matrix}
		2  \bar{u}(\bar{u} + \bar{v})  \\
		-2v\bar{u}(1+\bar{v}) + 2 \bar{u}(1+2\bar{v})
	\end{matrix}
	\right)
	=
	\left(
	\begin{matrix}
		-\frac{2 {\left(\bar{v} + 1\right)}}{2  \bar{u} \bar{v}^{2} + 2  
			\bar{u} \bar{v} + \bar{u}} \\
		\frac{2 {\left({\left(v - 2\right)} \bar{v} + v - 1\right)}}{2  u 
			\bar{u} \bar{v}^{2} + 2 u \bar{u} \bar{v} + u \bar{u}}
	\end{matrix}
	\right).
	\]
	
	As before, we now compute the scaling and extension of
	\[
	\lim_{u\to 0} |u|^2 
	\left(
	\begin{matrix}
		-\frac{2 {\left(\bar{v} + 1\right)}}{2  \bar{u} \bar{v}^{2} + 2  
			\bar{u} \bar{v} + \bar{u}} \\
		\frac{2 {\left({\left(v - 2\right)} \bar{v} + v - 1\right)}}{2  u 
			\bar{u} \bar{v}^{2} + 2 u \bar{u} \bar{v} + u \bar{u}}
	\end{matrix}
	\right)
	=
	\left(
	\begin{matrix}
		0 \\
		\frac{2 {\left({\left(v - 2\right)} \bar{v} + v - 1\right)}}{2 
			\bar{v}^{2} + 2 \bar{v} + 1}
	\end{matrix}
	\right).
	\]
	So on $U\cap D_0^\circ$ we have the expression,
	\[
	\xi_0
	=
	\frac{2 {\left({\left(v - 2\right)} \bar{v} + v - 1\right)}}{2 
		\bar{v}^{2} + 2 \bar{v} + 1}.
	\]
	This vector field has isolated singularities on $D_0^\circ$ at the 
	points
	\[
	v_0 =(1 - \sqrt{5})/2, \quad v_1 = (1 + \sqrt{5})/2.
	\]
	We compute the Hessian of $\xi_0$ at those singularities. For that we 
	write 
	$v=s+it$ and compute the partials of the vector field evaluated at 
	those 
	points. For $v_0$ we get:
	\[
	\begin{split}
		\left. \partial_s\xi_0 \right|_{((1 - \sqrt{5})/2,0)}
		&=
		-\frac{4  {\left({\left(s + i  t - 2\right)} {\left(s - i  
					t\right)} + s + i  t - 1\right)} {\left(2  s - 2 i  t + 
				1\right)}}{{\left(2  {\left(s - i  t\right)}^{2} + 2  s - 2 i  
				t + 
				1\right)}^{2}} \\
		&\left. + \frac{2  {\left(2  s - 1\right)}}{2  {\left(s - i  
				t\right)}^{2} + 2  s - 2 i  t + 1} \right|_{((1 - 
				\sqrt{5})/2,0)} \\
		&= \frac{-2 \sqrt{5}}{5 - 2  \sqrt{5}}
	\end{split}
	\]
	and
	\[
	\begin{split}
		\left. \partial_t\xi_0 \right|_{(1 - \sqrt{5})/2,0)}
		&=
		-\frac{4  {\left({\left(s + i  t - 2\right)} {\left(s - i  
					t\right)} + s + i  t - 1\right)} {\left(-2 i  s - 2  t - 
				i\right)}}{{\left(2  {\left(s - i  t\right)}^{2} + 2  s - 2 i  
				t + 
				1\right)}^{2}} \\
		& \left. + \frac{2  {\left(2  t + 3 i\right)}}{2  {\left(s - i  
				t\right)}^{2} + 2  s - 2 i  t + 1} \right|_{(1 - 
				\sqrt{5})/2,0)} \\
		&= \frac{6 i}{5- 2  \sqrt{5}} 
	\end{split}
	\]
	So the Hessian is, up to a positive real constant, 
	\[
	\Hess_{((1 - \sqrt{5})/2,0)} \xi_0 
	=
	\begin{pmatrix}
		\frac{-2 \sqrt{5}}{5 - 2  \sqrt{5}} &	0 \\
		0 &	\frac{6 }{5- 2  \sqrt{5}} 
	\end{pmatrix}, 
	\]
	which indicates that $v_0$ is a saddle point. A similar computation for 
	$v_1$ 
	leads to the Hessian
	\[
	\Hess_{((1 + \sqrt{5})/2,0)} \xi_0 
	=
	\begin{pmatrix}
		\frac{2  \sqrt{5}}{2  \sqrt{5} + 5}&	0 \\
		0 &	\frac{6}{2 \sqrt{5} + 5} 
	\end{pmatrix}.
	\]
	That is, $v_1$ is a fountain (see \Cref{fig:22_nondeg_case}).
	\begin{figure}
		\centering
		\includegraphics*[scale=0.7]{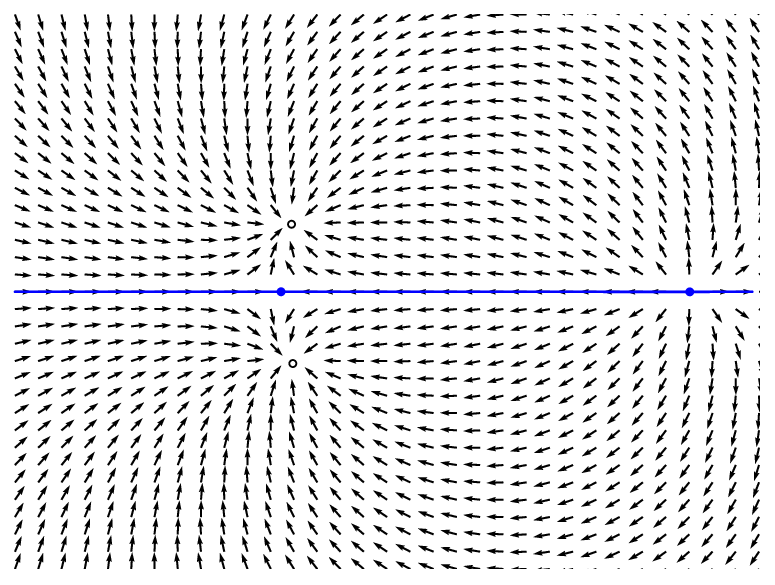}
		\caption{In black the vector field on the chart $U$. In this case 
			the vector field has a potential which is a Morse function so all 
			the zeroes are isolated. We see a saddle point and a fountain. In 
			blue the spine $S_0$}
		\label{fig:22_nondeg_case}
	\end{figure}

	\begin{figure}[h!]
		\centering
		\includegraphics*[scale=0.65]{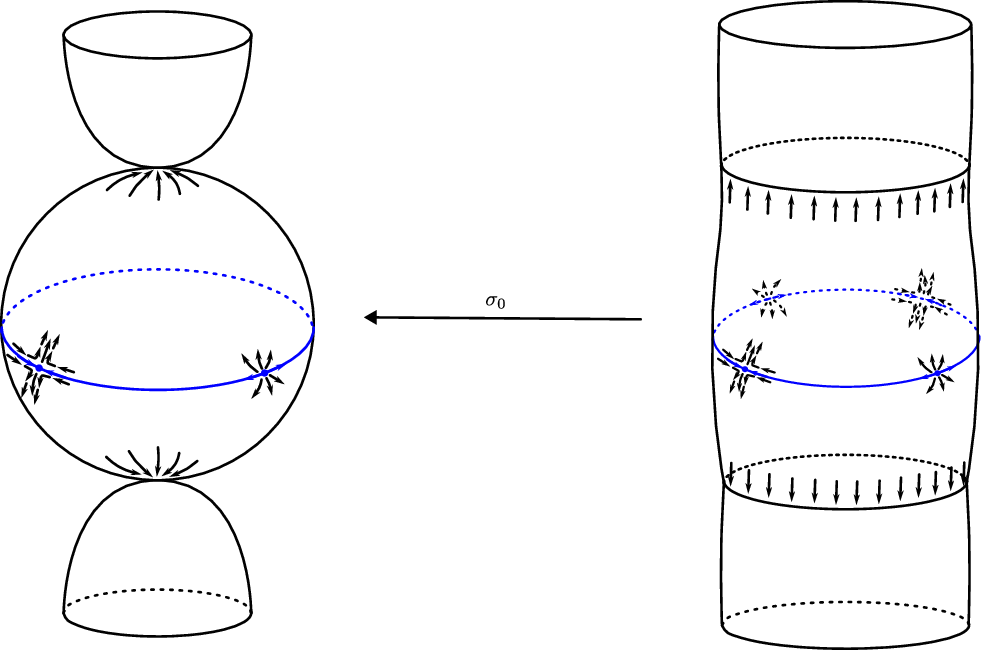}
		\caption{On the left we see the exceptional set of $Y_0$, the 
			vector field in black and the spine $S_0$ in blue. On the right we 
			see the corresponding picture for a fixed angled on the Milnor 
			fiber at radius zero.}
		\label{fig:22_nondeg_case_bl}
	\end{figure}

	\begin{figure}[h!]
		\centering
		\includegraphics*[scale=0.4]{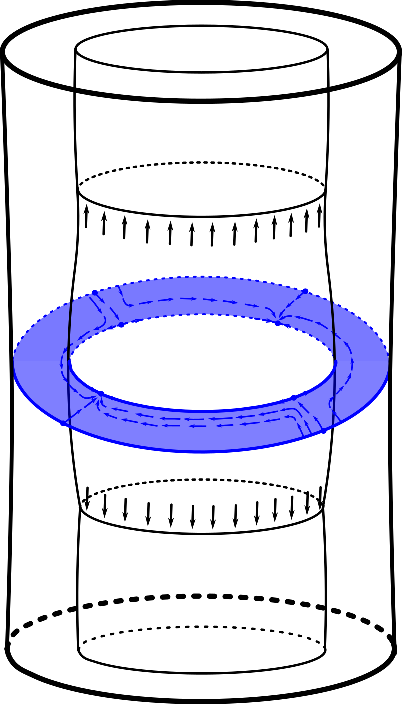}
		\caption{The inner cylinder represents the Milnor fiber at radius 
			$0$ and 
			angle $\theta$. The outer cylinder represents a Milnor fiber 
			over a point 
			$t$ with $\arg(t)= \theta$. In light blue we see the closure of 
			the union 
			of all trajectories starting at the Milnor fiber $F_t$ that 
			converge to 
			some point in the Milnor fiber at radius $0$. In darker blue we 
			see some of 
			the trajectories together with the spine $S_t$ on the Milnor 
			fiber $F_t$ 
			and the spine $\Sro_{0,\theta}$ at radius $0$ and angle 
			$\theta$. }
		\label{fig:22_nondeg_case_bl_stable}
	\end{figure}

\end{example}

\begin{example}\label{ex:brieskorn_33}
	Let $f(x,y)= y^3+x^3$  and let $\pi_0:Y_0 \to \Tu$ be the blow up at 
	the origin $0 \in \Tu$. Take the local chart $U \subset Y_0$ with 
	coordinates $u,v$ such that
	\[
	\pi_0(u,v) = (u, uv).
	\]
	In these local coordinates we have:
	\[
	\pi_0^*f=u^{3} {\left(v^{2} - v + 1\right)}{\left(v + 1\right)},
	\quad 
	\Jac \pi_0 = 
	\begin{pmatrix}
		1 &	0 \\
		v &	u 
	\end{pmatrix},
	\quad
	\det \Jac \pi_0 = u.
	\]
	And so, following \cref{eq:xi_coordinates_0}, we find
	\[
	\left(
	\begin{matrix}
		\xi^u  \\
		\xi^v
	\end{matrix}
	\right)
	=
	\frac{1}{u {\left(\bar{v}^{2} - \bar{v} + 1\right)} {\left(\bar{v} + 
			1\right)} \bar{u}^{3}}
	\left(
	\begin{matrix}
		3 u \bar{u}^{2}  \\
		-3 v \bar{u}^{2} + 3 \bar{u}^{2} \bar{v}^{2}
	\end{matrix}
	\right)
	=
	\left(
	\begin{matrix}
		-\frac{3}{\bar{u} \bar{v}^{3} + \bar{u}} \\
		-\frac{3  {\left(\bar{v}^{2} - v\right)}}{u \bar{u} \bar{v}^{3} + u 
			\bar{u}}
	\end{matrix}
	\right).
	\]
	We can compute
	\[
	\lim_{u\to 0} |u|^2 
	\left(
	\begin{matrix}
		-\frac{3}{\bar{u} \bar{v}^{3} + \bar{u}} \\
		-\frac{3  {\left(\bar{v}^{2} - v\right)}}{u \bar{u} \bar{v}^{3} + u 
			\bar{u}}
	\end{matrix}
	\right)
	=
	\left(
	\begin{matrix}
		0 \\
		-\frac{3 {\left(\bar{v}^{2} - v\right)}}{\bar{v}^{3} + 1}
	\end{matrix}
	\right).
	\]
	That is, for the chart $U\cap D_0^\circ$ with coordinate $v$,
	\[
	\xi_0
	=
	-\frac{3 {\left(\bar{v}^{2} - v\right)}}{\bar{v}^{3} + 1}.
	\]
	A direct computation, shows that this vector field vanishes at the 
	origin and at the $3$rd roots of unity, that is, at the points
	\[
	v=0, \quad v = 1, \quad v = -1/2 (1+i\sqrt{3} ), \quad v = 
	1/2(-1+i\sqrt{3}).
	\]
	Next we compute the Hessian of $\xi_0$ at those points. Take real 
	coordinates 
	$s,t$ such that $v=s+it$. We compute the 
	partials and 
	evaluate at 
	$(s,t)=(0,0)$
	\[
	\left. \partial_s \xi_0\right|_{(0,0)}
	=
	\left. 
	\frac{9  {\left({\left(s - i  t\right)}^{2} - s - i  
			t\right)} {\left(s - i  
			t\right)}^{2}}{{\left({\left(s - i  t\right)}^{3} + 
			1\right)}^{2}} - \frac{3  {\left(2  s - 2 i  t - 
			1\right)}}{{\left(s - i  t\right)}^{3} + 1}
	\right|_{(0,0)} = 3
	\]
	and
	\[
	\left.
	\partial_t \xi_0
	\right|_{(0,0)}
	=
	\left.
	-\frac{9 i  {\left({\left(s - i  t\right)}^{2} - s - 
			i  t\right)} {\left(s - i  
			t\right)}^{2}}{{\left({\left(s - i  t\right)}^{3} + 
			1\right)}^{2}} - \frac{3  {\left(-2 i  s - 2  t - 
			i\right)}}{{\left(s - i  t\right)}^{3} + 1}
	\right|_{(0,0)} = 3i
	\]
	which yields the Hessian
	\[
	\Hess_{(0,0)} \xi_0
	=
	\begin{pmatrix}
		3 &	0 \\
		0 &	3 
	\end{pmatrix},
	\]
	that shows that $\xi_0$ has a fountain at the origin.
	
	When $v=1$, that is $(s,t)=(1,0)$, we have
	\[
	\Hess_{(1,0)} \xi_0
	=
	\begin{pmatrix}
		-3/2 &	0 \\
		0 &	9/2 
	\end{pmatrix},
	\]
	which shows that $\xi_0$ has a saddle point at $(1,0)$. A similar 
	computation shows that  $\xi_0$ has also saddle points at $(-1/2, 
	-\sqrt{3}/2)$ and $(-1/2, \sqrt{3}/2)$. So, in the chart $U$, the 
	vector field looks as in \Cref{fig:33_case_U}.
	
	\begin{figure}
		\centering
		\includegraphics*[scale=0.8]{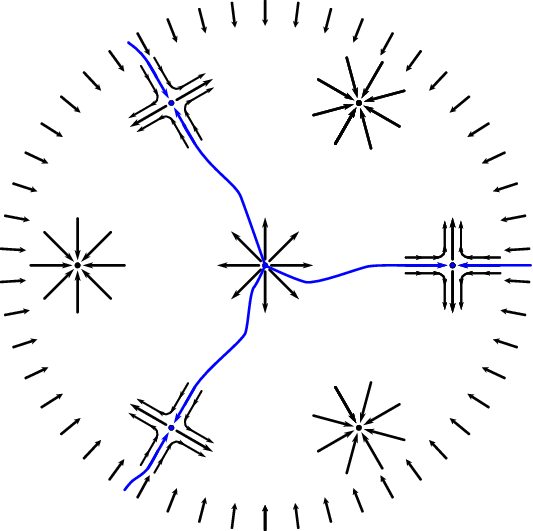}
		\caption{In black the vector field on the chart $U$. The inwards 
			pointing outer arrows indicate that there is a fountain at infinity 
			(or rather at the only point of $D_0$ that $U$ does not see). In 
			blue the stable manifolds of the three saddle points.}
		\label{fig:33_case_U}
	\end{figure}
	
	By taking another chart, an analogous computation yields that  $\xi_0$ 
	has a fountain at the only point in $D_0^\circ$ not seen by the chart 
	$U$.
	
	The Milnor fiber at radius $0$ minus a tubular neighborhood of its 
	boundary is a $3$-fold regular cover of $D_0^\circ$. Therefore, each of 
	the two fountains give rise to $3$ vertices of the spine at radius $0$.
\end{example}

	%-----------------------------------------------------------------------
% Beginning of chap8.tex
%-----------------------------------------------------------------------
%
%  AMS-LaTeX sample file for a chapter of a monograph, to be used with
%  an AMS monograph document class.  This is a data file input by
%  chapter.tex.
%
%  Use this file as a model for a chapter; DO NOT START BY removing its
%  contents and filling in your own text.
% 
%%%%%%%%%%%%%%%%%%%%%%%%%%%%%%%%%%%%%%%%%%%%%%%%%%%%%%%%%%%%%%%%%%%%%%%%
	\chapter{The Vector Field Near the Corners} \label{s:corners}

Near an intersection point $D_i \cap D_j$ of two exceptional components
in $Y$, the pull-back of $\xi$ has poles along $D_i \cap D_j$. Assuming 
that the edge $ij$ is invariant, \Cref{def:polar}
after rescaling $\pi^*(\xi)$ by a positive factor, the vector field
extends to an analytic vector field near $D_i \cap D_j$,
which does not vanish identically along $D_i$ or $D_j$.
If the edge is not invariant, a similar scaling of the further pullback
$(\piro)^*(\xi)$ to the real oriented blow-up extends over the boundary.
The vanishing order of this positive factor is given by
the \emph{radial weights} $\tau_i$ defined below.

In this section we work with the resolution $\pipol: \Ypol \to \C^2$ which, 
in particular, resolves generic polar curves (see \Cref{def:pipol}). We use 
the notation $\pi= \pipol$.

We sometimes write simply $f$ and $f_x$ instead
of $\pipol^*f$ and $\pipol^*f_x$ to avoid cumbersome formulas.
It is clear from the 
context and the coordinates use therein. Furthermore, the partials of
$x = \pipol^*x$ and $\pipol^*f$ with respect to $u,v$ are denoted $x_u, 
x_v, 
f_u,f_v$, and so on.

\begin{definition} \label{def:radial}
	The
\index{radial weight}
\emph{radial weight}
of the vertex $i \in \V$ is the integer
\index{$\tau_i$}
	\[
	\tau_i = c_{1,i} + m_i - p_i.
	\]
\end{definition}

\begin{rem}\label{rem:formula_ci}
	It follows from definition that $\tau_i + \varpi_i = 2c_{1,i}$.
	By \Cref{lem:varpi}, we therefore have $\tau_i \leq 2 c_{1,i}$ for all
	$i\in \V$. 
	Equality holds precisely for the invariant vertices, i.e.
	when $i \in \V_\Upsilon$, by \Cref{lem:inv_pol}.
	In particular, $\tau_i$ is even in this case.
\end{rem}

\begin{block} \label{block:corners}
	Let $i,j \in \V$, and assume that there is a directed
	edge $j \to i$ in $\Gamma$ (see \Cref{def:directed}).
	Note that this implies that $i \neq 0$, since all edges
	adjacent to $0$ point away from $0$.
	Corresponding to the edge $ij$,
	we have an intersection point $p \in D_i \cap D_j$.
	Choose
\index{isometric coordinates}
isometric coordinates
$x,y$ in $\C^2$
	so that $y=0$ defines the tangent associated with $p$
	(see \Cref{def:tangent}).
	Let $U \subset Y$ be a small chart containing $p$ with coordinates
	$u,v$. We choose $u,v$ in such a way that
	\begin{equation}\label{eq:coords_intesection}
		\begin{split}
			\pi^*x &= u^{c_{0,i}} v^{c_{0,j}} \\
			\pi^*y &= u^{k} v^{\ell} \eta(u,v) 
		\end{split}
	\end{equation}
	where $\eta(0,0) \neq0$. Since $i\neq0$, we have $k>c_{0,i}$ 
	(\Cref{lem:tang_van}). Similarly, $\ell \geq c_{0,j}$ with equality if 
	and only if $j=0$.
	
	On $U \setminus D$, we have an explicit description
	of the pullback of $\xi$
	\begin{equation} \label{eq:xi_corner}
		\pi|_{U\setminus D}^*\xi
		=
		\begin{pmatrix}
			\xi^u \\
			\xi^v
		\end{pmatrix}
		=
		\frac{-1}{\det\Jac\pi \cdot \bar f}
		\begin{pmatrix}
			y_v \bar f_x - x_v \bar f_y \\
			-y_u \bar f_x + x_u \bar f_y
		\end{pmatrix}
	\end{equation}
	
	Since $f_y$ defines a generic polar curve (by the definition of 
	$\pipol$ and the assumption on the coordinate $y$), it vanishes along 
	$D_i, D_j$ with multiplicities
	$p_i,p_j$, we can write $\pi^*f_y(u,v) = u^{p_i}v^{p_j}(b + \hot)$
	where $b\in\C^*$. We can therefore explicitly write  
	\[
	x_v \bar{f}_y
	= u^{c_{0,i}} \bar{u}^{p_i} v^{c_{0,j}-1} \bar{v}^{p_j} (c_{0,j} \bar 
	b  + \hot)
	\]
	with $b\in\C^*$. Here, once again, we are identifying $f_y$ with its 
	pullback. We can now write $\pi^* f_x(u,v)= u^{p_i'} v^{p_j'}(b' + 
	\hot)$  with the inequalities $p_i' \geq p_i$ and $p_j' \geq p_j$ 
	because the $x$-axis is a tangent by hypothesis. Again using the 
	observation after \cref{eq:coords_intesection}, we can write
	\[
	y_v \bar{f}_x 
	=  u^{k} \bar{u}^{p_i'} v^{\ell-1} \bar{v}^{p_j'} (\ell b' + \hot)
	=  u^{c_{0,i}} \bar{u}^{p_i} v^{c_{0,j}-1} \bar{v}^{p_j} (0 + \hot)
	\] 
	with $b' \in \C$. Together this yields
	\[
	y_v \bar{f}_x - x_v \bar{f}_y  
	= u^{c_{0,i}} \bar{u}^{p_i} v^{c_{0,j}-1} \bar{v}^{p_j} (-c_{0,j} \bar 
	b + \hot).
	\]
	By an analogous reasoning we find,
	\[
	-y_u \bar{f}_x + x_u \bar{f}_y 
	= u^{c_{0,i}-1} \bar{u}^{p_i} v^{c_{0,j}} \bar{v}^{p_j} (c_{0,i} \bar b 
	+ \hot).
	\]
	Similarly, we can also write 
	\begin{equation}\label{eq:coefficients_a_c}
		\begin{split}
			\pi^*f(u,v) &= u^{m_i} v^{m_j}(a  +\hot) \quad a \in \C^* \\
			\det\Jac\pi(u,v) &=  u^{\nu_i-1} v^{\nu_j-1}(c + \hot) \quad c 
			\in \C^*.
		\end{split}
	\end{equation}
	Setting
\index{$d$}
$d = \bar b / (c\bar a) \in \C^*$, we get
	(recall \Cref{def:c_1})
	\begin{equation} \label{eq:xi_corner_more}
		|u|^{\tau_i} |v|^{\tau_j} \pi|_{U\setminus D}^*\xi
		=
		\left(
		\frac{\bar{u}}{|u|}
		\right)^{\varpi_i}
		\left(
		\frac{\bar{v}}{|v|}
		\right)^{\varpi_j}
		d
		\left(
		\begin{matrix}
			u (c_{0,j} + \hot)\\
			v(-c_{0,i}  + \hot)
		\end{matrix}
		\right).
	\end{equation}
\end{block}

We recall the notation  $r = |u|$ from \Cref{s:real_oriented} and introduce 
the notation $s=|v|$. Do not confuse with the notation $s, t$ 
used in \Cref{ex:brieskorn_33}.

The previous discussion, in particular the above equation, yields the 
following 
result. 
\begin{lemma}\label{lem:extension_corner}
	\begin{blist}
		\item If $p \in D_i \cap D_j$, with $ij$ an invariant edge, then
		the vector field 
		\begin{equation}\label{eq:xiu_ext}
			|u|^{\tau_i}|v|^{\tau_j} \pi|_{U\setminus D}^* \xi
		\end{equation} 
		extends
		to an analytic vector field on $U$ that is tangent to $D_i\cap U$ 
		and $D_j\cap U$. And it does not vanish identically there.
		\item The scaled pull-back
		\begin{equation}\label{eq:xirou_ext}
			r^{\tau_i} s^{\tau_j} \piro|_{\Uro\setminus \partial \Uro}^*\xi
		\end{equation}
		extends to an analytic vector field on $\Uro$, which is tangent to
		both $\Dro_i \cap \Uro$ and $\Dro_j\cap \Uro$. Furthermore, if $ij$ 
		is a non-invariant edge, it does not vanish identically along 
		$\Dro_{i,j}$. 
	\end{blist}
\end{lemma}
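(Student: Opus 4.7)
Both statements are bookkeeping built on the explicit formula \eqref{eq:xi_corner_more} already obtained in \cref{block:corners}. The plan is to extract part (i) by setting the polar weights to zero, and to pass to polar coordinates on $\Uro$ for part (ii), verifying in each case that the pole cancellations work out and that the restriction to the relevant divisor is nontrivial.

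For (i), the assumption that $p$ is invariant means $\varpi_i=\varpi_j=0$, so the two phase factors on the right-hand side of \eqref{eq:xi_corner_more} become trivial and the scaled vector field equals
\[
  d\begin{pmatrix} u\,(c_{0,j}+\hot) \\ v\,(-c_{0,i}+\hot)\end{pmatrix},
\]
which is a holomorphic vector field on $U$. Its $\partial_u$-component is divisible by $u$ and its $\partial_v$-component by $v$, so it is tangent to both $D_i\cap U=\{u=0\}$ and $D_j\cap U=\{v=0\}$. Non-vanishing along $D_i$ is seen by restricting the $\partial_v$-component to $\{u=0\}$: the leading term is $-c_{0,i}\,d\,v$, which is nonzero on $D_i^\circ$ since $d\in\C^*$ and $c_{0,i}\geq 1$. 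The analogous argument on $D_j$ finishes (i).

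For (ii), we introduce the polar coordinates of \cref{eq:polar_coords} on $\Uro$, $u=re^{i\alpha}$ and $v=se^{i\beta}$, so that
\[
  \left(\tfrac{u}{|u|}\right)^{\!\varpi_i}\!\left(\tfrac{v}{|v|}\right)^{\!\varpi_j}
  = e^{i\varpi_i\alpha}e^{i\varpi_j\beta},
\]
which is real analytic on all of $\Uro$. To see $\pi^*\xi$ as a vector field on $\Uro$ away from the boundary, I will use the identity
\[
  \xi^u\partial_u+\overline{\xi^u}\partial_{\bar u}
  = \Re\!\left(e^{-i\alpha}\xi^u\right)\partial_r
    +\tfrac{1}{r}\Im\!\left(e^{-i\alpha}\xi^u\right)\partial_\alpha,
\]
and its obvious $(s,\beta)$-analogue for the $v$-components. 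Multiplying the right-hand side of \eqref{eq:xi_corner_more} by $e^{-i\alpha}$ and then by $1/r$ gives a factor $r e^{i(\varpi_i+1)\alpha}e^{i\varpi_j\beta}\cdot e^{-i\alpha}/r = e^{i\varpi_i\alpha}e^{i\varpi_j\beta}$, so the apparent $1/r$ pole is cancelled by the $u=re^{i\alpha}$ appearing in the first slot of the vector field; the analogous cancellation works for $v$. Consequently, $r^{\tau_i}s^{\tau_j}\sigma^*\pi^*\xi$ extends as an analytic vector field on $\Uro$. Tangency to $\Dro_i=\{r=0\}$ and $\Dro_j=\{s=0\}$ is then visible because the $\partial_r$-component vanishes at $r=0$ (it still carries a leading factor of $r$ coming from the $u$ in \eqref{eq:xi_corner_more}) and similarly for the $\partial_s$-component, while $\partial_\alpha,\partial_\beta$ are themselves tangent to those strata.

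For the non-vanishing assertion along $\Dro_{i,j}$ in the non-invariant case, I will restrict the extended vector field to $r=s=0$. The $\partial_r$- and $\partial_s$-components have already been shown to vanish there, so what survives is the $(\alpha,\beta)$-part whose coefficients are
\[
  \Im\!\left(d\,c_{0,j}\,e^{i\varpi_i\alpha}e^{i\varpi_j\beta}\right),\qquad
  \Im\!\left(-d\,c_{0,i}\,e^{i\varpi_i\alpha}e^{i\varpi_j\beta}\right).
\]
Since $d\in\C^*$ and $c_{0,i},c_{0,j}>0$, and since $(\varpi_i,\varpi_j)\neq (0,0)$ by the non-invariance of $ij$, the exponential factor is non-constant in $(\alpha,\beta)$ and its product with a nonzero complex constant has non-identically-zero imaginary part. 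Hence the restriction is not identically zero, proving the last claim. The only real obstacle in this plan is the pole-cancellation bookkeeping in the polar change of coordinates; everything else is a direct reading of \eqref{eq:xi_corner_more}.
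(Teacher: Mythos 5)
Your proposal is correct and follows essentially the same route as the paper, which derives the lemma directly from the expansion \cref{eq:xi_corner_more} of \cref{block:corners} together with the polar-coordinate formulas \cref{eq:xiro_corner}. The only quibble is that in part (i) the extension is real analytic rather than holomorphic (the higher-order terms involve $\bar u,\bar v$), but this does not affect any of the tangency or non-vanishing claims.
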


\begin{definition}\label{def:xiroU_intersection}
	Denote by $\xiro_U$ the
	analytic vector field on $\Uro$ whose restriction to $\Uro \setminus 
	\partial
	\Uro$ coincides with \cref{eq:xirou_ext}.  We denote by 
	$\xiro_{i,j,\theta}$ the restriction of $\xiro_U$ to a neighborhood of 
	$\Dro_{i,j,\theta}$ in the Milnor fiber $\Fro_\theta$ at radius $0$ and 
	angle $\theta$.
	
	Similarly, for $p$ an invariant intersection point, denote by $\xi_U$ 
	the
	analytic vector field on $U$ whose restriction to $U \setminus D$ 
	coincides
	with \cref{eq:xiu_ext}.
	
\end{definition}
\section{Invariant intersection points}
\label{ss:invariant_int}
In this subsection we consider an invariant edge $ij$ in the graph $\Gamma$.
We use the notation from \cref{block:corners}, and assume
$\varpi_i = \varpi_j = 0$ and that $j \to i$.

\begin{lemma} \label{lem:d_pos}
	The number $d$ from \cref{eq:xi_corner_more} is a positive real number
	if the edge $ij$ is invariant.
\end{lemma}
\begin{proof}
	Denote by $l$ the coefficient of $u^{c_{1,i}} v^{c_{1,j}}$
	in the expansion of $\pi^*y$ in $u,v$. We have
	\[
	\det\Jac\pi(u,v) = x_u y_v - x_v y_v
	= c_{0,i} u^{c_{0,i}-1} v^{c_{0,j}} y_v
	- c_{0,j} u^{c_{0,i}} v^{c_{0,j}-1} y_u,
	\]
	and so we can compute the number $c$ from \cref{eq:coefficients_a_c}:
	\[
	c = (c_{0,i} c_{1,j} - c_{0,j} c_{1,i}) l.
	\]
	A chain rule computation gives
	\[
	\begin{split}
		f_y &= \frac{-x_v f_u + x_u f_v}{x_u y_v - y_u x_v} \\
		&= \frac{
			u^{c_{0,i}+m_i-1}
			v^{c_{0,j}+m_j-1}
			(a   (c_{0,j} m_i-c_{0,i} m_j) + \hot)}
		{u^{\nu_i-1} v^{\nu_j -1}(c + \hot)} \\
		&= -\frac{a}{c} \cdot
		\left|
		\begin{matrix}
			c_{0,i} & m_i \\
			c_{0,j} & m_j
		\end{matrix}
		\right|
		(u^{p_i} v^{p_j} + \hot)
	\end{split}
	\]
	and so $b = a(m_i c_{0,j} - m_jc_{0,i})/c$, i.e.
	$d = \bar b / (c \bar a) = (m_i c_{0,j} - m_jc_{0,i}) / |c|^2 > 0$ where
	the last inequality follows from the fact that $(c_{0,i}m_j - 
	m_ic_{0,j})<0$ by
	\Cref{lem:c0ord}.
\end{proof}

\begin{lemma} \label{lem:elementary_invariant}
	If $p \in D_i \cap D_j$ is an invariant intersection point, then
	the vector field $\xi_U$ has an elementary singularity
	at $p$, whose
\index{manifold!unstable}
unstable manifold is $D_i$ and whose
\index{manifold!stable}
stable
	manifold is $D_j$.
\end{lemma}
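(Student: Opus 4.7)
The plan is to compute the linearization of $\xi_U$ at $p = (0,0)$ explicitly and then identify $D_i$ and $D_j$ with the unstable and stable manifolds by invariant-manifold uniqueness. Because $\varpi_i = \varpi_j = 0$, the equivariant factors $(u/|u|)^{\varpi_i}(v/|v|)^{\varpi_j}$ appearing in \cref{eq:xi_corner_more} are identically $1$, so $\xi_U$ is given near $p$ by
\[
  \xi_U = d \begin{pmatrix} u(c_{0,j} + \hot) \\ v(-c_{0,i} + \hot)\end{pmatrix},
\]
with $d = \bar b/(c\bar a) \in \C^*$ the scalar defined in \cref{block:corners}. Viewed as a real vector field on the four-real-dimensional neighborhood $U$, the Hessian of $\xi_U$ at $p$ is block diagonal: the $u$-block acts as multiplication by $d\,c_{0,j}$ on $\C \cong \R^2$, the $v$-block as multiplication by $-d\,c_{0,i}$. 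The four eigenvalues are accordingly $d\,c_{0,j}, \overline{d\,c_{0,j}}, -d\,c_{0,i}, -\overline{d\,c_{0,i}}$. Elementariness is equivalent to $\Re(d)\neq 0$, and for $D_i = \{u=0\}$ to carry the unstable directions and $D_j = \{v=0\}$ the stable ones, I need $\Re(d) < 0$.

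To pin this down, I will compute $d$ explicitly by matching leading bidegree coefficients in the chain-rule identity
\[
  (\pi^*f_y)\,\det\Jac\pi \;=\; -x_v\,(\pi^*f)_u + x_u\,(\pi^*f)_v,
\]
which comes from solving the chain-rule system for $\pi^*f_y$. The left-hand side has leading bimonomial $bc \cdot u^{p_i + \nu_i - 1}v^{p_j + \nu_j - 1}$, and invariance at the corner, $\varpi_i = \varpi_j = 0$, is exactly what converts these exponents into $m_i + c_{0,i} - 1$ and $m_j + c_{0,j} - 1$. Plugging $\pi^*f = u^{m_i}v^{m_j}(a + \hot)$ together with the exact formulas $x_u = c_{0,i}u^{c_{0,i}-1}v^{c_{0,j}}$, $x_v = c_{0,j}u^{c_{0,i}}v^{c_{0,j}-1}$ into the right-hand side, the coefficient of this same bimonomial collapses to the single determinantal expression $(c_{0,i}m_j - c_{0,j}m_i)\,a$. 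Matching gives $bc = (c_{0,i}m_j - c_{0,j}m_i)\,a$, and since this determinant is real,
\[
  d \;=\; \frac{\bar b}{c\bar a} \;=\; \frac{c_{0,i}m_j - c_{0,j}m_i}{|c|^2} \;\in\; \R.
\]
The sign of $d$ is then fixed by \Cref{lem:c0ord} applied with $g = f$: for any edge directed from $j$ to $i$ one has $c_{0,i}m_j - c_{0,j}m_i \leq 0$, and by \Cref{cor:Gapol_ah} the inequality is strict in the invariant case. Therefore $d < 0$.

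With $d$ real and negative, the four eigenvalues of the Hessian are the real numbers $d\,c_{0,j} < 0$ and $-d\,c_{0,i} > 0$, each of multiplicity two. Thus $p$ is an elementary singularity in the sense of \Cref{def:elementary_vf}, and the two-dimensional stable and unstable eigenspaces are $T_p D_j$ and $T_p D_i$, respectively. By \Cref{lem:extension_corner}, $\xi_U$ is tangent to both $D_i \cap U$ and $D_j \cap U$, so each is a smooth invariant surface through $p$ of the correct dimension and with the correct tangent plane; uniqueness of the local stable/unstable manifolds provided by Grobman--Hartman then forces the germs of $D_j$ and $D_i$ at $p$ to coincide with those manifolds. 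The principal difficulty will lie in step two: ensuring that both bidegree components align (a consequence of $\varpi_\bullet = 0$) and that the right-hand-side coefficient collapses algebraically to $(c_{0,i}m_j - c_{0,j}m_i)\,a$ is the precise bookkeeping that makes the sign of $d$ a direct consequence of the directed-graph structure of the resolution.
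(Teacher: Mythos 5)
Your proof is correct and follows essentially the same route as the paper's: read off the linearization $(d\,c_{0,j}u,\,-d\,c_{0,i}v)$ from \cref{eq:xi_corner_more}, determine $d$ by matching leading coefficients in the chain-rule identity for $\pi^*f_y$, apply \Cref{lem:c0ord} (with strictness from invariance via \Cref{cor:Gapol_ah}), and finish with tangency of $\xi_U$ to $D_i$ and $D_j$ plus uniqueness of the local invariant manifolds. One substantive point: your sign for $d$ is opposite to the paper's. The paper's proof asserts the leading coefficient of $-x_vf_u+x_uf_v$ is $a(c_{0,j}m_i-c_{0,i}m_j)$ and concludes $d>0$; the correct coefficient is $a(c_{0,i}m_j-c_{0,j}m_i)$, since $-x_vf_u$ contributes $-c_{0,j}m_ia$ and $x_uf_v$ contributes $c_{0,i}m_ja$, which is exactly your computation and yields $d<0$. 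Your sign is also the only one compatible with the statement being proved: with $D_i=\{u=0\}$, $D_j=\{v=0\}$ and linear part $\dot u=d\,c_{0,j}u$, $\dot v=-d\,c_{0,i}v$, one needs $d<0$ for the flow within $D_j$ to be attracted to the corner and the flow within $D_i$ to be repelled from it, which is the dynamical picture the rest of the paper relies on (e.g.\ \Cref{lem:broken_edges} and the construction of broken trajectories). So your proposal not only matches the paper's strategy but repairs its sign.
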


\begin{proof}
	Since $d$ is positive by \Cref{lem:d_pos},  the result follows from the
	Grobman-Hartman theorem and \cref{eq:xi_corner_more}.
\end{proof}

The previous lemma talks about the Hessian of the vector field $\xi_U$ in 
$\Ypol$, it is also useful later on the text to be able to speak about the 
Hessian  of the vector field $\xiro_U$ in $\Yropol$.

\begin{block} \label{block:pullback_ro}
	Here we briefly explain how to obtain useful formulas for the pullback 
	of our vector field to $\Yro$. We use polar coordinates $r$, $s$, 
	$\alpha$ and $\beta$
	(in this order), where
	$u = r e^{i\alpha}$, $v = s e^{i\beta}$
	for the chart $\Uro = \sigma^{-1}(U) \subset \Yro$.
	We can write
	\[
	\xiro_U
	=
	(
	\xiror,
	\xiros,
	\xiroal,
	\xirobe
	)^\top.
	\]
	The formulas
	\begin{equation} \label{eq:xiro_corner}
		\begin{aligned}
			\xiror &=  \Re(e^{-i\alpha} \sigma^*\xi^u),\qquad       &
			\xiros &= \Re(e^{-i\beta} \sigma^*\xi^v),         \\
			\xiroal &=  \Im(r^{-1}e^{-i\alpha} \sigma^*\xi^u),\qquad &
			\xirobe &= \Im(s^{-1}e^{-i\beta} \sigma^*\xi^v),
		\end{aligned}
	\end{equation}
	follow from a direct computation in polar coordinates. We carry on this 
	direct 
	computation of the formulas for $\xiror$. We observe that in $\Uro$, 
	the real 
	oriented blow up $\sigma$ is expressed as 
	\[
	(r,s, \alpha, \beta) \mapsto (r e^{i\alpha}, s e^{i\beta})
	\]
	and so its Jacobian is
	\[
	\Jac \sigma=
	\left(
	\begin{matrix}
		\cos \alpha & 0 & -r \sin \alpha & 0\\
		\sin \alpha  & 0 & r \cos \alpha & 0\\
		0 & \cos \beta & 0 & -s \sin \beta \\
		0 & \sin \beta  & 0 & s \cos \beta
	\end{matrix}
	\right).
	\]
	This matrix is, up to permutation of the second and third columns, a 
	block diagonal matrix. So we can compute its inverse by blocks and find 
	that
	
	\begin{equation}\label{eq:inverse_jacsig}
		(\Jac \sigma)^{-1}=
		\left(
		\begin{matrix}
			\cos \alpha & \sin \alpha & 0 & 0\\
			0  & 0 & \cos \beta & \sin \beta \\
			- \frac{\sin \alpha}{r} & \frac{\cos \alpha}{r} & 0 & 0\\
			0 & 0 & - \frac{\sin \beta}{s}  & \frac{\cos \beta}{s}
		\end{matrix}
		\right).
	\end{equation}
	By definition of the pullback of a vector field we know that
	\[
	\xiro_U = (\Jac \sigma)^{-1} (\Re(\sigma^* \xi^u), \Im(\sigma^* 
	\xi^u),\Re(\sigma^* \xi^v), \Im(\sigma^* \xi^v) )^\top.
	\]
	So using \cref{eq:inverse_jacsig}, we compute, on one hand:
	\begin{equation}\label{eq:xiror}
		\xiror = (\cos \alpha) \cdot \Re(\sigma^* \xi^u) +  (\sin \alpha) 
		\cdot \Im(\sigma^* \xi^u).
	\end{equation}
	And on the other hand:
	\begin{equation}\label{eq:xiror_other}
		\begin{split}
			\Re (e^{-i\alpha} \sigma^*(\xi^u)) 
			= &\Re\left( (\cos\alpha -i \sin \alpha) \cdot (\Re(\sigma^* 
			\xi^u) + i \Im(\sigma^* \xi^u) ) \right)\\
			= & \Re \left((\cos \alpha) \cdot \Re(\sigma^* \xi^u) +  (\sin 
			\alpha) \cdot \Im(\sigma^* \xi^u) \right.\\
			& \left. + i \left( (\cos \alpha) \cdot  \Im(\sigma^* \xi^u) - 
			(\sin \alpha) \cdot  \Re(\sigma^* \xi^u)  \right) \right) \\
			= & (\cos \alpha) \cdot \Re(\sigma^* \xi^u) +  (\sin \alpha) 
			\cdot \Im(\sigma^* \xi^u)
		\end{split}
	\end{equation}
	From \cref{eq:xiror} and \cref{eq:xiror_other}, we verify the formula 
	for $\xiror$ given in \cref{eq:xiro_corner}. The other three formulas 
	of \cref{eq:xiro_corner} follow from very similar computations.
\end{block}

\begin{cor} \label{cor:xiro_inv}
	At a point $q \in \Dro_{i,j}$ with $\varpi_i=\varpi_j=0$, the vector 
	field $\xiro_U$  has a Hessian
	\[
	\Hess_q \xiro_U
	=
	\left(
	\begin{matrix}
		d c_{0,j} & 0 & 0 & 0\\
		0  & -d c_{0,i}  & 0  & 0\\
		0 & 0 & 0 & 0 \\
		0 & 0  & 0 & 0
	\end{matrix}
	\right).
	\]
	Moreover, there exists a local change of coordinates such that
	\[
	\xiro_{i,j,\theta}=(dc_{0,j}r', -dc_{0,i}s',0,0)^\top.
	\]
	in coordinates $r',s',\alpha',\beta'.$
\end{cor}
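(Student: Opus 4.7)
The plan is a direct computation in polar coordinates using the formulas \eqref{eq:xi_corner_more} and \eqref{eq:xiro_corner}, followed by a hyperbolic linearisation transverse to the fixed-point torus $\Dro_{i,j}$. Specialising \eqref{eq:xi_corner_more} to the invariant case $\varpi_i=\varpi_j=0$ gives
\[
  |u|^{\tau_i}|v|^{\tau_j}\,\pi^*\xi
  \;=\;
  d\bigl(u(c_{0,j}+\hot),\; v(-c_{0,i}+\hot)\bigr)^{\!\top},
\]
with real-analytic higher-order terms in $u,v,\bar u,\bar v$ vanishing at the origin. Pulling back by $\sigma$ and inserting into \eqref{eq:xiro_corner}, the rescaling powers $r^{\tau_i}s^{\tau_j}$ absorb cleanly, and the fact that $d,c_{0,i},c_{0,j}$ are real (by \Cref{lem:elementary_invariant}) kills the constant parts of the angular components under $\Im$. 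One obtains the expansion
\[
  \xiro_U = \bigl(\,dc_{0,j}\,r + r\,O(r,s),\; -dc_{0,i}\,s + s\,O(r,s),\; O(r,s),\; O(r,s)\,\bigr)^{\!\top},
\]
so in particular $\xiro_U$ vanishes identically along the $2$-torus $\Dro_{i,j}=\{r=s=0\}$.

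The Hessian claim now follows directly. Because $\Dro_{i,j}$ is an entire manifold of zeros of $\xiro_U$, its tangent space $T_q\Dro_{i,j}=\langle\partial_\alpha,\partial_\beta\rangle$ lies in the kernel of $\Hess_q\xiro_U$, so the last two columns vanish. The nontrivial columns come from the leading terms above: $\partial_r\xiro_U^{(r)}|_q=dc_{0,j}$ and $\partial_s\xiro_U^{(s)}|_q=-dc_{0,i}$, while the remaining entries of the $(r,s)$-block vanish by inspection of the leading order. Thus $\Hess_q\xiro_U$ has spectrum $\{dc_{0,j},-dc_{0,i},0,0\}$ with $\ker=T_q\Dro_{i,j}$ of full dimension $2$, and so is conjugate to the displayed diagonal matrix.

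For the normal form, I would apply Sternberg's smooth linearisation transverse to the centre manifold $\Dro_{i,j}$. Since $\Dro_{i,j}$ is an analytic submanifold of fixed points on which $\xiro_U$ vanishes identically, it serves as an analytic centre manifold; the two transverse eigenvalues $dc_{0,j}>0$ and $-dc_{0,i}<0$ are of opposite signs, hence satisfy no nontrivial integer resonances among themselves. The resulting theorem (hyperbolic linearisation along a manifold of fixed points, obtained by combining a parametrised stable/unstable manifold theorem with two-dimensional linearisation transversely) produces smooth local coordinates $r',s',\alpha',\beta'$ near $q$, with $\{r'=s'=0\}=\Dro_{i,j}$, in which $\xiro_U=(dc_{0,j}\,r',\,-dc_{0,i}\,s',\,0,\,0)^{\!\top}$. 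Since $\xiro_U$ is tangent to each Milnor fibre $\Fro_\theta$ by \Cref{rem:R_and_xi}\cref{it:iii_and_xi_tang}, restricting to $\Fro_\theta\cap\Uro$ yields the same local expression for $\xiro_{i,j,\theta}$.

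The main obstacle is justifying this final linearisation step in the presence of a $2$-dimensional centre. The cleanest route is to first build the local stable and unstable analytic curves at each point of $\Dro_{i,j}$ using a parametrised invariant manifold theorem, and then straighten them simultaneously along the torus; the non-resonance of the two transverse eigenvalues ensures the resulting coordinate change is smooth near $q$.
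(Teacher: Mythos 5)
Your computation of the Hessian is essentially the paper's: both specialise \cref{eq:xi_corner_more} to $\varpi_i=\varpi_j=0$, feed it into \cref{eq:xiro_corner}, use that $d$ is a positive real (from \Cref{lem:elementary_invariant}) to kill the constant parts of the angular components, and use the torus of zeros $\Dro_{i,j}$ to see that the $\alpha,\beta$-columns vanish. That part is fine.

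The normal form is where you diverge, and where there is a genuine gap. The paper does not linearise upstairs along the torus at all: it applies Grobman--Hartman to $\xi_U$ \emph{downstairs} at the single elementary saddle $D_i\cap D_j\subset U$ (again via \Cref{lem:elementary_invariant}), obtains coordinates $u',v'$ with $\{u'=0\}=D_i$, $\{v'=0\}=D_j$ in which $\xi_U=(dc_{0,j}u',-dc_{0,i}v')^\top$, and then simply writes this linear field in polar coordinates $u'=r'e^{i\alpha'}$, $v'=s'e^{i\beta'}$; the formula $(dc_{0,j}r',-dc_{0,i}s',0,0)^\top$ drops out with no further dynamical input. Your route via a parametrised Sternberg linearisation transverse to the centre torus founders on the resonance claim: the transverse eigenvalues are $dc_{0,j}$ and $-dc_{0,i}$ with $c_{0,i},c_{0,j}$ positive integers, and the resonance relation $\lambda_1=m_1\lambda_1+m_2\lambda_2$, i.e.\ $(m_1-1)c_{0,j}=m_2c_{0,i}$, is \emph{always} solvable (take $m_1=1+c_{0,i}$, $m_2=c_{0,j}$, so $m_1+m_2\geq 2$). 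A saddle whose eigenvalue ratio is a negative rational is never Sternberg non-resonant, so the $C^\infty$ linearisation you invoke is not available; at this level of generality one only gets a topological (Grobman--Hartman) or finitely differentiable conjugacy. Since the corollary and its later uses (e.g.\ in \Cref{ss:brotraj}) only need the qualitative saddle picture and the identification of the stable and unstable manifolds with $\Dro_j$ and $\Dro_i$, the fix is either to weaken your conclusion to a topological normal form, or better, to adopt the paper's order of operations: linearise at the isolated fixed point in $Y$ first, then perform the real oriented blow-up in the linearising coordinates, which sidesteps the parametrised linearisation along $\Dro_{i,j}$ entirely.
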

\begin{proof}
	Using \cref{eq:xiro_corner} and \cref{eq:xi_corner_more} we get
	\[
	\xiror = \Re(e^{-i \alpha} d r e^{i \alpha} (c_{0,j} + \hot)) = 
	rdc_{0,j} + \Re(\hot)
	\]
	where $\Re(\hot)$ denotes the real part of the higher (than linear) 
	order terms. This yields the first row of the Hessian in the statement.
	Similarly, we find 
	\[
	\xiroal = \Im(r^{-1} e^{-i \alpha} d r e^{i \alpha} (c_{0,j} + \hot)) ) 
	= \Im(dc_{0,j} + \hot)= 0 + \Im (\hot),
	\]
	which gives the third row of the Hessian. The second and fourth rows 
	are completely analogous.
	
	For the last statement we use that the vector field $\xi_U$ satisfies 
	the
	hypothesis of the Grobman-Hartman theorem by 
	\Cref{lem:elementary_invariant}.
	This implies that there exists coordinates $u', v'$ for $U$ such that 
	$\{u'=
	0\} = D_i \cap U$ and $\{v'= 0\} = D_j \cap U$ and such that $\xi_U$ in 
	the
	coordinates $u',v'$ is
	\begin{equation} \label{eq:GH_vf}
		\left(
		\begin{matrix}
			\xi^{u'}\\
			\xi^{v'}
		\end{matrix}
		\right) 
		=
		\left(
		\begin{matrix}
			dc_{0,j} u'\\
			-d c_{0,i}v'
		\end{matrix}
		\right).
	\end{equation}
	So, after taking its pullback by the real oriented blow up 
	$\sigma(r',\alpha',s',\beta')= (r' e^{i \alpha'},s' e^{i \beta'})$, we 
	get the vector field
	
	\begin{equation}\label{eq:xiro_grob}
		\xiro_U
		=
		(
		d c_{0,j} r',
		-d c_{0,i} s',
		0,
		0
		)^\top
	\end{equation}
	which proves the last statement.
\end{proof}

In \cref{fig:droij_inv} we can see a drawing of the vector field $\xiro_U$ 
restricted to $(\Dro_{i,\theta} \cup \Dro_{j,\theta})\cap U$
\begin{figure}[!ht]
	\centering
	\includegraphics*[scale=1]{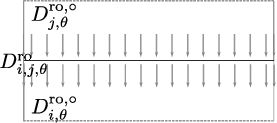}
	\caption{The vector field $\xiro_U$ restricted to $(\Dro_{i,\theta} 
		\cup \Dro_{j,\theta})\cap U$ near a component of $\Dro_{i,j,\theta}$ 
		for an edge $j \to i$ and with $\varpi_i=\varpi_j=0$.}
	\label{fig:droij_inv}
\end{figure}

\section{Non-invariant intersection points}
Next we assume that $\varpi_i, \varpi_j$ are not both 0. In this subsection 
we also use the formulas \cref{eq:xiro_corner}.

\begin{lemma}
	In polar coordinates given by the real oriented blow up, the following 
	expression holds
	\[
	\arg (f )(0,0,\alpha,\beta) = m_i\alpha + m_j\beta + \Delta  
	\]
	with $\Delta \in \R / 2 \pi \Z$ a constant.
\end{lemma}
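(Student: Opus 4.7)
The plan is to unwind the definition of $\arg(\fro)$ on the corner $\sigma^{-1}(p) \subset \Dro_{i,j}$ using the local description of $\pi^*f$ established in \cref{block:corners}, and then evaluate the resulting expression at $r = s = 0$.

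More concretely, I would first recall from \cref{eq:coefficients_a_c} that in the coordinates $u,v$ on $U$ chosen as in \cref{block:corners}, we can write
\[
  \pi^* f(u,v) = u^{m_i} v^{m_j} (a + \hot),
\]
with $a \in \C^*$. Passing to the polar coordinates $u = r e^{i\alpha}$, $v = s e^{i\beta}$ on $\Uro$, this becomes
\[
  \sigma^* \pi^* f (r,s,\alpha,\beta)
  = r^{m_i} s^{m_j}\, e^{i(m_i \alpha + m_j \beta)} \bigl(a + \hot\bigr),
\]
where now the higher order terms are real analytic functions in $(r, s, \alpha, \beta)$ vanishing as $(r,s) \to (0,0)$. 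By the functoriality of the real oriented blow-up, the diagram \cref{eq:ro_cd} gives $\sigma^* \pi^* f = \sigma'_* \fro$, and hence $\arg(\fro) = \arg(\sigma^* \pi^* f)$ wherever the latter is defined.

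Next I would use \Cref{lem:arg_equivariant} (or equivalently the extension statements from \Cref{lem:transverse_strata}) to observe that the function $\arg(\fro)$ extends analytically over the corner $\sigma^{-1}(p) = \{r = s = 0\}$. Evaluating the above formula at $r = s = 0$ kills the higher order terms, so on the corner
\[
  \arg(\fro)(0,0,\alpha,\beta)
  = \arg\!\bigl(a\, e^{i(m_i \alpha + m_j\beta)}\bigr)
  = m_i \alpha + m_j \beta + \arg(a) \pmod{2\pi \Z}.
\]
Thus the conclusion holds with $\Delta = \arg(a) \in \R / 2\pi \Z$.

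The only subtlety I would be careful about is verifying that the extension of $\arg(\fro)$ across the corner agrees with the pointwise evaluation of the expansion above; this is ensured by the real analyticity of the extension provided by \Cref{lem:arg_equivariant}, since the two real analytic functions coincide on the open set $\{r,s > 0\}$. No genuine obstacle is expected—this lemma is essentially a local coordinate computation isolating the leading exponent of $\pi^*f$ along $D_i \cap D_j$.
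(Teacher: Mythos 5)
Your proof is correct and is essentially the paper's argument made explicit: the paper deduces the affine form from the $\cdot_i$- and $\cdot_j$-equivariance of weights $m_i$ and $m_j$ (via \Cref{lem:arg_equivariant}), and that equivariance is exactly the content of the local expansion $\pi^*f = u^{m_i}v^{m_j}(a+\hot)$ that you substitute into polar coordinates. The only nit is the notation $\sigma'_\ast \fro$, which should read $\sigma'\circ\fro = (\piro)^*f$ from the commutative diagram \cref{eq:ro_cd}.
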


\begin{proof}
	This follows from \Cref{lem:arg_equivariant} and the observation that 
	$\arg (f )(0,0,\alpha,\beta)$ is equivariant with respect to $\cdot_i$ 
	of weight $m_i$ and equivariant with respect to $\cdot_j$ of weight 
	$m_j$.
\end{proof}
\begin{definition}
	With $d \in \C^*$ as in \cref{block:corners},
	set $\delta = \arg(d) \in \R/2\pi\Z$, and
\index{$\Sigma_{i,j}^\pm$}
	\begin{equation} \label{eq:Sigma_ij_pm}
		\begin{split}
			\Sigma_{i,j}^+
			&=
			\set{(0,0,\alpha,\beta)\in \Uro}
			{\delta + \varpi_i\alpha + \varpi_j\beta \equiv 0 \; (\mod  
				2\pi\Z)} \subset \Dro_{i,j}, \\
			\Sigma_{i,j}^-
			&=
			\set{(0,0,\alpha,\beta)\in \Uro}
			{\delta + \varpi_i\alpha + \varpi_j\beta \equiv \pi \; (\mod 
				2\pi\Z)}\subset \Dro_{i,j},
		\end{split}
	\end{equation}
	and $\Sigma_{i,j} = \Sigma_{i,j}^+ \cup \Sigma_{i,j}^-$.
\end{definition}

\begin{lemma}\label{lem:transversality_number}
	Fix $\theta \in \R / 2 \pi \Z$.
	\begin{blist}
		\item \label{it:trans_i}The set $\Sigma_{i,j}$ is a real 
		$1$-dimensional affine subspace homeomorphic to a disjoint union of 
		$2\gcd(\varpi_i,\varpi_j)$ circles.
		
		\item \label{it:trans_ii} The set $\Dro_{i,j} \cap \Yro_\theta$ is 
		a real $1$-dimensional affine subspace homeomorphic to a disjoint 
		union of $\gcd(m_i,m_j)$ circles.
		
		\item \label{it:trans_iii}The intersection $\Sigma_{i,j} \cap 
		\Yro_\theta$ is transverse and
		\[
		|\Sigma^+_{i,j} \cap \Yro_\theta |=
		|\Sigma^-_{i,j} \cap \Yro_\theta |=
		- 
		\left|
		\begin{matrix}
			m_i & \varpi_i \\
			m_j & \varpi_j
		\end{matrix}
		\right|.
		\]
		Where $| \cdot |$ denotes cardinality in the first two terms and 
		determinant in the last one.
	\end{blist}
\end{lemma}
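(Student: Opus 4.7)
My plan is to realise all three of $\Sigma_{i,j}^{\pm}$, $\Dro_{i,j}$, and $\Dro_{i,j}\cap\Yro_\theta$ as subsets of the torus $\Dro_{i,j}\cong T^2=(\R/2\pi\Z)^2_{\alpha,\beta}$ cut out by linear equations modulo $2\pi\Z$. Indeed, by~\cref{eq:Sigma_ij_pm} and the lemma immediately preceding the definition of $\Sigma_{i,j}$, these equations read
\[
  \chi_\varpi(\alpha,\beta)=\varpi_i\alpha+\varpi_j\beta+\delta\equiv 0\text{ or }\pi,
  \qquad
  \chi_m(\alpha,\beta)=m_i\alpha+m_j\beta+\Delta\equiv\theta,
\]
where $\chi_\varpi,\chi_m\colon T^2\to\R/2\pi\Z$ are continuous group homomorphisms.

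For \cref{it:trans_i,it:trans_ii} I will invoke the elementary fact that for $(a,b)\in\Z^2\setminus\{0\}$ with $g=\gcd(a,b)$, the homomorphism $T^2\to\R/2\pi\Z,(\alpha,\beta)\mapsto a\alpha+b\beta$ has image of index $g$, so each nonempty fibre is a disjoint union of exactly $g$ parallel circles. Applied to $\chi_\varpi$ this yields $2\gcd(\varpi_i,\varpi_j)$ circles (the factor $2$ from the two admissible values $0$ and $\pi$); note that $\varpi_i,\varpi_j>0$ at a noninvariant corner by~\Cref{cor:Gapol_ah}. Applied to $\chi_m$ it gives $\gcd(m_i,m_j)$ circles.

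For \cref{it:trans_iii} I consider the combined homomorphism $\Phi=(\chi_\varpi,\chi_m)\colon T^2\to T^2$, whose associated integer matrix has determinant
\[
  D=\det\begin{pmatrix}\varpi_i&\varpi_j\\ m_i&m_j\end{pmatrix}
  =m_j\varpi_i-m_i\varpi_j
  =-\det\begin{pmatrix}m_i&\varpi_i\\ m_j&\varpi_j\end{pmatrix}.
\]
When $D\neq 0$, $\Phi$ is a covering of degree $|D|$; hence the two level sets meet transversely (their tangents $(-\varpi_j,\varpi_i)$ and $(-m_j,m_i)$ are independent iff $D\neq 0$), and each of $\Sigma^\pm_{i,j}\cap\Yro_\theta$ has exactly $|D|$ points.

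The only point of substance is pinning down the sign $D>0$, so that $|D|=-\det\bigl(\begin{smallmatrix}m_i&\varpi_i\\ m_j&\varpi_j\end{smallmatrix}\bigr)$, as claimed. Because we are at a noninvariant corner,~\Cref{cor:Gapol_ah} gives $m_ic_{0,j}=m_jc_{0,i}$, i.e.\ $m_k=q\,c_{0,k}$ for $k\in\{i,j\}$ with a common positive rational $q$. Multiplying the strict inequality $c_{0,j}\varpi_i>c_{0,i}\varpi_j$ of~\Cref{cor:ups_pol} (strict since $\varpi_i,\varpi_j>0$) by $q$ yields $m_j\varpi_i>m_i\varpi_j$, i.e.\ $D>0$. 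The rest of the argument is then routine bookkeeping with covering maps of tori.
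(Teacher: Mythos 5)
Your proof is correct and follows essentially the same route as the paper: both arguments realise $\Sigma_{i,j}^{\pm}$ and $\Dro_{i,j}\cap\Yro_\theta$ as affine subtori of $(\R/2\pi\Z)^2$, count their components by a gcd, and get the sign of the determinant from \Cref{cor:ups_pol} together with \Cref{cor:Gapol_ah}; your packaging of the point count as the degree $|\det|$ of the torus covering $\Phi$ is a tidier version of the paper's parametrise-each-circle-and-compute-the-degree-of-$\arg(f)$ computation. One small slip: at a noninvariant corner only $\varpi_i>0$ is guaranteed, since $\varpi_j$ may vanish when $j$ is invariant (e.g.\ the edge joining $D_3$ and $D_2$ in \Cref{s:examples}), but this is harmless because $\gcd(\varpi_i,0)=\varpi_i$ and the strict inequality in \Cref{cor:ups_pol} only requires that the two polar weights do not both vanish.
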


\begin{proof}
	The map $\chi:\Dro_{i,j} \to \R/2\pi\Z$,
	$(\alpha,\beta) \mapsto \delta + \varpi_i \alpha + \varpi_j\beta$ with 
	$\delta= \arg(d)$,
	is a surjective affine morphism, and therefore its preimages 
	$\chi^{-1}(0)$ and $\chi^{-1}(\pi)$ are submanifolds of dimension 
	$\dim_\R \Dro_{i,j} - \dim_\R (\R / 2 \pi \Z) = 1$. From 
	\cref{eq:Sigma_ij_pm} we see that $\Sigma^\pm_{i,j}$ consists of 
	$\gcd(\varpi_i,\varpi_j)$ connected components. In fact, each component 
	can be parametrized as follows. Set $(\varpi_i', \varpi_j') = 
	(\varpi_i,\varpi_j)/\gcd(\varpi_i,\varpi_j)$. We see that 
	$\Sigma_{i,j}$ consists of components parametrized by $\gamma \in 
	\R/2\pi\Z$
	by
	\[
	r = s = 0, \quad
	\alpha = -\varpi'_j \gamma +\delta',\quad
	\beta = \varpi'_i \gamma +\delta''
	\]
	where $\delta'$ and $\delta''$ are some constants in $\R / 2 \pi \Z$.
	
	A similar argument works for the set $\Dro_{i,j} \cap \Yro_\theta$ 
	since 
	\begin{equation}\label{eq:parame_dijro}
		\Dro_{i,j} \cap \Yro_\theta
		= \set{(0,0, \alpha, \beta) \in \Uro}{m_i \alpha + m_j \beta + 
			\Delta = \theta }.
	\end{equation}
	
	The composition
	\[
	\arg(f)(0,0,-\varpi'_j\gamma,\varpi'_i\gamma)
	= \frac{m_j\varpi_i - m_i\varpi_j}{\gcd(\varpi_i,\varpi_j)} \cdot 
	\gamma 
	+ \Delta
	\]
	is a submersion, since $m_i\varpi_j - m_j\varpi_i < 0$ by 
	\Cref{cor:ups_pol} and \Cref{cor:Gapol_ah} \cref{it:Gapol_ah_pos}.
	which proves transversality. 
\end{proof}

\begin{lemma} \label{lem:noninv_corner}
	Let $\theta \in \R/2\pi\Z$.
	\begin{blist}
		\item \label{it:noninv_corner_ext}
		The vanishing set of $\xiro_U$ is $\Sigma_{i,j}$.
		
		\item \label{it:noninv_corner_elem}
		The Hessian of $\xiro_U$ along the normal bundle of $\Sigma_{i,j}$
		is elementary.
		
		\item \label{it:noninv_corner_plu}
		If $q \in \Sigma_{i,j}^+$ and $\arg(f)(q)=\theta$,
		then
		the 
\index{manifold!unstable}
unstable manifold of $\xiro_U$ at $q$ is the germ 
		$(\Dro_{i,\theta},q)$,
		and
		its
\index{manifold!stable}
stable manifold is a single trajectory in $(\Dro_{j,\theta},q)$.
		
		\item \label{it:noninv_corner_min}
		Similarly,
		if $q \in \Sigma_{i,j}^-$ and $\arg(f)(q)=\theta$,
		then
		the stable manifold of $\xiro_U$ at $q$ is $(\Dro_{i,\theta},q)$,
		and
		its unstable manifold is a single trajectory in 
		$(\Dro_{j,\theta},q)$.
	\end{blist}
\end{lemma}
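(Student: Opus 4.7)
My plan is to work in the polar coordinates $(r,s,\alpha,\beta)$ on $\Uro$ given by $u=re^{i\alpha}$, $v=se^{i\beta}$, substituting the asymptotic formula \eqref{eq:xi_corner_more} into the pullback formulas \eqref{eq:xiro_corner}. A direct computation yields
\[
\xiror = |d|c_{0,j}\,r\cos\psi + r\cdot O(r,s),\qquad \xiros = -|d|c_{0,i}\,s\cos\psi + s\cdot O(r,s),
\]
\[
\xiroal = |d|c_{0,j}\sin\psi + O(r,s),\qquad \xirobe = -|d|c_{0,i}\sin\psi + O(r,s),
\]
where $\psi=\delta+\varpi_i\alpha+\varpi_j\beta$ and $\delta=\arg d$. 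For part (i) I would check each stratum of $\Uro$ separately: on $\Dro_{i,j}$ (where $r=s=0$) only $\xiroal,\xirobe$ survive and they vanish exactly when $\sin\psi=0$, giving $\Sigma_{i,j}$; on the open strata $\Dro_i^\circ\cap\Uro$ or $\Dro_j^\circ\cap\Uro$ simultaneous vanishing of the nonzero components would force both $\sin\psi=0$ and $\cos\psi=0$, which is impossible; and on $\Uro\setminus\partial\Uro$ the field is a strictly positive multiple of $(\piro)^*\xi$, which is nowhere zero since $\pi\circ\sigma$ is a local diffeomorphism there and $\xi$ has no zeros outside $C$.

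For part (ii), at a point $q\in\Sigma_{i,j}$ I set $\epsilon=\cos\psi(q)\in\{\pm 1\}$. The factorizations of $\xiror,\xiros$ through $r$ and $s$, combined with $\sin\psi(q)=0$ and $\partial_r\psi=\partial_s\psi=0$, make the Hessian of $\xiro_U$ at $q$ block diagonal in the basis $(\partial_r,\partial_s,\partial_\alpha,\partial_\beta)$, with one-dimensional blocks $\epsilon|d|c_{0,j}$ and $-\epsilon|d|c_{0,i}$ and a $2\times 2$ block
\[
\epsilon|d|\begin{pmatrix}c_{0,j}\varpi_i & c_{0,j}\varpi_j\\ -c_{0,i}\varpi_i & -c_{0,i}\varpi_j\end{pmatrix}
\]
in the $(\partial_\alpha,\partial_\beta)$ directions. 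This $2\times 2$ block has rank one with kernel spanned by $\varpi_j\partial_\alpha-\varpi_i\partial_\beta$, which is precisely $T_q\Sigma_{i,j}$, and its nonzero eigenvalue equals its trace $\epsilon|d|(c_{0,j}\varpi_i-c_{0,i}\varpi_j)$. By \Cref{cor:ups_pol} applied to the non-invariant edge $ij$, the quantity $c_{0,j}\varpi_i-c_{0,i}\varpi_j$ is strictly positive, so this eigenvalue is nonzero. Passing to the quotient $T_q\Uro/T_q\Sigma_{i,j}$ leaves three nonzero real eigenvalues with signs $(\epsilon,-\epsilon,\epsilon)$, which is exactly the elementarity statement.

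For parts (iii) and (iv), I would use that $\xiro_U$ is tangent to $\Yro_\theta$ (since $\arg(f)$ is a first integral of $\xi$) and that by \Cref{lem:transversality_number}\ref{it:trans_iii}, $q$ is an isolated zero of $\xiro_U|_{\Yro_\theta}$ with $T_q\Sigma_{i,j}\cap T_q\Yro_\theta=\{0\}$. The projection $T_q\Yro_\theta\to T_q\Uro/T_q\Sigma_{i,j}$ is then an isomorphism, so the three nonzero eigenvalues above describe the linearization of $\xiro_U|_{\Yro_\theta}$ at $q$ exactly. For $q\in\Sigma_{i,j}^+$ the signs $(+,-,+)$ yield, via the stable manifold theorem, a $2$-dimensional unstable and a $1$-dimensional stable invariant manifold through $q$ inside $\Yro_\theta$, with the roles swapping for $\Sigma_{i,j}^-$. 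To identify these manifolds geometrically with $\Dro_{i,\theta}$ and a trajectory in $\Dro_{j,\theta}$, I would use that both $\Dro_{i,\theta}$ and $\Dro_{j,\theta}$ are smooth $2$-dimensional submanifolds of $\Yro_\theta$ through $q$ that are invariant under $\xiro_U$ (since the field is tangent to $\Dro_i$ and $\Dro_j$). The most delicate step, and the one I expect to be the main obstacle, is to verify that the relevant eigenspaces coincide at $q$ with $T_q\Dro_{i,\theta}=T_q\Dro_i\cap T_q\Yro_\theta$ and with the appropriate line in $T_q\Dro_{j,\theta}$; this amounts to an explicit tangent-space computation using the polar-coordinate form of $d\arg(\fro)|_q$ and the non-invariance identity $c_{0,i}m_j=c_{0,j}m_i$ from \Cref{cor:Gapol_ah}\ref{it:Gapol_ah_m}, which makes the necessary proportionalities line up. Uniqueness of the invariant manifold tangent to a given (un)stable subspace in the stable manifold theorem then forces the local unstable and stable manifolds at $q$ to be the germs described in the lemma.
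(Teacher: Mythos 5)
Your proposal follows essentially the same route as the paper's proof: the explicit polar expansions (the paper's \Cref{lem:explicit_pullback_xi}) give the vanishing set, the block-triangular Hessian whose angular $2\times 2$ block has rank one and positive trace by \Cref{cor:ups_pol} gives elementarity, and the (center-)stable manifold theorem combined with the invariance of $\Dro_{i,\theta}$ and $\Dro_{j,\theta}$ and the sign of the restricted Hessian identifies the stable and unstable manifolds, which is exactly how the paper finishes parts (iii) and (iv). The one inaccuracy is your claim that the Hessian is block \emph{diagonal}: the remainder terms of $\xiroal$ and $\xirobe$ that are divisible by $r$ or $s$ generally make the lower-left block nonzero (the paper marks it with a $\bigast$), but since the matrix remains block lower-triangular this changes neither the eigenvalues nor the invariant-submanifold argument you fall back on.
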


\begin{lemma} \label{lem:explicit_pullback_xi}
	Let $q\in \Sigma_{i,j}$. Set $\chi(\alpha,\beta) = \delta + 
	\varpi_i\alpha + \varpi_j\beta$ with $\delta = \arg(d)$. Then
	\begin{equation} \label{eq:diff_equi}
		\begin{split}
			r^{\tau_i} s^{\tau_j}  \xiror
			& \in  r |d| c_{0,j} \cos(\chi(\alpha,\beta)) + 
			\mathfrak{m}^2_{\Yro,q}, \\
			r^{\tau_i} s^{\tau_j} \xiros
			& \in -s|d| c_{0,i} \cos(\chi(\alpha,\beta)) + 
			\mathfrak{m}^2_{\Yro,q}, \\
			r^{\tau_i} s^{\tau_j}  \xiroal
			& \in |d| c_{0,j} \sin(\chi(\alpha,\beta)) + (r,s) 
			\mathcal{C}^\omega_{\Yro,q},\\
			r^{\tau_i} s^{\tau_j} \xirobe
			& \in -|d|c_{0,i} \sin(\chi(\alpha,\beta)) + (r,s) 
			\mathcal{C}^\omega_{\Yro,q},
		\end{split}
	\end{equation}
	where $\mathfrak{m}_{\Yro,q}$ is the maximal ideal of the local ring of 
	real analytic functions $\mathcal{C}^{\omega}_{\Yro,q}$ and $(r,s) 
	\mathcal{C}^\omega_{\Yro,q}$ is the ideal generated by $r$ and $s$ in 
	the same local ring.
\end{lemma}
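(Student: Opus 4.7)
The plan is to unwind equation \cref{eq:xi_corner_more} in the polar coordinates $(r,\alpha,s,\beta)$ on $\Uro$, and then apply the formulas \cref{eq:xiro_corner} that express the four components of $\xiro_U$. No deep machinery is required; the proof is a bookkeeping exercise once the correct identifications are made. The only subtle point, and hence the main thing to get right, is tracking the ideal to which the ``higher order terms" of \cref{eq:xi_corner_more} belong after pullback by $\sigma$.

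\textbf{Step 1: Pass from $(u,v)$ to polar coordinates.} Substituting $u = re^{i\alpha}$, $v = se^{i\beta}$ and $d = |d|e^{i\delta}$ into \cref{eq:xi_corner_more}, the prefactor $(u/|u|)^{\varpi_i}(v/|v|)^{\varpi_j} d$ becomes $|d|e^{i\chi(\alpha,\beta)}$ with $\chi(\alpha,\beta) = \delta + \varpi_i\alpha+\varpi_j\beta$. The factors $u$ and $v$ in front of the parentheses produce $r e^{i\alpha}$ and $s e^{i\beta}$ respectively. Encapsulating the $\hot$ of \cref{eq:xi_corner_more} as elements $g_1, g_2 \in \mathfrak{m}_{Y,p}$ (they vanish at $p=(0,0)$ in $(u,\bar u, v,\bar v)$), their pullbacks $\sigma^*g_\ell$ vanish along $\{r=s=0\}$, hence lie in $(r,s)\mathcal{C}^\omega_{\Yro,q}$. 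This gives the closed-form expressions
\[
r^{\tau_i}s^{\tau_j}\sigma^*\xi^u = r|d|\,e^{i(\alpha+\chi(\alpha,\beta))}\bigl(c_{0,j}+\sigma^*g_1\bigr),\qquad
r^{\tau_i}s^{\tau_j}\sigma^*\xi^v = -s|d|\,e^{i(\beta+\chi(\alpha,\beta))}\bigl(c_{0,i}+\sigma^*g_2\bigr).
\]

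\textbf{Step 2: Apply \cref{eq:xiro_corner}.} Multiplying the first identity by $e^{-i\alpha}$ absorbs the extra phase, yielding $e^{-i\alpha} r^{\tau_i}s^{\tau_j}\sigma^*\xi^u = r|d|\,e^{i\chi(\alpha,\beta)}(c_{0,j}+\sigma^*g_1)$, and analogously for the $v$-component. Taking real and imaginary parts, and using the identities $\xiror = \Re(e^{-i\alpha}\sigma^*\xi^u)$ and $\xiroal = \Im(r^{-1}e^{-i\alpha}\sigma^*\xi^u)$ (and their $v$-analogues), I obtain
\begin{align*}
r^{\tau_i}s^{\tau_j}\xiror  &= r|d|c_{0,j}\cos\chi(\alpha,\beta) + r|d|\,\Re\!\bigl(e^{i\chi(\alpha,\beta)}\sigma^*g_1\bigr),\\
r^{\tau_i}s^{\tau_j}\xiroal &= |d|c_{0,j}\sin\chi(\alpha,\beta) + |d|\,\Im\!\bigl(e^{i\chi(\alpha,\beta)}\sigma^*g_1\bigr),
\end{align*}
together with the corresponding $\xiros$ and $\xirobe$ equalities obtained by replacing $(r,\alpha,c_{0,j},g_1)$ with $(s,\beta,-c_{0,i},g_2)$.

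\textbf{Step 3: Identify the error terms with the claimed ideals.} In the radial components the error carries an explicit factor of $r$ (resp.\ $s$) multiplied by $\sigma^*g_\ell \in (r,s)\mathcal{C}^\omega_{\Yro,q}$, so it lies in $(r,s)^2\mathcal{C}^\omega_{\Yro,q}\subset \mathfrak{m}^2_{\Yro,q}$, which is exactly what \cref{eq:diff_equi} claims. In the angular components the division by $r$ (resp.\ $s$) cancels one factor, leaving the error in $(r,s)\mathcal{C}^\omega_{\Yro,q}$, which is a subset of $\mathfrak{m}^2_{\Yro,q}+(r,s)\mathcal{C}^\omega_{\Yro,q}$ as stated.

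The hardest aspect is purely notational: keeping track of the four different components and ensuring that the ``$\hot$" in \cref{eq:xi_corner_more}, which \emph{a priori} is a polynomial-plus-remainder expression in the variables $(u,\bar u, v,\bar v)$, is correctly interpreted as lying in the maximal ideal of $\mathcal{C}^\omega_{Y,p}$ (and not merely as a formal Laurent-type remainder). Once this is granted, pulling back by $\sigma$ places everything inside $(r,s)\mathcal{C}^\omega_{\Yro,q}$, and the proof is complete.
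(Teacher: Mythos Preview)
Your proof is correct and follows essentially the same route as the paper: both substitute polar coordinates into \cref{eq:xi_corner_more}, apply the conversion formulas \cref{eq:xiro_corner}, and then observe that the pullback of the higher order terms lands in $(r,s)\mathcal{C}^\omega_{\Yro,q}$ because those terms are built from monomials in $u,\bar u,v,\bar v$ vanishing at $p$. The only cosmetic difference is that for the radial components you route the error through $(r,s)^2\subset\mathfrak{m}^2_{\Yro,q}$, whereas the paper argues directly that the error is a product of $r$ (resp.\ $s$) with something vanishing at $q$.
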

\begin{proof}
	We recall from \cref{eq:xiro_corner} 
	\[
	\xiror 
	= \Re(e^{-i\alpha} \sigma^*\xi^u),\qquad       
	\xiroal 
	= \Im(r^{-1}e^{-i\alpha} \sigma^*\xi^u).   
	\]
	Now, we look at \cref{eq:xi_corner_more} and observe
	\[
	\left(
	\frac{u}{|u|}
	\right)^{\varpi_i}
	\left(
	\frac{v}{|v|}
	\right)^{\varpi_j}
	d 
	= |d|(\cos(\chi) + i \sin(\chi))
	\]
	We compute
	\[
	\begin{split}
		r^{\tau_i} s^{\tau_j}\Re(e^{-i\alpha} \sigma^*\xi^u) 
		&= \Re(e^{-i\alpha} e^{i\chi} |d|r e^{i\alpha} (c_{0,j} + \hot)) \\
		&= \Re(|d|r e^{i\chi}(c_{0,j} + \hot)) \\
		&= |d|r c_{0,j} \cos(\chi) + \Re(r e^{i\chi} \hot)
	\end{split}
	\]
	The function $r$ and all the higher order terms vanish at $q$. This 
	settles the first line of \cref{eq:diff_equi}. On another hand, we have
	\[
	\begin{split}
		r^{\tau_i} s^{\tau_j}\Im(r^{-1}e^{-i\alpha} \sigma^*\xi^u) 
		&= \Im(r^{-1}e^{-i\alpha} |d| e^{i\chi} r e^{i\alpha} (c_{0,j} + 
		\hot)) \\
		&= \Im (|d| e^{i\chi} (c_{0,j} + \hot))) \\
		&=|d|c_{0,j}\sin(\chi) + \Im(e^{i\chi} \hot)
	\end{split}
	\]
	Each higher order term is divisible by $r$ or $s$, this is because the 
	higher order terms in \cref{eq:xi_corner_more} are monomials in 
	$u,\bar{u},v$ and $\bar{v}$. This settles the third line of 
	\cref{eq:diff_equi}. The other two lines are completely analogous.
\end{proof}

\begin{proof}[Proof of \Cref{lem:noninv_corner}]
	
	Given that $U$ has been chosen small, these functions all vanish 
	precisely
	when $r=s=0$ and $\chi \in \Z\pi$, proving \cref{it:noninv_corner_ext}.
	
	If $q \in \Sigma_{i,j}$, then \Cref{lem:explicit_pullback_xi}
	shows that the linear terms of $\xiror$ and $\xiros$ are
	$|d|r c_{0,j}$ and $-|d|s c_{0,i}$, respectively. Furthermore, we have
	\[
	\partial_\alpha \sin(\chi) = \varpi_i \cos(\chi), \qquad
	\partial_\beta \sin(\chi) = \varpi_j \cos(\chi).
	\]
	As a result, the Hessian of $\xiro_U$ at $q \in \Sigma_{i,j}^\pm$ is
	\begin{equation} \label{eq:hessian}
		\pm |d| \cdot
		\left(
		\begin{matrix}
			c_{0,j} & 0 & 
			\multicolumn{2}{c}{\multirow{2}{*}{\scalebox{2}{$0$}}} \\
			0 & -c_{0,i} & \multicolumn{2}{c}{} \\
			\multicolumn{2}{c}{\multirow{2}{*}{\scalebox{2}{$\bigast$}}}
			& c_{0,j}\varpi_i & c_{0,j}\varpi_j \\
			\multicolumn{2}{c}{}
			& -c_{0,i}\varpi_i & -c_{0,i}\varpi_j
		\end{matrix}
		\right).
	\end{equation}
	The lower right block of the above matrix has rank $1$. Therefore its 
	only non-zero eigenvalue equals its trace $ c_{0,j}\varpi_i 
	-c_{0,i}\varpi_j$. This number is positive by \Cref{cor:ups_pol}.
	In particular, the Hessian has three nonzero real eigenvalues,
	which proves \cref{it:noninv_corner_elem}, since $\dim_\R \Sigma_{i,j} 
	= 1$.
	
	For \cref{it:noninv_corner_plu}, let $q \in \Sigma_{i,j}^+$.
	Then from the previous discussion we have that \cref{eq:hessian} has 
	two positive eigenvalues and one negative eigenvalue. As a result, 
	following Kelley \cite[Theorem 1]{kelley_stable}, $\xiro_U$ has a 
	unique $2$-dimensional unstable manifold and a unique $1$-dimensional 
	stable manifold.
	The submanifold $\Dro_{i,\theta}$, given by $r = 0$ and $\chi = 0$, is 
	an invariant  submanifold and, again,  it is unstable since the 
	restriction of the Hessian \cref{eq:hessian} to $T_q \Dro_{i,\theta}$ 
	is positive definite. A similar observation gives that the restriction 
	of the Hessian to $T_q \Dro_{j,\theta}$ has one positive and one 
	negative eigenvalue. The statement \cref{it:noninv_corner_min} is 
	analogous to the previous one. 
\end{proof}

\begin{figure}
	\centering
	\includegraphics[scale=1.5]{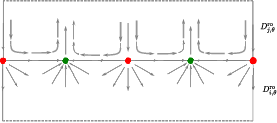}
	\caption{
		Trajectories of the scaled vector field along
		the boundary, near the corner locus.
		The winding number on $\Dro_{j,\theta}$ around the boundary
		component seen here, is computed by a path going from left
		to right in the upper half of the figure, and so coincides
		with the number of red (or green) points.
		On $\Dro_{i,\theta}$ we find the same number with the opposite sign.
		\label{fig:corner_sings}}
\end{figure}

\begin{notation}\label{not_red_green}
	Take a connected component of some $\Dro_{i,j,\theta}$ with $\varpi_i$ 
	or $\varpi_j$ not zero and such that $j \to i$. A point of 
	$\Sigma_{i,j}^+$ will be called a
\index{red point}
{\em red point},
that is, a red point 
	looks like half a
\index{fountain!half}
fountain
on $\Dro_{i,\theta}$ and half a
\index{saddle!half}
 saddle point 
	on $\Dro_{j,\theta}$. Analogously, a point of $\Sigma_{i,j}^-$ will be 
	called a
\index{green point}
{\em green point},
that is, a green point looks like half a 
	sink on $\Dro_{i,\theta}$ and half a saddle point on $\Dro_{j,\theta}$. 
	See \cref{fig:corner_sings} for a drawing of the vector field 
	$\xiro_{i,j,\theta}$.
\end{notation}
\section{The strict transform} \label{ss:strict_tranform}

In this subsection we rescale the pull-back $\pipol^*\xi$ in a neighborhood
of a component $D_a$ of the strict transform $\Cpol$, for some $a \in \A$.
With the right rescaling, this vector field looks like a Morse-Bott
attractor along $D_a$. As a result, the flow of the vector field
retracts a neighborhood onto $D_a$, and, as we shall see,
the strict transform $\overline{\pipol^{-1}(S \setminus \{0\})} \subset 
\Ypol$
of the total spine does not intersect a neighborhood of $\Cpol$.

\begin{block} \label{block:str}
	Let $a \in \A$ be an arrowhead in the graph $\Gapol$, and let $i \in \V$
	be the unique neighbor of $a$ in $\Gapol$. Then $D_i$ is an exceptional
	component, and $D_a$ is a component of the strict transform
	$\Cpol \subset \Ypol$. These divisors intersect in a unique point,
	choose a small coordinate chart $U$ around this point.
	We can assume that the coordinates $u,v$ in $U$ satisfy
	\[
	\pi^* x|_U = u^{c_{0,i}}, \qquad
	\pi^* f|_U = u^{m_i} v^{m_a}.
	\]
	Since $\pipol$ restricts to an isomorphism outside the exceptional 
	divisor,
	and $\xi$ is a well defined vector field outside the curve $C$,
	we have a well defined vector field $\pipol^*\xi$ on the set
	$U \setminus D = U \setminus (D_i \cup D_a)$.
	
	After choosing a small neighborhood $U$,
	we can choose $\epsilon$ and $\eta$ small enough that we can assume
	that $D_a \cap \pipol^{-1}(\Tu(\epsilon,\eta)) \subset U$.
\end{block}

\begin{lemma} \label{lem:str_tr}
	The vector field
	\begin{equation} \label{eq:str}
		|u|^{\tau_i} |v|^2 \cdot  \pipol^*\xi|_{U \setminus D}
	\end{equation}
	extends to an analytic vector field on $U$.
	Furthermore,
	\begin{enumerate}
		\item
		the singular set of this extension is the submanifold $U \cap D_a$,
		\item
		the Hessian of this extension has strictly negative real 
		eigenvalues along
		the normal bundle of the singular set.
	\end{enumerate}
\end{lemma}

\begin{proof}
	By the definition of a pull-back of a vector field, we find
	\[
	\pipol^*\xi|_{U\setminus D}
	=
	\left(
	\begin{matrix}
		\xi^u \\
		\xi^v
	\end{matrix}
	\right)
	=
	-(\Jac \pipol)^{-1}
	\cdot
	\left(
	\begin{matrix}
		\bar f_x / \bar f \\
		\bar f_y / \bar f
	\end{matrix}
	\right)
	=
	- \Jac \pipol^{-1} \cdot
	(\Jac \pipol^{-1})^*
	\cdot
	\left(
	\begin{matrix}
		m_i \bar u^{-1} \\
		m_a \bar v^{-1}
	\end{matrix}
	\right)
	\]
	We recall that in this context $(\Jac \pipol^{-1})^*$ stands for the 
	hermitian 
	adjoint of $\Jac \pipol^{-1}$. Here we use the assumption that $f = 
	u^{m_i} 
	v^{m_a}$
	is a monomial, and so its log derivatives
	are easily computed:
	\[
	f_u / f = m_i u^{-1},\qquad
	f_v / f = m_a v^{-1}.
	\]
	A computation gives (note that $x_v = 0$ and so $\det\Jac\pipol = x_u 
	y_v$)
	\[
	\Jac \pipol^{-1}
	(\Jac \pipol^{-1})^*
	=
	\dfrac{1}{|x_u y_v|^2}
	\left(
	\begin{matrix}
		|y_v|^2 &
		-y_v \bar y_u \\
		-y_u \bar y_v &
		|x_u|^2 + |y_u|^2
	\end{matrix}
	\right)
	=
	\dfrac{1}{|y_v|^2}
	\left(
	\begin{matrix}
		\left|\dfrac{y_v}{x_u}\right|^2 &
		-\dfrac{y_v}{x_u} \dfrac{\bar y_u}{\bar x_u} \\[1.1em]
		-\dfrac{y_u}{x_u} \dfrac{\bar y_v}{\bar x_u} &
		1 + |\dfrac{y_u}{x_u}|^2
	\end{matrix}
	\right).
	\]
	As a result, since $\tau_i = 2c_{1,i}$ because $i$ is invariant (recall 
	\Cref{rem:formula_ci} and \Cref{def:polar}), we find
	\begin{equation} \label{eq:str_vector}
		|u|^{\tau_i} |v|^2 \pipol^*\xi
		=
		v
		\cdot
		\left|\frac{u^{c_{1,i}}}{y_v}\right|^2
		\cdot
		\left(
		\begin{matrix}
			-m_i\dfrac{\bar v}{\bar u}\left|\dfrac{y_v}{x_u}\right|^2
			+
			m_a\dfrac{y_v}{x_u} \dfrac{\bar y_u}{\bar x_u} \\[1.1em]
			m_i\dfrac{\bar v}{\bar u}\dfrac{y_u}{x_u} \dfrac{\bar y_v}{\bar 
				x_u}
			-
			m_a - m_a\left|\dfrac{y_u}{x_u}\right|^2
		\end{matrix}
		\right).
	\end{equation}
	Here, the factor $\left|\frac{u^{c_{1,i}}}{y_v}\right|^2$ is a positive
	unit, since
	$y_v$ vanishes with order $c_{1,i}$ along $D_i$ (see \cref{block:c_1}).
	In fact, since $y_v$ and $y_u$ both vanish with order strictly higher
	than $x_u$, all the summands in each coordinates of the vector to the 
	right
	in \cref{eq:str_vector} are real analytic functions.
	As a result, \cref{eq:str} has a real analytic extension.
	The factor $v$ in \cref{eq:str_vector} guarantees that the extension
	vanishes along $D_i$.
	In fact, the extension of both functions
	\[
	|u|^{\tau_i} |v|^2 \xi^u,\qquad
	|u|^{\tau_i} |v|^2 \xi^v
	\]
	vanish along $D_a$, and the latter has the linear term $v$ with
	coefficient given by
	\[
	-m_a - m_a\left|\frac{y_u}{x_u}\right|^2 < 0
	\]
	at each point in $D_a$.
	It follows that the extension of \cref{eq:str} vanishes precisely along
	$D_a$, and that the Hessian has negative eigenvalues along the normal
	bundle.
\end{proof}

\begin{lemma}
	Let $\Uro = \sigma^{-1}(U) \subset \Yropol$ with polar coordinates
	$(r,s,\alpha,\beta)$ so that $u = re^{i\alpha}$ and $v = se^{i\beta}$.
	The vector field
	\begin{equation} \label{eq:strro}
		r^{\tau_i} s (\piropol)^*\xi,
	\end{equation}
	which is well defined on $\Uro \setminus \partial\Uro$,
	extends to a real analytic vector field on $\Uro$ which does not vanish
	on $\Uro$ and is tangent to $\Dro_i$ and is transverse to $\Dro_a$, 
	pointing
	outwards there.
\end{lemma}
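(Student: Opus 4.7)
The plan is to build on the previous lemma by using the formulas \cref{eq:xiro_corner} that express polar-coordinate components of a pull-back vector field in terms of its $(u,v)$-components. The key observation is that, in the formula \cref{eq:str_vector} from the previous proof, both complex components of the real-analytic extension $W = |u|^{\tau_i}|v|^2\,\pipol^*\xi$ carry an explicit factor of $v$. Writing $W^u = vA$ and $W^v = vB$ with $A,B$ real analytic on $U$, the pull-back to $\Uro$ satisfies $\sigma^*W^u = s\,e^{i\beta}\sigma^*A$ and $\sigma^*W^v = s\,e^{i\beta}\sigma^*B$, so one power of $s$ can be cancelled from the scaling; this suggests that $r^{\tau_i}s\,\piropol^*\xi = s^{-1}\sigma^*W$ is the natural extension.

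First I would verify analyticity by computing the four polar-coordinate components:
\begin{equation*}
\begin{aligned}
r^{\tau_i}s\,\xiror &= \Re(e^{i(\beta-\alpha)}\sigma^*A), &
r^{\tau_i}s\,\xiros &= \Re(\sigma^*B), \\
r^{\tau_i}s\,\xiroal &= r^{-1}\Im(e^{i(\beta-\alpha)}\sigma^*A), &
r^{\tau_i}s\,\xirobe &= s^{-1}\Im(\sigma^*B).
\end{aligned}
\end{equation*}
The radial components are analytic without further work. The $\xirobe$ component is analytic because $B|_{v=0} = -m_a(1+|y_u/x_u|^2)\cdot|u^{c_{1,i}}/y_v|^2$ is real (the only term of $B$ that is not obviously real carries a factor of $\bar v$), so $\Im(\sigma^*B)$ vanishes along $s=0$. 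The $\xiroal$ component is analytic once one checks that $A|_{u=0}=0$; from the explicit shape of $A$ in \cref{eq:str_vector}, its first summand carries a factor of $u$ (after simplifying $\bar v/\bar u\cdot|u|^{2(c_{1,i}-c_{0,i}+1)}$ to $\bar v u|u|^{2(c_{1,i}-c_{0,i})}$), and its second summand carries a factor of $u^{c_{1,i}}$ coming from the cancellation of $|u|^{2c_{1,i}}/\bar y_v$, both of which vanish along $u=0$ since $c_{1,i}\geq 1$.

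Next I would read off the geometric properties from the same formulas. Tangency to $\Dro_i$ follows from $(r^{\tau_i}s\,\xiror)|_{r=0} = \Re(e^{i(\beta-\alpha)}A|_{u=0})=0$, which is immediate from $A|_{u=0}=0$. Transversality to $\Dro_a$ and the outward direction follow from $(r^{\tau_i}s\,\xiros)|_{s=0} = B|_{v=0}<0$; since $s\geq 0$ on $\Uro$ with $s=0$ defining $\Dro_a$, the strictly negative $\partial_s$-component shows that the flow points out of the manifold with corners along $\Dro_a$. This same sign also proves non-vanishing along $\Dro_a$. Non-vanishing on $\Dro_i\setminus\Dro_a$ is inherited from the previous lemma, which guarantees $W\neq 0$ on $D_i\setminus\{p\}$: because $\sigma$ is a principal $S^1$-bundle over $D_i^\circ$ and $s>0$ on this stratum, the positive rescaling $r^{\tau_i}s\,\piropol^*\xi = s^{-1}\sigma^*W$ cannot vanish there. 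Finally, non-vanishing in the interior $\Uro\setminus\partial\Uro$ is just non-vanishing of $\xi$ on $\Tu^*$.

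The main technical step is verifying $A|_{u=0}=0$, which requires a careful bookkeeping of vanishing orders of mixed holomorphic/antiholomorphic quantities ($\bar v/\bar u$, $y_v$, $\bar y_v$, $x_u$, etc.) using the expansion from \cref{block:c_1}, in particular the identities $\ord_u y_v = c_{1,i}$, $\ord_u x_u = c_{0,i}-1$ and $c_{1,i}\geq c_{0,i}$. Everything else is essentially a formal consequence.
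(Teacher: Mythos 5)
Your proposal is correct and follows essentially the same route as the paper: both arguments start from the explicit formula \cref{eq:str_vector}, feed it into the polar-coordinate component formulas \cref{eq:xiro_corner}, and deduce analyticity of the four components from divisibility by $r$ and $s$, with tangency, transversality and non-vanishing read off from the vanishing of the $u$-component along $\{u=0\}$ and the strict negativity of the (real) restriction $B|_{v=0}$. Your factoring $W=v\cdot(A,B)$ and checking $A|_{u=0}=0$ and $\Im(B)|_{v=0}=0$ directly is just a repackaging of the paper's statements that $|u|^{\tau_i}|v|^2\xi^u$ is divisible by both $u$ and $v$ and that the only order-one term of $|u|^{\tau_i}|v|^2\xi^v$ in $v$ is a real multiple of $v$.
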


\begin{proof}
	The vector field
	\[
	(\piropol)^*\xi|_{\Uro\setminus \partial \Uro}
	=
	\left(
	\begin{matrix}
		\xiror \\
		\xiroal \\
		\xiros \\
		\xirobe
	\end{matrix}
	\right)
	\]
	is the pull-back of $\pipol^*\xi|_{U\setminus D}$ to
	$\Uro\setminus \partial\Uro$, where the coordinates are given by
	\cref{eq:xiro_corner}.
	Since $\xi_U$ is tangent to $D_i$, the analytic function
	$|u|^{\tau_i}|v|^2\xi^u$ is
	divisible by $u$. Furthermore, by \cref{eq:str_vector},
	$|u|^{\tau_i}|v|^2\xi^u$ is
	also divisible by $v$. As a result, $\sigma^*(|u|^{\tau_i}|v|^2\xi^u)$
	is divisible by both $r$ and $s$, and so the functions
	\[
	\begin{split}
		r^{\tau_i} s \xiror
		&= s^{-1} \Re(e^{-i\alpha} \sigma^*(|u|^{\tau_i}|v|^2\xi^u)) \\
		r^{\tau_i} s \xiroal
		&= s^{-1} r^{-1} \Im(e^{-i\alpha} \sigma^*(|u|^{\tau_i}|v|^2\xi^u))
	\end{split}
	\]
	extend to real analytic functions on $\Uro$.
	Similarly, we have
	\[
	\begin{split}
		r^{\tau_i} s \xiros
		&= s^{-1} \Re(e^{-i\alpha} \sigma^*(|u|^{\tau_i}|v|^2\xi^v)) \\
		r^{\tau_i} s \xirobe
		&= s^{-2} \Im(e^{-i\alpha} \sigma^*(|u|^{\tau_i}|v|^2\xi^v))
	\end{split}
	\]
	The analytic function $|u|^{\tau_i} |v|^2 \xi^v$ is divisible by
	$v$, and so $\Re(e^{-i\alpha} \sigma^*(|u|^{\tau_i} |v|^2) \xi^v)$ is
	divisible by $s$. As a result, $r^{\tau_i} s \xiros$ extends
	as an analytic function.
	
	In a similar way, the function
	$\Im(e^{-i\alpha} \sigma^*(|u|^{\tau_i} |v|^2 \xi^v))$
	is divisible by $s$. But the only term in the expansion
	of $|u|^{\tau_i} |v|^2 \xi^v$ which has order $1$ in $v$ is a
	real multiple of the variable $v$, it follows that
	$\Im(e^{-i\alpha} \sigma^*(|u|^{\tau_i} |v|^2 \xi^v))$
	is divisible by $s^2$. As a result, $r^{\tau_i} s \xirobe$ has
	an analytic extension to $\Uro$ which is never $0$.
	
	We have seen that the function $r^{\tau_i} s \xiror$ is divisible
	by $r$, and so vanishes along the boundary piece $\Dro_i \cap \Uro$.
	As a result, the vector field $r^{\tau_i} s (\piropol)^*\xi$ is
	tangent to $\Dro_i \cap \Uro$.
	
	Similarly, since $\xi^v$ has linear part which is a negative
	multiple of $v$, the function
	\[
	r^{\tau_i} s \xiros
	= s^{-1} \Re(e^{-i\alpha} \sigma^*(|u|^{\tau_i}|v|^2\xi^v))
	\]
	takes negative values along the set defined by $s=0$. This means that
	the extension of $r^{\tau_i} s (\piropol)^*\xi$ is transverse
	to the boundary piece $\Dro_a$ and points outwards.
\end{proof}

\begin{definition}
	With $U$ as in \cref{block:str}, and $\Uro = \sigma^{-1}(U)$, denote by
	\[
	\xi_U,\qquad \xiro_U
	\]
	the unique extensions of the vector fields
	\cref{eq:str} and \cref{eq:strro} to $U$ and $\Uro$, respectively.
\end{definition}

\begin{block}
	For $p \in U$, denote by $\gamma_p$ the trajectory of $\xi_U$ starting
	at $p$. If $p \in U \setminus D$, then $\gamma_p$ is, up to 
	reparametrization,
	the trajectory of $\xi$ pulled back to $U$ via $\pipol$.
\end{block}

\begin{cor}
	Let $U$ be as in \cref{block:str}. Then, there exists a smaller
	neighborhood $V \subset U$ satisfying
	\begin{enumerate}
		\item
		if $p \in V$, then $\gamma_p$ does not escape $U$,
		\item
		if $p \in V \cap D_i$, then $\gamma_p$ converges to the unique 
		intersection
		point in $D_i \cap D_a$,
		\item
		if $p \in V \setminus D_i$, then $\gamma_p$ converges to a point in
		$D_a \setminus D_i$.
		
	\end{enumerate}
\end{cor}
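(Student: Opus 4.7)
The overall strategy is to exploit the Morse--Bott attractor structure of $\xi_U$ along $D_a$ together with the invariance of $D_i$. By the previous lemma, $\xi_U$ vanishes precisely on $D_a \cap U$ and its Hessian has strictly negative real eigenvalues in the normal direction; moreover $D_i$ is an invariant submanifold. The plan is to construct $V$ as a sufficiently small neighborhood of the compact set $D_a \cap \piropol^{-1}(\Tu)$ so that a tubular-neighborhood/Morse--Bott argument forces every trajectory in $V$ to converge to some point of $D_a$, and then to use the algebraic form of $\xi_U$ to decide \emph{which} point.

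First, I would invoke standard results on normally hyperbolic attractor sets (for instance, the Hadamard--Perron or Morse--Bott local stable manifold theorem): because the normal Hessian along $D_a$ has negative real eigenvalues, there is an open neighborhood $W \subset U$ of $D_a \cap \piropol^{-1}(\Tu)$ and a smooth Lyapunov-type function (the square of the distance to $D_a$ relative to the normal bundle works, because $\xi_U$ has a negative definite normal component), which decreases strictly along nonstationary trajectories in $W$. Shrinking $W$, one obtains that trajectories starting in $W$ remain in $W$ and converge to a point of $D_a$. Take $V \subset W$ with $\overline V \subset U$; this already gives (i) and (ii): on $D_i$ the restricted vector field $\xi_U|_{D_i}$ has, in a small enough neighborhood of $q_0 := D_i \cap D_a$, a unique zero at $q_0$ which is a sink (the restriction of the normal Hessian is negative), hence every trajectory starting in $V \cap D_i$ stays in $D_i$ by invariance and converges to $q_0$.

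The delicate part, which I expect to be the main obstacle, is (iii): showing that a trajectory $\gamma_p$ starting at $p \in V \setminus D_i$ converges to a point of $D_a \setminus D_i$ rather than to the corner point $q_0$. Since $D_i = \{u=0\}$ is invariant, $\gamma_p$ never reaches $D_i$, but it could \emph{accumulate} on $q_0$ through $Y \setminus D_i$. To rule this out, I would use the explicit divisibility properties of \eqref{eq:str_vector}: the extended vector field $\xi_U$ vanishes on $D_a = \{v=0\}$, so both components are divisible by $v$; and because $\xi_U$ is tangent to $D_i$, the component $\xi^u$ is also divisible by $u$. Thus we may write, in the chart of $U$,
\[
  \xi^u = uv\,\tilde A(u,v), \qquad \xi^v = v\,B(u,v),
\]
with $\tilde A,B$ real analytic on $U$ and $B(u,0)<0$. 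Along a trajectory starting at $p=(u_0,v_0) \in V \setminus D_i$, the $v$-coordinate decays exponentially to $0$ by the Morse--Bott attractor argument (since $B$ is strictly negative near $D_a$ on a compact neighborhood), so $v(t) = O(e^{-ct})$ for some $c>0$. Then
\[
  \frac{d}{dt}\log|u(t)|^2 \;=\; 2\Re\!\bigl(v(t)\tilde A(u(t),v(t))\bigr),
\]
and the right hand side is bounded in absolute value by $C|v(t)|$ for some constant $C$ on $\overline V$, hence integrable on $[0,\infty)$. Therefore $\log|u(t)|$ has a finite limit, so $u(t) \to u_\infty \neq 0$, and $\gamma_p$ converges to $(u_\infty,0) \in D_a \setminus D_i$.

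Finally, to make (i)--(iii) hold simultaneously, one just picks $V$ small enough that the Lyapunov estimate of step one controls both excursions outside $U$ and the $u$-integrability estimate of step three uniformly on $\overline V$; the compactness of $D_a \cap \piropol^{-1}(\Tu) \subset U$ ensures that such a $V$ exists.
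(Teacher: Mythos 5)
Your proof is correct and follows exactly the route the paper intends: the corollary is stated with no proof (it is taken to be immediate from the preceding lemma's Morse--Bott attractor structure and the divisibility of the components of \cref{eq:str_vector} by $v$ and, for $\xi^u$, also by $u$), and your argument supplies precisely those details. In particular, your treatment of (iii) via the integrability of $\Re(v\tilde A)$ along the trajectory, which bounds the total variation of $\log|u|$ and rules out accumulation at the corner $D_i\cap D_a$, is the right way to close the only non-obvious step.
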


\begin{cor}\label{cor:strict_trans}
	There exists an open set $V \subset \Ypol$ containing $\Cpol$ such that
	\[
	\pushQED{}
	\overline{\pipol^{-1}(S) \setminus \{0\}} \cap V = \emptyset.
	here
	\popQED
	\]
\end{cor}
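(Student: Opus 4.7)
The plan is to reduce the corollary to showing that $\overline{\pipol^{-1}(S) \setminus \{0\}}$ and $\Cpol$ are disjoint as subsets of $\Ypol$, which then gives the neighborhood $V$ by taking a union of locally constructed open sets. Concretely, I would prove: for every $p \in \Cpol$ there is an open neighborhood $V_p \subset \Ypol$ of $p$ with $V_p \cap \pipol^{-1}(S \setminus \{0\}) = \emptyset$; since $V_p$ is open, elementary point-set topology then gives $V_p \cap \overline{\pipol^{-1}(S) \setminus \{0\}} = \emptyset$, and $V = \bigcup_{p\in\Cpol} V_p$ is the desired open set.

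The construction of $V_p$ splits into two cases depending on whether $p \in \Cpol$ lies on the exceptional divisor. If $p \in \Cpol \setminus D$, then $\pipol$ is a local diffeomorphism at $p$ and $\pipol(p) \in C \setminus \{0\}$; since $\rho(\pipol(p)) = \pipol(p) \neq 0$ by the definition of $\rho$ on $C$, continuity of $\rho$ (\Cref{lem:col_prop}\cref{it:col_prop_prop}) yields a neighborhood $W \ni \pipol(p)$ with $0 \notin \rho(W)$, i.e.\ $W \cap S = \emptyset$, and we set $V_p = \pipol^{-1}(W)$. If $p \in \Cpol \cap D$, then $p$ is the unique intersection point $D_i \cap D_a$ of an arrowhead component $D_a$ with its exceptional neighbor $D_i$. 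Here I invoke the previous corollary on Morse--Bott convergence to $D_a$: for the shrunken neighborhood $V_p \subset U$ produced there, every trajectory of $\xi_U$ beginning in $V_p \setminus D_i$ converges to a point $q' \in D_a \setminus D_i$, whence $\pipol(q') \in C_a \setminus \{0\}$. Since $\xi_U$ and $\pipol^*\xi$ differ on $U \setminus D$ only by the positive scalar $|u|^{\tau_i}|v|^2$, they share the same orbits off $D$, and the continuity of $\rho$ forces $\rho(\pipol(q)) = \pipol(q') \neq 0$ for every $q \in V_p \setminus (D \cup \Cpol)$; for $q \in V_p \cap D$ we have $\pipol(q) = 0 \notin S \setminus \{0\}$, and for $q \in V_p \cap (\Cpol \setminus D)$ the trajectory is constant with $\rho(\pipol(q)) = \pipol(q) \in C\setminus\{0\}$. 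In all subcases $q \notin \pipol^{-1}(S \setminus \{0\})$.

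The main subtle point I expect to have to verify carefully is the identification of $\omega$-limits of $\xi_U$ on $\Ypol$ with the limits of the original $\xi$-trajectories in $\Tu$. The rescaling $|u|^{\tau_i}|v|^2$ is not positive on $D$, and the orbit of $\pipol^*\xi$ could in principle hit $D_a$ in finite or infinite time; what matters is only that $\pipol$-images of the orbits accumulate at $\pipol(q') \in C\setminus\{0\}$, which is guaranteed by continuity of $\pipol$ together with continuity of $\rho$. Once this is handled, the rest is a bookkeeping argument combining the two local constructions of $V_p$ and the elementary fact that an open set disjoint from a set is automatically disjoint from its closure.
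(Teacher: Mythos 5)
Your argument is correct and is essentially the paper's intended one: the corollary is stated with an inline $\blacksquare$ precisely because it follows from the preceding Morse--Bott corollary, whose neighborhood $V$ already contains all of $\Cpol$ thanks to the arrangement in \cref{block:str} that $D_a \cap \pipol^{-1}(\Tu(\epsilon,\eta)) \subset U$, so that every trajectory starting in $V\setminus D$ converges to a point of $D_a\setminus D_i$ and hence its image converges to a point of $C\setminus\{0\}$, not to $0$. Your only deviations are cosmetic: you cover $\Cpol\setminus D$ by continuity of $\rho$ instead of by the inclusion $D_a\subset U$, and the identification $\rho(\pipol(q))=\pipol(q')$ that you flag is really forced by the uniqueness of the {\L}ojasiewicz limit of the $\xi$-trajectory (plus continuity of $\pipol$) rather than by continuity of $\rho$ itself, exactly as you resolve it in your final paragraph.
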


\section{Poincar\'e-Hopf indices}

If $ij$ is an invariant vertex, then the vector field $\xi_U$
is tangent to the disks $D_i \cap U$ and $D_j \cap U$, and does not
vanish outside the intersection point.
As a result, restricting the vector field to either disk produces
an
index, see \Cref{not:index_vf}.

\begin{lemma} \label{lem:int_ind}
	\begin{enumerate}
		\item \label{it:int_ind_inv}
		Assume that $ij$ is an invariant edge.
		Then the restriction $\xi_U|_{D_i\cap U}$ has an isolated 
		singularity
		at $D_i \cap D_j$ of index one.
		\item \label{it:int_ind_i} The set
		$\Droc_{i,\theta} \cap \Uro$ consists of $\gcd(m_i,m_j)$ punctured 
		disks.
		At each puncture, the restriction of $\xiro_U$ has index
		\[
		\frac{1}{\gcd(m_i,m_j)}
		\left|
		\begin{matrix}
			\varpi_i & m_{i} \\
			\varpi_j & m_{j} \\
		\end{matrix}
		\right|
		+ 1.
		\]
		\item \label{it:int_ind_j}
		The set
		$\Droc_{j,\theta} \cap \Uro$ consists of $\gcd(m_i,m_j)$ punctured 
		disks.
		On each disk, the restriction of $\xiro_U$ has index
		\[
		\frac{-1}{\gcd(m_i,m_j)}
		\left|
		\begin{matrix}
			\varpi_i & m_{i} \\
			\varpi_j & m_{j} \\
		\end{matrix}
		\right|
		+ 1.
		\]
	\end{enumerate}
\end{lemma}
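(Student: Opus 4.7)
For part \cref{it:int_ind_inv}, I invoke \Cref{lem:elementary_invariant}: after a change of coordinates, $\xi_U = (dc_{0,j}u,\ -dc_{0,i}v)^\top$ with $d \in \R_{>0}$, where $D_i=\{u=0\}$ and $D_j=\{v=0\}$. Restricting to $D_i$ yields the holomorphic vector field $-dc_{0,i}\,v\,\partial_v$ on the Riemann surface $D_i$, a sink at $p = D_i\cap D_j$ with a single negative real eigenvalue. Its Poincar\'e--Hopf index at $p$ equals $+1$.

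For parts \cref{it:int_ind_i} and \cref{it:int_ind_j}, I first determine the component structure. The restriction $\sigma|_{\Droc_{i,\theta}}:\Droc_{i,\theta}\to D_i^\circ$ is a degree-$m_i$ covering, and near the puncture $p$ the monodromy along a small loop encircling $p$ translates the fiberwise angular coordinate $\alpha$ by $-2\pi m_j/m_i \pmod{2\pi}$; this comes from the identity $\arg(\fro) = m_i\alpha + m_j\beta + \arg a + \hot$ and the constraint $\arg\fro = \theta$. The generated $\Z/m_i\Z$-action has $\gcd(m_i,m_j)$ orbits of length $m_i':=m_i/\gcd(m_i,m_j)$, so the preimage of a small punctured disk at $p$ splits into $\gcd(m_i,m_j)$ components, each an $m_i'$-fold cyclic cover of it, i.e.\ a punctured disk. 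The statement on the $D_j$-side is symmetric with $m_j':=m_j/\gcd(m_i,m_j)$.

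The main computation is the index at one such puncture. On the $D_i$-side, introduce a complex coordinate $w$ on a sheet compatible with the cyclic cover $v = w^{m_i'}$, where $v$ is the complex coordinate on $D_i$ centered at $p$; equivalently $s = |w|^{m_i'}$ and $\beta = m_i'\arg w$. From \Cref{lem:explicit_pullback_xi}, the restriction of $\xiro_U$ to $\Droc_{i,\theta}$, viewed via $\sigma$ as a vector field in the coordinate $v$ on $D_i^\circ$, reads $V_v\partial_v = -s|d|c_{0,i}\,e^{i(\chi+\beta)}\partial_v + \hot$. The constraint $\alpha \equiv (\theta - m_j\beta - \arg a)/m_i \pmod{2\pi}$ yields $\chi = \mathrm{const} - D\beta/m_i + \hot$ with $D := \det\bigl(\begin{smallmatrix}\varpi_i & m_i\\ \varpi_j & m_j\end{smallmatrix}\bigr)\le 0$ (by \Cref{cor:ups_pol} and \Cref{cor:Gapol_ah}). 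Converting to the sheet coordinate via $\partial_v = (m_i'w^{m_i'-1})^{-1}\partial_w$ collapses the exponent to
\[
  \chi + \beta - (m_i'-1)\arg w
  = \mathrm{const} + \bigl(1 + |D|/\gcd(m_i,m_j)\bigr)\arg w,
\]
so the vector field on the sheet takes the form
\[
  -\tfrac{|d|c_{0,i}}{m_i'}\,|w|\,e^{i\kappa\arg w}\,\partial_w + \hot,
  \qquad \kappa := 1 + |D|/\gcd(m_i,m_j)\in\Z_{>0}.
\]
Its Poincar\'e--Hopf index at $w=0$ equals the total winding of its angle, namely $\kappa$, proving \cref{it:int_ind_i}.

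The proof of \cref{it:int_ind_j} is identical in structure, replacing $v$ by the complex coordinate $u$ on $D_j$ centered at $p$ and introducing a sheet coordinate $w'$ with $u = (w')^{m_j'}$. The constraint now yields $\chi = \mathrm{const} + D\alpha/m_j$, and the same manipulation produces an exponent $1 + D/\gcd(m_i,m_j) = 1 - |D|/\gcd(m_i,m_j)$, giving the announced sign change in the index. A sanity check in the invariant case $D=0$: both formulas return $+1$, in agreement with part \cref{it:int_ind_inv}, since an $m_i'$-fold cyclic cover of a holomorphic sink is again a holomorphic sink. I anticipate the main obstacle to be the careful bookkeeping of phases through the branched cover and ensuring that the winding of the angle of the vector field in $\partial_w$ is correctly identified with its Poincar\'e--Hopf index (compare \Cref{ex:ind_k}).
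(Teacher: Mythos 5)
Your part (i) is fine and matches the paper's argument in substance, and your component count via the monodromy of $\alpha$ is correct. For parts (ii) and (iii) you take a genuinely different route from the paper: the paper contracts each circle of $\Dro_{i,j,\theta}$ to a point, counts the red and green points via \Cref{lem:transversality_number}, and reads off the index from the pronged/petal models of \Cref{ex:ind_k}, whereas you compute the total winding of the vector field in a uniformizing coordinate $w$ of the cyclic cover. Your method is more self-contained and makes the covering-degree bookkeeping explicit, which is a real advantage here.

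There is, however, a load-bearing sign error. For a noninvariant edge oriented from $j$ to $i$ the determinant $D=\det\bigl(\begin{smallmatrix}\varpi_i & m_i\\ \varpi_j & m_j\end{smallmatrix}\bigr)=\varpi_i m_j-\varpi_j m_i$ is strictly \emph{positive}, not $\leq 0$: the proof of \Cref{lem:transversality_number} deduces $m_i\varpi_j-m_j\varpi_i<0$ from \Cref{cor:ups_pol} and \Cref{cor:Gapol_ah}, and indeed $D/\gcd(m_i,m_j)$ is the (positive) number of red points per corner circle. With the correct sign, your own chain of equalities produces the exponent $1-D/\gcd(m_i,m_j)$ at the $\Droc_{i,\theta}$-side puncture and $1+D/\gcd(m_i,m_j)$ at the $\Droc_{j,\theta}$-side one, i.e.\ the two formulas of (ii) and (iii) interchanged; you land on the stated answers only because the false inequality $D\leq 0$ compensates the sign of the phase $e^{i(\varpi_i\alpha+\varpi_j\beta)}$ that you imported from \cref{eq:xi_corner_more}. (Recomputing that phase from \cref{block:corners} — the exponents $u^{1-c_{1,i}+\tau_i/2}\bar u^{p_i-m_i+\tau_i/2}=u^{1-\varpi_i/2}\bar u^{\varpi_i/2}$ — yields $e^{-i(\varpi_i\alpha+\varpi_j\beta)}$, which restores the correct assignment.) As written, your derivation therefore does not actually determine which side receives $N+1$ and which receives $1-N$. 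The assignment is pinned down independently by the relative Poincar\'e--Hopf count of \cref{blc:rel_poincare}: on a component of $\Droc_{i,\theta}$ the red/green points must contribute $+1/2$ on the side farther from the root and $-1/2$ on the nearer side (check this on a cylinder of $\Droc_{2,\theta}$ in \Cref{s:examples}), which forces index $N+1$ at the $D_i$-side puncture and $1-N$ at the $D_j$-side one. Fix the sign of the phase, keep $D>0$, and your computation goes through.
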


\begin{proof}
	First we prove \cref{it:int_ind_inv}. From
	\Cref{lem:d_pos,lem:elementary_invariant}, it 
	follows that the Hessian of the restriction has
	two real nonzero eigenvalues with the same sign. As a result, it  has 
	index $1$.
	
	For \cref{it:int_ind_i} and \cref{it:int_ind_j}, it suffices to have a 
	look
	at \cref{fig:corner_sings}, and use \cref{eq:wind_ph}. Indeed, observe 
	that, by \Cref{lem:transversality_number}, \cref{it:trans_ii} and 
	\cref{it:trans_iii}, the number of red points is  
	\begin{equation}\label{eq:pronged}
		\frac{1}{\gcd(m_i,m_j)}
		\left|
		\begin{matrix}
			\varpi_i & m_{i} \\
			\varpi_j & m_{j} \\
		\end{matrix}
		\right|.
	\end{equation}
	Which is the same as the number of green points. After contracting each 
	connected component of $\Dro_{i,j,\theta}$ to a point, we get a 
	\cref{eq:pronged}-pronged singularity on $\Dro_{j,\theta}$ and a 
	\cref{eq:pronged}-petal singularity on $\Dro_{i,\theta}$. So the result 
	follows after  \Cref{ex:ind_k}.
\end{proof}

\begin{lemma} \label{lem:varpi_index}
	Assume that $ik$ is an edge from $i$ to $k$, and that $i$ is invariant
	and $k$ not invariant. Then the
\index{Poincar\'e-Hopf index}
Poincar\'e-Hopf index of the vector 
	field
	$\xi_i$ at the intersection point $D_i \cap D_k$ equals $1-\varpi_k$.
\end{lemma}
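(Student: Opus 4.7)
The strategy is a direct local computation at the puncture $p = D_i \cap D_k$, following the setup of \cref{block:corners} with the roles of its $i,j$ played here by $k,i$ respectively (since $k$ lies further from $0$ than $i$). Pick coordinates $u,v$ on a chart $U \ni p$ so that $D_k = \{u = 0\}$, $D_i = \{v = 0\}$, $\pi^*x = u^{c_{0,k}} v^{c_{0,i}}$, and $y = 0$ is the tangent associated with $p$. Since $i$ is invariant, $\tau_i = 2c_{1,i}$, and the scaling $|v|^{\tau_i} \pi^*\xi$ extends smoothly across $D_i^\circ \cap U$; the restriction of this extension to $D_i$ is $\xi_i$, smooth away from $p$. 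The task thus reduces to computing the winding number of $\xi_i$ along a small circle $|u| = \epsilon$ in $D_i^\circ$ around $p$.

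First I would extract the leading term of the $u$-component $\xi^u$ of $\pi^*\xi$ near $v = 0$. Expanding $\pi^*f$, $\pi^*f_y$, and $\det \Jac \pi$ at the corner with their leading coefficients $a,b,c \in \C^*$, and using \Cref{lem:tang_van} (giving $\ord_u \pi^*y > c_{0,k}$) together with $\ord_u \pi^*f_x \geq p_k$, one sees that the term $-x_v \bar f_y$ strictly dominates $y_v \bar f_x$ in the first coordinate of
\[
\pi^*\xi = \frac{-1}{\det\Jac\pi \cdot \bar f}\begin{pmatrix} y_v \bar f_x - x_v \bar f_y \\ -y_u \bar f_x + x_u \bar f_y \end{pmatrix}.
\]
Setting $d = \bar b / (c \bar a) \in \C^*$ and using $\nu_\bullet = c_{0,\bullet} + c_{1,\bullet}$, $p_\bullet - m_\bullet = \varpi_\bullet - c_{1,\bullet}$, and $\varpi_i = 0$, exponent bookkeeping gives
\[
\xi^u = c_{0,i}\,d \cdot u^{1-c_{1,k}} v^{-c_{1,i}} \bar u^{\varpi_k - c_{1,k}} \bar v^{-c_{1,i}} (1 + \hot).
\]

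Multiplying by $|v|^{\tau_i} = |v|^{2c_{1,i}}$ turns the singular factor $v^{-c_{1,i}} \bar v^{-c_{1,i}} = |v|^{-2c_{1,i}}$ into $1$, and restricting to $v = 0$ yields
\[
\xi_i(u) = c_{0,i}\,d \cdot \frac{u\, \bar u^{\varpi_k}}{|u|^{2 c_{1,k}}} (1 + O(u))
\]
as a complex-valued function on $D_i^\circ$ near $p$ in the trivialization by $\partial_u$. A parallel check confirms that $|v|^{\tau_i} \xi^v$ vanishes at $v = 0$, so $\xi_i$ is tangent to $D_i$ as expected. On the circle $|u| = \epsilon$, the prefactor $c_{0,i}\,d / \epsilon^{2 c_{1,k}}$ is a fixed nonzero complex constant whose effect is a positive rescaling and a rotation by $\arg d$, neither of which alters the Poincar\'e--Hopf index. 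Hence $\Ind_{\xi_i}(p)$ coincides with the index of $u\, \bar u^{\varpi_k}$ at $u = 0$, which by the multiplicativity of winding numbers and \Cref{ex:ind_k} equals $\deg(u) + \varpi_k \deg(\bar u) = 1 - \varpi_k$.

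The main technical step is the exponent bookkeeping leading to the closed-form for $\xi^u$: verifying that $-x_v \bar f_y$ strictly dominates $y_v \bar f_x$ (which fails in the fully invariant corner case treated in \Cref{lem:elementary_invariant}, where the two terms have the same order and partially cancel) and that, after absorbing the $|v|^{\tau_i}$ scaling, the antiholomorphic winding contribution is precisely $\bar u^{\varpi_k}$ — the exponent $\varpi_k$ being the precise signature of the noninvariance of $k$.
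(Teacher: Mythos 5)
Your proof is correct, but it takes a genuinely different route from the paper's. The paper never writes the vector field in coordinates for this lemma: it works upstairs on the real oriented blow-up, where \Cref{lem:noninv_corner} and \Cref{lem:transversality_number} give exactly $\varpi_k m_i/\gcd(m_i,m_k)$ red points on each boundary circle of $\Dro_{i,\theta}$ lying over $D_i\cap D_k$; contracting that circle yields a pronged singularity with that many prongs, and since $\sigma|_{\Dro_{i,\theta}}$ restricted to a component is an $m_i/\gcd(m_i,m_k)$-fold cover of $D_i$, the singularity of $\xi_i$ downstairs is $\varpi_k$-pronged, whence index $1-\varpi_k$ by \Cref{ex:ind_k}. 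You instead stay in $\Ypol$ and extract the leading monomial of the scaled $u$-component from the expansion of \cref{block:corners}; the bookkeeping checks out (the phase exponent is $(1-c_{1,k})-(p_k-m_k)=1-\varpi_k$, so the restriction to $D_i$ is $u\,\bar u^{\varpi_k}$ times a positive function and a nonzero constant), and your two small shortcuts --- scaling by $|v|^{\tau_i}$ where $\xi_i$ is defined with $|x|^{\tau_i/c_{0,i}}$, and the survival of the domination of $-x_v\bar f_y$ over $y_v\bar f_x$ after restricting to $v=0$ --- only cost positive factors and higher-order-in-$u$ terms, neither of which changes a winding number. What your route buys is independence from the real-oriented-blow-up machinery, plus an unambiguous determination of the phase: your expansion produces $(\bar u/|u|)^{\varpi_k}$, the antiholomorphic winding consistent with index $1-\varpi_k$ (a literal reading of the exponent of $u/|u|$ in \cref{eq:xi_corner_more} would instead give $1+\varpi_k$, so your bookkeeping is the one that matches the lemma). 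What the paper's route buys is that it reuses the already-established red/green picture of \cref{fig:corner_sings} and exhibits the statement as the direct shadow of \Cref{lem:int_ind} under the covering map.
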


\begin{proof}
	Consider the the vector field $\xiro_{i,k,\theta}|_{\Dro_{i,\theta}}$. 
	Let $L$ 
	be one of the boundary components which is contained in 
	$\Dro_{i,k,\theta}$. 
	Then,  by \Cref{lem:noninv_corner} and \Cref{lem:transversality_number} 
	\cref{it:trans_iii}, the boundary component $L$ has $\varpi_k m_i/ 
	\gcd(m_i,m_k)$ red vertices. So, after contracting $L$ to a point, the 
	vector 
	field $\xiro_{i,k,\theta}|_{\Dro_{i,\theta}}$ has a $\varpi_k m_i/ 
	\gcd(m_i,m_k)$-pronged singularity. Now recall that for an invariant 
	vertex 
	$i$, the vector field $\xiro_{i,\theta}$ is just the pullback by 
	$\sigma$ of 
	$\xi_i$ restricted to $\Dro_{i,\theta}$. Furthermore, the map 
	$\sigma|_{\Dro_{i,\theta}}:\Dro_{i,\theta} \to D_i$ is an $m_i$ regular 
	cover, 
	and, restricted to each connected component of $\Dro_{i,\theta}$ is an 
	$m_i/\gcd(m_i,m_k)$ regular cover.  We find that $\xi_i$ has at $D_i 
	\cap D_k$ 
	a $\varpi_k$-pronged singularity. So, after \Cref{ex:ind_k}, 
	\cref{eq:winding_index}, it has Poincar\'e-Hopf index $1-\varpi_k$.
\end{proof}

	%-----------------------------------------------------------------------
% Beginning of chap9.tex
%-----------------------------------------------------------------------
%
%  AMS-LaTeX sample file for a chapter of a monograph, to be used with
%  an AMS monograph document class.  This is a data file input by
%  chapter.tex.
%
%  Use this file as a model for a chapter; DO NOT START BY removing its
%  contents and filling in your own text.
% 
%%%%%%%%%%%%%%%%%%%%%%%%%%%%%%%%%%%%%%%%%%%%%%%%%%%%%%%%%%%%%%%%%%%%%%%%
	\chapter{Other Exceptional Divisors} \label{s:others}
In this section, we consider an embedded resolution $\pipol:\Ypol\to \C^2$ 
(\Cref{def:pipol})
of the curve $(C,0)$ that resolves the generic polar curves, and fix a 
vertex $i \in \V$, which does not
correspond to the first blow-up, $i \neq 0$.
It may be impossible to scale and extend the
vector field $\xi$ over the exceptional divisor $D_i$, to get a vector
field on $D_i^\circ$, similar to $\xi_0$.
In terms of numerical invariants on the graph, we characterize
those divisors having such an extension. More concretely we show that it is 
precisely the vanishing of the polar weight $\varpi_i$ that controls when 
it is 
possible to do such an extension prior without doing the real oriented 
blow-up.
We describe a similar construction on the real oriented blow-up
$\Yro$, which can be done for any exceptional divisor, invariant or 
non-invariant.
This is 
further exemplified in \Cref{ex:angle_dependence}.

\begin{notation} \label{not:other_exc_coord}
	We fix a metric induced by a coordinate change in $\Gpol$ 
	(\Cref{def:gen_pol}). Fix also
\index{isometric coordinates}
isometric coordinates (recall 
	\Cref{def:isometric_coords}) $x,y$ in $\C^2$ with respect to this 
	metric chosen also so that the line $\{y=0\}$ is the tangent to $i$. 
	This can always be achieved by a further unitary change of coordinates.
	
	As we have been doing so far, we usually simplify the notation by
	identifying functions defined in a neighborhood
	of the origin in $\C^2$ with their pullback to $U$ via $\pipol$
	or $\Uro$ via $\piropol$. That is, we sometimes write simply $f$ and 
	$f_x$ instead
	of $\pipol^*f$ and $f_x$, etc. Furthermore, the partials of
	$x = \pipol^*x$ with respect to $u,v$ are denoted $x_u, x_v$, and so on.
	
	As in \Cref{s:real_oriented}, we have the real oriented
	blow-up $\sigma:\Yropol \to \Ypol$, and the chart $U$
	induces the chart $\Uro = \sigma^{-1}(U)$ with coordinates
	$r=|u|$, $\alpha = \arg(u)$ and $v$.
	The boundary of $\Uro$ is $\partial\Uro = (\piropol)^{-1}(D_i)$,
	given by $r = 0$.
	
	We choose a point $p\in D_i^\circ$, and a coordinate neighborhood
	$U$ containing $p$ so that $u(p) = v(p) = 0$.
	Since $x$ vanishes with order $c_{0,i}$ along $D_i$, and does
	not vanish on $U\setminus D_i$, we can choose the coordinates
	$u,v$ so that
	\[
	x = u^{c_{0,i}}.
	\]
	Since the restriction $\pipol|_{U\setminus D}:U\setminus D \to \C^2 
	\setminus \{0\}$
	is a local diffeomorphism, we can pull back $\xi$ to get
	$\pipol|_{U\setminus D}^*\xi$.
	
	In coordinates, we have complex functions $\xi^u, \xi^v$ on $U\setminus 
	D$
	so that this vector field is given by the vector
	\begin{equation} \label{eq:xi_U}
		\pipol|_{U\setminus D}^*\xi
		=
		\left(
		\begin{matrix}
			\xi^u\\
			\xi^v
		\end{matrix}
		\right)
		=
		\frac{-1}{\det\Jac\pi \bar f}
		\left(
		\begin{matrix}
			y_v \bar f_x - x_v \bar f_y \\
			-y_u \bar f_x + x_u \bar f_y
		\end{matrix}
		\right).
	\end{equation}
	Similarly, on $\Uro \setminus \partial \Uro$, we have a vector field
	\[
	\left.\piropol \middle|_{\Uro \setminus \partial \Uro}^* \xi \right.
	=
	\left(
	\begin{matrix}
		\xiror\\
		\xiroal\\
		\xirov
	\end{matrix}
	\right)
	\]
	Here, $\xiror$ and $\xiroal$ are real analytic functions,
	while $\xirov$ is, as before, a real analytic complex-valued function. 
	We write $v = s +it$ an correspondingly $\xirov = \xiros + i\xirot$.
	Recall, \cref{block:pullback_ro}, that a  computation in polar 
	coordinates gives 
	\begin{equation} \label{eq:xiro_coords}
		\xirov = \sigma^*\xi^v, \qquad
		\xiror = \Re\left(e^{-i\alpha} \sigma^*\xi^u \right),\qquad
		\xiroal = \Im\left( (re^{i\alpha})^{-1} \sigma^*\xi^u \right).
	\end{equation}
	
\end{notation}

\begin{definition}\label{def:relative_polar}
	Let $P_i$ be the relative polar curve defined by the partial $f_y$,
	and denote by $\tilde{P}_{i}$ its strict transform by $\pipol$.
\end{definition}

\begin{definition}
	Let $i \in \V$ with $i\neq 0$. Set
\index{$\Sigma_i$}
	\[
	\Sigma_i = \tilde{P}_{i} \cap D_i^\circ,\qquad
	\Siro_i = \sigma^{-1}(\Sigma_i), \qquad
	\Siro_{i,\theta} = \Siro \cap D_{i,\theta}^\circ.
	\]
	and
	\[
	\Sigma_U = \Sigma_i\cap U,\qquad
	\Siro_U = \Siro \cap \Uro.
	\]
\end{definition}

Note that we have a precise choice for the polar curve $P_i$,
which depends on the metric and the vertex $i$.
More precisely, $P_i$ is the vanishing set of the partial derivative of $f$
with respect to the direction orthogonal to the tangent associated with $i$.
Therefore, even if we fix the underlying metric, the definition of
$\Sigma_i$ for different vertices $i$ involves a different choice
of the polar curve.

In contrast, $\Sigma_0$ cannot be seen as the intersection of
$D_0^\circ$ with the strict transform of a particular polar curve. A signed 
count of singularities of the vector field
$\xi_0$, however, allows for estimating the number of points in $\Sigma_0$ 
as the following lemma shows:

\begin{lemma}\label{lem:estimate_sig_0}
	Let $F_0$ and $S_0$ be the number of fountains and saddle points, 
	respectively, of $\xi_0$. If $t$ is the number of tangents of $C$, then
	\[
	F_0 - S_0 = 2-t
	\]
\end{lemma}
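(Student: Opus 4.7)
The plan is to apply the Poincaré–Hopf theorem on $D_0 \cong \CP^1$, which has Euler characteristic $2$, using the contributions of $\xi_0$ on all of $D_0$ (interior singularities plus behavior at the punctures).

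First I would observe that $D_0^\circ = D_0 \setminus (D_0 \cap \tilde{C})$, and that $|D_0 \cap \tilde{C}| = t$, since the intersection points of the strict transform with the first exceptional divisor are in bijection with the distinct tangent directions of $C$ at $0$. Next, by \Cref{lem:attractors} (applied under the standing genericity assumption on the metric), $\xi_0$ has no sink singularities on $D_0^\circ$, so every zero of $\xi_0$ in $D_0^\circ$ is either a fountain (Poincaré–Hopf index $+1$) or a saddle (index $-1$). The total contribution from $D_0^\circ$ is therefore $F_0 - S_0$.

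Then I would account for the $t$ punctures. By \Cref{lem:sinks_near_strict}, near each intersection point with $\tilde{C}$ the vector field $\xi_0$ points towards the puncture and has Poincaré–Hopf index $+1$ there (computed via the winding number of $\xi_0$ along a small loop around the puncture, cf.\ \Cref{not:index_vf}). Thus the total contribution from the punctures is $t \cdot 1 = t$.

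Finally, applying Poincaré–Hopf to the (continuous, suitably extended) vector field on $D_0 \cong \CP^1$ yields
\[
(F_0 - S_0) + t = \chi(D_0) = 2,
\]
which rearranges to $F_0 - S_0 = 2 - t$. There is no serious obstacle here: the only care needed is to check that the index of $\xi_0$ at each puncture, as defined by the winding number of a nearby simple closed curve, really equals $+1$, but this is exactly what \Cref{lem:sinks_near_strict} provides.
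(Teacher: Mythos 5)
Your proof is correct and follows essentially the same route as the paper: index $+1$ at each of the $t$ punctures $D_0\cap\tilde C$ via \Cref{lem:sinks_near_strict}, indices $\pm 1$ for fountains and saddles (with no sinks by \Cref{lem:attractors}), and Poincar\'e--Hopf on $D_0\cong\CP^1$ with $\chi(D_0)=2$. The paper's proof is just a terser version of exactly this argument.
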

\begin{proof}
	By \Cref{lem:attractors}, $\xi_0$ has no sinks. By 
	\Cref{lem:sinks_near_strict}, the index of $\xi_0$ near $D_0 \cap 
	\tilde{C}$ is $1$. The Euler characteristic of $D_0$ is $2$. A direct 
	application of Poincar\'e-Hopf index theorem yields the result.
\end{proof}

\begin{rem}
	The intersection $\Sigma_i$ consists
	of finitely many points.
	The preimage $\Siro_i$
	is therefore the union of finitely many fibers
	of the $S^1$-bundle $\Droc_i \to D^\circ_i$.
	These fibers are transverse to the foliation on $\Droc_i$ given by
	$\arg(f)$, whose leaves are $\Droc_{i,\theta}$ for $\theta \in 
	\R/2\pi\Z$.
	In fact,
	$|\Siro_{i,\theta}| = m_i \cdot |\Sigma_i| < \infty$.
	
\end{rem}

\section{Extension over the boundary} \label{ss:others_ext}

\begin{lemma} \label{lem:ext_others}
	We use \Cref{not:other_exc_coord}.
	\begin{blist}
		\item \label{it:ext_others_noninv}
		The vector field
		\begin{equation} \label{eq:ext_others_noninv}
			r^{\tau_i}
			\piropol|_{\Uro \setminus \partial \Uro}^* \xi
		\end{equation}
		on $\Uro \setminus \partial \Uro$ extends to an analytic vector 
		field
		on $\Uro$ which is tangent to $\partial \Uro$ and whose vanishing 
		set is $\Siro_U$.
		
		\item \label{it:ext_others_inv}
		If $i$ is an invariant vertex, that is, if $m_i - p_i = c_{1,i}$, 
		then the vector field
		\begin{equation} \label{eq:ext_others_inv}
			|u|^{2c_{1,i}} \pipol|_{U \setminus D}^* \xi = |u|^{\tau_i} 
			\pipol|_{U \setminus D}^* \xi
		\end{equation}
		on $U\setminus D$
		extends to an analytic vector field on $U$, which is tangent to
		$D \cap U$ and whose vanishing set is $\Sigma_U$.
	\end{blist}
\end{lemma}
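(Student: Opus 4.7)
The overall plan is a local calculation in the chart $U$ with coordinates $u,v$ from \Cref{not:other_exc_coord}, using \cref{eq:xi_U} together with the vanishing orders of the holomorphic ingredients along $D_i = \{u=0\}$. The key factorizations I would record at the outset are
\[
\pi^*f = u^{m_i} G, \quad
\pi^*f_y = u^{p_i} H, \quad
\pi^*f_x = u^{p_i} F, \quad
x_u = c_{0,i} u^{c_{0,i}-1}, \quad
y_v = u^{c_{1,i}} Y,
\]
with $G,Y$ holomorphic units on $U\cap D_i^\circ$, with $H|_{D_i}$ cutting out $\Sigma_U$, and the secondary bound $\ord_{D_i}(y_u) \geq c_{0,i}$ coming from \Cref{lem:tang_van} (since $\{y=0\}$ is the tangent associated with $i$, so $\ord_{D_i}(y)>c_{0,i}$).

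Substituting into \cref{eq:xi_U}, the leading $(u,\bar u)$-bidegree terms are
\[
\xi^u = -\frac{\bar F\, \bar u^{p_i-m_i}}{c_{0,i}\,\bar G\, u^{c_{0,i}-1}} + \hot,\qquad
\xi^v = -\frac{\bar H\, \bar u^{p_i-m_i}}{Y\, \bar G\, u^{c_{1,i}}} + \hot,
\]
with the corrective term $y_u\bar f_x/(x_u y_v\bar f)$ in $\xi^v$ being strictly subdominant. For part (ii), invariance of $i$ gives $\tau_i = 2c_{1,i}$ and $\varpi_i = c_{1,i}+p_i-m_i = 0$, so $|u|^{\tau_i} = (u\bar u)^{c_{1,i}}$ is real-analytic; multiplying the two displays above by it reduces them to $-\bar F\, u^{c_{1,i}-c_{0,i}+1}/(c_{0,i}\bar G)$ and $-\bar H/(Y\bar G)$ respectively. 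Analyticity of the extension on $U$, tangency to $D_i$ (via $c_{1,i}>c_{0,i}$ forcing $|u|^{\tau_i}\xi^u|_{D_i} = 0$), and the identification of the zero locus with $\Sigma_U$ (from $|u|^{\tau_i}\xi^v|_{D_i}$ cutting out $H(0,v)=0$) then all follow; nonvanishing off $D_i$ is automatic, since $\xi$ is nonvanishing on $\Tu\setminus C$ and $|u|^{\tau_i}>0$ there. For part (i), I would pass to $\Uro$ with $u = re^{i\alpha}$ and apply \Cref{lem:arg_equivariant}, noting crucially that $|u|^{\tau_i} = r^{\tau_i}$ is automatically real-analytic regardless of the parity of $\tau_i$, so no invariance hypothesis is required. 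Translating via \cref{eq:xiro_coords}, the components $r^{\tau_i}\xi^r$, $r^{\tau_i}\xi^\alpha$, $r^{\tau_i}\xi^v$ acquire $r$-orders $c_{1,i}-c_{0,i}+1$, $c_{1,i}-c_{0,i}$, and $0$ respectively, all nonnegative since $i\neq 0$; the first two vanish on $\partial\Uro$ (giving tangency), and the third restricts on $\partial\Uro$ to a unit multiple of $e^{-i\varpi_i\alpha}\bar H(0,v)$, whose zero locus is precisely $\Siro_U$.

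The hard part will be the careful bookkeeping of the $(u,\bar u)$-bidegrees, in particular verifying that the subdominant $y_u\bar f_x$ piece of $\xi^v$ does not lower the advertised pole order: this is where the sharp inequality $\ord_{D_i}(y_u)\geq c_{0,i}$ provided by \Cref{lem:tang_van} is indispensable. It also clarifies why part (ii) needs the invariance hypothesis, as only then is the exponent $\tau_i = 2c_{1,i}$ even, so that $|u|^{\tau_i}$ is a polynomial in $u\bar u$ rather than merely smooth; for non-invariant $i$ this route genuinely breaks down and one must pass to the real-oriented blow-up, which is precisely what makes part (i) a strictly weaker statement working for every $i$.
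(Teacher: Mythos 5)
Your proposal is correct and follows essentially the same route as the paper: a local computation from \cref{eq:xi_U} using $\ord_u y_v = c_{1,i}$, $\ord_u f_y = p_i$, $\ord_u f_x \ge p_i$ and $\ord_u y_u \ge c_{0,i}$ (from \Cref{lem:tang_van}), with \Cref{lem:arg_equivariant} handling the passage to $\Uro$; the paper merely packages the same order estimates as a product of individually extendable equivariant factors rather than tracking $(u,\bar u)$-bidegrees. Only your closing aside is slightly off: for non-invariant $i$ the essential obstruction to extending over $U$ is not the parity of $\tau_i$ but the nonzero equivariance weight $\varpi_i$ (a residual factor $\left(u/|u|\right)^{-\varpi_i}$ survives in the scaled $\xi^v$ even when $\tau_i$ is even), though this does not affect the proof of the lemma as stated.
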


\begin{proof}
	Since $x = u^{c_{0,i}}$, we have $x_v = 0$. As a result
	\[
	r^{\tau_i} \xi^u = r^{c_{1,i} + m_i -p_i} \frac{-y_v \bar{f}_x}{\det 
		\Jac 
		\pipol \bar{f}} = r^{c_{1,i} - (c_{0,i} -1)} r^{c_{0,i}-1} 
	\frac{-y_v}{\det 
		\Jac \pipol} r^{m_i - p_i} \frac{\bar{f}_x}{\bar{f}}
	\]
	The second factor  $r^{c_{0,i}-1} \frac{-y_v}{\det \Jac \pipol}$ 
	extends over the boundary by \Cref{lem:arg_equivariant}. This is 
	because by \cref{block:c_1} we have  $\ord_u y_v = c_{1,i}$ and $\ord_u 
	\det \Jac \pipol = \nu_i -1 = c_{0,i} + c_{1,i}-1$. The third factor 
	$r^{m_i - p_i} \frac{\bar{f}_x}{\bar{f}}$ also extends over the 
	boundary since $\ord_u \bar{f}_x \geq p_i$ (\cref{eq:def_p_eq}) and 
	$\ord_u \bar{f} =m_i$. Finally, since $c_{1,i} - c_{0,i} + 1 \geq 2$ 
	(because $i\neq 0$), the first factor $r^{c_{1,i} - c_{0,i}+1}$ 
	vanishes identically along the boundary $\partial \Uro$. We conclude, 
	by the last two formulas of \cref{eq:xiro_coords} that both $r^{\tau_i} 
	\xiror$ and $r^{\tau_i}\xiroal$ extend over and vanish along $ \partial 
	\Uro$.

	Similarly as above, again using \Cref{lem:arg_equivariant}, the function
	\begin{equation} \label{eq:xiv_x}
		r^{c_{1,i}+m_i-p_i} \frac{x_u \bar f_y}{\det\Jac\pi\bar f}
		=
		r^{c_{1,i}}
		\frac{c_{0,i} u^{c_{0,i}-1}}{\det\Jac\pi}
		\cdot
		r^{m_i - p_i}
		\frac{\bar f_y}{\bar f}
	\end{equation}
	extends to a real analytic $\cdot_i$-equivariant function of weight
	$c_{1,i} - m_i + p_i$ on $\Uro$. We are using that $\ord_u \bar{f}_y = 
	p_i$. Note that this function vanishes precisely along 
	$\{r^{-p_i}f_y=0\}$ because 
	\[
	r^{c_{1,i}}
	\frac{c_{0,i} u^{c_{0,i}-1}}{\det\Jac\pi}
	\cdot
	r^{m_i}
	\frac{1}{\bar f}
	\]
	is an unit. Also note that $\{r^{-p_i}f_y=0\} \cap \partial \Uro = 
	\tilde{P}_i \cap \partial \Uro = \Siro_U$.

	By \Cref{lem:tang_van}, and our choice of coordinates $x,y$, we have
	\[
	\ord_{u}(y_u)
	= \ord_{u}(y) - 1
	> \ord_{u}(x) - 1
	= \ord_{u}(x_u).
	\]
	Furthermore,
	$\ord_u(f_x) \geq \ord_u(f_y)$ by \cref{eq:def_p_eq}.
	Thus, as in the previous argument, the function
	\begin{equation} \label{eq:xiv_y}
		r^{c_{1,i}+m_i-p_i}\frac{-y_u \bar f_x}{\det\Jac\pi\bar f}
		=
		r^{c_{1,i}}\frac{-y_u}{\det\Jac\pi}
		\cdot
		r^{m_i-p_i}\frac{\bar f_x}{\bar f}
	\end{equation}
	on $\Uro\setminus\partial\Uro$ extends over the boundary and vanishes
	there.
	
	By \cref{eq:xi_U} and \cref{eq:xiro_coords}, the function
	$r^{\tau_i} \xirov$
	is the sum of \cref{eq:xiv_x} and \cref{eq:xiv_y},
	and so extends to a function on $\Uro$. Outside the boundary, the 
	vector field   $r^{\tau_i}
	\piropol|_{\Uro \setminus \partial \Uro}^* \xi$ does not vanish since 
	$\piropol$ is a local diffeomorphism on $\Uro \setminus \partial \Uro$ 
	and $\xi$ does not vanish outside the curve $C \subset \C^2$. Since the 
	function in \cref{eq:xiv_y} vanishes along $\partial \Uro$, the 
	vanishing set of $\xirov|_{\partial \Uro}$ is that of the function in 
	\cref{eq:xiv_x} which we have already seen to be $\Siro_U$. This 
	finishes the proof of \cref{it:ext_others_noninv}.
	
	\cref{it:ext_others_inv} follows from a completely analogous argument 
	by replacing
	$r^{c_{1,i}+m_i-p_i}$ by $|u|^{2c_{1,i}}$.
\end{proof}

\begin{lemma} \label{lem:ext_others_tot}
	We use \Cref{not:other_exc_coord}.
	\begin{blist}
		\item \label{it:ext_others_noninv_total}
		The vector field
		\begin{equation} \label{eq:ext_others_noninv_tot}
			\piropol|_{\Yro \setminus  \Dro}^* (|x|^{\tau_i / c_{0,i}}\xi)
		\end{equation}
		on $\Yro \setminus \Dro$ extends to an analytic vector field
		on $(\Yro \setminus \Dro) \cup \Droc_i$ which is tangent to 
		$\Droc_i$ and whose vanishing set is $\Siro_i$. Moreover, for each 
		$\theta \in \R / 2 \pi \Z$, the extension is tangent to 
		$\Droc_{i,\theta}$.
		
		\item \label{it:ext_others_inv_tot}
		If $i$ is an invariant vertex, that is, if $m_i - p_i = c_{1,i}$, 
		then the vector field
		\begin{equation} \label{eq:ext_others_inv_tot}
			\pipol|_{Y \setminus D}^* (|x|^{2c_{1,i}/c_{0,i}} \xi) = 
			\pipol|_{Y \setminus D}^*(|x|^{\tau_i/c_{0,i}}  \xi)
		\end{equation}
		on $Y\setminus D$
		extends to an analytic vector field on $(Y \setminus D) \cup 
		D^{\circ}_i$, which is tangent to
		$D^{\circ}_i$ and whose vanishing set is $\Sigma_i$.
	\end{blist}
\end{lemma}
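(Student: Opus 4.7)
The strategy is to reduce each part to the corresponding local statement of \Cref{lem:ext_others}. Cover $D_i^\circ$ by coordinate charts $U \subset \Ypol$ with coordinates $u,v$ in which $D_i \cap U = \{u=0\}$. By the definition of $c_{0,i}$ as the vanishing order of a generic linear function along $D_i$ (\Cref{def:max_cycle}) and the genericity of the chosen $x$, the pullback $\pipol^* x$ vanishes to order exactly $c_{0,i}$ along $D_i\cap U$. Hence we can write $\pipol^*x = u^{c_{0,i}} g(u,v)$, and after possibly shrinking $U$ the function $g$ is a nowhere vanishing holomorphic unit.

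Next I would perform the local holomorphic change of coordinates $\tilde u = u\cdot g(u,v)^{1/c_{0,i}}$, for any local branch of the $c_{0,i}$-th root; this achieves $\pipol^*x = \tilde u^{c_{0,i}}$, matching the hypothesis of \Cref{lem:ext_others}. Applying that lemma in its first form yields an analytic extension of $\tilde r^{\tau_i}\,\piropol|_{\Uro\setminus\partial\Uro}^*\xi$ to $\Uro$, tangent to $\partial\Uro$, with vanishing set $\Siro_U$. The key identity, independent of the choice of branch, is
\[
\tilde r^{\tau_i} \;=\; |\tilde u|^{\tau_i} \;=\; r^{\tau_i}\cdot|g(u,v)|^{\tau_i/c_{0,i}} \;=\; |\pipol^* x|^{\tau_i/c_{0,i}},
\]
so this local extension is precisely the pullback of the globally defined vector field $|x|^{\tau_i/c_{0,i}}\,\xi$. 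Since the object we are extending is globally defined and the extension is unique where it exists, the local extensions automatically glue to a global analytic vector field on $(\Yro\setminus\Dro)\cup\Droc_i$, with global vanishing locus $\Siro_i$ and tangent to each $\Droc_i$. Tangency to each slice $\Droc_{i,\theta}$ is then immediate from \Cref{rem:R_and_xi}\cref{it:iii_and_xi_tang}: $\xi$ is tangent to the level sets of $\arg(f)$, and scaling by a positive real-valued function preserves this tangency.

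Part \cref{it:ext_others_inv_tot} follows by the same argument, using \Cref{lem:ext_others}\cref{it:ext_others_inv} in place of \cref{it:ext_others_noninv}, and dropping all references to the real oriented blow-up. Here the hypothesis of invariance gives $\tau_i = 2 c_{1,i}$, so that the two exponents $2 c_{1,i}/c_{0,i}$ and $\tau_i/c_{0,i}$ displayed in the statement coincide.

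The main technical content is already absorbed by \Cref{lem:ext_others}, which does the delicate pole-cancellation calculation in a normalised coordinate chart. The only point that must be checked here is that $|g|^{\tau_i/c_{0,i}}$ is a positive real analytic function on $U$: this holds because $g\bar g$ is a positive real analytic unit and any real power of a positive real analytic unit is again real analytic. Multiplication by this positive real analytic function preserves analyticity, the vanishing set, and tangency to every stratum, so no further obstacle arises.
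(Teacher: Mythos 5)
Your proposal is correct and follows essentially the same route as the paper: both reduce to the local statement of \Cref{lem:ext_others} by covering $D_i^\circ$ (resp.\ $\Droc_i$) with charts normalised so that $\pipol^*x = u^{c_{0,i}}$, observe that the local scaling factor $r^{\tau_i}$ is then the pullback of the global function $|x|^{\tau_i/c_{0,i}}$, and conclude that the local extensions glue by uniqueness. Your explicit coordinate change $\tilde u = u\,g^{1/c_{0,i}}$ and the tangency argument for the slices $\Droc_{i,\theta}$ merely spell out details the paper leaves implicit.
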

\begin{proof}
	It is always possible to cover $\Droc_i$ by charts $\Uro$ with 
	coordinates $r,\alpha,v$ as in \Cref{not:other_exc_coord}. In those 
	coordinates, we have that 
	\[
	{\piropol}^*|x|^{\tau_i/c_{0,i}} = r^{\tau_i}.
	\]
	Similarly, it is possible to cover $D^{\circ}_i$ by charts $U$ with 
	coordinates $u,v$ such that 
	\[
	{\pipol}^*|x|^{\tau_i/c_{0,i}} = |u|^{\tau_i}.
	\]
	Then, the proof follows from \Cref{lem:ext_others}.
\end{proof}

\begin{definition}\label{def:xiroU}
	Denote by
\index{$\xiro_i$}
$\xiro_U$ the analytic vector field on $\Uro$
	whose restriction to $\Uro\setminus\partial\Uro$ coincides
	with \cref{eq:ext_others_noninv}. If $i$ is an invariant vertex then
	denote by $\xi_U$ the analytic vector field on $U$
	whose restriction to $U\setminus D$ coincides
	with \cref{eq:ext_others_inv}.
	
	Similarly, denote by $\hxiro_i$ the analytic vector field on $(Y 
	\setminus \Dro)\cup \Droc_i$
	whose restriction to $\Yro \setminus \Dro$ coincides
	with \cref{eq:ext_others_noninv_tot}. If $i$ is an invariant vertex then
	denote by $\hat \xi_i$ the analytic vector field on $(Y \setminus D) 
	\cup D^\circ_i$
	whose restriction to $Y\setminus D$ coincides
	with \cref{eq:ext_others_inv_tot}. Denote the restriction by 
	$\xi_i=\hat{\xi}_i|_{D_i^\circ}$.
	
	Finally, denote the other corresponding restrictions by
	$\xiro_i = \hxiro_i|_{\Droc_i}$
	and $\xiro_{i,\theta} = \xiro_i |_{\Droc_{i,\theta}}$.
	
\end{definition}

\begin{rem} \label{rem:an_in}
	Let $i \in \V_\Upsilon \setminus \{0\}$.
	In local coordinates on $D_i$, the vector field $\xi_i$ is determined by
	the initial part of the pullback of the function $f_y$.
	If we start with a different metric, then the same construction
	gives a vector field which is a linear combination of the pullbacks
	of $f_x$ and $f_y$. Since $f_x$ vanishes with a higher order than
	$f_y$, by \Cref{cor:strict_ine}, this construction gives the same
	result up to a real scalar.
	Therefore, the vector fields $\xi_i$ and $\xiro_i$ do not depend
	on the chosen generic metric, up to a positive scalar.
\end{rem}

\section{A potential for $\xi_i$ on invariant vertices}
\label{ss:potential_inv}
In this subsection, we consider an invariant vertex $i\in\Vpol\setminus 
\{0\}$.
We show that the vector field $\xi_i$,
defined in the previous subsection, has a scalar potential with respect
to the metric $g_i$, defined in \Cref{def:exc_metric}.
An analogous statement for $\xiro_{i,\theta}$, and its proof,
can be found in the next subsection.
We choose a coordinate chart around a point $p \in D_i^\circ$ as
in \cref{block:c_1}, and set $\Uro = \sigma^{-1}(U)$.

Let $i\in \V\setminus\{0\}$ be an invariant vertex. The function 
\begin{equation} \label{eq:fxmc}
	\pi^* \left(- \log \frac{|f|}{|x|^{m_i/c_{0,i}}}\right)
\end{equation}
extends as an analytic function over $D_i^\circ$. 
In fact, in coordinates $u,v$ in $U$, satisfying $x = u^{c_{0,i}}$,
the function $f$ is divisible by $u^{m_i}$, and
$|x|^{m_i / c_{0,i}} = |u^{m_i}|$.

\begin{definition}\label{def:grad_i}
	Denote by
\index{$\phi_i$}
$\phi_i:D_i^\circ \to \R$ the restriction of the extension of
	\cref{eq:fxmc} to $D_i^\circ$.
\end{definition}

In coordinates $u,v$, we can write $\pi^*f = u^{m_i}g(v) + \hot$,
where the higher order terms are divisible by $u^{m_i+1}$.
Therefore, \cref{eq:fxmc} equals
\[	
\begin{split}
	-\log |f| + \log |u|^{m_i/c_{0,i}} 
	& = -(\log |g(v) + \hot| + m_i\log|u|) + \log |u|^{m_i} \\ 
	& = - \log  |g(v)|
\end{split}
\]
along $D_i$, where $u$ vanishes. As a result,
\begin{equation}\label{eq:phi_i}
	\phi_i(v) = -\log |g(v)|.
\end{equation}

\begin{lemma}\label{lem:morse_invariant}
	Let $i\neq 0$ be an invariant vertex.
	The vector field $\xi_i$ is the gradient of $\phi_i$ with respect
	to the Riemannian
\index{metric!on $D_i^\circ$}
metric $g_i$ from  \Cref{def:exc_metric}.
\end{lemma}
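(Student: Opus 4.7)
The plan is to realize $\xi_i$ as the restriction to $D_i^\circ$ of a gradient on the ambient chart $U$ with coordinates as in \cref{block:c_1}, and then identify this restriction with $\nabla^{g_i}\phi_i$. Outside the exceptional divisor, naturality of the gradient under the local biholomorphism $\pipol$ gives $\pipol^*\xi = -\nabla^{\pipol^*g^{\mathrm{std}}}(\pipol^*\log|f|)$; since multiplying a gradient by a positive function is equivalent to dividing the metric by that function, this becomes
\[
|u|^{2c_{1,i}}\pipol^*\xi = -\nabla^{g'}(\pipol^*\log|f|),
\qquad g' := |u|^{-2c_{1,i}}\,\pipol^*g^{\mathrm{std}},
\]
on $U \setminus D$.

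To remove the pole of $\log|f|$ along $D_i$, I would decompose
\[
-\pipol^*\log|f| \;=\; \pipol^*\Psi \;-\; m_i\log|u|,
\qquad \Psi := -\log\!\bigl(|f|/|x|^{m_i/c_{0,i}}\bigr),
\]
so that $\pipol^*\Psi$ extends analytically across $D_i^\circ$ with restriction $\phi_i$ by \Cref{def:grad_i}. This yields
\[
|u|^{2c_{1,i}}\pipol^*\xi \;=\; \nabla^{g'}(\pipol^*\Psi) \;-\; m_i\,\nabla^{g'}\log|u|
\]
on $U\setminus D_i$, and it remains to show that, upon restriction to $D_i^\circ$, the first term equals $\nabla^{g_i}\phi_i$ and the second vanishes. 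Both statements will follow from a single calculation of the limit of $(g')^{-1}$ as $u\to 0$.

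Using the Jacobian of $\pipol$ from \cref{block:c_1}, the Hermitian matrix of $\pipol^*g^{\mathrm{std}}$ in complex coordinates $(u,v)$ is $\bigl(\begin{smallmatrix}|x_u|^2+|y_u|^2 & \bar y_u y_v \\ \bar y_v y_u & |y_v|^2\end{smallmatrix}\bigr)$. Dividing by $|u|^{2c_{1,i}}$ and inverting, a direct computation that uses $|x_u|^2 = c_{0,i}^2|u|^{2c_{0,i}-2}$, $\ord_u y_v = c_{1,i}$ with $|y_v|^2/|u|^{2c_{1,i}}\to h_U$, the inequality $\ord_u y_u \geq c_{0,i}$ (from \Cref{lem:tang_van}), and $c_{1,i} > c_{0,i}$ for $i\neq 0$, yields
\[
(g')^{-1}\big|_{D_i^\circ}
= \begin{pmatrix}0 & 0 \\ 0 & h_U^{-1}\end{pmatrix}
= \begin{pmatrix}0 & 0 \\ 0 & g_i^{-1}\end{pmatrix}.
\]
From this two conclusions follow. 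First, $d\log|u|$ has a single $du$-component of order $|u|^{-1}$ and the $uu$-block of $(g')^{-1}$ vanishes like $|u|^{2c_{1,i}-2c_{0,i}+2}$, so $\nabla^{g'}\log|u|$ vanishes on $D_i^\circ$. Second, for any function $\phi$ analytic on $U$, the restriction $\nabla^{g'}\phi|_{D_i^\circ}$ equals $\nabla^{g_i}(\phi|_{D_i^\circ})$, because $(g')^{-1}$ annihilates the $u$-components of $d\phi$ and reduces to $g_i^{-1}$ on the $v$-components. Specializing to $\phi = \pipol^*\Psi$, whose restriction is $\phi_i$, delivers $\xi_i = \nabla^{g_i}\phi_i$.

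The main subtlety is that $\ord_{D_i}(y)$ takes different values depending on whether $i\in \V'$ or $i\in\V_y\setminus\V'$ (see \Cref{lem:ord_y}), so the entries of $g'$ have slightly different asymptotics in the two regimes. However, only the strict inequality $\ord_{D_i}(y) > c_{0,i}$ from \Cref{lem:tang_van} combined with $c_{1,i}>c_{0,i}$ for $i\neq 0$ is needed to obtain the limiting form of $(g')^{-1}$ above, so the case analysis can be handled uniformly. The hypothesis that $i$ is invariant enters tacitly: it is precisely what makes the scaling exponent $\tau_i$ equal to $2c_{1,i}$, ensuring that the vector field $|u|^{2c_{1,i}}\pipol^*\xi$ treated above is exactly $|u|^{\tau_i}\pipol^*\xi$ from \Cref{lem:ext_others}\,\cref{it:ext_others_inv}.
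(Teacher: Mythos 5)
Your proof is correct, but it takes a genuinely different route from the paper's. The paper works entirely with the coordinate expression \cref{eq:xi_U} for $|u|^{\tau_i}\xi^v$: it splits it into the $y_u\bar f_x$ term, which vanishes along $D_i^\circ$, and the $x_u\bar f_y$ term, whose restriction is computed to be $h_U^{-1}\,\overline{g'(v)}/\overline{g(v)}$ using \Cref{lem:varpi} (invariance gives $\ord_u f_v=m_i$) and then recognized as the $g_i$-gradient of $-\log|g|=\phi_i$ via \Cref{lem:cplex_gradient} and \Cref{rem:exc_metric}. You instead exploit naturality of the gradient under the local biholomorphism $\pipol$, observe that rescaling the vector field by $|u|^{2c_{1,i}}$ amounts to rescaling the metric, decompose the potential as $\pipol^*\Psi-m_i\log|u|$, and reduce everything to the single limit $(g')^{-1}|_{D_i^\circ}=\diag(0,h_U^{-1})$. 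Your version makes it transparent why the metric $g_i$ of \Cref{def:exc_metric} is forced on the construction, and it localizes the use of invariance to the identity $\tau_i=2c_{1,i}$ together with the fact that $\phi_i$ is the restriction of $\Psi$; the paper's computation is shorter but leans on \Cref{lem:varpi}. One small point you should make explicit: to kill $m_i\nabla^{g'}\log|u|$ you must beat the order-one pole of $d\log|u|$ not only with the $(1,1)$ entry of $(g')^{-1}$ but also with the off-diagonal entry $-|u|^{2c_{1,i}}\bar y_u/(|x_u|^2\bar y_v)$, whose vanishing order is at least $c_{1,i}-c_{0,i}+2\geq 3$ by \Cref{lem:tang_van}; this follows from the ingredients you list but is not spelled out in your plan.
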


\begin{proof}
	From \cref{eq:xi_U} and observing that $x_v=0$, we have the formula
	\begin{equation}\label{eq:xivex}
		|u|^{\tau_i}\xi^v = |u|^{\tau_i} \frac{y_u \bar{f}_x - x_u 
			\bar{f}_y}{x_u y_v \bar{f}}.
	\end{equation}
	We recall from \Cref{def:radial} that $\tau_i = c_{1,i} + m_i - p_i$. 
	Since $\varpi_i=0$ (recall \Cref{def:polar}), we have that $\tau_i = 2 
	c_{1,i}$ and so
	\[
	|u|^{\tau_i} =|u|^{2 c_{1,i}}= u^{c_{1,i}} \bar{u}^{c_{1,i}}.
	\]
	Similarly as in \cref{eq:xiv_y},
	the term $|u|^{\tau_i} \frac{y_u \bar{f}_x}{x_u y_v \bar{f}}$
	vanishes along $D_i^\circ$. The other term is
	\[
	|u|^{2 c_{1,i}} \frac{-x_u \bar{f}_y}{x_u y_v \bar{f}} 
	=  - \frac{|u|^{2 c_{1,i}}\bar{f}_y}{y_v\bar{f}}
	= - \frac{|u|^{2 c_{1,i}}\bar{f}_v}{|y_v|^2\bar{f}}.
	\]
	In the last inequality we have used that $\bar f_v = \bar{y}_v 
	\bar{f}_y$.
	Since $\varpi_i=0$, \Cref{lem:varpi} implies that
	$\ord_u \bar{f}_v = \ord_u \bar{f} = m_i$.
	Therefore, we have
	$\init_u f = u^{m_i}g(v)$ and $\init_u f_v = u^{m_i} g'(v)$.
	Along $D_i \cap U$, we find
	\[
	\left.
	|u|^{\tau_i} \xi^v
	\right|_{D_i\cap U}
	=
	h^{-1}_U(v) \frac{g'(\bar v)}{g(\bar{v})}.
	\]
	where $h_U$ is the real function defining the metric $g_i$, defined in
	\Cref{def:exc_metric}.
	The result now follows from \Cref{lem:cplex_gradient}, and
	\Cref{rem:exc_metric}.
\end{proof}

Before dealing with non-invariant vertices, we motivate the announced 
necessity 
to consider the real oriented blow-up with an example. In the next example 
we 
carefully analyze the cusp singularity and {\em attempt} an extension of 
the 
vector field in the style of the present subsection and observe the failure 
and 
its reason.

\begin{example}[The cusp $f(x,y) = y^2+x^3$] \label{ex:angle_dependence}
	We derive the minimal embedded resolution and associated invariants 
	step-by-step.
	
	\subsubsection*{Step 1: First Blow-up and Divisor $D_0$}
	We begin by blowing up the origin in $\C^2$. In the chart $(x,y) = 
	(u_0, 
	u_0v_0)$, the total transform of $f$ is:
	$$ \pi_0^*f = (u_0v_0)^2 + u_0^3 = u_0^2(v_0^2+u_0) $$
	This creates the first exceptional divisor $D_0$, given locally by 
	$u_0=0$. 
	The strict transform of the curve, $C_1$, has the equation 
	$g_1(u_0,v_0) = 
	v_0^2+u_0=0$. At the new origin $(u_0,v_0)=(0,0)$, the curve $C_1$ is 
	tangent to the divisor $D_0$. This tangency must be resolved.

	\textbf{Invariants for $D_0$:}
	\begin{itemize}
		\item By definition, $c_{0,0}=1, c_{1,0}=1$.
		\item $m_0 = \ord_{D_0}(f) = \mathrm{mult}_0(f) = 2$.
		\item $p_0 = \ord_{D_0}(f_y) = \ord_{D_0}(y) = \ord_{u_0}(u_0v_0) = 
		1$.
		\item $\varpi_0 = c_{1,0}-m_0+p_0 = 1-2+1=0$. $D_0$ is invariant.
	\end{itemize}
	
	\subsubsection*{Step 2: Second Blow-up and Divisor $D_1$}
	We blow up the point of tangency $P_1=(u_0,v_0)=(0,0)$. This creates 
	the 
	divisor $D_1$. Let's use the  chart, $(u_0,v_0)=(u_1v_1, v_1)$. The 
	total 
	transform 
	of $g_1$ is $v_1^2+u_1v_1 = v_1(v_1+u_1)$. Here, $D_1$ is given by 
	$v_1=0$, 
	the strict transform $\tilde{D}_0$ is $u_1=0$, and the strict transform 
	of 
	the curve is $\tilde{C}_1: v_1+u_1=0$. These three smooth curves 
	intersect 
	at the new origin $(u_1,v_1)=(0,0)$, which is a triple point and not a 
	normal crossing.
	
	\textbf{Invariants for $D_1$:}
	\begin{itemize}
		\item The center $P_1$ is a smooth point on $D_0$, so by 
		\Cref{lem:c_01_recursive}, $c_{0,1}=c_{0,0}=1$ and 
		$c_{1,1}=c_{1,0}+1=2$.
		\item We need the map from a chart seeing $D_1$ to $\C^2$. Using 
		the 
		chart $(u_0,v_0)=(u_1, u_1v_1)$ centered on $P_1$, the map is 
		$\pi(u_1,v_1)=(x,y) = (u_1, u_1^2v_1)$, and $D_1$ is $\{u_1=0\}$.
		$$ m_1 = \ord_{D_1}(f) = \ord_{u_1}((u_1^2v_1)^2+u_1^3) = 
		\ord_{u_1}(u_1^3(u_1v_1^2+1)) = 3 $$
		$$ p_1 = \ord_{D_1}(y) = \ord_{u_1}(u_1^2v_1) = 2 $$
		\item $\varpi_1 = c_{1,1}-m_1+p_1 = 2-3+2=1$. So $D_1$ is the 
		\textbf{non-invariant divisor}.
	\end{itemize}
	
	\subsubsection*{Step 3: Third Blow-up and Divisor $D_2$}
	We blow up the triple point $P_2=(u_1,v_1)=(0,0)$. This creates the 
	divisor 
	$D_2$ and separates the three intersecting curves. The final 
	configuration 
	is a normal crossings divisor. The resulting dual graph is:
	\begin{center}
		\begin{tikzpicture}[scale=1.5, every node/.style={scale=1.2}]
			\node[draw, circle] (D2) at (0,0) {$2$};
			\node[draw, circle] (D0) at (-1,0.8) {$0$};
			\node[draw, circle] (D1) at (-1,-0.8) {$1$};
			\node (C) at (1.2,0) {$\tilde{C}$};
			\draw (D2) -- (D0);
			\draw (D2) -- (D1);
			\draw[->, thick] (D2) -- (C);
		\end{tikzpicture}
	\end{center}
	
	\textbf{Invariants for $D_2$:}
	\begin{itemize}
		\item The center $P_2$ is the intersection $\tilde{D}_0 \cap D_1$, 
		so 
		by \Cref{lem:c_01_recursive}, $c_{0,2} = c_{0,0}+c_{0,1}=2$ and 
		$c_{1,2} = c_{1,0}+c_{1,1}=3$.
		\item A chart seeing $D_2=\{u_2=0\}$ is given by $\pi(u_2,v_2) = 
		(u_2^2v_2, u_2^3v_2^2)$.
		$$ m_2 = \ord_{D_2}(f) = \ord_{u_2}((u_2^3v_2^2)^2+(u_2^2v_2)^3) = 
		6 $$
		$$ p_2 = \ord_{D_2}(y) = \ord_{u_2}(u_2^3v_2^2)=3 $$
		\item $\varpi_2 = c_{1,2}-m_2+p_2 = 3-6+3=0$. $D_2$ is invariant.
	\end{itemize}
	The complete table of invariants confirms that $D_1$ is the unique 
	non-invariant divisor:
	\begin{center}
		\begin{tabular}{|l||c|c|c|} \hline
			$i$ & 0 & 1 & 2 \\ \hline \hline
			$c_{0,i}$ & 1 & 1 & 2 \\ \hline $c_{1,i}$ & 1 & 2 & 3 \\ \hline
			$m_i$ & 2 & 3 & 6 \\ \hline $p_i$ & 1 & 2 & 3 \\ \hline
			$\varpi_i $ & \textbf{0} & \textbf{1} & \textbf{0} \\ \hline
			$\tau_i $ & 2 & 3 & 6 \\ \hline
		\end{tabular}
	\end{center}
	
	\section*{Angle dependence}
	Now we show the angular dependence when approaching 
	the non-invariant divisor, $D_1$. To do this, we first find a local 
	coordinate 
	system $(u,v)$ where $D_1$ is given by $\{u=0\}$.
	
	As derived above, the second blow-up $\pi_1$ (centered at $p_1=(0,0)$ 
	in the $(u_0,v_0)$ chart) can be described by the map 
	$(u_0,v_0)=(u,uv)$. 
	The new exceptional divisor $D_1$ is $\{u=0\}$. Composing this with the 
	first blow-up map $\pi_0(u_0,v_0)=(u_0,u_0v_0)$ gives the map from our 
	local chart to $\C^2$:
	$$ (x,y) = \pi_0(\pi_1(u,v)) = \pi_0(u, uv) = (u, u(uv)) = (u, u^2v) $$
	This map $\pi(u,v)=(u, u^2v)$ is therefore the correct local model for 
	analyzing the vector field near $D_1$. It correctly yields the 
	invariants 
	calculated for $D_1$ in the table above: $\varpi_1=1$ and $\tau_1=3$.  
	We 
	recall the formula for the pullback of $\xi = -\nabla\log|f|$ 
	from \cref{eq:xi_U}:
	$$
	\pi^*\xi = \begin{pmatrix} \xi^u \\ \xi^v \end{pmatrix} = 
	\frac{-1}{\det\Jac\pi 
		\cdot \bar f} \begin{pmatrix} y_v \bar f_x - x_v \bar f_y \\ -y_u 
		\bar 
		f_x + 
		x_u \bar f_y \end{pmatrix}
	$$
	The appropriate scaling factor, as per \Cref{lem:ext_others_tot}, 
	involves 
	the 
	radial weight $\tau_1=3$. We must analyze the vector field 
	$|u|^{\tau_1}\pi^*\xi = |u|^3\pi^*\xi$. Let's compute the components.
	
	\begin{itemize}
		\item \textbf{Jacobian:}
		$$ \Jac\pi = \begin{pmatrix} x_u & x_v \\ y_u & y_v \end{pmatrix} = 
		\begin{pmatrix} 1 & 0 \\ 2uv & u^2 \end{pmatrix} \implies 
		\det\Jac\pi = 
		u^2. $$
		\item \textbf{Partials and pullbacks:}
		$$ f_x=3x^2, \quad f_y=2y $$
		$$ \pi^*f = u^3(uv^2+1), \quad \pi^*f_x = 3u^2, \quad \pi^*f_y = 
		2u^2v 
		$$
		\item \textbf{Components of $\pi^*\xi$:}
		$$ \xi^u = \frac{-1}{u^2 \bar{f}}(y_v \bar{f}_x - x_v \bar{f}_y) = 
		\frac{-1}{u^2 \bar{f}}(u^2 \cdot 3\bar{u}^2 - 0) = 
		\frac{-3\bar{u}^2}{\bar{f}} = 
		\frac{-3\bar{u}^2}{\bar{u}^3(1+\bar{u}\bar{v}^2)} = 
		\frac{-3}{\bar{u}(1+\bar{u}\bar{v}^2)} $$
		$$ \xi^v = \frac{-1}{u^2 \bar{f}}(-y_u \bar{f}_x + x_u \bar{f}_y) = 
		\frac{-1}{u^2 \bar{f}}(-2uv \cdot 3\bar{u}^2 + 1 \cdot 
		2\bar{u}^2\bar{v}) = 
		\frac{-2\bar{u}^2(-3uv+\bar{v})}{u^2 \bar{f}} = 
		\frac{-2(-3uv+\bar{v})}{u^2 
			\bar{u}(1+\bar{u}\bar{v}^2)} $$
	\end{itemize}
	
	Now we apply the scaling factor $|u|^3$:
	\begin{itemize}
		\item \textbf{u-component:}
		$$ |u|^3\xi^u = |u|^3 \frac{-3}{\bar{u}(1+\bar{u}\bar{v}^2)} = 
		\frac{-3 
			u\bar{u}|u|}{\bar{u}(1+\bar{u}\bar{v}^2)} = 
		\frac{-3u|u|}{1+\bar{u}\bar{v}^2} $$
		As $u \to 0$, this component tends to $0$. This is expected, as the 
		extended vector field must be tangent to the divisor $D_1=\{u=0\}$.
		
		\item \textbf{v-component:} This is the crucial part.
		$$ |u|^3\xi^v = |u|^3 \frac{-2(-3uv+\bar{v})}{u^2 
			\bar{u}(1+\bar{u}\bar{v}^2)} = \frac{u\bar{u}|u| \cdot 
			(-2(-3uv+\bar{v}))}{u^2 \bar{u}(1+\bar{u}\bar{v}^2)} = 
		\frac{|u|}{u} 
		\frac{-2(-3uv+\bar{v})}{1+\bar{u}\bar{v}^2} $$

		Let's analyze the limit of the scaled $v$-component as we approach 
		the 
		divisor 
		$D_1$, i.e., as $u \to 0$. We introduce polar coordinates for $u$, 
		setting 
		$u=re^{i\alpha}$ where $r=|u|$. The term $\frac{|u|}{u}$ becomes:
		$$ \frac{|u|}{u} = \frac{r}{re^{i\alpha}} = e^{-i\alpha} $$
		Now we can compute the limit:
		$$
		\lim_{u \to 0} \left( |u|^3\xi^v \right) = \lim_{r \to 0} \left( 
		e^{-i\alpha} 
		\frac{-2(-3re^{i\alpha}v+\bar{v})}{1+re^{-i\alpha}\bar{v}^2} 
		\right) = 
		e^{-i\alpha} \frac{-2(0+\bar{v})}{1+0} = -2\bar{v}e^{-i\alpha}
		$$
	\end{itemize}
	
	The limit of the scaled vector field on the divisor $D_1$ is not a 
	single 
	well-defined vector field. Instead, it depends on the angle $\alpha = 
	\arg(u)$ 
	of approach to the divisor. For each direction of approach, we get a 
	different 
	resulting vector field on $D_1$.
	
	This explicit angular dependence, captured by the term $e^{-i\alpha}$, 
	is 
	precisely the reason a standard extension fails.  As shown in the proof 
	of 
	\Cref{lem:ext_others}, the limit of the rescaled vector field is an 
	equivariant 
	function of weight $\varpi_i$. Here, for $i=1$, we have $\varpi_1=1$, 
	and 
	indeed our resulting limit $-2\bar{v}e^{-i\alpha}$ is equivariant of 
	weight 
	$1$ 
	with respect to the $\cdot_1$-action (which corresponds to rotating 
	$u$).
	
	The \textbf{real oriented blow-up} $\sigma: \Yro \to Y$ resolves this 
	indeterminacy. It replaces the divisor $D_1$ (a complex manifold) with 
	its 
	pre-image $\Dro_1 = \sigma^{-1}(D_1)$, which is the space of oriented 
	real 
	normal 
	directions 
	to $D_1$. In our local chart, $\Dro_1$ is parametrized by $(\alpha, 
	v)$. 
	The 
	limit vector field now becomes a well-defined vector field on this new, 
	larger 
	space $\Dro_1$, with its value at a point $(\alpha, v)$ being precisely 
	the 
	limit we calculated for that specific angle of approach.
\end{example}

\section{A potential for $\xi^{\mathrm{ro}}_{i,\theta}$} \label{ss:potential_ro}

In this subsection, we fix a non-invariant vertex $i \in \Vpol$,
$\varpi_i \neq 0$, and a value $\theta \in \R/2\pi\Z$ of $\arg(f)$. We 
prove an analogous result to \Cref{lem:xi_0_gradient,lem:morse_invariant} 
but for non-invariant vertices, that is, we show the existence of a 
potential function for the vector field $\xiro_{i,\theta}$.

\begin{block}
	Consider the Riemann surface $\Droc_{i,\theta}$, defined in
	\Cref{def:angled_ray}, with the metric
	$\gro_{i,\theta}$ defined in \Cref{def:exc_metric}.
	If we contract each boundary component of $\Dro_{i,\theta}$ to a point,
	we get a branched covering over $D_i$. Thus, $\Droc_{i,\theta}$ is a 
	Riemann
	surface with punctures. If $j\in\Vpol$ is a neighbor of $i$, let
	$U \subset \Ypol$ be a small coordinate chart containing the 
	intersection
	$D_i \cap D_j$, and let $\Uro = \sigma^{-1}(U)$. We can assume that
	$D_i \cap U$ is a disc. Thus, $\Droc_{i,\theta} \cap \Uro$ covers
	a punctured disc, and so is a disjoint union of some number of punctured
	discs.
\end{block}

\begin{lemma}
	Let $D \subset \Droc_{i,\theta}$ be a connected component, and
	take $j\in\Vpol$ so that $ji$ is the unique edge pointing to $i$.
	Let $U$ and $\Uro$ be as in the previous paragraph.
	Then $D$ has genus zero, and $D \cap \Uro$ is connected.
\end{lemma}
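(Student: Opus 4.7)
The plan is to realize the covering $\Droc_{i,\theta} \to D_i^\circ$ as a Galois cover with group $\Z/m_i$, and then to use the bamboo structure at noninvariant vertices to force the base $D_i^\circ$ to be topologically a disc or an annulus; both claims will then follow from the topology of connected covers of such surfaces.

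First I would observe that the rotation action $\cdot_i$ of $\R/2\pi\Z$ on $\Droc_i$ restricts to a free action of the subgroup $\Z/m_i \subset \R/2\pi\Z$ on $\Droc_{i,\theta}$, because $\arg(\fro)$ is $\cdot_i$-equivariant of weight $m_i$ (via \Cref{lem:arg_equivariant}). The quotient is $D_i^\circ$, so $\Droc_{i,\theta} \to D_i^\circ$ is a $\Z/m_i$-Galois cover, and any connected component $D$ is an $H$-Galois (hence connected unramified) cover of $D_i^\circ$ for some subgroup $H \leq \Z/m_i$.

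Next I would use that $i$ is noninvariant to constrain the topology of $D_i^\circ$. By \Cref{lem:neigh_invariant} and the description of $\Gapol$ obtained from $\Gamin$ via the polar blow-ups of \Cref{rem:resolv_jac}~\cref{it:repeat_blow} (illustrated in \cref{fig:redinv}), the noninvariant components of $\Gapol$ remain bamboos, so $i$ has at most two neighbors in $\Gapol$. Therefore $D_i^\circ = D_i \setminus \bigcup_{k \neq i} D_k$ is $\CP^1$ minus one or two points, i.e., topologically a disc or an annulus. A connected unramified cover of a disc is itself a disc (since $\pi_1 = 1$), and the connected unramified covers of an annulus are again annuli (coming from the subgroups $d\Z \leq \pi_1 = \Z$). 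In either case $D$ has genus zero, which settles the first claim.

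For the connectedness of $D \cap \Uro$, I would note that $U \cap D_i^\circ$ is a punctured disc neighborhood of $q = D_i \cap D_j$, so $D \cap \Uro \to U \cap D_i^\circ$ is an unramified cover of a punctured disc. In the disc case the map $D \to D_i^\circ$ is a homeomorphism, hence so is its restriction to $\Uro$; in the annulus case the loop around $q$ generates $\pi_1(D_i^\circ) = \Z$, so its image under the monodromy $\pi_1(D_i^\circ) \to H$ is all of $H$ (since $D$ is connected), which means the restricted cover of the punctured disc has connected total space. The main obstacle is the combinatorial step verifying that no noninvariant vertex of $\Gamin$ acquires a third neighbor in $\Gapol$ after the polar blow-ups; for this one must extract from \Cref{lem:polar} that the polar curve meets the exceptional divisor only at invariant nodes or along noninvariant edges, so that the polar blow-ups insert vertices into edges of the noninvariant bamboos rather than attaching new branches.
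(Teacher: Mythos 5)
Your first step is fine: the weight-$m_i$ equivariance of $\arg(\fro)$ does make $\Droc_{i,\theta}\to D_i^\circ$ a free $\Z/m_i$-Galois cover, and your topological analysis of connected covers of a disc or an annulus is correct. The gap is exactly where you suspected it, and it is not repairable in the form you propose: it is \emph{not} true that the generic polar curve meets the exceptional divisor only at invariant nodes or along noninvariant edges, so the claim that every noninvariant vertex of $\Gapol$ has at most two neighbours is unfounded. Already in the paper's worked example ($f=y^3+x^4+x^3y$), the strict transform of the polar meets $D_2^\circ$, where $2$ is a noninvariant vertex of valency two in the interior of the bamboo (in the chart where $D_2=\{u=0\}$ one has $f_y=u^6v^3(3v+1)$, so $\tilde P=\{v=-1/3\}$ hits $D_2^\circ$). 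In that example no further blow-up is needed, but in general $\Ypol\to\Ymin$ is obtained by blowing up precisely such intersection points until the polar is resolved; each blow-up at a point of $D_i^\circ$ attaches a new branch to $i$, so a noninvariant $i\in\Vpol$ can perfectly well have three or more neighbours. Then $D_i^\circ$ is a sphere with $\geq 3$ punctures, connected cyclic covers of which can have arbitrarily large genus (e.g.\ a degree-$N$ cover of a thrice-punctured sphere with full monodromy around each puncture has genus $(N-1)/2$), and the loop around $q=D_i\cap D_j$ no longer generates $\pi_1(D_i^\circ)$, so neither conclusion of the lemma follows. Note also that the statement genuinely fails for invariant vertices ($\Droc_{3,\theta}$ in the same example has genus $3$), so noninvariance must enter in an essential way, and in your argument it enters only through the false bamboo claim.

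The paper's proof uses noninvariance differently and is insensitive to the valency of $i$: since $\varpi_i\neq 0$, the function $f^{c_{0,i}}/x^{m_i}$ is constant along $D_i^\circ$, hence $\arg(x)$ is constant on each component $D$ of $\arg(\fro)^{-1}(\theta)\cap\Droc_i$. Thus $D$ embeds into the Milnor fibre at radius zero of the generic linear form $x$, which is a disc; a subsurface of a disc has genus zero, and the nested-disc decomposition of that fibre induced by $\Ypol\to\C^2$ gives each component a unique outer boundary circle, namely the one over $D_i\cap D_j$, which is why $D\cap\Uro$ is a single punctured disc. If you want to salvage your covering-space approach you would need to replace the disc/annulus dichotomy by an argument controlling the monodromies around \emph{all} punctures of $D_i^\circ$ simultaneously; the embedding into the Milnor fibre of $x$ is, in effect, the clean way to do that.
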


\begin{proof}
	It follows from \Cref{lem:varpi_char}\cref{it:varpi_char_x} that
	the function $\arg(x)$ takes a constant value, say $\theta'$, along $D$.
	Thus, $D$ is a connected component of the set
	$\arg(x)^{-1}(\theta') \cap \Droc_i$.
	Thus, $D$ embeds into the Milnor fiber at radius zero of the function
	$x$. This Milnor fiber is a disk, and so $D$ has genus zero.
	The modification $\Ypol\to\C^2$, and its real oriented blow-up,
	induce a decomposition of the Milnor fiber of the function $x$ at 
	radius zero 
	(recall \Cref{def:milnor_fib_radius_0}) 
	consisting of nested discs. Thus, each connected component
	has a unique outer boundary component. For the vertex $0$, this outer
	component is the boundary of the Milnor fiber, whereas for other 
	vertices $i$,
	the outer component corresponding to the unique edge pointing to $i$.
	Since the closure of $D$ has a unique outer boundary component,
	the set $D \cap \Uro$ is a single punctured disc, in particular,
	it is connected.
\end{proof}

\begin{lemma} \label{lem:potential_xiro}
	Let $i$ be any vertex in $\Vpol \setminus \{0\}$. Then, there exists
	a function
\index{$\phiro_{i,\theta}$}
 $\phiro_{i,\theta}:\Droc_{i,\theta}\to\R$ so that
	\[
	\xiro_{i,\theta} = \nabla \phiro_{i,\theta}
	\]
	where the gradient is taken with respect to the metric $\gro_{i,\theta}$
\index{metric!on $\Droc_{i,\theta}$}
	defined in \Cref{def:exc_metric}.
	
	Furthermore, if $k$ is a neighbor of $i$ so that the edge $ik$ is 
	oriented
	from $i$ to $k$, and $\varpi_k \neq 0$,
	then $\phiro_{i,\theta}$ extends over the corresponding
	boundary components with constant value there.
\end{lemma}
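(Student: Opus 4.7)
The strategy is to prove local existence of a potential using the flat-coordinate model of $\gro_{i,\theta}$, to glue these into a global $\phiro_{i,\theta}$ via closedness of the associated 1-form, and then to read off boundary constancy from the corner coordinates of Section~\ref{s:corners}.

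First, I would observe that after the coordinate change $w = g_{c_{1,i}}(v)$ introduced in \cref{block:c_1}, the metric $\gro_{i,\theta}$ becomes the flat Euclidean metric by \Cref{rem:exc_metric}; so the existence of a local potential for $\xiro_{i,\theta}$ reduces to the claim that the 1-form $\omega := \gro_{i,\theta}(\xiro_{i,\theta},\cdot)$ is locally exact. Using the leading-order computation of the extension in the proof of \Cref{lem:ext_others} (whose dominant term is $|u|^{\tau_i}\bar f_v/(|y_v|^2\bar f)$), one verifies by a direct Wirtinger-derivative calculation that $\omega$ admits a local primitive: in the non-invariant case this primitive is, up to constants, $-\Re\bigl(\bar a\, e^{i\varpi_i\alpha_\theta}\, h(v)/|a|^2\bigr)$, where $h(v)$ is the first non-constant coefficient in the expansion $\tilde f(u,v) = a + a_1 u + \dots + a_{\varpi_i - 1} u^{\varpi_i - 1} + u^{\varpi_i} h(v) + O(u^{\varpi_i+1})$, with the constancy of $a_1,\dots,a_{\varpi_i-1}$ justified by \Cref{lem:varpi,lem:varpi_char}.

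Second, I would glue the local primitives into a global $\phiro_{i,\theta}$ by showing that $\omega$ is exact, not just closed. The key input is the identity $\xi = -\nabla_{g_{\mathrm{std}}}\log|f|$ (\Cref{lem:vector_in_cords}), which yields the exactness of the corresponding 1-form on interior slices $\Yro_\theta\setminus\Dro$; any loop $\gamma\subset\Droc_{i,\theta}$ can be pushed to a nearby loop $\gamma_\epsilon$ in $\Yro_\theta\setminus\Dro$ along which the integral of $d(-\log|f||_{\Yro_\theta})$ vanishes, and passing to the limit $\epsilon\to 0$ gives $\oint_\gamma \omega = 0$. This produces $\phiro_{i,\theta}$ with the required gradient identity, unique up to an additive constant.

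Third, for the boundary behaviour at a corner $\Dro_{i,k,\theta}$ with $ik$ oriented from $i$ to $k$ and $\varpi_k\neq 0$, I would use the corner coordinates of \cref{block:corners}, in which $\pi^*f = u^{m_i} v^{m_k}(a+\hot)$. The boundary circle of $\Droc_{i,\theta}$ in $\Dro_{i,k,\theta}$ lies in $\{r = s = 0\}$ and is cut out by the angular relation $m_i\alpha + m_k\beta \equiv \theta - \arg a$ (\Cref{lem:transversality_number}). The local primitive of $\omega$ in these corner coordinates, obtained by integrating the explicit form of $\xiro_U$ provided by \Cref{lem:explicit_pullback_xi} and \cref{eq:xi_corner_more}, extends continuously to the corner as a function of $|\tilde f|$ only; on the circle $\{r=s=0\}$ this reduces to $-\log|a|$, the same value at every point, giving the claimed extension.

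The hard part will be the first step: identifying the correct local primitive in the non-invariant case, where the obvious candidate $-\log|\tilde f|$ is constant along $\Dro_i$ by \Cref{lem:varpi_char} and therefore cannot serve as a potential. The correct candidate must absorb the $\alpha_\theta$-dependent phase factor $e^{i\varpi_i\alpha_\theta}$ implicit in \cref{eq:xi_corner_more}; verifying its gradient is a calculation in the flat coordinate $w = g_{c_{1,i}}(v)$ which matches the explicit form of the extended vector field determined in Section~\ref{s:others}.
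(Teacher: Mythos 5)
Your route to the first statement (exactness of the dual form $\omega=\gro_{i,\theta}(\xiro_{i,\theta},\cdot)$) is genuinely different from the paper's and is plausible: the paper never pushes loops into the interior; instead it shows each component of $\Droc_{i,\theta}$ has genus zero with a single ``outer'' puncture, proves $\omega$ is represented by an antiholomorphic function (hence closed), extends it without poles over all the inner punctures, and concludes exactness because the filled-in surface is a disjoint union of copies of $\C$. Your alternative --- each slice integral $\oint_{\gamma_\epsilon}\epsilon^{-\varpi_i}\,d(-\log|f|)$ vanishes because the integrand is exact on the slice, so the limit vanishes --- would, if justified, give exactness on the open surface without the genus-zero input. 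But you assert ``passing to the limit'' without proof; the convergence of $\iota_\epsilon^*\bigl(\epsilon^{-\varpi_i}d(-\log|f|)\bigr)$ to $\omega$ on compacta requires controlling the $du$-components of vectors tangent to the slice $\{r=\epsilon\}\cap\Yro_\theta$ (these are of size $O(\epsilon^{1+\varpi_i})$ against derivatives of $-\log|\tilde f|$ of size $O(1)$), and this estimate is exactly where the invariance statements of \Cref{lem:varpi} enter. This can be made to work, but it is not a formality. Your identification of the local primitive as $\pm\Re\bigl(e^{i\varpi_i\alpha}f_{m_i+\varpi_i}(v)/a\bigr)$ is essentially correct and matches the paper's formula for $\karo_{i,\theta}$.

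The genuine gap is in your third step, the ``Furthermore'' clause. You claim the primitive near a corner $\Dro_{i,k,\theta}$ with $ik$ oriented from $i$ to $k$ ``extends continuously to the corner as a function of $|\tilde f|$ only'' and reduces to $-\log|a|$ there. This is unjustified, and it is inconsistent with your own (correct) observation that $-\log|\tilde f|$ is constant along $\Dro_i$ in the non-invariant case and therefore cannot be the potential: near the puncture the primitive is not a function of $|\tilde f|$. The real issue is that $\omega$, written as an antiholomorphic function of a local coordinate $v$ centered at the puncture, is a priori only known to be meromorphic there (the boundedness of $|v|^{\tau_k}\xi^v$ gives a pole of order at most $\tau_k$). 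A simple pole with nonzero real residue would make the primitive behave like $\log|v|$, so it would neither extend continuously nor be constant on the boundary circle; and your loop-pushing argument only kills the imaginary part of the residue ($\oint_\gamma\omega=0$ forces $\Im(c)=0$ for a pole $c/v$, not $\Re(c)=0$). The paper closes this precisely with the Poincar\'e--Hopf index computation: by \Cref{lem:int_ind} the index of $\xiro_{i,\theta}$ at the puncture is $1-\frac{1}{\gcd(m_i,m_k)}\left|\begin{smallmatrix}\varpi_k & m_k\\ \varpi_i & m_i\end{smallmatrix}\right|$, which is nonpositive by \Cref{cor:ups_pol} and \Cref{cor:Gapol_ah}, and this bounds the vanishing order of the antiholomorphic function from below by $0$, ruling out the pole. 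Without this step (or an equivalent), the constancy of $\phiro_{i,\theta}$ on those boundary components --- which is what \Cref{lem:finite_broken_traj} and the Petri-dish arguments later rely on --- is not established.
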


\begin{proof}
	If $i$ is invariant, then we can take $\phiro_{i,\theta} = 
	\sigma^*\phi_i$,
	where $\phi_i:D_i \to \R$ is defined in the previous subsection.
	We leave the second statement for invariant vertices to the end of
	this proof.
	
	So assume that $\varpi_i \neq 0$. By the previous lemma, 
	$\Droc_{i,\theta}$
	is a punctured sphere. To each neighbor of $i$, there corresponds
	a set of punctures. If $j$ is the unique neighbor of $i$ so that
	there is an edge from $j$ to $i$, then there is only one puncture 
	corresponding
	to $j$ on each connected component.
	Let $\Drosc_{i,\theta}$ be the Riemann surface obtained by filling
	in all the other punctures, thus, $\Drosc_{i,\theta}$
	is a disjoint union of copies of $\C$.
	Denote by $\karo_{i,\theta}$ the differential form dual to
	$\xiro_{i,\theta}$ with respect to the metric $\gro_{i,\theta}$.
	Let $u,v$ be a holomorphic coordinate system on a chart in
	$\Ypol$, near a point $p\in D_i$ as before, and expand
	
	\[
	f(u,v) = u^{m_i}f_{m_i}(v) + \ldots + u^{m_i+\varpi_i}f_{m_i+\varpi_i} 
	+ \hot
	\]
	Then, by \Cref{lem:varpi}, we have
	\[
	f'_{m_i} = \ldots = 
	f'_{m_i+\varpi_i - 1} = 0,
	\]
	and
	\[
	f'_{m_i+\varpi_i} \not\equiv 0.
	\]
	As a result, we can write
	\[
	\init_u f = u^{m_i} f_{m_i}(v),\qquad
	\init_u f_v = u^{m_i+\varpi_i} f'_{m_i+\varpi_i}(v).
	\]
	Similarly as in the proof of \Cref{lem:ext_others}, the restriction of
	$|u|^{\tau_i}\xi^v$ to $\partial \Uro$ is given by
	\[
	|u|^{\tau_i} \frac{x_u \init_u \bar f_y}{x_u y_v \init_u \bar f}
	=
	|u|^{\tau_i} \frac{\init_u \bar f_v}{|y_v|^2 \init_u \bar f}
	=
	\frac{|u|^{\tau_i} \bar u^{\varpi_i}}{|y_v|^2}
	\frac{\bar f'_{m_i+\varpi_i}}{\bar f_{m_i}}
	=
	\left(
	\frac{u}{|u|}
	\right)^{-\varpi_i}
	\frac{|u|^{2c_{1,i}} }{|y_v|^2}
	\frac{\bar f'_{m_i+\varpi_i}}{\bar f_{m_i}}.
	\]
	Recall that the metric on $\Droc_{i,\theta}$ is given by the function
	\[
	h_U(v) = \lim_{u\to 0}\frac{|y_v|}{|u|^{c_{1,i}}}.
	\]
	Furthermore, the function $u/|u|$ is locally constant on
	$\Droc_{i,\theta}$ because of \Cref{lem:varpi}.
	
	By \Cref{rem:exc_metric}, the differential form $\karo_{i,\theta}$ is
	given, in the coordinate $v$, by the complex function
	\[
	\left(
	\frac{u}{|u|}
	\right)^{-\varpi_i}
	\frac{\bar f'_{m_i+\varpi_i}}{\bar f_{m_i}},
	\]
	which is antiholomorphic on $\Droc_{i,\theta}$ for $\theta$ fixed. 
	By the Cauchy-Riemann equations, the form $\karo_{i,\theta}$ is closed.
	
	Now, assume that $v$ is a holomorphic coordinate in a chart around
	a puncture, corresponding to a vertex $k$, so that the edge $ik$ is 
	oriented
	from $i$ to $k$. Assume that the puncture is given by $v=0$.
	As a result, we can represent $\karo_{i,\theta}$ by an antiholomorphic
	complex function $\alpha + i \beta$ on a punctured disk. Since
	\[
	\lim_{v\to 0} |u|^{\tau_i} |v|^{\tau_k} \xi^v = 0,
	\]
	the function $|v|^{\tau_k}(\alpha + i\beta)$ is bounded on the 
	punctured disk.
	It follows from the Riemann extension theorem that the holomorphic 
	function
	$\alpha - i \beta$ has a meromorphic singularity at $v = 0$.
	
	As a result, the vanishing order of $\alpha - i \beta$ at
	$v=0$ is minus the Poincar\'e-Hopf index of $\xiro_{i,\theta}$ at the 
	puncture.
	By \Cref{lem:int_ind}, this index is
	\[
	\frac{-1}{\gcd(m_i,m_k)}
	\left|
	\begin{matrix}
		\varpi_k &
		m_k \\
		\varpi_i &
		m_i
	\end{matrix}
	\right|
	+ 1.
	\]
	By  \Cref{cor:ups_pol} and \Cref{cor:Gapol_ah} \cref{it:Gapol_ah_pos}, 
	the index
	is nonpositive, and so $\karo_{i,\theta}$ does not have a pole at $v=0$.
	Therefore, $\karo_{i,\theta}$ extends as
	a closed form on $\Drosc_{i,\theta}$, which is a disjoint union of
	copies of $\C$.
	It follows that $\karo_{i,\theta}$ is exact, i.e. there exists
	$\phiro_{i,\theta}:\Droc_{i,\theta}\to\R$
	with $\karo_{i,\theta} = d\phiro_{i,\theta}$,
	or equivalently, $\xiro_{i,\theta} = \nabla \phiro_{i,\theta}$.
	Furthermore, $\phiro_{i,\theta}$ extends continuously over the punctures
	corresponding to the edge $ik$, and so, has a constant limit
	at each corresponding boundary component.
	
	If $i$ is invariant, then
	the same argument as above applies to show that $\phi_i$ has a limit at
	a puncture corresponding to $ik$, if $i$ is such that $i \to k$,
	using, again, \Cref{lem:int_ind}.
	This finishes the proof.
\end{proof}

	%-----------------------------------------------------------------------
% Beginning of chap10.tex
%-----------------------------------------------------------------------
%
%  AMS-LaTeX sample file for a chapter of a monograph, to be used with
%  an AMS monograph document class.  This is a data file input by
%  chapter.tex.
%
%  Use this file as a model for a chapter; DO NOT START BY removing its
%  contents and filling in your own text.
% 
%%%%%%%%%%%%%%%%%%%%%%%%%%%%%%%%%%%%%%%%%%%%%%%%%%%%%%%%%%%%%%%%%%%%%%%%
	\chapter{Singularities on the Boundary of $Y^{\mathrm{ro}}_{\mathrm{pol}}$}
\label{s:sings_on_boundary}

In this section we study the singularities of $\xiro_U$ (\Cref{def:xiroU}) 
on
$\partial \Uro$. Fix a point $p \in \Sigma_{i}$ with $i \neq 0$ and assume 
that
the strict transform of $\{f_y=0\}$ defines the polar curve $P_i$.
Since $\pipol$ resolves the polar curve $P_i$
(\Cref{def:relative_polar}), the function $\pipol^*f_y$ vanishes along a 
smooth
curve $\tilde{P}_i$ passing through $p$ meeting $D_i^\circ$ transversely. 
As a
consequence, on top of choosing $u$ so that $x=u^{c_{0,i}}$ as in
\Cref{not:other_exc_coord}, we can choose $v$ so that
\[
\pipol^*f_y = u^{p_i}v.
\]

\section{Spinal cells}

In this subsection we give name to the sets of trajectories of the vector 
field $\pipol^*\xi$ that converge to points in the sets $\Sigma_i$.

\begin{definition}\label{def:spinal_cell}
	Let $i\in \Vpol \setminus \{0\}$, let $p \in \Sigma_i$.
	We define the set
\index{$S(p)$}
	\[
	S(p) = \set{b \in \Tu^*}{\text{the trajectory } (\pipol)^* \gamma_b 
		\text{ converges to } p \text{ in } \Ypol}
	\]
	Now fix $q \in \sigma^{-1}(p)$ and $\theta = \arg(q)$. We define the 
	sets
	\[
	S_\theta(p) = \set{b \in \Tu_\theta^*}{\text{the trajectory } 
		(\pipol)^* \gamma_b \text{ converges to } p \text{ in } \Ypolte}
	\]
	and
	\[
	S(q) = \set{b \in \Tu^*}{\text{the trajectory } (\piropol)^* \gamma_b 
		\text{ converges to } q \text{ in } \Yropolte}
	\]
\end{definition}

\begin{rem}
	We describe a few easy properties of the sets defined above. Recall the 
	definition of the total spine $S$ in \Cref{def:total_spine}. 
	
	Let $p \in \Sigma_i$. We have the containments,
	\[
	S(q) \subset S_\theta(p) \subset S(p) \subset S.
	\]
	with $\arg(q) = \theta$. Also, by definition 
	\[
	S(p) = \bigcup_{q \in \sigma^{-1}(p)} S(q)
	\]
	and
	\[
	S_\theta(p) = \bigcup_{\substack{q \in \sigma^{-1}(p) \\
			\arg(q)= \theta}} S(q).
	\]
	Finally observe that $\sigma^{-1}(p)$ is a circle and that there are 
	exactly $m_i$ points $q \in \sigma^{-1}(p)$ with $\arg(q) = \theta$ for 
	each $\theta \in \R / 2 \pi \Z$.
\end{rem}

All the sets in \Cref{def:spinal_cell} are contained, by definition, in 
$\Tu^*$. In the next definition we describe the counterpart of $S(q)$ in 
$\Yropol$.
\begin{definition}
	For $q \in \Siro_{i,\theta}$ we define the set
	\[
	\Sro(q) = \set{a \in \Yropolte \setminus \partial \Yropolte}{\text{the 
			trajectory } \gamma_a \text{ of } (\piropol)^*\xi \text{ converges 
			to } 
		q \text{ in } \Yropolte}.
	\]
	Equivalently, $\Sro(q) = (\piropol)^{-1}(S(q))$. Analogously, we define 
	\[
	\Sro(p) = \set{a \in \Yropol \setminus \partial \Yropol}{\text{the 
			trajectory } (\pipol)^* \gamma_b \text{ converges to } p \text{ in 
			} 
		\Ypol},
	\]
	or, equivalently, $\Sro(p) = (\piropol)^{-1}(S(p))$.
\end{definition}

\begin{definition}\label{def:local_spinal}
	Let $U$ be a given coordinate chart around $p$ as in the previous 
	sections.
	Recall \Cref{def:xiroU} of $\xiro_U$.  We define
	\[
	\Sro_U = \set{a \in \Uro }{\text{the trajectory } \gamma_a \text{ of } 
		\xiro_U \text{ converges to a point in } \sigma^{-1}(p)}.
	\]
\end{definition}
Note that the set $\Sro_U$ depends on $p$. We drop $p$ from the notation to 
avoid cumbersome formulas but it should be remembered since $U$ is a chart 
around $p$. Also note that it follows from the definition that the 
trajectory 
$\gamma_a$ should be completely contained in $\Uro$ since it is a 
trajectory of 
$\xiro_U$.

The main goal of this section is to prove the following proposition. Its 
proof is relegated to \Cref{ss:center_stable} after enough theory is 
developed.
\begin{prop}\label{prop:properties_SU}
	The set $\Sro_U$ satisfies
	\begin{enumerate}
		\item It is a submanifold of $\Uro$ of real dimension $3$ 
		transverse to $\partial \Uro$,
		\item it contains $\sigma^{-1}(p)$,  and
		\item \label{it:hessian_explicit} if $q \in \sigma^{-1}(p)$ then 
		the tangent space $T_q\Sro_U$ is the sum of the kernel and the 
		negative eigenspace of $\Hess_q \xiro_U$ (\cref{eq:hess_xiro}), 
		which is generated by the three vectors
		\[
		\partial_r + P \partial_s + Q \partial_t, \qquad \partial_\alpha, 
		\qquad P' \partial_s + Q' \partial_t, 
		\]
		where $P+iQ = -\bar{\lambda}_p^{-1} e^{-i \varpi_i \alpha} 
		\overline{\partial_r r^{\tau_i}\xirov(q)}$ and $P' + i Q' = 
		\sqrt{-e^{-i \varpi_i \alpha} \lambda_p}$.
	\end{enumerate}
\end{prop}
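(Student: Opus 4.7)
The proof will proceed in three stages: an explicit computation of $\Hess_q \xiro_U$ at each $q \in \sigma^{-1}(p)$, an application of Kelley's center-stable manifold theorem, and the identification of the resulting invariant submanifold with the dynamically defined set $\Sro_U$.

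First, I would compute $\Hess_q \xiro_U$ in the coordinates $(r,\alpha,s,t)$ on $\Uro$. By \Cref{lem:ext_others}, both components $r^{\tau_i}\xiror$ and $r^{\tau_i}\xiroal$ of $\xiro_U$ vanish identically on $\partial\Uro = \{r=0\}$ and are hence divisible by $r$, while $\sigma^{-1}(p)$ is a pointwise-fixed circle so that $\partial_\alpha \in \ker \Hess_q \xiro_U$. By the equivariance of \Cref{lem:arg_equivariant}, the $v$-component has the form $r^{\tau_i}\xirov = e^{i\varpi_i\alpha}(\lambda_p \bar v + b_p r + \hot)$ near $q$, where $\lambda_p \neq 0$ because the polar curve $\tilde{P}_i$ meets $D_i^\circ$ transversely at $p$. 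Combined with the sign of $\partial_r(r^{\tau_i}\xiror)(q)$, which is negative since trajectories of $\xi$ flow inward toward $p$, a block computation shows that the spectrum of $\Hess_q \xiro_U$ consists of $\{0, \mu, +|\lambda_p|, -|\lambda_p|\}$ with $\mu < 0$; the pair $\pm|\lambda_p|$ arises as the real eigenvalues of the antilinear map $w \mapsto e^{i\varpi_i\alpha}\lambda_p\,\bar w$ in the $(s,t)$-plane. The explicit eigenvectors of part (iii) follow from standard linear algebra: $P' + iQ' = \sqrt{-e^{-i\varpi_i\alpha}\lambda_p}$ solves the real eigenvector equation for the eigenvalue $-|\lambda_p|$, while the formula for $P+iQ$ comes from diagonalizing the triangular $3\times 3$ block coupling $\partial_r$ to the $(s,t)$-plane.

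Next, I would apply Kelley's center-stable manifold theorem \cite{kelley_stab_central, kelley_stable} along $\sigma^{-1}(p)$. The normal eigenvalues $\mu$ and $\pm|\lambda_p|$ are all nonzero and real, so $\sigma^{-1}(p)$ is a normally hyperbolic invariant circle of fixed points with a $2$-dimensional normal stable bundle and a $1$-dimensional normal unstable bundle. Kelley's theorem produces a smooth $3$-dimensional locally invariant center-stable manifold $W^{cs}$ containing $\sigma^{-1}(p)$ and tangent at each $q$ to the direct sum of $\ker \Hess_q \xiro_U$ and its negative eigenspace. The invariance of $\xiro_U$ under the $\cdot_i$-action on the $\alpha$-circle ensures that $W^{cs}$ is globally well-defined along $\sigma^{-1}(p)$.

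Finally, I would identify $W^{cs}$ with $\Sro_U$ in a neighborhood of $\sigma^{-1}(p)$. Since the center manifold coincides with the submanifold $\sigma^{-1}(p)$ of fixed points, the center dynamics is trivial and Kelley's theorem provides a foliation of $W^{cs}$ by strong stable manifolds of points of $\sigma^{-1}(p)$; hence every trajectory of $\xiro_U$ starting in $W^{cs}$ near $\sigma^{-1}(p)$ converges in forward time to a point of $\sigma^{-1}(p)$, so $W^{cs} \subset \Sro_U$ locally. Conversely, any trajectory with nonzero component along the $+|\lambda_p|$ unstable eigenvector would grow exponentially and escape a neighborhood of $q$, contradicting convergence, so $\Sro_U \subset W^{cs}$ locally. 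Transversality of $\Sro_U$ to $\partial\Uro$ follows immediately from the nonvanishing $\partial_r$-component of the first listed tangent vector. The main obstacle is this last step: $\Sro_U$ is defined purely dynamically and its equality with $W^{cs}$ hinges on the strong stable foliation together with the special feature that the center here is a locus of fixed points, which is what forces a convergent trajectory to lie on a stable leaf rather than oscillate in the unstable direction.
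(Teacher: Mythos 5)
Your overall architecture (compute the Hessian, invoke Kelley, identify the center-stable manifold with $\Sro_U$) matches the paper's, but there is a genuine error in the first stage that propagates and invalidates the third. You claim that $\partial_r(r^{\tau_i}\xiror)(q) = \mu < 0$ ``since trajectories flow inward toward $p$,'' so that $\sigma^{-1}(p)$ is a \emph{normally hyperbolic} circle of fixed points with a $2$-dimensional normal stable bundle. In fact the paper proves (Part 1 of \Cref{lem:zeros_not_0}, which is the most delicate computation in the section) that both $r^{\tau_i}\xiror$ and $r^{\tau_i}\xiroal$ vanish to order at least $2$ along $\sigma^{-1}(p)$: the first two rows of $\Hess_q\xiro_U$ are identically zero, the spectrum is $\{0,0,+|\lambda_p|,-|\lambda_p|\}$, and the kernel is $2$-dimensional, spanned by $\partial_\alpha$ together with the sheared vector $\partial_r + P\partial_s + Q\partial_t$. (This is already forced by the statement you are proving: it lists \emph{two} kernel vectors plus \emph{one} negative eigenvector.) The case of $\xiroal$ is not a soft order-of-vanishing estimate; the naive bound only gives order $\geq 1$, and ruling out order exactly $1$ requires the arithmetic of $\tau_i, c_{0,i}, c_{1,i}, m_i, p_i$ and the invariance dichotomy via \Cref{cor:strict_ine} and \Cref{lem:varpi}.

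Because the circle is not normally hyperbolic, your identification $W^{cs}=\Sro_U$ collapses: the center directions are $\partial_\alpha$ \emph{and} the normal $r$-direction, so Kelley's theorem gives no strong stable foliation transverse to a locus of fixed points, and convergence of trajectories inside $W^{cs}$ to $\sigma^{-1}(p)$ is not automatic --- center dynamics could a priori keep $r$ bounded away from $0$ or drift along the circle. The paper closes this gap by hand in \Cref{lem:local_dyn}: monotone decay of $|\tilde t|$ on the center-stable manifold, compactness of $\tSro_{U,b}$, the fact that no trajectory of $\xi$ emanates from the origin, and that $(\piro)^*|f|$ tends to $0$ along every trajectory, which forces $\tilde r\to 0$. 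Your escape argument for the converse inclusion is essentially right, but the forward inclusion needs this bespoke dynamical input. A secondary omission: the antilinear block $\bar A(e^{-i\varpi_i\alpha}\lambda_p)$ rotates with $\alpha$, so diagonalizing it along the whole circle requires the square root $\sqrt{-e^{-i\varpi_i\alpha}\lambda_p}$, hence the passage to the double cover $\tUro$ and the descent via Galois invariance (\Cref{cor:unicity_tilde}, \Cref{cor:SU_invariant}); invariance under the $\cdot_i$-action alone does not dispose of this.
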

\section{The Hessian in $U^{\mathrm{ro}}$}
In this subsection we compute the Hessian of $\xiro_U$ along its
singular set $\sigma^{-1}(p)$, for some $p \in \Sigma_i$.
This Hessian is block-triangular.

\begin{definition}
	For $\lambda \in \C$, define $2\times 2$ real matrices
	\[
	A(\lambda)
	=
	\left(
	\begin{matrix}
		\Re(\lambda) & -\Im(\lambda) \\
		\Im(\lambda) & \Re(\lambda) \\
	\end{matrix}
	\right),\qquad
	\bar A(\lambda)
	=
	\left(
	\begin{matrix}
		\Re(\lambda) & \Im(\lambda) \\
		\Im(\lambda) & -\Re(\lambda) \\
	\end{matrix}
	\right).
	\]
\end{definition}

\begin{rem}
	A direct computation yields
	\[
	\bar{A}(\lambda)^{-1} = \frac{1}{|\lambda|^2} \bar{A}(\lambda).
	\]
\end{rem}
\begin{rem}\label{rem:anti_hessian}
	The matrices $A(\lambda)$, $\bar A(\lambda)$ describe real linear
	maps $\C \to \C$ which are either complex linear on antilinear.
	If $V\subset \C$ is an open set, and $h:V\to\C$ is differentiable,
	then the Jacobian of $h$ is given by the formula
	\begin{equation} \label{eq:jacob_hesse}
		\Jac h(v) = A(\partial_v h(v)) + \bar A(\partial_{\bar v} h(v)).
	\end{equation}
	If $\lambda \neq 0$. Denote by $\sqrt{\lambda}$ a root of $\lambda$ and 
	by 
	$\sqrt{-\lambda}$ a root of $-\lambda$. Then the matrix $\bar 
	A(\lambda)$ 
	has
	eigenvectors $\sqrt{\lambda}$, with eigenvalue $|\lambda| > 0$,
	and $\sqrt{-\lambda}$, with eigenvalue $-|\lambda| < 0$. Furthermore, 
	these 
	two eigenspaces are orthogonal with respect to any inner product that 
	is 
	compatible with the complex structure of $\C$.
	
	The matrix in \cref{eq:jacob_hesse} is the Jacobian matrix of the 
	function
	$h:V \to \C$, where we identify $\C$ with $\R^2$ as usual
	This way,
	we can consider $h$ as a vector field (recall \Cref{rem:exc_metric}).
	If $h$ vanishes at some point
	$v \in V$, then this Jacobian matrix is the Hessian matrix of $h$
	at $v$.
\end{rem}

\begin{lemma}\label{lem:antiholo}
	Let $V \subset \C$ be an open set, and $h,k:V \to \C$ differentiable 
	functions.
	Assume that $h$ is antiholomorphic and has a simple zero at $p \in V$,
	and that $k(p) \neq 0$.
	Then the function $k\cdot h$, seen as a vector field on $V$,
	has an elementary singularity at $p$
	whose Hessian is
	$\bar{A} (k(p) \cdot \partial_{\bar v} h(p))$.
\end{lemma}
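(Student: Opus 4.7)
The plan is a direct computation using the formula for the Jacobian of a differentiable complex-valued function given in \Cref{rem:anti_hessian}, combined with the two hypotheses: $h$ is antiholomorphic and $h(p)=0$.

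First, I would apply the Leibniz rule to the product $kh$ to obtain
\[
  \partial_v(kh)(p) = \partial_v k(p)\,h(p) + k(p)\,\partial_v h(p),\qquad
  \partial_{\bar v}(kh)(p) = \partial_{\bar v} k(p)\,h(p) + k(p)\,\partial_{\bar v} h(p).
\]
The hypothesis $h(p)=0$ kills the first summand in each expression. The hypothesis that $h$ is antiholomorphic gives the Cauchy–Riemann relation $\partial_v h \equiv 0$, which kills the remaining summand in the first equation. Hence
\[
  \partial_v(kh)(p)=0,\qquad \partial_{\bar v}(kh)(p)=k(p)\,\partial_{\bar v}h(p).
\]

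Next, by the formula in \Cref{rem:anti_hessian},
\[
  \Jac(kh)(p) = A\bigl(\partial_v(kh)(p)\bigr) + \bar A\bigl(\partial_{\bar v}(kh)(p)\bigr) = A(0) + \bar A\bigl(k(p)\,\partial_{\bar v}h(p)\bigr) = \bar A\bigl(k(p)\,\partial_{\bar v}h(p)\bigr),
\]
which is the asserted Hessian. Finally, by the eigenvalue description in \Cref{rem:anti_hessian}, the matrix $\bar A(\lambda)$ with $\lambda=k(p)\,\partial_{\bar v}h(p)\neq 0$ has eigenvalues $\pm|\lambda|$, both real and nonzero, so the singularity is elementary in the sense of \Cref{def:elementary_vf}. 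There is no genuine obstacle here; the entire content is bookkeeping with the Wirtinger derivatives, and the only subtlety is remembering that antiholomorphicity is exactly the condition $\partial_v h=0$ (not $\partial_{\bar v} h = 0$), which makes the cross term survive in the antilinear part and produces the $\bar A$-block rather than an $A$-block.
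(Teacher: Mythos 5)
Your proof is correct and is essentially identical to the paper's: both apply the Leibniz rule to the Wirtinger derivatives of $kh$ at $p$, use $h(p)=0$ and $\partial_v h\equiv 0$ to conclude $\partial_v(kh)(p)=0$ and $\partial_{\bar v}(kh)(p)=k(p)\partial_{\bar v}h(p)$, and then invoke \Cref{rem:anti_hessian}. The only (harmless) difference is that you spell out the eigenvalue argument for elementarity, which the paper leaves implicit in its citation of the remark.
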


\begin{proof}
	The lemma follows from \Cref{rem:anti_hessian}, since
	\[\begin{split}
		\partial_v(h\cdot k)(p)
		&= \partial_v h(p) \cdot k(p) + h(p) \cdot \partial_v k(p)
		= 0,
		\\
		\partial_{\bar v}(h\cdot k)(p) 
		&=  \partial_{\bar v} h(p) \cdot k(p) + h(p) \cdot \partial_{\bar 
			v} k(p) 
		= k(p) \cdot \partial_{\bar v} h(p). here
	\end{split}
	\]
\end{proof}
In this subsection we use real coordinates $r,\alpha, s,t$ for $\Uro$ with 
$r,\alpha$ (the pullback of) polar coordinates for $u$ and $s+it=v$, and 
$u,v$ coordinates in $U$.
\begin{lemma} \label{lem:zeros_not_0}
	There exists a $\lambda_p \in \C^*$ so that
	if $q\in \sigma^{-1}(p)$, and we set $\alpha = \arg(u)(q)$. Then, the 
	matrix $\Hess_q \xiro_U $ expressed in $2 \times 2$ blocks with respect 
	to the coordinates $r,\alpha, s, t$, is 
	\begin{equation}\label{eq:hess_xiro}
		\Hess_q \xiro_U 
		=
		\left(
		\begin{matrix}
			\multicolumn{2}{c}{\multirow{2}{*}{\scalebox{2}{$0$}}} &  
			\multicolumn{2}{c}{\multirow{2}{*}{\scalebox{2}{$0$}}} \\
			\multicolumn{2}{c}{} & \multicolumn{2}{c}{} \\
			(\partial_r \Re (r^{\tau_i} \xirov))|_q & 0 &  
			\multicolumn{2}{c}{\multirow{2}{*}{$\bar 
					A(e^{-i\varpi_i\alpha}\lambda_p)$}} \\
			(\partial_r \Im (r^{\tau_i} \xirov)|_q  & 0 & 
			\multicolumn{2}{c}{} 
		\end{matrix}
		\right).
	\end{equation}
\end{lemma}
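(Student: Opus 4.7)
\medskip

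\noindent\textbf{Proof plan.} The goal is to compute the four $2\times 2$ blocks of $\Hess_q \xiro_U$ in the coordinates $(r,\alpha,s,t)$, so I would organize the argument by block. Write $\zeta^r = r^{\tau_i}\xiror$, $\zeta^\alpha = r^{\tau_i}\xiroal$, and $\zeta^v = \zeta^s + i\zeta^t = r^{\tau_i}\xirov$ for the four real components of $\xiro_U$. The top-left block $A$ and top-right block $B$ will vanish cheaply: revisiting the factorizations obtained in the proof of \Cref{lem:ext_others}, one sees that $\zeta^r$ and $\zeta^\alpha$ are each divisible by $r^{c_{0,i}+c_{1,i}-1} = r^{\nu_i-1}$, and since $i\neq 0$ forces $\nu_i - 1 \geq 2$, both $\zeta^r$ and $\zeta^\alpha$ vanish to order at least two in $r$ at $q$. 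This kills $\partial_r\zeta^r|_q$ and $\partial_r\zeta^\alpha|_q$, while the fact that $\zeta^r$ and $\zeta^\alpha$ vanish identically on $\{r=0\}$ (tangency to $\partial\Uro$) kills the $\partial_\alpha$, $\partial_s$ and $\partial_t$ partials. Thus $A = B = 0$.

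The second column of the lower-left block $C$ is similarly easy: $\sigma^{-1}(p) = \{r=0,\,v=0\}$ is contained in the vanishing locus $\Siro_U$ of $\xiro_U$, so $\zeta^v$ vanishes identically along this circle, hence $\partial_\alpha\zeta^s|_q = \partial_\alpha\zeta^t|_q = 0$. The first column of $C$ is just the definition of the partials $\partial_r\zeta^s|_q$ and $\partial_r\zeta^t|_q$, which is what the statement records.

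The main content is the lower-right block $D$, and here I would use the antiholomorphic structure of the leading term of $r^{\tau_i}\xirov$ along $\partial\Uro$. Starting from the decomposition $r^{\tau_i}\xirov = \text{\cref{eq:xiv_x}} + \text{\cref{eq:xiv_y}}$ used in the proof of \Cref{lem:ext_others}, the second term carries an extra factor of $r$ and so drops out on the boundary; for the first term, invoking \Cref{lem:arg_equivariant} on the two equivariant factors
\[
  r^{c_{1,i}}\,\sigma^*\!\left(\tfrac{c_{0,i}u^{c_{0,i}-1}}{\det \Jac\pipol}\right)
  \quad\text{and}\quad
  r^{m_i-p_i}\,\sigma^*\!\left(\tfrac{\bar f_y}{\bar f}\right)
\]
(of weights $-c_{1,i}$ and $m_i-p_i$ respectively) and inserting $f_y = u^{p_i}v$ and the expansion $f = u^{m_i}g(v) + \hot$ identifies the boundary restriction as
\[
  r^{\tau_i}\xirov\big|_{r=0} \;=\; e^{-i\varpi_i\alpha}\,L_1(v)\,\frac{\bar v}{\bar g(\bar v)}
\]
for some holomorphic unit $L_1$ on $D_i^\circ$ near $p$ (and $\bar g(0)\neq 0$ because $p\in D_i^\circ$). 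At $(v,\bar v)=(0,0)$ this is holomorphic$\,\times\,$antiholomorphic, so $\partial_v$ vanishes at $q$ while $\partial_{\bar v}|_q = e^{-i\varpi_i\alpha}\lambda_p$ with $\lambda_p := L_1(0)/\bar g(0) \in \C^*$. Applying \Cref{lem:antiholo} (or equivalently \Cref{rem:anti_hessian}) to the map $v\mapsto \zeta^v$ then gives $D = \bar A(e^{-i\varpi_i\alpha}\lambda_p)$ exactly as claimed.

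The only slightly delicate point is bookkeeping the equivariance weights so that the factor $e^{-i\varpi_i\alpha}$ emerges with the correct sign; everything else is a direct consequence of the scalings already established in \Cref{lem:ext_others} and of the transversality $f_y = u^{p_i}v$ granted by the choice of the resolution $\pipol$.
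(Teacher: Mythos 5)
Your treatment of the lower two blocks is essentially the paper's: split $r^{\tau_i}\xirov$ into the two terms of \Cref{lem:ext_others}, discard the $y_u\bar f_x$ term on the boundary, and apply \Cref{lem:antiholo} to the antiholomorphic leading part of the $x_u\bar f_y$ term, with the weight count giving $e^{-i\varpi_i\alpha}$. That part is sound.

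The gap is in the top two rows, specifically the entry $\partial_r(r^{\tau_i}\xiroal)|_q$. Your claim that $\zeta^r$ and $\zeta^\alpha$ are both divisible by $r^2$ "cheaply" is not correct for $\zeta^\alpha$. The correct order count from \Cref{lem:ext_others} gives $\ord_r(r^{\tau_i}\sigma^*\xi^u)\geq c_{1,i}-c_{0,i}+1\geq 2$ (not divisibility by $r^{\nu_i-1}$), which does settle $\zeta^r=\Re(e^{-i\alpha}r^{\tau_i}\sigma^*\xi^u)$; but $\xiroal=\Im(r^{-1}e^{-i\alpha}\sigma^*\xi^u)$ carries an extra factor $r^{-1}$ from the polar change of coordinates, so the same count only yields $\ord_r(r^{\tau_i}\xiroal)\geq c_{1,i}-c_{0,i}\geq 1$. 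When $c_{1,i}=c_{0,i}+1$ and $\ord_{D_i}(f_x)=p_i$ this bound is not enough, and this case is not vacuous: the paper shows it forces $i\notin\V_\Upsilon$ and $\varpi_i=1$, which is compatible with $p\in\Sigma_i$. The missing argument is to decompose
\[
\xi^u=-\frac{1}{|x_u|^2}\cdot\frac{\bar f_u}{\bar f}+\frac{\bar y_u}{|x_u|^2\bar y_v}\cdot\frac{\bar f_v}{\bar f},
\]
check that the second summand contributes order $\geq 2$ after the rescaling, and observe that the only order-one contribution of the first summand, coming from the polar part $m_i/u$ of the logarithmic derivative, is a \emph{real} function of $r$; taking the imaginary part in the definition of $\xiroal$ therefore kills it and raises the order to $2$. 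Without this step the vanishing of the $(2,1)$ entry of the Hessian, and hence the block-triangular form on which the center-stable manifold construction of \Cref{ss:center_stable} depends, is unproved.
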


\begin{proof}
	We split the proof into three parts.
	
	Part 1. In this first part we take care of the first $2$ rows of the
	matrix. In order to show that this $2 \times 4$ matrix is null, it is
	enough to show that both $ r^{\tau_i} \xiror$ and $r^{\tau_i} \xiroal$
	are divisible by $r^2$, that is, in particular they are in the square of
	the maximal ideal of the local ring at $q$.
	
	First we take care of $r^{\tau_i} \xiror$ which, by
	\cref{eq:xiro_coords}, equals $r^{\tau_i} \Re (e^{-i \alpha} \sigma^*
	\xi^u)$. Now, using \cref{eq:xi_U} and our choice of coordinates we find
	that 
	\[
	\ord_u(y_v\bar{f}_x) >  c_{0,i}  + p_i.
	\]
	Because $\ord_u(\bar{f}_x) \geq \ord_u(\bar{f}_y)$
	by \ref{eq:def_p_eq} and
	$\ord_u (y_v) = c_{1,i} > c_{0,i}$ by
	\Cref{lem:tang_van}. On another hand, the order of vanishing of the 
	denominator is 
	\[
	\ord_u(\det\Jac\pipol \bar{f})= c_{0,i} + c_{1,i} -1 + m_i.
	\]
	Finally, taking into account the factor $r^{\tau_i}$, we estimate
	\[
	\begin{split}
		\ord_r(r^{\tau_i} \xiror)
		&=\tau_i + \ord_r(y_v\bar{f}_x) - c_{0,i} - c_{1,i} +1 - m_i \\
		&> c_{1,i} + m_i - p_i + c_{0,i} + p_i - c_{0,i} - c_{1,i} +1 -m_i 
		\\
		&= 1.
	\end{split}
	\]
	This implies that $r^{\tau_i}\xiror$ vanishes with order at least $2$ 
	along the boundary.
	
	Now we take care of $r^{\tau_i} \xiroal$. The argument for this part is
	more delicate. Notice that in the expression for $\xiroal$ in
	\cref{eq:xiro_coords} there is a factor $r^{-1}$ that was not present in
	the step before. In this case, a similar computation yields only the
	weaker inequality $\ord_r(r^{\tau_i}\xiroal) \geq 1$. Next, we are going
	to argue that this function actually vanishes with order $2$ at $q$. If
	the above inequality is strict, there is nothing to prove. Assume
	otherwise that
	\begin{equation} \label{eq:ord_xiroal}
		\ord_r(r^{\tau_i}\xiroal) = 1.
	\end{equation}
	That is equivalent to
	saying that $\ord_r (r^{\tau_i} \xi^u) = 2$, which in turn, is 
	equivalent to saying that 
	\begin{equation}\label{eq:equality_ord}
		\ord_r(y_v\bar{f}_x) =  c_{0,i} + 1  + p_i.
	\end{equation}
	In general we know that $\ord_r(y_v) \geq c_{0,i} + 1$ and 
	$\ord_r(\bar{f}_x) \geq p_i$ so, in order to have 
	\cref{eq:equality_ord}, we need these two inequalities to be 
	equalities, that is:
	
	\[
	c_{1,i} = \ord_r(y_v) = c_{0,i} +1 
	\]
	and
	\[
	\ord_r(\bar{f}_x) = p_i
	\]
	A computation gives (since $u_v = 0$):
	\[
	\left(
	\begin{matrix}
		f_u\\
		f_v
	\end{matrix}
	\right)
	=
	\left(
	\begin{matrix}
		x_u & y_u\\
		0   & y_v
	\end{matrix}
	\right)
	\left(
	\begin{matrix}
		f_x\\
		f_y
	\end{matrix}
	\right),\quad
	\left(
	\begin{matrix}
		f_x\\
		f_y
	\end{matrix}
	\right)
	=
	\frac{1}{x_u y_v}
	\left(
	\begin{matrix}
		y_v & -y_u\\
		0   & x_u
	\end{matrix}
	\right)
	\left(
	\begin{matrix}
		f_u\\
		f_v
	\end{matrix}
	\right)
	=
	\left(
	\begin{matrix}
		\frac{1}{x_u} f_u - \frac{y_u}{x_uy_v} f_v\\
		\frac{1}{y_v} f_v
	\end{matrix}
	\right).
	\]
	In particular, we use that $\det\Jac\pipol = x_u y_v$, and so
	\begin{equation} \label{eq:xi_u_ord}
		\xi^u
		= \frac{-y_v \bar f_x}{\det\Jac\pipol \cdot \bar f}
		= - \frac{1}{|x_u|^2} \cdot \frac{\bar f_u}{\bar f}
		+ \frac{\bar y_u}{|x_u|^2 \bar y_v} \cdot \frac{\bar f_v}{\bar f}.
	\end{equation}
	To compute the order of $\xi^u$, we find first
	\begin{equation} \label{eq:xi_u_ord_1}
		\ord_r \left( - \frac{1}{|x_u|^2} \cdot \frac{\bar f_u}{\bar f} 
		\right)
		= -2c_{0,i} + 1,
	\end{equation}
	since $\ord_r x_u = c_{0,i} - 1$, and the logarithmic derivative of $f$ 
	has
	a simple pole along the zero set of $f$.
	Next, we estimate the order of the second term on the right hand side of
	\cref{eq:xi_u_ord}. Since we assume that $c_{1,i} = c_{0,i} + 1$,
	we have $i\neq 0$. Indeed, $c_{0,0} = c_{1,0} = 1$, by
	\Cref{lem:c_01_recursive}.
	Furthermore, since we assumed
	$\ord_r f_x = p_i$,  we find that $i \notin \V_\Upsilon$ by
	\Cref{cor:strict_ine}. By \Cref{lem:inv}, the vertex $i$ is not
	invariant, i.e. $\varpi_i > 0$. By \Cref{lem:varpi}, we have
	\[
	\ord_r f_v > m_i.
	\]
	Since $c_{0,i} < \ord_r y \leq c_{1,i} = c_{0,i} + 1$, we have
	$\ord_r y = c_{0,i} + 1$, and so $\ord_r y_u = c_{0,i}$.
	Also, $\ord_r x_u = c_{0,i} - 1$ and $\ord_r y_v = c_{1,i} = c_{0,i}+1$,
	and so
	\begin{equation} \label{eq:xi_u_ord_2}
		\ord_r
		\left(
		\frac{\bar y_u}{|x_u|^2 \bar y_v} \cdot \frac{\bar f_v}{\bar f}
		\right)
		> -2 c_{0,i} + 1.
	\end{equation}
	By \cref{eq:xi_u_ord_1,eq:xi_u_ord_2}, we find
	$\ord_r \xi^u = -2c_{0,i} + 1$.
	This also gives
	\[
	1
	=
	\ord_r
	\left(
	r^{\tau_i}r^{-1} e^{-i\alpha} 
	\xi^u
	\right)
	=
	c_{0,i}+1 + m_i - p_i - 1 - 2c_{0,i} + 1
	=
	-\varpi_i + 2,
	\]
	by \cref{eq:ord_xiroal}, and so $\varpi_i = 1$. Similarly,
	\[
	\ord_r
	\left(
	r^{\tau_i} r^{-1} e^{-i\alpha}
	\frac{\bar y_u}{|x_u|^2 \bar y_v} \cdot \frac{\bar f_v}{\bar f}
	\right)
	> -\varpi_i + 2 = 1,
	\]
	and so the order on the left hand side of the equation right above is 
	$\geq 2$.
	Similarly,
	\[
	\ord_r
	\left(
	- \frac{1}{|x_u|^2} \cdot \frac{\bar f_u}{\bar f}
	r^{\tau_i} r^{-1} e^{-i\alpha}
	\right)
	=
	-\varpi_i + 2 = 1,
	\]
	However, we can write
	\[
	\frac{f_u}{f} = \frac{m_i}{u} + g,
	\]
	where $g$ is holomorphic. As a result,
	\[
	r^{\tau_i} r^{-1} e^{-i\alpha}
	\frac{-1}{|x_u|^2} \cdot \frac{\bar f_u}{\bar f}
	=
	- \frac{r^{\tau_i-2} m_i}{|x_u|^2}
	- \frac{r^{\tau_i - 1} e^{-i\alpha} \bar g}{|x_u|^2}.
	\]
	The first term on the right hand side above is real. Therefore,
	\[
	r^{\tau_i}\Im\left(r^{-1} e^{-i\alpha}
	\frac{-1}{|x_u|^2} \cdot \frac{\bar f_u}{\bar f} \right)
	=
	\Im\left(
	\frac{r^{\tau_i-1} e^{-i\alpha} \bar g}
	{|x_u|^2}\right).
	\]
	We find
	\[
	\begin{split}
		\ord_r\left(\frac{r^{\tau_i-1} e^{-i\alpha} \bar g}
		{|x_u|^2}\right)
		&\geq
		\tau_i - 1 - 2c_{0,i} + 2 \\
		&=
		c_{1,i} + m_i - p_i - 2c_{1,i} + 3 \\
		&=
		-\varpi_i + 3 \\
		&= 2.
	\end{split}
	\]
	
	Part 2. Next, we take care of the lower right block of the Hessian in 
	the statement. 
	
	As in
	\cref{eq:xi_U,eq:xiro_coords}, the $v$-component, i.e. the 
	$(s,t)$-component
	of the vector field $\xiro_U$ is given by the complex function
	\[
	r^{\tau_i} \xirov =r^{\tau_i}( \xiros + i \xirot )=r^{\tau_i} \sigma^* 
	\xi^v
	= r^{\tau_i}\frac{-y_u \bar f_x + x_u \bar f_y}{\det\Jac\pipol \bar f}.
	\]
	We consider the first term of the above expression
	\begin{equation}\label{eq:first_term_xiv}
		r^{\tau_i}\frac{-y_u \bar f_x}{\det\Jac\pipol \bar f},
	\end{equation}
	and observe that because $\ord(y_u \bar{f}_x) \geq c_{0,i} + p_i$ and  
	$\ord(\det\Jac\pipol \bar f) = c_{0,i}+c_{1,i} -1 + m_i$,
	\[
	\tau_i +\ord(y_u \bar{f}_x) - \ord(\det\Jac\piropol \bar f) \geq 
	c_{1,i} + m_i - p_i + c_{0,i} + p_i - c_{0,i} - c_{1,i} + 1 - m_i = 1.
	\]
	This means that \cref{eq:first_term_xiv} is divisible by $r$ and in 
	particular, its partial derivatives with respect to $s, t$ (and even 
	$\alpha$) vanish at $q$.
	Now we consider the second term
	\begin{equation}\label{eq:second_term}
		r^{\tau_i} \frac{x_u \bar f_y}{\det\Jac\pi \bar f} 
		= r^{\tau_i} \frac{c_{0,i} u^{c_{0,i}-1} \bar{u}^{p_i} 
			\bar{v}}{\det\Jac\pi \bar f}.
	\end{equation}
	By taking,
	\[
	\lambda_p = 
	\left.
	\frac{c_{0,i}}{u^{-\nu_i+1}\det\Jac\pi}
	\cdot
	\partial_{\bar v}
	\left(
	\frac{\bar u^{-p_i+m_i} \bar{u}^{p_i} \bar{v}}{ \bar f}
	\right)
	\right|_{\substack{u=0\\v=0}}. 
	\]
	and applying \Cref{lem:antiholo} we get the expression for the lower 
	right block.
	
	Part 3. In this part we compute the $2 \times 2$ block under the 
	diagonal. 
	
	Observe that the first column is just by the definition of Hessian of a 
	vector field.
	
	Now we take care of the partial derivatives with respect to $\alpha$. 
	Indeed, to prove that the $2\times2$ block matrix under the diagonal is 
	as in the statement, we need to show that
	\[
	\partial_\alpha (r^{\tau_i} \xirov)|_q = 0.
	\]
	We already showed in Part 2, that $\partial_\alpha$ of 
	\cref{eq:first_term_xiv} vanishes at $q$. So we just need justify that
	\[
	\partial_\alpha \left. \left(r^{\tau_i} \frac{x_u \bar f_y}{\det\Jac\pi 
		\bar f}\right) \right|_q = 0.
	\]
	but again this is clear since \cref{eq:second_term} shows that there is 
	a factor of $\bar{v}$ that vanishes at $q$.
\end{proof}

\section{Center-stable manifolds}
\label{ss:center_stable}

In this subsection we prove the central technical result
\Cref{prop:properties_SU},
using the theory developed in previous subsections.
As before, $p \in \Sigma_i$, where $i\neq 0$.

\begin{block}
	In \Cref{lem:zeros_not_0}, we find that
	the Hessian of $\xiro_U$ is a block triangular matrix.
	Using the following coordinates in $\Uro$
	\[
	r' = r, \quad
	\alpha' = \alpha,\quad
	v' = v - r \partial_r (r^{\tau_i} \xirov),
	\]
	the Hessian of $\xiro_U$ is the block diagonal matrix
	\[
	\left(
	\begin{matrix}
		\multicolumn{2}{c}{\multirow{2}{*}{\scalebox{2}{$0$}}}
		&  \multicolumn{2}{c}{\multirow{2}{*}{\scalebox{2}{$0$}}} \\
		\multicolumn{2}{c}{} 
		&  \multicolumn{2}{c}{} \\
		\multicolumn{2}{c}{\multirow{2}{*}{\scalebox{2}{$0$}}}
		&  \multicolumn{2}{c}{\multirow{2}{*}
			{$\bar A(e^{-i\varpi_i\alpha'}\lambda_p)$}} \\
		\multicolumn{2}{c}{} 
		&  \multicolumn{2}{c}{}
	\end{matrix}
	\right).
	\]
	In order to get a diagonal Hessian, we must take a double cover of 
	$\Uro$.
	Define $\tUro$ by a Cartesian diagram
	\[
	\begin{tikzcd}
		\tUro \arrow{r}{\tilde\alpha} \arrow[swap]{d}{c}
		& \R/4\pi\Z \arrow{d}{2:1} \\
		\Uro \arrow{r}{\alpha'}
		& \R/2\pi\Z
	\end{tikzcd}
	\]
	where the vertical arrow on the right is the natural double cover of
	$\R/2\pi\Z$, inducing a double cover $c:\tUro \to \Uro$. Define
	\[
	\tSiro_p = c^{-1}(\sigma^{-1}(p)) \simeq \R / 4 \pi \Z.
	\]
	Along with
	$\tilde\alpha$, we define coordinates
	\[
	\tilde r = c^*(r'),\qquad
	\tilde v = e^{-i\varpi_i\tilde\alpha/2}\nu_p c^*(v'),
	\]
	where $\nu_p$ is a fixed
	square root of $\lambda_p$, i.e. $\nu_p^2 = \lambda_p$.
	Let $\txiro_U$ be the pullback of $\xiro_U$ to $\tUro$.
	At a point $\tilde{q} \in \tSiro_p$, in the coordinates $\tilde r$, 
	$\tilde \alpha$
	and $\tilde v = \tilde s + i\tilde t$, this vector field has Hessian
	\begin{equation} \label{eq:hess_txiro}
		\Hess_{\tilde{q}} \txiro_U = \left(
		\begin{matrix}
			\multicolumn{2}{c}{\multirow{2}{*}{\scalebox{2}{$0$}}}
			&  \multicolumn{2}{c}{\multirow{2}{*}{\scalebox{2}{$0$}}} \\
			\multicolumn{2}{c}{} 
			&  \multicolumn{2}{c}{} \\
			\multicolumn{2}{c}{\multirow{2}{*}{\scalebox{2}{$0$}}}
			&  |\lambda_p| & 0 \\
			\multicolumn{2}{c}{} 
			&  0 & -|\lambda_p|
		\end{matrix}
		\right).
	\end{equation}
	Thus, the system of ordinary differential equations corresponding to the
	vector field $\txiro_U$ is
	\begin{equation} \label{eq:kelleys_form}
		\begin{split}
			\dot{\tilde{r}}
			&= R(\tilde r, \tilde \alpha, \tilde s, \tilde t), \\
			\dot{\tilde{\alpha}}
			&= A(\tilde r, \tilde \alpha, \tilde s, \tilde t), \\
			\dot{\tilde{s}}
			&= |\lambda_p| \tilde s + S(\tilde r, \tilde \alpha, \tilde s, 
			\tilde t), \\
			\dot{\tilde{t}}
			&= -|\lambda_p| \tilde t + T(\tilde r, \tilde \alpha, \tilde s, 
			\tilde t),
		\end{split}
	\end{equation}
	where the functions $R,A,S,T$ are real analytic, vanish along
	$\tSiro_p$, and their partials with respect to $\tilde r,\tilde 
	s,\tilde t$
	also vanish along $\tSiro_p$.
	
	We can assume that $\tUro$ is of the form
	\[
	\tUro =
	[0,\eta_r] \times \R/4\pi\Z \times [-\eta_v, \eta_v]^2.
	\]
	where $\eta_r$ and $\eta_v$ are small.
	Set
	\begin{equation} \label{eq:hUro_form}
		\hUro =
		[-\eta_r,\eta_r] \times \R/4\pi\Z \times [-\eta_v, \eta_v]^2.
	\end{equation}

	\begin{figure}[h]
		\centering
		\includegraphics*[scale=0.7]{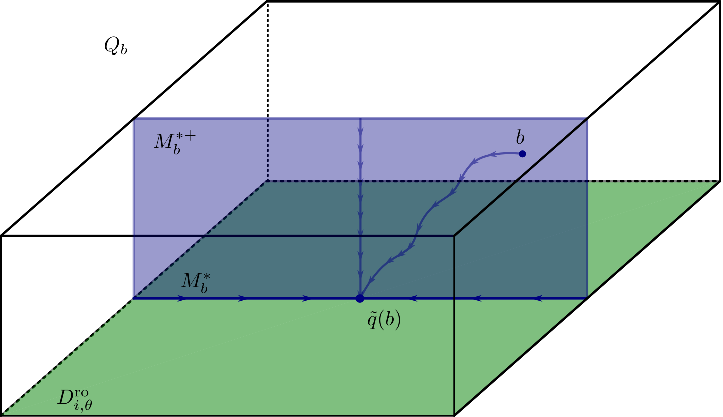}
		\caption{The space $Q_b$. In light blue we see  $M^{*+}_b$ 
			containing the point $b$ and intersecting transversely 
			$\Dro_{i,\theta}$. The manifold  $M^{*}_b$ consists of two 
			trajectories on  $\Dro_{i,\theta}$ converging to the point 
			$\tilde{q}(b)$.}
		\label{fig:qube_ext}
	\end{figure}
	
	We consider the system \cref{eq:kelleys_form} in $\hUro$.
	Therefore, this is a system in one {\em periodic variable} 
	$\tilde\alpha$ (by {\em periodic} here we mean that $\tilde\alpha \in 
	\R / 4 \pi \Z$.), and
	three real variables, where $\tilde r$ takes negative values as well.
	By \cite[Theorem 1]{kelley_stable}, 
	this system has a
\index{manifold!center-stable}
\emph{center-stable manifold}
	$M^{*+} \subset \hUro$.
	This means that, assuming that $\eta_r$ and $\eta_v$ are chosen small 
	enough, $M^{*+}$ is a manifold which is invariant by the vector field 
	$\txiro_U$, and
	there exists a $\Cinf$ function
	$\hat w^{*+}$ in the variables $\tilde\alpha, \tilde t, \tilde r$,
	so that
	\[
	M^{*+}
	=
	\set{(\tilde r, \tilde\alpha, \tilde s, \tilde t) \in \hUro}
	{\tilde s = \hat w^{*+}(\tilde \alpha, \tilde t, \tilde r)}.
	\]
	and $\hat w^{*+}$ and its derivatives with respect to
	$\tilde t,\tilde r$ vanish
	when $\tilde r,\tilde t$ are zero. Similarly,
	the system \cref{eq:kelleys_form} has a
	\index{manifold!center}
	\emph{center manifold}
	\[
	M^*
	=
	\set{(\tilde r, \tilde\alpha, \tilde s, \tilde t) \in \hUro}
	{\tilde s = \hat w^*(\tilde \alpha, \tilde r),\;
		\tilde t = \hat u^*(\tilde \alpha, \tilde r)}.
	\]
	where $\hat w^*,\hat u^*$ and their first partials with respect to
	$\tilde{r}$ vanish along $\tSiro_p$.
\end{block}

\begin{block}
	Let $b \in \tUro$, and let $\theta = \arg(f)(c(b))$. Then, the set
	$c^{-1}(\arg(f)^{-1}(\theta))$ has $2m_i$ connected components. Denote 
	by
	$Q_b$ the connected component which contains $b$. Then $Q_b$ has 
	coordinates
	$\tilde r, \tilde s, \tilde t$, and
	\[
	Q_b \simeq [0,\eta_r] \times [-\eta_v,\eta_v]^2.
	\]
	We define 
	\begin{equation}\label{eq:qb}
		\tilde{q}(b) \in Q_b
	\end{equation}
	as the origin in this coordinate system, i.e., the unique point of 
	$Q_b$ that maps to $p$ by $\sigma \circ c$. The restriction of $\tilde 
	\alpha$ to $Q_b$ is a
	function in the variables $\tilde r, \tilde s, \tilde t$.
	Since $\xiro_U$ is tangent to $\Yro_\theta$, the system
	\cref{eq:kelleys_form} reduces to a system in three variables
	\begin{equation} \label{eq:kelleys_form_Q}
		\begin{split}
			\dot{\tilde{r}}
			&= R(\tilde r, \tilde \alpha(\tilde r, \tilde s, \tilde t),
			\tilde s, \tilde t), \\
			\dot{\tilde{s}}
			&= |\lambda_p| \tilde s + S(\tilde r,
			\tilde \alpha(\tilde r, \tilde s, \tilde t), \tilde s, \tilde 
			t), \\
			\dot{\tilde{t}}
			&= -|\lambda_p| \tilde t + T(\tilde r,
			\tilde \alpha(\tilde r, \tilde s, \tilde t), \tilde s, \tilde 
			t),
		\end{split}
	\end{equation}
	This system has center-stable and center manifolds
	\[
	M^{*+}_b = M^{*+} \cap Q_b,\qquad
	M^*_b = M^* \cap Q_b.
	\]
	Note that $\dim M^{*+}_b = 2$ and $\dim M^*_b = 1$.
	In particular, $M^*_b$ consists of two trajectories, and one point.
\end{block}

Define $\tSro_U = M^{*+} \cap \tUro$. Then
\begin{equation} \label{eq:graph_SU}
	\tSro_U
	=
	\set{(\tilde r, \tilde\alpha, \tilde s, \tilde t) \in \tUro}
	{\tilde s = \tilde w^{*+}(\tilde \alpha, \tilde r, \tilde t)},
\end{equation}
where $\tilde w^{*+}$ is the restriction of $\hat w^{*+}$. Moreover, the 
properties satisfied by $M^{*+}$ imply the following lemma.

\begin{lemma}\label{lem:properties_tSro}
	The following properties for
	$\tSro_U$ hold:
	\begin{enumerate}
		\item \label{it:tSro_inv}
		$\txiro_U$ is tangent to $\tSro_U$,
		\item \label{it:tSro_axis}
		$\tSiro_p \subset \tSro_U$,
		\item \label{it:tangent_SU}
		for $\tilde q \in \tSiro_p$, we have
		$T_{\tilde q} \tSro_U =
		\R\langle \partial_{\tilde r}, \partial_{\tilde \alpha},
		\partial_{\tilde t} \rangle$. In other words, the $T_{\tilde q} 
		\tSro_U$ is the sum of the kernel and the negative eigenspace of 
		$\Hess_{\tilde{q}} \txiro_U$ (see \cref{eq:hess_txiro}).
	\end{enumerate}
	In particular, $\tSro_U$ is transverse to $\partial\tUro$.
	For $b \in \tUro$, define also
	\[
	\tSro_{U,b} = \tSro_U \cap Q_b.
	\]
	The functions $\tilde r$ and $\tilde t$ are coordinates on $\tSro_{U,b}$
	so that $\tSro_{U,b} \simeq [0,\eta_r] \times [-\eta_v,\eta_v]$.
\end{lemma}
\begin{proof}
	Property \cref{it:tSro_inv} follows from the fact that $M^{*+}$ is an
	invariant manifold. Property \cref{it:tSro_axis} follows since 
	$\tilde{w}^{*+}$ 
	vanishes along
	the $\alpha$ axis, i.e. when $\tilde r$ and $\tilde t$ vanish. Finally,
	\cref{it:tangent_SU} follows in the same way from the definition of
	a center-stable manifold, i.e. the vanishing of partials of
	$\tilde{w}^{*+}$. These properties of $\tilde{w}^{*+}$ and the 
	vanishing of its 
	partials are also part of the statement of \cite[Theorem 
	1]{kelley_stable}.
\end{proof}

\begin{lemma} \label{lem:local_dyn}
	Assuming $U$ is chosen small enough, then we have the following 
	properties
	\begin{enumerate}
		\item \label{it:local_dyn_S}
		If $b \in \tSro_U$, then $\gamma_b$ converges to
		$\tilde q(b)$.
		\item \label{it:local_dyn_nS}
		If $b \in \tUro \setminus \tSro_U$, then $\gamma_b$ exits $\tUro$.
	\end{enumerate}
\end{lemma}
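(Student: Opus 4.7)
The plan is to combine the center-stable manifold $M^{*+}$ from Kelley's theorem (already constructed above) with the Lojasiewicz gradient convergence recalled in \cref{blc:loja}. Part (ii) should follow essentially from the defining dynamical property of $M^{*+}$, whereas part (i) requires additional care to pin down the precise limit point.

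For part (i), the plan runs as follows. First, I would use that $\tSro_U$ is $\txiro_U$-invariant (see \cref{block:SU_properties}) and that $\arg(f)$ is a first integral of $\txiro_U$, so that $\gamma_b$ is confined to $\tSro_{U,b} \subset Q_b$ for all positive times during which it lies in $\tUro$. Second, I would invoke the defining property of the center-stable manifold (orbits starting on $M^{*+}$ never leave a preassigned small neighborhood of the critical set) to conclude that $\gamma_b$ stays in $\tUro$ for all $t\geq 0$. Third, pushing $\gamma_b$ down via $\sigma\circ c$ to $\Ypol$ and then via $\pipol$ to $\Tu^{*}$ gives, up to reparametrization, a trajectory of $\xi=-\nabla\log|f|$; by Lojasiewicz this trajectory has finite length and converges to a point of $C$, and since its image is trapped in an arbitrarily small neighborhood of $0\in C$, the limit is $0$. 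Lifting back, the image of $\gamma_b$ in $\Ypol$ converges to the unique zero of $\xi_U$ in $U$, namely $p$ (shrinking $U$ if necessary). Finally, the accumulation set of $\gamma_b$ in $\tUro$ lies in $(\sigma\circ c)^{-1}(p)=\tSiro_p$; constancy of $\arg(f)$ along $\gamma_b$ together with $Q_b\cap \tSiro_p=\{\tilde q(b)\}$ then forces the limit to be exactly $\tilde q(b)$.

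For part (ii), I would invoke the standard characterization of the center-stable manifold in Kelley's framework \cite{kelley_stable,kelley_stab_central}: after possibly shrinking $\eta_r,\eta_v$, the set $M^{*+}$ coincides with the set of points in $\tUro$ whose forward orbit remains in $\tUro$. Since the Hessian \cref{eq:hess_txiro} at any point of $\tSiro_p$ has the positive eigenvalue $+|\lambda_p|$ in the $\tilde s$-direction, an initial condition $b \notin M^{*+}$ has a nontrivial unstable component, which grows exponentially under the linearized flow and, by persistence, drives $\gamma_b$ out of $\tUro$ in finite time.

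The main obstacle will be in part (i): ensuring that the trajectory converges to the specific point $\tilde q(b)$ rather than merely accumulating along some piece of $\tSiro_p$. Kelley's theorem only guarantees that orbits in $M^{*+}$ approach the center manifold, and on the center manifold the linearized dynamics is degenerate, so \emph{a priori} a trajectory could drift nontrivially along the critical set. The decisive rigidity comes from $\arg(f)$ being a first integral combined with the discrete intersection $Q_b\cap\tSiro_p=\{\tilde q(b)\}$; without this, the center-stable manifold theorem alone would not isolate the limit point.
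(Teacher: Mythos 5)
Your outline of part (ii) and of the final localization step in part (i) is in the right spirit, but the pivotal step of part (i) rests on a false principle. You claim that "orbits starting on $M^{*+}$ never leave a preassigned small neighborhood of the critical set" is the defining property of the center-stable manifold; it is not, and Kelley's theorem \cite{kelley_stable} does not assert it. $M^{*+}$ is only locally invariant and tangent to the sum of the center and stable eigenspaces: an orbit on it may exit the neighborhood through the \emph{center} directions (already for $\dot x = x^2$, $\dot y = -y$ the center-stable manifold is the whole plane, yet orbits with $x>0$ escape). Here the center directions are $\tilde\alpha$ (compact, harmless) and $\tilde r$, and nothing in the general theory prevents $\gamma_b$ from escaping through $\{\tilde r = \eta_r\}$. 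This is precisely where the paper's proof does its real work: after straightening $M^{*+}$ and $M^*$ into coordinate subspaces, it shows the field points inward along $\partial_r\tSro_{U,b}$ — using that $M^*_b\cap\tUro$ is a single trajectory which must be oriented \emph{toward} $\tilde q(b)$ because no trajectory of $\xi$ emanates from the origin, and then shrinking $\eta_v$ — and inward along $\partial_t\tSro_{U,b}$ via $\partial_{\tilde t}\dot{\tilde t}<0$. Without this confinement your Lojasiewicz step cannot start: \cref{blc:loja} only gives convergence of the downstairs trajectory to \emph{some} point of $C$, possibly after it has left $\pipol(U)$, so you cannot conclude the limit is $0$.

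Two secondary remarks. First, your mechanism for isolating the limit point (constancy of $\arg(f)$ plus $Q_b\cap\tSiro_p=\{\tilde q(b)\}$) presupposes that the lifted trajectory in $\Ypol$ converges to $p$, which is itself not automatic from convergence to $0$ downstairs; the paper avoids this by working entirely inside the compact surface $\tSro_{U,b}$, where monotonicity of $|\tilde t|$, the vanishing of $c^*(\piro)^*|f|$ forcing $\tilde r\to 0$, and nonvanishing of $\txiro_U$ at $(0,\tilde t_0)$ for $\tilde t_0\neq 0$ pin down the limit. Second, in part (ii) the implication you actually need ($b\notin\tSro_U$ implies escape) is the correct half of the "characterization" and is what the paper proves via $\partial_{\tilde s}\dot{\tilde s}>0$; but the full coincidence of $M^{*+}$ with the set of non-escaping points is essentially the uniqueness statement that the paper only derives \emph{from} this lemma (\Cref{cor:unicity_tilde}), so it cannot be assumed here.
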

\begin{proof}
	In this proof, we make a coordinate change, replacing
	$\tilde s$ by $\tilde s - \hat w^{*+}(\tilde r,\tilde \alpha,\tilde t)$
	and
	$\tilde t$ by $\tilde t - \hat u^*(\tilde r,\tilde \alpha)$. Thus,
	the center-stable and center manifolds $M^{*+}$ and $M^*$ are
	coordinate subspaces.
	Similarly, if we fix a point $b \in \tUro$, then $M^{*+}_b$ and $M^*_b$
	are coordinate subspaces in $Q_b$.
	We assume that $\hUro$ has the form
	\cref{eq:hUro_form} in these new coordinates.

	Assume first that $b \in \tSro_U$.
	Name the boundary pieces of $\tSro_{U,b}$ as follows:
	\[
	\begin{split}
		\partial_0 \tSro_{U,b}
		&=
		\{0\} \times [-\eta_v,\eta_v],\\
		\partial_r \tSro_{U,b}
		&=
		\{\eta_r\} \times [-\eta_v,\eta_v],\\
		\partial_t \tSro_{U,b}
		&=
		[0,\eta_r] \times \{-\eta_v,\eta_v\}.
	\end{split}
	\]
	The set $M_q^* \cap \tUro$ consists of a single trajectory
	and the point $\tilde q(b)$.
	The trajectory must be oriented towards $\tilde q(b)$,
	since no trajectory of $\xi$ in $\C^2$ emanates from the origin.
	Therefore, there exists a neighborhood of the point $(\eta_r, 0)$
	in $\partial_r \tSro_{U,b}$, where $\txiro_U$ points inwards.
	Fixing $\eta_r$ and taking $\eta_v$ small enough, we can assume
	that $\txiro_U$ points inwards along all of
	$\partial_r \tSro_{U,b}$.
	
	We have
	\[
	\frac{\partial}{\partial \tilde t} \dot{\tilde{t}} (\tilde q(b))
	= -|\lambda_p| < 0.
	\]
	Therefore, we can assume that
	\begin{equation} \label{eq:decr}
		\frac{\partial}{\partial \tilde t} \dot{\tilde{t}}
		< 0
	\end{equation}
	on $\tSro_{U,b}$. Furthermore, $\dot{\tilde t}|_{\tSro_{U,b}}$ vanishes 
	precisely along
	$M^*_b \cap \tUro$. This implies that $\tilde t$ and $\dot{\tilde t}$ 
	have
	opposite signs on $\tSro_{U,b}$. As a result,
	$|\tilde t|$ decreases along the trajectory $\gamma_b$.
	Therefore, $|\tilde t|$ has a limit along $\gamma_b$, call it $\tilde 
	t_0$.
	
	It also follows from \cref{eq:decr} that the vector field $\txiro_U$
	restricted to $\tSro_{U,b}$ points inwards along
	$\partial_t \tSro_{U,b}$. As a result, a trajectory of $\txiro_U$
	which starts at $b \in \tSro_U$ does not escape from
	$\tSro_{U,b}$. Since $\tSro_{U,b}$ is compact, $\gamma_b$ is well 
	defined on
	the whole nonnegative real axis.
	
	The function $c^* (\piro)^* |f|$ has the same
	vanishing set as $\tilde r$ on $\tSro_{U,b}$, that is,
	$\partial_0 \tSro_{U,b}$.
	Furthermore, the limit of $c^* (\piro)^* |f|$ along $\gamma_b$ is zero.
	Therefore, $\tilde r$ has limit zero along $\gamma_b$, and so,
	in coordinates $\tilde r, \tilde t$ in $\tSro_{U,b}$, $\gamma_b$
	converges to $(0,\tilde t_0)$. Since $\txiro_U$ does not vanish at
	$(0,\tilde t_0)$ unless $\tilde t_0 = 0$, we have $\tilde t_0 = 0$,
	and so $\gamma_b$ converges to $\tilde q(b) = (0,0)$, which
	concludes the proof of \cref{it:local_dyn_S}.
	
	Next, we prove \cref{it:local_dyn_nS}.
	Assume that $b \in \tUro \setminus \tSro_U$.
	Similarly as above, we can assume that
	\begin{equation} \label{eq:decr_1}
		\frac{\partial}{\partial \tilde s} \dot{\tilde{s}} > 0
	\end{equation}
	in $Q_b$. Therefore, the absolute value of the
	$\tilde s$ coordinate of the trajectory
	$\gamma_b$ increases. Therefore, $\gamma_b$ escapes $\tUro$.
\end{proof}

\begin{figure}[h]
	\centering
	\includegraphics*[scale=0.7]{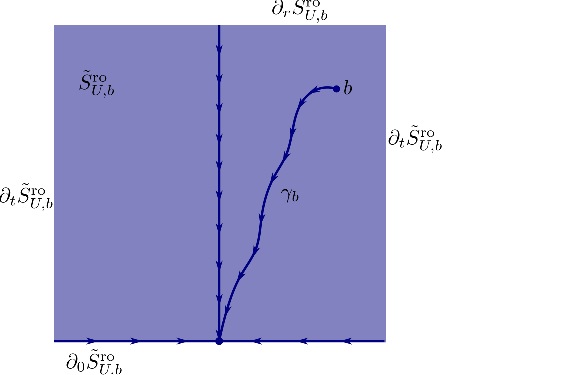}
	\caption{We see $\Sro_{U,b}$ containing a trajectory $\gamma_b$.}
	\label{fig:SUb}
\end{figure}

\begin{cor} \label{cor:unicity_tilde}
	The function $\tilde w^{*+}$ is the only function in variables
	$\tilde \alpha, \tilde t$ and $\tilde r \geq 0$, which vanishes together
	with its partials along $\tilde t = \tilde r = 0$,
	whose graph as in \cref{eq:graph_SU} is an invariant submanifold.
\end{cor}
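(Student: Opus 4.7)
The strategy is to exploit the dynamical characterization of $\tSro_U$ furnished by \Cref{lem:local_dyn}: the manifold $\tSro_U$ consists exactly of those $b \in \tUro$ whose forward trajectory $\gamma_b$ converges to a point of $\tSiro_p$. If every point $b$ of an alternative graph $M'$ has this convergence property, then $M' \subset \tSro_U$, and since both are graphs of functions over the common compact base $\R/4\pi\Z \times [0,\eta_r] \times [-\eta_v,\eta_v]$, the inclusion forces $\tilde w' = \tilde w^{*+}$.

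Let $\tilde w'$ satisfy the hypotheses and write $M' \subset \tUro$ for its graph. The vanishing of $\tilde w'$ and its partials along $\tSiro_p = \{\tilde r = \tilde t = 0\}$ ensures that $T_{\tilde q} M' = \R\langle \partial_{\tilde r}, \partial_{\tilde \alpha}, \partial_{\tilde t}\rangle$ for every $\tilde q \in \tSiro_p$, which coincides with $T_{\tilde q} \tSro_U$ by \cref{block:SU_properties}\cref{it:tangent_SU}. Pick $b \in M'$, set $\theta = \arg(f)(c(b))$, and consider the two-dimensional slice $M' \cap Q_b$; near $\tilde q(b)$ it is parametrized by $(\tilde r, \tilde t)$ with $\tilde \alpha$ determined implicitly by the equation $\arg(f) = \theta$ (whose $\tilde \alpha$-derivative is nonzero on $\tSiro_p$ by \Cref{lem:transverse_strata}), and $\tilde s = \tilde w'(\tilde \alpha, \tilde r, \tilde t)$.

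The core step is to re-run the argument of the proof of \Cref{lem:local_dyn}\cref{it:local_dyn_S} verbatim, with $M' \cap Q_b$ in place of $\tSro_{U,b}$. Its three ingredients are all properties of the ambient vector field $\txiro_U$ on $\tUro$ rather than of $\tSro_U$, and so they transfer to any invariant submanifold tangent to the center-stable subspace along $\tSiro_p$: (i) the inequality $\partial_{\tilde t} \dot{\tilde t} < 0$ near $\tSiro_p$ forces $|\tilde t|$ to decrease along $\gamma_b \subset M' \cap Q_b$; (ii) since $\xi = -\nabla\log|f|$ is directed toward $\{f=0\}$, the vector field $\txiro_U$ points inward along the face $\tilde r = \eta_r$, keeping $\gamma_b$ inside $M' \cap Q_b$; (iii) the vanishing of $(\piro)^*|f|$ along $\gamma_b$ forces $\tilde r \to 0$, and combined with the monotonicity of $|\tilde t|$ yields convergence of $\gamma_b$ to $\tilde q(b) \in \tSiro_p$. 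Once convergence is established for every $b \in M'$, \Cref{lem:local_dyn}\cref{it:local_dyn_nS} immediately rules out $b \in M' \setminus \tSro_U$, so $M' \subset \tSro_U$ and hence $\tilde w' = \tilde w^{*+}$.

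The main technical obstacle is the transfer of ingredient (ii): the inward-pointing at the $\tilde r = \eta_r$ face was phrased in the original proof using the trajectory $M^* \cap \tUro$ flowing into $\tilde q(b)$, and the center manifold $M^*$ is not a priori contained in $M'$. However, inward-pointing is an open $C^1$ condition on the submanifold, and the equality of tangent planes $T_{\tilde q} M' = T_{\tilde q} \tSro_U$ along $\tSiro_p$ makes $M'$ arbitrarily $C^1$-close to $\tSro_U$ in a neighborhood of $\tSiro_p$. After shrinking $\eta_r$ and $\eta_v$ to absorb this $C^1$ proximity, the inward-pointing extends from $\partial_r \tSro_{U,b}$ to $\partial_r(M' \cap Q_b)$, and the argument goes through.
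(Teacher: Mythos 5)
Your argument is essentially the paper's own proof: the paper likewise establishes uniqueness by observing that the proof of \cref{it:local_dyn_S} in \Cref{lem:local_dyn} applies to the graph of any competing function $w$ (so each trajectory $\gamma_b$ starting on that graph converges to $\tilde q(b)$), which contradicts \cref{it:local_dyn_nS} at any point of the graph lying off $\tSro_U$. Your additional discussion of transferring the inward-pointing condition at the face $\tilde r = \eta_r$ merely fills in a step the paper leaves implicit.
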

\begin{proof}
	If $w$ is another such function, and
	$b = (\tilde r, \tilde \alpha, \tilde t, w(\tilde r, \tilde 
	\alpha,\tilde t))$,
	then, by the proof of
	\cref{it:local_dyn_S}, the trajectory $\gamma_b$ converges to $\tilde 
	q(b)$. 
	But by \cref{it:local_dyn_nS}, it diverges.
\end{proof}

\begin{cor} \label{cor:SU_invariant}
	The set $\tSro_U \subset \tUro$
	is invariant under the Galois transformation of the covering map $c$.
\end{cor}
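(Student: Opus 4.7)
The plan is to apply the uniqueness statement of \Cref{cor:unicity_tilde} to the image of $\tSro_U$ under the Galois involution $\tau:\tUro \to \tUro$ of $c$, which is given by $\tilde\alpha \mapsto \tilde\alpha + 2\pi$ while fixing the remaining coordinates of $\Uro$.

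First I would observe that since $\txiro_U = c^*\xiro_U$ is a pullback along $c$, it is automatically $\tau$-invariant: $\tau_*\txiro_U = \txiro_U$. Therefore $\tau(\tSro_U)$ is again an invariant submanifold of $\txiro_U$ inside $\tUro$, it contains $\tSiro_p$ (which is $\tau$-stable since it is the preimage $c^{-1}(\sigma^{-1}(p))$ of a locus downstairs), and its tangent space at any point of $\tSiro_p$ agrees with the distribution described in \cref{block:SU_properties} \cref{it:tangent_SU} (because that distribution is intrinsically determined by $\Hess \txiro_U$, hence $\tau$-invariant).

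The next step is to rewrite $\tau(\tSro_U)$ as a graph over the coordinates $(\tilde\alpha,\tilde r,\tilde t)$. The only subtlety is how the coordinate $\tilde v = \tilde s + i\tilde t = e^{-i\varpi_i\tilde\alpha/2}\nu_p\, c^*(v')$ transforms: since $\tau$ shifts $\tilde\alpha$ by $2\pi$, we obtain
\[
  \tau^*\tilde v \;=\; e^{-i\varpi_i\pi}\,\tilde v \;=\; (-1)^{\varpi_i}\tilde v,
\]
so in real coordinates $\tau$ acts by $(\tilde r,\tilde\alpha,\tilde s,\tilde t) \mapsto (\tilde r,\tilde\alpha+2\pi,(-1)^{\varpi_i}\tilde s,(-1)^{\varpi_i}\tilde t)$. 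If $\varpi_i$ is even the result is immediate. If $\varpi_i$ is odd, then using \cref{eq:graph_SU}, the submanifold $\tau(\tSro_U)$ is defined by $\tilde s = w'(\tilde\alpha,\tilde r,\tilde t)$ where
\[
  w'(\tilde\alpha,\tilde r,\tilde t)
  \;=\;
  -\,\tilde w^{*+}(\tilde\alpha-2\pi,\tilde r,-\tilde t).
\]
In either parity, $w'$ is a $\Cinf$ function that vanishes together with its partials in $\tilde r,\tilde t$ along the locus $\tilde r = \tilde t = 0$, because $\tilde w^{*+}$ has this property.

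Finally I would invoke \Cref{cor:unicity_tilde}: $\tau(\tSro_U)$ is an invariant manifold presented as the graph of such a function, so $w' = \tilde w^{*+}$ and hence $\tau(\tSro_U) = \tSro_U$. I do not expect a real obstacle here; the only place requiring care is bookkeeping of the sign $(-1)^{\varpi_i}$ introduced by the twist $e^{-i\varpi_i\tilde\alpha/2}$ when passing to the Galois translate, and verifying that the transformed graph still meets the hypotheses of the uniqueness corollary.
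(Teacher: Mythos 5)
Your proposal is correct and follows essentially the same route as the paper: apply the uniqueness statement of \Cref{cor:unicity_tilde} to the Galois translate of $\tSro_U$, using that $\txiro_U$ is a pullback along $c$ and hence Galois-invariant. Your explicit tracking of the sign $(-1)^{\varpi_i}$ coming from the twist $e^{-i\varpi_i\tilde\alpha/2}$ in the definition of $\tilde v$ is accurate and in fact more careful than the paper's two-line argument, which elides this bookkeeping.
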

\begin{proof}
	Let $\iota$ be the Galois transformation. Then, $\tilde w^{*+} \circ 
	\iota$
	satisfies the conditions in the previous corollary, and
	so $\tilde w^{*+} = \tilde w^{*+} \circ \iota$.
\end{proof}
\begin{rem}
	The unicity statement \Cref{cor:unicity_tilde} may only apply to
	$\tSro_U$, the graph of $\tilde w^{*+}$,
	and not the whole center-stable submanifold $M^{*+}$, the graph of
	$\hat w^{*+}$.
\end{rem}
\begin{cor}\label{cor:local_spine_cell}
	We have the following equality of sets
	\[
	\Sro_U = c(\tSro_U).
	\]
	Furthermore, the following properties are satisfied
	\begin{enumerate}
		\item \label{it:ls_i} $\xiro_U$ is tangent to $\Sro_U$,
		\item \label{it:ls_ii}$\sigma^{-1}(p) \subset \Sro_U$,
		\item \label{it:ls_iii}for $q \in \sigma^{-1}(p)$, and $\tilde{q} 
		\in c^{-1}(q)$, we have $T_q \Sro_U = D_{\tilde{q}} c 
		(T_{\tilde{q}} \tilde{S}_U)$.
	\end{enumerate}
\end{cor}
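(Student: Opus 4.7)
The plan is to reduce everything to the lifting properties of the double cover $c:\tUro \to \Uro$ together with \Cref{lem:local_dyn} and the Galois-invariance of $\tSro_U$ established in \Cref{cor:SU_invariant}. Since $c$ is a local diffeomorphism and $\txiro_U = c^*\xiro_U$ by construction, trajectories of $\xiro_U$ are images of trajectories of $\txiro_U$, and conversely every trajectory of $\xiro_U$ lifts locally (and in fact globally in $\tUro$ once one fixes a starting preimage). Moreover $c^{-1}(\sigma^{-1}(p)) = \tSiro_p$ by definition of $\tSiro_p$.

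For the equality $\Sro_U = c(\tSro_U)$, first I would establish the easy inclusion $c(\tSro_U) \subset \Sro_U$: if $\tilde{a}\in \tSro_U$, then by \Cref{lem:local_dyn}\cref{it:local_dyn_S} the trajectory $\tilde\gamma_{\tilde a}$ stays in $\tUro$ and converges to $\tilde{q}(\tilde a) \in \tSiro_p$. Pushing down by $c$, the trajectory $\gamma_{c(\tilde a)}$ stays in $\Uro$ and converges to $c(\tilde q(\tilde a)) \in \sigma^{-1}(p)$, which by \Cref{def:local_spinal} places $c(\tilde a) \in \Sro_U$. For the converse inclusion, take $a \in \Sro_U$ and any lift $\tilde a \in c^{-1}(a)$. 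The trajectory $\tilde\gamma_{\tilde a}$ projects to $\gamma_a$; since $\gamma_a$ converges to some $q \in \sigma^{-1}(p)$ and $c$ is a covering, $\tilde \gamma_{\tilde a}$ converges to some point in $c^{-1}(q) \subset \tSiro_p \subset \tUro$, so in particular it does not exit $\tUro$. The contrapositive of \Cref{lem:local_dyn}\cref{it:local_dyn_nS} then forces $\tilde a \in \tSro_U$, so $a = c(\tilde a) \in c(\tSro_U)$.

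The three subsequent properties are formal consequences. First, since $\tSro_U$ is Galois-invariant by \Cref{cor:SU_invariant} and $c$ is a Galois double cover which is a local diffeomorphism, $c(\tSro_U)$ is a genuine smooth submanifold of $\Uro$ and $c|_{\tSro_U}:\tSro_U \to \Sro_U$ is itself a covering. Property \cref{it:ls_i} then follows because $\xiro_U$ is the pushforward $Dc(\txiro_U)$ and $\txiro_U$ is tangent to $\tSro_U$ by \cref{block:SU_properties}. Property \cref{it:ls_ii} follows from $\tSiro_p \subset \tSro_U$ (again from \cref{block:SU_properties}) and $c(\tSiro_p) = \sigma^{-1}(p)$. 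Property \cref{it:ls_iii} is immediate from the fact that $c$ is a local diffeomorphism at every $\tilde q \in c^{-1}(q)$, so the derivative $D_{\tilde q} c$ carries $T_{\tilde q} \tSro_U$ isomorphically onto $T_q \Sro_U$.

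The only delicate step is the lifting argument in the $\subseteq$ direction of the main equality, where one must ensure that the tail of $\tilde\gamma_{\tilde a}$ stays inside $\tUro$ so that \Cref{lem:local_dyn}\cref{it:local_dyn_nS} can be applied; this is settled by noting that convergence of $\gamma_a$ to $q$ means the trajectory is eventually contained in an arbitrarily small neighborhood of $q$ in $\Uro$, which lifts via $c$ to an arbitrarily small neighborhood of the chosen lift $\tilde q$ in $\tUro$. Everything else reduces to covering-space bookkeeping.
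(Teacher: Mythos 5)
Your proposal is correct and follows the same route as the paper, which deduces the set equality from \Cref{lem:local_dyn} together with the Galois invariance of \Cref{cor:SU_invariant}, and reads off the three properties from \cref{block:SU_properties}; you have simply made the covering-space bookkeeping (path lifting, the contrapositive of \Cref{lem:local_dyn}\cref{it:local_dyn_nS}, and the role of Galois invariance in making $c(\tSro_U)$ an honest submanifold) explicit. No gaps.
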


\begin{proof}
	The first statement follows from \Cref{lem:local_dyn} and  
	\Cref{cor:SU_invariant}. The three properties follow from 
	\Cref{lem:properties_tSro}. 
\end{proof}

\begin{proof}[Proof of \Cref{prop:properties_SU}]
	The three properties follow from the results \Cref{cor:local_spine_cell} 
	and 
	\Cref{lem:properties_tSro}, \cref{it:tangent_SU}. That the three 
	vectors 
	are like \Cref{prop:properties_SU}, \cref{it:hessian_explicit} is a 
	direct 
	computation using the previously cited results. Below you can find this 
	direct (but tedious) computation.
\end{proof}

\begin{proof}[Direct computation for \Cref{prop:properties_SU}, 
	\cref{it:hessian_explicit} ]
	As established by the application of center-stable manifold theory in 
	\Cref{lem:properties_tSro}, (\cref{it:tangent_SU}) and 
	\Cref{cor:local_spine_cell}, the tangent 
	space $T_q\Sro_U$ is the center-stable eigenspace of the linearized 
	vector field $\xiro_U$ at the singular point $q$. This space is spanned 
	by the generalized eigenvectors of the Hessian matrix, $L = \Hess_q 
	\xiro_U$, corresponding to eigenvalues with non-positive real part. We 
	compute this basis explicitly.
	
	From \cref{eq:hess_xiro}, the Hessian at $q$ in coordinates $(r, 
	\alpha, s, 
	t)$ is the block lower-triangular matrix:
	$$
	L = \Hess_q \xiro_U =
	\begin{pmatrix}
		\mathbf{0}_{2 \times 2} & \mathbf{0}_{2 \times 2} \\
		M_{bl} & M_{br}
	\end{pmatrix}
	$$
	where the blocks are defined as follows:
	\begin{itemize}
		\item $M_{br} = \bar A(k)$, where $k = 
		e^{-i\varpi_i\alpha}\lambda_p$. 
		This is the Hessian of the $\xirov$ component with respect to 
		$v=s+it$.
		\item $M_{bl}$ is a $2 \times 2$ matrix describing the partials of 
		$\xirov$ with respect to $(r, \alpha)$. Its second column is zero, 
		and 
		its first column is the vector representation of $D = \partial_r 
		(r^{\tau_i} \xirov)|_q$.
	\end{itemize}
	The eigenvalues of $L$ are the eigenvalues of its diagonal blocks, 
	which 
	are $0$ (with multiplicity 2) from the top-left block, and $\pm|k| = 
	\pm|\lambda_p|$ from the bottom-right block. The center-stable 
	eigenspace 
	corresponds to the eigenvalues $0, 0,$ and $-|\lambda_p|$. We now 
	compute 
	the basis for this space.
	
	\paragraph{Stable Eigenvector ($\lambda = -|\lambda_p|$)}
	We seek an eigenvector $V = (V_r, V_\alpha, V_s, V_t)^\top$ such that 
	$LV = 
	-|\lambda_p|V$. The equation for the first two components is 
	$\mathbf{0}_{2\times 2} \icol{V_r \\ V_\alpha} = -|\lambda_p| \icol{V_r 
		\\ 
		V_\alpha}$, which implies $V_r = V_\alpha = 0$. The equation for 
	the last 
	two components is $M_{br} \icol{V_s \\ V_t} = -|\lambda_p| \icol{V_s \\ 
		V_t}$.
	This identifies $(V_s, V_t)$ as the eigenvector of $M_{br} = \bar A(k)$ 
	for 
	the eigenvalue $-|\lambda_p| = -|k|$. In complex notation, this 
	eigenvector 
	is a multiple of $\sqrt{-k}$.
	$$ \sqrt{-k} = \sqrt{-e^{-i\varpi_i\alpha}\lambda_p} = i 
	\sqrt{e^{-i\varpi_i\alpha}\lambda_p} = i e^{-i\varpi_i\alpha/2}\nu_p, 
	\quad 
	\text{where } \nu_p^2 = \lambda_p. $$
	This matches the definition of $P'+iQ'$ in the proposition. The 
	corresponding basis vector is $(0,0,P',Q')^\top$, which is:
	\[ \mathbf{V_3} = P' \partial_s + Q' \partial_t. \]
	
	\paragraph{Center Eigenvectors ($\lambda=0$)}
	We seek generalized eigenvectors for the eigenvalue $0$, i.e., vectors 
	$V$ 
	in the kernel of $L$. The equation is $LV=\mathbf{0}$. The 
	component-wise 
	equations are:
	\begin{enumerate}
		\item $\mathbf{0}_{2\times 2} \icol{V_r \\ V_\alpha} = \mathbf{0}$, 
		which is trivially satisfied.
		\item $M_{bl} \icol{V_r \\ V_\alpha} + M_{br} \icol{V_s \\ V_t} = 
		\mathbf{0}$.
	\end{enumerate}
	We find a basis for the two-dimensional solution space.
	\begin{itemize}
		\item \textbf{First center vector:} Choose $(V_r, V_\alpha) = (0, 
		1)$. 
		The second equation becomes $M_{br} \icol{V_s \\ V_t} = 
		\mathbf{0}$. 
		Since $M_{br}$ is invertible ($|\lambda_p|\neq 0$), this implies 
		$V_s=V_t=0$. The resulting eigenvector is $(0,1,0,0)^\top$:
		\[ \mathbf{V_2} = \partial_\alpha. \]
		\item \textbf{Second center vector:} Choose $(V_r, V_\alpha) = (1, 
		0)$. 
		The second equation becomes $M_{br} \icol{V_s \\ V_t} = -M_{bl} 
		\icol{1 
			\\ 0}$. In complex notation, this is $k \cdot 
		\overline{(V_s+iV_t)} = 
		-D$. We solve for $V_s+iV_t$:
		$$ \overline{V_s+iV_t} = -k^{-1}D \implies V_s+iV_t = 
		-\overline{k^{-1}} \bar{D}. $$
		We compute the coefficient $\overline{k^{-1}}$:
		$$ \overline{k^{-1}} = \frac{1}{\bar k} = \frac{k}{|k|^2} = 
		\frac{e^{-i\varpi_i\alpha}\lambda_p}{|\lambda_p|^2} = 
		e^{-i\varpi_i\alpha} \frac{\lambda_p}{|\lambda_p|^2} = 
		e^{-i\varpi_i\alpha} (\bar{\lambda}_p)^{-1}. $$
		Thus, the complex number representing the $(s,t)$ components of the 
		eigenvector is
		$$ P+iQ = V_s+iV_t = -e^{-i\varpi_i\alpha} (\bar{\lambda}_p)^{-1} 
		\bar{D}, $$
		which is precisely the formula given in the proposition. The 
		eigenvector is $(1,0,P,Q)^\top$:
		\[ \mathbf{V_1} = \partial_r + P\partial_s + Q\partial_t. \]
	\end{itemize}
	The three vectors $\mathbf{V_1}, \mathbf{V_2}, \mathbf{V_3}$ form a 
	basis 
	for the center-stable eigenspace, which is tangent to $\Sro_U$ at $q$.
\end{proof}

\begin{rem} \label{rem:solid_tK}
	It follows from construction that the restriction of
	the Galois transformation $\iota$ to $\tSro_U$ either respects or 
	reverses
	orientation, depending on whether $\varpi_i$ is even or odd.
	Since $\tSro_U$ is a solid torus, the quotient by $\iota$
	is a
\index{solid torus}
solid torus if $\varpi_i$ is even, and a
\index{solid Klein bottle}
solid Klein bottle if
	$\varpi_i$ is odd.
	
\end{rem}

	%-----------------------------------------------------------------------
% Beginning of chap11.tex
%-----------------------------------------------------------------------
%
%  AMS-LaTeX sample file for a chapter of a monograph, to be used with
%  an AMS monograph document class.  This is a data file input by
%  chapter.tex.
%
%  Use this file as a model for a chapter; DO NOT START BY removing its
%  contents and filling in your own text.
% 
%%%%%%%%%%%%%%%%%%%%%%%%%%%%%%%%%%%%%%%%%%%%%%%%%%%%%%%%%%%%%%%%%%%%%%%%
	\chapter{The Total Spine of the Milnor Fibration}
\label{s:total_spine}

\section{Broken trajectories}
\label{s:broken_trajectories}

\begin{block}
	Construct a $C^\infty$ real function $\psi:\Ypol \to \R_{\geq 0}$ as
	follows. If $p \in D = \pipol^{-1}(0)$, take a chart $U$ containing $p$ 
	with
	coordinates $u,v$. Let $i,j \in \W$, in particular $i$ or $j$ might be 
	an
	arrowhead. If $p \in D_i^\circ$ assume that $D\cap U = D_i \cap U  = 
	\{u=0\}$.
	Otherwise, if $p \in D_i \cap D_j$ is an intersection point, assume that
	$U\cap D = U\cap  (D_i \cup D_j)$ and that $D_i \cap U = \{u=0\}$ and
	$D_j \cap U = \{v=0\}$.
	If $i\in \A $, that is, if $i$ is an arrowhead, set $\tau_i = 2$
	(as in \Cref{lem:str_tr}).
	In the former case set $\psi_U = |u|^{\tau_i}$ and in the latter
	$\psi_U = |u|^{\tau_i} |v|^{\tau_j}$. Set
	\[
	\psi = \sum_{U} \chi_U \psi_U
	\]
	where $\{\chi_U\}$ is a partition of unity subordinate to some finite 
	open
	cover by charts $U$.
\end{block}

\begin{definition}
	The vector field
\index{$\xiropol$}
$\xiropol$ is the unique continuous extension over all 
	$\Yropol$ of the vector field 
	\[
	\sigma^*\psi \cdot (\piropol)^* \xi.
	\]
\end{definition}

\begin{rem}
	By construction,
	\begin{enumerate}
		\item the vector field $\xiropol|_{\Uro}$ coincides with the vector 
		field $\xiro_U$ (see \Cref{def:xiroU,def:xiroU_intersection}) on 
		$\Uro$ up to multiplication by a positive real function $\Uro \to 
		\R_{>0}$, and
		
		\item the vector field $\xiropol|_{\Droc_{i,\theta}}$ coincides, up 
		to a positive real factor, with the vector field $\xiro_{i,\theta}$.
	\end{enumerate}
\end{rem}

\begin{definition}
	For $\theta \in \R/2\pi\Z$, set
\index{$\Siro_\theta$}
	\[
	\Sigma = \bigcup_{i\in \V}\Sigma_i,\quad
	\Siro_\theta = \bigcup_{i\in \V}\Siro_{i,\theta}.
	\]
\end{definition}

\begin{definition}\label{def:broken_trajectory}
	A
\index{broken trajectory}
\emph{broken trajectory}
$\gamma$ in $\Dpol=(\piropol)^{-1}(0)$
	a family of trajectories $\{\gamma_k\}_{k \in I}$
	of the vector field $\xiropol$ in $\Dpol$, satisfying
	\[
	\lim_{t\to+\infty} \gamma_k(t) = 
	\lim_{t\to-\infty} \gamma_{k+1}(t),\quad
	k \in I.
	\]
	If $|I|=n < \infty$ we say that the broken trajectory has length $n$.
\end{definition}
\begin{rem}\label{rem:broken_trajectories}
	\begin{enumerate}
		\item The fact that $\xiropol$ is tangent to $\Yropolte$ 
		(\Cref{rem:R_and_xi} \cref{it:iii_and_xi_tang}) implies that there 
		exists a $\theta \in \R / 2 \pi\Z$ such that all parts $\gamma_i$ 
		of a broken trajectory are contained in $\Yropolte$.
		
		\item \label{it:non_invariant_broken} The definition allows for 
		parts
		$\gamma_k$  of a broken trajectory $\gamma$ to be included in the
		connecting parts $\Dro_{i,j,\theta}$. Observe that this might only
		happen when the edge $ij$ is non-invariant, that is, when at least 
		one
		of $\varpi_i$ or $\varpi_j$ is non-zero.
		Indeed, if $ij$ is invariant, then the restriction of
		$\xiropol$ to $\Droc_{ij}$ is identically zero,
		see \cref{block:pullback_ro}.
		
	\end{enumerate}
\end{rem}

A broken trajectory $\gamma$ defines a path in the graph $\Gapol$. Here we 
understand a path in the following sense: a sequence of vertices (it is not 
necessary to specify the edges since $\Gapol$ is a tree). The path is 
defined by a sequence of vertices $i_k$ and edges $e_k$, with $k = 1, 
\ldots n$ where $i_k \in \V$ appears in the sequence if $\gamma_k \subset 
\Droc_{i_k,\theta}$ and $e_k = ij$ appears in the sequence if $\gamma_k$ is 
contained in $\Droc_{i,j,\theta}$.

\begin{lemma} \label{lem:broken_edges}
	If $\gamma_k \subset \Dro_{i,j,\theta}$, that is, if $e_k=ij$ and $j 
	\to i$, then $i_{k+1} = j$.
\end{lemma}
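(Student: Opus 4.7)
My plan is to read the statement as a purely local dynamical assertion at the forward endpoint of the trajectory $\gamma_k$, and to reduce it to \Cref{lem:noninv_corner}.

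First, observe that $\Dro_{i,j,\theta}$ is a disjoint union of circles by \Cref{lem:transversality_number} \cref{it:trans_ii}, and that the vanishing set of $\xiro_U$ on $\Dro_{i,j}$ is exactly $\Sigma_{i,j} = \Sigma^+_{i,j}\cup\Sigma^-_{i,j}$ by \Cref{lem:noninv_corner} \cref{it:noninv_corner_ext}. Intersecting with $\Yro_\theta$ and using that $\xiropol$ is tangent to $\Yro_\theta$ (\Cref{rem:R_and_xi} \cref{it:iii_and_xi_tang}), we see that the restriction of $\xiropol$ to the one-dimensional compact manifold $\Dro_{i,j,\theta}$ is a vector field whose zero set is $\Sigma_{i,j}\cap \Yro_\theta$. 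Therefore the forward limit $q := \lim_{t\to+\infty}\gamma_k(t)$ exists and belongs to $(\Sigma^+_{i,j}\cup\Sigma^-_{i,j})\cap\Yro_\theta$.

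Next I would rule out the possibility $q\in\Sigma^+_{i,j}$. Indeed, by \Cref{lem:noninv_corner} \cref{it:noninv_corner_plu}, the stable manifold at a red point is a \emph{single} trajectory contained in $(\Dro_{j,\theta},q)$, and therefore is disjoint from the one–dimensional submanifold $\Dro_{i,j,\theta}$ (away from $q$ itself). Since $\xiropol$ does not vanish on $\Dro_{i,j,\theta}\setminus\Sigma_{i,j}$, no trajectory of $\xiropol$ inside $\Dro_{i,j,\theta}$ can converge to $q$, contradicting $\gamma_k(t)\to q$. Hence $q\in\Sigma^-_{i,j}\cap\Yro_\theta$ is a green point.

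Finally, by the defining property of a broken trajectory (\Cref{def:broken_trajectory}), $\lim_{t\to-\infty}\gamma_{k+1}(t)=q$, so $\gamma_{k+1}$ is contained in the unstable manifold of $\xiropol$ at $q$. By \Cref{lem:noninv_corner} \cref{it:noninv_corner_min}, this unstable manifold is a single trajectory contained in $(\Dro_{j,\theta},q)$, and in particular it meets $\Dro_{i,j,\theta}$ only at $q$ itself. Hence for all $t$ slightly larger than $-\infty$, the trajectory $\gamma_{k+1}(t)$ lies in $\Droc_{j,\theta}$, which means $i_{k+1}=j$, as required.

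The only step requiring real care is the second one — making sure the forward endpoint is a green point rather than a red point; everything else is a direct bookkeeping from the local normal form of $\xiro_U$ at non-invariant corners established in \Cref{lem:noninv_corner}.
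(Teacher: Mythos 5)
Your argument is correct and follows essentially the same route as the paper: the paper's proof simply notes (via \cref{fig:corner_sings}) that $\gamma_k$ runs from a red point to a green point of $\Dro_{i,j,\theta}$ and that the unique trajectory emanating from a green point lies in $\Droc_{j,\theta}$, which is exactly the content of your second and third steps, with \Cref{lem:noninv_corner} \cref{it:noninv_corner_plu} and \cref{it:noninv_corner_min} doing the work. You spell out more explicitly than the paper why the forward endpoint cannot be a red point, which is a welcome addition rather than a deviation.
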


\begin{proof}
	By \Cref{rem:broken_trajectories},\cref{it:non_invariant_broken} we 
	know that $ij$ is a non-invariant edge. By \cref{fig:corner_sings} the 
	path $\gamma_k$ must go from a red vertex to a green vertex on  
	$\Dro_{i,j,\theta}$. But there is a unique trajectory emanating from a 
	green vertex and it is contained in $\Droc_{j,\theta}$.
\end{proof}
We introduce some notation that is useful for this section and later on.

\begin{notation}\label{not:parts_Milnor_fiber}
	Let $\Gamma$ be the dual graph of some resolution of a plane curve 
	singularity and let $\Gamma'$ be a subgraph. We denote by 
\index{Milnor fibration!at radius zero}
	$\Fro_\theta[\Gamma']$ the part of the Milnor fiber at radius $0$ and 
	angle $\theta$ that corresponds to $\Gamma'$, that is, if $\V_{\Gamma'} 
	\subset \V$ is the set of vertices of $\Gamma'$, then (recall 
	\Cref{def:angled_ray}):
	\[
	\Fro_\theta[\Gamma'] = \bigcup_{i\in \V_{\Gamma'}} \Dro_{i,\theta}.
	\]
\end{notation}

\begin{lemma}\label{lem:finite_broken_traj}
	A broken trajectory $\gamma$ can pass through a non-invariant edge $ij$ 
	at most $2 \gcd(m_i,m_j)$ times. 
\end{lemma}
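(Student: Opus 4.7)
The idea is to bound, for each circle component $C \subset \Dro_{i,j,\theta}$, the number of times a broken trajectory passes through $C$ by $2$, and then multiply by the number of components, which is $\gcd(m_i,m_j)$ by \Cref{lem:transversality_number}.

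First I would enumerate the possible interactions of a broken trajectory with a single circle $C$. Using the stable/unstable manifold description at red and green points in \Cref{lem:noninv_corner} (and the dynamics along the corner pictured in \Cref{fig:corner_sings}), one verifies that the only patterns are: (I) $\Droc_{j,\theta} \to R \to \Droc_{i,\theta}$ through a red point $R\in C$; (II) $\Droc_{i,\theta} \to G \to \Droc_{j,\theta}$ through a green point $G\in C$; and (III) $\Droc_{j,\theta}\to R \to (\text{arc of } C) \to G \to \Droc_{j,\theta}$ through a red–green pair on the same circle. The patterns ``$\Droc_{i,\theta}\to G \to C$'' or ``$\Droc_{j,\theta}\to R \to \Droc_{j,\theta}$ without touching the corner'' are excluded because $G$ is attracting on $C$ and $R$'s stable manifold in $\Droc_{j,\theta}$ is only $1$-dimensional.

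Next, since $ij$ is non-invariant, WLOG $\varpi_i \neq 0$, so by \Cref{lem:potential_xiro} the potential $\phiro_{j,\theta}$ extends to each connected component $C$ of $\Dro_{i,j,\theta}$ with a constant value $\phi_j(C)$; moreover $\phiro_{j,\theta}$ is strictly monotonic along every non-constant trajectory of $\xiropol$ in $\Droc_{j,\theta}$. The crucial consequence is that \emph{no trajectory in $\Droc_{j,\theta}$ may start and end on the same circle~$C$}. Combining this monotonicity with the classification above, any broken trajectory can use pattern (I) or (III) on $C$ at most once (each such use requires arriving at $C$ from $\Droc_{j,\theta}$ at potential exactly $\phi_j(C)$ and thereafter leaving with strictly lower potential, which forbids a second arrival to $C$ along $\Droc_{j,\theta}$). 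A symmetric argument applied to the other side (invoking $\phiro_{i,\theta}$ when $\varpi_j\neq 0$, or the analogous statement for invariant vertices noted at the end of the proof of \Cref{lem:potential_xiro}) bounds pattern (II) on $C$ by one as well. Summing, each $C$ contributes at most $2$ passes, and the bound $2\gcd(m_i,m_j)$ follows.

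The main obstacle is handling the case where only one of $\varpi_i,\varpi_j$ is nonzero: then only one of the two potentials is known to be constant on $C$, and bounding pattern (II) by one requires an additional argument on the ``other side''. I would address this by either using the invariant-case extension of $\phi_i$ from \Cref{lem:potential_xiro}, or by a more delicate dynamical argument at the boundary circle, exploiting that consecutive uses of the same circle from the $i$-side would force the intervening trajectory in $\Droc_{j,\theta}$ to violate monotonicity of $\phiro_{j,\theta}$.
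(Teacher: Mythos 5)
Your target per-circle bound (at most two passes per component of $\Dro_{i,j,\theta}$, hence $2\gcd(m_i,m_j)$ in total) is the same one the paper proves, and your classification of the local patterns at red and green points is consistent with \Cref{lem:noninv_corner}. The gap is in the step that bounds each pattern by one occurrence per circle. The potential $\phiro_{j,\theta}$ of \Cref{lem:potential_xiro} is strictly monotone only along the individual pieces of the broken trajectory that lie in $\Droc_{j,\theta}$; it says nothing once the broken trajectory leaves $\Droc_{j,\theta}$. Between two arrivals at the same circle $C$, the broken trajectory may exit $\Droc_{j,\theta}$ through a different boundary component (into a sibling branch, or back towards $0$, or into $\Fro_\theta[\Xi_i]$ and out through a \emph{different} circle of the same corner), wander arbitrarily, and re-enter $\Droc_{j,\theta}$ at a boundary circle whose potential value is lower than $\phi_j(C)$; nothing in your argument then prevents the next piece in $\Droc_{j,\theta}$ from climbing back up to $\phi_j(C)$ and hitting $C$ again. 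So ``thereafter leaving with strictly lower potential, which forbids a second arrival'' does not follow. The only thing your monotonicity argument genuinely excludes is a \emph{single} trajectory of $\xiro_{j,\theta}$ with both endpoints on $C$. The paper closes exactly this gap with a global argument: three passes through one circle yield a sub-broken-trajectory confined to one side of the corner with both ends on that circle; one then takes the minimal subgraph $\Gamma'\subset\Gapol$ supporting it, identifies its root $k$, and contracts the relevant boundary circles of $\Dro_{k,\theta}$ to points, so that the sub-trajectory projects to a loop of integral curves of a gradient field on the quotient surface, which is impossible. Some version of this ``find the root and collapse'' step is unavoidable, because the recursion through deeper branches is exactly where the difficulty lives.

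A second, independent problem is your pattern (II). As you note, \Cref{lem:potential_xiro} only asserts that $\phiro_{i,\theta}$ extends with constant value over boundary components corresponding to edges oriented \emph{away} from $i$; the circle $C$ is the incoming boundary component of $\Droc_{i,\theta}$, and there the potential cannot extend with a constant value at all, since $C$ carries both sources (red points) and sinks (green points) for $\xiro_{i,\theta}$ (see \cref{fig:corner_sings}). So the ``symmetric argument on the other side'' is not available even in principle, and the fallback you sketch (forcing a violation of monotonicity of $\phiro_{j,\theta}$) runs into the same global escape-and-return issue as above. To repair the proof you would need to replace both per-pattern counts by the paper's pigeonhole-plus-collapse argument, or by some other device that controls the whole excursion between consecutive visits to the corner rather than only its pieces inside $\Droc_{j,\theta}$.
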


\begin{proof}
	Assume that $j \to i$. We know that $\Droc_{i,j,\theta}$ has $\gcd(m_i, 
	m_j)$ connected components. If $\gamma$ passes through 
	$\Droc_{i,j,\theta}$ more than 
	$2 \gcd(m_i,m_j)$ times, then by the pigeonhole principle, it must pass 
	at least $3$ times through the same connected component, let's say 
	$\tDroc_{i,j,\theta}$ of $\Droc_{i,j,\theta}$. Let $\Xi_j$ be the 
	branch of $\Gapol$ at the vertex $j$. Then, since $\gamma'$ passes at 
	least $3$ times through $\tDroc_{i,j,\theta}$, there exists a 
	sub-broken trajectory $\gamma' \subset \gamma$ of length $n'$ such that 
	\begin{enumerate}
		\item $\gamma'_1$ starts at $\tDroc_{i,j,\theta}$,
		\item $\gamma'_{n'}$ ends at $\tDroc_{i,j,\theta}$
		\item $\gamma' \subset \Fro_{\theta}[\Gapol \setminus \Xi_j]$
	\end{enumerate}
	The last condition means that $\gamma'$ is contained in parts of the 
	Milnor fiber at radius $0$ corresponding to vertices that are closer 
	$0$ than $j$. The \cref{fig:petri_dish_loop} makes it easier to follow 
	this proof.
	
	Let $\Gamma' \subset \Gapol$ be the smallest subgraph such that 
	$\Fro_\theta[\Gamma']$ contains $\gamma'$. Observe that $\Gamma'$ is 
	connected since $\gamma'$ is connected. The graph $\Gamma$ is a 
	directed tree naturally rooted at $0$, so $\Gamma'$ is also a directed 
	tree rooted at some vertex $k$ which minimizes the distance to $0$ 
	among all vertices in $\V_{\Gamma'}$. By definition of $\Gamma'$, we 
	have that $\Dro_{k,\theta}$ contains a part of $\gamma'$ and moreover, 
	there is at least a boundary component of $\Dro_{k,\theta}$ that 
	$\gamma'$ intersects twice since $\gamma'$ must return to 
	$\tDroc_{i,j,\theta}$.
	
	Now, for each $\ell\in \V_{\Gamma'}$ such that $k\ell$ is an edge of
	$\Gamma'$ with $k \to \ell$, we contract each of the boundary components
	of $\Dro_{k,\theta}$  contained in  $\Dro_{k,\ell,\theta}$ to a point.
	We obtain a compact surface $\widehat{\Dro_{k,\theta}}$. The image of
	$\gamma'$ in this quotient surface must consist of some loops. But, by
	\Cref{lem:potential_xiro} (if $k \neq 0$) or by \Cref{lem:xi_0_gradient}
	(if $v_i=0$ ), the vector field $\xiro_{k,\theta}$ admits a potential
	that takes constant values on all the contracted boundary components (so
	the potential is equivariant under the quotient and yields a potential
	in $\widehat{\Dro_{k,\theta}}$) and it is impossible to have loops
	formed by integral lines of potentials. \end{proof}

\begin{figure}[!ht]
	\centering
	\includegraphics*[scale=0.95]{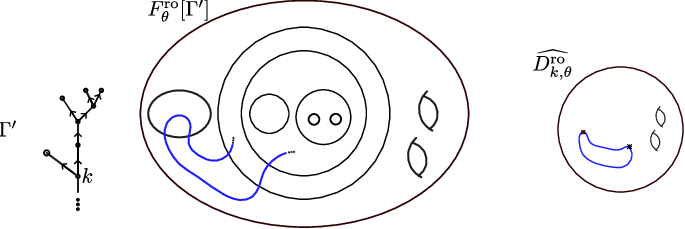}
	\caption{On the left hand side we see the surface 
		$\Fro_\theta[\Gamma']$. On the right hand side, we see the surface 
		$\widehat{\Dro_{k,\theta}}$ after contracting two boundary components 
		of $\Dro_{k,\theta}$.}
	\label{fig:petri_dish_loop}
\end{figure}

\begin{lemma} \label{lem:it_ends}
	There exists a number $M = M(\Gapol, (\pi^*f)) \in \Z_{>0}$ which only 
	depends on
	the graph $\Gapol$ and the multiplicities of the total transform of 
	$f$, so that all continuous broken trajectories
	in $\Ypol$ have length $\leq M$.
\end{lemma}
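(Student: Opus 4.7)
My plan is to bound separately (a) the number of times a broken trajectory can cross each edge of $\Gapol$, and (b) the number of consecutive pieces $\gamma_k$ that stay inside a single stratum $\Droc_{i,\theta}$ (or a non-invariant corner) between two edge crossings. Associate to the broken trajectory the walk in $\Gapol$ obtained by recording the stratum in which each $\gamma_k$ lives. It suffices to bound (a) and (b) by quantities depending only on $\Gapol$ and the multiplicities $(m_i)$.

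For (a), the bound on non-invariant edges is already supplied by \Cref{lem:finite_broken_traj}, namely $2\gcd(m_i,m_j)$ per edge. For invariant edges I will use the local model of \Cref{cor:xiro_inv}: near an invariant corner, the rescaled vector field has the form $(dc_{0,j}r',-dc_{0,i}s',0,0)$ with $d>0$, so $\Droc_{i,\theta}$ is the unstable direction and $\Droc_{j,\theta}$ the stable direction. Thus the broken trajectory can only cross an invariant edge $ij$ in the direction $i\to j$ (toward the root $0$). Since $\Gapol$ is a tree, any return from $j$ to $i$ would require re-traversing $ij$ in the opposite direction, which is forbidden; hence each invariant edge is crossed at most once. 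Summing gives at most $(|\V|-1)+\sum_{ij\text{ non-inv}}2\gcd(m_i,m_j)$ edge crossings total.

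For (b), suppose $\gamma_k$ and $\gamma_{k+1}$ both lie in the same $\Droc_{i,\theta}$. By \Cref{lem:potential_xiro} (or \Cref{lem:xi_0_gradient} when $i=0$), $\xiro_{i,\theta}$ is the gradient of a potential $\phiro_{i,\theta}$ with respect to $\gro_{i,\theta}$, and $\phiro_{i,\theta}$ strictly decreases along every trajectory piece. The common limit of $\gamma_k$ and $\gamma_{k+1}$ is therefore a critical point of $\phiro_{i,\theta}$, either an interior point of $\Siro_{i,\theta}$ or a singularity on the boundary (an invariant corner point or a red/green point of some $\Sigma_{i,j}^\pm$ with $ij$ non-invariant). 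Since all such critical sets are finite and the potentials at successive critical points form a strictly decreasing sequence, the number of consecutive pieces in $\Droc_{i,\theta}$ is bounded by the cardinality of this critical set, a quantity determined by $\Gapol$ and $(m_i)$. An analogous (in fact more elementary) bound holds for consecutive pieces inside a single non-invariant corner $\Dro_{i,j,\theta}$, where trajectories connect adjacent red and green points and the number of such points is explicit.

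Combining (a) and (b) produces a constant $M$ depending only on $\Gapol$ and the multiplicities $(m_i)$ bounding the length of any broken trajectory. The main obstacle is the rigidity argument for invariant edges: one must check that the one-sidedness coming from \Cref{cor:xiro_inv} really forbids a return trip, and here the tree property of $\Gapol$ is essential — without it, a trajectory could conceivably loop back and recross an invariant edge via some alternate route. The remaining steps are essentially bookkeeping enabled by the potentials constructed in \Cref{ss:potential_ro} and the finiteness of the singular set of $\xiropol$ on each stratum.
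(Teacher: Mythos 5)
Your proposal is correct and follows essentially the same route as the paper's proof: invariant edges are crossed at most once because the local dynamics at an invariant corner are one-directional and $\Gapol$ is a tree, non-invariant edges are crossed at most $2\gcd(m_i,m_j)$ times by \Cref{lem:finite_broken_traj}, and the remaining count is finite stratum by stratum. The only (harmless) difference is that the paper rules out two consecutive pieces lying in the same $\Droc_{i,\theta}$ outright, whereas you bound the number of such consecutive pieces via the monotonicity of the potentials from \Cref{ss:potential_ro} — both suffice, and your stable/unstable labelling at invariant corners, while internally a little inconsistent about which divisor is which, does not affect the argument since only the uni-directionality of the crossing is used.
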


\begin{proof}
	The dynamics of $\xiropol$ near $\Dro_{i,j,\theta}$ for $ij$ an 
	invariant 
	edge (\cref{fig:droij_inv}) and the fact that $\Gamma$ is a tree, imply 
	that a broken trajectory can pass at most once through a invariant 
	edge. By 
	\Cref{lem:finite_broken_traj}, a broken trajectory can pass at most $2 
	\gcd(m_i,m_j)$ times through a non-invariant edge. Also, by the 
	dynamics of 
	$\xiropol$ near the corner parts 
	(\cref{fig:droij_inv,fig:corner_sings}) 
	and by \Cref{lem:broken_edges}, two consecutive vertices  $v_k, 
	v_{k+1}$ of 
	the path defined by a broken trajectory cannot be on the same connected 
	component $\Droc_{i,\theta}$. The result follows by the finiteness of 
	$\Gamma$. 
\end{proof}

\section{The total spine}
So far, we have described certain families of trajectories
of $\xi$ which converge to the origin in $\C^2$, i.e. that are contained
in the total spine $S$, \Cref{def:total_spine}.
These are the stable (for $i = 0$) and center stable (for $i \neq 0$)
manifolds $S(p)$, for $p \in \Sigma_i$,
constructed in \Cref{s:1st_blowup} and \Cref{ss:center_stable}.
In this section, we show that this is the whole total spine.

\begin{thm} \label{thm:spine}
	The
\index{total spine}
total spine
admits a finite partition
	\begin{equation} \label{eq:spine_partition}
		S = \{0\} \sqcup \bigsqcup_{p\in \Sigma} S(p),
	\end{equation}
	\begin{itemize}
		\item
		If $p\in \Sigma_0$ is a
\index{repeller}
repeller
		then $S(p) \setminus f^{-1}(S^1_\eta)$ is an open punctured disk.
		\item
		If $p\in \Sigma_0$ is a
\index{saddle}
saddle point,
		then $S(p) \setminus f^{-1}(S^1_\eta)$ is an open
\index{solid torus}
		solid torus.
		\item
		If $p \in \Sigma_i$ with $i \neq 0$,
		then $S(p) \setminus f^{-1}(S^1_\eta)$ is an open
		solid torus if $\varpi_i$ is even, and an open
\index{solid Klein bottle}
		solid Klein bottle if $\varpi_i$ is odd.
	\end{itemize}
\end{thm}

\begin{proof}
	Let $\gamma \subset \Tu^*$ be a trajectory of the vector field $\xi$
	which converges
	to the origin, and assume that $\gamma$ is not contained in a (center) 
	stable
	manifold $S(p)$, $p \in \Sigma$.
	Then $\gamma$ lifts to a trajectory $\gapol$ of $\xipol$ in $\Ypol$,
	and to a trajectory $\garopol$ of $\xiropol$ in $\Yropol$.
	Denote by $\Apol$ and $\Aropol$ the accumulation sets of $\gapol$
	and $\garopol$. Since $\sigma$ is a proper map, we have
	$\Apol = \sigma(\Aropol)$. Furthermore, the set $\Aropol$ is invariant
	under the flow of $\xiropol$.
	
	\emph{Claim:} If $p \in \Aropol$ and $\xiropol(p) = 0$, then
	there exists a trajectory $\gamma_p \subset \Aropol$ satisfying
	\[
	\lim_{t \to -\infty} \gamma_p(t) = p.
	\]
	
	To prove the claim, we consider two cases; either $p \in \Droc_i$ for
	some vertex $i \in \Vpol$, or $p \in \Dro_{ij}$ for some edge $ij$.
	
	In the first case, we have $p \in \Siro_i$, and
	$\gamma \cap S(p) = \emptyset$, by assumption.
	Fix some small half-ball $B_p$ around $p$ in $\Yropol$.
	Let $p_i = \garopol(t_i) \in \garopol \cap B_p$ be
	a sequence that converges to $p$. Since the trajectory of $\xiropol$
	starting at $p_i$ diverges from $p$, there exists some $t'_i > t_i$
	so that $p'_i = \garopol(t_i') \in \partial B_p$, and $\gamma$ is
	exiting $B_p$ at $p_i'$.
	By compactness, we can
	restrict this sequence to a subsequence, and assume that
	$p'_i \to p' \in \partial B_p$. By the assumption that $\gamma$
	converges to the origin, we must have $p' \in (\piropol)^{-1}(0)$, and 
	so
	$p' \in \Droc_i$.
	The trajectory of $\xiropol$ in $\Dro_i$ which passes through $p'$
	satisfies the conditions of the claim.
	
	Consider next the case when $p \in \Dro_{ij}$ for some edge $ij$.
	By \Cref{cor:strict_trans}, both vertices $i$ and $j$ are nonarrowheads,
	i.e. $i,j \in \Vpol$.
	As a result, this case follows by
	\Cref{lem:elementary_invariant} if the edge $ij$ is invariant, and
	\Cref{lem:noninv_corner} if it is not, which
	finishes the proof of the claim.
	
	We will now construct a broken trajectory in $\Aropol$,
	as follows.
	The set $\Aropol$ is nonempty, since the map $\piropol$ is proper.
	Take some $p_1 \in \Aropol$. If $\xiropol(p_1) \neq 0$, let
	$\gamma_1$ be the trajectory passing through $p_1$. Otherwise, choose
	$\gamma_1$ as some trajectory in $\Aropol$ as in the claim.
	Assuming that we have constructed a
	broken trajectory $\gamma_1,\ldots,\gamma_{n-1}$ in $\Aropol$, let
	$p_n$ be the limit of $\gamma_{n-1}$ at plus infinity, and choose 
	$\gamma_n$
	using the point $p = p_n$ in the claim.
	This process never ends, in contradiction to \Cref{lem:it_ends}.
	This proves \cref{eq:spine_partition}.
	
	If $p$ is a repeller on $D_0$, then $S(p)$ is the punctured stable
	manifold at $p$, in this case a disk. Similarly, if $p \in D_0$
	is a saddle point, then the punctured (open) stable manifold at $p$
	is an open solid torus. The cases when $p \in D_i$ with $i \neq 0$
	follow from \Cref{rem:solid_tK}.
\end{proof}

	%-----------------------------------------------------------------------
% Beginning of chap12.tex
%-----------------------------------------------------------------------
%
%  AMS-LaTeX sample file for a chapter of a monograph, to be used with
%  an AMS monograph document class.  This is a data file input by
%  chapter.tex.
%
%  Use this file as a model for a chapter; DO NOT START BY removing its
%  contents and filling in your own text.
% 
%%%%%%%%%%%%%%%%%%%%%%%%%%%%%%%%%%%%%%%%%%%%%%%%%%%%%%%%%%%%%%%%%%%%%%%%
	\chapter{The Invariant Spine}
\label{s:invariant_spine}

In this section we construct a $\Cinf$ fibration equivalent to the Milnor
fibration by contracting the parts of the Milnor fiber at radius $0$
corresponding to non-invariant vertices. Furthermore, we construct a 
piecewise
smooth spine for all the fibers of this fibration at the same time.

\begin{lemma}\label{lem:four_pos}
	Let $i \in \Vpol$ and $\theta \in \R/2\pi\Z$.
	Any trajectory of the vector field $\xiro_{i,\theta}$ in
	$\Droc_{i,\theta}$ has a limit at each end in $\Dro_{i,\theta}$,
	each of which is either: 
	\begin{enumerate}
		\item a singularity of $\xiro_{i,j,\theta}$ in the corner circle 
		$\Dro_{i,j,\theta}$,
		\item a
\index{saddle}
saddle point $q$ of $\xiro_{i,\theta}$, or
		\item a
\index{repeller}
repeller on $\Dro_{0,\theta}$.
	\end{enumerate}
	The last case can only happen if $i = 0$.
\end{lemma}

\begin{proof}
	Let $\gamma \subset \Droc_{i,\theta}$ be a maximal trajectory, defined 
	on
	an interval $(a,b)$. It suffices to show that $\gamma(t)$ has a limit, 
	when
	$t \to b^-$, the case $t\to a^+$ follows similarly.
	
	Assume first that $\gamma$ has an accumulation point $q$ in 
	$\Droc_{i,\theta}$
	when $t \to b^-$. Since the potential $\phiro_{i,\theta}$ increases
	along $\gamma$, it is bounded on $\gamma$ by $\phiro_{i,\theta}(q)$,
	and it follows that necessarily $\xiro_{i,\theta}(q) = 0$.
	It now follows from \cite{loj_coll} that
	$b=+\infty$, and $\gamma(t) \to q$ when $t\to+\infty$.
	
	Otherwise, $\gamma$ has an accumulation point on the boundary
	of the compact space $\Dro_{i,\theta}$.
	In this case, it is clear from the description of
	$\xiro_{i,\theta}$ near the boundary in \Cref{s:corners} that $\gamma$
	has a limit at some point, see
	\cref{fig:droij_inv,fig:corner_sings}.
\end{proof}

\begin{figure}
	\centering
	\includegraphics{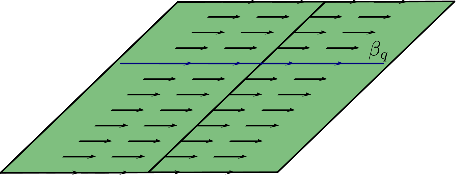}
	\caption{The vector field near a corner where both adjacent vertices 
		are invariant.}
	\label{fig:both_invariant}
\end{figure}

\begin{figure}
	\centering
	\includegraphics{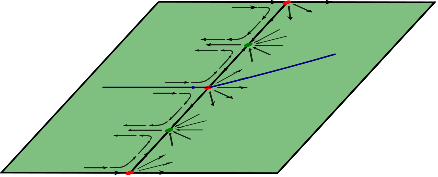}	
	\caption{The vector field near a non-invariant corner where the broken 
		trajectory (followed backwards) arrives at a
\index{red point}
red point.  In this case, 
		the continuation of the broken trajectory is determined by the dynamics 
		of the vector field.}
	\label{fig:droij_non_inv_2}
\end{figure}

\begin{figure}
	\centering
	\includegraphics{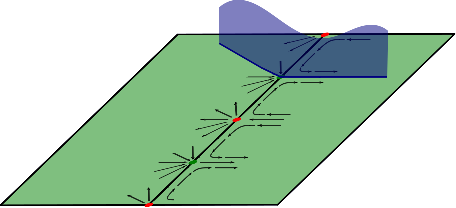}
	\caption{The vector field near a non-invariant corner where the broken 
		trajectory (followed backwards) arrives at a
\index{green point}
green point. In this case, 
		the continuation of the broken trajectory is determined by the 
		center-stable manifold of the saddle point where the broken trajectory 
		started.}
	\label{fig:droij_non_inv}
\end{figure}

\section{Broken trajectories} \label{ss:brotraj}
In this subsection we construct two broken trajectories (recall 
\Cref{def:broken_trajectory}) associated to each
saddle point of $\xiropol$ 
and define a topological path, the
{\em total broken trajectory}
as the 
union of their closures.

Let $\beta^+_q[0]$ be one of the two trajectories contained in the stable 
manifold of $\xiro_{i,\theta}$ at a saddle point $q[0]=q \in 
\Droc_{i,\theta}$. We observe that since $\beta^+_q[0]$ is a trajectory of 
$\xiro_{i,\theta}$,  \Cref{lem:four_pos} says that its closure consists of 
$q[0]$ and another point $q[1] \neq q[0]$. Furthermore, since the 
\index{manifold!center-stable}
center-stable manifold $\Sro(q)$ meets transversely $\Droc_{i,\theta}$ at 
$q$ and it is an invariant (by $\hxiro_i$) manifold we also have
\[
\beta^+_q[0] \subset \overline{\Sro(q)} \cap \Yro_\theta.
\]
Assume that we have constructed a topological path $\beta^+_q[\ell]$ 
endowed with a filtration 
\[
\beta^+_q[0] \subseteq \beta^+_q[1] \subset \cdots \beta^+_q[\ell]
\]
such that for each $k=0, \ldots, \ell$
\begin{enumerate}[label=(\alph*)]
	\item the set \label{it:a_integral} $\mathrm{int} \left( \beta^+_q[k] 
	\, \setminus \, \beta^+_q[k-1] \right)$ is an integral line of 
	$\xiro_{j,\theta}$ for some $j \in \Vpol$.
	
	\item \label{it:b_two_points} the boundary of the closure 
	$\overline{\beta^+_q[k] \setminus \beta^+_q[k-1]}$ consists of two 
	points $q[k]$ and $q[k+1]$ which, by \Cref{lem:four_pos} satisfy that 
	$q[k] \neq q[k+1]$.
	
	\item \label{it:c_centerstable} $\beta^+_q[k] \subset 
	\overline{\Sro(q)} \cap \partial \Yro_\theta.$
\end{enumerate}

The next is an exhaustive list, depending on where the point $q[\ell + 1]$ 
lies, that indicates  if the construction of the broken trajectory finishes 
or one has to construct $\beta^+_q[\ell+1]$ and iterate. At each step in 
which the algorithm is not finished, we verify that 
\cref{it:a_integral,it:b_two_points,it:c_centerstable} above are satisfied.
\begin{enumerate}
	\item \label{it:1stcase} The point  $q[\ell+1] \in \Droc_{j,\theta}$ is 
	a saddle point of $\xiro_{j,\theta}$ for some $j \in \V$. Define 
	\[
	\beta^+_q = \beta^+_q[\ell] \cup \{q[\ell+1]\}
	\]
	and finish the construction here.
	
	\item \label{it:2ndcase}  The point  $q[\ell+1] \in \Droc_{0,\theta}$ 
	and it is a fountain of $\xiro_{0,\theta}$. Define
	\[
	\beta^+_q = \beta^+_q[\ell] \cup \{q[\ell+1]\}
	\]
	and finish the construction here.
	
	\item \label{it:3rdcase} The point $q[\ell+1] \in \Dro_{j,k}$ and both 
	$j$ 
	and $k$ are invariant vertices with $k \to j$ (as in figure 
	\Cref{fig:both_invariant}). In this case there is a unique trajectory 
	$\gamma[\ell+1]$ of $\xiro_{k,\theta}$ that has $q[\ell+1]$ in its 
	closure. 
	We define 
	\[
	\beta^+_q[\ell+1] = \beta^+_q[\ell] \cup q[\ell+1] \cup \gamma.
	\]
	Now we check that in this case  the induction hypothesis 
	\cref{it:a_integral,it:b_two_points,it:c_centerstable} above are still 
	satisfied. Indeed:
	\begin{enumerate}[label=(\alph*)]
		\item $\mathrm{int}\left( \beta^+_q[\ell+1] \, \setminus \, 
		\beta^+_q[\ell] \right)= \gamma[\ell+1]$ which is, by definition, 
		an integral line.
		\item By \Cref{lem:four_pos}, we have that the boundary of 
		$\overline{\gamma[\ell+1]}$ consists exactly of two different 
		points.
		
		\item By the last statement of \Cref{cor:xiro_inv}, the closure of 
		the
		center stable manifold $\overline{\Sro(q)}$ meets $\partial \Uro$
		precisely at $\left(\{q[\ell+1]\}\cup\gamma[\ell] \cup
		\gamma[\ell+1]\right) \cap \partial \Uro$. Here we are using that
		$\Sro(q)$ is an invariant manifold contained in $\Yro_\theta$ that 
		has
		$\gamma[\ell]$ in its closure. A complete proof of this point is 
		below.
		
		\begin{proof}[Proof of  (c)]
			Our goal is to demonstrate that the property $\beta^+_q[k] 
			\subset 
			\overline{\Sro(q)} \cap \partial \Yro_\theta$ is preserved at 
			each step of 
			the construction. We assume, by induction, that the constructed 
			path so 
			far, $\beta^+_q[\ell]$, lies within the closure of the 
			center-stable 
			manifold $\overline{\Sro(q)}$. The terminal segment, 
			$\gamma[\ell]$, is a 
			trajectory of a vector field $\xiro_{j,\theta}$ which converges 
			to the 
			point $q[\ell+1] \in \Dro_{j,k}$.
			
			The current step of the construction addresses Case (iii), 
			where the edge 
			$jk$ is invariant. This implies that the corner point 
			$q[\ell+1]$ is an 
			elementary singularity of the extended vector field, as 
			established in 
			\Cref{lem:elementary_invariant}.
			
			By \Cref{cor:xiro_inv}, there exists a local coordinate system 
			$(r', s', 
			\alpha', \beta')$ in a neighborhood $\Uro$ of $q[\ell+1]$ such 
			that the 
			vector field $\xiro_U$ is linearized to the form:
			\[
			\xiro_U = (C_k r', -C_j s', 0, 0)
			\]
			where $C_k = d c_{0,k}$ and $C_j = d c_{0,j}$ are positive 
			constants. The 
			flow is described by the equations $\dot{r'} = C_k r'$ and 
			$\dot{s'} = -C_j 
			s'$. In this chart, the point $q[\ell+1]$ corresponds to the 
			origin. The 
			local manifold $\Droc_{j,\theta}$ is represented by the 
			half-space $\{s' = 
			0, r' > 0\}$, and $\Droc_{k,\theta}$ is represented by $\{r' = 
			0, s' > 
			0\}$. The trajectory $\gamma[\ell]$ approaches the origin along 
			the 
			positive $r'$-axis. The next segment of the broken trajectory, 
			$\gamma[\ell+1]$, must be the unique trajectory emanating from 
			the origin 
			within $\Droc_{k,\theta}$, which is the positive $s'$-axis.
			
			The center-stable manifold $\Sro(q)$ is, by definition, 
			invariant under the 
			flow of $\xiropol$. The induction hypothesis, $\gamma[\ell] 
			\subset 
			\overline{\Sro(q)}$, implies that $\overline{\Sro(q)}$ contains 
			the 
			positive $r'$-axis in the chart $\Uro$. Consequently, there 
			exists a 
			sequence of points $\{p_n\}_{n \in \N}$ in $\Sro(q) \cap \Uro$ 
			that 
			converges to the positive $r'$-axis. Let the coordinates of 
			these points be 
			$p_n = (r'_n, s'_n, \alpha'_n, \beta'_n)$, where $r'_n > 0$ and 
			$s'_n \to 
			0$ as $n \to \infty$.
			
			By the invariance of $\Sro(q)$, the full trajectory 
			$\gamma_{p_n}$ through 
			each point $p_n$ is contained in $\Sro(q)$. The coordinates of 
			this 
			trajectory at time $t$ are given by $\gamma_{p_n}(t) = (r'_n 
			e^{C_k t}, 
			s'_n e^{-C_j t}, \alpha'_n, \beta'_n)$.
			
			Consider the backward flow ($t < 0$) of these trajectories. As 
			$t \to 
			-\infty$, the $r'$-coordinate, $r'_n e^{C_k t}$, converges to 
			$0$, while 
			the $s'$-coordinate, $s'_n e^{-C_j t}$, grows without bound. 
			This shows 
			that trajectories originating in $\Sro(q)$ near the $\{s'=0\}$ 
			plane are 
			swept towards the $\{r'=0\}$ plane under the backward flow.
			
			Since $\overline{\Sro(q)}$ is a closed set, it must contain the 
			accumulation points of all trajectories contained within it. 
			Let $p_s = (0, 
			s'_0, \alpha', \beta')$ be an arbitrary point on the positive 
			$s'$-axis, 
			which represents the trajectory $\gamma[\ell+1]$. For any such 
			$p_s$, we 
			can construct a sequence of points in $\Sro(q)$ that converges 
			to it. 
			Specifically, for each $n$, we can choose a time $t_n < 0$ such 
			that $s'_n 
			e^{-C_j t_n} = s'_0$. Then the sequence of points $q_n = 
			\gamma_{p_n}(t_n)$ 
			lies in $\Sro(q)$, and as $n \to \infty$, we have $s'_n \to 0$, 
			which 
			forces $t_n \to -\infty$. Consequently, the $r'$-coordinate 
			$r'_n e^{C_k 
				t_n}$ converges to $0$. Thus, the sequence $\{q_n\}$ 
			converges to $p_s$.
			
			This demonstrates that any point on the positive $s'$-axis is a 
			limit point 
			of $\Sro(q)$. Therefore, the entire trajectory $\gamma[\ell+1]$ 
			is 
			contained within $\overline{\Sro(q)}$. This completes the 
			induction.
		\end{proof}
	\end{enumerate} 
	
	\item \label{it:4rdcase} The point $q[\ell+1] \in \Dro_{j,k}$ and, at 
	least 
	one of the two vertices $j$ or $k$ is a non-invariant vertex. Assume $k 
	\to 
	j$. In this case, one of the following happens,
	\begin{enumerate}[label = (\arabic*), ref= (\arabic*)]
		\item \label{it:acase} See \Cref{fig:droij_non_inv_2}. This case is 
		similar to the previous one: 
		there is a unique trajectory $\gamma[\ell+1]$ of $\xiro_{k,\theta}$ 
		that has $q[\ell+1]$ in its closure. We define 
		\[
		\beta^+_q[\ell+1] = \beta^+_q[\ell] \cup q[\ell+1] \cup 
		\gamma[\ell+1].
		\]
		Again, the  \cref{it:a_integral,it:b_two_points} are verified 
		directly. And \cref{it:c_centerstable} follows because by 
		\Cref{lem:noninv_corner}, \cref{it:noninv_corner_elem} the Hessian 
		of $\xiro_U$ is elementary along the normal bundle and, since 
		$\Sro(q)$ is an invariant manifold contained in $\Yro_\theta$ that 
		has $\gamma[\ell] \cup \{q[\ell]\}$ in its closure, then  $\Sro(q)$ 
		must also have the only trajectory of $\xiro_{k,\theta}$ that has 
		$q[\ell]$ in its closure.
		
		\item \label{it:bcase}
		Assume now that $\gamma[\ell]$ is the single unstable trajectory 
		going out of a 
		green point in its half-saddle part as in 
		\Cref{fig:droij_non_inv}. In this case there is not a unique 
		trajectory 
		of $\xiro_{j,\theta}$ that has $q[\ell+1]$ in its closure. Note 
		that in this 
		case, the right-hand part of \Cref{fig:droij_non_inv} corresponds 
		with 
		$\Dro_{k,\theta}$.
		
		Using the same methods as above, it is possible to choose a 
		trajectory
		in $\Droc_{j,\theta}$, which continues the broken trajectory, and 
		is contained
		in $\overline{\Sro(q)}$. We choose any such trajectory for the sake 
		of this 
		algorithm. However,
		uniqueness is not guaranteed by this argument.
		Observe that the chosen trajectory may necessarily be the corner 
		locus, in 
		which case, we follow the corner locus until we hit an adjacent red 
		point, and 
		continue 
		as before.
		In subsequent work, we plan on describing in more details the 
		structure
		of the cell near the boundary. As a result of this work, we
		expect the choice made in this step to be unique. 
		Furthermore, in this subsequent work we show that this $(iv) (2)$ 
		case can be 
		avoided by a small equisingular perturbation {\em a la Kodaira} of 
		the analytic 
		type of the defining function $f$.

	\end{enumerate}

\end{enumerate}

\begin{definition}\label{def:total_broken_trajectory}
	For each $q \in \Siro_{i,\theta}$ that is a saddle point of 
	$\xiro_{i,\theta}$ define the set 
	\[
	\beta_q = \beta^+_q \cup \beta^-_q \cup \{q\}
	\]
	and call it
\index{broken trajectory!total}
{\em the total broken trajectory of $\xiro_{i,\theta}$} at 
	$q$.
\end{definition}

\begin{lemma}\label{lem:equivalence_broken_trajectory}
	We have 
	\[
	\beta_q
	\subset \overline{\Sro(q)}
	\cap \partial \Yro_\theta.
	\]
\end{lemma}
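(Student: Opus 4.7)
The plan is to prove the containment by induction on the index $\ell$ of the filtration $\beta^+_q[0] \subset \beta^+_q[1] \subset \cdots$ constructed in \Cref{ss:brotraj}, and symmetrically for $\beta^-_q[\ell]$. Since $q \in \overline{\Sro(q)} \cap \partial \Yro_\theta$ trivially, and $\beta_q = \beta^+_q \cup \{q\} \cup \beta^-_q$, it suffices to handle one of the two halves.

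For the base case, $\beta^+_q[0]$ is one of the two trajectories in the stable manifold of $\xiro_{i,\theta}$ at $q$. By \Cref{prop:properties_SU}\cref{it:hessian_explicit}, the tangent space $T_q \Sro_U$ spans the kernel of $\Hess_q \xiro_U$ together with its negative eigenspace; in particular, it contains the tangent direction to the stable trajectory in $\Droc_{i,\theta}$. Since $\Sro(q)$ agrees with $\Sro_U$ near $q$, is invariant under the flow of $\xiropol$, and $\xiropol|_{\partial \Yropol}$ is tangent to $\partial \Yro_\theta$, the entire stable trajectory $\beta^+_q[0] \subset \partial \Yro_\theta$ must lie in $\overline{\Sro(q)}$.

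For the inductive step, the required containment $\beta^+_q[\ell+1] \subset \overline{\Sro(q)} \cap \partial \Yro_\theta$ was already verified in the case analysis of \Cref{ss:brotraj} under property \cref{it:c_centerstable}. The two ingredients are: (a) closedness of $\overline{\Sro(q)}$, which handles the addition of a limit singularity $q[\ell+1]$ in cases \cref{it:1stcase} and \cref{it:2ndcase}; and (b) uniqueness of the trajectory $\gamma[\ell+1]$ continuing out of $q[\ell+1]$, which is supplied by the elementary Hessian at that singularity --- \Cref{cor:xiro_inv} for invariant corners (case \cref{it:3rdcase}), and \Cref{lem:noninv_corner}\cref{it:noninv_corner_elem,it:noninv_corner_plu,it:noninv_corner_min} for non-invariant corners (case \cref{it:4rdcase}). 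By flow-invariance of $\Sro(q)$, the unique trajectory in the relevant boundary piece having $q[\ell+1]$ in its closure must be contained in $\overline{\Sro(q)}$.

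The main obstacle is case \ref{it:bcase} of case \cref{it:4rdcase}, where the corner singularity $q[\ell+1]$ is of the type described by \Cref{lem:noninv_corner}\cref{it:noninv_corner_plu} and therefore admits infinitely many trajectories emanating into $\Droc_{k,\theta}$ (namely, the whole germ $(\Dro_{k,\theta}, q[\ell+1])$ is its unstable manifold). The resolution is that $\Sro(q)$ is a connected, flow-invariant manifold whose closure already contains the one-sided trajectory $\gamma[\ell]$ approaching $q[\ell+1]$ from $\Droc_{j,\theta}$; the non-degeneracy of the Hessian along the normal bundle to $\Sigma_{j,k}$ (\Cref{lem:noninv_corner}\cref{it:noninv_corner_elem}) forces $\overline{\Sro(q)}$ to meet a transverse slice to $\Sigma_{j,k}$ in a single one-dimensional analytic germ, which then continues into $\Droc_{k,\theta}$ along exactly one trajectory, namely $\gamma[\ell+1]$. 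This closes the induction, and the lemma follows.
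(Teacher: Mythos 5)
Your proposal is correct and follows essentially the same route as the paper: the paper's own proof is a one-line appeal to the inductive verification of property \cref{it:c_centerstable} carried out in the case analysis of \Cref{ss:brotraj}, and your argument simply makes that induction explicit (base case via \Cref{prop:properties_SU}, inductive step via \Cref{cor:xiro_inv} and \Cref{lem:noninv_corner}), including the same treatment of the delicate case \ref{it:bcase}.
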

\begin{proof}
	It follows from the construction of the broken trajectory above and 
	that the
	property  \cref{it:c_centerstable}, which is satisfied at each step of 
	the
	construction, says that that
	$\beta_q^+[k] \subset \overline{\Sro(q)} \cap \partial \Yro_\theta$.
\end{proof}

\begin{definition}\label{def:spine_at_rad_0}
	We define the
\index{spine!at radius zero}
spine at radius $0$ and angle $\theta$, and denote it by 
	$\Sro_{\theta}$, as the union of all the total broken trajectories 
	associated with all the saddle points of the vector fields 
	$\xiro_{i,\theta}$ for all $i\in\V$. That is,
	\[
	\Sro_{\theta}= \bigcup_{\substack{ i\in\V \\ q \text{ is a saddle point 
				of } \xiro_{i,\theta}}} \beta_q
	\]
\end{definition}

As we will see later in \Cref{ex:not_spine}, the spine at radius $0$ and
angle $\theta$ is not in general a spine as in \Cref{def:spine}.
In a sequel to this work we
will work out some further genericity conditions under which for a
generic analytical type representing a particular topological type and
for a generic angle, the spine at radius $0$ and angle $\theta$ is a
spine. In \Cref{ss:invariant_milnor} we construct a quotient of the spine
at radius $0$ which is always a spine for a quotient of the Milnor fiber
at radius $0$ which is homeomorphic to the Milnor fiber.

\section{The Petri dishes}
\label{ss:petri_dishes}

In this section we analyze the vector field and the broken trajectories 
inside the connected components of the part of the Milnor fiber that 
corresponds to the non-invariant graph. We use 
\Cref{not:parts_Milnor_fiber}.

\begin{lemma}\label{lem:noninv_petri}
	
	Let $\Xi$ be a branch (recall \Cref{def:Ga_branch}) of $\Gapol$ at a
	vertex $i$ of $\Gapol \setminus \Upsilon$. Then $\Fro_\theta[\Xi]$ is a
	disjoint union of disks. Moreover, if $ij$ is the edge that joins $\Xi$
	with $\Gapol \setminus \Xi$, then  $\Fro_\theta[\Xi]$ is a disjoint
	union of $\gcd(m_i,m_j)$ disks.
	
\end{lemma}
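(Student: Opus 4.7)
The approach is to proceed by induction on the length $h$ of the bamboo $\Xi$. First, I would verify the combinatorial structure: since $i \in \Gapol \setminus \Upsilon$, the connected component of $\Gapol \setminus \Upsilon$ containing $i$ is a bamboo (by \Cref{lem:neigh_invariant}, combined with \Cref{lem:inv_pol} and \Cref{lem:only_non_invariant} to transfer the statement from $\Gamin$ to $\Gapol$), and $\Xi$ inherits this structure, so $\Xi = \{i_1,\ldots,i_h\}$ with $i_1 = i$ and $i_h$ the unique leaf. Since arrowheads lie only in $\Upsilon$ (by \Cref{def:upsilon}), $\Xi$ contains no arrowheads and $i_h$ is a leaf of $\Gapol$ itself. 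Setting $i_0 := j$ and invoking \Cref{lem:alpha_ratio} together with the coprimality $\gcd(\alpha_{k-1},\alpha_k) = 1$ recorded in the preamble to that lemma, I obtain $\gcd(m_{i_{k-1}},m_{i_k}) = m_{i_h}\gcd(\alpha_{k-1},\alpha_k) = m_{i_h}$ for all $1\le k\le h$; in particular $\gcd(m_i,m_j) = m_{i_h}$, so it suffices to show $\Fro_\theta[\Xi]$ is a disjoint union of $m_{i_h}$ disks.

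Next, I would describe each slice $\Dro_{i_k,\theta}$ topologically. The cover $\Droc_{i_k,\theta}\to D_{i_k}^\circ$ is unramified of degree $m_{i_k}$; the base is a disk for $k = h$ (one puncture) and a cylinder for $1\le k<h$ (two punctures). Using the local normal form $f = u^{m_{i_k}}v^{m_{i_{k\pm 1}}}(a+\cdots)$ near a corner with $D_{i_{k\pm 1}}$ as in \cref{block:corners}, one sees that going once around the puncture changes the fiber coordinate $\alpha$ by $-2\pi m_{i_{k\pm 1}}/m_{i_k}$, so the monodromy permutes the $m_{i_k}$ sheets by a shift of $-m_{i_{k\pm 1}}$ modulo $m_{i_k}$. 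The orbit analysis yields $\gcd(m_{i_k},m_{i_{k\pm 1}}) = m_{i_h}$ connected components of $\Droc_{i_k,\theta}$, each a disk for $k = h$ or a cylinder for $1\le k<h$ (a connected finite cover of a cylinder being a cylinder). By \Cref{lem:transversality_number}\cref{it:trans_ii}, each corner set $\Dro_{i_{k-1},i_k,\theta}$ also consists of exactly $m_{i_h}$ circles.

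The base case $h = 1$ follows from a direct closing-up computation: for each of the $m_i$ sheets of $\Droc_{i,\theta}$, one revolution of $\beta$ around the puncture in $D_i^\circ$ changes $\alpha$ by $-2\pi m_j/m_i = -2\pi\alpha_0\in 2\pi\Z$, so the boundary of each open disk traces a single embedded closed curve on the corner torus; comparing winding vectors shows this curve coincides with exactly one of the $m_i$ corner circles of $\Dro_{i,j,\theta}$, and distinct sheets produce distinct corner circles, giving $m_i$ closed disks. For the inductive step, I would apply the hypothesis to $\Xi' = \{i_2,\ldots,i_h\}$ (a branch at $i_2$) to obtain $m_{i_h}$ disks in $\Fro_\theta[\Xi']$, one per circle of $\Dro_{i_1,i_2,\theta}$. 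Since each of the $m_{i_h}$ cylinders of $\Dro_{i_1,\theta}$ has exactly one boundary circle on each side (by the same closing-up computation applied at both $\Dro_{i_1,j,\theta}$ and $\Dro_{i_1,i_2,\theta}$), there is a bijection between the cylinders and the corner circles on each side. Capping one end of each cylinder by its matching disk from $\Fro_\theta[\Xi']$ produces $m_{i_h}$ disks in $\Fro_\theta[\Xi]$, each now bounded by a single corner circle in $\Dro_{i_1,j,\theta}$.

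The main technical obstacle is verifying the closing-up step, namely that the boundary trace of each component on a corner torus is a single embedded circle coinciding with one full corner circle, rather than a multi-covered or non-embedded curve. This reduces to checking that the winding vector $(\alpha_{k-1},\alpha_k)$ of the trace on the torus has coprime entries, which is precisely the condition $\gcd(\alpha_{k-1},\alpha_k) = 1$ built into the $\alpha_k$.
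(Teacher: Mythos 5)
Your covering-space computation is a correct and considerably more explicit version of what the paper asserts: the monodromy of the $m_{i_k}$-fold cover $\Droc_{i_k,\theta}\to D_{i_k}^\circ$ around a puncture is the shift by $-m_{i_{k\pm1}}$, the orbit count via \Cref{lem:alpha_ratio} gives $\gcd(m_{i_{k-1}},m_{i_k})=m_{i_h}$ for every consecutive pair, and the primitivity $\gcd(\alpha_{k-1},\alpha_k)=1$ is exactly what makes each boundary trace close up onto a single corner circle. The paper compresses all of this into the observation that near the connecting edge the fiber looks like the Milnor fiber of $u^{m_i}v^{m_j}$.

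There is, however, a gap at your very first structural step. You assert that the branch $\Xi$ is a bamboo \emph{in $\Gapol$}, citing \Cref{lem:neigh_invariant} together with \Cref{lem:inv_pol} and \Cref{lem:only_non_invariant}. But \Cref{lem:neigh_invariant} only gives the bamboo structure in $\Gamin$, \Cref{lem:only_non_invariant} only says that $\Upsilon$ itself is not modified, and \Cref{lem:inv_pol} only says that the new vertices of $\Vpol\setminus\Vmin$ are noninvariant; none of these controls \emph{where} those new vertices attach. The modification $\Ypol\to\Ymin$ blows up points of $\Pmin\cap D$, and these may lie on $D_{i_k}^\circ$ for a bamboo vertex $i_k$ (or at several points of $D_{i_h}^\circ$), in which case the component of $\Gapol\setminus\Upsilon$ acquires side branches and is no longer a bamboo --- compare the new vertices of $\Vpol(n)$ in \cref{fig:redinv}. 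Your description of each $D_{i_k}^\circ$ as a once- or twice-punctured sphere, and hence the whole induction, breaks down in that situation. The paper's proof avoids this by running the topological argument where the bamboo structure actually holds, namely in $\Gamin$, and then invoking the fact that the further blow-ups $\Ypol\to\Ymin$ refine the decomposition of the Milnor fiber at radius $0$ without changing its homeomorphism type. Your argument is salvageable the same way: carry out your computation for the corresponding union of pieces over $\Ymin$ and then transfer; as written, though, the reduction is unjustified.
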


\begin{proof}
	That $\Fro_\theta[\Xi]$  is a disjoint union of disks follows from 
	\Cref{lem:neigh_invariant} that implies, that in $\Gamin$, the graph 
	$\Xi$ is a bamboo. The process of blowing up more to get to $\Gapol$ 
	may introduce new vertices, but does not change the topology of the 
	Milnor fiber. The statement on the number of connected components 
	follows because near the intersection point of $\Xi$ with the rest of 
	$\Gapol$, the Milnor fiber at radius $0$ looks like the Milnor fiber of 
	$u^{m_i}v^{m_j}$ which is a disjoint union of $\gcd(m_i, m_j)$ 
	cylinders.
\end{proof}

\begin{definition}\label{def:petri_dish}
	Let $\Xi$ be a branch of $\Gapol$ at a vertex $i$ of $\Gapol \setminus 
	\Upsilon$. We call each of the disks in $\Fro_\theta[\Xi]$, a
\index{Petri dish}
{\em Petri dish}.
We denote Petri dishes by
\index{$\D_\theta$}
$\D_\theta$.
	
	We say that a Petri dish is {\em maximal} if it is not strictly 
	contained in any other Petri dish.
\end{definition}

Next we introduce some useful notation for this subsection.

\begin{notation}\label{not:petri_dish}
	Let $\D_\theta$ be a Petri dish coming from some branch $\Xi$ as in 
	\Cref{def:petri_dish}. 
	
	We denote by $\beta[\D_\theta]$ the union of all the broken 
	trajectories that start at some $\Dro_{i,\theta}$ with $i \in \V_\Xi$, 
	intersected with $\D_\theta$. That is,
	\[
	\beta[\D_\theta] = \left(\bigcup_{\substack{ q \in \Sigma_{i,\theta} \\ 
			i \in \V_\Xi}} \beta_q \right) \cap \D_\theta.
	\]
	We denote by $S[\D_\theta]$ the part of the spine at radius $0$ that 
	lies in $\D_\theta$. That is
	\[
	S[\D_\theta] = \Sro_{\theta} \cap \D_\theta. 
	\]
	
\end{notation}

A Petri dish $\D_\theta$ inherits a the structure of a stratified 
topological manifold with smooth strata. But it does not have, a priori, a 
smooth structure. In the the rest of this section we make use of 
Poincar\'e-Hopf index theorem to prove results about the set 
$\beta[\D_\theta]$ and, in order to formalize such arguments we need to put 
a smooth structure on $\D_\theta$. 

\begin{lemma}\label{lem:smooth_str_petri}
	Let $\D_\theta$ be a Petri dish associated with a branch $\Xi$ with 
	vertex set
	$\V_\Xi$. Then, $\D_\theta$ admits a smooth structure,
	which is compatible with that of $\Droc_{i,\theta} \subset \D_\theta$,
	for each $i$ on $\Xi$, and the restricted vector
	field $\xiro_{\D_\theta}= \xiropol|_{D_\theta}$ is a continuous vector 
	field on
	$\D_\theta$.
\end{lemma}

\begin{proof}
	Each connected component $\D_{i,\theta}$ of $\Dro_{i,\theta}$ can be 
	thought of as a cobordism between the connected components $\partial_j 
	\D_{i,\theta}$ of $\partial \D_{i,\theta}$ corresponding to edges $j 
	\to i$ and the connected components $\partial_k \D_{i,\theta}$ of 
	$\partial \D_{i,\theta}$ corresponding to edges $i \to k$. 
	
	Furthermore, let $ij$ be an edge $j \to i$. Then, there is a 
	diffeomorphism 
	\[
	\iota_{ij}: \partial_j \D_{i,\theta} \to \partial_i \D_{j,\theta}
	\]
	induced by the structure of fiber product  that identifies both sets of 
	boundary components. Then, by \cite[Theorem 1.4]{hMilnor}, there exists 
	a 
	smooth structure on $ \D_{i,\theta} \sqcup_{\iota_{ij}} \D_{j,\theta}$ 
	which is compatible with each of the smooth structures on 
	$\D_{i,\theta}$ 
	and $\D_{j,\theta}$. 
	
	By definition, the vector field $\xiro_{\D_\theta}$ is well defined on 
	(and tangent to) $\Dro_{i,j,\theta}$.
	
	By iteratively applying this reasoning, we get a smooth structure on 
	all $\D_\theta$ and a continuous vector field on it. Furthermore, this 
	vector field coincides with $\xiro_{i,\theta}$, up to multiplication by 
	a real positive function, on each of the pieces of $\D_\theta$.
	
\end{proof}
\begin{rem}
	Note that the vector field $\xiro_{\D_\theta}$ is a continuous (by 
	construction) vector field but, in general, it is not $C^1$. 
\end{rem}

\begin{rem}\label{rem:petri_dish}
	
	\begin{enumerate}
		
		\item By definition, $\beta[\D_\theta] \subseteq S[\D_\theta]$. 
		And, in general, the above is not an equality as we will see later 
		in an example.
		
		\item \label{it:non_zero_ind} The vector field $\xiro_{\D_\theta}$ 
		does not have any nonzero index singularities in the interior of 
		$\D_\theta \setminus \beta[\D_\theta]$. This is because 
		$\beta[\D_\theta]$ contains all the saddle points by construction 
		and the other singularities are by \Cref{lem:noninv_corner} 
		half-fountain-half-saddle or half-sink-half saddle points, (i.e. 
		they are green and red points like those of 
		\cref{fig:corner_sings}) which all have index $0$.
		
		\item  \label{it:no_ver_val_1} By definition of broken trajectory, 
		the
		set $\beta[\D_\theta]$ seen as graph, if it has vertices of valency 
		$1$,
		then they have to lie on $\partial \D_\theta$.
		
	\end{enumerate}
\end{rem}

\begin{block}\label{blc:rel_poincare}
	Here we deduce an easy relative version of the
\index{Poincar\'e-Hopf index}
Poincar\'e-Hopf index 
	theorem for vector fields on surfaces with boundary. Let $F$ be an 
	oriented compact surface of genus $g$ with $b>0$ boundary components, 
	and let $\xi_F$ be a vector field on $F$ which is also tangent to 
	$\partial F$. Assume furthermore that $\xi$ has isolated singularities 
	(some of them might be on the boundary).  Now take the double-surface 
	$2F$ of $F$: that is, consider $-F$ (which is the surface with opposite 
	orientation) and glue $-F$ to $F$ along their boundaries by the 
	identity map (note that the identity map inverts the orientation from 
	$\partial F$ to $\partial( -F)$). The surface $-F$ comes naturally 
	equipped with a vector field $\xi_{-F}$ which is identified with 
	$\xi_F$ on $\partial (-F)$ by the identity map. Thus, we have a well 
	defined vector field $\xi_{2F}$ defined on $2F$ with a isolated 
	singularities. Observe that the surface $2F$ is a closed oriented 
	surface of genus $2g + b -1$.  This construction allows us to do two 
	things:
	\begin{enumerate}
		\item Assign an index to isolated singularities on the boundary: 
		for an isolated singularity $p\in \partial F$ of $\xi_F$, we define 
		the index of $\xi_f$ at $p$ by the formula
		
		\begin{equation}\label{eq:half_index}
			\Ind_{\xi_F} (p) = \frac{1}{2} \Ind_{\xi_{2F}} (p)
		\end{equation}
		where, on the right-hand side, $p$ is seen as a point in $2F$.
		
		\item We compute a relative version of the Poincar\'e-Hopf index 
		theorem. By applying the classical theorem to the surface $2F$, we 
		get:
		\[
		2\sum_{p \in \mathring{F}} \Ind_{\xi_F} (p) + \sum_{p \in \partial 
			F \subset F} \Ind_{\xi_{2F}} (p) = 2 - 2(2g+b-1).
		\]
		which, after dividing by $2$ and applying the previous definition 
		\cref{eq:half_index} yields,
		\begin{equation}\label{eq:rel_poincare}
			\sum_{p \in F} \Ind_{\xi_F} (p)  = 2 - 2g - b = \chi(F).
		\end{equation}
		Where the term on the left-hand side allows for $\xi_F$ to have 
		isolated singularities on the boundary of $F$.
	\end{enumerate}
	Next we classify all the singularities that might appear on the 
	boundary of connected components of $\D_\theta \setminus 
	\beta[\D_\theta]$.
	\begin{lemma}\label{lem:class_singularities}
		
		Let $\Delta$ be the closure of a connected component of $\D_\theta 
		\setminus
		\beta[\D_\theta]$ and assume that $\partial \Delta$ is an embedded 
		circle. Then
		the vector field $\left. \xiro_{\D_\theta}\right|_{\Delta}$ has no 
		nonzero
		index singularities on the interior of $\Delta$ and, on $\partial 
		\Delta$ it
		may have
		
		\begin{enumerate}
			\item \label{it:zero_ind} Quarter-fountain-quarter-saddle, 
\index{saddle!quarter}
\index{fountain!quarter}
\index{sink!quarter}
			quarter-sink-quarter-saddle, quarter saddle points as the only 
			singularities of index $0$ (see \cref{fig:half_index_0}),
			\item \label{it:pos_ind}
\index{sink!half}
\index{fountain!half}
 half-sink, half-fountain points as the 
			only singularities of index $1/2$ (see \cref{fig:half_index_1}),
			
			\item \label{it:neg_ind} half-saddle points as the only 
			singularities of index $-1/2$ (see the right-hand side of 
			\cref{fig:half_index_m1}).
		\end{enumerate}
	\end{lemma}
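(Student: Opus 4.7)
The plan is to reduce the problem to a local analysis on $\partial \Delta$, since by \Cref{rem:petri_dish}~\cref{it:non_zero_ind} the only vector-field singularities in the interior of $\D_\theta \setminus \beta[\D_\theta]$ have index zero; in particular, any red or green point lying on an interior corner circle $\Dro_{i,j,\theta}$ of $\D_\theta$ through which no broken trajectory passes contributes nothing (its half-fountain and half-saddle halves cancel). Next I will describe the piecewise smooth structure of $\partial \Delta$: by construction it consists of alternating arcs of two types, namely (I) arcs of $\beta[\D_\theta]$, which are integral curves of $\xiro_{\D_\theta}$, and (II) arcs of $\partial \D_\theta$, which are subarcs of the corner circle bounding the Petri dish. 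The vector field is tangent to both types of arcs (type (I) trivially, type (II) because $\xiro_{\D_\theta}$ preserves $\Dro_{i,j,\theta}$ by \Cref{lem:noninv_corner}).

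I then classify the points of $\partial \Delta$ at which $\xiro_{\D_\theta}$ fails to be regular. On the smooth interior of a type-(II) arc, the only critical points are the red and green points of \Cref{not_red_green} which are not endpoints of any $\beta$-arc; since $\D_\theta$ sits on the $\Dro_{i,\theta}$ side of its boundary corner circle, \Cref{lem:noninv_corner}~\cref{it:noninv_corner_plu}\cref{it:noninv_corner_min} identifies these as half-fountains and half-sinks, giving case~\cref{it:pos_ind}. (Case \cref{it:neg_ind}, the half-saddle, enters analogously through an interior corner circle whose tangent separatrix—not the perpendicular one—is absorbed into $\beta[\D_\theta]$.) The corners of $\partial \Delta$ occur either at interior saddle points of some $\xiro_{i,\theta}$ (where all four separatrices belong to $\beta[\D_\theta]$ and $\Delta$ occupies one of the four sectors, yielding a quarter-saddle) or at red or green points on $\partial \D_\theta$ where a $\beta$-arc meets a $\partial \D_\theta$-arc (producing the quarter-fountain-quarter-saddle or quarter-sink-quarter-saddle configurations of case~\cref{it:zero_ind}).

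The final step is to compute the index in each configuration. For the smooth boundary points in~\cref{it:pos_ind}, \cref{it:neg_ind} the doubling construction of \cref{blc:rel_poincare} applies directly: the doubled surface carries a full fountain, sink or saddle and the stated index $\pm 1/2$ follows from \cref{eq:half_index}. For the corner configurations of~\cref{it:zero_ind}, I will compute directly the net rotation of $\xiro_{\D_\theta}$ along a small arc in $\Delta$ around the corner; using the explicit normal forms from \Cref{lem:elementary_invariant} and \Cref{lem:explicit_pullback_xi} one sees that the quarter of fountain (or sink) contributes $+\pi/2$ to the rotation and the adjacent quarter of saddle contributes $-\pi/2$, so their sum vanishes, and after combining with the corner angle correction prescribed by the Gauss--Bonnet-type Poincar\'e--Hopf formula for manifolds with corners, the total index works out to $0$.

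The main technical obstacle will be maintaining a consistent index convention at the corners: the different combinations (two $\beta$-arcs meeting at an interior saddle versus a $\beta$-arc meeting a $\partial \D_\theta$-arc at a red or green point) sit on different local models, and one must verify case by case that the cancellation between the vector-field winding and the corner-angle defect gives the same answer, namely zero, in all three configurations listed in~\cref{it:zero_ind}. Apart from this bookkeeping, the proof is a routine application of doubling and the local normal forms already established in Sections~\ref{s:1st_blowup}--\ref{s:others}.
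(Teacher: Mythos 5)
Your overall strategy---dispose of the interior via \Cref{rem:petri_dish}, classify the boundary singularities by whether they lie on the outer circle $\partial\D_\theta$, on an interior corner circle, or at an interior saddle, and read off indices via the doubling convention of \cref{blc:rel_poincare}---is the same as the paper's. However, your case analysis at interior saddle points has a genuine gap. You assume that at such a saddle $q$ \emph{all four} separatrices lie in $\beta[\D_\theta]$, so that $\Delta$ occupies one quarter-sector. But $\beta[\D_\theta]$ is built from the \emph{stable} manifolds only: through a saddle it is guaranteed to contain the two stable separatrices, while an unstable separatrix belongs to $\beta[\D_\theta]$ only in the special event that it feeds into another broken trajectory. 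In the generic situation the segment of $\partial\Delta$ through $q$ is the union of the two stable separatrices, $\Delta$ is locally a half-disk containing one unstable separatrix, and the restricted singularity is exactly the half-saddle of index $-1/2$ of case \cref{it:neg_ind}. These are the only sources of index $-1/2$, and they are essential downstream: the proof of \Cref{lem:connected_comp} needs every $\beta$-arc joining two red points to carry such a point.

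Your proposed alternative source for case \cref{it:neg_ind}---an interior corner circle whose separatrix tangent to the circle is ``absorbed into $\beta[\D_\theta]$''---cannot occur: a broken trajectory never contains an arc of the boundary circle of a Petri dish, because those circles carry no saddle points of $\xiro_{\D_\theta}$ (this is precisely the observation the paper makes when treating red and green points of smaller Petri dishes, which therefore only ever produce the index-$0$ quarter-fountain-quarter-saddle and quarter-sink-quarter-saddle configurations of case \cref{it:zero_ind}). Once the saddle-point case is repaired, the remainder of your argument (red and green points of $\partial\D_\theta$ give the half-fountains and half-sinks of case \cref{it:pos_ind}; doubling gives the indices $\pm 1/2$; the corner configurations have index $0$) goes through and coincides with the paper's proof.
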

	\begin{proof}
		
		The first part of the statement on the singularities in the 
		interior of
		$\Delta$ is exactly \Cref{rem:petri_dish} \cref{it:non_zero_ind}. 
		Now we
		notice that the only singularities that $\partial\Delta$ might 
		have, happen when
		$\partial \Delta$ crosses a singularity of $\left.
		\xiro_{\D_\theta}\right|_{\Delta}$. So let $p \in \partial \Delta$ 
		be a
		singularity of  $\left. \xiro_{\D_\theta}\right|_{\Delta}$, next we 
		go
		through all the possible cases depending on where $p$ lies.
		
		\textbf{Case 1}. If $p \in \partial \D_\theta$, then $p$ is either 
		a half-fountain or a half-sink point when seen as a singularity of 
		$\xiro_{\D_\theta}$ (because of \Cref{lem:noninv_corner}). In this 
		case, $p$ seen as a singularity of  $\left. 
		\xiro_{\D_\theta}\right|_{\Delta}$ can only be a  half-fountain 
		(red vertex) or a half-sink (green vertex) singularity. This covers 
		\cref{it:pos_ind}.
		
		\textbf{Case 2.} If $p$ is in the interior of $\D_\theta$, then 
		there are two possibilities:
		\begin{enumerate}[label= Case (2.\alph*),leftmargin=6\parindent]
			\item $p$ is a saddle point of  $\xiro_{\D_\theta}$. In this 
			case the segment of $\partial \Delta$ that passes through $p$ 
			might be formed either by a stable trajectory and an unstable 
			trajectory (the blue singularity of \cref{fig:half_index_0}) 
			giving a singularity of index $0$, or by two stable 
			trajectories (the right-hand side of \cref{fig:half_index_m1}) 
			giving a singularity of index $-1/2$. Note that it can't happen 
			that this segment is formed by two unstable trajectories 
			(left-hand side of \cref{fig:half_index_m1}) since 
			$\beta[\D_\theta]$ contains by definition the stable 
			trajectories of all saddle points.
			
			\item $p$ is a red or green point of a smaller Petri dish 
			$\D'_\theta \subset \D_\theta$. In this case the segment of 
			$\partial \Delta$ that passes through $p$ must correspond to a 
			broken trajectory that either enters the smaller Petri dish 
			$\D'_\theta$ or exits $\D'_\theta$ (this corresponds to the two 
			left figures of \cref{fig:half_index_0}). Note that by the 
			construction of broken trajectories, no broken trajectory can 
			contain a segment of the boundary of a Petri dish since 
			boundaries of Petri dishes do not have saddle points of 
			$\xiro_{\D_\theta}$.
		\end{enumerate} 
		
		These two subcases cover all the possibilities in 
		\cref{it:zero_ind} and \cref{it:neg_ind} finishing the proof.
	\end{proof}
	
	\begin{figure}[!ht]
		\centering
		\includegraphics*[scale=1.5]{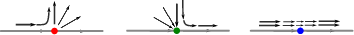}
		\caption{Singularities (red, green and blue points) of index $0$ of 
			a vector field tangent to the boundary (gray thick line) of a 
			surface.}
		\label{fig:half_index_0}
	\end{figure}
	\begin{figure}[!ht]
		\centering
		\includegraphics*[scale=1.5]{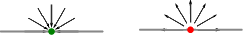}
		\caption{Singularities (green and red points) of index $1/2$ of a 
			vector field tangent to the boundary (gray thick line) of a 
			surface.}
		\label{fig:half_index_1}
	\end{figure}
	\begin{figure}[!ht]
		\centering
		\includegraphics*[scale=1.5]{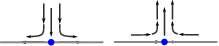}
		\caption{Singularities (blue points) of index $-1/2$ of a vector
			field tangent to the boundary (gray thick line) of a surface.
			The situation on the left hand side does not occur in the above
			algorithm.}
		\label{fig:half_index_m1}
	\end{figure}
\end{block}

\begin{lemma}\label{lem:number_ saddle}
	Let $\D_\theta$ be a Petri dish coming from some branch $\Xi$. Let
	$i$ be the root of $\Xi$ and let $j$ be the vertex in $\Gapol \setminus
	\Xi$ to which it is connected. Then the number of saddle points $I$ of
	$\xiro_{\D_\theta}$ is equal to:
	\begin{equation}\label{eq:number_saddle}
		I = -
		\frac{
			\left|
			\begin{matrix}
				\varpi_i & m_i  \\
				\varpi_j & m_j
			\end{matrix}
			\right|
		}{\gcd(m_i,m_j)} - 1.
	\end{equation}
	In particular, $\beta[\D_\theta]$ is non-empty if and only if $I \geq 
	1$, equivalently, if there are at least $2$ red points on $\partial 
	\D_\theta$.
\end{lemma}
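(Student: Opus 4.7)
The plan is to compute $I$ by applying the relative Poincar\'e--Hopf index formula from \cref{blc:rel_poincare} to the smooth $2$-disk $\D_\theta$ (so $\chi(\D_\theta)=1$), and to show that after classifying the zeros of $\xiro_{\D_\theta}$ the only net contributions come from the interior saddles (which account for $-I$) and the red/green points lying on the outer boundary $\partial \D_\theta$.

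First I would catalogue the zero set of $\xiro_{\D_\theta}$. By \Cref{lem:ext_others}\cref{it:ext_others_noninv} together with the Hessian in \Cref{lem:zeros_not_0}---whose lower-right block $\bar A(e^{-i\varpi_i\alpha}\lambda_p)$ has eigenvalues $\pm|\lambda_p|$---the zeros of $\xiro_{k,\theta}$ lying in the open stratum $\Droc_{k,\theta}$ for each $k\in \V_\Xi$ are exactly the points of $\Siro_{k,\theta}$, and each is a non-degenerate planar saddle of index $-1$; no fountains or sinks ever appear in these strata. The remaining zeros lie on corner strata $\Dro_{k,\ell,\theta}$ for edges $k\ell$ of $\Gapol$ incident to $\Xi$, and by \Cref{lem:transversality_number} and \Cref{lem:noninv_corner} they are precisely the red points $\Sigma^+_{k,\ell}$ and the green points $\Sigma^-_{k,\ell}$. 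Since $\V_\Xi\cap\V_\Upsilon=\emptyset$ and $j$ is the unique neighbour in $\V_\Upsilon$ of the root $i$, every such edge is non-invariant, so \Cref{lem:noninv_corner} applies throughout.

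Next I would read off the index contributions. On the outer circle $\partial \D_\theta$, which is one connected component of $\Dro_{i,j,\theta}$ with $j\notin \V_\Xi$, \Cref{lem:noninv_corner}\cref{it:noninv_corner_plu}--\cref{it:noninv_corner_min} show that, seen from the Petri-dish side (the $\Droc_i$ side), each red point is a half-fountain and each green point a half-sink; in the sense of \cref{blc:rel_poincare} each therefore contributes boundary index $+\tfrac12$. By \Cref{lem:transversality_number}\cref{it:trans_iii}, $\partial \D_\theta$ carries $R:=(\varpi_i m_j - m_i \varpi_j)/\gcd(m_i,m_j)$ red points and the same number of green points. For an \emph{internal} red (resp.\ green) point on a corner $\Dro_{k,\ell,\theta}$ with $k,\ell\in\V_\Xi$, both sides lie in $\D_\theta$, so the local picture superposes a half-fountain (resp.\ half-sink) on the $\Droc_k$ side with a half-saddle on the $\Droc_\ell$ side; a direct winding-number computation around a small loop, using the local model of \Cref{lem:explicit_pullback_xi} on each side, gives winding number $1$ and hence index $0$. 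Plugging everything into \cref{eq:rel_poincare},
\[
  1 \;=\; \chi(\D_\theta) \;=\; -I \;+\; 0 \;+\; 2R\cdot\tfrac12 \;=\; -I + R,
\]
so $I = R-1$, which is \cref{eq:number_saddle} after rearranging; the equivalent statement about $\beta[\D_\theta]$ is then immediate from $I\geq 1 \iff R\geq 2$.

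The main technical obstacle will be verifying the index-$0$ claim for the interior corner singularities. Because the smooth structure on $\D_\theta$ across $\Dro_{k,\ell,\theta}$ is produced by the Milnor gluing of \Cref{lem:smooth_str_petri}, rather than being inherited from a single chart of $\Yropol$, one cannot simply restrict the $4{\times}4$ Hessian of \cref{eq:hessian} to a planar slice. The cleanest route is to parametrise a small loop around the corner point by its angle $\phi$, split it into its $\Droc_k$- and $\Droc_\ell$-halves, and compute the angular change of $\xiro_{\D_\theta}$ relative to the tangent of the loop on each half using the normal forms of \Cref{lem:explicit_pullback_xi} together with the local chart provided by \Cref{lem:smooth_str_petri}; the half-fountain half contributes winding $0$ and the half-saddle half contributes winding $1$, totalling $1$ and giving index $0$, as required.
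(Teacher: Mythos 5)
Your proposal is correct and follows essentially the same route as the paper: both apply the relative Poincar\'e--Hopf formula of \cref{blc:rel_poincare} to the disk $\D_\theta$, assign index $-1$ to the interior saddles, index $0$ to the red/green points on interior corners (\Cref{rem:petri_dish}), and index $\tfrac12$ to the red and green points on $\partial \D_\theta$, counting the latter via \Cref{lem:transversality_number} and \Cref{lem:int_ind} to obtain $I=R-1$. Note only that your (correctly positive) value of $R$ matches \cref{eq:number_saddle} up to the sign/row-order convention of the displayed determinant, a discrepancy already present between the lemma's statement and the count of three red points versus two saddles in the worked example of \Cref{s:examples}.
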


\begin{proof}
	
	The term with the fraction on \cref{eq:number_saddle} equals the number 
	of red
	points (which equals the number of green points, too)
	on $\partial \D_\theta$ (\Cref{lem:int_ind}). These
	singularities look on $\D_\theta$ like half sinks (green points) and 
	half
	fountains (red points). See \cref{fig:half_index_1}. These have relative
	Poincar\'e-Hopf index equal to $1/2$. By \Cref{rem:petri_dish}
	\cref{it:non_zero_ind} the only singularities of nonzero index on the 
	interior
	of $\D_\theta$ are saddle points. A direct application of
	\cref{eq:rel_poincare} yields the result since:
	
	\[
	-
	\frac{
		\left|
		\begin{matrix}
			\varpi_i & m_i  \\
			\varpi_j & m_j
		\end{matrix}
		\right|
	}{\gcd(m_i,m_j)} - 
	I = 1.
	\]
	That $\beta[\D_\theta]$ is non-empty if and only if there is at least 
	$1$ saddle point in $\D_\theta$ follows from the definition of the 
	broken trajectories which start at all saddle points.
\end{proof}

\begin{lemma}\label{lem:no_reentry}
	Let $\D_\theta$ be a maximal Petri dish and let $\gamma$ be a 
	broken trajectory. If a segment of $\gamma$ is contained in $\D_\theta$ 
	and 
	exits through a point $p \in \partial \D_\theta$, then no subsequent 
	segment of $\gamma$ can re-enter $\D_\theta$.
\end{lemma}

\begin{proof}
	The Petri dish correponds to a connected component of
	$\Gamma \setminus \Upsilon$, adjacent to an invariant vertex, say $i 
	\in \V$.
	Let $k$ be the neighbor of $i$ in this component, i.e. the unique 
	non-invariant
	neighbor of $i$.
	After exiting $\D_\theta$, we have a segment of $\gamma$, which we 
	follow
	(backwards) until we end in a limit point, which exists by
	\Cref{lem:four_pos}. If this limit point is a saddle point of $\xiro_i$,
	then this is the end of the broken trajectory, and the proof is
	finished, in this case.
	
	Otherwise, this limit point is on the
	corner locus $\Dro_{i,j,\theta}$ for some neighbor $j$.
	Since the potential $\phiro_{i,\theta}$ decreases along this trajectory,
	and is constant along $\Dro_{i,k,\theta}$ it cannot happen that $j = k$.
	Therefore, $j$ is invariant.
	It cannot happen that we have a directed edge $i \to j$, since
	any trajectory of $\xiro_{i,\theta}$ projects to a trajectory of
	$\xi_i$ on $D_i$, and $\xi_i$ has an attractor at $D_i\cap D_j$, if
	$i \to j$. Therfore, we have $j\to i$.
	Now, continuing the trajectory beyond
	this step, we can never return through $\Dro_{i,j,\theta}$ again, since
	$D_j \cap D_i$ is a repeller of $\xi_i$.
\end{proof}

\begin{cor} \label{cor:no_green_points}
	Let $\D_\theta$ be a maximal Petri dish. The set $\beta[\D_\theta]$ 
	does not 
	contain any green points in $\partial \D_\theta$.
\end{cor}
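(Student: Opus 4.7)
I would prove this by contradiction, using a potential-monotonicity argument based on the gradient potentials of Lemma \ref{lem:potential_xiro}. Suppose a green point $p \in \partial \D_\theta \subset \Dro_{i,j,\theta}$ lies in $\beta[\D_\theta]$, where $i$ is the root of $\Xi$ and $j$ is its parent in $\Gapol$. Since every descendant of a vertex in $\Gapol \setminus \Upsilon$ also lies in $\Gapol \setminus \Upsilon$ (by connectedness of $\Upsilon$), all vertices of $\V_\Xi$ are non-invariant; in particular $\varpi_i \neq 0$, and the edge $ji$ is oriented from $j$ to $i$.

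The first step is to locate $p$ in the inductive construction of \Cref{s:invariant_spine}. As a singular point of $\xiro_U$, the point $p$ can appear in some $\beta^+_q$ or $\beta^-_q$ (with $q$ a saddle in the interior of $\D_\theta$) only as a junction $q[\ell+1]$, playing simultaneously the role of the $\alpha$-limit of one segment and the $\omega$-limit of the following one. By Lemma \ref{lem:noninv_corner} \cref{it:noninv_corner_min}, the unstable manifold of $\xiro_U$ at $p$ is a single trajectory entirely contained in $\Dro_{j,\theta}$; hence the segment whose $\alpha$-limit is $p$ cannot lie in $\Droc_{i,\theta}$, but must lie in $\Droc_{j,\theta}$, i.e.\ outside $\D_\theta$. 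Tracing backwards from the starting saddle $q$, the broken trajectory must therefore have exited $\D_\theta$ at some earlier step; and by the companion statement \cref{it:noninv_corner_plu} of Lemma \ref{lem:noninv_corner}, the only admissible exit point across $\partial \D_\theta$ is a red point $p_1$ (whose unstable direction lies in $\Droc_{i,\theta}$).

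The contradiction then comes from a Lyapunov-function argument. Lemma \ref{lem:potential_xiro}, applied to the edge $ji$ oriented from $j$ to $i$ with $\varpi_i \neq 0$, provides a gradient potential $\phi^{\mathrm{ro}}_{j,\theta}$ for $\xiro_{j,\theta}$ that extends continuously to the boundary circle of $\Droc_{j,\theta}$ lying inside $\Dro_{i,j,\theta}$ with a single constant value $c$. Both $p_1$ and $p$ sit on this circle, so $\phi^{\mathrm{ro}}_{j,\theta}(p_1) = \phi^{\mathrm{ro}}_{j,\theta}(p) = c$, yet $\phi^{\mathrm{ro}}_{j,\theta}$ strictly decreases backwards along every segment of the broken trajectory contained in $\Droc_{j,\theta}$. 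Iterating Lemma \ref{lem:potential_xiro} at the other non-invariant corners potentially crossed by the broken trajectory between $p_1$ and $p$, I would normalize the potentials on neighbouring divisors so that they agree on their common boundary components (a consistent procedure since $\Gapol$ is a tree), thereby producing a continuous Lyapunov function on the union of divisors visited which strictly decreases backwards along the broken trajectory from $p_1$ to $p$. This contradicts $\phi^{\mathrm{ro}}_{j,\theta}(p_1) = \phi^{\mathrm{ro}}_{j,\theta}(p)$.

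The main technical obstacle I anticipate is precisely this global normalization: verifying that the constant boundary values supplied by Lemma \ref{lem:potential_xiro} at each non-invariant corner can be consistently matched across the finitely many segments (bounded by Lemma \ref{lem:it_ends}) traversed by the broken trajectory between its exit at $p_1$ and its putative re-entry at $p$, while preserving strict backward monotonicity across the corner crossings.
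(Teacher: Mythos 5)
Your first step is correct and is exactly what the paper's (very terse) proof asserts: a green point of $\partial \D_\theta$ is a half-sink on the $\D_\theta$ side, so by \Cref{lem:noninv_corner} the only trajectory emanating from it is the single unstable trajectory in $\Droc_{j,\theta}$; hence a broken trajectory containing that green point must previously have left $\D_\theta$, and the only admissible exit is a red point $p_1 \in \partial\D_\theta$. The gap is in your second step, at exactly the place you flag, and it is not merely technical. \Cref{lem:potential_xiro} gives constancy of a potential on a corner circle only from the side of the divisor \emph{closer} to the origin (the hypothesis is that the edge $ik$ is oriented from $i$ to $k$ with $\varpi_k\neq 0$, and the conclusion concerns $\phiro_{i,\theta}$). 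The potential of the \emph{farther} divisor, restricted to the same circle, is genuinely non-constant: red points are half-fountains of that gradient field, hence strict local minima of its potential, and green points are half-sinks, hence strict local maxima. So if the excursion of the broken trajectory between $p_1$ and $p$ crosses the same corner circle at two distinct points --- which it can, since the relevant counts in \Cref{lem:int_ind} are in general larger than one --- no additive renormalization makes the two potentials agree at both crossings, and the continuous, strictly backward-decreasing Lyapunov function you want does not exist. The tree structure of $\Gapol$ is irrelevant here: the obstruction already lives on a single edge.

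The repair is the one actually used in the paper, which is hidden in the \emph{proof} of \Cref{lem:finite_broken_traj} (the citation of that lemma in the paper's proof of this corollary refers to its argument rather than its statement). One never glues potentials across divisors: take the smallest subgraph $\Gamma'$ with $\Fro_\theta[\Gamma']$ containing the excursion, let $k$ be its root, and contract the boundary circles of $\Dro_{k,\theta}$ corresponding to outgoing edges --- precisely the circles on which $\phiro_{k,\theta}$ \emph{is} constant. Each excursion beyond such a circle re-enters through a contracted circle, so in the quotient surface $\widehat{\Dro_{k,\theta}}$ the trace of the broken trajectory closes up into a loop made of integral curves of the single gradient $\nabla\phiro_{k,\theta}$ and of points where the induced potential is still well defined; such a loop forces the potential to increase strictly around a closed curve, which is absurd. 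This keeps the whole argument on one divisor and uses only the one-sided constancy that \Cref{lem:potential_xiro} actually provides. In the special case where the excursion never leaves $\Droc_{j,\theta}$, your argument is already complete and correct as written.
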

\begin{proof}
	A green point is, by definition, half a sink. In order for 
	$\beta[\D_\theta]$ 
	to contain a green point it must have  happened that a broken 
	trajectory 
	starting at $q \in \D_\theta$ must have exited $\D_\theta$ and re-enter 
	it 
	through that green point. But \Cref{lem:no_reentry} prevents this.
\end{proof}

\begin{lemma}\label{lem:connected_comp}
	Let $\D_\theta$ be a maximal Petri dish. Each connected component of 
	$\beta[\D_\theta]$:
	\begin{enumerate}
		\item \label{it:conn_red} is connected to some red vertex on 
		$\partial \D_\theta$ and
		\item \label{it:no_loops} is contractible.
	\end{enumerate}
\end{lemma}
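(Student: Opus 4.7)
My plan is to first establish (ii) by a Poincar\'e--Hopf index argument, and then to deduce (i) as an immediate corollary.

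For (ii), I argue by contradiction. Suppose some connected component of $\beta[\D_\theta]$ contains an embedded loop. Since $\D_\theta$ is a disk (\Cref{lem:noninv_petri}) and $\beta[\D_\theta]$ is a finite $1$-complex, I may choose an embedded loop $L\subset\beta[\D_\theta]$ bounding a subdisk $\Delta\subset\D_\theta$ which is minimal, in the sense that no cycle of $\beta[\D_\theta]$ lies in the interior of $\Delta$. In particular $L\subset\inter\D_\theta$, hence $\Delta\cap\partial\D_\theta=\emptyset$. I then show $\beta[\D_\theta]\cap\Delta = L$: by minimality the intersection equals $L$ together with some embedded forest $F$ inside $\Delta$ (possibly attached to $L$); any leaf of $F$ not lying on $L$ would be a valence-$1$ vertex of $\beta[\D_\theta]$ sitting in $\Delta\cap\partial\D_\theta=\emptyset$, contradicting \Cref{rem:petri_dish} \cref{it:no_ver_val_1}. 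Hence $F$ is empty.

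Consequently the open interior of $\Delta$ is a connected component of $\D_\theta\setminus\beta[\D_\theta]$ whose boundary is the embedded circle $L$, so \Cref{lem:class_singularities} applies. Applying the relative Poincar\'e--Hopf formula \cref{eq:rel_poincare} to $\xiro_{\D_\theta}|_\Delta$ (which is continuous on $\Delta$ by \Cref{lem:smooth_str_petri}) yields
\[
  \sum_{p\in\Delta} \Ind_{\xiro_{\D_\theta}}(p) \;=\; \chi(\Delta) \;=\; 1.
\]
By \Cref{lem:class_singularities}, every singularity in the interior of $\Delta$ has index $0$. Moreover, because $L\cap\partial\D_\theta=\emptyset$, no singularity on $L$ can fall in Case 1 of the proof of \Cref{lem:class_singularities}, so every singularity on $L$ has index $0$ or $-1/2$. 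This forces the left-hand side to be $\leq 0$, contradicting the value $1$ on the right, and proves (ii).

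Statement (i) then follows immediately: by (ii), each connected component $B$ of $\beta[\D_\theta]$ is a finite connected acyclic $1$-complex, i.e.\ a tree, and therefore has at least one vertex of valence $1$. By \Cref{rem:petri_dish} \cref{it:no_ver_val_1} such a leaf lies on $\partial\D_\theta$, and by \Cref{cor:no_green_points} it is not a green point, so it must be a red point on $\partial\D_\theta$. The delicate step I anticipate is justifying that the Poincar\'e--Hopf count on $\Delta$ really is bounded above by $0$: this hinges on the fact that the vector fields $\xiro_{i,\theta}$ for non-invariant $i$ have only saddle-type zeros (as seen from the Hessian computation in \Cref{lem:zeros_not_0}), that all such saddles are absorbed into $\beta[\D_\theta]$ by construction, and that the remaining interior singularities of $\xiro_{\D_\theta}$ are the index-$0$ internal corner points, as packaged in \Cref{rem:petri_dish} \cref{it:non_zero_ind}.
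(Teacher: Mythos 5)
There is a genuine gap in your proof of (ii), at the step ``In particular $L\subset\inter\D_\theta$, hence $\Delta\cap\partial\D_\theta=\emptyset$.'' Minimality of the enclosed subdisk does not force the loop away from $\partial\D_\theta$: a cycle of $\beta[\D_\theta]$ can perfectly well pass through red points on $\partial\D_\theta$ (two distinct broken-trajectory arcs terminating at the same pair of red points already produce such a loop), and this is in fact the main case the paper has to exclude. For such a loop your index count breaks down, because each red point on $L$ is a half-fountain of index $+1/2$ (\Cref{lem:class_singularities}\cref{it:pos_ind}), so the sum of indices over $\Delta$ is not automatically $\leq 0$. To close this case one needs the extra observation that each arc of $L$ joining two consecutive red points carries two trajectories flowing in opposite directions and hence must contain a saddle of index $-1/2$; only then does the Poincar\'e--Hopf sum become $\leq n/2 - n/2 = 0 < 1 = \chi(\Delta)$. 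Your argument as written establishes only that there are no loops contained in $\inter\D_\theta$, and your reduction ``$\beta[\D_\theta]\cap\Delta=L$'' also silently uses the false premise $\Delta\cap\partial\D_\theta=\emptyset$.

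For what it is worth, your treatment of the interior-loop case is a legitimate alternative to the paper's: the paper rules out interior loops dynamically, by contracting the smaller Petri dishes and invoking the potentials of \Cref{lem:potential_xiro} (closed orbits of a gradient-like field are impossible), whereas you get the same conclusion from \cref{eq:rel_poincare} together with the classification of index-$0$ and index-$(-1/2)$ singularities. The paper also proves (i) first by following a broken trajectory from saddle to saddle until it must exit through a red point, and only then uses (i) to reduce (ii) to boundary-touching loops; your reversed order (deducing (i) from (ii) via the leaf of a tree, \Cref{rem:petri_dish}\cref{it:no_ver_val_1} and \Cref{cor:no_green_points}) is sound in principle, but it makes a complete proof of (ii) indispensable, and that is exactly where the boundary case is missing.
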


\begin{proof}
	Let $q \in \beta[\D_\theta]$ be a saddle point of some 
	$\xiro_{i,\theta}$ with $i \in \V_\Xi$. 
	
	\textbf{Claim 1.} The broken trajectory $\beta^+_q$ can't end at $q$.
	\begin{proof}[Proof of Claim 1.]
		If $\beta^+_q$ ends at $q$, because of \Cref{lem:no_reentry}, it 
		must 
		be contained entirely in $\D_\theta$. Now we apply a similar 
		reasoning 
		as in the proof of \Cref{lem:no_reentry}: contract each of the 
		smaller 
		Petri dishes contained in $\D_\theta$ to a point. We obtain a loop, 
		formed by integral lines of a vector field that, by 
		\Cref{lem:potential_xiro}, admits a potential which is 
		contradictory.
	\end{proof}
	By the previous claim, either the broken trajectory ends at another 
	saddle point $q' \in \D_\theta$ or it has to exit $\partial \D_\theta$ 
	by some red point on $\partial \D_\theta$ (note that it can only exit 
	$\D_\theta$ through a red point since broken trajectories are 
	constructed following the flow backwards). If it exits $\D_\theta$ 
	through a red point, we are done proving \cref{it:conn_red}. 
	
	Assume otherwise it arrives at another saddle point $q'$. Now consider 
	one of the broken trajectories $\beta'^+_{q'}$ associated with $q'$.
	
	\textbf{Claim 2.} The broken trajectory  $\beta'^+_{q'}$ can't end at 
	$q$.
	\begin{proof}[Proof of Claim 2.]
		The proof is very similar to the previous claim:  contract each of 
		the smaller Petri dishes contained in $\D_\theta$ to a point. We 
		obtain a loop, formed by integral lines of a vector field that, by 
		\Cref{lem:potential_xiro}, admits a potential which is 
		contradictory.
	\end{proof}
	
	After claim 2, we get that the  broken trajectory $\beta'^+_{q'}$ 
	either ends at a red point in $\partial \D_\theta$ or it gets to 
	another saddle point $q''$ that, following a similar reasoning to that 
	of Claim 1 and 2, must be different from $q$ and $q'$. Now we can 
	iterate this process and get that, after each step, we either arrive at 
	a red point in $\partial \D_\theta$ or to a saddle point that we have 
	not visited before. But there are only finitely many saddle points 
	contained in $\D_\theta$. This proves \cref{it:conn_red}. 
	
	Now we prove \cref{it:no_loops}. The proof of \cref{it:conn_red} 
	already shows
	that there can be no loops in the interior of
	$\D_\theta$. Assume that we have a loop $L$ that contains at least a 
	point $p$
	in $\partial \D_\theta$. This point is necessarily red by
	\Cref{cor:no_green_points}. Furthermore, assume that this loop does not 
	contain
	any other loops of $\beta[\D_\theta]$ in the disk that it encloses, for 
	if it
	does, we can always choose one of the smaller loops. Then $L$ is a
	concatenation of paths $L_1, \ldots, L_n$ and red points $p_1, \ldots, 
	p_n$.
	The loop $L$ encloses a disk, which by \Cref{rem:petri_dish}
	\cref{it:non_zero_ind} and by our hypothesis on the minimality of $L$, 
	does not
	contain any nonzero index singularities. Since each $L_k$ joins two red
	points, it must contain at least one singularity of index $-1/2$: 
	indeed, if
	for example $L_1$ joins $p_1$ with $p_2$, then there are two 
	trajectories of
	$\xiro_{\D_\theta}$ inside $L_1$ with opposite sign, so necessarily 
	there is a
	saddle point on $L_1$. Thus, using \cref{eq:rel_poincare} we get
	
	\[
	\sum_{i=1}^{n} \left( \Ind_{\xiro_{\D_\theta}} (p_i) + \sum_{q \in L_i} 
	\Ind_{\xiro_{\D_\theta}} (q) \right) \leq \frac{n}{2} -  \frac{n}{2} = 
	0.
	\]
	which contradicts the fact that the Euler characteristic of the disk 
	enclosed by $L$ is equal to $1$.
\end{proof}

\begin{cor}\label{cor:complement_disks}
	The closure of each connected component of $\D_\theta \setminus 
	\beta[\D_\theta]$ is a closed disk.
\end{cor}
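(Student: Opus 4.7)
The plan is to view $G := \beta[\D_\theta] \cup \partial \D_\theta$ as a finite planar graph embedded in $\D_\theta$, prove it is connected and bridgeless, and then invoke the classical topological fact that every face of such an embedding is an open disk bounded by a simple closed curve.

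First I would set up the graph structure on $G$, whose vertices are the saddle points, the red points on $\partial \D_\theta$, and the other singular points of $\xiro_{\D_\theta}$ appearing on $\beta[\D_\theta]$, and whose edges are the arcs of the broken trajectories and the arcs of $\partial \D_\theta$ between consecutive red points. Connectedness of $G$ is immediate: $\partial \D_\theta$ is a circle, and by \Cref{lem:connected_comp}\,\cref{it:conn_red} each component of $\beta[\D_\theta]$ contains at least one red point of $\partial \D_\theta$.

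Next I would verify that $G$ has no bridges. An edge on $\partial \D_\theta$ is clearly not a bridge. For an edge $e$ of a tree $T \subset \beta[\D_\theta]$ (recall $T$ is a tree by \Cref{lem:connected_comp}\,\cref{it:no_loops}, all of whose leaves lie on $\partial \D_\theta$ by \Cref{rem:petri_dish}\,\cref{it:no_ver_val_1}), removing $e$ splits $T$ into two subtrees $T_1, T_2$. A short combinatorial argument using that interior vertices of $T$ have degree $\geq 2$ shows that each $T_i$ retains at least one leaf of $T$: if an interior endpoint $u$ of $e$ has degree $\geq 3$ in $T$, then $u$ is still interior in its subtree, whose leaves are therefore leaves of $T$; if $u$ has degree $2$ in $T$, then $u$ becomes a new leaf in its subtree, but any tree with $\geq 2$ vertices has $\geq 2$ leaves, so there is again an original leaf of $T$ in the subtree. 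Since these leaves lie on $\partial \D_\theta$, both subtrees remain connected to the rest of $G$ through $\partial \D_\theta$ after removing $e$, hence $e$ is not a bridge.

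Finally, I would invoke the classical fact that in a connected, bridgeless planar graph, each face of the planar embedding is an open topological disk whose boundary is a simple closed curve (a cycle of the graph). Applied to $G \subset \D_\theta$, this means every bounded face is an open disk with simple closed curve boundary, and its closure in $\D_\theta$ is a closed topological disk by Schönflies. The connected components of $\D_\theta \setminus \beta[\D_\theta]$ are in natural bijection with the bounded faces of $G$ (each component is the union of a bounded face with the open arcs of $\partial \D_\theta$ on its boundary), and the closure of each component in $\D_\theta$ coincides with the closure of the corresponding face. This yields the claim.

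The only real obstacle is the bridgelessness step, which is entirely combinatorial once \Cref{rem:petri_dish}\,\cref{it:no_ver_val_1} is in hand; the critical ingredient is that every leaf of $\beta[\D_\theta]$ lies on $\partial \D_\theta$, so the circle $\partial \D_\theta$ can always be used to reroute connectivity after deleting a tree edge.
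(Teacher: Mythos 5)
Your strategy is in substance the same as the paper's: the components of $\beta[\D_\theta]$ are trees attached to $\partial\D_\theta$ (\Cref{lem:connected_comp}), all of whose valency-one vertices lie on $\partial\D_\theta$ (\Cref{rem:petri_dish} \cref{it:no_ver_val_1}), and the complementary regions of such a configuration in a disk are disks. Your graph-theoretic packaging of this, including the identification of the components of $\D_\theta\setminus\beta[\D_\theta]$ with the bounded faces of $G$ and the leaf-counting argument, is sound.

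The one step that fails as written is the ``classical fact'' you invoke at the end: it is \emph{not} true that every face of a connected, bridgeless plane graph is bounded by a simple closed curve. The standard counterexample is the bowtie, two triangles sharing a single vertex: it is connected and bridgeless, yet the boundary walk of its outer face passes twice through the shared (cut) vertex, and the closure of that face is not a closed disk (its frontier is a wedge of two circles, and the closure fails to be a surface with boundary at the pinch point). The hypothesis you actually need is $2$-connectedness --- absence of cut vertices, not merely of bridges --- under which every face is indeed bounded by a cycle and Sch\"onflies applies. Fortunately, your own argument supplies the repair with no new ideas: the same leaf count shows that deleting any vertex $v$ of $G$ leaves each resulting subtree of $\beta[\D_\theta]$ containing an original leaf on $\partial\D_\theta$, while $\partial\D_\theta$ minus a point is still connected; hence $G$ has no cut vertices and is $2$-connected whenever $\beta[\D_\theta]\neq\emptyset$ (the empty case being trivial). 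With ``bridgeless'' upgraded to ``$2$-connected'' throughout, your proof is complete and matches the paper's argument in a more explicit form.
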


\begin{proof}
	By \Cref{lem:connected_comp}, each connected component 
	$\beta[\D_\theta]$ is connected to some red vertex and it is a tree. 
	The interior of the complement of a tree contained in a disk is a 
	disjoint union of open disks provided that the tree is connected to at 
	least a point of the boundary of the disk. Now, by 
	\Cref{rem:petri_dish} \cref{it:no_ver_val_1}, each connected component 
	of $\beta[\D_\theta]$ does not have vertices of valency $1$ in the 
	interior of $\D_\theta$, so the boundary of the closure of each 
	connected component must be an embedded circle.
\end{proof}
\begin{prop}\label{prop:petri_dishes}
	If the set $\beta[\D_\theta]$ is non-empty, then
	\begin{enumerate}
		\item \label{it:con} it is connected,
		\item \label{it:contractible} it is contractible,
		\item \label{it:all_red_points} it contains all the red points in 
		$\partial \D_\theta$.
	\end{enumerate}
\end{prop}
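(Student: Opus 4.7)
The plan is a Euler-characteristic and edge-counting argument on the connected components of $\beta[\D_\theta]$, built on Lemma \ref{lem:connected_comp} (tree structure of each component), Lemma \ref{lem:number_ saddle} (the total count $R=I+1$ of red points on $\partial\D_\theta$), and Corollary \ref{cor:no_green_points} (no green point appears in $\beta[\D_\theta]$). Since (ii) follows immediately from (i) together with Lemma \ref{lem:connected_comp}\cref{it:no_loops}, the task reduces to establishing (i) and (iii) simultaneously.

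Let $T_1,\dots,T_c$ be the connected components of $\beta[\D_\theta]$; each is a tree. The vertices of $T_j$ are the $I_j$ interior saddles of $\xiro_{\D_\theta}$ on $T_j$, the $r_j$ red points of $\partial\D_\theta$ on $T_j$, and, a priori, the multiply-visited inner red points at non-invariant corners in the interior of $\D_\theta$ (say $r^{inn}_j$ of them). I would decompose the edges of $T_j$ into the $2I_j$ stable-arm segments emanating from saddles (by the construction in Subsection \ref{ss:brotraj}) together with, at each multiply-visited inner red $p$, the $v_p\geq 2$ pairwise distinct continuation segments provided by case \cref{it:bcase} of the construction and the center-stable unicity in Lemma \ref{lem:noninv_corner}\cref{it:noninv_corner_elem}. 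Setting $K_j=\sum_{p\in T_j} v_p$ over multiply-visited inner reds, the tree identity $V_j-E_j=1$, summed over $j$, would then read
\[
r \;=\; I \;+\; c \;+\; (K - r^{inn}),
\]
with $r=\sum r_j$, $I=\sum I_j$, $K=\sum K_j$, $r^{inn}=\sum r^{inn}_j$.

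The bound $v_p\geq 2$ gives $K-r^{inn}\geq r^{inn}\geq 0$, while Lemma \ref{lem:number_ saddle} together with the trivial inequality $r\leq R$ yields $r\leq I+1$. Chaining these inequalities,
\[
I+c \;\leq\; r \;\leq\; I+1,
\]
which forces $c\leq 1$; non-emptiness of $\beta[\D_\theta]$ delivers $c\geq 1$, so $c=1$, proving (i), and all inequalities saturate, yielding $r^{inn}=K=0$ and $r=R$, which is (iii). The main obstacle is the edge bookkeeping: one must carefully verify that the $2I_j$ stable arms together with the $K_j$ continuations exhaust the edges of $T_j$ without duplication, which rests on Lemma \ref{lem:noninv_corner}\cref{it:noninv_corner_elem}; the pleasant surprise is that the rigid equality $R=I+1$ leaves no slack whatsoever once $c\geq 1$, simultaneously forcing connectedness, red-point saturation, and the a posteriori absence of multiply-visited inner reds.
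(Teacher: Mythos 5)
You are following the paper's own strategy: each component of $\beta[\D_\theta]$ is a tree by \Cref{lem:connected_comp}, one counts vertices against edges, and the identity $R=I+1$ of \Cref{lem:number_ saddle} leaves no room for a second component; part (ii) is then immediate from \Cref{lem:connected_comp} exactly as you say. The only real difference is where the count is performed. The paper builds an abstract graph $\tilde\beta$ whose vertices are the saddles together with \emph{all} red points of $\partial\D_\theta$ (including any that might a priori be isolated) and whose edges are the broken trajectories themselves, one edge per arm, so that $e=2I$ requires no bookkeeping at interior junctions; then $\chi(\tilde\beta)=R+I-2I=1$ forces $\tilde\beta$ to be connected with no isolated red vertex, giving (i) and (iii) simultaneously. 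Your embedded-complex version is viable, but your sub-claim $v_p\geq 2$ is not justified: at a junction of type (a) in \Cref{ss:brotraj} the continuation is the \emph{unique} trajectory dictated by the local dynamics, so two arms arriving there share it and $v_p=1$, and the same can happen in case (b) for the two arms of a single saddle, which share one center-stable manifold. This does not damage the proposition, because your chain only needs $K-r^{inn}\geq 0$, i.e.\ $v_p\geq 1$, which holds since no arm terminates at an interior junction; but it does invalidate the bonus conclusion $r^{inn}=0$, which in any case is not part of the statement. Finally, your vertex set should include all multiply-visited interior junctions, not only the red ones: \Cref{cor:no_green_points} excludes green points only from the outer boundary $\partial\D_\theta$, and broken trajectories do cross green points of interior Petri-dish boundaries; again this costs nothing, since each such junction still contributes at least one continuation edge.
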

\begin{proof}
	We construct a (possibly disconnected) abstract graph $\tilde{\beta}$:
	\begin{enumerate}
		\item The set of vertices is the union of the saddle points of 
		$\xiro_{\D_\theta}$ and the red vertices on $\partial \D_\theta$.
		
		\item  there is an edge between a saddle point and another saddle 
		point or between a saddle point and a red point if there is a 
		broken trajectory in $\beta[\D_\theta]$ that joins them.
	\end{enumerate}
	The underlying $1$-dimensional $CW$-complex of $\tilde{\beta}$ is
	homeomorphic to $\beta[\D_\theta]$ union all the red vertices. And we
	have a natural injective map $\beta[\D_\theta] \hookrightarrow
	\tilde{\beta}$ that realizes $\beta[\D_\theta]$ in $\tilde{\beta}$.
	Since each red vertex can be thought of as a tree and the connected
	components  of $\beta[\D_\theta]$ are trees by
	\Cref{lem:connected_comp} \cref{it:no_loops}, the Euler characteristic
	of $\tilde{\beta}$ counts the number of connected components. If we let
	$R$ be the number of red points, $I$ the number of saddle points and $e$
	the number edges, then we have $e=2I$, since every edge is a trajectory
	coming out of exactly one saddle point, and every saddle point has two 
	such
	trajectories. Therefore,
	\[
	\chi(\tilde{\beta}) = R + I - e  = R - I.
	\]
	But $R - I = 1$ by \Cref{lem:number_ saddle}. So $\tilde{\beta}$ is 
	connected an contains all the red points, we conclude 
	\cref{it:con,it:all_red_points}.
	
	That $\beta[\D_\theta]$ is contractible \cref{it:contractible} follows 
	directly from \Cref{lem:connected_comp} \cref{it:no_loops}.
\end{proof}
\begin{example}\label{ex:not_spine}
	In this example we show that the union of broken trajectories starting 
	at all
	saddle points does not, in general, form a
\index{spine!at radius zero}
spine for the Milnor fiber 
	at radius
	$0$. 
	
	Consider the  plane curve singularity $C$ defined by $f(x,y)=
	y^6-2y^3x^4-3x^8+yx^7$. The plane curve $C$ consists of two branches 
	which are
	Brieskorn-Pham singularities of type $(3,4)$ with the same tangent. The 
	polar
	curve $P$ is defined by the equation $f_y = 6y^5-6y^2x^4+x^7$ and has 
	two
	branches. The strict transform of one of the branches passes through the
	exceptional divisor $D_2$ and the other one through the divisor $D_3$. 
	The
	divisor $D_3$ is an invariant divisor and so the point $\tilde{P} \cap 
	D_3$ is
	a base point of the Jacobian ideal by \Cref{lem:polar} 
	\cref{it:polar_node}.
	After blowing up this base point once, we get the dual graph on the 
	bottom of
	\cref{fig:68_res_graph}.
	
	\begin{figure}[!ht]
		\centering
		\includegraphics*{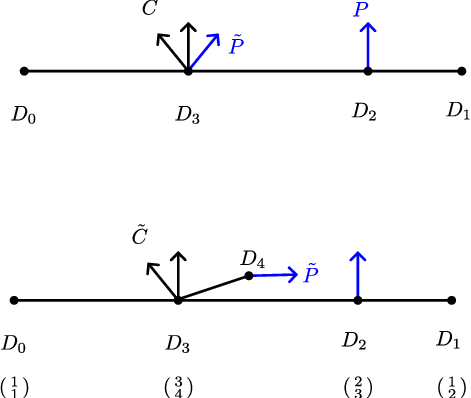}
		\caption{On top the dual graph of the minimal resolution of $C$. On 
			bottom,
			the resolution resulting from blowing up the base point of the 
			divisor $D_3$
			once. In
			blue we have the strict transform of the polar curve $P$ which 
			has two
			branches.} \label{fig:68_res_graph} \end{figure}
	
	Let's look at the divisor $D_3$ in the minimal resolution of the plane 
	curve.
	For that, we consider the monomial transformation
	\[
	\pi(u,v)= (u^3v^2, u^4v^3)
	\]
	which sees a neighborhood around $D_3 \cap D_2$. Using $\tau_3 = 8$ and
	$\tau_2 = 5$ we can compute the restriction of the extension of the 
	vector
	field over $D_3^\circ$ in this chart (see \cref{fig:68_D3} for its
	representation) and we get the expression:
	
	\[
	\xi_3= -\frac{18 {\left(\bar{v} - 1\right)}}{|v|^4\left( \bar{v}^{2} - 
		2 \bar{v} - 3 \right) }.
	\]
	
	The intersection with the two components of the strict transform of $C$ 
	are on
	the real points $-1, 3 \in \C$. We see (\cref{fig:68_D3}) that there is 
	a
	trajectory of $\xi_3$ on $D_3^\circ$ that goes from the saddle point
	corresponding to the intersection point
	$D_3 \cap D_4$. On $D_2$ there is another intersection point
	$\tilde{P} \cap D_2$.

	\begin{figure}[!ht]
		\centering
		\includegraphics*[scale=0.8]{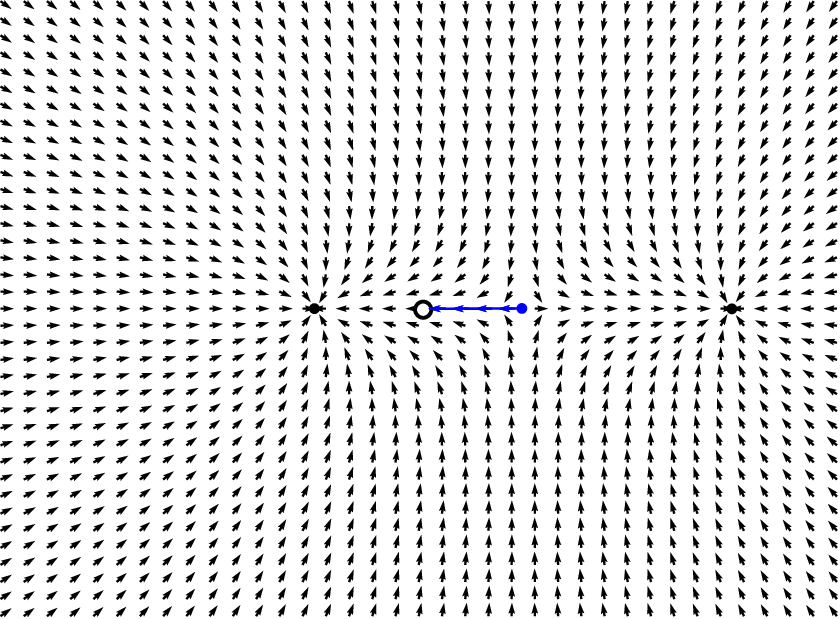}
		\caption{The divisor $D_3$. The two black dots correspond with the
			intersection points of the strict transforms of the curve $C$. 
			The blue dot
			corresponds with the intersection point of $D_3$ and $D_4$.
			The circled dot corresponds with the intersection point of 
			$D_2$ and
			$D_3$. The blue segment is a trajectory.}
		\label{fig:68_D3}
	\end{figure}
	
	Now we analyze what happens at the level of the Milnor fiber at radius 
	$0$. The
	multiplicity $m_2$ of $\pi^*f$ at $D_2$ is $16$. The multiplicity $m_1$ 
	is $8$.
	This says that $\Dro_{1,\theta}$ consists of $8$ disks and 
	$\Dro_{2,\theta}$
	consists of $8$ cylinders for each $\theta \in \R / 2 \pi \Z$. Thus, on 
	each of
	these $8$ cylinders, the vector field $\xiro_{2,\theta}$ has $2$ saddle 
	points.
	Furthermore, a computation yields that $\varpi_2=1$ and we know that
	$\varpi_3=0$. Therefore, using \Cref{lem:transversality_number}
	\cref{it:trans_iii} we get that each connected component of 
	$\Dro_{2,3,\theta}$ 
	has
	$3$ red points (and $3$ green points). We conclude that for
	a generic angle $\theta$,
	one of the three red
	points on each cylinder is chosen by two half broken trajectories 
	starting at
	the two saddle points on each connected component of $\Dro_{2,\theta}$. 
	We
	have, in total $24$ red points and $8$ of these are chosen by two half 
	broken
	trajectories on their corresponding cylinders.
	
	We analyze now what happens in $D_4$. The multiplicity $m_4=m_3 = 24$, 
	so
	$\Dro_{4,\theta}$ consists of $24$ disks. Since $\varpi_4 =2$, each of 
	these
	disks has $2$ red points and $2$ green points ($48$ of each in total).
	Moreover, $D_4$ intersects $\tilde{P}$ and so each of the $24$ disks of
	$\Dro_{4,\theta}$ has a saddle point of $\xiro_{4,\theta}$ on its 
	interior.
	Each of the
	two red points on each component of $\partial \Dro_{4,\theta}$ gets 
	chosen by
	one of the broken trajectories associated with the saddle point. 
	
	The fact that there is a trajectory going from $D_4 \cap D_3$ to 
	$D_3\cap D_2$ yields that there are $24$ trajectories in 
	$\Dro_{3,\theta}$ joining the $24$ red points of $\Dro_{2,3,\theta}$ 
	with $24$ of the $48$ green points lying on $\Dro_{3,4,\theta}$.
	
	Let $p_1,p_2 \in \Dro_{2,\theta}$ be two saddle points of 
	$\xiro_{2,\theta}$
	lying on the same connected component of $\Dro_{2,\theta}$. We know 
	that the
	Hessian of $\xiro_{2,3\theta}$ at the red points is non degenerate, so 
	the two
	broken trajectories that start at $p_1$ and $p_2$ and end at the
	same red point,
	arrive (flowing backwards) at this red point, with different tangents.
	After passing through the red points, both broken trajectories choose 
	the same
	trajectory on $\Dro_{3, \theta}$ and this is one of the $24$ 
	trajectories that
	lie above the trajectory on $D_3^\circ$ connecting $D_2 \cap D_3$ with 
	$D_3
	\cap D_4$. Here, the closures of $\Sro(p_1)$ and $\Sro(p_2)$ intersect
	transversely along the trajectory in $\Droc_{3,\theta}$.
	
	\begin{figure}[!ht]
	    \includegraphics*{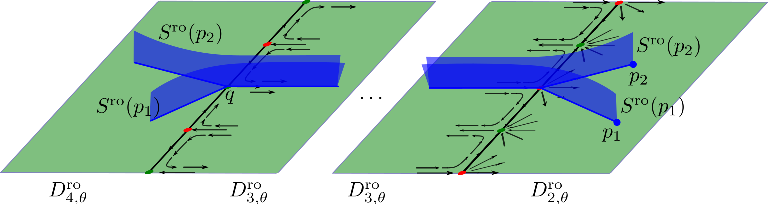}
		\caption{On the right part of the picture we see a neighborhood of 
			the  $\Dro_{2,3,\theta}$. On the right we also see, in blue, the 
			center-stable manifolds $\Sro(p_1)$ and $\Sro(p_2)$. On the left 
			part of the picture we see a neighborhood of $\Dro_{3,4,\theta}$. 
			We see how the two center-stable manifolds define different 
			trajectories in $\Dro_{4,\theta}$ even though they define the same 
			in $\Dro_{3, \theta}$.}
		\label{fig:coinciding_broken}
	\end{figure}
	
	If $q$ is the green point in
	$\Dro_{3,4,\theta}$ chosen by the common segment of the two broken
	trajectories, then the germs of $\overline{\Sro(p_1)}$ and
	$\overline{\Sro(p_2)}$ at $q$ define different manifolds, which by the
	non-degeneracy of the vector field $\xiropol$ at $q$, define different
	trajectories in $\Dro_{4,\theta}$, that is $\overline{\Sro(p_1)} \cap
	\Droc_{4,\theta}$ and $\overline{\Sro(p_2)} \cap \Droc_{4,\theta}$ are 
	two
	distinct trajectories of $\xiro_{4,\theta}$. In \cref{fig:one_of_three} 
	we see
	the three qualitatively different possibilities  and we see how all of 
	them
	form $CW$ complexes that enclose a disk and so we conclude that in 
	general the
	spine at radius $0$ and angle $\theta$ (\Cref{def:spine_at_rad_0}) is 
	not a
	spine.

	\begin{figure}[!ht]
		\centering
		\includegraphics*[scale=0.6]{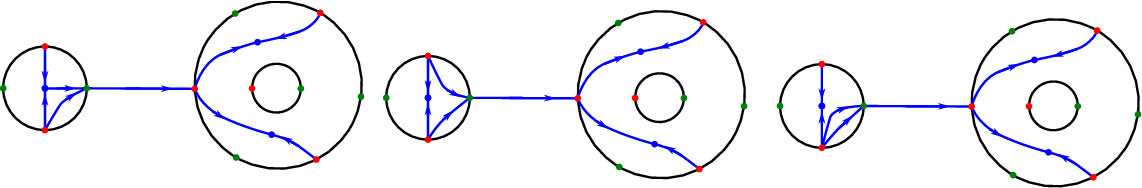}
		\caption{The three possibilities for the broken trajectories coming 
			from
			one Petri dish and entering the Petri dish (flowing backwards)
			corresponding to the component of the polar
			curve that intersects $D_4$.}
		\label{fig:one_of_three}
	\end{figure}
	
\end{example}

\begin{rem}
	In the previous example, a special situation occurs, which can be seen
	in \cref{fig:68_D3}. The blue point has two trajectories emerging from
	it, and one of the trajectories converges to the white point.
	In a generic situation, each trajectory emerging from the blue point
	would converge to one of the sinks. This picture cannot be made
	generic by perturbing the metric by \Cref{rem:an_in}.
	It can, however be made generic by an equisingular deformation
	of the analytic structure of $(C,0)$.
	Such deformations will be further studied in a subsequent manuscript.
\end{rem}

\section{The invariant Milnor fibration}
\label{ss:invariant_milnor}

In this subsection we introduce a $\Cinf$-fibration which is equivalent to 
the
Milnor fibration and whose total space
is a quotient of the exceptional part of the boundary of $\Yropol$.

\begin{block}
	Let
	\[
	\partial_\V \Yropol = \bigcup_{i \in \Vpol} \Dro_i.
	\]
	We consider the equivalence relation $\sim_\Upsilon$ that collapses
	each Petri dish $\D_\theta$ to a point.
	Thus, $\sim_\Upsilon$ is the finest equivalence relation of
	$\partial_\V \Yropol$ for which $p \sim_\Upsilon p'$ for all
	$p,p' \in \Dro_{i,\theta}$, for any $\theta$ and
	$i \in \Vpol \setminus \V_\Upsilon$.
	Denote the quotient space by
	\[
	\Dro_\Upsilon = \partial_\V \Yropol / \sim_\Upsilon
	\]
	Thus, the image of the Milnor fiber at radius $0$ in this quotient 
	space is homeomorphic to the Milnor fiber at radius $0$. Since 
	$\arg^\mathrm{ro}$ takes constant value along Petri dishes, we have a 
	well defined map
	\[
	\arg_\Upsilon^\mathrm{ro}:\Droup \mapsto \R / 2 \pi \Z 
	\]
	on the quotient space.
\end{block}

\begin{block} \label{block:inv_diff}
	We construct a $\Cinf$ structure on the space $\Dro_\Upsilon$ as 
	follows.
	First, we construct a $\Cinf$ structure on $\Dro_i / \sim_\Upsilon$,
	for $i \in \V_\Upsilon$.
	This space contains some number of copies of embedded $S^1$,
	which are the collapsed Petri dishes. More precisely,
	if $ik$ is an edge corresponding to an invariant vertex $i$ and a
	non-invariant vertex $k$, take a coordinate $v$ in a neighborhood $V 
	\subset
	D_i$ of $D_i \cap D_k$. Then $\sigma^*v$ takes constant value along
	$\Dro_{i,k}$. This induces a function $v^\mathrm{ro}: \sigma^{-1}(V) 
	\to \C$,
	which, along with $\arg^{\mathrm{ro}}(f)$ endows $\sigma^{-1}(V)$ with 
	a smooth
	structure.
	This way, $\Dro_i / \sim_\Upsilon$ is a compact $3$ dimensional manifold
	with boundary components which are tori, corresponding to invariant 
	edges.
	
	The space $\Dro_\Upsilon$ is obtained by gluing together the pieces
	$\Dro_i / \sim_\Upsilon$ along the boundary tori (via the identity map).
	Let $ij$ be an invariant
	edge with $j\to i$. We obtain coordinates near the embedded torus
	$\Dro_{ij}$ as follows. Take Grobman-Hartman coordinates $u,v$ in a
	chart $U$ around
	the intersection point $D_i \cap D_j$ as given by
	\Cref{lem:elementary_invariant} such that $\{u=0\} = D_i \cap U$
	and $\{v=0\} = D_j \cap U$.
	In particular, as in \cref{eq:GH_vf}, the vector field $\xi_U$ is
	given as
	\[
	\xi_U
	=
	d
	\left(
	\begin{matrix}
		c_{0,j} u \\
		-c_{0,i} v \\
	\end{matrix}
	\right)
	\]
	where $d > 0$ by \Cref{lem:d_pos}.
	This way, we get polar coordinates
	$u = re^{i\alpha}$ and $v = se^{i\beta}$ in $\Uro = \sigma^{-1}(U)$.
	The functions $\alpha, \beta:\Uro \to S^1$ restrict to functions
	in a neighborhood of $\Dro_{ij} = \Dro_i \cap \Dro_j$, giving two
	coordinates in $\Dro_\Upsilon \cap \Uro$.
	A third coordinate $t$ is constructed by setting
	\[
	t:\Dro_\Upsilon \to \R,\qquad
	t(q) = 
	\begin{cases}
		s(q) & q \in \Dro_i, \\
		-r(q) & q \in \Dro_j. \\
	\end{cases}
	\]
\end{block}

\begin{lemma}\label{lem:invariant_mf}
	
	The map $\arg_\Upsilon^\mathrm{ro}(f):\Droup \mapsto \R / 2 \pi \Z $, 
	induced
	by $\arg^\mathrm{ro}(f)$ is a $\Cinf$ locally trivial fibration with 
	fiber
	$\Froupte$, which is equivalent to  the Milnor fibration.
	
\end{lemma}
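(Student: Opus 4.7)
The plan is to prove the three assertions (well-definedness, local triviality, and equivalence with the Milnor fibration) in sequence, reducing everything to the corresponding known properties of the Milnor fibration at radius zero (Definition 4.3.3 and Lemma 4.3.2).

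First I would check that $\arg^{\mathrm{ro}}_{\Upsilon}(f)$ is well-defined, i.e.\ that $\arg^{\mathrm{ro}}(f)$ descends to the quotient. Each (maximal) Petri dish $\D_\theta$ lies inside $\Fro_\theta$ by construction (Definition 10.2.3), so $\arg^{\mathrm{ro}}(f)$ is constant on every equivalence class that $p_{\Upsilon}$ collapses; hence it factors through $p_{\Upsilon}$. Let $P\subset\partial\Yro$ denote the union of all maximal Petri dishes over all $\theta\in\R/2\pi\Z$, so that $\Droup=\partial\Yro/\!\sim$ with $\sim$ collapsing each connected component of $P$ to a point.

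Second, I would upgrade $P$ to a closed topological subbundle of the Milnor fibration $\arg^{\mathrm{ro}}(f)\colon\partial\Yro\to\R/2\pi\Z$. For this, recall that in the proof of Lemma 4.3.2 local triviality is obtained from a vector field on $\partial\Yro$ lifting $\partial_\theta$ and tangent to every stratum $\Droc_{i,\theta}$, $\Dro_{i,j,\theta}$; such lifts exist by the submersion statement there and can be glued via partitions of unity. Since every Petri dish is, by Lemma 10.2.2 and Definition 10.2.3, a union of such strata (glued along their corner strata), any stratum-preserving flow preserves Petri dishes setwise. Thus the local trivializations of Lemma 4.3.2 can be chosen compatible with $P$, and $P\to\R/2\pi\Z$ is itself a locally trivial topological fibration whose fiber over $\theta$ is a disjoint union of closed disks (the finitely many maximal Petri dishes in $\Fro_\theta$).

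Third, passing to the quotient is formal once the subbundle structure is in place. Over a small interval $U\subset\R/2\pi\Z$ we have a commuting trivialization $(\arg^{\mathrm{ro}}(f))^{-1}(U)\cong U\times \Fro_\theta$ carrying the subbundle $P\cap(\arg^{\mathrm{ro}}(f))^{-1}(U)$ onto $U\times(P\cap\Fro_\theta)$. Taking fiberwise quotients yields a homeomorphism
\[
  (\arg^{\mathrm{ro}}_{\Upsilon}(f))^{-1}(U)\;\cong\;U\times \Froupte,
\]
proving local triviality of $\arg^{\mathrm{ro}}_{\Upsilon}(f)$ with fiber $\Froupte=\Fro_\theta/(P\cap\Fro_\theta)$. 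Finally, the quotient map $p_{\Upsilon}|_{\partial\Yro}\colon\partial\Yro\to\Droup$ commutes with the two fibrations by the first step, and collapses each connected component of $P$ (a disk) to a point; since a closed disk is contractible, this collapse is a fiberwise homotopy equivalence, so $\arg^{\mathrm{ro}}_{\Upsilon}(f)$ is fiber-homotopy equivalent, hence equivalent as a topological fibration, to the Milnor fibration at radius $0$, which is in turn equivalent to the classical Milnor fibration.

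The main obstacle, and the only step that is not bookkeeping, is the second one: verifying that the local trivializations constructed for Lemma 4.3.2 can genuinely be arranged to preserve the Petri dish decomposition. This relies on the observation that Petri dishes are unions of strata $\Droc_{i,\theta}$, $\Dro_{i,j,\theta}$ and hence are preserved by any stratum-preserving smooth isotopy, together with the existence of a stratum-preserving lift of $\partial_\theta$ given by the transversality statement of Lemma 4.3.2.
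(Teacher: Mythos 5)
Your proposal is correct and follows essentially the same route as the paper: local triviality is obtained from the stratumwise-submersion argument of \Cref{lem:transverse_strata} (which you make explicit via a stratum-preserving lift of $\partial_\theta$, automatically compatible with the Petri dishes since these are unions of strata), and the fiber is identified using the fact that the collapsed pieces are disjoint closed disks (\Cref{lem:neigh_invariant}, \Cref{lem:noninv_petri}). The only point to tighten is your last inference: collapsing finitely many disjoint closed disks embedded in a compact surface yields a \emph{homeomorphic} surface, so the fiberwise quotient gives a genuine homeomorphism $\Froupte \cong \Fro_\theta$ rather than merely a fiber homotopy equivalence, which is what ``equivalent to the Milnor fibration'' requires.
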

\begin{proof}
	The space $\Dro_\Upsilon$ is compact, and the function
	$\arg^{\mathrm{ro}}_\Upsilon(f)$, and its restriction to the boundary
	of $\Dro_\Upsilon$, is a submersion. Therefore, the lemma follows from
	Ehresmann's fibration theorem.
	
	Indeed, take any $q \in \Dro_\Upsilon$. If $q \in \Droc_i$ for some
	$i \in \V_\Upsilon$, then it is clear that 
	$\arg^{\mathrm{ro}}_\Upsilon(f)$
	is a submersion at $q$. If $q$ is the image of some Petri dish, then
	$\arg^{\mathrm{ro}}_\Upsilon(f)$ is a coordinate near $q$, in 
	particular,
	it is a submersion at $q$. If $q \in \Dro_{ij}$, where $ij$ is
	an invariant edge, then $\arg^{\mathrm{ro}}_\Upsilon(f) |_{\Dro_{ij}}$ 
	is
	already a submersion, as seen
	in \Cref{lem:transverse_strata},
	and the same argument applies to the boundary
	components of $\Dro_\Upsilon$.
\end{proof}

\begin{definition}\label{def:invariant_milnor}
	We call 
	\[
	\arg_\Upsilon^\mathrm{ro}(f):\Droup \mapsto \R / 2 \pi \Z
	\] 
	the
\index{Milnor fibration!invariant}
{\em invariant Milnor fibration}.
For each $\theta \in \R / 2 \pi 
	\Z$ we denote by 
	\[
	\Froupte = (\arg_\Upsilon^\mathrm{ro})(f)^{-1}(\theta)
	\] 
	the corresponding fiber and call it {\em the invariant Milnor fiber}.
\end{definition}

\begin{block}
	Next, we construct a vector field defined on $\Droup$.
	In $\Droc_i$,
	outside a small neighborhood of the boundary tori $\Dro_{ij}$, the 
	vector field coincides with $\xiro_i$.
	Let $ij$ be an invariant edge, directed as $j\to i$.
	Then, with a coordinate neighborhood $U$ with coordinates $u,v$
	as in the above construction, define a vector field
	\[
	\xiro_{U,\Upsilon} = (1,0,0)
	\]
	in the coordinates $(t,\alpha,\beta)$.
	Note that this vector field is parallel to $\xiro_i$ and $\xiro_j$ 
	outside
	$\Dro_{ij}$. Using partition of unity, these vector fields add up to a
	global vector field on all of $\Dro_\Upsilon$.
	This vector field is tangent to the Milnor fibers $\Froupte$ for
	$\theta \in \R/2\pi\Z$.
\end{block}

\begin{definition}
	Denote by $\xiroup$ the vector field on $\Droup$
	constructed in the previous paragraph, and by
	$\xiroupte$ its restriction to the invariant Milnor fiber
	$\Froupte$, for $\theta \in \R/2\pi\Z$.
\end{definition}

\begin{rem} \label{rem:trajs}
	By construction, the trajectories of $\xiroup$ pass transversely
	through the tori $\Dro_{ij}$. In fact, as a set, a trajectory of
	$\xiroup$ is a broken trajectory of $\xiro$ in the invariant part
	of $\partial \Yropol$.
\end{rem}

\begin{lemma} \label{lem:inv_potential}
	There exists a
	real function
\index{$\phiroup$}
$\phiroup: \Droup \to \R$ such that $\xiroup$ is
	gradient-like for it. That is,
	\[
	d\phiroup(\xiroup) > 0
	\]
	outside the singular points of $\xiroup$.
	Setting $\phiroupte = \phiroup|_{\Froupte}$, we also have
	\[
	d\phiroupte(\xiroupte) > 0
	\]
	outside the singular points of $\xiroupte$.
\end{lemma}
\begin{proof}
	For any $i \in \V_\Upsilon$, the vector field $\xi_i$ has a potential
	$\phi_i$, given in \Cref{def:grad_0,def:grad_i}.
	We set $\phiro_i = \sigma^* \phi_i$.
	If $i$ has distance $k$ to $0$ in $\Upsilon$, rescale
	$\phiro_i$ to take values in $[k,k+1]$.
	These functions glue
	together to the desired global function $\phiroupte$.
\end{proof}

\begin{rem}\label{rem:pronged_sings}
	Let $ij$ be and edge of $\Gapol$ with $j$ invariant and $i$ non-invariant
	(necessarily $j \to i$). Then, by \Cref{lem:noninv_corner} (see also
	\cref{fig:corner_sings}), the vector field $\xiroupte$ (by
	\Cref{lem:int_ind} \cref{it:int_ind_j}) has $\varpi_i m_j/
	\gcd(m_j,m_i)$-pronged singularities near the intersection points of
	$\Dro_{j,\theta}$ with $\sigma_\Upsilon^{-1}(D_i)$. See
	\cref{fig:petri_collapse}.
\end{rem}

\begin{block}
	We extend the equivalence relation $\sim_\Upsilon$ trivially over
	$\Yropol$, so that equivalence classes outside
	$\partial_\V \Yropol \subset \Yropol$ contain only one point.
	This way, the invariant Milnor fibration sits inside the space
	$\Yropol / \sim_\Upsilon$, which projects by $\piropol$ to $\C^2$
	isomorphically outside $C$.
\end{block}

\begin{definition}
	Denote by
	\[
	p_\Upsilon:\Yropol \to \Yropol/\sim_\Upsilon 
	\]
	the natural projection.
	Let $\Sroupte$ be the intersection of $\Froupte$ with the closure of the
	strict transform of the total spine, that is,
	\[
	\Sroupte
	= \Froupte \cap
	\overline{p_\Upsilon \left((\piropol)^{-1}(S\setminus\{0\})\right)}.
	\]
	We call $\Sroupte$ the
\index{spine!invariant}
\emph{invariant spine}. Define also
	the \emph{total invariant spine} as
	\[
	\Sroup = \bigcup_{\theta \in \R/2\pi\Z} \Sroupte.
	\]
\end{definition}

\begin{thm}\label{thm:invariant_spine}
	The invariant spine $\Sroupte$ satisfies the following:
	\begin{enumerate}
		\item \label{it:union}
		It coincides with the union of all trajectories of $\xiroupte$
		that do not escape to the boundary
		of the invariant Milnor fiber $\partial \Froupte$.
		\item \label{it:spine}
		It is a spine of the invariant Milnor fiber $\Froupte$.
		\item \label{it:smooth}
		It is a piecewise smooth $CW$-complex of dimension
		$1$ and all $1$-cells meet transversely at $0$-cells. 
	\end{enumerate} 
\end{thm}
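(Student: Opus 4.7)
The plan is to handle each of the three claims in turn, taking as input the structural description of the total spine (\Cref{thm:spine}), the existence of the gradient-like potential $\phiroupte$, and the local dynamics near the various types of singular points worked out in \Cref{s:1st_blowup,s:corners,s:others,s:sings_on_boundary}.

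For \cref{it:union}, the first step is to enumerate the singularities of $\xiroupte$ in the interior of $\Froupte$. Combining \Cref{lem:smooth_invariant} and \Cref{rem:pronged_sings}, these are of three kinds: (a) the interior singularities of $\xiro_{i,\theta}$ for $i\in\V_\Upsilon$ (saddles, and on $\Dro_{0,\theta}$ also fountains); (b) the elementary corner singularities on $\Dro_{i,j,\theta}$ for invariant edges $ij$, described by \Cref{lem:elementary_invariant} and \Cref{cor:xiro_inv}; (c) the collapsed points $p_\Upsilon(\D_\theta)$, one for each maximal Petri dish, which carry pronged data via \Cref{rem:pronged_sings}. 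Since $\phiroupte$ strictly increases along any nonconstant orbit and diverges only at neighborhoods of arrowhead intersections (which constitute $\partial\Froupte$), every forward orbit of $\xiroupte$ either exits through $\partial\Froupte$ or converges to one of these singular points. Conversely, \Cref{thm:spine} decomposes the total spine $S$ as $\{0\}\cup\bigsqcup_{p\in\Sigma}S(p)$, and the strict transform of $S\setminus\{0\}$ under $\piropol$ restricted to $\Yro_\theta$ is the union of the $\Sro(q)\cap\Yro_\theta$ for $q\in\Siro_\theta$. Under the quotient $p_\Upsilon$, the portion of this union contained in an invariant cell $\Droc_{i,\theta}$ maps onto the stable manifold of the corresponding singularity of type (a) or (b), while the portion entering a Petri dish $\D_\theta$ contracts, together with all of $\beta[\D_\theta]$, to the pronged singularity $p_\Upsilon(\D_\theta)$ of type (c); this identifies $\Sroupte$ with the union of stable sets.

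For \cref{it:spine}, I would use the flow of $\xiroupte$ to build the collar. Since $\phiroupte$ is gradient-like and has only finitely many critical values in the interior of $\Froupte$ (corresponding to the singularities above), a standard flow-box argument along the complement $\Froupte\setminus\Sroupte$ yields a homeomorphism $\Froupte\setminus\Sroupte\cong\partial\Froupte\times(0,1]$ by following each orbit forward until it crosses a fixed level set close to $\partial\Froupte$; this level set realizes $\partial\Froupte$ up to isotopy because the escaping orbits meet $\partial\Froupte$ transversely by \Cref{lem:sinks_near_strict}, \Cref{lem:noninv_corner}, and the behaviour of $\xi_U$ at the strict transform computed in \Cref{cor:strict_trans}.

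For \cref{it:smooth}, the 1-cells are trajectories of $\xiroupte$ in the smooth locus $\Droc_{i,\theta}$ for $i\in\V_\Upsilon$ and are therefore smooth arcs, while the 0-cells are the singularities of type (a)--(c). Transversality at saddles on $\Droc_{i,\theta}$ follows from the Morse property of $\phi_i$ (\Cref{lem:xi_0_gradient,lem:morse_invariant,lem:potential_xiro}); at invariant corner points from \Cref{lem:elementary_invariant} and \Cref{cor:xiro_inv}, where the stable and unstable directions are precisely $T_q\Dro_j$ and $T_q\Dro_i$ (which are transverse inside $\Froupte$ once the two cells are glued via \Cref{lem:smooth_invariant} using \cite[Thm.~1.4]{hMilnor}); at the pronged 0-cells $p_\Upsilon(\D_\theta)$ from \Cref{prop:petri_dishes}\cref{it:all_red_points}, which shows that each red point of $\partial\D_\theta$ supplies exactly one incoming 1-cell and that distinct red points give distinct angular directions at the pronged singularity. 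The main obstacle is precisely this last case: $\xiroupte$ is only continuous at $p_\Upsilon(\D_\theta)$, so the transverse meeting of 1-cells cannot be read off from a Hessian and instead must be extracted from the combinatorics of the broken trajectories inside $\D_\theta$ (\Cref{lem:connected_comp}, \Cref{prop:petri_dishes}) together with the local model of a $\varpi_k m_i/\gcd(m_i,m_k)$-pronged singularity provided by \Cref{lem:noninv_corner} and \Cref{lem:int_ind}.
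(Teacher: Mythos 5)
Your proposal is correct and follows essentially the same route as the paper, which likewise deduces all three parts from \Cref{thm:spine}, \Cref{prop:petri_dishes} and \Cref{rem:pronged_sings}; you have simply spelled out in detail the collar construction and the transversality checks that the paper's (very terse) proof leaves implicit. No gaps.
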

\begin{proof}
	By \Cref{prop:petri_dishes}\cref{it:all_red_points} and
	\Cref{rem:trajs}, $\Sroupte$ contains all trajectories
	converging to singularities of the vector field $\xiroupte$.
	It follows from \Cref{thm:spine} that no other point in
	$\Froupte$ is contained in the invariant spine.
	This proves \cref{it:union}.
	
	For \cref{it:spine}, the flow of $\xiroupte$ trivializes the
	complement of the spine as a collar neighborhood of the boundary.
	\cref{it:smooth} follows from construction.
\end{proof}

\begin{figure}[h!]
	\centering
	\includegraphics*[scale=0.7]{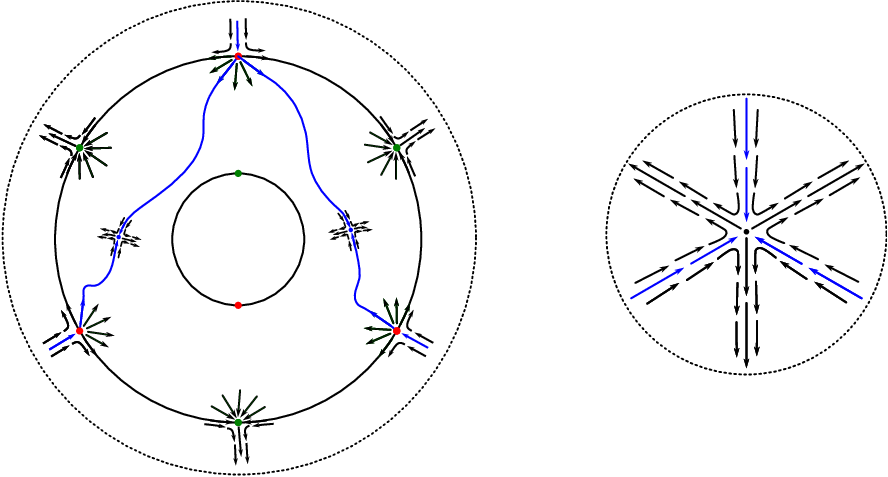}
	\caption{On the left we see a a small neighborhood (dotted disk) of a 
		Petri dish, in blue the broken trajectories that appear in this 
		neighborhood. On the right we see the same neighborhood after 
		collapsing the Petri disk to a point. The contracted Petri dish behaves 
		as a trivalent vertex}
	\label{fig:petri_collapse}
\end{figure}

	%-----------------------------------------------------------------------
% Beginning of chap13.tex
%-----------------------------------------------------------------------
%
%  AMS-LaTeX sample file for a chapter of a monograph, to be used with
%  an AMS monograph document class.  This is a data file input by
%  chapter.tex.
%
%  Use this file as a model for a chapter; DO NOT START BY removing its
%  contents and filling in your own text.
% 
%%%%%%%%%%%%%%%%%%%%%%%%%%%%%%%%%%%%%%%%%%%%%%%%%%%%%%%%%%%%%%%%%%%%%%%%
	\chapter{Example}
\label{s:examples}
\begin{example}
	Consider the plane curve defined by the function 
	\[
	f(x,y)= y^3+x^4+x^3y.
	\]
	Since the curve is irreducible, it has only one tangent $\{y=0\}$. Also 
	note that we are already using the coordinates of 
	\Cref{not:other_exc_coord} and, in particular, the polar curve $P_i$ 
	associated with $i=1,2,3$ is the same, and it is the curve defined by 
	\[
	P= \{f_y=0\}=\{3y^2+x^3=0\}.
	\]
	
	Next we compute the minimal resolution $\pi:Y \to \Tu$ of the curve $f$ 
	and observe that the curve $P$ gets resolved by this resolution. See 
	\Cref{fig:34_res_graph}.
	\begin{figure}[h!]
		\centering
	\includegraphics*{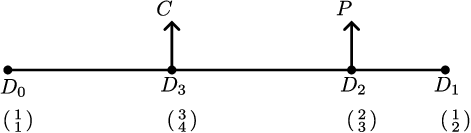}
		\caption{The dual resolution graph of the curve defined by $f$. The 
			blue arrow represents the strict transform $\tilde{P}$ of the polar 
			curve $\{f_y = 0\}$.}
		\label{fig:34_res_graph}
	\end{figure}
	
	Now we compute all the numerical invariants introduced in this work: 
	$c_{0,i}, c_{1,i}, m_i, p_i, \tau_i$ and $\varpi_i$ for $i=0,1,2,3$. 
	See \ref{tb:34}.
	
	\begin{equation}\label{tb:34}
		\begin{tabular}{|l||c|c|c|c|}
			\hline
			$i$ & 0 & 3 & 2 & 1  \\
			\hline
			\hline
			$c_{0,i}$ & 1 & 3 & 2 & 1 \\
			\hline
			$c_{1,i}$ & 1 & 4 & 3 & 2  \\
			\hline
			$m_i$ & 3 & 12 & 8 & 4 \\
			\hline
			$p_i$ & 2 & 8 & 6 & 3 \\
			\hline
			$\tau_i $ &2 & 8 & 5 & 3\\
			\hline
			$\varpi_i $ & 0 & 0 & 1 & 1 \\
			\hline
		\end{tabular}
	\end{equation}
	
	\subsection*{The topology of the Milnor fiber}
	
	Next, we compute the topology of the Milnor fiber. We do this by 
	looking at the
	multiplicities $m_i$ of $\pi^*f$: near $D_i^\circ$ the Milnor fiber is
	an
	$m_i$-fold cover of $D_i^\circ$. Moreover, the part of the Milnor fiber 
	that
	covers $D_i^\circ$ can be identified with $\Droc_{i,\theta}$ for each 
	$\theta
	\in \R / 2 \pi \Z$. Also, we know that $\Droc_{i,\theta}$ has \[ \sum_j
	\gcd(m_i,m_j) \] boundary components, where $j$ runs over vertices 
	adjacent to
	$i$. Using this information we can conclude that $\Droc_{0,\theta}$ is a
	disjoint union of $3$ disks, $\Droc_{3,\theta}$ is the surface of genus 
	$3$ and
	$8$ boundary components, $\Droc_{2,\theta}$ is a disjoint union of $4$ 
	annuli
	and $\Droc_{1,\theta}$ is a disjoint union of $4$ disks, see 
	\cref{fig:34_top}.
	With these invariants, we are ready to start computing the vector fields
	$\xi_i$ for $i \in \V$.
	
	\begin{figure}[h!]
		\centering
		\includegraphics*[scale=0.8]{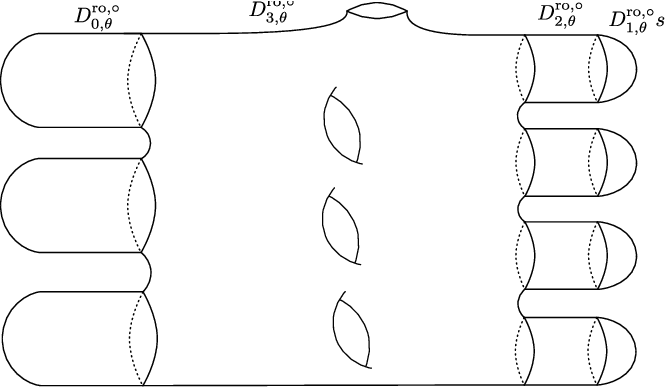}
		\caption{The Milnor fiber at radius $0$ at an angle $\theta$ 
			separated in the parts corresponding to each exceptional divisor. 
			The marked simple closed curves correspond with parts of the Milnor 
			fiber that lie in the corners of $\Yro$.}
		\label{fig:34_top}
	\end{figure}
	
	\subsection*{The 1st blow up}
	We follow \Cref{s:1st_blowup} for this part of the example. We are 
	going to check that the standard metric is already generic in the sense 
	of  (\Cref{def:genericity_morse}). Let $\pi_0:Y_0 \to \Tu$ be the first 
	blow up and take a coordinate chart $U \subset Y_0$ with coordinates 
	$u,v$ so that $\pi_0(u,v)=(u, uv)$ in these coordinates. In order to 
	check that the standard metric is generic, we need to verify that 
	\[
	\left. - \log \frac{|\pi_0^*f|}{(\pi_0^* d_{\id} 
		)^{3/2}}\right|_{D_0^\circ}
	\]
	is a Morse function. Equivalently, by \Cref{lem:xi_0_gradient}, we need 
	to verify that the vector field
	\[
	\left. \xi_0 = \pi_0^*(d \cdot \xi) \right|_{D_0^\circ}
	\]
	is elementary. A direct computation in the coordinates $u,v$ gives,
	(following \cref{eq:xi_coordinates_0}):
	\[
	\pi_0^*\xi=\left(
	\begin{matrix}
		\xi^u  \\
		\xi^v
	\end{matrix}
	\right)
	=
	\left(
	\begin{matrix}
		-\dfrac{3  \bar{v} + 4}{\bar{v}^{3} + \bar{u} \bar{v} + \bar{u}} 
		\\[10pt]
		\dfrac{3  v \bar{u} \bar{v} + 4  v \bar{u} - 3  \bar{v}^{2} - 
			\bar{u}}{u \bar{u} \bar{v}^{3} + u \bar{u}^{2} \bar{v} + u 
			\bar{u}^{2}}
	\end{matrix}
	\right).
	\]
	So, on $U\cap D_0^\circ$,
	\[
	\xi_0 = (|u|^2+|uv|^2)\xi^v|_{\{u=0\}} = -\frac{3 (1+|v|^2)}{\bar{v}}.
	\]
	Since $4 (1+|v|^2)$ is a unit on $U\cap D_0^\circ$ and $-1/\bar{v} = 
	-v/|v|^2$ is an elementary vector field on $U\cap D_0^\circ$ so we are 
	done in this chart. Actually, on $U\cap D_0^\circ$ the vector field 
	$\xi_0$ never vanishes. Now, we verify the other chart where 
	$\pi_0(u,v) = (uv, u)$. An analogous computations gives, 
	\[
	\xi_0 = 3 (1+|v|^2) v,
	\]
	which has the same dynamics as the vector field $v$ which is 
	elementary. This vector field has a single singularity which is a 
	fountain at the origin of this chart. The vector field $\xi_0$ is 
	defined on $D_0^\circ$, to compute the vector field $\xiro_{0,\theta}$ 
	for some $\theta \in \R / 2 \pi \Z$, we need to compute the real 
	oriented blow up. In this case, since $\varpi_0 =0$, the vector field 
	does not depend on $\arg(u)=\alpha$.
	
	\subsection*{The invariant part}
	Now we observe that the invariant graph $\Upsilon$ is the bamboo 
	induced by the vertices $0$ and $3$, equivalently, it is the smallest 
	connected subgraph containing $0$ and all vertices that intersect with 
	the strict transform of $C$ (recall \Cref{lem:only_non_invariant}). So 
	the only other vertex of $\Gamma$ with $\varpi_i=0$ is the vertex 
	$i=3$. In this case, by \Cref{lem:ext_others_tot}, the zero set of 
	$\xi_3$ should be the intersection points of $D_3$ with $\tilde{P}$, 
	but this intersection is empty. So the vector fields $\xi_3$ and 
	$\xiro_3$ are nowhere vanishing on $D_3^\circ$ and $\Droc_3$ 
	respectively.
	
	\subsection*{The non-invariant part}
	
	The non-invariant part of the graph is the graph induced by the vertices 
	$1$ and $2$. Let us take a chart $U \subset Y$ with coordinates $u,v$ 
	that contains most of these two divisors. For that, following the 
	weight vectors of \Cref{fig:34_res_graph} we take the chart $U$ in 
	which the the resolution map $\pi$ looks like:
	\[
	\pi(u,v) = (u^2v, u^3v^2).
	\]In this chart, $D^\circ_2$ is given by $\{u=0\}$, and $D^\circ_1$ by 
	$\{v=0\}$. We have
	\[
	\begin{split}
		f &= u^8 v^4 + u^9 v^6 + u^9 v^5 = u^8 v^4(1 + uv^2 + uv), \\
		f_x &= u^6 v^3 (4 + 3uv), \\
		f_y &= u^6 v^3 (3v + 1). \\
	\end{split}
	\]
	Recall here we are identifying its function with its pullback by $\pi$ 
	to ease the notation. Notice that the two partials have the same weight 
	w.r.t. $u$ and $v$, i.e.
	the weight vectors $(2,3)$ and $(1,2)$.
	
	We compute the Jacobian of the transformation and its inverse:
	\[
	\Jac \pi
	=
	\left(
	\begin{matrix}
		2uv   & u^2 \\
		3u^2v & 2u^3v
	\end{matrix}
	\right),
	\quad
	\left(\Jac \pi\right)^{-1}
	=
	\frac{1}{u^4 v^2}
	\left(
	\begin{matrix}
		2u^3v   & -u^2 \\
		-3u^2v & 2uv
	\end{matrix}
	\right).
	\]
	We find that
	\[
	\pi^*\xi
	=
	\left(
	\begin{matrix}
		-\dfrac{8  u v + 3  {\left(2  u v \bar{u} - 1\right)} \bar{v} - 
			1}{u^{2} v^{2} \bar{u}^{3} \bar{v}^{3} + u^{2} v^{2} \bar{u}^{3} 
			\bar{v}^{2} + u^{2} v^{2} \bar{u}^{2} \bar{v}} \\[10pt]
		\dfrac{12  u v + 3  {\left(3  u v \bar{u} - 2\right)} \bar{v} - 
			2}{u^{3} v \bar{u}^{3} \bar{v}^{3} + u^{3} v \bar{u}^{3} 
			\bar{v}^{2} + u^{3} v \bar{u}^{2} \bar{v}}
	\end{matrix}
	\right) 
	\]
	Note that this formula holds only outside the exceptional set, that is,
	for $(u,v) \in (\C^*)^2$. Following \Cref{lem:extension_corner} and the 
	table \ref{tb:34}, we use the function $|u|^5 |v|^3$ to rescale and 
	extend the vector field over the boundary of the real oriented blow-up. 
	In particular, to see the extension over $\Droc_2$, we take coordinates 
	$(r,\alpha,s ,\beta) \in (\R_{\geq 0} \times \R / 2 \pi \Z)^2$ and 
	compute the vector field over $\Droc_2$ which, in this coordinates is 
	defined by $\{r=0\}$. First we compute the pullback of $\xi^v$ rescaled 
	using the corresponding $\tau$ exponents:
	\begin{equation} \label{eq:xi}
		(r^5 s^3\sigma^*\xi^v)|_{\Droc_2}
		=
		-2 s{\left(3  \bar{v} + 1\right)} e^{-i  \alpha }
	\end{equation}
	Recall that on $\Droc_2$, the coordinate $s$ does not vanish. Now, 
	following \cref{eq:xiro_corner}, we compute the rescaled $\xiros$ and 
	$\xirobe$. We start by $\xiros$,
	\begin{equation}\label{eq:34_xiros}
		\begin{split}
			r^5 s^3 \xiros 
			& =  \Re(e^{-i\beta} \sigma^*\xi^v) \\
			& =   \Re\left(e^{-i\beta}\left(-2 s{\left(3  s e^{-i\beta} + 
				1\right)} e^{-i  \alpha }\right)\right) \\
			& = \Re\left(-6s^2e^{-i(2\beta +\alpha)} 
			-2se^{-i(\alpha+\beta)}\right) \\
			& = -6s^2 \cos(2\beta + \alpha) - 2s\cos (\alpha+\beta).
		\end{split}
	\end{equation}
	Similarly, we compute $\xirobe$:
	
	\begin{equation}\label{eq:34_xirob}
		\begin{split}
			r^5 s^3 \xirobe
			& =  \Im(s^{-1}e^{-i\beta} \sigma^*\xi^v) \\
			& =   \Im\left(s^{-1}e^{-i\beta}\left(-2 s{\left(3  s 
				e^{-i\beta} + 1\right)} e^{-i  \alpha }\right)\right) \\
			& = \Im\left( -6s 
			e^{-i(2\beta+\alpha)}-2e^{-i(\alpha+\beta)}\right) \\
			& = 6s \sin (2\beta+\alpha) + 2 \sin (\alpha+ \beta)
		\end{split}
	\end{equation}
	Now we fix $\theta$ in order to compute $\xiros_\theta$ and 
	$\xirobe_\theta$. That is, we fix $f^\mathrm{ro}=\frac{f}{|f|}=e^{i 
		\theta}$ so we get:
	\[
	\left.\left(\frac{f}{|f|}\right)\right|_{r=0}= e^{i 8 \alpha} e^{i 
		4\beta}  = e^{i\theta},
	\]
	that is
	\[
	8 \alpha + 4\beta = \theta  \mod 2\pi.
	\]
	For each $\theta \in \R/2 \pi \Z$, this equation has four circles of
	solutions in the torus $(\R/2 \pi \Z)^2$. These four
	circles are defined by the equations $\{2 \alpha + \beta = \theta/4 +
	k\pi/2\}$ with $k=0,1,2,3$. These four circles are exactly
	$\Dro_{3,2,\theta}$. Note that each of these circles admits a
	parametrization 
	\[
	\tilde{\beta} \mapsto \left( \frac{\theta}{8} + \frac{k \pi}{4} 
	-\frac{\tilde{\beta}}{2},\tilde{\beta}\right),
	\quad \tilde{\beta} \in [0, 4\pi)
	\]
	
	Moreover, we know that $\Droc_{2,\theta}$ consists of four cylinders. 
	Each of these cylinders has one of the previous four circles in its 
	closure in $\Yro_\theta$ and is defined by 
	\[
	\{r=0\} \cap \{2 \alpha + \beta = \theta/4 + k\pi/2\} \subset 
	\Yro_\theta.
	\]
	Therefore, we can consider coordinates, $s$ and $\beta$, with $s\in 
	\R_{\geq 0}$ and $\beta \in [0, 4 \pi)$. 
	
	Substituting the above expression for $\alpha$ in 
	\cref{eq:34_xiros,eq:34_xirob} we get the formula for 
	$\xiro_{2,\theta}$ in the coordinate patch $\Uro \cap \Droc_{2,\theta}$ 
	in the connected component where $\delta = 0$:
	\[
	\xiro_{2,\theta} 
	=
	\left(
	\begin{matrix}
		-6s^2 \cos\left(\frac{3\beta}{2} + \frac{\theta}{8} 
		+\frac{k\pi}{4}\right) - 
		2 s\cos \left(\frac{\beta}{2} + \frac{\theta}{8} 
		+\frac{k\pi}{4}\right) \\
		6s \sin \left(\frac{3\beta}{2} + \frac{\theta}{8} 
		+\frac{k\pi}{4}\right) 
		+ 2 \sin \left(\frac{\beta}{2} + \frac{\theta}{8} +\frac{k\pi}{4} 
		\right)
	\end{matrix}
	\right)
	\]
	In \cref{fig:34_div_12} we can see the cylinder corresponding to $k=0$ 
	and $\theta=0$ and in \cref{fig:34_div_23_special} we can see the 
	cylinder corresponding to $\theta=0$ and $k=2$. This last cylinder 
	represents a non-generic picture: the broken trajectory corresponding 
	to the stable manifold of one of the saddle points, goes through one of 
	the discs of $\Droc_{1,\theta}$, comes back again to $\Droc_{2,\theta}$ 
	and has the other saddle point in its closure.  Also, following 
	\Cref{lem:transversality_number} \cref{it:trans_iii} we get
	\[
	|\Sigma_{1,2}^+ \cap \Yro_\theta|= |\Sigma_{1,2}^- \cap \Yro_\theta|
	=
	-\left|
	\begin{matrix}
		4 & 1 \\
		8 & 1
	\end{matrix}
	\right|
	=4.
	\]
	Where $| \cdot |$ denotes cardinality in the first two terms and 
	determinant in 
	the third one.
	Since $\Dro_{1,2,\theta}$ has $4$ connected components, we conclude 
	that there is $4/4=1$ point of $\Sigma_{i,j}^+$ and one point of 
	$\Sigma_{i,j}^-$ on each connected component. These are the green and 
	red points in \cref{fig:34_div_23}.

	\begin{figure}[!ht]
		\centering
		\includegraphics*[scale=0.8]{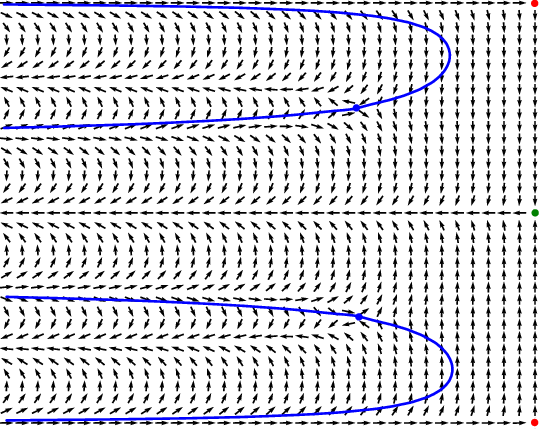}
		\caption{The vector field $\xiro_{2,\theta}$ on one of the four 
			connected components of $\Droc_{2,\theta}$ for $\theta=0$. The 
			right vertical line is one of the connected components of 
			$\Dro_{2,1,\theta}$. In blue the stable manifolds of the two saddle 
			points.}
		\label{fig:34_div_23}
	\end{figure}
	
	\begin{figure}[!ht]
		\centering
		\includegraphics*[scale=0.8]{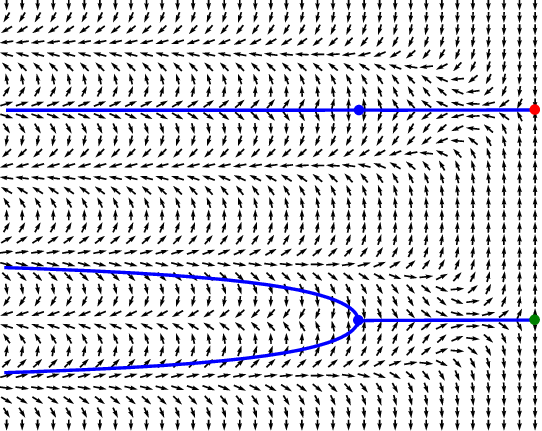}
		
		\caption{The vector field $\xiro_{2,\theta}$ on one of the four
			connected components of $\Droc_{2,\theta}$ for $\theta=4\pi$. 
			The left
			side of the box is one of the connected components of
			$\Dro_{3,2,\theta}$.}
		
		\label{fig:34_div_23_special}
	\end{figure}
	We  take now a chart near $D_{2}\cap D_3$. In this chart the resolution 
	takes the form
	\[
	\pi(u,v)= (u^3v^2,u^4v^3)
	\]
	and $D_2$ is defined by $\{v=0\}$. In this case, we get
	\[
	(r^8s^5 \sigma^*\xi^u)|_{\Droc_2} = r^2 \frac{2 {\left(\bar{u} + 
			3\right)} e^{-i  \beta}}{\overline{u}} =  2 r \left(\bar{u} + 
	3\right)e^{-i  \beta}e^{i\alpha}.
	\]
	As in \cref{eq:34_xiros,eq:34_xirob}, we compute
	\[
	\begin{split}
		r^8 s^5 \xiror &= 2r^2 \cos(\beta+\alpha) + 6 r \cos(\beta) \\
		r^8 s^5 \xiroal &= -2r\sin(\beta+\alpha)-6\sin(\beta).
	\end{split}
	\]
	Now we use the relation $12 \alpha + 8\beta = \theta + k\pi/2$,  and we 
	get
	\[
	\xiro_{2,\theta}
	=
	\left(
	\begin{matrix}
		2r^2 \cos \left(-\frac{\alpha}{2}+\frac{\theta}{8} + \frac{k 
			\pi}{16}\right) 
		+ 6 r \cos \left(\frac{\theta}{8} - \frac{3\alpha}{2} + \frac{k 
			\pi}{16}\right) \\[5pt]
		-2r\sin \left(-\frac{\alpha}{2}+\frac{\theta}{8}  \frac{k \pi}{16} 
		\right)
		-6\sin \left(\frac{\theta}{8} - \frac{3\alpha}{2}  \frac{k \pi}{16} 
		\right)
	\end{matrix}
	\right)
	\]
	in these coordinates. In \cref{fig:34_div_23_other} we can see the 
	cylinder corresponding to $k=0$ and $\theta=0$ and in 
	\cref{fig:34_div_23_other_special} we can see the cylinder 
	corresponding to the non-generic cylinder $\theta=0$ and $k=2$. In this 
	case, the computation from \Cref{lem:transversality_number} 
	\cref{it:trans_iii} yields
	\[
	|\Sigma_{2,3}^+ \cap \Yro_\theta|= |\Sigma_{2,3}^- \cap \Yro_\theta|
	=
	-\left|
	\begin{matrix}
		8 & 1 \\
		12 & 0
	\end{matrix}
	\right|
	=12.
	\]
	Which gives $12/4=3$ green points and three red points.
	\begin{figure}[!ht]
		\centering
		\includegraphics*[scale=0.7]{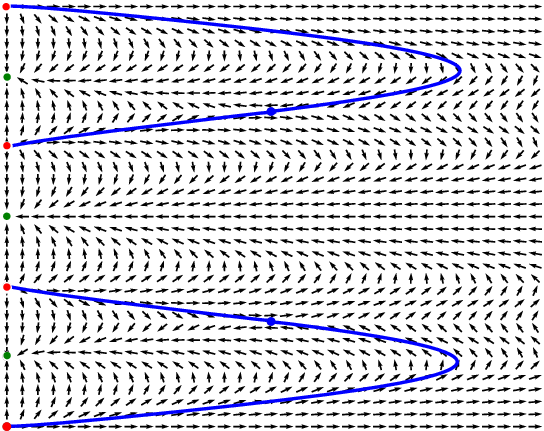}
		\caption{The vector field $\xiro_{2,\theta}$ on one of the four 
			connected components of $\Droc_{2,\theta}$ for $\theta=0$. The left 
			vertical line is one of the connected components of 
			$\Dro_{3,2,\theta}$.}
		\label{fig:34_div_23_other}
	\end{figure}
	
	\begin{figure}[!ht]
		\includegraphics*[scale=0.9]{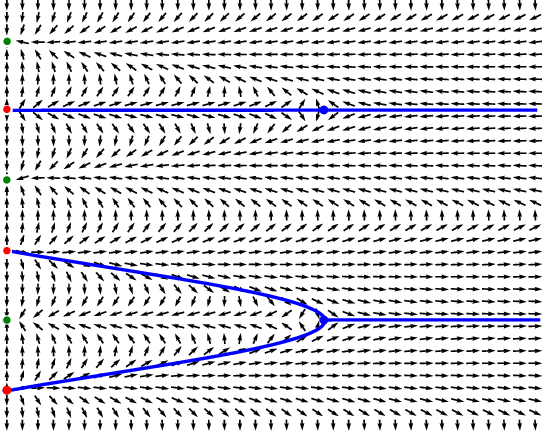}
		\caption{The vector field $\xiro_{2,\theta}$ on one of the four 
			connected components of $\Droc_{2,\theta}$ for $\theta=0$. The left 
			vertical line is one of the connected components of 
			$\Dro_{3,2,\theta}$.}
		\label{fig:34_div_23_other_special}
	\end{figure}
	Similarly, in order to finish the computations for the non-invariant 
	part of the graph, we can compute $\xiro_{1,\theta}$ which is defined 
	on the adjacent divisor $\Droc_{1,\theta}$. We obtain:
	\[
	(r^5 s^3\sigma^*\xi^u)|_{\Droc_1}=re^{-i  \beta}
	\]
	which, after a computation similar to that of 
	\cref{eq:34_xiros,eq:34_xirob}, yields:
	\[
	r^5 s^3 \xiror = r \cos(\beta+\alpha) \, \, r^5 s^3 \xiroal = 
	-\sin(\beta+\alpha).
	\]
	Using the relation $\beta = \theta/4 - 2\alpha$ we get:
	\[
	(\xiro_{1,\theta})^\top = \left( r 
	\cos(\theta/4-\alpha),-\sin(\theta/4-\alpha)\right). 
	\]

	\begin{figure}[h]
		\centering
		\includegraphics*[scale=0.9]{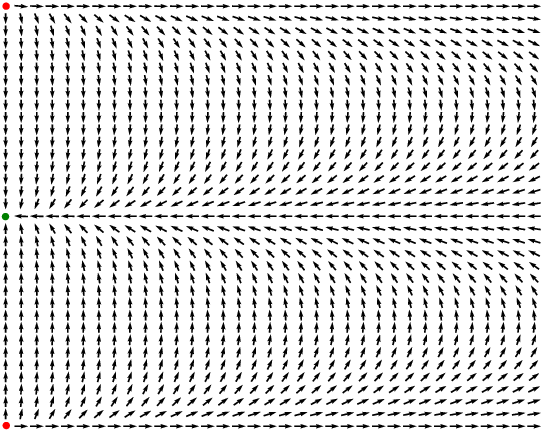}
		\caption{The vector field $\xiro_{1,\theta}$ on one of the four 
			connected components (disks) of $\Droc_{1,\theta}$ for $\theta=0$. 
			The left vertical line is one of the connected components (circles) 
			of $\Dro_{1,2,\theta}$. The two red points and the top and bottom 
			of the picture are identified. The horizontal line converging to 
			the green point actually forms part of the same segment as the 
			horizontal line emerging from the red point.}
		\label{fig:34_div_12}
	\end{figure}

	\begin{figure}[h]
		\centering
		\includegraphics*{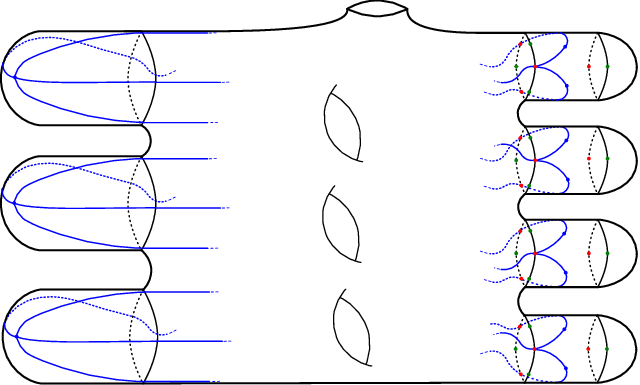}
		\caption{The Milnor fiber with the relevant parts of the spine (in 
			blue) drawn. This is the case for an angle $\theta \neq 0$.}
		\label{fig:mfiber34}
	\end{figure}
	
	\begin{figure}[h]
		\centering
		\includegraphics*{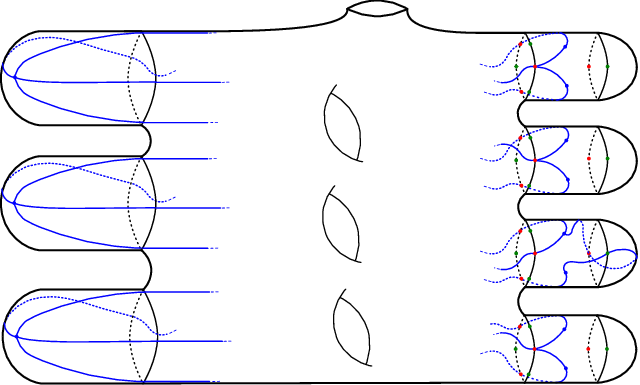}
		\caption{The Milnor fiber with the relevant parts of the spine (in 
			blue) drawn. This is the special angle $\theta = 0$.}
		\label{fig:34milnor_special}
	\end{figure}
	
	\subsection*{The invariant spine and invariant Milnor fiber}
	
	Following \Cref{s:invariant_spine}, we contract each Petri dish to a
	point. In this case there are four Petri dishes, each one corresponding
	to a connected component of the branch $\Xi$ at the vertex $2$. Each of
	these Petri dishes consists of a cylinder and a disk. After contracting
	them, we get four  $3$- pronged singularities (see
	\cref{fig:34milnor_invariant}). The invariant spine in this case is the
	$(3,4)$ bipartite graph.
	
	\begin{figure}[h]
		\centering
		\includegraphics*{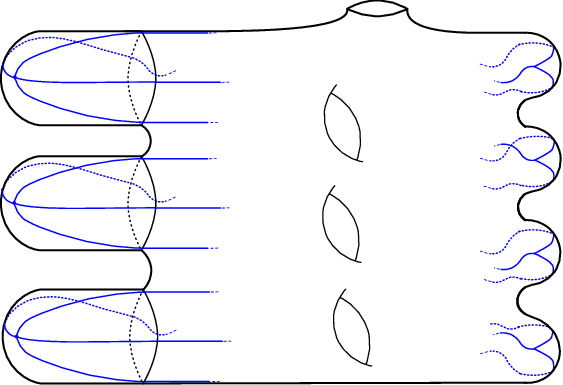}
		\caption{The invariant Milnor fiber with the invariant spine in 
			blue.}
		\label{fig:34milnor_invariant}
	\end{figure}
	
\end{example}

\backmatter

\printindex

\end{document}